\renewcommand\normalsize{%
	\@setfontsize\normalsize{11.7}{14pt plus .3pt minus .3pt}%
	\abovedisplayskip 10\p@ \@plus4\p@ \@minus4\p@
	\abovedisplayshortskip 6\p@ \@plus2\p@
	\belowdisplayshortskip 6\p@ \@plus2\p@
	\belowdisplayskip \abovedisplayskip}
\renewcommand\small{%
	\@setfontsize\small{9.5}{12\p@ plus .2\p@ minus .2\p@}%
	\abovedisplayskip 8.5\p@ \@plus4\p@ \@minus1\p@
	\belowdisplayskip \abovedisplayskip
	\abovedisplayshortskip \abovedisplayskip
	\belowdisplayshortskip \abovedisplayskip}
\renewcommand\footnotesize{%
	\@setfontsize\footnotesize{8.5}{9.25\p@ plus .1pt minus .1pt}
	\abovedisplayskip 6\p@ \@plus4\p@ \@minus1\p@
	\belowdisplayskip \abovedisplayskip
	\abovedisplayshortskip \abovedisplayskip
	\belowdisplayshortskip \abovedisplayskip}
\theoremstyle{plain}
\newtheorem{cor}{Corollary}[section]
\newtheorem{thm}[cor]{Theorem}
\newtheorem{prop}[cor]{Proposition}
\newtheorem{lemma}[cor]{Lemma}
\newtheorem{lem}[cor]{Lemma}
\theoremstyle{definition}
\newtheorem{defi}[cor]{Definition}
\newtheorem{defn}[cor]{Definition}
\theoremstyle{remark}
\newtheorem{remark}[cor]{Remark}
\newtheorem{ex}[cor]{Example}
\newtheorem{exs}[cor]{Examples}
\newtheorem{example}[cor]{Example}
\newtheorem{cv}[cor]{Convention}
\newtheorem{ppts}[cor]{Properties}
\newcommand\bqn{\begin{equation*}}
\newcommand\eqn{\end{equation*}}
\newcommand\bq{\begin{equation}}
\newcommand\eq{\end{equation}}
\newcommand\be{\begin{enumerate}}
\newcommand\ee{\end{enumerate}}
\newcommand\bei{\begin{itemize}}
\newcommand\eei{\end{itemize}}
\newcommand\ba{\begin{aligned}}
\newcommand\ea{\end{aligned}}
\newcommand\ban{\begin{aligned*}}
\newcommand\ean{\end{aligned*}}
\newcommand{\bsm}{\left(\begin{smallmatrix}}
\newcommand{\esm}{\end{smallmatrix}\right)}                   
\newcommand{\bpm}{\begin{pmatrix}}
\newcommand{\epm}{\end{pmatrix}}
\newcommand{\thismonth}{\ifcase\month 
	\or January\or February\or March\or April\or May\or June%
	\or July\or August\or September\or October\or November%
	\or December\fi}
\newcommand{\calA}{\mathcal A}
\newcommand{\calB}{\mathcal B}
\newcommand{\calC}{\mathcal C}
\newcommand{\calD}{\mathcal D}
\newcommand{\calE}{\mathcal E}
\newcommand{\calF}{\mathcal F}
\newcommand{\calG}{\mathcal G}
\newcommand{\calH}{\mathcal H}
\newcommand{\calI}{\mathcal I}
\newcommand{\calL}{\mathcal L}
\newcommand{\calM}{\mathcal M}
\newcommand{\calN}{\mathcal N}
\newcommand{\calO}{\mathcal O}
\newcommand{\calP}{\mathcal P}
\newcommand{\calR}{\mathcal R}
\newcommand{\calS}{\mathcal S}
\newcommand{\calT}{\mathcal T}
\newcommand{\calU}{\mathcal U}
\newcommand{\calV}{\mathcal V}
\newcommand{\calX}{\mathcal X}
\newcommand{\calZ}{\mathcal Z}
\newcommand{\fa}{\mathfrak a}
\newcommand{\fg}{\mathfrak g}
\newcommand{\fk}{\mathfrak k}
\newcommand{\fp}{\mathfrak p}
\newcommand{\CC}{\mathbb C}
\newcommand{\EE}{\mathbb E}
\newcommand{\FF}{\mathbb F}
\newcommand{\HH}{\mathbb H}
\newcommand{\KK}{\mathbb K}
\newcommand{\LL}{\mathbb L}
\newcommand{\MM}{\mathbb M}
\newcommand{\NN}{\mathbb N}
\newcommand{\PP}{\mathbb P}
\newcommand{\QQ}{\mathbb Q}
\newcommand{\RR}{\mathbb R}
\newcommand{\ZZ}{\mathbb Z}
\newcommand{\bB}{\mathbf B}
\newcommand{\bG}{\mathbf G}
\newcommand{\bK}{\mathbf K}
\newcommand{\bH}{\mathbf H}
\newcommand{\bL}{\mathbf L}
\newcommand{\bN}{\mathbf N}
\newcommand{\bP}{\mathbf P}
\newcommand{\bQ}{\mathbf Q}
\newcommand{\bS}{\mathbf S}
\newcommand{\bT}{\mathbf T}
\newcommand{\bU}{\mathbf U}
\newcommand{\bGmPI}{\quotient{\bG}{{}_\KK\bP_I}}
\newcommand{\bGmQ}{\quotient{\bG}{\bQ}}
\newcommand{\Gr}{{\rm Gr}}
\newcommand{\SL}{\operatorname{SL}}
\newcommand{\Sp}{\operatorname{Sp}}
\newcommand{\PSp}{\operatorname{PSp}}
\newcommand{\PSL}{\operatorname{PSL}}
\newcommand{\SO}{\operatorname{SO}}
\newcommand{\GL}{\operatorname{GL}}
\newcommand{\Aut}{\operatorname{Aut}}
\newcommand{\Out}{\operatorname{Out}}
\newcommand{\Iso}{\operatorname{Iso}}
\newcommand{\Hom}{\mathrm{Hom}}
\newcommand{\Teich}{\mathrm{Teich}}
\newcommand{\<}{\langle}
\renewcommand{\>}{\rangle}
\newcommand{\Ad}{\mathrm{Ad}}
\newcommand{\cons}{\mathrm{c}}
\newcommand{\cl}{\mathrm{cl}}
\newcommand{\Cone}{\mathrm{Cone}}
\newcommand\diag{\mathrm{diag}}
\newcommand{\ev}{\mathrm{ev}}
\newcommand{\Graph}{\mathrm{Graph}}
\newcommand\Id{\mathrm{Id}}
\newcommand{\lcm}{\mathrm{lcm}}
\newcommand{\Lie}{\mathrm{Lie}}
\newcommand{\Min}{\mathrm{Min}}
\newcommand\ov{\overline}
\newcommand\per{\mathrm{per}}
\newcommand{\Ret}{\mathrm{Ret}}
\newcommand\Stab{\mathrm{Stab}}
\newcommand\tr{\mathrm{tr}}
\newcommand\wt{\widetilde}
\newcommand\Prox{\mathrm{Prox}}
\newcommand\Rspec{{\operatorname{RSp}}}
\newcommand\Rspecc{{\operatorname{RSp}_\mathrm{cl}}}
\newcommand{\qbarr}{\overline{\mathbb Q}^\mathrm{r}}
\newcommand\rol{{\mathbb R^\omega_{\pmb{\mu}}}}
\newcommand\grom{\bG({\mathbb R^\omega_{\pmb{\mu}}})}
\newcommand\rom{{\mathbb R^\omega_{\pmb{\mu}}}}
\newcommand\rhol{{\rho^\omega_{\pmb{\mu}}}}
\newcommand\rhomp{{\rho^{\omega'}_{\pmb{\mu}'}}}
\newcommand\rhom{{\rho^\omega_{\pmb{\mu}}}}
\newcommand\oK{{\KK}^\omega}
\newcommand{\oKl}{\KK^\omega_{\pmb{\mu}}}
\newcommand{\ox}{x^\omega}
\newcommand{\oxl}{x^\omega_{\pmb{\mu}}}
\newcommand{\plambda}{{\pmb{\mu}}}
\newcommand{\prlambda}{{\pmb{\lambda}}}
\newcommand{\pmu}{{\pmb{\mu}}}
\newcommand\fabp{\overline{\mathfrak a}^+}
\newcommand{\fap}{\overline{\mathfrak a}^+}
\newcommand\bgof{\mathcal B_{\bG(\mathbb F)}}
\newcommand\bgf{\bgof}
\newcommand\bgrol{\mathcal B_{\bG(\mathbb R^\omega_{\pmb{\mu}})}}
\newcommand\bgrom{\mathcal B_{\bG(\mathbb R^\omega_{\pmb{\mu}})}}
\newcommand\bgromp{\mathcal B_{\bG({\mathbb R^{\omega'}_{\pmb{\mu}'}})}}
\newcommand\obgof{\overline{\mathcal B_{\bG(\mathbb F)}}}
\newcommand\obgf{\obgof}
\def\rsp#1{#1^\mathrm{RSp}}
\def\rspcl#1{#1^\mathrm{RSp}_\mathrm{cl}}
\def\thp#1{#1^\mathrm{WL}}
\def\thpN#1{#1^\mathrm{NL}}
\def\nl#1{#1^\mathrm{NL}}
\newcommandx{\ioo}[2]{I_{(#1,#2)}}
\newcommandx{\ioc}[2]{I_{(#1,#2]}}
\newcommandx{\ico}[2]{I_{[#1,#2)}}
\newcommandx{\icc}[2]{I_{[#1,#2]}}
\newcommand\bGFF{\mathbf G(\mathbb F)}
\newcommand\bGRR{\mathbf G(\mathbb R)}
\newcommand\GRR{ G_\mathbb R}
\newcommand\GFF{ G_\mathbb F}
\newcommand\G{\Gamma}
\newcommand\g{\gamma}
\newcommand{\frakT}{\mathfrak T}
\newcommand{\frakM}{\mathfrak M}
\newcommand{\frakK}{\mathfrak K}
\newcommand{\Ln}{\mathrm{Ln}}
\newcommand{\Log}{\mathrm{Log}}
\newcommand{\CR}{\mathrm{cr}}
\newcommand{\mindisp}{\lambda}
\newcommand{\norm}{g}
\newcommand{\valu}{\Lambda^0}
\newcommand{\Gal}{\mathrm{Gal}}
\newcommand{\Jord}{J}
\newcommand{\jord}{j}
\newcommand{\FP}{{\rm FP}}
\newcommand{\faL}{\mathfrak a_\Lambda}
\newcommand{\faZ}{\mathfrak a_\ZZ}
\newcommand\quotient[2]{
	\mathchoice
	{
		\text{\raise.5ex\hbox{$#1$}\big/\lower.5ex\hbox{$#2$}}%
	}
	{
		\text{\raise.25ex\hbox{$#1$}\big/\lower.25ex\hbox{$#2$}}%
	}
	{
		#1\,/\,#2
	}
	{
		#1\,/\,#2
	}
}
\newcommand\fracmod[2]{
	\mathchoice
	{
		\text{\lower.5ex\hbox{$#1$}\big\backslash\raise.5ex\hbox{$#2$}}%
	}
	{
		\text{\lower.25ex\hbox{$#1$}\big\backslash\raise.25ex\hbox{$#2$}}%
	}
	{
		#1\,\backslash\,#2
	}
	{
		#1\,\backslash\,#2
	}
}
\DeclareMathOperator{\Frac}{Frac} 
\begin{document}

\title[The real spectrum compactification]{The real spectrum compactification \\ of character varieties}
\begin{abstract}
We study the real spectrum compactification of character varieties of finitely generated groups in semisimple Lie groups. 
This provides a compactification with good topological properties, 
and we interpret the boundary points in terms of actions on building-like spaces.  
Among the applications we give a general framework guaranteeing the existence of equivariant harmonic maps in building-like spaces.
\end{abstract}

\author[M.~Burger]{Marc Burger}
\address{Departement Mathematik, ETHZ, R\"amistrasse 101, CH-8092 Z\"urich, Switzerland}
\email{burger@math.ethz.ch}

\author[A.~Iozzi]{Alessandra Iozzi}
\address{Departement Mathematik, ETHZ, R\"amistrasse 101, CH-8092 Z\"urich, Switzerland}
\email{iozzi@math.ethz.ch}

\author[A.~Parreau]{Anne Parreau}
\address{Univ. Grenoble Alpes, CNRS, Institut Fourier, F-38000  Grenoble, France}
\email{Anne.Parreau@univ-grenoble-alpes.fr}

\author[M.B.~Pozzetti]{Maria Beatrice Pozzetti}
\address{Dipartimento di Matematica, Universita' di Bologna, Piazza di Porta San Donato 5, 40126 Bologna, Italy}
\email{beatrice.pozzetti@unibo.it}

\thanks{We thank Xenia Flamm, Konstantin Andritsch,
	Raphael Appenzeller, Segev Gonen Cohen, Victor Jaeck and the anonimous referee for valuable comments. The first author thanks Peter Sarnak and Gopal Prasad for a very useful email exchange. Beatrice Pozzetti acknowledges support through Germany’s Excellence Strategy EXC-2181/1-390900948, the DFG project 338644254 and the DFG Emmy Noether grant 427903332.
Marc Burger and Alessandra Iozzi were partially supported by the SNF grant 2-77196-16.
Alessandra Iozzi acknowledges moreover support from U.S. National Science Foundation grants DMS 1107452, 1107263, 1107367 
"RNMS: Geometric Structures and Representation Varieties" (the GEAR Network). 
Marc Burger, Alessandra Iozzi and Beatrice Pozzetti would like to thank 
the Mathematisches Forschungsinstitut Oberwolfach for its hospitality
and the National Science Foundation under Grant No. 1440140 
that supported their residence at the Mathematical Sciences Research
Institute in Berkeley, California, during the semester of Fall 2019
where work on this paper was undertaken.
Anne Parreau is partially supported by ANR GoFR - ANR-22-CE40-0004,
and thanks the FIM for its hospitality.}

\date{\today}

\maketitle

\setcounter{tocdepth}{2} 
\tableofcontents

\section{Introduction}
\addtocontents{toc}{\protect\setcounter{tocdepth}{1}}
Let $\mathbf G<\SL(n,\mathbb C)$ be a connected semisimple algebraic group defined over the field of real algebraic numbers  $\ov{\mathbb Q}^r$
and let $\mathbf G(\mathbb R):=\mathbf G\cap\SL(n,\mathbb R)$.
The {\em character variety} $\Xi(\Gamma,\bGRR)$ of a finitely generated group $\Gamma$ in the real semisimple group $\bGRR$ 
is the quotient by $\bGRR$-conjugation of the topological space
$\Hom_\mathrm{red}(\Gamma, \bGRR)$ of reductive representations.
Our aim is to construct a compactification of $\Xi(\Gamma,\bGRR)$ with good topological properties and 
give an interpretation of its boundary points in terms of actions on building-like spaces.
Among the applications we obtain existence results concerning harmonic maps and structural properties of the Weyl chamber length  compactification of  
$\Xi(\Gamma,\bGRR)$.

\medskip
The character variety $\Xi(\Gamma,\bGRR)$ is homeomorphic, in an explicit way, 
to a real semialgebraic subset of $\RR^l$ defined over $\qbarr$, for an appropriate $l\in\NN$
(see \cite{RS} and \S~\ref{s.RS}).
As such it admits a {\em real spectrum compactification} $\Xi^\mathrm{RSp}(\Gamma,\bGRR)$, 
whose subset $\Xi^\mathrm{RSp}_\mathrm{cl}(\Gamma,\bGRR)$ of closed points is our main object of study. 
The realization of $\Xi(\Gamma,\bGRR)$ as a real semialgebraic set depends on a number of choices (see \S~\ref{s.char}), 
but any two such choices lead to semialgebraic sets that are related via a canonical semialgebraic homeomorphism. 
This allows to give an intrinsic model of the  real spectrum compactification  
$\Xi^\mathrm{RSp}(\Gamma,\bGRR)$ (see \S~\ref{s.canonicity}).


\subsection{Properties and characterizations of points in the real spectrum compactification}

Here are some key properties of the real spectrum compactification $\rsp\Xi(\Gamma,\bGRR)$ and of its closed points $\rspcl\Xi(\Gamma,\bGRR)$:
\begin{enumerate}
\item[(P1)]\label{item:2} there are injections $i$ and $i_\mathrm{cl}$ with homeomorphic dense image
	\bqn
	\xymatrix{
	&\rspcl{\Xi}(\Gamma,G)\ar@{^{(}->}[dd]\\
	\Xi(\Gamma,G)\quad\ar@{^{(}->}[ur]^{i_\mathrm{cl}}\ar@{^{(}->}[dr]_i&\\
	&\rsp{\Xi}(\Gamma,G)\,,
	}
	\eqn
	 the image of $i_\mathrm{cl}$ is open;
	\item[(P2)]\label{item:1} $\Xi^\mathrm{RSp}_\mathrm{cl}(\Gamma,\bGRR)$ is a compact metrizable space on which $\Out(\Gamma)$ acts by homeomorphisms;
		\item[(P3)]\label{item:3} the injection $i_\mathrm{cl}$ induces a bijection at the level of connected components;
	\item[(P4)]\label{item:4} the closure of every point contains a unique closed point, 
	and thus there is a continuous retraction $\rsp\Xi(\Gamma,\bGRR)\to \rspcl\Xi(\Gamma,\bGRR)$.
	
	\end{enumerate}
For the next property we observe that the group $\Out(\Gamma)$ acts by semialgebraic homeomorphisms on $\Xi(\Gamma,\bGRR)$, and hence by homeomorphisms on $\rsp\Xi(\Gamma,\bGRR)$, preserving the subset $\rspcl\Xi(\Gamma,\bGRR)$:
\begin{enumerate}

	\item[(P5)]\label{item:5} if $\calC\subset\Xi(\Gamma,\bGRR)$ is a connected component (hence a semialgebraic set) 
	that is contractible and invariant under  an element $\phi\in\Out(\Gamma)$, 
	then $\phi$ has a fixed point in $\calC^\mathrm{RSp}_\mathrm{cl}$. 
\end{enumerate}
The first four properties are standard for real semialgebraic sets and can be found in \cite[Chapter 7]{BCR}, while (P5) is due to Brumfiel, \cite{Brum88B, Brum92}.

\medskip
Our first result gives a geometric interpretation of the points in 
  \bq\label{eq:no_bd}
\rsp\partial\Xi(\Gamma,\bGRR)= \Xi^\mathrm{RSp}(\Gamma,\bGRR)\smallsetminus \Xi(\Gamma,\bGRR)\,,
\eq
where we are using this notation despite the fact that in general $\Xi(\Gamma,\bGRR)$ is not open in   $\Xi^\mathrm{RSp}(\Gamma,\bGRR)$.
Our considerations bring us to consider fields more general than $\RR$, so if $\FF$ is a real closed field
(see \S~\ref{subsec:intro1}) we denote by $\bGFF$ the ``extension'' of $\mathbf G(\qbarr)$ to $\FF$,
that is the algebraic group defined by the same polynomial equations as $\mathbf G(\qbarr)$
but with matrix entries in $\FF$ (see \S~\ref{s:Intro2}).
Given a representation $\rho:\Gamma\to {\bGFF}$,  
we say that the real closed field $\FF$  is {\em $\rho$-minimal} (or just {\em minimal})
if $\rho$ cannot be ${\bGFF}$-conjugated into a representation $\rho:\Gamma\to G(\LL)$, where $\LL\subset\FF$ is a proper real closed subfield.
We prove that if $\rho$ is reductive and if $\FF$ is real closed, 
a minimal real closed field $\FF_\rho\subset\FF$ always exists and is unique (Corollary~\ref{c.minimalfield}).
If $\FF_1$ and $\FF_2$ are two real closed fields, 
we say that  two representations $(\rho_1,\FF_1)$ and $(\rho_2,\FF_2)$ are {\em equivalent} 
if there exists an isomorphism 
$i:\FF_1\to\FF_2$ such that $i\circ\rho_1$ and $\rho_2$ are $G(\FF_2)$-conjugated.
%
 \begin{thm}[See Proposition~\ref{prop:Rspecrep} and Corollary~\ref{c.minimalfield}]\label{thm:1}
 	Points in $ \rsp\partial\Xi(\Gamma,\bGRR)$ are in bijective correspondence with equivalence classes of pairs 
	$(\rho,\FF_\rho)$ where $\rho:\Gamma\to \bG(\FF_\rho)$ is reductive and $\FF_\rho$ is real closed, non-Archimedean and minimal. 
	{Moreover $\FF_\rho$ is of finite transcendence degree over $\qbarr$.}
	\end{thm}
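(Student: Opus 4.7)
The plan is to combine three ingredients: the standard description of points of the real spectrum of a semialgebraic set in terms of pairs (real closed field, $F$-point), the identification of $F$-points of the character variety with conjugacy classes of reductive representations into $\bG(F)$, and the existence and uniqueness of a minimal subfield from Corollary~\ref{c.minimalfield}.

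First, I would use the canonical realization of $\Xi(\Gamma,\bGRR)$ as a semialgebraic subset $V\subset \RR^l$ defined over $\qbarr$ (see \S~\ref{s.canonicity}), together with the classical fact from \cite[Chapter 7]{BCR} that points of $V^{\mathrm{RSp}}$ are in bijection with equivalence classes of pairs $(\FF,x)$, where $\FF$ is a real closed field extension of $\qbarr$ and $x\in \FF^l$ satisfies the defining (strict and non-strict) polynomial inequalities of $V$, with equivalence given by the existence of a common real closed extension into which both pairs embed compatibly.

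Next I would translate $\FF$-points of $V$ into representations: since the semialgebraic model of the character variety is built from trace-type $\qbarr$-invariants that make sense over any real closed field, there is a canonical bijection between $V(\FF)$ and $\bG(\FF)$-conjugacy classes of reductive representations $\rho\colon\Gamma\to\bG(\FF)$ (this is the content of Proposition~\ref{prop:Rspecrep}). Applying Corollary~\ref{c.minimalfield} to such a representation produces a canonical minimal real closed subfield $\FF_\rho\subset \FF$ over which $\rho$ can be conjugated; uniqueness of $\FF_\rho$ then guarantees that the resulting pair $(\rho,\FF_\rho)$ depends only on the real-spectrum point, and that on minimal fields the abstract equivalence relation from the previous step reduces to the one stated in the theorem, namely isomorphism of fields combined with $\bG$-conjugation.

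Finally, to single out the boundary points and control the transcendence degree, I would note that $\FF_\rho$ is generated as a real closed field by the finitely many coordinates of $x\in V(\FF_\rho)$ and hence has finite transcendence degree over $\qbarr$. A real closed field of finite transcendence degree embeds into $\RR$ if and only if it is Archimedean, so $(\rho,\FF_\rho)$ corresponds to a point of $V(\RR)=\Xi(\Gamma,\bGRR)$ precisely when $\FF_\rho$ is Archimedean, and to a point of $\rsp\partial\Xi(\Gamma,\bGRR)$ precisely when $\FF_\rho$ is non-Archimedean. The main obstacle I anticipate is the second step: establishing that the (non-canonical) semialgebraic embedding $\Xi(\Gamma,\bGRR)\hookrightarrow \RR^l$ extends, over every real closed field $\FF$, to a bijection between $V(\FF)$ and reductive $\bG(\FF)$-conjugacy classes of representations, and that different embeddings intertwine the two notions of equivalence; this requires checking that reductivity over a non-Archimedean real closed field is still captured by the same system of polynomial invariants, which is the main technical content of Proposition~\ref{prop:Rspecrep}.
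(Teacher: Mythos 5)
Your proposal is essentially correct and follows the same route as the paper: realize the character variety as a semialgebraic subset over $\qbarr$, identify points of the real spectrum with $\FF$-points for real closed $\FF$, translate those to $\bG(\FF)$-conjugacy classes of reductive representations, and invoke the existence and uniqueness of the minimal real closed subfield from Corollary~\ref{c.minimalfield}; the finite transcendence degree and the Archimedean/non-Archimedean dichotomy go exactly as you say. You correctly identify the only serious point as showing that the bijection
$\Xi_{F,p}(\Gamma,G)_\FF \leftrightarrow \bG(\FF)\text{-conjugacy classes of reductive representations}$
persists over every real closed $\FF$.

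One small misattribution: that bijection is \emph{not} the content of Proposition~\ref{prop:Rspecrep}, which concerns the representation variety $\calR_F(\Gamma,G)$ (where points of the real spectrum are simply representations, with no conjugation in the equivalence). The fact that $P_\FF:\calR^{\mathrm{red}}_F(\Gamma,G)_\FF\to\Xi_{F,p}(\Gamma,G)_\FF$ has fibers exactly the $G_\FF$-orbits is Proposition~\ref{p.proper}, which rests on the Richardson--Slodowy theory of minimal vectors transferred to arbitrary real closed fields (\S~\ref{s.RS}). The paper's own one-line proof of Theorem~\ref{thm:1} actually routes through the representation variety: it lifts a point of $\rsp\Xi$ to a point of $\rsp{\calM_F(\Gamma,G)}$ via the surjectivity in Proposition~\ref{p:Prsp}, applies Proposition~\ref{prop:Rspecrep} there, and then passes to the minimal field with Corollary~\ref{c.minimalfield}; the $\bG$-conjugation in the final equivalence relation appears because different lifts are conjugate by Proposition~\ref{p.proper}. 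Your version works directly on the character variety and is equivalent, but you should cite Proposition~\ref{p.proper} (not Proposition~\ref{prop:Rspecrep}) as the source of the bijection with conjugacy classes.
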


In particular, the pair $(\rho,\FF_\rho)$ represents
%
a fixed point of an automorphism $\phi\in\Out(\Gamma)$, 
if and only if there exists an automorphism $\alpha\in\Aut(\FF_\rho)$  
such that $\rho\circ\phi$ and $\alpha\circ\rho$ are conjugate in $\mathbf G(\mathbb F_\rho)$. 
We refer the reader to \S~\ref{sec.fixp} for applications to Gothen components and length functions. 

\medskip
If $\FF$ is a real closed fields of finite transcendency degree over $\qbarr$,
it admits an order compatible valuation, which is unique up to multiplication by a positive scalar (see \S~\ref{s.valuation} for details). 
However many results hold for more general real closed fields; we refer to  \S~\ref{subsec:intro1} for basic concepts pertaining 
to ordered fields and 
 details about real closed fields.
 
\subsection{Actions on CAT(0)-spaces and closed points}
We turn now to the characterization of 
\bqn
\rsp\partial_\cl \Xi(\Gamma,\bGRR):=\rspcl\Xi(\Gamma,\bGRR)\smallsetminus\Xi(\Gamma,\bGRR)\,,
\eqn
where now, contrary to \eqref{eq:no_bd}, this is a boundary in the topological sense thanks to (P1), 
and we will do so in terms of actions on certain metric spaces arising naturally in this context. 

We assume as we may that $\bG$ is invariant under transposition. 
Then the symmetric space $\calX_\RR$ associated to $\bGRR<\SL(n,\RR)$ has a natural semialgebraic model 
as the $\bGRR$-orbit of the identity in $\calP^1(n,\RR)$, the space of symmetric positive definite matrices with real coefficients of determinant one. 
For every real closed field $\FF$, $\calX_\FF\subset   \calP^1(n,\FF)$ denotes the $\FF$-extension of $\calX$ (see Definition \ref{d.extension}),
that is the subset of $\calP^1(n,\FF)$ defined by the same polynomial equalities and inequalities 
with coefficients in $\qbarr$ defining $\calX_\RR$.
The group $\bG(\FF)$ acts on $\calX_\FF$ and when $\FF$ is non-Archimedean 
we refer to $\calX_\FF$ as the \emph{non-standard symmetric space} associated to $\bG(\FF)$.

When $\FF$ admits a non-trivial order compatible $\RR$-valued valuation
we construct an invariant pseudodistance on $\calX_\FF$
whose metric quotient $\bgof$ is in general not complete but whose completion $\ov{\bgof}$  is CAT(0).  For more on $\bgof$ see \S~\ref{subsec:metric shadows}. 
\begin{thm}[See Theorem ~\ref{t.1.2text}]\label{thm:1.2}	
  Let $\bG<\SL(n,\CC)$ be a connected semisimple algebraic group defined over $\qbarr$,
$F=F^{-1}$ a finite generating set of $\Gamma$
and  $E:=F^{2^n-1}\subset\Gamma$. Let $(\rho,\FF)$ represent a point in $\rsp\partial\Xi(\Gamma,\bGRR)$, 
that is $\rho:\G\to \bG(\FF)$ is reductive, $\FF$ is real closed non-Archimedean and minimal. 
The following assertions are equivalent:
\be
\item\label{it:bdry_cl1} $(\rho,\FF)$ represents a closed point.
\item\label{it:bdry_cl2} The $\Gamma$-action on $\bgf$ does not have a global fixed point.
\item\label{it:bdry_cl3} There exists $\eta\in E$ such that $\rho(\eta)$ has positive translation length on $\ov\bgof$.
\ee
\end{thm}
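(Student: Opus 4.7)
The plan is to prove the cycle of implications by showing $(\ref{it:bdry_cl3}) \Rightarrow (\ref{it:bdry_cl2})$ (immediate), $(\ref{it:bdry_cl2}) \Rightarrow (\ref{it:bdry_cl3})$ (CAT(0) Helly-type argument), and then establishing $(\ref{it:bdry_cl1}) \Leftrightarrow (\ref{it:bdry_cl2})$ via the parametrization of boundary points furnished by Theorem~\ref{thm:1} together with property (P4).

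The step $(\ref{it:bdry_cl3}) \Rightarrow (\ref{it:bdry_cl2})$ is immediate, since an isometry of $\ov\bgof$ with positive translation length has no fixed point, so the $\Gamma$-action on $\bgof$ cannot have a global fixed point. For the converse $(\ref{it:bdry_cl2}) \Rightarrow (\ref{it:bdry_cl3})$ I would contrapose: suppose every $\rho(\eta)$ with $\eta\in E$ has zero translation length on $\ov\bgof$, and aim to produce a global $\Gamma$-fixed point on $\bgof$. Each such $\rho(\eta)$ is then elliptic in the CAT(0) completion and fixes a non-empty closed convex subset. The goal is to intersect these fixed-point sets. The exponent $2^n-1$ in the definition of $E$ strongly suggests an inductive stabilization procedure: one filters the invariant closed convex subsets of $\ov\bgof$ coming from $\qbarr$-algebraic sub-symmetric spaces (associated to reductive Levi subgroups of $\bG(\FF)$ inside $\SL(n,\FF)$), a lattice whose chains have length at most $n$. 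A Helly-type argument, where at each stage one replaces the action on $\ov\bgof$ by an action on a proper invariant sub-symmetric space and shrinks $F$ accordingly, then shows that it suffices to test products of at most $2^n-1$ generators to detect either a global fixed point or a hyperbolic element. I expect this to be the main obstacle: because $\ov\bgof$ is not locally compact, classical Bruhat--Tits circumcenter arguments are not available, and one must use the algebraic origin of $\bgof$ (via the embedding into $\calP^1(n,\FF)$ and the Noetherianity of the $\qbarr$-algebraic structure) to compensate.

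For $(\ref{it:bdry_cl1}) \Leftrightarrow (\ref{it:bdry_cl2})$, I would use Theorem~\ref{thm:1} to parametrize $\rsp\partial\Xi(\Gamma,\bGRR)$ by equivalence classes of pairs $(\rho,\FF_\rho)$ with $\FF_\rho$ real closed, non-Archimedean and minimal, together with (P4) which identifies closed points as those admitting no proper specialization. For $(\ref{it:bdry_cl1}) \Rightarrow (\ref{it:bdry_cl2})$, suppose $\Gamma$ fixes a point $x\in\bgof$; then $\rho(\Gamma)$ is contained in the stabilizer of $x$, which is bounded with respect to the valuation on $\FF$ underlying the metric on $\bgof$. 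One can then refine this valuation to produce a strict specialization of $(\rho,\FF)$, contradicting closedness. For $(\ref{it:bdry_cl2}) \Rightarrow (\ref{it:bdry_cl1})$, the absence of a fixed point on $\bgof$ persists under any refinement of the valuation; any strict specialization of $(\rho,\FF)$ would therefore provide an equivalent representation defined over a proper real closed subfield of $\FF$, contradicting the minimality of $\FF=\FF_\rho$ granted by Theorem~\ref{thm:1}.

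The critical technical ingredients I expect to need are: the Noetherianity of the lattice of $\qbarr$-algebraic sub-symmetric spaces of $\calX_\FF$ (to make the induction in Step~2 terminate in at most $n$ stages); a dictionary between refinements of the order-compatible valuation on $\FF$ and specializations in the real spectrum (to translate the fixed-point dichotomy into the language of closed points); and the minimality statement of Theorem~\ref{thm:1}, which rules out the ``trivial'' case where the representation is already defined over a proper subfield. The hardest of these is the first one, because controlling the combinatorics of the induction so that the word-length bound is exactly $2^n-1$ requires a uniform Helly statement in the non-locally-compact CAT(0) space $\ov\bgof$.
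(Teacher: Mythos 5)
Your cycle has the correct easy step ($(\ref{it:bdry_cl3})\Rightarrow(\ref{it:bdry_cl2})$), but the two substantive parts of your plan miss the algebraic mechanism the paper actually uses, and your main step ($(\ref{it:bdry_cl2})\Rightarrow(\ref{it:bdry_cl3})$ via a Helly-type argument) is a misguess about where the exponent $2^n-1$ comes from. The paper proves the cycle $(\ref{it:bdry_cl1})\Rightarrow(\ref{it:bdry_cl3})\Rightarrow(\ref{it:bdry_cl2})\Rightarrow(\ref{it:bdry_cl1})$ and never attempts a direct geometric passage from $(\ref{it:bdry_cl2})$ to $(\ref{it:bdry_cl3})$.

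For $(\ref{it:bdry_cl1})\Rightarrow(\ref{it:bdry_cl3})$ the paper conjugates $\rho$ by $K_\FF$ so that $(\rho(\gamma))_{\gamma\in F}$ is a minimal vector (Richardson--Slodowy, Corollary~\ref{c.minimalfield}); then the characterization of closed points in the real spectrum (Proposition~\ref{prop:closed_points}: closed iff Archimedean over the image ring) converts $(\ref{it:bdry_cl1})$ into the statement that $\|(\rho(\gamma))_{\gamma\in F}\|^{2m}$ is a big element of $\FF$. The key step is then Theorem~\ref{t.traceanddisp}, a semialgebraic inequality bounding this norm by a sum of powers of $\tr(w(g))$ over words $w$ of length at most $2^n-1$; this inequality is a corollary of Procesi's theorem (Theorem~\ref{t.Procesi}) that traces of words of length $\le 2^n-1$ generate the algebra of $\GL_n$-invariant polynomials on $M_{n,n}^f$. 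This is where $2^n-1$ comes from --- it has nothing to do with the length of a chain of sub-symmetric spaces or any Helly/Noetherianity argument on $\ov\bgof$. Once some $\tr(\rho(\eta))$ is a big element, the translation length formula of Proposition~\ref{prop:6.6.6} gives $(\ref{it:bdry_cl3})$. Your proposed inductive filtration by Levi sub-symmetric spaces would at best give a bound like $n$ or $n^2$, not $2^n-1$, and you rightly observe that you do not know how to make it converge in the non-proper CAT(0) space; this part of your proposal is not salvageable as written.

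Your $(\ref{it:bdry_cl1})\Leftrightarrow(\ref{it:bdry_cl2})$ sketch is closer in spirit but also has a gap in the $(\ref{it:bdry_cl1})\Rightarrow(\ref{it:bdry_cl2})$ direction. If $\Gamma$ fixes a point $x\in\bgof$ with $x\neq[\Id]$, conjugating $\rho$ to fix $[\Id]$ may destroy the minimal-vector normalization, so the boundedness of $\rho(\Gamma)$ at $x$ does not directly translate into the norm $\|(\rho(\gamma))_{\gamma\in F}\|$ failing to be a big element --- to bridge this you again need the trace inequality of Theorem~\ref{t.traceanddisp} (traces are conjugation invariant, norms of minimal vectors are not). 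This is precisely why the paper does not try to prove $(\ref{it:bdry_cl1})\Rightarrow(\ref{it:bdry_cl2})$ directly; it routes through $(\ref{it:bdry_cl3})$. The direction $(\ref{it:bdry_cl2})\Rightarrow(\ref{it:bdry_cl1})$, on the other hand, is handled cleanly exactly as you suspect: no global fixed point implies $[\Id]$ is not fixed, so Proposition~\ref{prop:Rspecrepcl} gives closedness in $\rsp{\calM_F}$, and Proposition~\ref{p:Prsp} (the quotient map $p$ preserves closed points, using the properness estimate of Proposition~\ref{prop:pptyP}) pushes this to $\rsp{\Xi_{F,p}(\G,G)}$. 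Your ``refine the valuation'' language is not wrong but is not what is needed; the specialization comes for free from the characterization of closed points via big elements.
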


The equivalence between the second and the third statement gives an effective way to determine 
when there is a global fixed point for a $\Gamma$-action on $\bgf$. 
A similar characterization for points in the real spectrum compactification of a {\em representation variety} $\Hom(\Gamma,\bGRR)$, 
as opposed to a character variety $\Xi(\Gamma,\bGRR)$, is much easier (see Proposition ~\ref{prop:Rspecrepcl} for a precise statement).


\subsection{Harmonic maps into CAT(0) spaces}
In many interesting cases the actions arising in our construction have strong properness properties.
Recall  that a $\Gamma$-action by isometries on a
CAT(0)-space $(X,d)$ is \emph{non-evanescent} (see \cite[\S 1.2]{MonJams2006} and \cite{KSch})
if for one, and hence any, finite generating set $F\subset \Gamma$, and for any $R\geq 0$ the set
\bqn
\left\{x\in X\,\left| \,\max_{g\in F} d(gx,x)\leq R\right.\right\}
\eqn
is bounded.  It is said {\em non-parabolic} if $\Gamma$ does not fix a point in the geodesic ray boundary of $X$.

%
Notice that the property of being non-evanescent is stronger than being non-parabolic, that is
if the $\Gamma$-action is non-evanescent, then $\Gamma$ does not fix a point in the geodesic ray boundary of $X$,
while the two properties are equivalent if $X$ is proper, that is if all closed balls of finite radius are compact.
Notice that the CAT(0)-spaces $\ov\bgof$ occurring in Theorem~\ref{thm:1.2} are never proper spaces.
 
\begin{thm}[See Theorem ~\ref{p.properact}]\label{thm:2}
Let $\frakM\subset \Xi(\Gamma,\bGRR)$ be a union of connected components consisting of non-parabolic representations. 
Then for every $(\rho,\FF)$ representing a point in $\rsp\partial\frakM$, the $\Gamma$-action on $\obgf$ is non-evanescent.
\end{thm}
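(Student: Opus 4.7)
\emph{Plan.} I would argue by contradiction via semialgebraic transfer. Suppose $(\rho,\FF)$ represents a point of $\rsp\partial\frakM$ while the $\Gamma$-action on $\obgof$ is evanescent. I will deduce that $\rho(\Gamma)$ is contained in a proper parabolic subgroup of $\bG(\FF)$, and then invoke semialgebraicity to contradict the non-parabolicity of $\frakM$.

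For the semialgebraic transfer step, observe that the set $\mathrm{Par}\subseteq\Xi(\Gamma,\bGRR)$ of representations whose image is contained in some proper parabolic subgroup of $\bGRR$ is a closed semialgebraic subset. Indeed, the generalized flag varieties $\bG/\bP$ are projective, so for each of the finitely many conjugacy classes of proper parabolic $\bP$ the condition ``$\exists g\in\bG:\ g^{-1}\sigma(F)g\subseteq \bP$'' defines a closed semialgebraic set, and $\mathrm{Par}$ is the finite union of these. Since $\frakM$ is a clopen subset of $\Xi(\Gamma,\bGRR)$ consisting of non-parabolic representations, $\frakM\cap\mathrm{Par}=\emptyset$, so functoriality of the real spectrum on semialgebraic subsets (properties (P1)--(P4)) yields $\rsp\frakM\cap\rsp{\mathrm{Par}}=\emptyset$. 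Thus it is enough to show $(\rho,\FF)\in\rsp{\mathrm{Par}}$, i.e.\ that $\rho(\Gamma)\subseteq\bP(\FF)$ for some proper parabolic subgroup $\bP$ of $\bG$ defined over $\FF$.

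To extract such a parabolic I would unpack the evanescence assumption: there exist $R\in\RR_{>0}$ and $y_n\in\obgof$ with $d(y_n,y_0)\to\infty$ but $\max_{g\in F}d(\rho(g)y_n,y_n)\leq R$. Lifting to $x_n\in\calX_\FF$ via the metric quotient $\calX_\FF\to\bgof$ defined by the order-compatible valuation, the shadow of $d_\FF(x_n,x_0)$ tends to $+\infty$ while the shadow of $\max_{g\in F}d_\FF(\rho(g)x_n,x_n)$ remains bounded by $R$. Using the building-like structure of $\calX_\FF$ developed earlier in the paper, together with the non-Archimedean version of the standard dictionary between $\bG(\FF)$-fixed points at the visual boundary of $\calX_\FF$ and proper parabolic subgroups of $\bG(\FF)$, I expect to extract from such a sequence a $\rho(\Gamma)$-fixed point at infinity of $\calX_\FF$, yielding the desired parabolic.

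\emph{Main obstacle.} The delicate point is the last one: turning an unbounded $R$-sublevel set of the displacement function in the non-proper CAT(0) space $\obgof$ into a genuine fixed point at the visual boundary of $\calX_\FF$. In a generic CAT(0) space evanescence only guarantees a fixed point in the horofunction boundary, so one must exploit both the $\bG(\FF)$-equivariance of the quotient $\calX_\FF\to\bgof$ and the flat-rich geometry of $\calX_\FF$. A natural strategy is to replace the sequence $(x_n)$, after translating by suitable $h_n\in\bG(\FF)$, by a single non-standard point $x\in\calX_\FF$ with $d_\FF(x,x_0)$ infinitely large but $\max_{g\in F}d_\FF(\rho(g)x,x)$ of bounded valuation; the geodesic ray from $x_0$ through $x$ then defines a point of the visual boundary of $\calX_\FF$, and the bounded-displacement condition forces $\rho(\Gamma)$ to preserve its asymptotic class.
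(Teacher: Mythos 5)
Your overall framework — argue by contradiction, and transfer the non-parabolicity of $\frakM$ to the real spectrum — matches the paper's strategy. Your observation that the set of parabolic representations is a closed semialgebraic set whose real spectrum must miss $\rsp\frakM$ is essentially the content of the proposition immediately preceding Theorem~\ref{p.properact} in the paper, and your ``main obstacle'' is correctly identified as the genuine difficulty. However, the resolution you sketch would not work, and you are implicitly trying to prove something the paper carefully avoids.

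You aim to show that $\rho$ \emph{itself} is $\FF$-parabolic, by lifting the evanescing sequence to $\calX_\FF$ and extracting a $\rho(\Gamma)$-fixed point at the visual boundary. Two things break. First, in the non-proper CAT(0) space $\obgof$, an unbounded $R$-sublevel set of the displacement function does not produce a fixed point at the visual boundary of $\obgof$ — only a fixed horofunction, as you note — and your proposed fix of passing to a ``non-standard point $x\in\calX_\FF$ with $d_\FF(x,x_0)$ infinitely large'' is not coherent inside $\calX_\FF$ itself, since $d_\FF$ is $\RR$-valued; it requires enlarging the field. Second, and more seriously: even granting a $\rho(\Gamma)$-fixed point $\xi\in\partial_\infty\obgof$, you would need the stabilizer $\Stab_{\bG(\FF)}(\xi)$ to be $\bQ(\FF)$ for a proper parabolic $\bQ$. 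The paper establishes this correspondence (Theorem~\ref{t.bdrystab}) only for Robinson fields $\rom$, via the identification of $\bgrom$ with an asymptotic cone, and explicitly remarks that the analogous statement can fail in general. The paper's actual proof therefore does \emph{not} show $\rho$ is $\FF$-parabolic. Instead it runs a diagonal argument: realize $\rho$ as $\rhom$ for a sequence $(\rho_k)\subset\frakT$ and scales $\pmu$; from the evanescing sequence of points $(x^{(k)})$ in the asymptotic cone, extract along a subsequence $n(k)$ a \emph{new} ultrafilter $\omega'$ and \emph{new} scales $\pmu'$; form the new representation $\rhomp$ over $\RR^{\omega'}_{\pmu'}$; exhibit a genuine geodesic ray in the new asymptotic cone $\bgromp$ that is moved a bounded amount by $\rhomp(F)$; apply Theorem~\ref{t.bdrystab} to conclude $\rhomp$ is parabolic; and finally use that $\alpha_{\rhomp}$ lies in $\rsp\frakT$ (because $\rsp\frakT$ is closed and $\alpha_{\rhomp}$ is a specialization of a point of $\rsp\frakT$) to reach the contradiction. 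It is this re-scaling and the resulting passage to a fresh Robinson field — so that Theorem~\ref{t.bdrystab} is applicable, and the new point still sits in the closed set $\rsp\frakT$ — that your sketch is missing, and it is the essential content of the proof.
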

%
 %
It then immediately follows from \cite{KSch} that

\begin{cor}
Let $\Gamma=\pi_1(M)$ where $M$ is a compact Riemannian manifold with universal covering $\wt M$
and let $\frakM\subset \Xi(\Gamma,\bGRR)$ be a union of connected components consisting of non-parabolic representations. 
Then for every $(\rho,\FF_\rho)$ representing a point in $\partial\rsp{\frakM}$ there exists a $\Gamma$-equivariant harmonic map 
$\wt M\to\overline{\mathcal B_{\bG(\mathbb F_\rho)}}$. 
\end{cor}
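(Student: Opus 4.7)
The plan is to read this corollary as a direct combination of Theorem~\ref{thm:2} with the Korevaar--Schoen existence theorem for equivariant harmonic maps into complete CAT(0) targets, so the work is really in verifying hypotheses rather than producing new geometric input.

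First I would unpack what the pair $(\rho,\FF_\rho)$ provides. Since $(\rho,\FF_\rho)$ represents a point in $\partial\rsp\frakM\subset\rsp\partial\Xi(\Gamma,\bGRR)$, Theorem~\ref{thm:1} says that $\rho\colon\Gamma\to\bG(\FF_\rho)$ is reductive, with $\FF_\rho$ real closed, non-Archimedean, and minimal. In particular $\FF_\rho$ has finite transcendence degree over $\qbarr$, so $\FF_\rho$ admits an order compatible $\RR$-valued valuation. Consequently, by the construction of the pseudodistance on $\calX_{\FF_\rho}$ recalled in \S~\ref{subsec:metric shadows}, the completion $\obgof$ of the metric quotient is a complete CAT(0) space, and $\rho$ induces a $\Gamma$-action on $\obgof$ by isometries. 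This verifies the target-side hypothesis of \cite{KSch}.

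Next I would verify the properness hypothesis on the action. By assumption every connected component of $\frakM$ consists of non-parabolic representations, so $(\rho,\FF_\rho)\in\partial\rsp\frakM$ falls exactly in the scope of Theorem~\ref{thm:2}. Theorem~\ref{thm:2} then gives that the $\Gamma$-action on $\obgof$ is non-evanescent, meaning that for every finite generating set $F\subset\Gamma$ and every $R\ge 0$ the joint sublevel set
\bqn
\bigl\{x\in\obgof\,\bigl|\,\max_{g\in F}d(\rho(g)x,x)\le R\bigr.\bigr\}
\eqn
is bounded. This is exactly the form of properness used by Korevaar--Schoen to replace local compactness of the target when running their variational argument on Dirichlet energy.

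Finally I would invoke \cite{KSch}: for $M$ a compact Riemannian manifold with universal cover $\wt M$ and $\Gamma=\pi_1(M)$, a $\Gamma$-equivariant map $\wt M\to X$ into a complete CAT(0) space $X$ admitting a non-evanescent isometric $\Gamma$-action minimizes the equivariant Dirichlet energy, and this minimizer is harmonic. Applying this with $X=\obgof$ yields the desired $\Gamma$-equivariant harmonic map $\wt M\to\obgof$. The only genuine subtlety, and thus the main thing to check carefully, is that the notion of non-evanescence used in Theorem~\ref{thm:2} matches verbatim the properness hypothesis in the version of Korevaar--Schoen being quoted; everything else is bookkeeping on the structure of $\obgof$ already recorded in \S~\ref{subsec:metric shadows}.
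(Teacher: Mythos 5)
Your proof is correct and follows the same route as the paper: apply Theorem~\ref{thm:2} (Theorem~\ref{p.properact}) to get non-evanescence of the $\Gamma$-action on the complete CAT(0) space $\obgof$, then invoke the Korevaar--Schoen existence theorem for equivariant harmonic maps \cite[Theorem~4.1.2]{KSch}. The paper treats this as an immediate consequence without spelling out the intermediate verifications you include, and merely adds a remark that $\obgof$ is preferred over $\ov\bgrol$ because it is separable and canonically attached to $\rho$.
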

 For example if $\Gamma=\pi_1(\Sigma)$ where $\Sigma$ is a compact oriented surface without boundary, 
 and $\bGRR$ is a connected Lie group of Hermitian type, one can take $\frakM$ to be the set of maximal representations;
if $\bGRR$ is real split, one can take $\frakM$ to be the Hitchin component.
 
 \medskip

 %

\subsection{Relation with the Weyl chamber length spectrum
  compactification}
\label{subsec:RelWithWLcomp}
 We now turn to the relation between the real spectrum compactification 
and the Weyl chamber length compactification;
the latter has been studied by the third author in \cite{Parreau12} using Weyl chamber valued length functions. 

Let $\frakM$ be a union of connected components of $\Xi(\Gamma,\bGRR)$
that does not contain {\em bounded} representations (that is representations having a global fixed point in the symmetric space);
again, to keep an example in mind, one can think of Hitchin or maximal
representations. For any representation $\rho\in\frakM$, one can
consider the projectivized Weyl chamber length function
$\PP(\jord\circ\rho)$ on $\G$ obtained by projectivizing the
composition of $\rho$ with the Jordan projection $\jord:\bGRR\to \fap$
with values in a closed Weyl chamber $\fap$.
This generalizes the length function of the action on the symmetric space
and leads to the Weyl chamber length compactification
$\thp\frakM$ of $\frakM$ as follows (see \S~\ref{s.WL}).  
Let $\widehat{\frakM}=\frakM\cup\{*\}$ be the one point compactification of $\frakM$ and consider the map
\bq
\label{eq:WLcomp}
\begin{array}{ccccc}
\widehat{\mathbb P L}	:&	\frakM&\to&\widehat{\frakM}\times\mathbb P\left((\fap)^\Gamma\right)\\
	&\rho&\mapsto&(\rho,\mathbb P(\jord\circ\rho)).
\end{array}\eq
Then $\thp\frakM$ is the closure of $\widehat{\mathbb P L}(\frakM)$ and, by construction, is compact and contains $\widehat{\mathbb P L}(\frakM)$ as an open set.
When $\bG$ has real rank 1, this is the Morgan-Shalen compactification \cite{Mor-Sha}; it has been extended to higher rank in \cite{Parreau12}.

We show that the inclusion
$\frakM\hookrightarrow \thp{\frakM}$
extends to an explicit map 
\bq\label{eq:bdry}
\rspcl{\frakM}\to \thp{\frakM},
\eq
where the image of $[(\rho,\FF_\rho)]\in \partial\rspcl{\frakM}$ encodes a
 Weyl chamber valued length function of the $\rho(\Gamma)$-action
on  $\ov{\calB_{\bG(\FF_\rho)}}$ refining the length function discussed in Theorem~\ref{thm:1.2}.
%
Notice that  if $[(\rho,\FF_\rho)]\in\partial \rsp{\frakM}$ is not closed,
the corresponding length function vanishes because of Theorem~\ref{thm:1.2}.

 \begin{thm}[See Theorem ~\ref{t:Rspec-ThP}]\label{t.ThPINTRO}
 Let $\frakM$ be a union of connected components of $\Xi(\Gamma,\bGRR)$ that does not contain bounded representations.
 Let $\Out(\Gamma,\frakM)$ be the subgroup of the outer automorphism group of $\G$ preserving the set $\frakM$.
The map in  \eqref{eq:bdry} 
  \bqn
 \rspcl{\frakM}\twoheadrightarrow \thp{\frakM}
 \eqn
 is continuous, surjective and $\Out(\Gamma,\frakM)$-equivariant.
 \end{thm}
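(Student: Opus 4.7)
The strategy is to combine the universal property of the real spectrum compactification---namely that continuous semialgebraic functions on $\frakM$ extend canonically and continuously to $\rsp\frakM$---with the definition of $\thp\frakM$ as a closure. The key observation is that for each $\gamma\in\Gamma$, the function $\rho\mapsto\jord(\rho(\gamma))\in\fap$ is governed by semialgebraic data on $\frakM$: its coordinates encode logarithms of moduli of eigenvalues of $\rho(\gamma)$, which are roots of the characteristic polynomial, hence satisfy polynomial relations with the matrix entries of $\rho(\gamma)$.

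First I would define the map in \eqref{eq:bdry} explicitly. On the interior $\frakM\subset\rspcl\frakM$ it agrees with $\widehat{\PP L}$ as in \eqref{eq:WLcomp}. On a boundary point $[(\rho,\FF_\rho)]$ I would set the image to $(\ast,\PP(v_{\FF_\rho}\circ\jord_{\FF_\rho}\circ\rho))$, where $\jord_{\FF_\rho}$ is the Jordan projection in the non-standard Weyl chamber over $\FF_\rho$, and $v_{\FF_\rho}\colon\FF_\rho\to\RR$ is an order-compatible valuation---unique up to positive scalar, which is harmless after projectivization. Well-definedness of the projectivization at closed boundary points is precisely the content of Theorem~\ref{thm:1.2}: at least one element of the generating set acts with positive translation length on $\ov\bgof$, so the induced Weyl-chamber-valued length function is not identically zero.

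Continuity reduces, via the universal property, to the assertion that for each $\gamma\in\Gamma$ the semialgebraic function $\rho\mapsto\jord(\rho(\gamma))$ on $\frakM$ extends continuously to $\rspcl\frakM$ with value at $[(\rho,\FF_\rho)]$ equal to $v_{\FF_\rho}\circ\jord_{\FF_\rho}(\rho(\gamma))$. This identification is the technical heart of the argument and follows from Tarski--Seidenberg transfer: the defining semialgebraic relations for the coordinates of the Jordan projection, evaluated over $\FF_\rho$ and then composed with $v_{\FF_\rho}$, produce exactly the real-spectrum extension. Assembling the coordinate-wise extensions and projectivizing gives a continuous map $\rspcl\frakM\to\widehat\frakM\times\PP((\fap)^\Gamma)$, which by construction lands in the closure $\thp\frakM$ of $\widehat{\PP L}(\frakM)$.

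Surjectivity is then automatic: $\rspcl\frakM$ is compact, so its image in the Hausdorff space $\thp\frakM$ is closed, and it contains the dense subset $\widehat{\PP L}(\frakM)$. Equivariance under $\Out(\Gamma,\frakM)$ is immediate because both the Jordan projection and the valuation are intrinsically attached to the conjugacy class of a representation. The main obstacle I expect is the precise identification of the real-spectrum extension of $\rho\mapsto\jord(\rho(\gamma))$ with $v_{\FF_\rho}\circ\jord_{\FF_\rho}(\rho(\gamma))$, together with fixing a normalization of $v_{\FF_\rho}$ compatible with the scaling conventions underlying the construction of $\bgof$; once this compatibility is in place, the remaining continuity, surjectivity and equivariance are formal consequences.
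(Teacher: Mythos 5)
Your overall plan is sound and, at the level of structure, matches the paper: define the boundary value via $v_{\FF_\rho}\circ\Jord_{\FF_\rho}$, invoke Theorem~\ref{thm:1.2} to show the boundary length function is not identically zero, deduce surjectivity from compactness of $\rspcl\frakM$ and density of $\frakM$, and get equivariance from the conjugacy invariance of the Jordan projection. Those parts are fine.

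The gap is in the continuity step, and it is not minor. You assert that for each $\gamma\in\Gamma$ the function $\rho\mapsto\jord(\rho(\gamma))$ on $\frakM$ extends \emph{continuously} to $\rspcl\frakM$ as a $\fap$-valued map with boundary value $v_{\FF_\rho}\circ\Jord_{\FF_\rho}(\rho(\gamma))$, and you attribute this to Tarski--Seidenberg. This is false. A semialgebraic function does extend to $\rsp\frakM$, but its value at a non-Archimedean point $\alpha$ lies in $\FF_\alpha$, not in $\RR$; moreover the unnormalized Jordan projection $\jord(\rho_k(\gamma))$ blows up in $\fap$ along any sequence $\rho_k$ escaping to infinity, so it cannot converge to the finite real number $v_{\FF_\rho}\circ\Jord_{\FF_\rho}(\rho(\gamma))$. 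Tarski--Seidenberg transfers polynomial identities and inequalities across real closed fields; it does not deliver continuity of a real-valued function obtained by post-composing a semialgebraic extension with a valuation, and indeed without a normalization this composition is not even canonically defined, since $v_{\FF_\rho}$ is only determined up to a positive scalar.

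What is missing is the specific rescaling mechanism, which is the paper's Proposition~\ref{p.cont}: the quotient $\ln(f(\rho))/\ln(2+g(\rho))$, where $g$ is a Euclidean norm, \emph{does} extend continuously to a bounded $\RR$-valued function on $\rspcl\frakM$, with boundary value $\log_{b_\alpha}(f(\alpha))/\log_{b_\alpha}(2+g(\alpha))$; the point is that both numerator and denominator are rescaled by the \emph{same} base of logarithm, so the choice of big element cancels and the ratio is genuinely $\RR$-valued and continuous. The paper applies this to the lift on $\rspcl{(\frakT\cap\calM_{G})}$, with $f=\alpha\circ\Jord$ for each simple root $\alpha$ and $g(\rho)=\sum_{\eta\in F}\tr(\rho(\eta)\rho(\eta)^t)$, and only after this rescaling does one pass to $\PP((\fap)^\Gamma)$. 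You flag a ``normalization of $v_{\FF_\rho}$'' issue but locate it in the wrong place: the obstacle is not choosing a valuation compatibly with $\bgof$, it is that one must divide out by a diverging semialgebraic quantity on all of $\frakM$ simultaneously, and the continuity of that ratio is the nontrivial analytic content of the theorem. As written your proof would also need, but does not mention, the surjectivity of the lift $p:\rspcl{(\frakT\cap\calM_{G})}\twoheadrightarrow\rspcl{\Xi\frakT}$ from Corollary~\ref{c:Prsp} to descend the rescaled map from the minimal-vector model to the character variety.
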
 

There is no reason, in general, to expect that the boundary $\thp\partial\frakM:=\thp{\frakM}\setminus\frakM$ 
reflects topological properties of $\frakM$  as  the next example shows.  We see here also
that the map in  Theorem ~\ref{t.ThPINTRO} is not injective: 
\begin{ex}\label{ex:wolff}
Let $\Sigma$ be a closed oriented surface with genus greater than or equal to 2, 
$\Gamma=\pi_1(\Sigma)$ and $\bGRR=\PSL(2,\RR)$. The character variety $\Xi(\Gamma,\bGRR)$ has $4g-3$ connected components, 
distinguished by the $4g-3$ possible values of the Toledo invariant in $[2-2g, 2g-2]\cap \ZZ.$ 
Each of the components corresponding to the maximal value of the (absolute value of the) Toledo invariant
gives a copy of the Teichm\"uller space $\calT_g$.
The Weyl chamber length compactification $\calT_g^\mathrm{WL}$ coincides in this case with the Thurston compactification, 
and its boundary $\thp\partial\calT_g$ is identified with the set of projective measured laminations. 
Wolff  has shown in \cite[Theorem 1.1]{Wolff} that 
the boundary $\thp\partial\calC$ of any connected component $\calC\subset\Xi(\Gamma,\bGRR)$ 
different from the Teichm\"uller components and 
with non-vanishing Toledo invariant contains the boundary of $\thp\partial \calT_g$ as a subset with empty interior.
This is in contrast with the fact that $\Xi^\mathrm{RSp}(\Gamma,\bGRR)$ still has $4g-3$ connected components (see (P3)).
\end{ex}

One of the interests in the Weyl chamber length compactification is that 
the boundary length functions arise from $\Gamma$-actions on buildings of the form $\bgrol$
for appropriate $\plambda$ and $\omega$, \cite{Parreau12}. However neither the action giving rise to a point  in $\thp{\frakM}$, 
nor the associated Robinson field 
is by any means canonical;
see Example~\ref{e.twistbend} for a source of non-injectivity of the surjection in Theorem ~\ref{t.ThPINTRO}. 
A key point in the real spectrum compactification is that it arranges all such possible actions, up to suitable equivalence relation, 
in a compact metrizable space.

\subsection{Density of integral points, properness and real-valued
  length functions}
The real spectrum compactification can be used to deduce properties of
the Weyl chamber length compactification: for instance we  use it
to show that integral length functions are dense in $\thp\partial{\frakM}$,
generalizing the density of integral points in Thurston's boundary for
Teichm\"uller space, following the methods of \cite{Brum2}.
To be more precise, choose a Cartan subspace $\fa$ and let $\Phi\subset \fa^*$ denote the set of roots of $\bGRR$.
Given a subgroup $\Lambda<\RR$, let 
$$\faL:=\{v\in\fa\,|\; \alpha(v)\in\Lambda,\; \forall \alpha\in\Phi\}.$$
We then have
\begin{thm}[See Proposition ~\ref{p.finlen} and Corollary ~\ref{c.integrallengths}]\label{thmI:fl} Let $\frakM$ be a union of connected components of $\Xi(\Gamma,\bGRR)$ that does not contain bounded representations.
	\begin{enumerate}
		\item For every length function $L$ with $[(*,L)]\in\thp\partial\frakM$,
		there exists a finite dimensional $\QQ$-vector space $\Lambda<\RR$ such that $L(\gamma)\in \faL$ for all $\g\in\G$.
		\item The set of length functions $[(*,L)]$ with $L$ taking values in $\faZ$ is dense in $\thp\partial\frakM$.
	\end{enumerate}
	\end{thm}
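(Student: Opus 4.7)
\textbf{Part (1).} The plan is to lift the length function to the real spectrum and read off the required $\QQ$-vector space from the minimal real closed field. Given $[(*,L)]\in\thp\partial\frakM$, Theorem~\ref{t.ThPINTRO} produces $[(\rho,\FF_\rho)]\in\rspcl\partial\frakM$ with the same image. By Theorem~\ref{thm:1} the field $\FF_\rho$ is real closed, non-Archimedean and satisfies $\mathrm{trdeg}(\FF_\rho/\qbarr)<\infty$; therefore it carries a canonical order-compatible valuation $v\colon\FF_\rho^\times\to\RR$ (see \S\ref{s.valuation}) that vanishes on $\qbarr$. Setting $\Lambda := v(\FF_\rho^\times)$, Abhyankar's inequality gives $\dim_\QQ\Lambda\leq \mathrm{trdeg}(\FF_\rho/\qbarr)<\infty$, so $\Lambda$ is a finite-dimensional $\QQ$-subspace of $\RR$. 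Unwinding the definition of the boundary length function via Theorem~\ref{t:Rspec-ThP} yields, for each root $\alpha\in\Phi$ and each $\gamma\in\Gamma$, an identity of the form $\alpha(L(\gamma))=-v\bigl(\alpha(\jord\rho(\gamma))\bigr)$. Since $\alpha$ is defined over $\qbarr$ and $v$ is additive and vanishes on $\qbarr$, the right-hand side lies in $\Lambda$, so $L(\gamma)\in\faL$.

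\textbf{Part (2).} For density, I reduce via the continuous surjectivity of the map in Theorem~\ref{t.ThPINTRO} to the statement that the subset of $[(\rho,\FF_\rho)]\in\rspcl\partial\frakM$ whose associated projective length function is $\faZ$-valued is dense in $\rspcl\partial\frakM$. Following Brumfiel \cite{Brum2}, the strategy is to approximate an arbitrary closed point of the real spectrum by closed points supported on germs of real semialgebraic curves $\phi\colon[0,\varepsilon)\to\frakM$ escaping every compact subset of $\frakM$. For such a germ the associated minimal real closed field embeds in a Puiseux series field over $\qbarr$, equipped with its canonical order-compatible valuation whose value group is contained in $\frac{1}{N}\ZZ$ for some $N\in\NN$. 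The corresponding length function therefore takes values in $\frac{1}{N}\faZ$, and after projective normalization represents the same class in $\PP((\fap)^\Gamma)$ as a $\faZ$-valued length function. The density of such germ-points in $\rspcl\partial\frakM$ then gives the desired density of $\faZ$-valued length functions in $\thp\partial\frakM$.

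\textbf{Main obstacle.} Part (1) is a clean consequence of Abhyankar's inequality once Theorem~\ref{thm:1} is available. The hard part is (2): one must verify that germs of semialgebraic curves escaping to infinity in $\frakM$ produce closed points of $\rspcl\partial\frakM$, that such ``curve'' points are dense among all closed boundary points, and that passage to projective classes is compatible with the approximation. The first two items are precisely the kind of statement handled by Brumfiel's specialization theory in \cite{Brum2}; adapting them to the higher-rank Weyl-chamber length compactification of \cite{Parreau12} requires carefully tracking the interaction between the order-compatible valuation on $\FF_\rho$ and the restricted root system of $\bG$, but no conceptually new ingredient beyond Brumfiel's methods should be required.
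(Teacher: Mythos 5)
Your part~(1) is essentially correct and tracks the paper's argument: combining Theorems~\ref{t.ThPINTRO} and~\ref{thm:1} with Abhyankar's inequality (Proposition~\ref{p.valuautom}(1)) is exactly the paper's route. Note only that the paper's Proposition~\ref{p.finlen} produces a finer group, namely $\frac{1}{2m!}v(\LL_\rho^\times)$ where $\LL_\rho\subset\FF_\rho$ is the (non--real-closed) field generated over $\KK$ by the matrix entries of $\rho$; your $\Lambda=v(\FF_\rho^\times)$ is just its $\QQ$-span, so for the statement of (1) both are fine.

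Part~(2) has a genuine gap. You assert that the associated minimal real closed field ``embeds in a Puiseux series field over $\qbarr\ldots$ whose value group is contained in $\frac{1}{N}\ZZ$.'' This is false: $\FF_\rho$ is real closed, so every positive element has $n$-th roots for all $n$, which forces $v(\FF_\rho^\times)$ to be divisible; once it is nonzero it is a dense subgroup of $\RR$, never contained in $\frac{1}{N}\ZZ$. The discreteness you want does not live on $\FF_\rho$ but on the smaller, non--real-closed field $\LL_\rho=\Frac(\KK[V]/\frak p_\alpha)$, and it is precisely the definition of a \emph{rational point} (transcendence degree $1$ over $\KK$) that makes $v(\LL_\rho^\times)$ discrete. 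But then you are not done: the Jordan projection $\Jord_{\FF_\rho}(\rho(\gamma))$ is not built out of elements of $\LL_\rho$; its eigenvalues live only in an algebraic extension of $\LL_\rho$. The paper supplies the missing bridge in Lemma~\ref{l.discJord}, which bounds the relevant extension degree by $m!$ and thereby places $\Log_b(\Jord_{\FF_\rho}(\rho(\gamma)))$ inside $\fa_{\frac{1}{2m!}v(\LL_\rho^\times)}$, still a cyclic group. Without some version of this bounded-denominator control, your deduction ``the corresponding length function therefore takes values in $\frac{1}{N}\faZ$'' does not follow from the discreteness of $v(\LL_\rho^\times)$, and it cannot follow from a statement about $v(\FF_\rho^\times)$ since that group is divisible. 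The rest of your outline (density of rational/curve points via \cite{Brum2}, continuity of the map to $\thp\partial\frakM$, projective normalization) agrees with the paper and is sound once this lemma is in place.
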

On the other hand we deduce from results of Prasad--Rapinchuk \cite{PraRap} the following:
\begin{prop}[See Proposition ~\ref{p.infdim}]\label{pIn.inf}
Let $\rho:\G\to\bG(\RR)$ be non-elementary and $\G$ finitely generated. Then $\langle j(\rho(\g))|\;\g\in\G\rangle_\QQ$ is an infinite dimensional $\QQ$-vector subspace of $\fa$, where $j$ is the Jordan projection.
\end{prop}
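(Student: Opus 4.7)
The plan is to reduce to the case of a Zariski-dense representation into a semisimple algebraic group of positive rank, and then to invoke the Prasad--Rapinchuk theory of generic regular semisimple elements and generic maximal tori. Since $\rho$ is non-elementary and $\Gamma$ is finitely generated, the Zariski closure of $\rho(\Gamma)$ in $\bG$ is reductive with non-compact, positive-rank semisimple derived group $\bH$. The Jordan projection $\jord_\bG\vert_{\bH(\RR)}$ is related to $\jord_\bH$ by a $\QQ$-linear embedding $\fa_\bH \hookrightarrow \fa_\bG$, so it suffices to prove the analogous statement for $\rho$ viewed as a Zariski-dense homomorphism into $\bH$.

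Next I would reformulate the claim arithmetically. Via a faithful linear representation of $\bH$, the coordinates of $\jord(\rho(\gamma))$ in a convenient basis of $\fa_\bH^*$ are $\QQ$-linear combinations of logarithms of absolute values of eigenvalues of $\rho(\gamma)$, so $\dim_\QQ \langle \jord(\rho(\gamma))\rangle_{\gamma \in \Gamma} = \infty$ is equivalent to the multiplicative subgroup $M\subset\RR_{>0}$ generated by these absolute values having infinite $\QQ$-rank. I would then construct an infinite $\QQ$-linearly independent family of Jordan projections inductively: given $\gamma_1,\dots,\gamma_n \in \Gamma$ with $\QQ$-linearly independent Jordan projections, the set of $h \in \bH$ whose eigenvalues satisfy one of the finitely many multiplicative relations that would force $\jord_\bH(h)$ to lie in the $\QQ$-span of $\jord(\rho(\gamma_1)),\dots,\jord(\rho(\gamma_n))$ is a proper algebraic subvariety of $\bH$, so Zariski density of $\rho(\Gamma)$ produces $\gamma_{n+1}$ outside it.

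The main obstacle, and the step that genuinely requires the Prasad--Rapinchuk input, is verifying that this ``eigenvalue constraint'' locus is truly a \emph{proper} subvariety rather than all of $\bH$. This is where one invokes their analysis of characters on generic maximal tori of a finitely generated Zariski-dense subgroup: any multiplicative relation among the relevant eigenvalue characters that holds identically on $\rho(\Gamma)$ must come from a relation in the character lattice of a generic maximal torus of $\bH$; in a positive-rank semisimple group such relations cut out proper subvarieties and impose only finitely many $\QQ$-linear constraints per torus, never forcing a universal $\QQ$-linear relation on all Jordan projections. Translating these toric-genericity statements into the concrete algebraic conditions needed to run the induction is the technical heart of the argument.
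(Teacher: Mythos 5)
Your reductions are sound: passing to the semisimple derived group of the Zariski closure is exactly what the paper does, and reformulating $\QQ$-linear independence of Jordan projections as infinite $\QQ$-rank of the multiplicative group generated by eigenvalue moduli is correct. But the inductive scheme has a genuine gap at the step that you yourself flag as the ``technical heart.''

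The problem is that the set
\[
\bigl\{\,h\in\bH(\RR)\ :\ \jord(h)\in\langle \jord(\rho(\gamma_1)),\dots,\jord(\rho(\gamma_n))\rangle_\QQ\,\bigr\}
\]
is \emph{not} cut out by ``finitely many multiplicative relations'' and is \emph{not} a proper algebraic subvariety. Membership in the $\QQ$-span of $n$ fixed real vectors is an arithmetic (Diophantine) condition: for each choice of rational tuple $(q_1,\dots,q_n)$ the constraint $\jord(h)=\sum q_i\,\jord(\rho(\gamma_i))$ is a single closed condition (pinning down the eigenvalue moduli of $h$ to specific reals), but there are infinitely many rational tuples, so the bad locus is a countable union of proper closed subsets. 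Zariski density of $\rho(\Gamma)$ only says the image meets every nonempty Zariski-open set; it does not prevent a countable subgroup from lying entirely in a countable union of proper subvarieties. So the induction does not close, and Prasad--Rapinchuk's genericity theorems do not repair this, since their results concern multiplicative independence of characters at carefully constructed $\RR$-regular elements, not the propriety of an arithmetic constraint locus.

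The paper sidesteps this by running the logic in the opposite direction. Assume for contradiction that $\dim_\QQ\langle \jord(\rho(\gamma))\rangle_{\gamma\in\Gamma}=m-1<\infty$. Then \emph{every} $m$-tuple in a finitely generated Zariski-dense $\Lambda<\rho(\Gamma_2)\subset\calD(\bH^\circ)(\RR)$ satisfies a nonzero integer relation $\prod_{i=1}^m\Jord_\RR(\gamma_i)^{a_i}=\Id$. Prasad--Rapinchuk's Corollary 1 supplies a \emph{specific} $m$-tuple of $\RR$-regular elements in $\Lambda$ for which their Proposition 1 holds: any relation $\prod\chi_i(\gamma_i)^{s_i}=1$ with nontrivial $\chi_i\in X(\bT_i)$ forces $s_1=\cdots=s_m=0$. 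After a careful choice of characters built from the product of positive roots (and raising to a power to kill the compact part of the tori), the integer relation above becomes such a $\prod\chi_i(\gamma_i)^{a_i}=1$, giving $a_i=0$, a contradiction. This way one only needs PraRap for a single well-chosen $m$-tuple, rather than a propriety statement about an arithmetic locus.
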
	
Here we say that a reductive representation is \emph{elementary} if its image is virtually contained in a compact extension of an Abelian group
(see Definition~\ref{def:elementary_rep}).

Combining Theorem ~\ref{t.ThPINTRO}, Theorem ~\ref{thmI:fl} (2) and Proposition ~\ref{pIn.inf} we show:

\begin{thm}[See Theorem ~\ref{thm:realnotfinite}]
	Let $\frakM$ be a connected component consisting of non-elementary representations, and consider the map 
	$$\begin{array}{cccc}
	{\mathbb P L}:&\frakM&\to&\mathbb P\left((\fap)^\G\right)\\
	&\rho&\mapsto&\mathbb P(j\circ\rho)\,.
	\end{array}$$ 
	Then ${\rm Im}({\mathbb P L})$ is open in its closure and ${\mathbb P L}:\frakM\to{\rm Im}({\mathbb P L})$ is proper.
\end{thm}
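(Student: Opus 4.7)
The plan is to reduce both statements to a dimensional separation between interior and boundary length functions. The main ingredients are the continuous surjection $\rspcl\frakM\twoheadrightarrow\thp\frakM$ of Theorem~\ref{t.ThPINTRO} (which in particular gives the compactness of the Weyl chamber length compactification $\thp\frakM$), together with the tension between the finite-dimensional constraint on boundary length functions (Theorem~\ref{thmI:fl}(1)) and the infinite-dimensional span of $j\circ\rho$ for non-elementary $\rho$ (Proposition~\ref{pIn.inf}).

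Let $\pi\colon\widehat\frakM\times\PP((\fap)^\Gamma)\to\PP((\fap)^\Gamma)$ be projection onto the second factor, so $\pi\circ\widehat{\PP L}=\PP L$ on $\frakM$. Since $\thp\frakM$ is compact and $\PP((\fap)^\Gamma)$ is Hausdorff, $\pi|_{\thp\frakM}$ is closed, and as $\widehat{\PP L}(\frakM)$ is dense in $\thp\frakM$, we have $\pi(\thp\frakM)=\overline{\mathrm{Im}(\PP L)}$; the closed subset $\thp\partial\frakM$ maps to a closed subset $\pi(\thp\partial\frakM)$ of $\overline{\mathrm{Im}(\PP L)}$. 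The crucial claim is
\[
\mathrm{Im}(\PP L)\cap\pi(\thp\partial\frakM)=\emptyset.
\]
Suppose for contradiction that $\PP(j\circ\rho)=\pi([(*,L_0)])$ for some $\rho\in\frakM$ and $[(*,L_0)]\in\thp\partial\frakM$. By Theorem~\ref{thmI:fl}(1) there is a finite-dimensional $\QQ$-subspace $\Lambda<\RR$ with $L_0(\gamma)\in\faL$ for all $\gamma\in\Gamma$; since the root system $\Phi$ is finite and spans $\fa^*$, $\faL$ is itself finite-dimensional over $\QQ$. Writing $L_0=c(j\circ\rho)$ for a suitable $c>0$ yields $j(\rho(\Gamma))\subset c^{-1}\faL$, still a finite-dimensional $\QQ$-subspace of $\fa$. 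This contradicts Proposition~\ref{pIn.inf}, which applies because every representation in $\frakM$ is non-elementary.

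Both conclusions now follow formally. Openness is the set-theoretic identity $\mathrm{Im}(\PP L)=\overline{\mathrm{Im}(\PP L)}\setminus\pi(\thp\partial\frakM)$. For properness, given a compact $K\subset\mathrm{Im}(\PP L)$ and a sequence $(\rho_n)\subset\PP L^{-1}(K)$, extract a subsequence with $\widehat{\PP L}(\rho_n)\to(x,[L_\infty])\in\thp\frakM$; then $[L_\infty]\in K$ since $K$ is closed in $\overline{\mathrm{Im}(\PP L)}$, the option $x=*$ is ruled out by disjointness, and so $x\in\frakM$ with $\rho_n\to x$, showing $\PP L^{-1}(K)$ is sequentially compact. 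The main difficulty is concentrated in the disjointness claim: this is where the non-elementary hypothesis is essential, via Proposition~\ref{pIn.inf} (which itself rests on the work of Prasad--Rapinchuk); every other step is routine compactness in $\thp\frakM$.
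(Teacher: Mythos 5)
Your argument is correct and follows essentially the same route as the paper's proof: in both cases the heart of the matter is that (i) compactness of $\rspcl\frakM$ and continuity of the length map produce $\overline{\mathrm{Im}(\PP L)}$ as the image of the compact space, and (ii) the boundary length functions take values in a finite-dimensional $\QQ$-subspace of $\fa$ (Proposition~\ref{p.finlen}, equivalently Theorem~\ref{thmI:fl}(1)) while Proposition~\ref{pIn.inf} forbids any length function $j\circ\rho$ of a non-elementary real representation from doing so, giving the disjointness of $\mathrm{Im}(\PP L)$ from the image of the boundary. The paper runs the argument directly on the continuous map $\PP L\colon\rspcl{(\Xi\frakT)}\to\PP((\fap)^\Gamma)$ rather than factoring through $\thp\frakM$ and then projecting, but that factorization is exactly what Theorem~\ref{t.ThPINTRO} records, so the two presentations are mathematically identical.
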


{Let us close this section with a discussion on real valued length
functions. 
In \S~\ref{s.realvaluedlength} we associate to a dominant weight 
of an almost faithful representation of $\bG(\CC)$ 
defined over $\RR$, and to every $[\rho]\in \Xi(\Gamma,\bGRR)$,
 a length function $\ell_N([\rho]) \in \RR^{\Gamma}_{\geq0}$; 
here the subscript $N$ refers to the semialgebraic norm associated to
said dominant weight, see \S~\ref{s.seminorm} and Example
\ref{e.fundExSemialgebraicNorm}.
Given $\frakM \subset \Xi(\Gamma,\bGRR)$ a union of connected components not
containing any bounded representation, we define as in 
\S~\ref{subsec:RelWithWLcomp} Equation \eqref{eq:WLcomp}
a corresponding length function compactification 
$\thpN{\frakM} \subset \mathbb P(\RR^{\G}_{\geq0})$.
One advantage of working with such length functions is that $\ell_N([\rho])$
can be explicitely defined for any representation 
$\rho:\G\to \bGFF$ where $\FF$ is real closed of finite transcendence
degree over $\qbarr$. We proceed then to construct a continuous
surjective map 
$\widehat{\mathbb P \ell_N}:\rspcl{\frakM}\to\thpN{\frakM}$
(Theorem \ref{t.lengthA}) in analogy with Theorem \ref{t.ThPINTRO}.
We obtain furthermore that the set of length functions in
$\thpN{\partial}\frakM$ taking values in $\ZZ$ is dense 
(Corollary \ref{c.denseZNlenght}).
Finally if $\frakM$ consists of non-elementary representations and
$$\mathbb P \ell_N:\frakM\to\mathbb P(\RR^{\G}_{\geq0})$$
is the map obtained by projectivizing length functions, we show that
$\mathbb P \ell_N(\frakM)$ is open in its closure 
$\overline{\mathbb P \ell_N(\frakM)}$
which is compact, and that the map
$\mathbb P \ell_N:\frakM\to\mathbb P \ell_N(\frakM)$
taking values in the locally compact space 
$\mathbb P \ell_N(\frakM)$ is proper (Theorem \ref{thm:PPLNproper}).
}

\subsection{Metric shadows and $\Lambda$-buildings}
\label{subsec:metric shadows}
The real valued length functions also arise as length functions for the action on the metric space $\ov\bgof$ endowed with suitable distances.
Let $\fa \subset \Lie(\bGRR)$ be a Cartan subspace; 
then (see \cite{Planche}) to every Weyl group invariant norm $\|\,\cdot\,\|$ on $\fa$ corresponds 
a $\bGRR$-invariant Finsler metric $d_{\|\,\cdot\,\|}$ on $\calX_\RR$ and every invariant Finsler metric is obtained this way. 
We show that if $\FF$ is real closed and admits an order compatible (non-Archimedean) valuation $\FF\to \RR\cup\{\infty\}$, 
every Weyl group invariant norm $\|\,\cdot\,\|$ on $\fa$ leads to a 
$\bG(\FF)$-invariant pseudodistance $d_{\|\,\cdot\,\|}^\FF$ on $\calX_\FF$;
these pseudodistances are all equivalent and thus the quotient $\bgof$ 
of $\calX_\FF$ by the $d_{\|\,\cdot\,\|}^\FF=0$ relation is independent of $\|\,\cdot\,\|$:
we call it the {\em metric shadow} of $\calX_\FF$. 
This object is canonical since any two order compatible valutations (when they exist) are real multiple of each other. 
With an abuse of notation we will still denote by 
$d_{\|\,\cdot\,\|}^\FF$ the distance induced on $\bgof$.
Then we have
\begin{thm}[See Corollary~\ref{cor:4.17}]\
	\begin{enumerate}
		\item The distance $d_{\|\,\cdot\,\|}^\FF$ has the midpoint property.
		\item If the norm $\|\,\cdot\,\|$ comes from a scalar product, 
		then $d_{\|\,\cdot\,\|}^\FF$ verifies the median inequality.
		In particular the completion $\obgof$ is CAT(0).
	\end{enumerate}
\end{thm}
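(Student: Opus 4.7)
The plan is to transfer the CAT(0) geometry of the real symmetric space $\calX_\RR$ to the pseudodistance $d_{\|\cdot\|}^\FF$ on $\calX_\FF$, using the semialgebraic model $\calX\subset\calP^1(n)$ together with the Tarski--Seidenberg transfer principle, and then to descend along the valuation to the metric shadow $\bgof$. The key bridge is the vectorial distance $\vec d^\FF : \calX_\FF\times\calX_\FF \to \fap$, obtained by applying the valuation to the eigenvalues of $p^{-1}q$ (ordered in the dominant Weyl chamber), and satisfying the identity $d_{\|\cdot\|}^\FF(p,q)=\|\vec d^\FF(p,q)\|$.

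For part (1), I would use the classical midpoint formula
\[
m(p,q)=p^{1/2}\bigl(p^{-1/2}qp^{-1/2}\bigr)^{1/2}p^{1/2}
\]
valid in $\calP^1(n,\RR)$; since the positive definite square root of a positive definite matrix is uniquely characterized by $\qbarr$-semialgebraic conditions, this formula transfers to a $\bG(\FF)$-equivariant midpoint map on $\calX_\FF$. The identity $\vec d(p,m)=\vec d(m,q)=\tfrac12\vec d(p,q)$ in $\calX_\RR$ is a semialgebraic statement about the eigenvalues along the geodesic (verifiable by diagonalising $p^{-1/2}qp^{-1/2}$), so by Tarski it transfers to $\calX_\FF$. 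Composing with the norm $\|\cdot\|$ yields $d_{\|\cdot\|}^\FF(p,m)=d_{\|\cdot\|}^\FF(m,q)=\tfrac12 d_{\|\cdot\|}^\FF(p,q)$, and this passes to the quotient $\bgof$.

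For part (2), direct transfer is blocked because the squared Riemannian distance $\tr(\log(p^{-1}q)^2)$ on $\calX_\RR$ is transcendental in the matrix entries. My plan is to exploit the apartment structure of the metric shadow. Each maximal $\FF$-flat in $\calX_\FF$ projects, via $\vec d^\FF$ and the scalar-product norm, to an apartment in $\bgof$ isometric to the Euclidean space $(\fa,\|\cdot\|)$, on which the CN (parallelogram) inequality holds as an equality; and the semialgebraic statement that any two points of $\calX_\RR$ lie in a common maximal flat transfers to the same statement for $\calX_\FF$. The main obstacle is then to show that these Euclidean apartments assemble coherently, i.e.\ that $(\bgof,d_{\|\cdot\|}^\FF)$ satisfies the axioms of an affine $\RR$-building in the sense of Tits and Parreau, because the CN inequality for a general triangle cannot be deduced from the Euclidean case on a single apartment alone. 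Once this building structure is in hand, the CN inequality on $\bgof$ follows from the classical CAT(0) property of $\RR$-buildings.

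Finally, the assertion that the completion $\obgof$ is CAT(0) is a consequence of the standard Bruhat--Tits characterization: a complete metric space having midpoints and satisfying the CN inequality is CAT(0).
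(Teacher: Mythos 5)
Your part~(1) is essentially sound, though it travels a slightly different road than the paper: you produce the midpoint explicitly from the matrix formula $m(p,q)=p^{1/2}(p^{-1/2}qp^{-1/2})^{1/2}p^{1/2}$ and transfer the semialgebraic halving identity $\delta_\FF(p,m)^2=\delta_\FF(p,q)$ by Tarski, whereas the paper deduces existence of midpoints from the divisibility of the value group $-\log_b(\FF^*)$. Both are fine; yours is more constructive but must be phrased in terms of the multiplicative Cartan projection $\delta_\FF$ rather than the additive $\vec d^\FF$ (which involves the non-semialgebraic $\Log_b$), as you in fact do implicitly.

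Part~(2) has a genuine gap, and you flag it yourself: you reduce the median (CN) inequality to showing that $(\bgof,d^\FF_{\|\cdot\|})$ carries the structure of an affine $\RR$-building, but you do not carry out this reduction, and it is not a shortcut. Establishing the building axioms for these metric shadows is a separate theorem (stated in \S\ref{subsec:metric shadows} and due to Appenzeller), which moreover requires the additional hypothesis that the root system of $\bGRR$ be reduced — a hypothesis not present in the statement you are proving. The paper circumvents the building structure entirely via Theorem~\ref{thm:4.15}: choosing a valuation-compatible injection $\FF\hookrightarrow\rom$ (possible since $\FF$ has finite transcendence degree over $\KK$), one gets an isometric embedding $\bgof\hookrightarrow\bgrom$, and Theorem~\ref{thm:4.15} identifies $(\bgrom,d^\rom_{\|\cdot\|})$ with an asymptotic cone $\Cone(\calX_\RR,\omega,\prlambda,d_{\|\cdot\|})$ of the Riemannian symmetric space. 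Since $(\calX_\RR,d_{\|\cdot\|})$ is CAT(0) when $\|\cdot\|$ comes from a scalar product, and asymptotic cones (ultralimits) of CAT(0) spaces are CAT(0) and complete, the CN inequality on $\bgrom$ and hence on $\bgof$ is immediate, and combined with the midpoint property from part~(1) this gives that $\obgof$ is geodesic and CAT(0). You need some bridge of this kind to get CN for triangles not contained in a single flat; the asymptotic-cone identification is that bridge, and without it (or without independently establishing the building structure) your argument does not close.
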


A particularly interesting case is when $\FF=\rol$
is a Robinson field associated to a non-principal ultrafilter $\omega$ on $\NN$  and $\pmb\mu:=(\mu_n)_{n\in\NN}$  a sequence of positive reals with $\lim_\omega \mu_n=+\infty$. Indeed

\begin{thm}[See~Theorem \ref{thm:4.15} and Corollary~\ref{cor:4.17}]\
	\begin{enumerate}
		\item 
		Let $(\lambda_n)_{n\in\NN}$ be  a sequence of  reals with $\lim_\omega \lambda_n=+\infty$, and $\mu_n=e^{\lambda_{n}}$. Then 
		$(\bgrom,d_{\|\,\cdot\,\|}^\rom)$ is isometric 
		to the asymptotic cone of the sequence 
		$(\calX_\RR,\prlambda,\frac{d_{\|\,\cdot\,\|}}{\lambda_n})$.
		\item A valuation compatible
		field embedding  $\FF\hookrightarrow\rom$ induces an isometric embedding
		$$\ov\bgof\hookrightarrow\bgrom \;.$$
	\end{enumerate}
\end{thm}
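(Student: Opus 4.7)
My plan is to identify the asymptotic cone with the metric shadow at the level of matrix entries, then to match the Finsler distances via the Cartan decomposition. For part (1), an element of $\Cone_\omega(\calX_\RR,\prlambda,\lambda_n^{-1})$ is represented by a sequence $(x_n)\in\calX_\RR^\NN$ with $d_{\|\cdot\|}(x_n,\prlambda_n)\leq C\lambda_n$ for some $C$, modulo sequences satisfying $\lim_\omega d_{\|\cdot\|}(x_n,y_n)/\lambda_n=0$. The distance bound forces the entries of $x_n$ and of $x_n^{-1}$ to grow like $O(e^{C'\lambda_n})=O(\mu_n^{C'})$, hence each entry defines a class in $\rom$, producing a symmetric positive definite matrix $x^\omega\in\calP^1(n,\rom)$; since all $\qbarr$-polynomial conditions cutting $\calX_\RR$ out of $\calP^1(n,\RR)$ are preserved by the Robinson field construction, $x^\omega$ lies in $\calX_\rom$. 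The equivalence relation on the cone matches exactly the null pseudodistance relation, so $(x_n)\mapsto x^\omega$ descends to a well-defined map into $\bgrom$.

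To prove this map is an isometric bijection, I would express $d_{\|\cdot\|}(x_n,y_n)=\|a(x_n,y_n)\|$ using the Cartan projection $a\colon\calX_\RR\times\calX_\RR\to\fap$, which is semialgebraic (given by ordered logarithms of eigenvalues coming from KAK). Semialgebraicity implies that $a^\rom(x^\omega,y^\omega)$, computed in $\bG(\rom)$, is represented by the sequence $(a(x_n,y_n))_n$, and the canonical order-compatible valuation on $\rom$ applied coordinate-wise yields $\lim_\omega a(x_n,y_n)/\lambda_n \in \fap$. Taking norms shows $d^\rom_{\|\cdot\|}(x^\omega,y^\omega)=\lim_\omega d_{\|\cdot\|}(x_n,y_n)/\lambda_n$, i.e., the asymptotic cone distance. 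Surjectivity is established by lifting each entry of an arbitrary element of $\calX_\rom$ to a real sequence and invoking the transfer principle to ensure the semialgebraic constraints cutting out $\calX_\RR$ hold for $\omega$-almost all $n$, possibly after a null-sequence correction.

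For part (2), a valuation-compatible embedding $\FF\hookrightarrow\rom$ extends to $\bG(\FF)\hookrightarrow\bG(\rom)$ and restricts to $\calX_\FF\hookrightarrow\calX_\rom$ since both sides are defined by the same $\qbarr$-polynomial conditions. Compatibility of the valuations means the vector-valued valuation of the Cartan projection on $\calX_\FF$ agrees with that computed in $\calX_\rom$, so $d^\FF_{\|\cdot\|}$ is precisely the restriction of $d^\rom_{\|\cdot\|}$; passing to quotients by the null relation gives an isometric injection $\bgof\hookrightarrow\bgrom$. By part (1), $\bgrom$ is an asymptotic cone of a complete CAT(0) space and is therefore itself a complete metric space, so this injection extends uniquely to the completion $\ov\bgof\hookrightarrow\bgrom$.

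The main technical obstacle is controlling the Cartan projection under the Robinson field construction, in particular the identity $v(a^\rom(x^\omega,y^\omega))=\lim_\omega a(x_n,y_n)/\lambda_n$ in $\fap$; this rests on the semialgebraic nature of KAK over real closed fields and the preservation of singular value data under the transfer principle, together with the uniform polynomial-in-$\mu_n$ growth bounds that place everything inside $\rom$. A secondary point is to justify that the metric shadow does not depend on the chosen Weyl-invariant norm, which was asserted just before the theorem statement and which we freely invoke to make the objects $\bgof$ and $\bgrom$ canonically defined.
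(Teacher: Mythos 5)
Your outline matches the paper's strategy in broad strokes — pass to matrix entries, identify the cone with $\calX_\rom$-points, and match distances through the Cartan projection — but the surjectivity step has a genuine gap. To show every element of $\bgrom$ arises from a sequence $(x_n)\in\calX_\RR^\NN$, you take $\bar y\in\calX_\rom$ and need a preimage in $\calX_{\oR}(\calO_\pmu)$, i.e.\ a point of the $\oR$-extension of $\calX$ with entries in $\calO_\pmu$ reducing to $\bar y$ modulo $\calI_\pmu$. If you lift the entries of $\bar y$ naively to elements of $\calO_\pmu$, the resulting tuple satisfies the semialgebraic equations cutting out $\calX$ only modulo $\calI_\pmu$, not exactly, so \L{}o\'s/Tarski--Seidenberg gives you $y_n$ satisfying those equations \emph{approximately} for $\omega$-a.a.\ $n$ — not exactly. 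Your phrase ``possibly after a null-sequence correction'' is precisely the claim that needs proof: one must show the approximate solution is $\calI_\pmu$-close to an exact one. In the paper this is Proposition~\ref{prop:4.18} (that $\pi(V_\FF(\calO))=V_{\FF_\calO}$ for closed semialgebraic $V$), whose proof rests on a \L{}ojasiewicz-type inequality $d_V^L=hf$ with a polynomial growth bound on $h$; this is a nontrivial ingredient that transfer alone does not provide, and your proposal has no substitute for it. Without it the isometric map from the cone into $\bgrom$ is injective but not obviously onto, and completeness of the cone alone does not close the gap because denseness of the image would still need the same lifting.

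The remainder of your argument is sound and follows the paper's route: the growth estimate on entries (used implicitly in Lemma~\ref{lem:4.20} via Lemma~\ref{lem:6.7}), the computation of the Cartan projection entrywise over the Robinson field (Lemma~\ref{lem:4.20}(2)), and the comparison of $\mathrm{Log}_b$ with $\frac{1}{\lambda_k}\mathrm{Ln}$ (Lemma~\ref{lem:4.21}) all check out, as does part~(2), where the isometric embedding $\calX_\FF\hookrightarrow\calX_\rom$ and compatibility of Cartan projections under a valuation-preserving embedding, together with completeness of $\bgrom$ as an asymptotic cone, give the extension to $\ov\bgof$.
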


Crucial in the proofs of the above results is 
the existence of a $\bGFF$-invariant semi-algebraic $\FF_{\geq 1}$-valued multiplicative distance function on $\calX_\FF$; 
combined with an order compatible valuation $v:\FF\to\RR\cup\{\infty\}$, 
this gives a pseudodistance on $\calX_\FF$ whose metric quotient is again $\bgof$ endowed now with a $v(\FF^*)$-valued distance. 
In fact, more generally one can perform this construction with any order compatible valuation $v:\FF\to\Lambda\cup\{\infty\}$ 
where the value group is now a, not necessarily Archimedean, ordered Abelian group $\Lambda$
and denote the quotient by $\mathcal B_{\bG(\mathbb F),\Lambda}$. One has then: 

\begin{thm}\cite{Appenzeller}
	Assume the the root system of $\bGRR$ is reduced. The metric quotient 
	$\mathcal B_{\bG(\mathbb F),\Lambda}$ has the structure of a $\Lambda$-building.
\end{thm}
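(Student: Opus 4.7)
The plan is to equip $\mathcal B_{\bG(\FF),\Lambda}$ with the axioms of an affine $\Lambda$-building in the appropriate generalized sense (specializing to Bruhat--Tits when $\Lambda=\ZZ$ and to Kleiner--Leeb when $\Lambda=\RR$). Let $\fa\subset\Lie(\bGRR)$ be a Cartan subspace, $W$ the associated Weyl group and $\Phi$ the (reduced) root system. The model apartment will be $\AA_\Lambda:=\fa\otimes_\ZZ\Lambda$, endowed with the linear $W$-action and the $\Lambda$-valued symmetric form obtained by scalar extension from a $W$-invariant scalar product on $\fa$; the roots $\alpha\in\Phi$ define $\Lambda$-valued linear functionals on $\AA_\Lambda$, and the walls $\{\alpha=\lambda\mid \alpha\in\Phi,\lambda\in\Lambda\}$ give the expected polyhedral decomposition.

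Apartments of $\mathcal B_{\bG(\FF),\Lambda}$ would be defined as images in the metric shadow of maximal $\FF$-flats of $\calX_\FF$: for every maximal $\RR$-split torus $\bA<\bG$ defined over $\qbarr$, the $\bA(\FF)$-orbit of the basepoint is a maximal flat in $\calX_\FF$ whose image in $\mathcal B_{\bG(\FF),\Lambda}$ is identified isometrically with $\AA_\Lambda$ by applying the valuation $v:\FF^\times\to\Lambda$ in logarithmic coordinates on $\bA(\FF)^+$. The essential input is the semialgebraic Cartan decomposition $\bG=\bK\bA^+\bK$, which is valid over $\qbarr$ and, by the Tarski--Seidenberg transfer principle, extends to $\bG(\FF)=\bK(\FF)\bA(\FF)^+\bK(\FF)$. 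This immediately implies that any two points of $\mathcal B_{\bG(\FF),\Lambda}$ lie in a common apartment, and one reads off the $\Lambda$-valued refined distance between them from the vector-valued Cartan projection transferred to $\FF$ and composed with $v$.

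The remaining axioms---that two apartments sharing a pair of points agree on a convex $\Lambda$-polyhedral subset and are glued by an element of the affine Weyl group, and that retractions onto a fixed apartment exist---would be verified by transferring the Iwasawa and Bruhat decompositions of $\bG$ from $\qbarr$ to $\FF$ and reading off the affine structure using $v$. I expect the main obstacle to be the identification of the stabilizer in $\bG(\FF)$ of an apartment with an appropriate extension of the affine Weyl group $W\ltimes\AA_\Lambda$: this requires a careful description of the ``Iwahori-like'' subgroup arising from the valuation ring of $\FF$ and a proof that it acts on neighbouring apartments by the expected affine reflections and translations. The reducedness hypothesis on $\Phi$ is essential at this step, as non-reduced systems such as $BC_n$ introduce additional translation relations and doubled walls in the apartment, so that the resulting metric quotient fails to be a $\Lambda$-building in the strict sense without further modification.
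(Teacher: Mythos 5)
The paper does not prove this theorem: it is quoted from the external reference \cite{Appenzeller}, so there is no internal proof to compare your proposal against. That said, your outline follows the route one would expect: apartments come from maximal $\KK$-split tori via the valuation, the transferred Cartan decomposition (Proposition~\ref{prop:4.4}) gives that any two points lie in a common apartment, and the Iwasawa/Bruhat decompositions over $\FF$ handle the gluing and retraction axioms. Two points warrant care. First, the model apartment should not be $\fa\otimes_\ZZ\Lambda$; since $\fa$ is a real vector space this tensor product is pathological. Because $\FF$ is real closed, the value group $\Lambda=v(\FF^\times)$ is divisible, hence a $\QQ$-vector space, and the correct model apartment is $\fa_\QQ\otimes_\QQ\Lambda$ for a rational form $\fa_\QQ$ (e.g.\ the $\QQ$-span of coweights), consistently with the map $\mathrm{Log}_b$ of Lemma~\ref{lem:4.13} and with the sublattice $\fa_\Lambda$ used in \S\ref{s.discrete}. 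Second, you correctly identify the stabilizer/gluing step as the substantive part and correctly diagnose why reducedness enters; a full proof would need to pin down the Iwahori-type subgroup $\bG(\calO)\cap\calN(\bS)(\FF)$ and its action by affine reflections, and verify the exchange and retraction conditions of whichever axiomatization of $\Lambda$-buildings (Bennett, Kleiner--Leeb, or Parreau's) one adopts. As an outline your plan is sound, but it is deliberately not a complete argument, so I cannot certify that the details close; for the actual proof one must consult \cite{Appenzeller}.
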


In view of the applications to higher rank Teichmüller spaces we mention here that if $\bGRR$ admits a $\Theta$-positive structure in the sense of Guichard-Wienhard, then its root system is reduced.

{
\subsection{Perspectives}
While the theory developed in this article applies to any finitely
generated group, one motivation for developing it are its potential
applications to the study of compactifications of spaces of geometric
structures and to higher rank Teichmüller spaces.
One instance of geometric structures are projective structures on
manifolds. More precisely, let $M$ be a compact closed connected
manifold of dimension $d\geq 2$ with hyperbolic fundamental group
$\Gamma$. The space $\calP(M)$ of marked properly convex real projective
structures on $M$ identifies via the holonomy representation with a
union of connected components of $\Xi(\Gamma,\SL_{d+1}(\RR))$ \cite{BenoistIII}.
Thus $\calP(M)$ inherits the structure of a semialgebraic set and
thus $\rspcl{\calP(M)}$ provides a (Hausdorff) compactification of 
$\calP(M)$ (see Proposition \ref{prop:closed}) 
and 
\S~\ref{subsec:compactification_semialgebraic} Equation \eqref{e.rspS}). 
A major open problem is then the characterization of points  
in $\rspcl{\calP(M)}$ in terms of some geometric objects associated to
$M$. A step in this direction is \cite{FlPa} on the Hilbert geometry
over ordered valued fields: the authors associate to every
point in $\rspcl{\calP(M)}$ a real closed field $\FF$ and an action of
$\G$ on a properly convex subset $\Omega$ of $\PP^d(\FF)$ so that the induced action on the corresponding Hilbert metric space $X_\Omega$ does not have a global fixed
point  (\cite[Theorem B]{FlPa}),
thus echoing our Theorem \ref{thm:1.2}.

Turning to higher rank Teichmüller theory, let  $\Sigma$ be a  compact closed
connected surface of genus at least two and let $\G$ be its fundamental
group. A higher rank Teichmüller space is a connected component of the
character variety $\Xi(\Gamma,\bGRR)$ consisting entirely of faithful
representations with discrete image (see \cite[Definition 2]{Wienhard_ICM}). Examples
are given, when $\bGRR$ is of Hermitian type, by the subset of maximal
representations, which form a union of connected components of
$\Xi(\Gamma,\bGRR)$ (see \cite{BIW}), and when $\bG$ is $\RR$-split, by the
Hitchin components (see \cite{Labourie2006}). These examples fall into a more
general framework, namely when $\bGRR$ admits a $\Theta$-positive structure,
in which case one speaks of $\Theta$-positive representations (see
\cite{GWpos, GWalg}). It is known that $\Theta$-positive representations
are faithful with discrete image and it was shown recently that the
set $\Xi^\Theta(\Gamma,\bGRR)$ of $\Theta$-positive representations is a union of
connected components \cite{BGLPW}.
A characterization of points in $\Xi^\Theta(\Gamma,\bGRR)^\mathrm{RSp}$ is still
an open problem; it has however been obtained in the case of Hitchin
components \cite{Flamm} and in the case of maximal representations;
see \cite{BIPP-ann} and the forthcoming article 
\cite{BIPP_RspecMax}.
In \cite{BIPP_RspecMax} we apply the theory developed here to
establish the following property of the action of the mapping class
group $M(\Sigma)$ on $\Xi^\Theta(\Gamma,\bGRR)^\mathrm{RSp}$:

\begin{thm}
  Stabilizers in $M(\Sigma)$ of points in $\Xi^\Theta(\Gamma,\bGRR)^\mathrm{RSp}$ are
  virtually abelian.
\end{thm}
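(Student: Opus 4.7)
The plan is to combine the characterization of fixed points from Theorem~\ref{thm:1} with the non-evanescent action on the CAT(0) space $\overline{\mathcal B_{\bG(\FF_\rho)}}$ guaranteed by Theorem~\ref{thm:2}, and then to exploit the rigidity of $\Theta$-positive representations to convert this into virtual abelianity. By Theorem~\ref{thm:1}, every $x\in\Xi^\Theta(\Gamma,\bGRR)^\mathrm{RSp}$ is represented by a pair $(\rho,\FF_\rho)$, and the remark following that theorem identifies the stabilizer $H\leq M(\Sigma)$ as the set of $\phi$ for which there exist $\alpha\in\Aut(\FF_\rho)$ and $g\in\bG(\FF_\rho)$ with $\rho\circ\phi=\Int(g)\circ\alpha\circ\rho$. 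I would split the analysis according to whether $x$ is an interior or a boundary point.

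For an interior point $\FF_\rho=\RR$ and $\rho$ is $\Theta$-positive, hence faithful with discrete image; the mapping class group is known to act properly on $\Xi^\Theta(\Gamma,\bGRR)$ (for instance in the Hitchin and maximal cases), so $H$ is finite and a fortiori virtually abelian. For a boundary point I would apply Theorem~\ref{thm:2} to the non-parabolic family $\frakM=\Xi^\Theta(\Gamma,\bGRR)$, obtaining that the $\Gamma$-action on $\overline{\mathcal B_{\bG(\FF_\rho)}}$ is non-evanescent. The relation $\rho\circ\phi=\Int(g)\circ\alpha\circ\rho$ then shows that $H$ acts by isometries on $\overline{\mathcal B_{\bG(\FF_\rho)}}$ normalizing the image $\rho(\Gamma)$, and by the $\Out(\Gamma)$-equivariance of the length function map in Theorem~\ref{t.ThPINTRO} (and of its real-valued refinement $\widehat{\mathbb P\ell_N}$), $H$ preserves the projective Weyl chamber length function attached to $(\rho,\FF_\rho)$.

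The main obstacle, and the input genuinely specific to $\Theta$-positivity, is to upgrade this length-function invariance to a structural description of $H$. The plan is to associate to every boundary point $(\rho,\FF_\rho)$ a canonical $\Gamma$-invariant combinatorial datum on $\Sigma$: namely a (possibly empty) multicurve $\mathcal C\subset\Sigma$ corresponding to the vanishing locus of the length function on $\overline{\mathcal B_{\bG(\FF_\rho)}}$, together with a residual $\Theta$-positive representation on each complementary subsurface obtained from the minset of the unbounded elements. This is the content of the forthcoming \cite{BIPP_RspecMax}, where $\Theta$-positivity is used to rule out more complicated laminations at infinity and force $\mathcal C$ to be a genuine multicurve.

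Once such a $\Gamma$-invariant multicurve is in place, the conclusion should follow by a classical Ivanov--McCarthy style argument: after passing to a finite-index subgroup, $H$ must preserve $\mathcal C$ setwise together with each complementary subsurface, the Dehn twists along the components of $\mathcal C$ contributing a free abelian factor of rank at most $|\mathcal C|$, while on each complementary piece the restriction of $H$ lands in the stabilizer of a point in a lower-complexity $\Theta$-positive character variety, on which the mapping class group of the piece acts properly, yielding a finite contribution. Combining these ingredients produces a virtually abelian group, completing the proof.
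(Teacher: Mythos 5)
The theorem you are trying to prove is not actually proved in this paper: it is explicitly stated as a result of the forthcoming companion article \cite{BIPP_RspecMax}, with the present paper offering only a pointer to the relevant tools. The paper's hint is that ``an important role here is played by Theorem~\ref{thm:1} as well as the relation with geodesic currents,'' and it then directs the reader to \S~\ref{s.crossratio}, where $k$-positively ratioed representations are assigned a geodesic current $\mu_\rho$ whose intersection with closed curves recovers the real-valued length function $\ell_{N_k}(\rho(\cdot))$, together with the continuity of $\mathbb P\calC:\rspcl{\frakM}\to\PP(\calC(\Sigma))$. So there is no proof in this paper against which to measure your argument verbatim.

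Your outline correctly identifies the two available structural inputs --- the field-automorphism description of fixed points coming from Theorem~\ref{thm:1}, and the non-evanescent action on the CAT(0) space $\overline{\calB_{\bG(\FF_\rho)}}$ coming from Theorem~\ref{thm:2} --- and your treatment of the interior case via properness of the mapping class group action is reasonable, though note that properness for \emph{all} $\Theta$-positive components is itself a nontrivial fact, proved separately in the literature rather than in this paper. The genuine gap, which you yourself flag, is the central step: producing the canonical $\Gamma$-invariant multicurve (and residual $\Theta$-positive data on complementary pieces) from a boundary point, and ruling out more general laminations as degenerations. That is precisely the content of the forthcoming \cite{BIPP_RspecMax}; nothing in the present paper supplies it, so your argument is an honest sketch with the decisive ingredient deferred rather than a self-contained proof. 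It is also worth observing that the paper's own preferred route to this multicurve appears to be the geodesic-current map $\mathbb P\calC$ of Theorem~\ref{t.crmain} and the integrality/density results for currents (Corollary~\ref{c.multicurves}), whereas your plan reaches the multicurve via the vanishing locus of the length function on the CAT(0) completion; these are compatible in spirit (a weighted multicurve is a geodesic current, and the length function is its intersection function) but the current-theoretic formulation is where the $\Theta$-positivity hypothesis is most directly leveraged in the framework of this paper. The final Ivanov--McCarthy reduction you invoke is standard once the invariant multicurve and complementary structure are in hand, so that part of the sketch is fine --- the issue is entirely in obtaining that structure.
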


An important role here is played by Theorem~\ref{thm:1}
as well as the relation with geodesic currents. We explore
this relation in \S~\ref{s.crossratio} where we introduce the concept
of $k$-positively ratioed representation
$\rho:\G\to \SL_d(\FF)$ where $\FF$ is any real closed field and $1\leq 2k \leq
d$.
We then show that for every $k$-positively ratioed representation $\rho$, 
there is a geodesic current $\mu_\rho$ such that, if $i(\cdot,\cdot)$ denotes the
Bonahon intersection form, for every $\gamma\in\G\setminus\{\Id\}$
$$i(\mu_\rho,\gamma)=\ell_{N_k}(\rho(\gamma)) \ \ \text{(Corollary \ref{c.posrat})}$$
Here $N_k$ is the semialgebraic norm associated to the $k$-th
fundamental  weight of $\SL_d$. 
Given a component $\frakM \subset \Xi(\Gamma,\bGRR)$ where $\bG <\SL_d$, consisting
of $k$-positively ratioed representations, we use this to define a map 
$\PP \calC : \rspcl{\frakM}\to \PP(\calC(\Sigma))$ into the projectivization
of the space $\calC(\Sigma)$ of geodesic currents on $\Sigma$ and show that it
is continuous. We then conclude:

\begin{thm}[See Theorem \ref{t.crmain}]\label{thm:comm_diagr}
The diagram
	$$\xymatrix{\rspcl{\frakM}\ar[r]^{\mathbb P\calC}\ar[dr]_{\mathbb P\ell_{N_{k}}}&\mathbb P(\calC(\Sigma))\ar[d]^{i(\cdot,\cdot)}\\&\mathbb P(\RR^\G_{\geq 0})}$$
is a commutative diagram of continuous maps.
\end{thm}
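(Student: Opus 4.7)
The plan is to verify separately the commutativity of the triangle and the continuity of each arrow. Continuity of $\PP\ell_{N_k}:\rspcl{\frakM}\to\PP(\RR^\G_{\geq 0})$ is already given by Theorem~\ref{t.lengthA}, so the remaining work concerns $\PP\calC$ and the intersection form.

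Commutativity is essentially tautological once Corollary~\ref{c.posrat} is invoked: for any representative $(\rho,\FF_\rho)$ of a point in $\rspcl{\frakM}$ and every $\gamma\in\G\smallsetminus\{\Id\}$ the identity $i(\mu_\rho,\gamma)=\ell_{N_k}(\rho(\gamma))$ holds, and projectivizing both sides in $\gamma$ yields $i(\cdot,\cdot)\circ\PP\calC=\PP\ell_{N_k}$. For the intersection form $i(\cdot,\cdot):\PP(\calC(\Sigma))\to\PP(\RR^\G_{\geq 0})$, continuity reduces coordinatewise to the standard fact that $\mu\mapsto i(\mu,\gamma)$ is continuous on $\calC(\Sigma)$ for each $\gamma$, since a closed geodesic defines a geodesic current and pairing with it is weak-$\ast$ continuous.

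The main obstacle is the continuity of $\PP\calC:\rspcl{\frakM}\to\PP(\calC(\Sigma))$. My plan is a compactness-plus-rigidity argument. Take a convergent sequence $[(\rho_n,\FF_n)]\to [(\rho_\infty,\FF_\infty)]$ in $\rspcl{\frakM}$, use the (sequential) compactness of $\PP(\calC(\Sigma))$ to extract a subsequential limit $[\mu]$ of $[\mu_{\rho_n}]$, and combine the continuity of $\PP\ell_{N_k}$, the commutativity just established, and the continuity of $\mu\mapsto i(\mu,\gamma)$ to obtain, in $\PP(\RR^\G_{\geq 0})$,
$$[i(\mu,\gamma)]_{\gamma\in\G}=[\ell_{N_k}(\rho_\infty(\gamma))]_{\gamma\in\G}=[i(\mu_{\rho_\infty},\gamma)]_{\gamma\in\G}.$$
The delicate step — which I regard as the heart of the argument — is a rigidity claim: a geodesic current arising from the construction of Corollary~\ref{c.posrat} for a positively ratioed boundary representation is determined, up to positive scalar, by its intersection numbers with $\G\smallsetminus\{\Id\}$. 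I would deduce this from properties built into the construction of $\mu_\rho$, specifically from the fact that the support of $\mu_\rho$ is encoded by the asymptotic behavior of the cross-ratios attached to $\rho$, and that the intersection numbers with periodic currents recover these cross-ratios on pairs of fixed points of hyperbolic elements, which form a dense subset of $\partial\pish^{(2)}$. Once this rigidity is available, every subsequential limit of $[\mu_{\rho_n}]$ coincides with $[\mu_{\rho_\infty}]$, so the full sequence converges and $\PP\calC$ is continuous, completing the proof of the theorem.
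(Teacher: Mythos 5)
Your plan for the commutativity of the triangle and for the continuity of $\PP\ell_{N_k}$ is fine and matches the paper: Corollary~\ref{c.posrat} gives $i(\mu_\rho,\gamma)=\ell_{N_k}([\rho])(\gamma)$ pointwise, and Theorem~\ref{t.lengthA} supplies the continuity of $\PP\ell_{N_k}$. What remains — the continuity of $\PP\calC$ — is where you diverge from the paper and where there is a genuine gap.

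Your strategy is a compactness-plus-uniqueness argument: extract a subsequential limit $[\mu]$ of $[\mu_{\rho_n}]$ in $\PP(\calC(\Sigma))$, use the already-established triangle and the continuity of $\PP\ell_{N_k}$ and of $i(\cdot,\cdot)$ to deduce that $i(\mu,\cdot)$ and $i(\mu_{\rho_\infty},\cdot)$ are proportional, and then invoke a rigidity claim to conclude $[\mu]=[\mu_{\rho_\infty}]$. The rigidity claim — that the current $\mu_{\rho_\infty}$ is determined, up to positive scalar, among \emph{all} geodesic currents by its intersection numbers with $\G\smallsetminus\{\Id\}$ — is exactly what you need, and you explicitly flag it as ``the heart of the argument,'' but you do not prove it. The informal justification (``the support of $\mu_\rho$ is encoded by the asymptotic behavior of the cross-ratios,\dots the intersection numbers with periodic currents recover these cross-ratios on pairs of fixed points of hyperbolic elements'') is not an argument; it is a restatement of what you would need to show. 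Note also that you would need rigidity in a strong form — the subsequential limit $\mu$ is a priori an arbitrary geodesic current, not one produced by the construction of Corollary~\ref{c.posrat}, so a statement quantified only over currents ``arising from the construction'' would not suffice. What you are implicitly invoking is an injectivity-of-intersection-numbers theorem (in the spirit of Otal's marked length spectrum rigidity for geodesic currents), a substantial result that would need to be quoted precisely or established; as written this is a gap.

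The paper's proof avoids this entirely by taking a more direct route: it factors $\PP\calC$ through the projectivized cone $\PP(\mathcal{CR}^+(\calH_\G))$ of positive cross-ratios. The map $\rspcl{(\Xi\frakT)}\to\PP(\mathcal{CR}^+(\calH_\G))$ is shown continuous by the same normalization device used in the proof of Theorem~\ref{t:Rspec-ThP}: lift to a map into $\mathcal{CR}^+(\calH_\G)$ by dividing the cross-ratio $[\cdot,\cdot,\cdot,\cdot]_{\rho,b}$ by $\log_b(2+\sum_{\eta\in F}\tr(\rho(\eta)\rho(\eta)^t))$, and verify the continuity of the resulting coordinate functions via the semialgebraicity of $\CR_k$ (Lemma~\ref{l.proxF}) and Proposition~\ref{p.cont}. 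The second map $\PP(\mathcal{CR}^+(\calH_\G))\to\PP(\calC(\Sigma))$, sending a positive cross-ratio to its geodesic current, is continuous by \cite[Proposition~4.10]{BIPP-PCBT}. This is more economical: it never needs to know that a current is recovered from its intersection numbers. If you want to salvage your approach, you must supply a precise reference or proof of the rigidity/injectivity statement for geodesic currents; otherwise you should switch to the factorization through cross-ratios, for which all the required continuity statements are already available in the paper and in \cite{BIPP-PCBT}.
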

}

\subsection{Tools and techniques I: Artin--Lang principle and framings}\label{subsec:T&TI}
We conclude the introduction by mentioning some of the tools that play an important role in the paper. 
The first is the Artin--Lang principle. 
A semialgebraic set $S$ is defined by finitely many  polynomial equalities and inequalities satisfied by elements of $\RR^n$.
Then its real spectrum compactification $S^{\mathrm{RSp}}$ organizes into a topological space the points
in all $\FF$-extensions of $S$ over all real closed extension $\FF\supset \qbarr$,
by means of a suitable equivalence relation.
One of the main tools used in the paper, that is essentially the \emph{Artin--Lang finiteness principle}, 
is that any polynomial inequality satisfied by all points in $S$ is satisfied by any point in its real spectrum. 

This has striking consequences when applied to character varieties. 
For example if all representations in a union $\frakM$ of connected components of a character variety
admit an equivariant boundary map, it allows to prove the existence of well behaved equivariant boundary maps 
for all representations in $\rsp\partial\frakM$. 
To give a more specific example, recall that a feature of all higher rank Teichm\"uller spaces, 
encompassed in the unifying framework of  $\Theta$-positive representations \cite{GWpos}, 
is that they consist of representations
admitting canonical boundary maps in appropriate flag manifolds
satisfying certain transversality properties.
%
One of the strengths of real algebraic methods in general, and of the Artin--Lang principle in particular, 
is that algebraic properties satisfied by representations in such components
are satisfied also by representations in their real spectrum compactification.
%
\begin{thm}[See Proposition ~\ref{p.dpCframing}]\label{thmINTRO:framing}
Let $\G$ be the fundamental group of a closed orientable surface of genus at least 2, 
and  let $\frakM\subset\Xi(\G,\bGRR)$ be a component consisting only of $\Theta$-positive representations. 
Then every representation in $\rsp{\frakM}$ is $\Theta$-positive.
\end{thm}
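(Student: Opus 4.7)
The plan is to express $\Theta$-positivity as a family of semialgebraic conditions defined over $\qbarr$ and then invoke the Artin--Lang transfer principle underlying the construction of $\rsp{\frakM}$ (see \S\ref{subsec:T&TI}): every algebraic feature enjoyed by the representations in $\frakM$ propagates automatically to all points of its closure in the real spectrum.

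The first step is the semialgebraic formulation. A $\Theta$-positive representation $\rho \in \frakM$ comes with a $\rho$-equivariant boundary map $\xi_\rho : \partial \Gamma \to \mathcal{F}_\Theta(\RR)$ which, on the dense $\Gamma$-invariant subset $H \subset \partial \Gamma$ of attracting fixed points of infinite-order elements, is given by $\xi_\rho(\gamma^+) = F^+_\Theta(\rho(\gamma))$, the attracting $\Theta$-flag of the $\Theta$-proximal element $\rho(\gamma)$; moreover, every cyclically ordered finite tuple of values of $\xi_\rho$ is positive in the sense of Guichard--Wienhard. The three ingredients underlying this description are semialgebraic over $\qbarr$: $\Theta$-proximality is an open semialgebraic condition on $\bGRR$; the attracting fixed $\Theta$-flag of a $\Theta$-proximal element is a semialgebraic function of that element; and positivity of a transverse tuple of $\Theta$-flags is cut out by polynomial inequalities coming from the positive semigroup inside the unipotent radical. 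Consequently, for every finite list $\gamma_1, \ldots, \gamma_n \in \Gamma$ whose attracting fixed points are pairwise distinct and cyclically ordered in $\partial \Gamma$, the subset
\[
U_{\gamma_1, \ldots, \gamma_n} := \{[\rho] \in \Xi(\Gamma, \bGRR) : \rho(\gamma_i) \text{ is } \Theta\text{-proximal},\ (F^+_\Theta(\rho(\gamma_i)))_{i=1}^n \text{ is positive}\}
\]
is semialgebraic, and by assumption contains the semialgebraic subset $\frakM$.

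The second step is the Artin--Lang transfer. The inclusion $\frakM \subset U_{\gamma_1, \ldots, \gamma_n}$ extends to the closure of $\frakM$ in the real spectrum: every representative $(\rho, \FF_\rho)$ of a point in $\rsp{\frakM}$ satisfies the same semialgebraic condition with scalars extended to $\FF_\rho$. Running this over all admissible tuples produces a $\rho$-equivariant map $\xi_\rho : H \to \mathcal{F}_\Theta(\FF_\rho)$, $\xi_\rho(\gamma^+) := F^+_\Theta(\rho(\gamma))$, whose cyclically ordered finite tuples of values are positive over $\FF_\rho$. The transversality built into positivity, together with a monotonicity argument in $\mathcal{F}_\Theta(\FF_\rho)$, will then let one extend $\xi_\rho$ from the dense set $H$ to all of $\partial \Gamma$ and conclude that $\rho$ is $\Theta$-positive in the appropriate non-Archimedean sense.

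The hard part will be fixing a definition of $\Theta$-positive representation valid over an arbitrary real closed field $\FF_\rho$ which is simultaneously (a) equivalent to Guichard--Wienhard's definition when $\FF_\rho = \RR$, (b) manifestly semialgebraic over $\qbarr$ so that the Artin--Lang transfer is legitimate, and (c) robust enough to accommodate the extension of $\xi_\rho$ from the countable dense set $H$ to all of $\partial \Gamma$ despite the non-compactness of $\mathcal{F}_\Theta(\FF_\rho)$. Once such a working definition is adopted---most naturally, \emph{admits an equivariant boundary map whose restriction to $H$ is given by attracting fixed flags and whose cyclically ordered finite tuples are positive}---the semialgebraic transfer becomes essentially formal and the theorem follows.
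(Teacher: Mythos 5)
Your core mechanism matches the paper's: express $\Theta$-positivity via the semialgebraic data of $I$-proximality, attracting fixed $\Theta$-flags, and positivity of tuples, then transfer these conditions to the real spectrum via the Artin--Lang/Tarski--Seidenberg principle. This is exactly what Propositions~\ref{p.dpframing} and~\ref{p.dpCframing} do, using Corollary~\ref{l.fp} for existence and uniqueness of attracting fixed points, with $\Theta$-positivity realized as Example~\ref{e.confspa2}(2c): an $I$-dynamics preserving, $(Y,\mathfrak C)$-positive framing where $Y$ is the set of cyclically ordered 4-tuples and $\mathfrak C$ the configuration space of $\Theta$-positive quadruples.

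However, the ``hard part'' you identify at the end — extending the framing $\xi_\rho$ from the dense set $H=\calH_\Gamma$ to all of $\partial_\infty\Gamma$ — is not a step of the paper's proof, and in fact cannot be carried out in general. The paper points this out explicitly just before Example~\ref{e.confspa}: when $\KK$ is countable, any field $\FF_\rho$ arising in the real spectrum compactification has countable flag manifold $\calF_I(\FF_\rho)$, while $\partial_\infty\Gamma$ is uncountable, so no equivariant map defined on all of $\partial_\infty\Gamma$ can exist. The resolution is that the paper's working notion of ``$\Theta$-positive over $\FF$'' is \emph{deliberately weaker} than the Guichard--Wienhard definition over $\RR$: Definition~\ref{d.dynframing} only asks for a framing on a $\Gamma$-invariant subset $X\subset\calH_\Gamma$, not for a continuous map on $\partial_\infty\Gamma$. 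With this definition, conditions (a) and (b) in your list are met (over $\RR$ the framing on $\calH_\Gamma$ is the restriction of the Anosov boundary map, and all conditions are semialgebraic over $\qbarr$), and condition (c) is simply dropped. Once you adopt this weaker definition, your ``monotonicity argument'' and extension step should be deleted; the semialgebraic transfer alone already closes the proof, exactly as you anticipated in your final sentence.
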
	

See \S~\ref{s.framing} for other situations in which we can guarantee the existence of framings for representations in $\frakM^\Rspec$.


\subsection{Tools and techniques II: a concrete model of the character variety}
Given a finite generating set $F$ of $\Gamma$ with $|F|=f$, the representation variety $\Hom(\Gamma,\bGRR)$ is homeomorphic, 
via the evaluation map, to a real algebraic subset $\calR_F(\Gamma,\bGRR)$ of the vector space $M_{n,n}(\RR)^f$, 
and closed $\bGRR$-orbits correspond to reductive representations.
We will use and adapt the approach to geometric invariant theory due to Richardson--Slodowy \cite{RS}: 
using the natural scalar product on $M_{n,n}(\RR)^f$
they show that reductive representations, 
hence closed $\bGRR$-orbits, correspond exactly to the orbits containing a vector of minimal length.
The set  $\calM_F(\Gamma,\bGRR)$ of \emph{minimal vectors}  
turns out to be a coarse fundamental domain for the $\bGRR$-action on the set $\calR^\mathrm{red}_F(\Gamma,\bGRR)$ 
of reductive representations, in the sense that every $\bGRR$-orbit meets $\calM_F(\Gamma,\bGRR)$ 
and the intersection of a $\bGRR$-orbit with $\calM_F(\Gamma,\bGRR)$ is a $\bK(\RR)$-orbit, 
where $\bK(\RR):=SO(n)\cap\bGRR$ is a maximal compact subgroup. 
A further important property of $\calM_F(\Gamma,\bGRR)$ is that it is a real algebraic subset 
that is closed in $M_{n,n}(\RR)^f$ (and thus also in $\calR_F(\Gamma,\bGRR)$). 
The semialgebraic model for the character variety is obtained by taking the image of $\calM_F(\Gamma,\bGRR)$ 
in an appropriate affine space using a finite set of generators of the algebra of $\bK$-invariant polynomials.

Combining a properness statement implicit in \cite{RS}, results of Procesi \cite{Procesi} 
and a careful study of the geometry of symmetric spaces,
we obtain the following bound on the norm of a minimal vector $\rho$
in terms of the traces of the elements $h\in\Gamma$
that can be written as words of length at most $2^n-1$ in the generating set $F$. 
For such an element $h$ we denote by $|h|_F$ its word length with respect to $F$.
This result plays a crucial role in the characterization of closed points in Theorem ~\ref{thm:1.2}.
\begin{thm}[See Theorem ~\ref{t.traceanddisp}]\label{thmINTRO:Procesi}
  Let $\bG<\SL(n,\CC)$ be a connected semisimple algebraic group defined over $\qbarr$.
  Let $F$ be a finite generating set of $\Gamma$ with $|F|=f$, 
 let $E:=F^{2^n-1}\subset\Gamma$, and $m=\lcm\{1,\ldots, 2^n-1\}$. 
 Then there exists $c_1=c_1(f,n)$ such that for any representation $\rho:\G\to \bGRR$ with $(\rho(\g))_{\g\in F}$ minimal,
 the inequality
	\bqn
	\left(\sum_{\gamma\in F}\tr(\rho(\gamma)\rho(\gamma)^t)\right)^{2m}\leq n^m\left(c_1\sum_{h\in E}\tr(\rho(h))^{\frac {2m}{|h|_F}}\right)^{2(n-1)}\,.
	\eqn
	holds.
\end{thm}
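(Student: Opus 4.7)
The plan is to combine three ingredients: the Richardson--Slodowy characterisation of minimal vectors, Procesi's theorem on matrix invariants, and quantitative estimates coming from the geometry of $\calX_\RR \subset \calP^1(n,\RR)$.

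First I would observe that since $v := (\rho(\gamma))_{\gamma \in F}$ is a minimal vector, by Richardson--Slodowy its squared Frobenius norm $\|v\|^2 = \sum_{\gamma \in F}\tr(\rho(\gamma)\rho(\gamma)^t)$ coincides with $\min_{g \in \bG(\RR)}\|g\cdot v\|^2$, and hence is the value at $v$ of a continuous $\bG(\RR)$-invariant function on the reductive locus of $M_{n,n}(\RR)^f$. The task is therefore to bound this $\bG(\RR)$-invariant quantity raised to $2m$ by the manifestly $\bG(\RR)$-invariant polynomial appearing on the right-hand side.

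Next I would invoke Procesi's theorem: the algebra of $\GL_n(\RR)$-invariant polynomials on $M_{n,n}(\RR)^f$ under simultaneous conjugation is generated by the trace monomials $T_w(v) := \tr(v_{i_1}\cdots v_{i_k})$ for words $w = i_1\cdots i_k$ of length $|w| \leq 2^n - 1$. This is precisely what accounts for the set $E = F^{2^n-1}$ appearing in the statement. The exponent $m = \lcm\{1,\ldots,2^n-1\}$ is then chosen so that each factor $\tr(\rho(h))^{2m/|h|_F}$ is a well-defined polynomial of uniform degree $2m$ in the matrix entries of $v$, making the two sides of the inequality dimensionally consistent.

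The quantitative bridge between Frobenius norm and trace invariants passes through the Cartan decomposition: for each $\gamma \in F$, the bound $\tr(\rho(\gamma)\rho(\gamma)^t) = \sum_i \sigma_i(\rho(\gamma))^2 \leq n\,\sigma_1(\rho(\gamma))^2$ reduces matters to controlling the largest singular value $\sigma_1(\rho(\gamma))$. Minimality of $v$ forces the basepoint of $\calX_\RR$ to be a minimiser of the Finsler displacement function, which in turn bounds $\sigma_1(\rho(\gamma))^{|h|_F}$ by the spectral radius $r(\rho(h))$ for appropriately chosen $h \in E$. The spectral radius of any $A \in \SL_n$ is then bounded by a polynomial in $\tr(A),\tr(A^2),\ldots,\tr(A^n)$ via Newton's identities together with the constraint $\det A = 1$. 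The exponent $2(n-1)$ on the right-hand side reflects the loss incurred by the relation $\sigma_1(A)\prod_{i\geq 2}\sigma_i(A) = 1$ when recovering $\sigma_1$ from the spectral radius (one pays up to $n-1$ such factors), while the coefficient $n^m$ absorbs the $\ell^\infty$-versus-$\ell^2$ loss for singular values and the combinatorial constants coming from Newton's identities.

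The main obstacle is the bookkeeping: organising the geometric minimality estimate, the spectral-radius bounds and the degree conversions so that all constants collapse into a single $c_1 = c_1(f,n)$ depending only on the combinatorial data and not on $\rho$ or the field of definition. This rests essentially on the fact that $\calM_F(\Gamma,\bG(\RR))$ is a real algebraic slice for the $\bG(\RR)$-action on the reductive locus, and on establishing a sufficiently sharp quantitative form of the minimality condition at the basepoint of $\calX_\RR$.
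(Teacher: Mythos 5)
You have correctly identified the two structural ingredients --- Procesi's generation theorem for trace monomials of length at most $2^n-1$ and the Richardson--Slodowy characterisation of minimal vectors --- and you correctly read off the roles of the exponents $m$ and $2(n-1)$. But the quantitative bridge you propose does not hold. The claim that minimality of the tuple $v$ ``bounds $\sigma_1(\rho(\gamma))^{|h|_F}$ by the spectral radius $r(\rho(h))$ for appropriately chosen $h\in E$'' is asserted without a mechanism, and it is not true in the form you need. Minimality is a stationarity condition on the whole tuple (the first variation of $g\mapsto\|g_*v\|^2$ vanishes at the identity); it does not produce a pointwise inequality relating the largest singular value of an individual generator $\rho(\gamma)$ to the spectral radius of some word $\rho(h)$. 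And even granting such a bound, passing from spectral radius to the traces on the right-hand side via Newton's identities would require control of $\tr(\rho(h)^k)$ for $k$ up to $n$, and would still leave the basic discrepancy between eigenvalue moduli and singular values unaddressed for non-normal matrices.

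The paper avoids this entirely. Its core estimate (Proposition~\ref{prop:Ginvariantpoly}, a quantitative version of \cite[Lemma~6.3]{RS}) is a compactness and positivity argument: since invariants separate closed orbits, $q^{-1}(0)\cap\calM_{\bH(\RR)}=\{0\}$, so the homogeneous invariant $A\mapsto\sum_i q_i(A)^{2d/d_i}$ is bounded below on the unit sphere intersected with the minimal cone, and the bound $\|A\|^{2d}\leq C\sum_i q_i(A)^{2d/d_i}$ follows by homogeneity --- no spectral-radius reasoning at all. Combining this with Procesi's theorem and the identity $\calM_{\GL_n(\RR)}=\calM_{\SL_n(\RR)}$ handles the case $\bG=\SL_n$. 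The reduction from general $\bG$ to $\SL_n$ is achieved by Lemma~\ref{l.2.10}: because $\calX_\RR$ is a totally geodesic convex subspace of $\calP^1(n,\RR)$, the infimal displacement over $G_\RR$ equals the infimal displacement over $\SL_n(\RR)$, and the two-sided comparison $\tr(gg^t)/n\leq e^{d_R(g_*\Id,\Id)}\leq\tr(gg^t)^{2(n-1)}$ (Lemma~\ref{lem:6.7}) converts this into $\|g\|^2\leq n\|\Phi(g)\|^{4(n-1)}$ --- which is where the exponent $2(n-1)$ actually originates, not from ``paying $n-1$ singular-value factors.'' Your ``bridge'' step would need to be replaced by both of these arguments.
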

For our purposes it is necessary to adapt the theory of Richardson--Slodowy to general real closed fields $\FF$, 
and for this reason it is important to ensure that all inequalities involved, 
as for example the statement of Theorem ~\ref{thmINTRO:Procesi}, are given by polynomials. 


\subsection{Tools and techniques III: Robinson field realization}
Another important theme in the paper is a bridge between geometric group theory and real algebraic geometry, 
given by the {Robinson field realization}. 
Given a non-principal ultrafilter $\omega$, 
we say that a sequence of scales $\plambda=(\mu_k)$ is \emph{well adapted} to a sequence of representations $\rho_k:\G\to\bGRR$ 
if there exist $c_1,c_2\in\RR$ with $c_i>0$, such that in the ultraproduct $\RR^\omega$ we have the inequalities 
$$c_1\plambda\leq \left(\sum_{\gamma\in F}\tr(\rho_k(\gamma)\rho_k(\gamma)^t)\right)_{k\in\NN}\leq c_2\plambda,$$
and it is \emph{adapted}   if just the second  inequality holds.
If one starts with a sequence of representations, a non-principal ultrafilter and an adapted sequence of scales, 
one obtains a corresponding action on the asymptotic cone that admits an algebraic description as $\calB_{\bG(\rol)}$. 
In this algebraic description the isometric action on the building comes from a representation $\rho:\Gamma\to \bG(\rol)$. 
For reductive representations,
%
  this leads to a point in the real spectrum compactification of the character variety. 
A remarkable fact linking real algebraic geometry and geometric group theory is that any point in the real spectrum compactification arises this way:

\begin{thm}[See Theorem~\ref{thm:7.16}]\label{thm:ggt}  Let $\omega$ a non-principal ultrafilter on $\NN$, 
let $((\rho_k,\RR))_{k\geq1}$ be a sequence of representations with 
$(\rho_k(\g))_{\g\in F}$ minimal for all $k\geq 1$, 
and let $(\rhol,\rol)$ be its $(\omega,\plambda)$-limit for an adapted sequence of scales $\plambda$.
Then:
\begin{itemize}
\item $\rhol$ is reductive.
\item Moreover if $\plambda$ is well adapted and an infinite element, and $\FF_{\rhol}$ is the $\rhol$-minimal field, 
then $\FF_{\rhol}\subset\rol$ and $(\rhol,\rol)$ is $\bK(\rol)$-conjugate to a representation $(\pi,\FF_{\rhol})$ 
that represents a closed point in $\rsp\partial\Xi(\Gamma,\bG(\RR))$.
\end{itemize}
Conversely, any $(\rho,\FF)$ representing a closed point in $\rsp\partial\Xi(\Gamma,\bG(\RR))$ arises in this way.
More precisely for any non-principal ultrafilter $\omega$  and any sequence of scales $\plambda$ giving an infinite element,  
there exist an order preserving field injection
$i\colon \FF\hookrightarrow\rol$ and
 a sequence of  representations $((\rho_k,\RR))_{k\geq1}$  with $(\rho_k(\g))_{\g\in F}$ minimal for all $k\geq 1$ for which $\plambda$ is well adapted,
and such that $i\circ\rho$ and $\rhol$ are $\bG(\rol)$-conjugate.
\end{thm}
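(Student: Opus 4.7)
The strategy is to bridge the ultraproduct/asymptotic-cone construction on one side with the Richardson--Slodowy minimal-vector description of the character variety on the other, using the Artin--Lang principle to transfer algebraic statements across real closed fields and Theorem~\ref{thmINTRO:Procesi} to quantitatively control trace invariants.

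For the forward direction, I first show $\rhol$ is reductive. The condition that $(\rho_k(\gamma))_{\gamma\in F}$ is minimal is cut out by a semialgebraic family of polynomial relations and inequalities (the Kempf--Ness gradient criterion for $\calM_F$). Since $\omega$-almost every $(\rho_k(\gamma))_{\gamma\in F}$ satisfies these relations, their ultraproduct $(\rhol(\gamma))_{\gamma\in F}$ satisfies them in $\rol$. By the Artin--Lang principle, the Richardson--Slodowy characterization of reductive representations over $\RR$ extends verbatim to $\rol$, so $\rhol$ is reductive. Next, assuming $\plambda$ is well adapted and infinite, I apply Theorem~\ref{thmINTRO:Procesi} to each $\rho_k$: the squared norm $\sum_{\gamma\in F}\tr(\rho_k(\gamma)\rho_k(\gamma)^t)$ is polynomially controlled by the traces $\tr(\rho_k(h))$ for $h\in E=F^{2^n-1}$. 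Well-adaptedness forces these normalized traces to lie in the "bounded" part of $\rol$ relative to $\plambda$, and by Procesi's invariant theory the $\bG(\rol)$-conjugacy class of $\rhol$ is determined by these trace values. The subfield of $\rol$ generated over $\qbarr$ by the trace invariants of $\rhol$ is precisely the minimal field $\FF_{\rhol}$; moving by $\bK(\rol)$---which acts transitively on the minimal vectors inside each $\bG(\rol)$-orbit---produces a $\bK(\rol)$-conjugate $\pi$ taking values in $\bG(\FF_{\rhol})$.

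For closedness of $(\pi,\FF_{\rhol})$, I invoke Theorem~\ref{thm:1.2}: it suffices to exhibit $\eta\in E$ with positive translation length on $\ov\bgof$. The well-adapted lower bound $c_1\plambda\leq \sum_{\gamma\in F}\tr(\rho_k(\gamma)\rho_k(\gamma)^t)$, read inside $\rol$, means that the base-point displacement by some $\gamma\in F$ is commensurate to $\plambda$ and therefore projects to a strictly positive distance in the metric shadow $\bgrol$. Combined with Theorem~\ref{thmINTRO:Procesi} applied in the opposite direction (trace dominating norm forces some trace $\tr(\pi(h))$ with $h\in E$ to be large in the valuation), one deduces positive translation length for a suitable element of $E$.

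For the converse, start from a closed point $(\rho,\FF)$; by Theorem~\ref{thm:1}, $\FF$ is real closed, non-Archimedean, minimal, and of finite transcendence degree over $\qbarr$. The central construction is an order-preserving field embedding $i:\FF\hookrightarrow\rol$ whose induced valuation is compatible with the prescribed scale $\plambda$: pick a transcendence basis $t_1,\dots,t_m$ of $\FF$ over $\qbarr$ and realize each $t_j$ as the class in $\rol$ of a real sequence whose $\omega$-logarithmic growth matches the intrinsic valuation of $t_j$ rescaled against $\log\plambda$; since $\rol$ is saturated enough to absorb any finitely generated real closed extension of $\qbarr$, such sequences can be produced. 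Then read off the entries of $(i\circ\rho)(\gamma)$ as $\omega$-classes of real sequences to obtain representations $\rho_k:\G\to\bG(\RR)$ for $\omega$-almost every $k$. Each such $\rho_k$ can be $\bG(\RR)$-conjugated into $\calM_F$ because $\rho$ is reductive and the minimality locus is algebraic, so by Artin--Lang $\omega$-almost every $\rho_k$ is reductive and hence admits a minimal representative; possibly after modifying $i$ by a $\bG(\rol)$-conjugation (which does not change the conjugacy class of $i\circ\rho$), one arranges that $(\rho_k(\gamma))_{\gamma\in F}$ itself is minimal. Well-adaptedness of $\plambda$ then holds by construction of $i$.

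The principal obstacle is the valuation-compatible embedding $i:\FF\hookrightarrow\rol$ in the converse, which must simultaneously preserve the order, reproduce (up to a positive scalar) the intrinsic valuation of $\FF$, and match the prescribed scale $\plambda$. This combines the model-theoretic universality of $\rol$ with an explicit tuning of logarithmic growth rates; without this precise compatibility one would get a representation that is only adapted and not well adapted, and the closed-point structure would be lost. A secondary technical point is ensuring, via Artin--Lang applied to the algebraic description of $\calM_F$, that the move into $\calM_F$ can be performed coherently along the sequence so as to preserve the $(\omega,\plambda)$-limit.
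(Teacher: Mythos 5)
Your forward direction is broadly aligned with the paper's argument, but contains one substantive error, and your converse direction misses the key technical device the paper relies on.

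\paragraph{Forward direction.} Your treatment of reductivity is essentially the paper's: the paper invokes Proposition~\ref{prop:4.18} (reduction modulo an order convex subring preserves closed semialgebraic sets) applied to $\calM_F(\G,G_\RR)$, together with the adaptedness hypothesis, to conclude $(\rhol(\g))_{\g\in F}\in\calM_F(\G,G_\rol)$. Your appeal to the semialgebraic nature of $\calM_F$ and the transfer principle is the same idea. For closedness, you route through Theorem~\ref{thm:1.2} (positive translation length of some $\eta\in E$) rather than Proposition~\ref{prop:Rspecrepcl} as the paper does; this is a legitimate alternative, since Theorem~\ref{t.1.2text} is proved independently of Theorem~\ref{thm:7.16}. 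However, your claim that ``the subfield of $\rol$ generated over $\qbarr$ by the trace invariants of $\rhol$ is precisely the minimal field $\FF_{\rhol}$'' is incorrect for general $\bG$: Procesi's theorem describes $\GL_n$-invariants, so traces determine the $\SL_n(\rol)$-conjugacy class, not the $\bG(\rol)$-class, and a fortiori not the $K_\rol$-orbit inside $\calM_F$. The paper's $\FF_{\rhol}$ is the real closure of the field generated by the values of the $K$-invariant polynomials $p_1,\dots,p_l$ of Lemma~\ref{l.Kinv}, which form a strictly larger algebra than trace polynomials. The fact that $\rhol$ can then be conjugated into $G_{\FF_{\rhol}}$ by an element of $K_\rol$ is exactly Corollary~\ref{c.minimalfield}, whose proof is a transfer/properness argument on the fibres of $P$; you state this conclusion but do not derive it.

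\paragraph{Converse direction.} Here the paper's proof is short because it reduces to a ready-made accessibility result: Corollary~\ref{c.repacc} (built on the curve-selection Proposition~\ref{prop:2} via Corollaries~\ref{c.access} and~\ref{cor:3.9}) produces, for any $\omega$ and any scale $\plambda$, a continuous locally semialgebraic path $s_\alpha^\RR$ in the real points of a closed semialgebraic set through which one reads off a sequence $(\rho_k)_k$ with prescribed norm $\mu_k=\sum_\g\tr(\rho_k(\g)\rho_k(\g)^t)$ and with $j\circ\rho=\rhom$ for a valuation-compatible $j$. Replacing $\calR_F(\G,G)$ by $\calM_F(\G,G)$ in that corollary, together with the surjectivity of $\rsp p:\rspcl{\calM_F(\G,G)}\twoheadrightarrow\rspcl{\Xi_{F,p}(\G,G)}$ (Corollary~\ref{c:Prsp}), gives the statement directly. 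Your proposal instead sketches a model-theoretic embedding via a transcendence basis of $\FF$ and a ``saturation'' of $\rol$, and then reads off representative sequences coordinatewise. This sketch has several genuine gaps it does not close: (i) choosing coordinate representatives does not by itself yield homomorphisms $\rho_k:\G\to\bG(\RR)$ for $\omega$-almost all $k$; this requires that the representatives be chosen inside the real algebraic set $\calR_F(\G,\bG(\RR))$, which is exactly what Proposition~\ref{prop:4.18} (the lifting direction) and the path construction arrange; (ii) the claim that, after a $\bG(\rol)$-conjugation, each individual $(\rho_k(\g))_{\g\in F}$ is minimal requires lifting into $\calM_F$ coherently, which again is the content of the accessibility argument, not an afterthought; (iii) well-adaptedness of $\plambda$ ``by construction'' is what the normalization $N(s_\alpha^\RR(\mu_k))^2=\mu_k$ in Corollary~\ref{c.access} achieves and is not automatic from an arbitrary order-preserving embedding. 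You identify the valuation-compatible embedding as the principal obstacle, but you do not resolve it; the paper's resolution is precisely the curve-selection machinery of \S~\ref{subsec:accessibility}.
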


This interplay is useful in both directions: on the one hand we use real algebraic methods to deduce properties of actions on asymptotic cones, 
such as the non-evanescence  as in  Theorem~\ref{thm:2};
 on the other hand, we use asymptotic cones and ultrafilters to establish properties of points in the real spectrum compactification: 
this is the key in proving the characterization of points in the real spectrum compactification of maximal character varieties achieved in \cite{BIPP_RspecMax}.

\subsection{Tools and techniques IV: accessibility}
The classical curve selection lemma in real algebraic geometry states 
that any point $x$ in the closure of a semialgebraic set $S\subset \RR^l$ is the endpoint of a continuous semialgebraic segment $s\colon[0,1)\to S$. 
We prove an analogous result for closed points in the real spectrum compactification of a semialgebraic set $S$; 
in this case, however, a point $x\in\rspcl S$ is accessible by a continuous ray $s\colon[0,1)\to S$ 
that we can only guarantee being locally semialgebraic (Proposition ~\ref{prop:2}).

This has as consequence the following property of the Weyl chamber length compactification, which is of independent interest: 
while $\thp{\frakM}$ might, in general, be topologically badly behaved (see Example ~\ref{ex:wolff}), 
its connected components are always path connected:

\begin{thm}[See Corollary ~\ref{c.length}]
	For every $(*,[L])\in\thp\partial\frakM$ there exists a continuous, locally semialgebraic path $\rho_t:[0,\infty)\to \Hom_{\rm red}(\Gamma,\bGRR)$ such that, for every $\gamma\in\Gamma$, we have
	\bqn
	\lim_{t\to\infty}\frac{j(\rho_t(\gamma))}{t}=L(\gamma)\,,
	\eqn
	where $j$ is the Jordan projection.
\end{thm}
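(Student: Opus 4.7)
The plan is to combine the surjection $\rspcl{\frakM}\twoheadrightarrow\thp{\frakM}$ from Theorem~\ref{t.ThPINTRO} with the real spectrum curve selection lemma (Proposition~\ref{prop:2}), and then reparametrize by a semialgebraic norm function so that the projective convergence in $\thp{\frakM}$ becomes linear in the new parameter.

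First I would use Theorem~\ref{t.ThPINTRO} to pick a closed point $x\in\rspcl\partial\frakM$ mapping to $(*,[L])$. Proposition~\ref{prop:2} then produces a continuous, locally semialgebraic path $\tau:[0,1)\to\frakM$ converging to $x$ in $\rspcl{\frakM}$. Working in the Richardson--Slodowy model of \S~\ref{subsec:T&TI}, in which $\Xi(\Gamma,\bGRR)$ is identified with the quotient of the semialgebraic set of minimal vectors by the compact group $\bK(\RR)$, I lift $\tau$ through minimal vectors to a continuous, locally semialgebraic path $\widetilde\tau:[0,1)\to \Homred(\Gamma,\bGRR)$.

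Next, fix a finite generating set $F$ of $\Gamma$ and a Weyl-invariant norm $\|\cdot\|$ on $\fap$, and set
\[
N(u):=\sum_{\gamma\in F}\|\jord(\widetilde\tau(u)(\gamma))\|.
\]
Since $\jord$ is semialgebraic and $\widetilde\tau$ is locally semialgebraic, $N$ is continuous and locally semialgebraic. Continuity of the surjection $\rspcl{\frakM}\to\thp{\frakM}$ implies $\widehat{\mathbb P L}(\widetilde\tau(u))\to (*,[L])$ in $\thp{\frakM}$, whence $N(u)\to+\infty$ as $u\to 1^-$ (as the image must exit every compact piece of $\frakM\subset\thp{\frakM}$) and the ratios $\jord(\widetilde\tau(u)(\gamma))/N(u)$ converge pointwise to a specific representative $L_0$ of the projective class $[L]$. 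Replacing $L$ by $L_0$ is harmless since only the class $[L]$ is prescribed.

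Finally, the monotonicity theorem for semialgebraic functions of one variable, applied to $N$, implies that $N$ is strictly monotone on a terminal interval $[u_0,1)$, so it admits a continuous locally semialgebraic inverse $N^{-1}:[t_0,\infty)\to [u_0,1)$. Setting $\rho_t := \widetilde\tau(N^{-1}(t))$ for $t\geq t_0$ and extending continuously over $[0,t_0]$ yields a continuous, locally semialgebraic path with
\[
\lim_{t\to\infty}\frac{\jord(\rho_t(\gamma))}{t}\;=\;\lim_{u\to 1^-}\frac{\jord(\widetilde\tau(u)(\gamma))}{N(u)}\;=\;L_0(\gamma)
\]
for every $\gamma\in\Gamma$. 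The main obstacle is this last step: arranging a monotone, invertible reparametrization that converts projective convergence in $\thp{\frakM}$ into linear convergence in the new parameter. This relies essentially on the locally semialgebraic structure of $\widetilde\tau$ furnished by Proposition~\ref{prop:2}, since a merely continuous accessing path would not yield a well-behaved scale function $N$ whose inverse can be used to align the projective limit with a specific scalar representative of $[L]$.
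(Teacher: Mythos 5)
Your strategy shares some ingredients with the paper's proof (Theorem~\ref{t.ThPINTRO}, Proposition~\ref{prop:2}, the Richardson--Slodowy minimal-vector model), but it re-derives the normalization after constructing the accessing path, and this is where several genuine gaps appear.

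First, the choice of normalizing function matters. You set $N(u)=\sum_{\gamma\in F}\|\jord(\widetilde\tau(u)(\gamma))\|$ and assert that $N(u)\to\infty$ because the path exits compacts. That inference is not valid: the values of the Jordan projection on the generating set $F$ alone do not control divergence in the character variety. By Theorem~\ref{thm:1.2} (3) the closedness of a boundary point guarantees a positive translation length only on some $\eta\in E:=F^{2^n-1}$, not on $F$ itself; and the Procesi-type bound in Theorem~\ref{thmINTRO:Procesi} requires traces over $E$, not $F$. So the length function $L$ in the boundary may vanish identically on $F$, and in that case $N(u)$ need not diverge and $\jord(\widetilde\tau(u)(\gamma))/N(u)$ need not converge for all $\gamma$. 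The paper instead normalizes by $\log_b\bigl(2+\sum_{\eta\in F}\tr(\rho(\eta)\rho(\eta)^t)\bigr)$, whose continuity on $\rspcl(\Xi\frakT)$ is exactly what Proposition~\ref{p.cont} and Theorem~\ref{t:Rspec-ThP} establish.

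Second, your monotonicity step does not go through. The monotonicity theorem for one-variable semialgebraic functions applies to genuinely semialgebraic functions; your $N$ is only \emph{locally} semialgebraic (inherited from $\widetilde\tau$), so its domain decomposes into countably many semialgebraic pieces and the guaranteed finite monotonic partition on each piece does not rule out accumulating oscillations as $u\to1^-$. You cannot conclude that $N$ is eventually monotone, so $N^{-1}$ need not exist on a terminal interval.

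Both issues are circumvented by reading Proposition~\ref{prop:2} more carefully: the proposition already hands you, for any chosen continuous proper semialgebraic $\norm$, a path satisfying $\norm(s_\alpha(t))=t$ identically. The paper applies it on $\frakT\cap\calM_G$ (so the path already lands in $\Homred(\Gamma,\bGRR)$, with no separate lifting needed) and with $g(\rho)=2+\sum_{\eta\in F}\tr(\rho(\eta)\rho(\eta)^t)$, so the normalization is built in: $g(\pi_\rho(s))=s$, hence the denominator of $\Theta$ along the path is exactly $\ln s$. The exponential reparametrization $\rho_t=\pi_\rho(e^{ct})$ then turns the projective limit furnished by continuity of $\Theta$ into the linear asymptotic $\jord(\rho_t(\gamma))/t\to L(\gamma)$ with no monotonicity argument, no auxiliary $N$, and no lifting through the $\bK(\RR)$-quotient. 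To repair your proposal you would essentially have to make the same two choices: apply Proposition~\ref{prop:2} directly to $\frakT\cap\calM_G$, and normalize by the trace function rather than by Jordan projections over $F$.
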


\subsection*{Structure of the paper}
In \S~\ref{sec.prelim} we offer a self-contained introduction to real algebraic geometry and the real spectrum compactification. 
The setting there, and in the rest of the paper, is considerably more general than what is outlined in the introduction: 
for instance we do not restrict to $\qbarr$ as our field of definition, 
but rather consider general real closed fields $\qbarr \subset\KK\subset\RR$.
While character varieties of semisimple Lie groups could always be defined over $\qbarr$, 
different choices of $\KK$ give rise to different real spectrum compactifications, the one for $\qbarr$ being the smallest. 
If $\KK$ is countable, the set of closed points in the real spectrum compactification has a countable basis for its topology, and is thus metrizable.

In \S~\ref{s.RspGenProp} we discuss several general properties of the real spectrum compactification of semialgebraic sets that are not readily available in the literature, 
and are useful in our study of compactifications of character varieties. 
More specifically we prove the Robinson field realization and the accessibility described above
(respectively Theorem~\ref{thm:ggt} and Corollary~\ref{c.length}), 
as well as a continuity statement for real valued functions 
that is needed to relate the real spectrum compactification and the Weyl chamber length compactification.

In \S~\ref{sec:3} we adapt the classical theory of semisimple algebraic groups to hold over real closed fields $\FF$ 
and discuss extensions of the classical Cartan and Jordan decompositions to this setting, 
as well as properties of proximal elements and their fixed points in  suitable flag manifolds. 
We work there with semialgebraic semisimple Lie groups that are not necessarily algebraic.

In \S~\ref{sec:5} we carry out the program outlined above to construct
the CAT(0) space $\obgof$ and discuss the relation of its visual boundary with the set of parabolic subgroups introduced in \S~\ref{sec:3}.

In \S~\ref{s.symm} we discuss representation varieties.
We prove Theorem ~\ref{thm:2} and Theorem ~\ref{thmINTRO:framing} mentioned above, which fit more naturally in this context. 

In \S~\ref{s.char} we give the details of the construction of the character variety, introduced as second main tool, 
leading to the proof of Theorems ~\ref{thm:1} and ~\ref{thm:1.2}. 

In \S~\ref{s.WL} we discuss the relation of the Weyl chamber length compactification and the real spectrum compactification. 
The results there are more general than what is stated in the introduction, in particular because we do not restrict ourselves to connected components
of the character variety but we allow general closed semialgebraic subsets thereof. 

In \S~\ref{s.crossratio} we turn to the case in which $\Gamma=\pi_1(\Sigma)$ 
is the fundamental group of a compact closed connected surface $\Sigma$ and we prove Theorem~\ref{thm:comm_diagr}.
\addtocontents{toc}{\protect\setcounter{tocdepth}{2}}

\section{Preliminaries}\label{sec.prelim}

\subsection{Real closed fields}\label{subsec:intro1}

A general reference of this section is \cite[Ch. 1]{BCR}.
An {\em ordered field} is a field $\FF$ with an {\em ordering},
that is a total order relation $\leq$ satisfying the following properties:
\be
\item If $x,y\in \FF$ with $x\leq y$, then $x+z\leq y+z$ for all $z\in\FF$.
\item If $x,y\in\FF$ with $0\leq x$ and $0\leq y$, then $0\leq xy$.
\ee

In ordered fields  the absolute value $|x|\in\FF$ of
$x\in\FF$ is defined as $|x|=\max\{x, -x\}$.
An order is {\em Archimedean} if every $x\in\FF$ is bounded above by some $r\in\QQ$.
A {\em real closed field} $\FF$ is an ordered field that has no non-trivial algebraic extensions
to which the order extends. 
An ordered field $\FF$ is real closed if and only if it has the following two properties:
\be
\item Every polynomial of odd degree has a root in $\FF$;
\item Every positive element is a square;
\ee
in particular the order is then unique.
Given an ordered field $\LL$, an ordered algebraic extension $\FF$ is a {\em real closure} of $\LL$ 
if $\FF$ is real closed and the ordering on $\LL$ extends to the ordering of $\FF$.
The real closure denoted $\ov\LL^r$ \label{n.rc}
exists for any ordered field $\LL$ and is unique up to a unique order preserving isomorphism over $\LL$.
In other words if $\FF_1$ and $\FF_2$ are real closures of $\LL$, there exists a unique order preserving isomorphism
$\FF_1\to\FF_2$ that is the identity on $\LL$.

\begin{example}\label{ex:orders}\
	\be
	\item The field $\qbarr$  of real algebraic numbers and the field $\RR$ of real numbers are real closed fields.
	
	\item The field $\RR(X)$ of rational functions over $\RR$ admits many orders extending the order of $\RR$.
	For example we consider the order 
	\bqn
	f>0\text{ if and only if there exists }T\in\RR\text{ such that }f|_{(T,\infty)}>0\,.
	\eqn
	However $\RR(X)$ is not a real closed field as, for example, $\sqrt{X}\notin\RR(X)$.  
	Its real closure {with respect to the order introduced above }is contained in the field of formal Puiseux series with coefficients in $\RR$ 
(centered at infinity)
	\bqn
	\left\{\sum_{q\leq k}a_qX^{q/n}:\, n\in\NN^+, k\in\ZZ\right\}\,,
	\eqn
	where $q\in\ZZ$ and $a_q\in\RR$, and the order is given by $\sum_{q\leq k}a_qX^{q/n}>0$ if $a_k>0$.
	The field of formal Puiseux series is a real closed field,
	that is larger than the real closure $\ov{\RR(X)}^r$ of $\RR(X)$;
	the latter consists of the Puiseux series that are algebraic over $\RR(X)$
	and hence converge at infinity (\cite[Example 1.3.6]{BCR}). The real closure of $\RR(X)$, however, depends on the choice of the order.
	\ee
\end{example}

We remark that $\qbarr$ is a subfield of any real closed field, while
a real closed field $\FF$ is isomorphic to a subfield of $\RR$ if and
only if $\FF$ is Archimedean, \cite{Priess-Crampe}.

\medskip

An useful operation that produces new real closed fields from a given one is the reduction modulo an {\em order convex subring} \label{n.2}\nomenclature{$\calO$}{order convex subring of an ordered field $\FF$ } 
$\calO\subset\FF$; 
Recall that a subring $\calO$ in an ordered field $\FF$ 
is {\em order convex}
if whenever $0 \leq x \leq y$ and $y\in\calO$, also $x\in\calO$.
Then $\calO$ is a valuation ring
with maximal ideal
\bqn
\calI=\{x\in\calO:\, x\neq 0, \;x^{-1}\notin\calO\}\cup\{0\} \;,
\eqn
which is is also order convex.
This implies that the residue field \label{n.3}
$\FF_\calO:=\calO/\calI$ inherits a unique ordering compatible
with the projection, and is a real closed field if $\FF$ is.
For specific details about this construction see \cite[Chapter 3]{Lightstone_Robinson}.

Let now $\KK\subset \RR$ be a field, and $\omega$ an ultrafilter over $\NN$.
We consider the {\em hyper-$\KK$ field}, \label{n.4} 
defined as the ultraproduct 
\bqn
\oK:=\KK^\NN/\sim\,,
\eqn
which is by definition the quotient of the product ring $\KK^\NN$ by the equivalence relation
where $(x_k)_{k\in\NN}\sim(y_k)_{k\in\NN}$ if and only if the two
sequences coincide $\omega$-almost everywhere. The corresponding class
in $\oK$ will be denoted by $[(x_k)]_\omega$.
Notice that there is an embedding $\KK\hookrightarrow\oK$ given by $x\mapsto[(x,x,\dots,)]_\omega$
and that if $\omega$ is principal,  this embedding is an isomorphism.
If $\KK$ is an ordered field, then $\oK$ is also an ordered field
with order given by $[(x_k)]_\omega > 0$ if and only if $x_k>0$ 
for $\omega$-almost everywhere, 
and in fact it is real closed if $\KK$ is.
%

There is an important construction of order convex subrings
that lead to specializations of interest to us.
Observe that if $\omega$ is non-principal
there are positive {\em infinite} elements in $\oK$,
that is elements  larger than any integer.
For such an element $\plambda$
then \label{n.5}
\bqn
\calO_{\plambda}:=\{x\in\oK:\, |x|<{\plambda}^m \text{ for some } m\in\ZZ \}
\eqn
is an order convex subring of $\oK$ with maximal ideal
\bqn
\calI_\plambda:=\{x\in\oK:\, |x|<\plambda^m \text{ for all }m\in\ZZ \}\,.
\eqn
The quotient $\oKl:=\calO_\plambda/\calI_\plambda$ is the {\em Robinson field} \label{n.6} 
associated to the non-principal ultrafilter $\omega$ and the infinite element $\plambda$. 
This is necessarily a non-Archimedean field, as, denoting by $\plambda$ also the corresponding element in $\oKl$, all rational numbers are smaller than $\plambda$.  For background on Robinson fields see \cite[Chapter 3]{Lightstone_Robinson}.

\subsection{Valuations}\label{s.valuation}
Often real closed fields admit order compatible valuations:
\begin{defn}\label{defn:valuation} A {\em valuation} \label{n.7}
	on a field $\FF$
	is a function $v\colon \FF\to\RR\cup\{\infty\}$ such that the
	function $\|x\|:=e^{-v(x)}$ satisfies the following
	properties:
	\be
	\item $\|x\|\geq 0$ and $\|x\|=0$ if and only if $x=0$;
	\item $\|xy\|=\|x\|\,\|y\|$;
	\item $\|x+y\|\leq\|x\|+\|y\|$.
	\ee
	A valuation on an ordered field $\FF$ is {\em order compatible} if 
	whenever $0\leq x\leq y$, then $\|x\|\leq\|y\|$. It is {\em trivial} if
	$\|x\|=1$  for all $x\neq 0$.
\end{defn}
\begin{remark}
	This definition, used in \cite{EP}, is more general than the usual one as it is designed to include $-\ln|\cdot|:\RR\to\RR\cup\{\infty\}$ as an example of valuation; here $\ln$ is the natural logarithm.
\end{remark}	
Notice that if (the order on) $\FF$ is non-Archimedean, then $\|n\|=1$
for every
positive integer $n$,
and  the condition (3) in Definition~\ref{defn:valuation} becomes 
the stronger {\em ultrametric} inequality
\bqn
\|x+y\|\leq\max\{\|x\|,\|y\|\}\,.
\eqn
and hence (1), (2) and (3) can be replaced by the following, which
define a {\em non-Archimedean valuation}: 

\be
\item[(1')] $v(x)=\infty$ if and only if $x=0$;
\item[(2')] $v(xy)=v(x)+v(y)$;
\item[(3')] $v(x+y)\geq\min\{v(x),v(y)\}$.
\ee

\begin{remark}
	A valuation satisfying (1'), (2'), (3') is often called a \emph{rank one valuation}.
\end{remark}

\begin{example}\
	\be
	\item Ostrowski classified all valuations on $\QQ$:  if there exists a
	function $\|\,\cdot\,\|$ on $\QQ$
	that satisfies the three properties in Definition~\ref{defn:valuation} and is order compatible, 
	then there exists $C\in\RR$, $0<C\leq1$ such that $\|x\|=|x|^C$.  Thus all order compatible valuations on $\qbarr$ are Archimedean.
	\item A valuation on $\RR$ is $-\ln|\cdot|:\RR\to\RR\cup\{\infty\}$, where $\ln$ denotes the natural logarithm.
	\item On $\RR(X)$ one can define a valuation compatible with the order defined in Example~\ref{ex:orders}(2) by setting
	\bqn
	v\left(\frac{p(X)}{q(X)}\right):=\deg q-\deg p\,,
	\eqn
	which can then be extended in an obvious way to Puiseux series and hence to $\ov{\RR(X)}^r$.
	\ee
\end{example}
One can characterize ordered fields admitting an order compatible non-trivial valuation purely in terms of their ordering.This leads to an explicit description of said valuations which we will need later on; for this we follow closely Brumfiel \cite{Brum2}.

A positive element $b\in\FF$ in an ordered field is a {\em big element} \label{n.8}
if for any $a\in\FF$, there exists $m\in\NN$, $m\geq1$,
such that $a<b^m$.  For $a\in\FF$, with $a>0$, the two subsets of $\QQ$ 
\bq\label{eq:Aa}
A_a:=\left\{\frac{m}{n}:\, b^m\leq a^n, \, n\in\NN^+,\, m\in\ZZ\right\}
\eq
and 
\bq\label{eq:Ba}
B_a:=\left\{\frac{m'}{n'}:\, b^{m'}\geq a^{n'}, \, n'\in\NN^+,\, m'\in\ZZ\right\}
\eq
define a Dedekind cut of $\QQ$ and hence a real number denoted $\log_b(a)$.  Then 
\bqn
v_b(a):=-\log_b|a|\,
\eqn
is a non-trivial order compatible valuation on $\FF$.
Conversely if $v$ is a non-trivial order compatible valuation and $b\in\FF$, $b>0$ satisfies $v(b)<0$ then $b$ is a big element and $v(a)=v(b)\log_b(|a|)$ for all $a\in\FF$.

If on the other hand $\FF=\RR$, then $b\in \RR$ is a big element if and only if $b>1$ and
$v_b(a)=\log_b|a|$ is the usual logarithm in base $b$.  

\begin{example}\label{e.RobinsonValuation} If $\omega$ is a non-principal ultrafilter, then $\KK^\omega$ has no big element. The construction of the Robinson field $\oKl:=\calO_\plambda/\calI_\plambda$  discussed at the end of Section~\ref{subsec:intro1}  is designed in such a way that the positive element $\plambda\in\oKl$ is a big element,
	and hence  the Robinson field $\oKl$ has an order compatible valuation.
	One can express the valuation on the field $\oKl$ in terms of limit of logarithms:
	\bqn
	v(\pmb x):=-\lim_\omega\frac{\ln(|x_k|)}{\ln(\lambda_k)}
	\eqn
	where $\plambda=(\lambda_k)_{k\in\NN}$ and $\pmb x=(x_k)_{k\in\NN}$.
	One verifies easily from the density of $\KK$ in $\RR$ that $\oKl=\rol$.
\end{example}

It is important for our needs to isolate a class of fields which have
big elements.
\begin{lemma}[{\cite{Brum2}}]\label{l.bigelt}
	If $\LL$ is an ordered field extension of finite transcendence
	degree over $\KK$ and $\KK$ contains a big element, then $\LL$
	contains a big element as well.
	In particular   any ordered field that has finite transcendence degree
	over $\KK\subset\RR$ has big elements and hence an order compatible
	valuation.
\end{lemma}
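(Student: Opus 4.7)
The plan is to proceed by induction on the transcendence degree of $\LL$ over $\KK$. Writing $\LL$ as a tower $\KK=\LL_0\subset\LL_1\subset\cdots\subset\LL_n=\LL$ of simple ordered extensions (each step either algebraic of degree one or purely transcendental), it suffices to prove the following: if an ordered field $\FF$ admits a big element $b$ and $\FF\subset\FF'=\FF(t)$ is a simple ordered extension, then $\FF'$ also admits a big element.

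For the algebraic step I would argue that the same $b$ remains big in $\FF'$. Any $a\in\FF'$ satisfies a monic polynomial $a^n+c_{n-1}a^{n-1}+\cdots+c_0=0$ with $c_i\in\FF$, and Cauchy's bound on roots (which holds in any ordered field, by splitting on whether $|a|\leq 1$ and applying the identity $a^n=-\sum c_ia^i$ in the opposite case) yields $|a|\leq 1+\max_i|c_i|\in\FF$; the big-element property of $b$ then supplies $m$ with $|a|<b^m$. For the transcendental step I would distinguish three cases based on the position of $|t|$ relative to $\FF$. If $|t|$ is infinite over $\FF$ (larger than every element of $\FF$), I claim $|t|$ itself is a big element of $\FF(t)$: for $a=p(t)/q(t)$ with $p,q\in\FF[X]$ of degrees $d',d$, the elementary bound $|p(t)|\leq(d'+1)\max_i|\alpha_i|\,|t|^{d'}$ and, by isolating the leading term of $q$, the lower bound $|q(t)|\geq\tfrac12|\beta_d|\,|t|^d$ hold once $|t|$ exceeds the specific quantity $2d\max_i|\beta_i|/|\beta_d|\in\FF$ (which it does by infiniteness); hence $|a|\leq C\,|t|^{d'-d}$ for some $C\in\FF$, and since $|t|>C$ one concludes $|a|<|t|^k$ for a suitable $k$. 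If $|t|$ is infinitesimal over $\FF$, then $|1/t|$ is infinite and the previous subcase applies to $\FF(1/t)=\FF(t)$. If $|t|$ is finite and non-infinitesimal, I would search for any $s\in\FF(t)$ that is infinite over $\FF$: if such an $s$ exists, Cauchy's bound forces $s$ to be transcendental over $\FF$, so $\FF(s)\subset\FF(t)$ is simple transcendental and $\FF(t)/\FF(s)$ is algebraic, and combining the infinite subcase with the algebraic step shows that $|s|$ is a big element of $\FF(t)$; if no such $s$ exists, then every element of $\FF(t)$ is bounded in absolute value by an element of $\FF$, and so $b$ trivially remains big.

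The main technical point is the lower bound on $|q(t)|$ in the ``infinite'' subcase, which requires $|t|$ to exceed a specific element of $\FF$; infiniteness is exactly what makes this comparison available. The case analysis in the ``finite non-infinitesimal'' subcase is also delicate but splits cleanly according to whether $\FF(t)$ contains an element infinite over $\FF$. For the ``in particular'' statement, any $\KK\subset\RR$ is Archimedean, so every $b>1$ in $\KK$ is a big element (each $a\in\KK$ is bounded by some integer, which is in turn bounded by $b^m$ for $m$ large since $b>1$), and the first part of the lemma then applies; the existence of an order compatible valuation follows from the equivalence between big elements and nontrivial order compatible valuations recalled earlier in the section.
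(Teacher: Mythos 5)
The paper does not supply a proof of this lemma; it is quoted from Brumfiel. Evaluating your argument on its own merits: the plan is sound and the case analysis (infinite, infinitesimal, finite non-infinitesimal) for the transcendental step is complete and correctly executed, the Cauchy bound in ordered fields is applied correctly in the algebraic step, and the ``in particular'' deduction via the Archimedean property of $\KK\subset\RR$ is right. This is the standard approach.

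One point to tighten: your opening reduction to a \emph{finite} tower of simple extensions is not valid as stated, because an extension of finite transcendence degree need not be finitely generated --- the algebraic part on top of a transcendence basis can be infinite. The correct decomposition is $\KK\subset\KK(t_1)\subset\cdots\subset\KK(t_1,\ldots,t_n)\subset\LL$, with $n$ simple transcendental steps followed by a single algebraic extension that may not be simple. This does not affect the body of your argument, since the algebraic step as you wrote it (``any $a\in\FF'$ satisfies a monic polynomial over $\FF$, hence is bounded by an element of $\FF$'') only uses that $\FF'/\FF$ is algebraic, not that it is generated by one element. So the proof survives the patch essentially unchanged. Two cosmetic remarks: ``algebraic of degree one'' in your first paragraph presumably means ``simple algebraic''; and in the lower bound for $|q(t)|$, the constant should involve $\max_{i<d}|\beta_i|$ rather than $\max_i|\beta_i|$, though the latter also suffices since it only weakens the threshold that $|t|$ must exceed.
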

%

The following leads to interesting invariants:
\begin{prop}\label{p.valuautom}
	Let $\KK\subset \RR$ be real closed and $\FF$ be real closed, non-Archimedean of transcendence degree $\leq d$ over $\KK$, $\phi\in\Aut(\FF)$ and $b\in\FF_{>1}$ a big element.
	\be
	\item $\log_b(\FF^*)$ is a $\QQ$-vector space of $\RR$ of dimension at most $d$.
	\item $\log_b(\phi(b))$ is a real algebraic number of degree at most $d$ independent of $b$.
	\ee
\end{prop}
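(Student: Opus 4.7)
\textbf{Plan for the proof of Proposition~\ref{p.valuautom}.}

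For (1), I first observe that the map $\log_b\colon\FF^*\to\RR$, $a\mapsto\log_b|a|$, is a homomorphism from the multiplicative group $\FF^*$ to $(\RR,+)$, whose image coincides with $\log_b(\FF^*_{>0})$. Because $\FF$ is real closed, every positive element has an $n$-th root in $\FF$ for every $n\geq 1$, so $\FF^*_{>0}$ is divisible. Consequently $\log_b(\FF^*)$ is a divisible subgroup of $(\RR,+)$, hence a $\QQ$-vector subspace.

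To bound its dimension I adapt the Abhyankar inequality. Note first that, since $b$ is big and $\FF$ is non-Archimedean, $b$ is larger than every natural number, so $\log_b(n)=0$ for every $n\in\NN_{\geq 1}$; in particular $\log_b$ vanishes on $\KK^*$ (every element of $\KK$ is bounded in absolute value by a natural number). Now assume $a_1,\dots,a_r\in\FF^*$ satisfy that $\log_b(a_1),\dots,\log_b(a_r)$ are $\QQ$-linearly independent and suppose for contradiction that $P(a_1,\dots,a_r)=0$ for some non-zero $P\in\KK[X_1,\dots,X_r]$. For each monomial $c_Ia^I=c_Ia_1^{i_1}\cdots a_r^{i_r}$ of $P$ one has $\log_b(c_Ia^I)=\sum_j i_j\log_b(a_j)$, and by the assumed independence these values are pairwise distinct for distinct multi-indices $I$. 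But a finite non-zero sum of non-zero elements of $\FF$ with pairwise distinct valuations cannot vanish, since the term of smallest valuation dominates. This contradiction shows $r\leq\mathrm{trdeg}(\FF/\KK)\leq d$, proving (1).

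For (2), the key observation is that $\phi$ induces a well-behaved map on the value group $L:=\log_b(\FF^*)$. Indeed $\phi$ is an automorphism of the real closed field $\FF$, so it preserves positivity (positivity being equivalent to being a square) and also fixes $\QQ$ pointwise. Hence $\phi$ preserves the valuation ring $\calO=\{a\in\FF:|a|\leq n\text{ for some }n\in\NN\}$ and its maximal ideal $\calI$, and therefore descends to a $\QQ$-linear automorphism $\tilde\phi\colon L\to L$ defined by $\tilde\phi(\log_b|a|)=\log_b|\phi(a)|$ which is furthermore order-preserving. The next step is the elementary fact that any order-preserving $\QQ$-linear automorphism of a $\QQ$-subspace $L\subset\RR$ is multiplication by a positive real constant $c_\phi$: fixing $x_0\in L_{>0}$ and setting $c_\phi:=\tilde\phi(x_0)/x_0$, for any $x\in L$ and rationals $p/q$ with $p x_0<qx$ in $L$ the order-preservation yields $(p/q)c_\phi x_0<\tilde\phi(x)$, and the analogous upper bound then forces $\tilde\phi(x)=c_\phi x$.

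Since $\log_b(b)=1\in L$, we deduce $\log_b(\phi(b))=\tilde\phi(1)=c_\phi$. Independence of the choice of $b$ follows because changing $b$ to another big element $b'$ rescales the value group by multiplication by $1/\log_b(b')$, conjugating $\tilde\phi$ by a scaling; a scalar multiplication is unchanged under such conjugation, so $c_\phi$ is intrinsic. Finally, since $L$ is stable under multiplication by $c_\phi$ and contains $1$, the powers $1,c_\phi,c_\phi^2,\dots$ all lie in $L$; by part (1) these cannot all be $\QQ$-linearly independent, yielding a non-trivial polynomial relation over $\QQ$ of degree $\leq d$ satisfied by $c_\phi$.

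The main obstacle is the argument that an order-preserving $\QQ$-linear automorphism of a $\QQ$-subspace of $\RR$ must be multiplication by a scalar: this is what converts the a priori $\QQ$-linear datum of $\tilde\phi$ into a single real number $c_\phi$, and it is crucial both for independence of $b$ and for the algebraicity bound. The Abhyankar-type argument in (1) is the other technical point, but it reduces to the standard ``distinct valuations forbid cancellation'' principle once one verifies that $\log_b$ vanishes on $\KK^*$.
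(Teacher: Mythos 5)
Your argument for (1) is correct and self-contained: instead of citing Bourbaki's bound on the rational rank of the value group, you reprove the Abhyankar-type inequality directly via the ``distinct valuations forbid cancellation'' principle, using that $\log_b$ kills $\KK^*$. This is a legitimate and arguably cleaner alternative to the paper's citation.

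In (2) there is a genuine gap in the well-definedness of $\tilde\phi$. You justify that $\tilde\phi(\log_b|a|):=\log_b|\phi(a)|$ is well defined on $L=\log_b(\FF^*)$ by observing that $\phi$ preserves the ring $\calO=\{a:|a|\leq n \text{ for some }n\in\NN\}$ of \emph{finite} elements and its maximal ideal. But this $\calO$ is not the valuation ring of $v_b=-\log_b$: the kernel of $\log_b$ on $\FF^*$ is the group of units of $\calO_b=\{a:\log_b|a|\leq 0\}$, which can strictly contain $\calO$. For instance, if $\FF$ contains an infinite element $s$ with $s^n<b$ for all $n$ (already possible at transcendence degree $2$ over $\KK$), then $s\in\calO_b\setminus\calO$, so $\log_b(s)=0$ while $s$ is not a unit of $\calO$. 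Hence $\phi$ preserving $\calO$ does \emph{not} yield $\log_b|a|=\log_b|a'|\Rightarrow\log_b|\phi(a)|=\log_b|\phi(a')|$. The missing ingredient is exactly where the paper's proof begins: since $\phi$ is order-preserving, $\phi(b)$ is again a big element, and the mutual domination $\phi(b)<b^m$, $b<\phi(b)^{m'}$ gives $\calO_b=\calO_{\phi(b)}=\phi(\calO_b)$, so $\phi$ does preserve the correct valuation ring. Once this is supplied, your remaining argument (an order-preserving $\QQ$-linear self-map of a $\QQ$-subspace of $\RR$ is a scalar, hence $c_\phi$ is algebraic of degree $\leq d$ and independent of $b$) is correct, though more roundabout than the paper's one-line cocycle computation $\log_b(\phi(x))=\log_b(\phi(b))\log_{\phi(b)}(\phi(x))=\log_b(\phi(b))\log_b(x)$, which produces the scaling constant directly and makes the well-definedness discussion unnecessary.
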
	
\begin{proof}
	(1) Since every positive element $x\in\FF$ has $m$-th roots in $\FF$ for all $m\geq 1$, $\log_b(\FF^*)$ is divisible and hence a $\QQ$-vector space. By \cite[Corollary 1, \S10.3, VI]{BourbakiCA} its rational rank is bounded by $d$, which shows (1).
	
	(2) Since $\FF$ is real closed, $\phi$ is order preserving and hence $\phi(b)$ is a big element as well, and $$\log_{\phi(b)}(\phi(x))=\log_b(x)\quad \forall x\in\FF^*.$$
	This implies, using that $\log_b(\phi(b))\log_{\phi(b)}(\phi(x))=\log_b(\phi(x))$, that $$\log_b(\phi(b))\log_b(x)=\log_b(\phi(x)),$$
	hence the multiplication map 
	$$\begin{array}{ccc}
		\RR&\to&\RR\\ \xi&\mapsto&\log_b(\phi(b))\xi
	\end{array}$$
	preserves the  $\QQ$-vector subspace $\log_b(\FF^*)$, whose dimension is at most $d$, implying (2).
\end{proof}	

We conclude the section with an example of an ordering on $\KK(T,S)$ together with an order compatible valuation $v$ 
such that $v(\KK(T,S))=\QQ$. 
This will not be needed in the rest of the paper, but shows the surprising fact that, 
despite the field $\KK(T,S)$ having transcendency degree two over $\KK$, the value group is not a finitely generated Abelian group.
\begin{example}\label{ex:weirdval}
	Let $\KK(T^\QQ)$ be the field of formal power series with coefficients in $\KK$
	$$f=\sum_{n=1}^\infty a_nT^{r_n}$$
	whose exponents $r_n$ belong to $\QQ$ and form a monotone increasing sequence with $\lim_{n\to\infty}r_n=+\infty$, endowed with the
	ordering given by $f>0$ if $a_1>0$ and valuation $v(f):=r_1$.
	
	Let $0<r_1<r_2<\cdots$ be a strictly monotone increasing sequence in $\QQ$ such that 
	\bq\label{e.weirdval1}\lim_{n\to\infty}(r_n-qr_{n-1})=+\infty\quad \forall q\in\NN\eq
	Then, using the arguments in \cite[Exercise VI, \S 3, 1a) ]{BourbakiCA} one shows that  $$s:=1+T^{r_1}+T^{r_2}+\ldots\in\KK(T^\QQ)$$ is transcendental over $\KK(T)$.
	In this case we can define a field injection
	$$\KK(T,S)\to\KK(T^\QQ)$$
	by sending $T$ to $T$ and $S$ to $s$. We endow $\KK(T,S)$ with the ordering and the valuation, still denoted $v$, induced by $\KK(T^\QQ)$.
	We are going to state an additional condition \eqref{e.weirdval2} on the sequence $(r_n)_{n\geq 1}$ which, together with \eqref{e.weirdval1},  will imply that $v(\KK(T,S))\supset \ZZ+\ZZ r_1+\ZZ r_2+\ldots \;.$
	Namely for every $n\geq 1$, let $s_n:=1+T^{r_1}+\ldots+ T^{r_{n-1}}$.
	Let furthermore $P_n\in\KK(T)[X]$ be a minimal polynomial of $s_n$ over $\KK(T)$, chosen so that all its coefficients are in $\KK[T]$; if $P_n(X)=\sum_{j=0}^{d_n}a_j^{(n)}(T)X^j$ where  $d_n$ is the degree of $P_n$,  let 
	$$b_n:=\max\{\deg(a_j^{(n)}(T)):\, 0\leq j\leq d_n\}.$$
	Then if 
	\bq\label{e.weirdval2}r_n>b_n+(d_n-1)r_{n-1}\eq
	we have $v(P_n(s))\in r_n+\ZZ r_{n-1}+\ZZ r_{n-2}+\ldots+\ZZ r_1+\ZZ.$
	Indeed write $s=s_n+R_n$ and consider the Taylor expansion of $P_n$ around $s_n$ to obtain, since $P_n(s_n)=0$:
	$$P_n(s)=R_n P'_n(s_n)+\frac{1}{2}R_n^2P_n''(s_n)+\ldots+\frac{1}{d_n!}R_n^{d_n}P_n^{(d_n)}(s_n).$$
	This implies by \eqref{e.weirdval2} for $j\geq 2$:
	$$v(R_nP_n'(s_n))\leq r_n+b_n+(d_n-1)r_{n-1}<jr_n\leq v(R_n^jP_n^{(j)}(s_n))$$
	since $v(P_n^{(j)}(s_n))\geq0$ for every positive $j$. Thus $v(P_n(s))=r_n+ v(P_n'(s_n))$ and we conclude by observing that $v(P_n'(s_n))\in \ZZ+\ZZ r_1+\ldots+\ZZ r_{n-1}$.
	This readily implies that if  \eqref{e.weirdval1} and \eqref{e.weirdval2} hold for every $n\geq 2$,
	$v(\KK(T,S))\supset \ZZ+\ZZ r_1+\ZZ r_2+\ldots \;.$
	To obtain a specific example
	set  $r_1=1$, and define $r_n$ inductively by
	$r_n=\max\{(b_n+(d_n-1)r_{n-1}, n^n\}+\frac1n$ for all $n\geq 2$,
	to obtain
	$$v(\KK(T,S))=\QQ.$$
\end{example}

\subsection{Semialgebraic varieties and $\FF$-extension}\label{s:Intro2}
Let $\KK$ be a real closed field.  

\begin{defn} A set $S\subset\KK^n$ is {\em semialgebraic} \label{n.9}
	if 
	\bq\label{eq:1/2alg}
	S=\bigcup_{\mathrm{finite}}\bigcap_{\mathrm{finite}}\{x\in\KK^n:\, P_j(X)=0\}\cap\{x\in\KK^n:\, Q_i(X)>0\}\,,
	\eq
	where $P_i(X),Q_i(X)\in\KK[X]$.
\end{defn}
For example one can show that a subset $S\subset\KK$ is semialgebraic if and only if it is the finite union of points and open intervals, bounded or unbounded.

\begin{defn} A map $f\colon S_1\to S_2$ between semialgebraic sets if {\em semialgebraic} if its graph
	is a semialgebraic subset of $\KK^n\times\KK^m$, where $S_1\subset\KK^n$ and $S_2\subset\KK^m$.
\end{defn}

The following is a fundamental result:

\begin{prop}[{\cite[Proposition~2.2.7]{BCR}}] The image of a semialgebraic set via a semialgebraic map is semialgebraic.
\end{prop}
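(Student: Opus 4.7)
The plan is to reduce the statement to the Tarski--Seidenberg projection theorem, which asserts that the image of a semialgebraic set in $\KK^{n+1}$ under the projection $\pi:\KK^{n+1}\to\KK^n$ onto the first $n$ coordinates is semialgebraic in $\KK^n$.

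Granting this, the proof is then a packaging argument. Let $f:S_1\to S_2$ be semialgebraic with $S_1\subset\KK^n$, $S_2\subset\KK^m$, and let $\Gamma_f\subset\KK^n\times\KK^m$ be its graph, which is semialgebraic by hypothesis. For any semialgebraic subset $A\subset S_1$ I would write
\[
f(A)\;=\;\pi_m\bigl((A\times\KK^m)\cap \Gamma_f\bigr),
\]
where $\pi_m:\KK^{n}\times\KK^m\to\KK^m$ is the projection onto the last $m$ coordinates. The product $A\times\KK^m$ is semialgebraic (just adjoin the trivial condition $1>0$ in the extra variables to a defining system for $A$), and intersections of semialgebraic sets are semialgebraic since the defining description in \eqref{eq:1/2alg} is closed under taking finite unions and intersections of the atomic conditions $\{P_j=0\}$ and $\{Q_i>0\}$. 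Factoring $\pi_m$ as the composition of $n$ successive one-coordinate projections and iterating Tarski--Seidenberg then gives that $f(A)$ is semialgebraic.

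The real content, and the main obstacle, is therefore Tarski--Seidenberg itself. This I would not reprove here but only sketch: over a real closed field it suffices, by the Boolean structure of semialgebraic sets, to eliminate the last variable $T$ from a finite conjunction of atomic conditions $P_j(X,T)=0$, $Q_i(X,T)>0$, i.e.\ to describe semialgebraically the set of $X\in\KK^n$ for which a certain univariate polynomial system in $T$ has a solution. Using Sturm's theorem (or Thom's lemma) one can express the number and relative ordering of the real roots of the $P_j(X,\cdot)$ and $Q_i(X,\cdot)$ by sign conditions on a finite family of polynomials in $X$ alone, coming from iterated remainder sequences; intersecting with the sign conditions of the $Q_i$ evaluated between consecutive roots then yields an explicit semialgebraic description of the projection. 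Since this is classical material, I would simply invoke \cite[Proposition~2.2.1]{BCR} (the Tarski--Seidenberg principle for real closed fields) at this step, and conclude the proposition as above.
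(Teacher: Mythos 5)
Your proof is correct and is essentially the standard argument (the one behind the cited \cite[Proposition~2.2.7]{BCR}): the paper itself gives no proof, only the citation, and your reduction to the Tarski--Seidenberg projection theorem via the graph identity $f(A)=\pi_m\bigl((A\times\KK^m)\cap\Gamma_f\bigr)$ is exactly how it is done in the reference.
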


For any real closed field $\KK$ we define the norm  $N\colon\KK^n\to\KK_{\geq0}$ by
\bqn
N(x)=:\sqrt{\sum_{j=1}^n x_j^2}\,.
\eqn
Given $x\in\KK^n$ and $r\in\KK_{\geq0}$, we define the {\em open ball}
\bqn
B(x,r):=\{y\in\KK^n:\, N(x-y)<r\} \;.
\eqn
Open balls form a basis of open sets for the product topology on $\KK^n$,
called the {\em Euclidean topology}.
%
If $\KK=\RR$ this is the familiar topology, but if $\KK$ is any other  real closed field and $n\geq1$,
$\KK^n$ is totally disconnected in the Euclidean topology.
We will say that a semialgebraic map is \emph{continuous}, if it is with respect to the Euclidean topology.

\begin{defn} A semialgebraic set $S\subset\KK^m$ is {\em semialgebraically connected}  \label{n.10}
	if it cannot be written as the disjoint union of two non-empty closed semialgebraic subsets.
\end{defn}

The following is a classical theorem of Whitney in the case $\KK=\RR$:

\begin{thm}[{\cite[Theorem 2.4.4]{BCR}}] Any semialgebraic subset $S\subset\KK^n$ is the disjoint union of a finite number 
	of connected semialgebraic sets $C_1,\dots, C_p$ that are both open and closed in $S$.
	These sets are called the {\em semialgebraic connected components} of $S$.
\end{thm}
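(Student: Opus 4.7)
The plan is to establish the theorem via cylindrical algebraic decomposition (CAD), which is the standard companion to the Tarski--Seidenberg principle underlying semialgebraic geometry over an arbitrary real closed field.

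First I would invoke the CAD theorem: after possibly enlarging the defining polynomials of $S$ in \eqref{eq:1/2alg}, one obtains a finite partition $\KK^n = \bigsqcup_{i=1}^N C_i$ into semialgebraic cells, each $C_i$ semialgebraically homeomorphic to some $\KK^{d_i}$, and such that $S$ is the union of those cells it contains. The key additional property of a CAD is \emph{frontier compatibility}: the frontier $\overline{C_i}\smallsetminus C_i$ of each cell is itself a finite union of cells of strictly smaller dimension. Each cell $C_i$ is semialgebraically connected: by induction on dimension and the semialgebraic homeomorphism $C_i \cong \KK^{d_i}$, it suffices to note that $\KK$ itself is semialgebraically connected, since any clopen semialgebraic subset of $\KK$ is a finite union of points and open intervals and can be clopen in $\KK$ only if it equals $\KK$.

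Next I would define a relation on the cells lying in $S$: set $C_i \sim C_j$ if $C_i \subset \overline{C_j}$ or $C_j \subset \overline{C_i}$. Let $\approx$ denote the equivalence relation generated, and for an $\approx$-class $[C_i]$ put $D_{[C_i]} := \bigcup_{C_j \in [C_i]} C_j$, a finite union of semialgebraic sets, hence semialgebraic. Frontier compatibility forces the frontier of any cell of $D_{[C_i]}$ inside $S$ to consist of cells of $D_{[C_i]}$, so $D_{[C_i]}$ is closed in $S$; and since the complement of $D_{[C_i]}$ in $S$ is again a union of entire $\approx$-classes, it is closed too, so $D_{[C_i]}$ is also open in $S$. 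Finally, I would verify that each $D_{[C_i]}$ is semialgebraically connected: a partition $D_{[C_i]} = A \sqcup B$ into nonempty clopen semialgebraic subsets would, by connectedness of each individual cell, force every cell of $D_{[C_i]}$ to lie entirely in $A$ or entirely in $B$; but whenever $C_j \subset \overline{C_k}$ with both cells in $D_{[C_i]}$, any clopen subset of $D_{[C_i]}$ containing $C_k$ must also contain $C_j$, so the two sides of the partition cannot be separated across any $\sim$-edge, contradicting the connectedness of the $\approx$-chain.

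Uniqueness and the count being finite are then automatic: the $D_{[C_i]}$ are pairwise disjoint, semialgebraically connected, and clopen in $S$, and any other decomposition of $S$ into clopen semialgebraically connected pieces must agree with this one by a standard argument.

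The main obstacle is really the first step. Semialgebraic connectedness is not a topological notion in the Euclidean sense, for $\KK^n$ is totally disconnected whenever $\KK\neq\RR$; one cannot borrow intuition from classical connectedness and must invest in CAD, whose construction over a general real closed field rests on the Tarski--Seidenberg transfer principle. Once CAD with frontier compatibility is in hand, the rest of the argument is a clean combinatorial packaging of cells into equivalence classes.
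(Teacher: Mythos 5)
The paper states this theorem by citing {\cite[Theorem 2.4.4]{BCR}} and gives no proof of its own, so there is no in-paper argument for your proposal to be matched against. Your overall plan---cylindrical decomposition into semialgebraically connected cells, then gluing cells that meet each other's closures---is essentially the argument used in BCR, so the structure is the right one.

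The one genuine gap is your appeal to \emph{frontier compatibility} of a CAD as if it were part of the package. The cylindrical algebraic decomposition theorem (BCR, \S 2.3) partitions $\KK^n$ into cells and makes $S$ a union of cells, but it does \emph{not} arrange that the frontier of each cell is a union of cells; the ``frontier condition'' is a further, nontrivial refinement theorem, not a consequence of the basic CAD statement, and invoking it silently leaves a hole. Fortunately it is not needed. Replace your relation by: $C_i\sim C_j$ whenever $\overline{C_i}\cap C_j\neq\varnothing$ or $C_i\cap\overline{C_j}\neq\varnothing$, generate an equivalence $\approx$, and set $D_\alpha=\bigcup_{C_i\in\alpha}C_i$. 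Then $D_\alpha$ is closed in $S$: if $x\in\overline{D_\alpha}\cap S$, finiteness gives $x\in\overline{C_i}$ for some $C_i\in\alpha$, and $x$ lies in some cell $C_j\subset S$, so $\overline{C_i}\cap C_j\neq\varnothing$, forcing $C_j\in\alpha$ and $x\in D_\alpha$; openness follows by applying this to the complementary classes. Semialgebraic connectedness of $D_\alpha$ uses the elementary lemma that if $A$, $B$ are semialgebraically connected semialgebraic sets with $\overline{A}\cap B\neq\varnothing$, then $A\cup B$ is semialgebraically connected (a clopen separation of $A\cup B$ would have to put $A$ entirely on one side and $B$ on the other, but then the side containing $B$ is open in $A\cup B$ yet contains a point of $\overline{A}$, a contradiction); iterate along a $\sim$-chain in $\alpha$. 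With this modification, the rest of your argument---connectedness of a single cell via the semialgebraic homeomorphism with $\KK^d$ and the one-dimensional base case, finiteness, disjointness, and uniqueness---goes through unchanged.
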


If the groundfield $\KK$ is the field of real numbers, the notion of connectedness and semialgebraic connectedness coincide.
In general connectedness is of course stronger than semialgebraic connectedness.

\begin{defi}\label{d.extension}
	Given a semialgebraic set $S\subset\KK^n$ and a real closed field $\FF\supset\KK$, 
	we denote by $S_\FF$ the \emph{$\FF$-extension} of $S$, \label{n.11}
	that is the set
	\bqn
	S_\FF:=\bigcup_{\mathrm{finite}}\bigcap_{\mathrm{finite}}\{x\in\FF^n:\, P_j(X)=0\}\cap\{x\in\FF^n:\, Q_i(X)>0\}\,,
	\eqn
	where $P_i(X),Q_i(X)\in\KK[X]$ are the same polynomials that define $S$ in \eqref{eq:1/2alg}.  
\end{defi}
It can be shown that the set $S_\FF$ is independent of the choice of the finite set of polynomials defining $S$, 
\cite[Proposition~5.1.1]{BCR}.

We can also consider extensions of semialgebraic functions $f\colon S_1\to S_2$ between semialgebraic set $S_1\subset\KK^m$ and $S_2\subset \KK^n$.
In fact the set $(\mathrm{graph}\,f)_\FF$ is the graph of a
semialgebraic map $(S_1)_\FF\to (S_2)_\FF$, which will be denoted by $f_\FF$, \cite[Proposition 5.3.1]{BCR}.

\begin{ppts}
	
	\label{ppts.2.10}
	Let $\KK,\FF$ be real closed fields with $\KK\subset\FF$.
	\be
	\item \cite[Proposition 5.3.5(i)]{BCR}
	The semialgebraic set $S\subset\KK^n$ is open (resp. closed, non-empty) if and only if $S_\FF\subset\FF^n$ is open (resp. closed,  non-empty).
	\item \cite[Propositions 5.3.3 and 5.3.5(ii)]{BCR}
	The semialgebraic map $f\colon S_1\to S_2$ is injective (resp. surjective, bijective, continuous) if and only if
	its extension $f_\FF\colon (S_1)_\FF\to(S_2)_\FF$ is injective (resp. surjective, bijective, continuous).
	\item  \cite[Proposition 5.3.6(ii)]{BCR}
	The semialgebraic connected components of a semialgebraic set $S$ correspond to the semialgebraic connected components
	of its extension $S_\FF$.
	\ee
\end{ppts}

The above properties build on the Tarski--Seidenberg Principle:
namely, if $\KK\subset\FF$ and $S\subset\KK^n$ is semialgebraic with $S_\FF\neq\varnothing$, 
then $S\neq\varnothing$, \cite{BCR}.

\subsection{Reduction modulo an order convex subring}
The  $\FF$-extension is compatible with the reduction modulo an order
convex subring (see Section \ref{subsec:intro1}) as we now describe. This will play an important role in the  proof of Theorem  \ref{thm:4.15}, relating non-standard symmetric spaces to asymptotic cones.

Let  $\KK\subset \calO\subset\FF$ where $\KK, \FF$ are real closed
and $\calO$ is an order convex subring, and hence a valuation ring,
whose maximal ideal is denoted $\calI\subset \calO$. The residue field
$\FF_\calO:=\calO/\calI$ is real closed  and $\FF_\calO\supset\KK$.
Given $V\subset\KK^n$ semialgebraic  we are interested in the relation between the extensions $V_\FF$ and $V_{\FF_\calO}$. Let
\bqn
\pi\colon \calO^n\to \FF_\calO^n
\eqn
be the canonical projection map, which is a morphism of $\KK$-vector spaces. We define:
\begin{defi}\label{d.2.15}
	$$V_\FF(\calO):=V_\FF\cap \calO^n.$$ \label{n.12}
\end{defi}	

We have then:

\begin{prop}\label{prop:4.18} Assume that $V\subset\KK^n$ is closed semialgebraic.   
	Then $\pi(V_\FF(\calO))=V_{\FF_\calO}$.
\end{prop}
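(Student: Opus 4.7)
The proof has two inclusions. For $\pi(V_\FF(\calO)) \subseteq V_{\FF_\calO}$, I would invoke the finiteness theorem for closed semialgebraic sets \cite[Thm.~2.7.2]{BCR} to write $V = \bigcup_i \bigcap_j \{P_{ij}(x) \geq 0\}$ with $P_{ij} \in \KK[X]$. For $y \in V_\FF \cap \calO^n$, some set of inequalities $P_{ij}(y) \geq 0$ holds in $\FF$; since the coefficients of $P_{ij}$ lie in $\KK \subseteq \calO$, we have $P_{ij}(y) \in \calO$, and the order-preserving ring homomorphism $\pi$ yields $P_{ij}(\pi(y)) = \pi(P_{ij}(y)) \geq 0$, placing $\pi(y)$ in $V_{\FF_\calO}$.

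For the reverse inclusion, I may assume $V \neq \emptyset$ (otherwise both sides are empty by Properties~\ref{ppts.2.10}(1)). Given $\bar y \in V_{\FF_\calO}$, I fix any lift $y^{(0)} \in \calO^n$ and any point $v_0 \in V \cap \KK^n \subseteq \calO^n$. The key tool is the $1$-Lipschitz semialgebraic distance function $d_V \colon \KK^n \to \KK_{\geq 0}$, $d_V(x) := \inf_{v \in V} N(x-v)$, characterized by $V = d_V^{-1}(0)$ since $V$ is closed. By Tarski--Seidenberg these properties transfer to give $V_\FF = d_{V,\FF}^{-1}(0)$ and $V_{\FF_\calO} = d_{V,\FF_\calO}^{-1}(0)$, and the bound $d_{V,\FF}(y^{(0)}) \leq N(y^{(0)} - v_0) \in \calO$ shows $d_{V,\FF}(y^{(0)}) \in \calO$.

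The crux is to prove $d_{V,\FF}(y^{(0)}) \in \calI$. The transferred $1$-Lipschitz inequality $|d_{V,\FF}(y) - d_{V,\FF}(y')| \leq N(y - y')$ implies that $\pi \circ d_{V,\FF}$ descends through $\pi \colon \calO^n \to \FF_\calO^n$ to a well-defined map $\tilde d \colon \FF_\calO^n \to \FF_\calO$. The key lemma to establish is $\tilde d = d_{V, \FF_\calO}$, an instance of the general principle that continuous semialgebraic functions commute with reduction modulo $\calI$; the proof relies on the compatibility of $\pi$ with polynomial evaluation and with non-negative square roots ($\pi(\sqrt{\alpha}) = \sqrt{\pi(\alpha)}$ for $\alpha \in \calO_{\geq 0}$), combined with the uniform continuity of $d_V$ on closed balls transferred across fields. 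Once the lemma is in hand, $\bar y \in V_{\FF_\calO}$ gives $d_{V,\FF_\calO}(\bar y) = 0$, whence $d_{V,\FF}(y^{(0)}) \in \calI$.

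To conclude, I would use semialgebraic compactness (transferred from $\RR$) to see that the infimum defining $d_{V,\FF}(y^{(0)})$ is achieved on the closed bounded semialgebraic set $V_\FF \cap \bar B_\FF(y^{(0)}, N(y^{(0)} - v_0))$: some $y \in V_\FF$ satisfies $N(y - y^{(0)}) = d_{V,\FF}(y^{(0)}) \in \calI$. Then each $|y_i - y^{(0)}_i| \leq N(y - y^{(0)}) \in \calI$, so $y \in \calO^n$ and $\pi(y) = \bar y$, completing the lift. The main obstacle is the identification $\tilde d = d_{V,\FF_\calO}$: it captures the compatibility of the residue map with continuous semialgebraic evaluation, and avoiding circularity requires a direct argument via the explicit semialgebraic description of $d_V$ rather than a naive comparison of infima over $V_\FF$ and $V_{\FF_\calO}$, since $V \cap \KK^n$ need not be dense in $V_{\FF_\calO}$.
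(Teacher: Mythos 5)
Your approach is correct in outline but takes a genuinely different route from the paper's, and one step of your sketch needs to be firmed up.

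The paper handles the hard inclusion $\pi(V_\FF(\calO)) \supseteq V_{\FF_\calO}$ in two stages: for $V$ algebraic, say $V=\{f=0\}$, it invokes a \L ojasiewicz-type identity $d_V^L = hf$ together with a polynomial growth bound on $h$ to deduce $d_{V_\FF}(y)^L \leq c(1+N(y)^2)^p|f(y)| \in \calI$ for any lift $y$ of $\bar y\in V_{\FF_\calO}$, and then picks $z\in V_\FF$ with $N(y-z) < d_{V_\FF}(y)+\epsilon$, $\epsilon\in\calI$; for general closed semialgebraic $V$, it writes $V=p_2(W)$ for some algebraic $W$ and pushes the algebraic case forward using surjectivity of $p_2$ at the residue level. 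Your proof avoids both the \L ojasiewicz inequality and the projection trick and instead rests entirely on the single lemma $\pi\circ d_{V,\FF} = d_{V,\FF_\calO}\circ\pi$ on $\calO^n$.

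That lemma is in fact true, and there is a cleaner proof of it than the one you gesture at. You appeal to ``compatibility of $\pi$ with polynomial evaluation and square roots, combined with uniform continuity,'' but the Lipschitz estimate only shows $\pi\circ d_{V,\FF}$ descends to \emph{some} function $\tilde d$ on $\FF_\calO^n$; it does not by itself identify $\tilde d$ with $d_{V,\FF_\calO}$, and indeed you correctly flag the circularity danger of comparing infima over $V_\FF$ and $V_{\FF_\calO}$. The decisive observation is that the graph $\Gamma\subset\KK^{n+1}$ of $d_V$ is \emph{closed} semialgebraic (since $d_V$ is continuous), so the easy inclusion you already proved — applied to $\Gamma$ rather than to $V$ — gives $\pi(\Gamma_\FF(\calO))\subseteq\Gamma_{\FF_\calO}$; combining this with the fact that $\Gamma_{\FF_\calO}$ is the graph of $d_{V,\FF_\calO}$ (Lemma~\ref{l.2.14}(3)) yields exactly $\pi(d_{V,\FF}(y)) = d_{V,\FF_\calO}(\pi(y))$ whenever $y\in\calO^n$ and $d_{V,\FF}(y)\in\calO$. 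So there is no circularity: the key lemma is a corollary of the easy inclusion, not of the hard one. You should make this explicit, since as written the sketch of the lemma does not constitute a proof.

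Two smaller remarks. First, the final semialgebraic-compactness step is correct but unnecessary: once $d_{V,\FF}(y^{(0)})\in\calI$ you can, exactly as the paper does, pick $\epsilon\in\calI_{>0}$ and $z\in V_\FF$ with $N(y^{(0)}-z)<d_{V,\FF}(y^{(0)})+\epsilon\in\calI$, and conclude $z\in\calO^n$ with $\pi(z)=\bar y$ without invoking that the infimum is attained. Second, what your approach buys is a uniform one-pass argument for all closed semialgebraic $V$ with no special treatment of the algebraic case and no auxiliary algebraic set $W$, at the price of having to prove the reduction-commutes-with-$d_V$ lemma; the paper's version front-loads the work into the \L ojasiewicz estimate, which is arguably more self-contained but requires the extra reduction to the algebraic case.
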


\begin{remark}
	Proposition \ref{prop:4.18} is due to Thornton \cite[Proposition 2.38]{Thornton} in the case where $V$ is real algebraic, $\FF$ the field of hyperreals and $\FF_\calO$ a Robinson field.
\end{remark}	
In the proof of Proposition \ref{prop:4.18} we will use the distance function to $V$ from a point $x$ in $\KK^n$:
$$d_V(x):=\inf\{N(x-y):\, y\in V\}\in\KK_{\geq 0}$$
and recall its properties in the following lemma:
\begin{lemma}\label{l.2.14}\
	\begin{enumerate}
		\item $d_V:\KK^n\to\KK_{\geq 0}$ is well defined.
		\item $d_V$ is semialgebraic, continuous and when $V\subset \KK^n$ is closed, its vanishing set is precisely $V$.
		\item The $\FF$-extension of $d_V$ is $d_{V_\FF}$.
	\end{enumerate} 	
\end{lemma}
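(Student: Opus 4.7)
The plan is to address each item using the structure theorem for semialgebraic subsets of a real closed field together with the Tarski--Seidenberg transfer principle.

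For (1), fix $x\in\KK^n$ and consider $T_x:=\{N(x-y):y\in V\}\subset\KK_{\geq 0}$. This is semialgebraic as the image of $V$ under the semialgebraic map $y\mapsto N(x-y)$, hence by the classification of semialgebraic subsets of $\KK$ it is a finite union of intervals and points with endpoints in $\KK\cup\{\pm\infty\}$. Since $T_x$ is bounded below by $0$, each constituent interval has a finite infimum in $\KK$, and $\inf T_x$ is the minimum of finitely many such infima, which lies in $\KK_{\geq 0}$.

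For (2), the graph $\{(x,r):r=d_V(x)\}$ is cut out by the first-order formula
\[
(\forall y)(y\in V\Rightarrow N(x-y)\geq r)\,\wedge\,(\forall \epsilon>0)(\exists y)(y\in V\wedge N(x-y)<r+\epsilon),
\]
and is therefore semialgebraic by Tarski--Seidenberg. Continuity follows from the triangle inequality $|d_V(x)-d_V(x')|\leq N(x-x')$, proved as in the classical case (and still meaningful over $\KK$ since it is a polynomial assertion). For the vanishing locus: $x\in V$ gives $d_V(x)\leq N(x-x)=0$; conversely, if $V$ is closed and $d_V(x)=0$ then every ball $B(x,\epsilon)$ with $\epsilon\in\KK_{>0}$ meets $V$, which is incompatible with $x$ lying in the complement of $V$, which is Euclidean-open.

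For (3), observe that the defining formula of the graph of $d_V$ from (2) involves only the polynomials defining $V$ together with those defining $N$. By Properties~\ref{ppts.2.10}, the $\FF$-extension of a semialgebraic set is obtained by interpreting the same defining polynomial conditions over $\FF$; hence the $\FF$-extension of the graph of $d_V$ is the semialgebraic subset of $\FF^n\times\FF_{\geq 0}$ cut out by the same formula with $V$ replaced by $V_\FF$. But this is precisely the defining formula for the graph of $d_{V_\FF}$, proving $(d_V)_\FF=d_{V_\FF}$.

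The main delicacy lies in (1): a real closed field need not be order-complete, so one cannot invoke a general existence theorem for infima. The crucial input is the finiteness of the decomposition of a semialgebraic subset of $\KK$ into intervals with endpoints in $\KK\cup\{\pm\infty\}$, without which the infimum need not lie in $\KK$ at all. Once this is established, parts (2) and (3) are essentially formal consequences of the transfer principle.
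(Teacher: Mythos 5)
Your proof is correct and reconstructs essentially the argument the paper delegates to the references: the paper's proof is simply a citation to \cite[Proposition~2.2.8]{BCR} for parts (1) and (2) and to \cite[Proposition~5.1.1]{BCR} for part (3), and you have unpacked that citation using the same underlying ideas (the structure theorem for semialgebraic subsets of $\KK$ to extract the infimum, and Tarski--Seidenberg for definability and transfer). You correctly identified that the delicate point is the existence of the infimum in a possibly non-order-complete real closed field, which is precisely where the finite interval decomposition is needed.

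One small imprecision in part (3): Properties~\ref{ppts.2.10} as listed in the paper address openness/closedness, injectivity/surjectivity/continuity of maps, and connected components, but they do not directly state that the $\FF$-extension commutes with first-order definable sets. The fact you actually need is that the $\FF$-extension of a semialgebraic set is independent of the quantifier-free formula chosen to present it (this is \cite[Proposition~5.1.1]{BCR}, the reference the paper itself invokes), combined with uniform quantifier elimination to pass from your quantified formula to a quantifier-free one before extending. The conclusion is correct; it is only the pointer to Properties~\ref{ppts.2.10} that is not quite the right citation.
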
	
\begin{proof}
	Assertions (1) and (2) are the content of \cite[Proposition 2.2.8]{BCR}. The third follows from the formula given in \cite[Proposition 2.2.8 (ii)]{BCR} for the graph of $d_V$ and \cite[Proposition 5.1.1]{BCR}.
\end{proof}	
\begin{proof}[Proof of Proposition \ref{prop:4.18}]
	(1) First we treat the case where $V$ is algebraic, in which case we may assume 
	$$V=\{x\in \KK^n:\, f(x)=0\}$$
	for an appropriate $f\in\KK[x_1,\ldots, x_n]$.
	Observe first that since $f$ has coefficients in $\KK$ and $\pi:\calO^n\to \FF_\calO^n$ is an $\calO$-module morphism we have $\pi(V_\FF(\calO))\subset V_{\FF_\calO}$.
	
	Since the null sets of $f$ and $d_V$ coincide there exists \cite[Theorem 2.6.6]{BCR} a continuous semialgebraic function $h:\KK^n\to\KK$ and an integer $L\geq 1$ with 
	\begin{equation}\label{e.4.18.1}
		d_V^L=hf.
	\end{equation}
	By \cite[Proposition 2.6.2]{BCR} there exist $c\in \KK_{\geq 0}$ and $p\in\NN$ with 
	\begin{equation}\label{e.4.18.2}
		|h(x)|\leq c(1+N(x)^2)^p.
	\end{equation}
	The inequalities \eqref{e.4.18.1} and \eqref{e.4.18.2} hold for the $\FF$-extensions of the corresponding functions; taking into account that the $\FF$-extension of $d_V$ is $d_{V_{\FF}}$ (Lemma \ref{l.2.14}) we get
	\begin{equation}\label{e.4.18.3}
		d_{V_\FF}(x)^L\leq c(1+N(x)^2)^p|f(x)| \quad \forall x\in\FF^n.
	\end{equation}
	Let $x\in V_{\FF_\calO}$ and $y\in \calO^n$ with $\pi(y)=x$. Since $f(x)=0$ we have $f(y)\in \calI$ and hence $|f(y)|\in \calI$. Since $c\in\KK\subset \calO$ and $(1+N(y)^2)^p\in \calO$ we conclude that $c(1+N(y)^2)^p|f(y)|\in \calI$ which by \eqref{e.4.18.3} and order convexity of $\calI$ implies
	$$d_{V_\FF}(y)=\inf\{N(y-z):\, z\in V_\FF\}\in \calI.$$
	Let $\epsilon>0$ with $\epsilon\in \calI$ and pick $z\in V_\FF
	$ with $N(y-z)< d_{V_\FF}(y)+\epsilon\in \calI$. This implies 
	$$\sum_{i=1}^n(y_i-z_i)^2\in \calI,$$ 
	thus $y_i-z_i\in \calI$, $1\leq i\leq n$, and hence $x=\pi(y)=\pi(z)$ with $z\in V_\FF\cap \calO^n$.
	
	\medskip
	(2) Now we assume that $V\subset \KK^n$ is semialgebraic closed. Then $V$ is a finite union of basic closed sets \cite[Theorem 2.7.2]{BCR}, say $V=\bigcup_{i=1}^r S_i$ with 
	$$S_i=\{x\in\KK^n:\, f_1^{(i)}(x)\geq 0,\ldots, f_r^{(i)}(x)\geq 0\}.$$
	Then $V_\FF=\bigcup_{i=1}^r S_{i,\FF}$ and $V_{\FF_\calO}=\bigcup_{i=1}^r S_{i,\FF_\calO}$.
	
	Now the projection $\pi:\calO\to \FF_\calO$ is order preserving in the sense that if $x\geq 0$ in $\calO$ then $\pi(x)\geq 0$ in $\FF_\calO$. Thus if $x\in \calO^n$ with $f_l^{(i)}(x)\geq 0$ then $f_l^{(i)}(\pi(x))\geq 0$. This implies $\pi(S_{i,\FF}(\calO))\subset S_{i,\FF_\calO}$ and hence $\pi(V_\FF(\calO))\subset V_{\FF_\calO}$.
	
	Let now $W\subset \KK^l\times \KK^n$ be an algebraic set such that $p_2(W)=V$ where $p_2:\KK^l\times \KK^n\to \KK^n$ denotes the canonical projection. By the facts previously established we have a well defined commutative square
	\bqn
	\xymatrix{
		W_\FF\cap \calO^{l+n}\ar[r]^{\pi}\ar[d]_{p_2} &W_{\FF_\calO}\ar[d]^{p_2}\subset \FF_\calO^{l+n}\\
		V_\FF\cap \calO^n\ar[r]^{\pi}&V_{\FF_\calO}\subset \FF_\calO^n.
	}
	\eqn
	By (1) we have $\pi(W_\FF\cap \calO^{l+n})=W_{\FF_\calO}$ and by Properties \ref{ppts.2.10} (2) we have $p_2(W_{\FF_\calO})=V_{\FF_\calO}$, which implies $\pi(V_\FF\cap \calO^n)=V_{\FF_\calO}$ which concludes the proof.
\end{proof}	
\subsection{The real spectrum $\Rspec(\calA)$ of a ring $\calA$}\label{subsec:real_spectrum}
We proceed recalling basic facts about the real spectrum of a ring, leading to the real spectrum compactification of real semialgebraic sets. 
We adopt the notation in \cite{BCR}. Another useful reference is \cite{Brum2}.

\begin{cv}  In this paper a ring will always denote a commutative associative ring with unit.
\end{cv}

\begin{defn}\label{d.rsp}
	The \emph{real spectrum} $\Rspec(\calA)$ of a ring $\calA$ \label{n.13}
	is the set of prime cones, 
	namely the subsets  $\alpha\subset \calA$ with the properties that 
	\begin{enumerate}
		\item  $-1\notin\alpha$;
		\item $\alpha+\alpha\subset \alpha,\; \alpha\cdot\alpha\subset \alpha$;
		\item$ \alpha\cup -\alpha=\calA$;
		\item $\alpha\cap-\alpha$ is a prime ideal in $\calA$.
	\end{enumerate} 
\end{defn}

It will be useful to have different ways to think about points in the real spectrum of a ring $\calA$. We quote from \cite[Proposition 7.1.2]{BCR}:
\begin{prop}\label{p.charrsp}
	The following data are equivalent:
	\begin{enumerate}
		\item a prime cone $\alpha$ in $\calA$;
		\item a pair $(\fp, \leq)$ consisting of a prime ideal $\fp$ and an
		ordering  $\leq$ on the field of fractions $\Frac\left(\calA/{\fp}\right)$ of $\calA/{\fp}$; \label{n.14}
		\item an equivalence class of pairs $(\phi,\FF)$ where
		$\phi:\calA\to\FF$ is a homomorphism to a real closed field $\FF$ that is the real closure of the field of fractions of $\phi(\calA)$ and $(\phi_1,\FF_1)$, $(\phi_2,\FF_2)$ are equivalent if there exists an isomorphism $\psi:\FF_1\to\FF_2$ such that $\phi_2=\psi\phi_1$;
		\item an equivalence class of pairs $(\phi,\FF)$ where $\phi:\calA\to\FF$ is a homomorphism to a real closed field $\FF$, for the smallest equivalence relation such that  $(\phi_1,\FF_1)$ and $(\phi_2,\FF_2)$ are equivalent if there is a homomorphism $\psi:\FF_1\to\FF_2$ such that $\phi_2=\psi\phi_1$. \label{n.15}
	\end{enumerate}	
\end{prop}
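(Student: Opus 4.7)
The plan is to construct explicit bijections between the four descriptions and verify well-definedness using standard properties of ordered fields, most importantly the uniqueness up to unique order-preserving isomorphism of the real closure of an ordered field.

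First I would show $(1)\Leftrightarrow(2)$. Given a prime cone $\alpha$, set $\fp:=\alpha\cap(-\alpha)$, which is a prime ideal by axiom~(4). The image $\ov\alpha$ of $\alpha$ in $\calA/\fp$ satisfies $\ov\alpha\cup(-\ov\alpha)=\calA/\fp$ and $\ov\alpha\cap(-\ov\alpha)=\{0\}$, so it defines a total order on the integral domain $\calA/\fp$ compatible with the ring operations. This order then extends uniquely to $\Frac(\calA/\fp)$ by declaring $a/b\geq 0$ iff $ab\in\ov\alpha$ (independence of representative follows from $b^2\geq 0$). Conversely, given $(\fp,\leq)$, pull back via $\calA\twoheadrightarrow\calA/\fp\hookrightarrow\Frac(\calA/\fp)$ the set of nonnegative elements; axioms~(1)--(4) are immediate.

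Next I would handle $(2)\Leftrightarrow(3)$. Given $(\fp,\leq)$, let $\FF$ be the real closure of the ordered field $\Frac(\calA/\fp)$ (which exists and is unique up to unique order-preserving isomorphism over $\Frac(\calA/\fp)$, by the discussion in \S\ref{subsec:intro1}), and let $\phi:\calA\to\FF$ be the composition $\calA\twoheadrightarrow\calA/\fp\hookrightarrow\Frac(\calA/\fp)\hookrightarrow\FF$. Then $\phi(\calA)=\calA/\fp$, its field of fractions inside $\FF$ is $\Frac(\calA/\fp)$, and $\FF$ is the real closure of this field, so $(\phi,\FF)$ is a representative as in (3). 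Conversely, given $(\phi,\FF)$ as in (3), set $\fp:=\ker\phi$ and define $\leq$ on $\Frac(\calA/\fp)=\Frac(\phi(\calA))\subset\FF$ by restriction of the order on $\FF$. The crucial point is well-definedness of the equivalence in (3): if $(\phi_1,\FF_1),(\phi_2,\FF_2)$ give rise to the same $(\fp,\leq)$, then both $\FF_i$ are real closures of $(\Frac(\calA/\fp),\leq)$, so there is a unique order-preserving isomorphism $\psi:\FF_1\to\FF_2$ fixing this common subfield pointwise, and hence $\phi_2=\psi\circ\phi_1$.

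Finally for $(3)\Leftrightarrow(4)$, the map (3)$\to$(4) is tautological, since the class of $(\phi,\FF)$ in (3) is contained in its class in (4). For the other direction, given $(\phi,\FF)$ as in (4), let $\FF'\subset\FF$ be the real closure of $\Frac(\phi(\calA))$ inside $\FF$; then $(\phi,\FF')$ is a representative of form (3), and $(\phi,\FF)\sim(\phi,\FF')$ in the sense of (4) via the inclusion $\FF'\hookrightarrow\FF$. One checks that two representatives of form (3) equivalent under (4) must already be equivalent under (3): any chain of homomorphisms $\psi$ between such representatives restricts to an order-preserving map between the real closures of the fraction fields of the images, which is forced to be the unique isomorphism above. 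Combining with $(2)\Leftrightarrow(3)$ gives a well-defined bijection between the classes in (4) and the pairs $(\fp,\leq)$.

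The most delicate step is the final verification that the a priori weaker equivalence relation in (4) does not identify more pairs than the one in (3); this is where uniqueness of the real closure over an ordered subfield is essential, together with the observation that any homomorphism of real closed fields is automatically order-preserving (since positive elements are exactly nonzero squares). Everything else is essentially bookkeeping once one has chosen canonical representatives of the form $\bigl(\calA\twoheadrightarrow\calA/\fp\hookrightarrow\ov{\Frac(\calA/\fp)}^{\,r}\bigr)$.
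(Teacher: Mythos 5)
Your proof is correct. The paper itself does not prove this proposition: it cites \cite[Proposition~7.1.2]{BCR} and, in the paragraph following the statement, only records the correspondence maps (the passage from a prime cone $\alpha$ to $\fp_\alpha:=\alpha\cap(-\alpha)$ with the order on $\Frac(\calA/\fp_\alpha)$, the reduction-and-real-closure construction for (2)$\Rightarrow$(3),(4), and $\alpha=\{a:\phi(a)\geq0\}$ for (4)$\Rightarrow$(1)). The explicit maps you construct coincide exactly with these, and you supply the verifications the paper omits.

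The only spot that deserves a touch more care is the final step, where you argue that two representatives of form (3) equivalent in the sense of (4) are already equivalent in the sense of (3). Your phrase ``any chain of homomorphisms\dots is forced to be the unique isomorphism above'' glosses over the fact that the generating relation in (4) is directed and not symmetric. The clean way to close this is the one implicit in the paper's last formula: the assignment $(\phi,\FF)\mapsto\{a:\phi(a)\geq0\}$ is invariant under a single generating relation (because field homomorphisms between real closed fields are order-preserving), hence constant on (4)-classes; then uniqueness of the real closure over the fixed ordered subfield $\Frac(\calA/\fp)$ upgrades equality of prime cones to (3)-equivalence. Your sketch points in this direction but would benefit from being spelled out.
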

In fact one goes from (1) to (2) by taking $\fp_\alpha:=\alpha\cap(-\alpha)$ for $\fp$ and  the unique order $\leq_{\alpha}$ on $\Frac\left(\calA/{\fp_\alpha}\right)$ whose set of positive elements is given by 
$$\left\{\frac{\ov x}{\ov y}:\; xy\in\alpha, \, y\notin\fp_\alpha\right\}$$
where $\ov x\in\calA/\fp_{\alpha}$ denotes the reduction modulo $\fp_{\alpha}$. One goes from (2) to (3) and (4) by composing the reduction $\calA\to\calA/\fp$ with the inclusion of $\Frac\left(\calA/{\fp}\right)$ into a real closure $\FF$ with respect to the ordering $\leq$. One goes from (4) to (1) by taking 
$$\alpha=\{a\in\calA|\; \phi(a)\geq 0\}.$$

\begin{exs} \label{ex.rspecbase} \
	\be
	\item If $\calA=\CC$, then $\Rspec(\calA)=\emptyset$.
	
	\item If $\KK$ is orderable and $V\subset \KK^n$ is a non-empty real
	algebraic subset with coordinate ring $\calA=\KK[V]$, then its real
	spectrum $\rsp V:=\Rspec(\KK[V])$ is not empty.  In fact, every
	point $x\in V$ gives rise to a prime cone
	$\alpha_x:=\{p\in\KK[V]:\,p(x)\geq0\}$, which gives an injection
	$V\hookrightarrow \rsp V$.
	%
	\item 
	(see \cite[Example 7.1.4]{BCR} in the case where $\KK=\RR$) If
	$\calA=\KK[X]$ with $\KK$ a real closed subfield of $\RR$, the real
	spectrum has the following simple description.  For every point
	$x\in \RR$ we set
	$$\begin{array}{c}\alpha_x:=\{p\in\KK[X]:\, p(x)\geq 0\}\\
		\alpha_{x^+}:=\{p\in\KK[X]:\, \exists\epsilon>0 ,\,
		\forall y\in]x,x+\epsilon[ ,\, p(y)\geq0\}\\
		\alpha_{x^-}:=\{p\in\KK[X]:\, \exists\epsilon>0,\,
		\forall y\in]x-\epsilon,x[ ,\, p(y)\geq0\}.\end{array}$$
	They are prime cones and
	$\alpha_{x^\pm}\subset \alpha_x$, with equality if and  only if $x\notin\KK$.
	These, together with the  prime cones
	$$\begin{array}{c}\alpha_{+\infty}:=\{p\in\KK[X]:\, \exists m\in\KK ,\,
		\forall y\in]m,+\infty[ ,\, p(y)\geq0\}\\
		\alpha_{-\infty}:=\{p\in\KK[X]:\, \exists m\in\KK ,\,
		\forall y\in]-\infty,m[ ,\, p(y)\geq0\}\end{array}$$
	classify all points of $\Rspec(\calA)$.
	%
	\ee
\end{exs}	

In the sequel, given a prime cone $\alpha$ of $\calA$ we denote by $\phi_\alpha:\calA\to\calA/\frak p_\alpha$ the associated reduction, and by $\FF_\alpha$ the real closure of $\Frac(\calA/\frak p_\alpha)$ with respect to the ordering $\leq_{\alpha}$;  for $f\in\calA$ we denote by $f(\alpha)$ the image $\phi_\alpha(f)$  of $f$ in $\FF_\alpha$. With this notation we have
$$\alpha=\{f\in\calA|\; f(\alpha)\geq 0\}.$$
%
%


\subsection{Topologies on $\Rspec(\calA)$, closed points and specializations}\label{s.ret}
Given a ring $\calA$, we will endow $\Rspec(\calA)$  with two topologies, the constructible topology 
and  the spectral topology.  While it is easy to show that $\Rspec(\calA)$ is compact in the constructible topology, it is also totally disconnected, and, for  $\calA=\KK[V]$, the constructible topology  induces the discrete topology on $V\hookrightarrow \rsp V$ rather than the Euclidean  (recall Example \ref{ex.rspecbase}).
The coarser topology given by the spectral topology has better topological properties, but might contain non-closed points. We will see that we can remedy to this by considering the set of closed points in $\Rspec(\calA)$ with respect to the spectral topology. 

\medskip

The {\em constructible sets} \label{n.16}
are essential in both the constructible and the spectral topology; they are defined as
\bqn
\bigcup_{\mathrm{finite}}\bigcap_{\mathrm{finite}}\{\alpha\in\Rspec(\calA):\,g(\alpha)=0\}\cap\{\alpha\in\Rspec(\calA):\,f(\alpha)>0\}\,,
\eqn
where $f,g$ run through finite subsets of $\calA$.
The {\em constructible topology} has all constructible sets as a basis of open sets. 
If $\Rspec(\calA)$ is endowed with the constructible topology, then the  map $\Rspec(\calA)\to\{0,1\}^\calA$
that to a prime cone in $\Rspec(\calA)$ associates its characteristic function, is continuous. 
This identifies $\Rspec(\calA)$ with a closed subset of $\{0,1\}^\calA$, hence compact and totally disconnected.
The \emph{spectral topology}, \label{n.17}
instead, is the coarser topology whose basis of open sets is given by  
\bqn
U(f_1,\ldots,f_r)=\{\alpha\in \Rspec(\calA)|\; f_i(\alpha)>0,\,i=1,\dots,r\} \,,
\eqn
where $f_i\in\calA$. Since the spectral topology has fewer open sets than the constructible topology,
we deduce that $\Rspec(\calA)$ is compact also with respect to this topology.

However $\Rspec(\calA)$ is not Hausdorff in the spectral topology:
non-closed points arise from the so-called specialization.
If $\alpha,\beta\in\Rspec(\calA)$,
we say that $\beta$ is a {\em specialization } \label{n.18}
of $\alpha$ if $\beta\supset\alpha$.
Chasing the definitions, it is easy to see that
\bqn
\alpha\subset\beta\text{ if and only if }\beta\in\ov{\{\alpha\}}\,,
\eqn
where the closure of $\{\alpha\}$ is in the spectral topology.
Moreover one can prove that the set of prime cones that specialize $\alpha$ is totally ordered by inclusion,
\cite[Proposition 7.1.23]{BCR}, and hence contain a unique maximal element that is closed, \cite[Proposition 7.1.24]{BCR}.
We can thus define a
retraction \label{n.19}
\bq\label{eq:retraction}
\Ret:\Rspec(\calA)\to \Rspecc(\calA)
\eq
onto the set  $\Rspecc(\calA)$  of closed points, which is continuous
\cite[Proposition 7.1.25]{BCR}.
To give one more illustration of how the constructible
topology is used as a tool, we give the proof of the following:

\begin{prop}[{\cite[Proposition 7.1.24]{BCR}}]
	The subspace $\Rspecc(\calA)$ of closed points is compact and Hausdorff.
\end{prop}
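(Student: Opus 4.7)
The plan is twofold, treating compactness and the Hausdorff property separately; compactness will be essentially formal, while Hausdorffness requires a short but not completely obvious separation argument.

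For compactness, I would invoke the continuous retraction $\Ret:\Rspec(\calA)\to\Rspecc(\calA)$ introduced in \eqref{eq:retraction}. Since $\Rspec(\calA)$ has already been shown to be compact in the spectral topology, and $\Ret$ is continuous and surjective onto $\Rspecc(\calA)$, the subspace $\Rspecc(\calA)$ is compact as the continuous image of a compact space.

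The more interesting step is Hausdorffness. Given two distinct closed points $\alpha\neq\beta$, maximality in the specialization order (a closed point equals its own closure) forces $\alpha\not\subset\beta$ and $\beta\not\subset\alpha$; hence one may choose $f\in\alpha\setminus\beta$ and $g\in\beta\setminus\alpha$. Using the characterization $h\in\gamma\iff h(\gamma)\geq 0$ in $\FF_\gamma$ together with the fact that $\gamma\cup(-\gamma)=\calA$, one obtains $f(\alpha)\geq 0$, $f(\beta)<0$, $g(\beta)\geq 0$, $g(\alpha)<0$ in the respective real closed fields.

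The key trick, and in my view the only subtlety of the proof, is to combine $f$ and $g$ into the single element $f-g$: individually, the basic open sets $\{f<0\}$ and $\{g<0\}$ contain $\beta$ and $\alpha$ respectively but need not be disjoint. However, direct evaluation yields $(f-g)(\alpha)>0$ and $(f-g)(\beta)<0$, so the basic spectral open sets
$$U:=\{\gamma\in\Rspec(\calA):(f-g)(\gamma)>0\}\quad\text{and}\quad V:=\{\gamma\in\Rspec(\calA):(g-f)(\gamma)>0\}$$
are respective neighborhoods of $\alpha$ and $\beta$. They are automatically disjoint, since in any totally ordered field an element and its opposite cannot both be strictly positive. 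Intersecting $U$ and $V$ with $\Rspecc(\calA)$ produces the required disjoint neighborhoods and completes the argument.
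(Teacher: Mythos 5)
Your proof is correct. The compactness step is essentially the same in spirit as the paper's (both exploit compactness of $\Rspec(\calA)$ together with the retraction $\Ret$, though you use continuity of $\Ret$ directly while the paper observes that every open set containing $\Ret(\alpha)$ already contains $\alpha$, so a cover of the closed points is a cover of everything). Your Hausdorffness argument, however, is genuinely different and, in my view, sharper. The paper proceeds indirectly: assuming every pair of basic spectral neighborhoods of $\alpha$ and $\beta$ meets, it passes to the constructible topology, uses compactness there to extract a common specialization $\gamma$, and then concludes $\alpha=\Ret(\gamma)=\beta$. This argument is the one that works for closed points of any spectral space and does not use the ordered-field structure. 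Your argument, by contrast, directly exploits the structure of prime cones ($\alpha\cup(-\alpha)=\calA$ and $\alpha=\{f:f(\alpha)\geq 0\}$): maximality of closed points gives $f\in\alpha\setminus\beta$ and $g\in\beta\setminus\alpha$, and the signs $f(\alpha)\geq 0$, $g(\alpha)<0$ (resp. $f(\beta)<0$, $g(\beta)\geq 0$) force $(f-g)(\alpha)>0$ and $(f-g)(\beta)<0$, whence $U(f-g)$ and $U(g-f)$ separate $\alpha$ from $\beta$ and are automatically disjoint in any ordered field. Note that neither $\{f>0\}$ nor $\{g>0\}$ alone would suffice, since the inequalities at $\alpha$ and $\beta$ may be non-strict; the passage to $f-g$ is what makes both strict, and you correctly flag this as the key point. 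Your route buys a more transparent, self-contained separation argument that makes visible exactly where the ordered structure is used, at the cost of not generalizing to arbitrary spectral spaces; the paper's route is the general spectral-space argument, which is perhaps why BCR phrase it that way.
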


\begin{proof}  First observe that if $\alpha\in\Rspec(\calA)$ and $U$ is an open set with $U\ni\Ret(\alpha)$,
	then $U\ni\alpha$;  as a result, any open cover of $\Rspecc(\calA)$ also covers $\Rspec(\calA)$,
	which implies the first assertion.
	
	For the second, let $\alpha,\beta$ be closed points
	and assume that for every $U_1:=U(f_1,\ldots,f_r)$ containing $\alpha$
	and $U_2:=U(g_1,\ldots,g_s)$ containing $\beta$, we have $U_1\cap U_2\neq\varnothing$.
	Since $U_1$ and $U_2$ are closed in the constructible topology and $\Rspec(\calA)$ is compact, 
	we deduce that 
	\bqn
	\bigcap_{\substack{U_1\ni \alpha\\ U_2\ni \beta}}(U_1\cap U_2)\neq\varnothing.
	\eqn
	If $\gamma$ is a point in this intersection, then it follows that $\alpha\in\ov{\{\gamma\}}$, and $\beta\in\ov{\{\gamma\}}$.
	Since $\alpha$ and $\beta$ are closed, then $\alpha=\Ret(\gamma)=\beta$.
\end{proof}

\begin{defn} Let $\FF$ be an ordered field, $R_1, R_2\subseteq\FF$ subrings with $R_1\subseteq R_2$.
	We say that {\em $R_2$ is Archimedean over $R_1$} \label{n.20}
	if every element of $R_2$ is bounded above
	by some element of $R_1$.
\end{defn}

\begin{remark}\label{rem:closure_archimedean}  It is often used that a real closure of an ordered field $\LL$ is Archimedean 
over the field itself.  
Indeed the roots of a polynomial in $\LL[X]$ admit an upper bound in terms of the coefficients of this polynomial.
\end{remark}

The following characterization of closed points is essential for our applications to character varieties, 
we thus give a proof as it is not to be found in \cite{BCR}.
\begin{prop}\label{prop:closed_points} Let $\alpha\in\Rspec(\calA)$ be a prime cone 
	and $\phi_\alpha\colon \calA\to \FF_\alpha$ the corresponding homomorphism defined in Section~\ref{subsec:real_spectrum}.
	The following are equivalent:
	\be
	\item $\alpha$ is closed;
	\item $\FF_\alpha$ is Archimedean over $\phi_\alpha(\calA)$.
	\ee
\end{prop}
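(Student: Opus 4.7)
Set $R := \phi_\alpha(\calA)\subseteq\FF_\alpha$ and recall that $\fp_\alpha:=\alpha\cap(-\alpha)$ coincides with $\ker\phi_\alpha$ and that the pair $(\fp_\alpha,\leq_\alpha)$ determines $\alpha$ via Proposition~\ref{p.charrsp}. The strategy is to study specializations of $\alpha$ through the convex hull of $R$ inside $\FF_\alpha$.

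For the implication $(2)\Rightarrow(1)$, assume $\FF_\alpha$ is Archimedean over $R$ and let $\beta$ be any prime cone with $\alpha\subseteq\beta$. Since then $\fp_\alpha\subseteq\fp_\beta$, the formula $\phi_\alpha(a)\mapsto\phi_\beta(a)$ gives a well-defined order-preserving ring map $R\to\phi_\beta(\calA)$; showing it is injective will force $\fp_\alpha=\fp_\beta$ and hence $\alpha=\beta$. Given $a\in\calA$ with $\phi_\alpha(a)\neq 0$, the Archimedean hypothesis yields $a'\in\calA$ with $|\phi_\alpha(a'a)|\geq 1$, so either $a'a-1\in\alpha$ or $-a'a-1\in\alpha$; applying $\phi_\beta$ via $\alpha\subseteq\beta$ then gives $|\phi_\beta(a'a)|\geq 1$, and in particular $\phi_\beta(a)\neq 0$.

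For the contrapositive of $(1)\Rightarrow(2)$, assume $\FF_\alpha$ is not Archimedean over $R$, so that there exists $y\in\FF_\alpha$ with $0<y<1/r$ for every $r\in R_{>0}$, and I aim to construct a proper specialization $\beta\supsetneq\alpha$. The heart of the argument is to produce a nonzero element of $R$ that is already infinitesimal with respect to $R$. Since $\FF_\alpha$ is algebraic over $\Frac(R)$, clearing denominators in the minimal polynomial of $y$ yields a nontrivial relation
\[
a_n y^n+\cdots+a_1 y+a_0=0,\qquad a_i\in R,
\]
and after dividing by a power of $y$ we may assume $a_0\neq 0$. Setting $A:=\sum_i|a_i|\in R_{>0}$ and using $y<1$ gives $|a_n y^{n-1}+\cdots+a_1|\leq A$, hence $|a_0|\leq Ay$; for any $s\in R_{>0}$, applying $y<1/(sA)$ (which holds because $sA\in R_{>0}$) yields $|a_0|\cdot s\leq sAy<1$, so $a_0$ is infinitesimal with respect to $R$.

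To finish, form the convex hull $\calO:=\{z\in\FF_\alpha:|z|\leq r\text{ for some }r\in R_{>0}\}$, an order-convex valuation subring of $\FF_\alpha$ containing $R$, with maximal ideal $\fm$; by construction $a_0\in(\fm\cap R)\setminus\{0\}$. The composite $\pi\circ\phi_\alpha:\calA\to\calO/\fm$, followed by inclusion into the real closure of its fraction field, represents a prime cone $\beta$: order-compatibility of $\pi$ gives $\alpha\subseteq\beta$, while any $a\in\calA$ with $\phi_\alpha(a)=a_0$ witnesses $\fp_\beta\supsetneq\fp_\alpha$, so $\beta\neq\alpha$. The main obstacle is the middle paragraph: promoting an infinitesimal in $\FF_\alpha$ to an infinitesimal lying inside the subring $R$ is what forces the appeal to algebraicity of $\FF_\alpha$ over $\Frac(R)$ and the polynomial estimate above.
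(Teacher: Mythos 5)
Your proof is correct, and the two halves have different relationships to the paper's argument. For $(2)\Rightarrow(1)$ you prove the implication directly, while the paper argues by contrapositive; but the key step — produce, from the Archimedean hypothesis, an element $a'\in\calA$ with $\phi_\alpha(a')$ large enough that $|\phi_\alpha(a'a)|\geq 1$, and then push this inequality through $\alpha\subseteq\beta$ to force $\phi_\beta(a)\neq 0$ — is the same inequality-transport the paper uses with its element $t\in\fp_\beta\setminus\fp_\alpha$, so this direction is essentially the paper's proof in a different logical packaging. For $(1)\Rightarrow(2)$ your route is genuinely different. The paper first invokes Remark~2.15 (a real closure is Archimedean over the base field via root bounds) to pass from $\FF_\alpha$ not Archimedean over $R$ to $\Frac(R)$ not Archimedean over $R$, extracts an unbounded fraction $\phi_\alpha(f_1)/\phi_\alpha(f_0)$, and manipulates it to show $\phi_\alpha(f_0)$ is a nonzero infinitesimal in $R$; it then constructs the specialization as an abstract prime cone containing $P=\{a-f_0b : a,b\in\alpha\}$. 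You instead start from an infinitesimal $y\in\FF_\alpha$, use the minimal polynomial of $y$ over $\Frac(R)$ to force an infinitesimal down into $R$ (in effect re-deriving a dual form of Remark~2.15 explicitly rather than citing it), and then build the specialization geometrically via the convex-hull valuation ring $\calO\subset\FF_\alpha$ and reduction modulo its maximal ideal $\fm$. What your version buys is that it is self-contained (no appeal to Remark~2.15) and that the construction of $\beta$ via $\calO/\fm$ foreshadows the specialization mechanism of Lemma~\ref{lem:special} later in the paper, making the valuation-theoretic picture of the real spectrum visible already at this point; what the paper's version buys is that it avoids introducing $\calO$ and $\fm$ and stays entirely inside the elementary cone calculus of Definition~\ref{def:cone}. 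Both are valid.
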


Clearly if $\FF_\alpha$ is Archimedean then it is Archimedean over any of its subrings  and,
in particular, $\alpha$ is closed by the proposition.

We start the proof with the following elementary lemma:

\begin{lemma}\label{lem:closed_points} Let $\alpha,\beta\in\Rspec(\calA)$ with $\alpha\subseteq\beta$.  The following are equivalent:
	\be
	\item $\alpha=\beta$;
	\item $\fp_\alpha=\fp_\beta$.
	\ee
\end{lemma}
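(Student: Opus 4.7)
The implication (1) $\Rightarrow$ (2) is immediate from the definition $\fp_\alpha=\alpha\cap(-\alpha)$ recalled after Proposition~\ref{p.charrsp}. So the content of the lemma is the reverse implication, and my plan is to prove it directly by element chase, using the two defining properties of a prime cone that we have not yet exploited: namely $\alpha\cup(-\alpha)=\calA$ (property (3) in Definition~\ref{d.rsp}) and $\fp_\alpha\subset\alpha$ (which follows from $\fp_\alpha=\alpha\cap(-\alpha)$).

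More precisely, assume $\alpha\subseteq\beta$ and $\fp_\alpha=\fp_\beta$; I want to show the reverse inclusion $\beta\subseteq\alpha$. Pick $b\in\beta$. Since $\alpha$ is a prime cone, either $b\in\alpha$, in which case we are done, or $-b\in\alpha$. In the latter case, the inclusion $\alpha\subseteq\beta$ gives $-b\in\beta$ as well, so $b\in\beta\cap(-\beta)=\fp_\beta$. Invoking the hypothesis $\fp_\beta=\fp_\alpha$ and the containment $\fp_\alpha\subseteq\alpha$, we conclude $b\in\alpha$, which finishes the argument.

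I do not expect any real obstacle: the statement is a purely formal consequence of the axioms of prime cones, and the hypothesis $\alpha\subseteq\beta$ is used only to transport $-b$ from $\alpha$ into $\beta$ in the second case. The lemma will then be the key input for Proposition~\ref{prop:closed_points}, where one combines it with the bijection between orderings on $\Frac(\calA/\fp_\alpha)$ extending the one given by $\alpha$ and the prime cones $\beta\supseteq\alpha$ with $\fp_\beta=\fp_\alpha$, to translate closedness of $\alpha$ into a statement about the ordered field $\FF_\alpha$ and its Archimedean subring $\phi_\alpha(\calA)$.
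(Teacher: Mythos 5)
Your proof is correct. The paper actually leaves this lemma unproved (it merely calls it ``elementary''), so there is no textbook proof to compare against, but your argument is exactly the standard one: (1) $\Rightarrow$ (2) by the definition $\fp_\alpha=\alpha\cap(-\alpha)$, and for (2) $\Rightarrow$ (1), given $b\in\beta$, the dichotomy $\alpha\cup(-\alpha)=\calA$ forces either $b\in\alpha$ outright or $-b\in\alpha\subseteq\beta$, whence $b\in\beta\cap(-\beta)=\fp_\beta=\fp_\alpha\subseteq\alpha$. The element chase is complete and the use of the hypothesis $\alpha\subseteq\beta$ is placed exactly where it is needed.
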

%

In the proof of Proposition~\ref{prop:closed_points} we will use the concept of proper cone, more general  than the one of prime cone introduced in Definition \ref{d.rsp}.

\begin{defn}\label{def:cone} Let $\calA$ be a ring.  A {\em cone} $P$ of $\calA$  is a subset $P\subset\calA$
	satisfying the following properties:
	\be
	\item If $a,b\in P$, then $a+b\in P$ and $ab\in P$;
	\item If $a\in \calA$, then $a^2\in P$.
	\ee
	The cone is {\em proper} \label{n.21}
	if in addition:
	\be
	\item[(3)] $-1\notin P$.
	\ee
\end{defn}

Notice that a prime cone is in particular a proper cone and every proper cone is contained in a prime cone,
\cite[Theorem~4.3.7, proof of (i)$\,\Rightarrow\,$(ii)]{BCR}.

\begin{proof}[Proof of Proposition~\ref{prop:closed_points}]
	(2)$\,\Rightarrow\,$(1) We prove the contrapositive of the statement.
	Let us assume that $\alpha$ is not closed, that is that there exists a prime cone $\beta\subset\calA$
	such that $\alpha\subsetneq\beta$.  Then, by the equivalence in Lemma~\ref{lem:closed_points},
	$\fp_\alpha\subsetneq \fp_\beta$.
	Consider then the commutative diagramm
	\bqn
	\xymatrix{
		\calA\ar[r]^{\phi_\beta}\ar[d]_{\phi_\alpha} &\calA/{\fp_\beta}\\
		\calA/{\fp_\alpha}\ar[ur]_{\ov\phi}
	}
	\eqn
	Let $t\in\fp_\beta\smallsetminus\fp_\alpha$.  We may assume that $\phi_\alpha(t)>0$.
	If every element of $\FF_\alpha$ were bounded above by some element of $\phi_\alpha(\calA)$,
	then in particular there would be $a\in\calA$ with 
	\bqn
	\frac{1}{\phi_\alpha(t)}<\phi_\alpha(a)\,,
	\eqn
	that is $1<\phi_a(a)\phi_\alpha(t)$.  By applying $\ov{\phi}$ we get that
	\bqn
	1<\phi_\beta(a)\phi_\beta(t)=0\,,
	\eqn
	since $t\in \fp_\beta$.

	\medskip
	\noindent
	(1)$\,\Rightarrow\,$(2) Let us assume that $\FF_\alpha$ is not Archimedean over $\phi_\alpha(\calA)$.
	We will construct a cone $\beta$ such that $\alpha\subsetneq\beta$. 
	Since $\FF_\alpha$ is not Archimedean over it, $\Frac(\calA/\fp_\alpha)$ cannot be Archimedean over $\phi_\alpha(\calA)$
	(see Remark~\ref{rem:closure_archimedean}).
	Hence there are $f_0, f_1\in \alpha\smallsetminus(-\alpha)$ with 
	\bqn
	\phi_\alpha(a)<\frac{\phi_\alpha(f_1)}{\phi_\alpha(f_0)}
	\eqn
	for all $a\in\calA$.  Since $\phi_\alpha(f_0)>0$, we get
	\bqn
	\phi_\alpha(f_0)\phi_\alpha(a)<\phi_\alpha(f_1) 
	\eqn
	for every $a\in \calA$.  Replacing $a$ by $af_1$ and using that $\phi_\alpha$ is a homomorphism give
	\bqn
	\phi_\alpha(f_0a)\phi_\alpha(f_1)<\phi_\alpha(f_1) \,.
	\eqn
	Since also $\phi_\alpha(f_1)>0$, this implies that
	\bqn
	\phi_\alpha(f_0a)<1
	\eqn
	for all $a\in\calA$, that is
	$1-f_0a\in\alpha\smallsetminus(-\alpha)$ for all $a\in\calA$.  
	
	We claim that 
	\bqn
	P:=\{a-f_0b:\,a,b\in\alpha\}
	\eqn
	is a proper cone in $\calA$.  In fact (1) in Definition~\ref{def:cone} is straightforward; 
	property (2) is also immediate since $\alpha$ is a cone and $\alpha\subset P$.  
	To see (3), notice that if $-1=a-f_0b$ for $a,b\in \alpha$, then $1-f_0b=-a\in-\alpha$,
	which is a contradiction.  Then $P\subseteq\beta$ for some prime cone $\beta$ that strictly contains $\alpha$.
	In fact, since $f_0\in\alpha\smallsetminus(-\alpha)$, so $-f_0\in(-\alpha)\smallsetminus\alpha$.
	Since $-f_0\in P$, this implies that $\alpha\subsetneq P$ and hence $\alpha\subsetneq\beta$.  
\end{proof}

\subsection{Compactifications of (semi)algebraic sets}\label{subsec:compactification_semialgebraic}
Let $\KK$ be a real closed field and $V\subset\KK^n$ a real algebraic set.
We study the real spectrum of the coordinate ring $\calA=\KK[V]$ of $V$ and we set, as in Example \ref{ex.rspecbase}, \label{n.22}
\bqn
\rsp{V}:=\Rspec(\KK[V])\quad\text{ and } \quad\rspcl{V}:=\Rspecc(\KK[V])\,.
\eqn
We mentioned already that every point in $V$ gives rise to a prime
cone and this gives an injection $V\hookrightarrow\rsp{V}$
defined by $x\mapsto \alpha_x:=\{f\in\KK[V]:\,f(x)\geq0\}$.

We will need the following interplay between semialgebraic subsets of $V$ and constructible subsets of $\rsp V$:
\begin{ppts}\label{prop:constr}  \
	\be
	\item For every semialgebraic subset $S\subset V$ there exists a
	unique constructible set $\cons(S)\subset\rsp{V}$, such that
	$\cons(S)\cap V=S,$ \cite[Proposition 7.2.2 (i)]{BCR}.  \label{n.23}
	
	\item The set of polynomial equalities and inequalities defining
	$\cons(S)$ is the same set of polynomial equalities and inequalities
	defining $S$, \cite[Proposition~7.2.2, (ii)]{BCR}.
	\ee
\end{ppts}

\begin{thm}\label{thm:constr} Let $V\subset\KK^n$ be a real algebraic set.
	\be
	\item The map $S\mapsto \cons(S)$ is an isomorphism between the Boolean algebra of semialgebraic subsets of $V$
	and the Boolean algebra of constructible subsets of $\rsp{V}$.
	\item The semialgebraic set $S\subset V$ is open (resp. closed) if and only if the corresponding constructible set $\cons(S)\subset\rsp{V}$
	is open (resp. closed).
	\ee
\end{thm}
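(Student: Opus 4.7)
The plan is to deduce both assertions from Properties~\ref{prop:constr} together with some structural facts about semialgebraic sets already established in \cite{BCR}.

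For part (1), I would first check that $S \mapsto \cons(S)$ is bijective. Injectivity is immediate from Properties~\ref{prop:constr}(1): if $\cons(S_1) = \cons(S_2)$ then
\[
S_1 \;=\; \cons(S_1)\cap V \;=\; \cons(S_2)\cap V \;=\; S_2.
\]
For surjectivity, take any constructible $C \subset \rsp V$ and set $S := C \cap V$. By Properties~\ref{prop:constr}(2) the set $C$ is defined, as a subset of $\rsp V$, by the same Boolean combination of polynomial equalities and inequalities defining $S$ as a subset of $V$; uniqueness in Properties~\ref{prop:constr}(1) then forces $\cons(S) = C$. To see that $\cons$ preserves the Boolean operations, I would observe that any Boolean combination of the defining polynomial conditions produces the same Boolean combination on both sides: negating a system of polynomial inequalities on $V$ yields the complement in $V$, and the analogous statement in $\rsp V$ follows from Proposition~\ref{p.charrsp} (since each prime cone corresponds to an ordered homomorphism into a real closed field, the trichotomy $f(\alpha) > 0$, $f(\alpha) = 0$, $f(\alpha) < 0$ holds pointwise). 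Concretely, one verifies
\[
\cons(V \smallsetminus S) \;=\; \rsp V \smallsetminus \cons(S),
\qquad \cons(S_1 \cup S_2) \;=\; \cons(S_1) \cup \cons(S_2),
\]
by checking that both sides satisfy the characterising condition of Properties~\ref{prop:constr}(1), and then invoking uniqueness.

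For part (2), I would first prove that the inclusion $V \hookrightarrow \rsp V$ is continuous for the Euclidean topology on $V$ and the spectral topology on $\rsp V$. Indeed, a basic spectral-open set $U(f_1,\ldots,f_r)$ pulls back to the Euclidean-open set $\{x \in V : f_i(x) > 0,\ i=1,\ldots,r\}$. Consequently, if $\cons(S)$ is spectral-open (resp.\ spectral-closed), then $S = \cons(S) \cap V$ is Euclidean-open (resp.\ closed) in $V$.

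The converse is the substantive direction. Here I would use the structural fact that every closed semialgebraic subset of $V$ can be written as a finite union of basic closed semialgebraic sets of the form $\{f_1 \geq 0,\ldots, f_k \geq 0\}$ (\cite[Theorem 2.7.2]{BCR}). Given closed $S = \bigcup_i \bigcap_j \{f_{i,j} \geq 0\}$, part (1) yields
\[
\cons(S) \;=\; \bigcup_i \bigcap_j \bigl\{\alpha \in \rsp V : f_{i,j}(\alpha) \geq 0\bigr\},
\]
and each basic set $\{f \geq 0\}$ is spectral-closed, its complement $\{-f > 0\}$ being a basic spectral-open set; hence $\cons(S)$ is spectral-closed. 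The statement for open $S$ follows by taking complements and applying part (1). The main obstacle in the argument is invoking \cite[Theorem 2.7.2]{BCR}, which is where the Tarski--Seidenberg/finiteness phenomenon enters; once that is in hand, everything else reduces to the syntactic transfer principle made available by Properties~\ref{prop:constr}.
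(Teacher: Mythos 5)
Your proof is correct and follows essentially the same route as the paper, which simply attributes part (1) to the Artin--Lang Homomorphism Theorem (via Properties~\ref{prop:constr}) and part (2) to the Finiteness Theorem \cite[Theorem~2.7.2]{BCR}. You have merely unpacked what the paper leaves as a citation: bijectivity and the Boolean-algebra structure via the uniqueness clause in Properties~\ref{prop:constr}(1) and trichotomy in $\FF_\alpha$, and the topological statement via continuity of $V\hookrightarrow\rsp V$ together with the finite-union-of-basic-closed-sets decomposition.
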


The above two properties \ref{prop:constr}, together with Theorem~\ref{thm:constr}~(1) follow from the Artin--Lang Homomorphism Theorem, \cite[Theorem~4.1.2]{BCR},
which  can be readily obtained from the Tarski--Seidenberg Principle.  Then Theorem~\ref{thm:constr} (2) follows
from the Finiteness Theorem, \cite[Theorem~2.7.2]{BCR}.

\begin{cor}\label{cor:dense}  Let $\KK$ be a real closed field and $S\subset V\subset\KK^n$ a semialgebraic set.
	Then $S$ is dense in $\cons(S)$.  
\end{cor}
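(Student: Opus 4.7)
The plan is to show that every basic open neighborhood in the spectral topology of a point $\alpha\in\cons(S)$ meets the subset $S\hookrightarrow \rsp V$. Since a basis of the spectral topology on $\rsp V$ is given by sets of the form $U(f_1,\ldots,f_r)=\{\beta\in \rsp V\,|\,f_i(\beta)>0,\,i=1,\ldots,r\}$ with $f_i\in\KK[V]$, and since $\cons(S)\cap V=S$, density of $S$ in $\cons(S)$ is equivalent to showing that for every $\alpha\in\cons(S)$ and every such $U(f_1,\ldots,f_r)\ni \alpha$, the intersection $U(f_1,\ldots,f_r)\cap S$ is non-empty.

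First I would introduce the auxiliary semialgebraic set
\[
T:=S\cap\{x\in V:\,f_i(x)>0,\;i=1,\ldots,r\}\subset V.
\]
By Properties~\ref{prop:constr}, $\cons(T)$ is defined by the same polynomial equalities and inequalities that define $T$; combining the defining conditions for $S$ with $f_i>0$, this gives precisely $\cons(T)=\cons(S)\cap U(f_1,\ldots,f_r)$. Alternatively, this identification follows at once from the Boolean algebra isomorphism in Theorem~\ref{thm:constr}(1), since both sides correspond to the same semialgebraic subset $T\subset V$ under the bijection $S\mapsto\cons(S)$.

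Second, since $\alpha\in\cons(S)\cap U(f_1,\ldots,f_r)=\cons(T)$, the constructible set $\cons(T)$ is non-empty. Here I would invoke the Artin--Lang Homomorphism Theorem (which is the key input behind Theorem~\ref{thm:constr}, cited just before the statement of the corollary): non-emptiness of $\cons(T)$ in $\rsp V$ implies non-emptiness of $T$ in $V$, because a $\KK$-algebra homomorphism $\KK[V]\to \KK$ satisfying the given strict inequalities amounts to a point of $T$. Picking any $x\in T$, its image $\alpha_x\in \rsp V$ satisfies $f_i(\alpha_x)=f_i(x)>0$, so $\alpha_x\in U(f_1,\ldots,f_r)\cap S$, which is the desired conclusion.

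The proof is short and the only substantive ingredient is the Artin--Lang principle; once the reduction to non-emptiness of $\cons(T)\subset\rsp V$ is made, no further work is needed. The main point to get right is the bookkeeping in identifying $\cons(S)\cap U(f_1,\ldots,f_r)$ with $\cons(T)$ via Properties~\ref{prop:constr} and Theorem~\ref{thm:constr}(1), so there is no serious obstacle.
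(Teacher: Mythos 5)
Your proof is correct and follows essentially the same route as the paper: you reduce density to showing a basic open set $U(f_1,\dots,f_r)\ni\alpha$ meets $S$, pass to the semialgebraic set $T=S\cap U\cap V$, identify $\cons(T)=\cons(S)\cap U$ via Theorem~\ref{thm:constr}(1)/Properties~\ref{prop:constr}, and conclude $T\neq\varnothing$ from $\alpha\in\cons(T)$ by the Artin--Lang principle underlying that theorem. The only cosmetic difference is that you invoke Artin--Lang explicitly where the paper reads the conclusion off the Boolean algebra isomorphism directly; these are the same argument.
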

\begin{proof}  Let $\alpha\in \cons(S)$ and let $U:=U(f_1,\dots,f_r)$ be an open set containing $\alpha$, where $f_1,\dots,f_r\in\KK[V]$.
	We want to show that $U\cap S\neq\varnothing$.
	By Property~\ref{prop:constr} (2) $U=c(U\cap V)$, where
	$U\cap V=\{x\in V:\,f_i(x)>0, \text{ for } i=1,\dots,r\}$.
	By construction and by Theorem~\ref{thm:constr} (1), $\alpha\in \cons(S)\cap c(U\cap V)=c(S\cap (U\cap V))$,
	which implies that $S\cap (U\cap V)\neq\varnothing$.  Thus $U\cap S=S\cap(U\cap V)\neq\varnothing$.
\end{proof}
It follows from Properties \ref{ppts.2.10} (2) and Corollary \ref{cor:dense} that, for a semialgebraic subset $S\subset V$, the constructible set $\cons(S)$ is the closure of $S$ in $\rsp V$ if and only if $S$ is closed. 

The next proposition holds only for an Archimedean real closed field $\KK\subset\RR$; it provides a compactification of $V_\RR$ for $V\subset \KK^n$ an algebraic set, and more generally for $S_\RR$ with $S\subset V$ semialgebraic.
\begin{prop}\label{prop:closed} \
	Let $\KK$ be an Archimedean real closed field and let
	$V\subset\KK^n$ be an algebraic set.
	\begin{enumerate} 
		
		\item  The map $x \mapsto \alpha_x$ is a topological embedding
		of $V_\RR$ in $\rspcl{V}$ with open and dense image.
		The space $\rspcl{V}$ is compact, Hausdorff and, if $\KK$ is
		countable, metrizable.
		
		\item A point $\alpha$ is in the boundary $\rsp{\partial}V=\rsp{V}-V_\RR$ if and only if
		$\FF_\alpha$ is non-Archimedean.  
		
		\item \label{it:compS}
If $S\subset V$ is semialgebraic, $S_\RR$ is open and dense in 
		$$\cons(S)_{cl}:=\{\alpha\in \cons(S):\, \alpha \text{ is closed in $\cons(S)$}\}$$
		and the latter is compact Hausdorff.
	\end{enumerate}
\end{prop}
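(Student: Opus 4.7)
The strategy rests on three ingredients: Proposition~2.23 characterizing closed points of $\rsp V$ via the Archimedean property of $\FF_\alpha$ over $\phi_\alpha(\KK[V])$; H\"older's theorem that every real closed Archimedean field embeds uniquely and order-preservingly into $\RR$; and density of $\qbarr\subset\KK$ in $\RR$, automatic since $\KK$ is Archimedean.

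For part (1), the map $x\mapsto\alpha_x$ lands in $\rspcl V$ because $\phi_{\alpha_x}$ is evaluation at $x$, so $\FF_{\alpha_x}\subset\RR$ is Archimedean and hence closed by Proposition~2.23. Injectivity follows from density of $\KK$ in $\RR$: any two distinct points of $V_\RR$ are separated by some coordinate function $X_i-c$ with $c\in\KK$. Continuity is immediate since a basic open $U(f_1,\dots,f_r)$ pulls back to $\{x\in V_\RR:f_i(x)>0\}$. The heart of the argument is openness of the image: given $x\in V_\RR$, choose $c\in\KK$ with $\sum_i x_i^2<c$, so $\alpha_x\in U(c-\sum_i X_i^2)$; for any closed $\alpha$ in that neighborhood the elements $\phi_\alpha(X_i)$ are bounded in $\FF_\alpha$, so $\phi_\alpha(\KK[V])$ is Archimedean over $\KK$, and combined with $\FF_\alpha$ Archimedean over $\phi_\alpha(\KK[V])$ (Proposition~2.23) and $\KK$ Archimedean, this forces $\FF_\alpha$ Archimedean. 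H\"older then gives $\FF_\alpha\hookrightarrow\RR$ and $\alpha=\alpha_y$ for a unique $y\in V_\RR$; this same dichotomy yields part (2). Continuity of the inverse map is a standard approximation: every Euclidean ball centered at a point of $\RR^n$ contains a $\KK$-centered ball with $\KK$-radius, matching a basic constructible open. Density of $V_\RR$ in $\rsp V$ comes from Tarski--Seidenberg (a non-empty basic open corresponds to a non-empty semialgebraic subset of $V_\KK$, hence of $V_\RR$), and density in $\rspcl V$ then follows via the retraction $\Ret\colon\rsp V\to\rspcl V$. Compact Hausdorff of $\rspcl V$ is the Proposition just before~2.23 in the excerpt, and metrizability when $\KK$ is countable uses countability of $\KK[V]$ and Urysohn.

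For part (3), note $S_\RR=V_\RR\cap\cons(S)$; since each $\alpha_x\in V_\RR$ is closed in $\rsp V$, it is closed in $\cons(S)$, so $S_\RR\subseteq\cons(S)_{cl}$. Density of $S_\RR$ in $\cons(S)_{cl}$ follows by the Tarski--Seidenberg argument of (1), restricted to opens meeting $\cons(S)_{cl}$. Hausdorffness of $\cons(S)_{cl}$ is the same finite-intersection argument proving $\rspcl V$ Hausdorff, now applied to $\cons(S)$, which is clopen in $\rsp V$ with the constructible topology and hence compact therein. For compactness of $\cons(S)_{cl}$ and openness of $S_\RR$, I would introduce the retraction $r\colon\cons(S)\to\cons(S)_{cl}$ sending $\gamma$ to the maximum of its specialization chain in $\cons(S)$ (well defined because specialization chains in $\rsp V$ are finite for finitely generated coordinate rings), and prove its continuity: given an open neighborhood $V$ of $r(\gamma_0)$, one shrinks it to an open $W_0\ni\gamma_0$ whose defining $\KK$-polynomial strict inequalities admit $\KK$-margins at $r(\gamma_0)$, so that the inequalities persist under specialization. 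Compactness of $\cons(S)_{cl}$ then follows as the continuous image of $\cons(S)$, itself compact in the spectral topology via the identity from the constructibly compact $\cons(S)$. Openness of $S_\RR$ in $\cons(S)_{cl}$ is verified locally at each $x\in S_\RR$: combining the boundedness argument of (1) with strict inequalities extracted from the semialgebraic description of $S$ produces a basic open neighborhood of $\alpha_x$ in $\rsp V$ whose intersection with $\cons(S)_{cl}$ lies inside $V_\RR$.

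The main obstacle is continuity of the retraction $r$ in part (3). Unlike the classical $\Ret\colon\rsp V\to\rspcl V$, whose continuity is standard, the restriction to the constructible subset $\cons(S)$ must handle the degeneration of strict inequalities upon specialization, precisely when the specializing polynomial lies in $\fp_{r(\gamma)}\setminus\fp_\gamma$. The resolution requires choosing neighborhood-defining polynomials whose strict inequalities admit $\KK$-margins at the specialized point, so that the strictness persists under the passage from the generic point to its specialization; this is where the full force of $\KK$-rationality of the defining data, together with Proposition~2.23, is used.
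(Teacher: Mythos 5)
Your arguments for parts (1) and (2) match the paper's proof almost step for step: injectivity via density of $\KK$ in $\RR$, closedness of $\alpha_x$ via Proposition~\ref{prop:closed_points}, openness of $V_\RR$ via the criterion $Q(\alpha)<T$ (your $c-\sum X_i^2$), and density from Corollary~\ref{cor:dense}. Your extra remark on continuity of the inverse via $\KK$-centred balls is a detail the paper leaves implicit, but the method is the same.

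For part (3) you take a genuinely different route, and this is where the gap is. The paper handles a non-closed $S$ by picking a closed semialgebraic $W$ and a semialgebraic homeomorphism $f:W\to S$, transporting through the induced homeomorphism $\cons(W)\to\cons(S)$ of \cite[Proposition 7.2.8]{BCR}, and then citing \cite[Proposition 7.1.25(ii)]{BCR} for compactness and Hausdorffness of $\cons(S)_{cl}$; when $S$ is closed, $\cons(S)_{cl}=\cons(S)\cap\rspcl V$ and everything is immediate from part (1). You instead try to build the retraction $r:\cons(S)\to\cons(S)_{cl}$ by hand and prove continuity via ``$\KK$-margins'', and you flag this yourself as the main obstacle — correctly, because the argument does not go through. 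The point $r(\gamma_0)$ is closed in $\cons(S)$ but need not be closed in $\rsp V$, so $\FF_{r(\gamma_0)}$ may be non-Archimedean over $\phi_{r(\gamma_0)}(\KK[V])$ and the values $f_i(r(\gamma_0))$ of the defining polynomials may all be infinitesimal; then no $\epsilon_i\in\KK_{>0}$ with $f_i(r(\gamma_0))>\epsilon_i$ exists. Concretely, take $V=\KK^2$, $S=\{(x,y)\colon 0<y<1,\ -y<x<y\}$, and let $\beta$ be the prime cone given by $\phi_\beta(X)=0$, $\phi_\beta(Y)=t$ for $t$ a positive infinitesimal; then $\beta\in\cons(S)_{cl}$ (its only proper specialization is $\alpha_{(0,0)}\notin\cons(S)$) and $\beta\in U(Y-X^2)$. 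But any basic open $U(g_1,\dots,g_m)\ni\beta$ with all $g_j(\beta)$ non-infinitesimal forces $g_j(0,0)>0$, hence $U(g_1,\dots,g_m)$ contains $\alpha_{(0,-\delta)}$ for small $\delta\in\KK_{>0}$, which is not in $U(Y-X^2)$. So no $\KK$-margin neighbourhood of $\beta$ lies inside $U(Y-X^2)$, and the proposed shrinking cannot be done. A related problem affects your openness argument: the margins you extract from ``the semialgebraic description of $S$'' at $x\in S_\RR$ concern one basic piece of $S$, while a nearby $\gamma\in\cons(S)_{cl}$ may live in a different basic piece with different equalities, and then those margins do not control $\Ret(\gamma)$. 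The paper's reduction to $S$ closed, or a direct appeal to the general spectral-space result \cite[Proposition 7.1.25]{BCR}, is what is actually needed here; the $\KK$-margin idea only works when $r(\gamma_0)$ is already a closed point of $\rsp V$.
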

\begin{proof}  (1) 
	Let $x,y\in V_\RR$ with $x\neq y$.
	Since $\KK$ is dense in $\RR$, one can find $f\in\KK[V]$ with $f(x)>0$ and $f(y)<0$,
	showing that the corresponding prime cones $\alpha_x,\alpha_y\in\rsp{V}$ are different.
	In addition, by Proposition~\ref{prop:closed_points}, since $\RR$ is
	Archimedean, $\alpha_x$ is a closed point.
	
	Since $V$ is dense in $\rsp{V}$ by Corollary~\ref{cor:dense}, then $V_\RR$ is of course dense in $\rspcl{V}$.
	
	It remains to show that the image of $V_\RR$ in $\rspcl{V}$ is open. To this end we set $Q(x):=\sum_{i=1}^nx_i^2$
	and we claim that 
	\bqn
	V_\RR=\{\alpha\in\rspcl{V}:\, \text{there exists }T\in\KK\text{ with }Q(\alpha)<T\}\,.
	\eqn
	The inclusion $\subseteq$ is clear.  For the reverse one, let $\alpha\in \rspcl{V}$ be such that $Q(\alpha)<T$ for some $T\in \KK$
	and consider the homomorphism $\phi_\alpha\colon\KK[V]\to\FF_\alpha$ defined in Section~\ref{subsec:real_spectrum}.
	Since $\alpha$ is closed, according to Proposition~\ref{prop:closed_points}, $\FF_\alpha$ is Archimedean over $\phi_\alpha(\KK[V])$.
	
	The ring $\phi_\alpha(\KK[V])$ is generated by the $\phi_\alpha(X_i)$,
	which are bounded by $\sqrt{T}\in\KK$, thus it is Archimedean over $\KK$. Hence $\FF_\alpha$ is Archimedean,
	thus isomorphic to a subfield of $\RR$. 
	Thus $\alpha$ is also represented by a homomorphism $\KK[V]\to\RR$ and hence belongs to $V_\RR$. 
	
	(2) This follows from the above description of $V_\RR$.
	
	(3) If $S\subset V$ is closed, $S_\RR=\cons(S)\cap V_\RR$ is open in $\cons(S)\cap \rspcl V=\cons(S)_{cl}$ 
	and dense since $S$ is dense in $\cons(S)$; if $S\subset V$ is not closed, there is $W\subset \KK^m$ semialgebraic closed, 
	and a semialgebraic homeomorphism $f:W\to S$; 
	by \cite[Proposition 7.2.8]{BCR} this induces a homeomorphism $F:c(W)\to \cons(S)$ 
	which hence sends $c(W)_{cl}$ to $\cons(S)_{cl}$; 
	in addition $F|_{W_\RR}=f_\RR$ and hence $F(W_\RR)=S_\RR$. 
	This shows that in general $S_\RR$ is open and dense in $\cons(S)_{cl}$. 
	In addition $\cons(S)_{cl}$ is always Hausdorff and compact \cite[Proposition 7.1.25 (ii)]{BCR}.
\end{proof}

\begin{ex}
	In the case of $V=\KK$, where $\KK$ is a real closed subfield of $\RR$,
	recall that $\rsp{\KK[V]}$ is given by
	the set of prime cones defined in Example \ref{ex.rspecbase}(3):
	$$\{\alpha_x|\; x\in\RR\cup\{\pm\infty\}\}\cup\{\alpha_{x^+},\alpha_{x^-}|\;x\in\KK\}.$$
	The subset of closed points is given by the first set, in which $\RR$ injects as open and dense subset. 
	Furthermore for all $x\in \KK$, $\ov{\{\alpha_{x^+}\}}=\{\alpha_x, \alpha_{x^+}\}$ and $\ov{\{\alpha_{x^-}\}}=\{\alpha_x, \alpha_{x^-}\}$.
	
	Semialgebraic subsets of $\KK$ are finite unions of intervals and half lines with endpoints in $\KK$. 
	For the semialgebraic set $S=(a,b]\cap \KK$ with $a,b\in\KK$, we have
	\bqn
	\cons(S)=(a,b]\cup\{\alpha_{x^\pm}|\;x\in(a,b)\cap\KK\}\cup\{\alpha_{a^+},\alpha_{b^-}\}
	\eqn
	and
	\bqn
	\cons(S)_{cl}=(a,b]\cup\{\alpha_{a^+}\},
	\eqn
	which is homeomorphic to a  closed segment in $\RR$. Note that $\alpha_{a^+}$ is closed
	in $\cons(S)$ but not in $\rsp{\KK}$.
\end{ex}

Let $V\subset\KK^n$ be a real algebraic subset with $\KK\subset\RR$.
The quotient map $\KK[X_1,\ldots, X_n]\to \KK[V]$ to the coordinate ring of $V$ induces canonically an injection 
\bqn
\rsp V\hookrightarrow \rsp{(\KK^n)}
\eqn
which is a homeomorphism onto its image. 
If then $S\subset V$ is a semialgebraic subset, one can associate by Properties~\ref{ppts.2.10}(1) two constructible subsets, 
namely $c_V(S)\subset \rsp V$ and $c_{\KK^n}(S)\subset \rsp{(\KK^n)}$ which coincide via the above inclusion. 
We will henceforth use the notations  \label{n.23b}
\bq\label{e.rspS}
\rsp{S}:= \cons(S) \quad\text{ and }\quad \rspcl{S}:= \cons(S)_{cl}.
\eq
Of course if $S\subset V$ is closed then $\rspcl S= \rspcl V\cap c(S)$, 
in general, as topological spaces, $\rsp{S}$ and $\rspcl{S}$ are independent of the ambient real algebraic set. 
With this we have that $S_\RR$ is open and dense in $\rspcl{S}$ and the latter is compact Hausdorff.
We will denote $\rsp{\partial}S:= \rsp{S}\setminus S_\RR$ and $\rspcl{\partial}S:= \rspcl{S}\setminus S_\RR$.  \label{n.24}

\begin{defn}
	We say that a point $\alpha\in\rsp S$ is \emph{non-Archimedean} if $\alpha\notin S_\RR$.
\end{defn}	
\begin{remark}\label{rem:point->prime cone} Let $S\subset V\subset \KK^n$ where $V$ is real algebraic and $S$ is semialgebraic. For any real closed extension $\FF$ of $\KK$,  every point $x\in V_\FF$ leads to  
	a prime cone  \label{n.25}
	\bqn
	\alpha_x:=\{f\in\KK[V]:\,f(x)\geq0\}\in \rsp{V}\;
	\eqn 
	and if $x\in S_\FF$ then $\alpha_x\in\rsp S$. 
	While this does not  in general lead to an injection of $V_\FF$, 
	this construction will be useful in representing points in $\rsp{V}$ by points in $V_{\KK^\omega}$, 
	where $\KK^\omega$ is a hyper-$\KK$-field (see Section~\ref{subsec:intro1}). 
	Observe that the image of $V_\FF$ in $\rsp{V}$ consists of the
	points $\alpha\in \rsp{V}$ such that
	there exists a field injection
	$\FF_{\alpha}\hookrightarrow \FF\;.$
	
	There is a useful converse: let $\alpha\in\rsp{S}$, then (Proposition \ref{p.charrsp}) there is $\FF_\alpha$ real closed and $\phi_\alpha:\KK[X_1,\ldots, X_n]\to \FF_\alpha$ such that $\FF_\alpha$ is the real closure of the field of fractions of $\phi_\alpha(\KK[X_1,\ldots, X_n])$ and such that 
	\bqn
	\alpha:=\{f\in\KK[X_1,\ldots,X_n]:\,\phi_\alpha(f)\geq0\}.
	\eqn 
	Let $x_\alpha=(\phi_\alpha(X_1),\ldots,\phi_\alpha(X_n))\in S_{\FF_{\alpha}}$. Then $\alpha=\alpha_{x_\alpha}$.	
\end{remark}


Now that we have introduced the objects, we conclude this section with
a general mechanism  according to which specialization arises through morphisms taking values into order convex subrings (see Section~\ref{subsec:intro1}). 
The proof is an immediate consequence of the definitions.

\begin{lem}\label{lem:special}
	Let $\alpha\in\rsp V$ and assume that the corresponding morphism $\phi_\alpha:\KK[V]\to \FF_\alpha $ 
	takes values  in a order convex subring $\calO\subset \FF_\alpha $.
	Then the composition of $\phi_\alpha $ with the reduction modulo $\calI$, $\mod_\calI:\calO\to\FF_\calO$,
	produces a new point $\alpha\mod\calI\in\rsp V$, which is in the closure of $\alpha$. 
	In particular if $\alpha\in \cons(S)$, where $S$ is a closed semialgebraic subset of $V$,
	then $\alpha\mod\calI$ is in $\cons(S)$ as well.
	If $\calO$ is the convex subring generated by
	$\phi_\alpha(\KK[V])$ then $\alpha\mod\calI=\Ret(\alpha)$.
\end{lem}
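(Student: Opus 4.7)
The plan is to treat the three assertions in sequence, relying throughout on the dictionary between prime cones and homomorphisms to real closed fields from Proposition~\ref{p.charrsp}.

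First I would verify that the composition $\psi := \mod_\calI\circ\phi_\alpha:\KK[V]\to\FF_\calO$ is a ring homomorphism into a real closed field (since $\calO$ is a convex valuation ring of the real closed field $\FF_\alpha$, its residue field $\FF_\calO$ is again real closed, as recalled in \S~\ref{subsec:intro1}). Proposition~\ref{p.charrsp}(4) then assigns to $(\psi,\FF_\calO)$ the prime cone
\bqn
\alpha\mod\calI := \{f\in\KK[V]:\,\mod_\calI(\phi_\alpha(f))\geq 0\}\in\rsp V \;.
\eqn
For the specialization claim I would invoke order convexity of $\calI$: this makes the quotient map $\mod_\calI:\calO\to\FF_\calO$ order preserving, so $\phi_\alpha(f)\geq 0$ implies $\mod_\calI(\phi_\alpha(f))\geq 0$, giving $\alpha\subseteq\alpha\mod\calI$, equivalently $\alpha\mod\calI\in\overline{\{\alpha\}}$ by the criterion in \S~\ref{s.ret}. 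The ``in particular'' part is then immediate from Theorem~\ref{thm:constr}(2), which ensures $\cons(S)$ is closed in $\rsp V$ when $S$ is closed semialgebraic and hence stable under passing to specializations.

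The substantive step is the final identification with $\Ret(\alpha)$ when $\calO$ is the convex subring generated by $\phi_\alpha(\KK[V])$. By uniqueness of the closed point in $\overline{\{\alpha\}}$ noted after \eqref{eq:retraction}, it suffices to prove $\alpha\mod\calI$ is itself closed, which via Proposition~\ref{prop:closed_points} amounts to showing $\FF_{\alpha\mod\calI}$ is Archimedean over $\phi_{\alpha\mod\calI}(\KK[V])$. Using Remark~\ref{rem:closure_archimedean} this reduces to bounding a generic element $\bar f/\bar g$ of $\Frac(\phi_{\alpha\mod\calI}(\KK[V]))$, with $f,g\in\phi_\alpha(\KK[V])$ and $g\notin\calI$, by an element of $\phi_{\alpha\mod\calI}(\KK[V])$. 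Since $g\notin\calI$ forces $g^{-1}\in\calO$, one has $f/g\in\calO$, and by the very definition of the convex subring generated by $\phi_\alpha(\KK[V])$ there exists $h\in\phi_\alpha(\KK[V])$ with $|f/g|\leq h$; reducing modulo $\calI$ yields the desired bound $|\bar f/\bar g|\leq\bar h$.

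I expect the main conceptual subtlety to be the bookkeeping between the ambient field $\FF_\calO$ (which need not itself be Archimedean over the image of $\KK[V]$) and the possibly smaller real closure $\FF_{\alpha\mod\calI}$ representing the new prime cone in the canonical form of Proposition~\ref{p.charrsp}. Fortunately $\FF_{\alpha\mod\calI}$ sits as a subfield of $\FF_\calO$, realized as the real closure of $\Frac\,\psi(\KK[V])$ inside $\FF_\calO$, so the order inequalities established in $\FF_\calO$ transfer without loss. Beyond this, no further technical input beyond the criteria of \S~\ref{subsec:real_spectrum} and \S~\ref{s.ret} is required.
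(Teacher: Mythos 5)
Your proof is correct and carries out exactly the definitional unpacking the paper gestures at (the paper gives no argument, stating only that the lemma follows immediately from the definitions). The first two assertions are handled as you say, and for the retract identification you correctly reduce via Proposition~\ref{prop:closed_points} and Remark~\ref{rem:closure_archimedean} to bounding $\bar f/\bar g$ with $g\notin\calI$ a unit in $\calO$, then use the defining property of the convex hull of $\phi_\alpha(\KK[V])$ to produce the bound and push it through the order-preserving reduction $\mod_\calI$.
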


\subsection{Density of rational points}\label{s.density}
In this subsection we draw a consequence of a result of Brumfiel \cite[\S 4]{Brum2} adapted to our needs. Let 
$$V\subset \KK^n$$
be real algebraic where $\KK\subset \RR$ is real closed.
\begin{defn}
	A point $\alpha\in\rsp V$ is {\em rational} if  \label{n.26}
	\be
	\item $\alpha$ is a closed point;
	\item $\Frac(\KK[V]/\frak p_\alpha)$ has transcendence degree $1$ over $\KK$.
	\ee
\end{defn}
Our interest in rational points lies in the following:
\begin{lemma}
	If $\alpha\in \rsp V$ is rational, then the valuation on $\Frac(\KK[V]/\frak p_\alpha)$ with respect to any big element in $\phi_{\alpha}(\KK[V])$ is discrete.
\end{lemma}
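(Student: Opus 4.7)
The plan is to verify that the restriction of $v_b$ to $L := \Frac(\KK[V]/\mathfrak{p}_\alpha)$ is a non-trivial rank-one valuation trivial on $\KK$, and then to invoke the classical theory of valuations on function fields of transcendence degree one, which forces the value group to be discrete.

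First I would check that $v_b|_\KK$ is trivial. Since $\alpha$ is closed, Proposition \ref{prop:closed_points} ensures that $\FF_\alpha$ is Archimedean over $\phi_\alpha(\KK[V])$, so a big element of $\phi_\alpha(\KK[V])$ is also a big element of $\FF_\alpha$; in particular $b>1$. Because $\mathfrak{p}_\alpha\cap\KK=0$, the field $\KK$ embeds into $\phi_\alpha(\KK[V])$, and combining the inequality $a<b^{m}$ for some $m=m(a)$ with the Archimedean property of $\KK\subset\RR$ (so that $a^{n}$ is dominated in $\KK$ by an integer, and hence by some $b^{m_{0}}$ independently of $n$), a direct inspection of the cuts $A_a$ and $B_a$ of \eqref{eq:Aa} and \eqref{eq:Ba} yields $\log_b(a)=0$ for every $a\in\KK_{>0}$.

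Since $b\in L$ with $v_b(b)=-1$, the restriction $v_b|_L$ is then a non-trivial rank-one valuation on $L$ trivial on $\KK$, and the remaining step is the classical fact that any such valuation on a field of transcendence degree one over $\KK$ is discrete. One may argue geometrically, identifying such valuation rings with the local rings of closed points of the normalization of an algebraic curve with function field $L$, each a discrete valuation ring with value group $\ZZ$; alternatively, one can invoke Abhyankar's inequality $\mathrm{rat.rank}(v_b|_L)+\mathrm{tr.deg}_\KK\, k_{v_b}\leq \mathrm{tr.deg}_\KK L=1$, which must be an equality in our situation, and then apply the structure theorem in the equality case to conclude that the value group is finitely generated, hence (being of rational rank one inside $\RR$) isomorphic to $\ZZ$. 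The main obstacle is therefore verifying that $v_b|_\KK$ is trivial; once this is in hand, discreteness follows from a standard reference (e.g.\ Bourbaki, \emph{Commutative algebra}, Ch.~VI).
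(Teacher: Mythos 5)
Your approach is genuinely different from the paper's. The paper first observes that $b$ is transcendental over $\KK$, so that $\KK(b)\subset L:=\Frac(\KK[V]/\fp_\alpha)$ carries a unique ordering making $b$ big, for which the associated valuation is the discrete degree\nobreakdash-in\nobreakdash-$b$ valuation; since $L$ has transcendence degree one over $\KK$, it is a finite extension of $\KK(b)$, and discreteness persists by finiteness of the ramification index. You instead establish that $v_b$ is trivial on $\KK$ and then cite the theory of valuations on function fields of curves over $\KK$ (or Abhyankar's inequality) as a black box. Both routes are valid, and the two crucial observations are really the same point seen from different angles, both flowing from $b$ being infinitely large over the Archimedean real closed field $\KK$; the paper's argument is more self-contained, while yours leans on a classical but heavier result.

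However, the argument you give for $v_b|_\KK$ being trivial has a gap. Establishing only $b>1$ is not enough, and the parenthetical ``so that $a^{n}$ is dominated in $\KK$ by an integer, and hence by some $b^{m_{0}}$ independently of $n$'' does not follow from $b>1$: if $b$ were itself in $\KK$ (say $b=2$), then $v_b|_\KK$ would be the ordinary base\nobreakdash-$2$ logarithm, a non-trivial valuation, and the conclusion would fail. What you need is that $b$ is \emph{infinitely large} in $\FF_\alpha$. In the non-Archimedean setting to which the lemma applies this is automatic: $\FF_\alpha$ is non-Archimedean, and since $\alpha$ is closed, Proposition~\ref{prop:closed_points} gives $\FF_\alpha$ Archimedean over $\phi_\alpha(\KK[V])$, forcing $\phi_\alpha(\KK[V])$ to contain infinitely large elements, whence any big element of $\phi_\alpha(\KK[V])$ is infinitely large. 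Only with this in hand does $a^n<N^n<b$ hold for \emph{every} $n$ (where $N\in\ZZ$ dominates $a\in\KK$), giving $A_a\subset\QQ_{\leq0}$, $B_a\supset\QQ_{>0}$, and hence $\log_b(a)=0$. This is precisely what the paper's unproved assertion ``clearly $b$ has to be transcendental over $\KK$'' encapsulates, and it should be spelled out before invoking Abhyankar.
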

\begin{proof}
	Let $b\in \phi_\alpha(\KK[V])$ be a big element in $\Frac(\KK[V]/\frak p_\alpha)$. Clearly $b$ has to be transcendental over $\KK$; now there is a unique order on $\KK(b)$ making $b$ a big element and then the corresponding valuation is discrete. Since $\Frac(\KK[V]/\frak p_\alpha)$ is a finite algebraic extension of $\KK(b)$,
	the valuation with respect to $b$ on $\Frac(\KK[V]/\frak p_\alpha)$ is discrete as well.
\end{proof}	

From \cite[Proposition 4.2]{Brum2} we furthermore deduce 
\begin{cor}\label{c.Bru4.2}
	Let $S\subset V$ be a closed semialgebraic subset. Then the set of rational points 
	is dense in $\rspcl\partial(S)$
\end{cor}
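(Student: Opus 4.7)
The plan is to verify that Brumfiel's general density result \cite[Proposition 4.2]{Brum2} applies to our setting, and to make explicit how rational points arise via curve selection. First I would fix $\alpha \in \rspcl\partial(S)$ together with a basic spectral open neighborhood $U := U(f_1,\ldots,f_r) \cap \rspcl S$ of $\alpha$, where $f_1,\ldots,f_r \in \KK[V]$; the goal is then to exhibit a rational point $\beta \in U \cap \rspcl\partial(S)$. Observe that the semialgebraic set
\[
S' := \{x \in S_\RR : f_i(x) > 0,\ i=1,\ldots,r\}
\]
is non-empty, for otherwise the Artin--Lang principle would force $\cons(S) \cap U(f_1,\ldots,f_r) = \varnothing$, contradicting $\alpha \in U$.

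The main step is a curve selection argument. Using the semialgebraic version of the curve selection lemma I would extract a continuous semialgebraic arc $\gamma : [0,1) \to S'$ whose image is unbounded in $\RR^n$; this is possible precisely because $\alpha$ is non-Archimedean (by Proposition \ref{prop:closed} (2)), so that at least one coordinate function is unbounded along approximating sequences to $\alpha$. The Zariski closure $C \subset V$ of $\gamma([0,1))$ is a one-dimensional irreducible algebraic subset, and hence $\Frac(\KK[C])$ has transcendence degree $1$ over $\KK$.

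The quotient $\KK[V] \twoheadrightarrow \KK[C]$ induces a closed embedding $\rsp C \hookrightarrow \rsp V$ which carries $\rspcl C$ into $\rspcl V$. Applying the explicit description of Example \ref{ex.rspecbase}(3) to the one-dimensional ring $\KK[C]$, I would then identify a closed prime cone $\beta \in \rspcl C$ encoding the behavior of $\gamma$ at the endpoint $t\to 1^-$. Its residue field $\FF_\beta$ is an algebraic, hence Archimedean, extension of $\Frac(\KK[C])$, so $\beta$ is a closed point of $\rspcl V$ of transcendence degree $1$, i.e. a rational point. Since $S$ is closed the constructible set $\cons(S)$ is closed in $\rsp V$ and therefore contains $\beta$; moreover $\beta \in U(f_1,\ldots,f_r)$ since the strict inequalities $f_i>0$ are preserved by passing from the arc $\gamma$ to its endpoint cone; and $\beta \notin S_\RR$ because the arc escapes every compact subset of $S_\RR$. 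Hence $\beta$ is a rational point lying in $U \cap \rspcl\partial(S)$.

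The main obstacle is the curve selection step itself: one must guarantee that the arc $\gamma$ can be chosen so that the associated endpoint prime cone lies in the prescribed basic open $U$ rather than merely in some nearby constructible set. This is arranged, following Brumfiel, by selecting $\gamma$ to be a half-branch of a generic algebraic curve intersected with $S'$ whose germ at infinity captures the sign specification of $\alpha$ on the finite collection $\{f_1,\ldots,f_r\}$; the non-Archimedean hypothesis on $\alpha$ is what allows such an unbounded half-branch to exist inside $S'$.
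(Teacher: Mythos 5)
Your strategy --- curve selection at infinity, with the end of the selected arc furnishing a one-dimensional closed prime cone --- is exactly the argument underlying Brumfiel's \cite[Proposition~4.2]{Brum2}, which the paper invokes directly without reproving, so the approach is the right one in spirit. Note though that the obstacle you flag at the end (whether the end cone of $\gamma$ lands in $U$ rather than merely nearby) is actually automatic: since each $f_i$ is strictly positive on the image of $\gamma$, $f_i$ does not lie in the prime ideal $\frak p_\beta$ cutting out the curve, so $f_i(\beta)\neq 0$, and combined with $f_i(\beta)\geq 0$ this already gives $f_i(\beta)>0$.

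The genuine gap is elsewhere. You build the arc $\gamma$ inside $S'_\RR\subset\RR^n$, but the corollary requires a point of $\Rspec(\KK[V])$ whose residue field has transcendence degree one \emph{over $\KK$}, and the Zariski closure over $\KK$ of an $\RR$-semialgebraic arc need not be a curve. For instance with $\KK=\qbarr$, $V=\KK^2$ and $\gamma(t)=(t,\pi t)$, no nonzero polynomial in $\qbarr[X_1,X_2]$ vanishes on the image, so the induced homomorphism $\phi_\beta\colon\KK[V]\to\FF_\beta$ is injective and $\beta$ has transcendence degree two over $\KK$, failing condition (2) of the definition of rational point. The repair is to run curve selection over $\KK$ throughout: $S':=\{x\in S:\, f_i(x)>0,\ i=1,\ldots,r\}$ is $\KK$-semialgebraic, non-empty by Artin--Lang and unbounded by the transfer principle, and the semialgebraic triviality theorem \cite[Corollary~9.3.3]{BCR} holds over any real closed field, so one obtains a continuous $\KK$-semialgebraic half-branch $\gamma\colon(T,\infty)\to S'$ inside $\KK^n$ with $Q(\gamma(t))=t$. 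Its image then has Zariski closure over $\KK$ a genuine $\KK$-curve $C$, and the rest of your argument goes through: $Q|_C\in\phi_\beta(\KK[V])$ is a big element for the end-at-infinity ordering, so $\FF_\beta$ is Archimedean over $\phi_\beta(\KK[V])$ and $\beta$ is closed by Proposition~\ref{prop:closed_points}.
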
	


\section{General properties of real spectrum}\label{s.RspGenProp}
In this section we prove some important  properties of the real spectrum of a real algebraic set 
that will play a crucial role in our study of character varieties.
In Section~\ref{subsec:3} we show how to realize any point in the real
spectrum as a homomorphism in a subfield of a hyper-$\KK$ field, 
in Section~\ref{subsec:accessibility} we show that any closed point in the real spectrum compactification can be obtained as limit of a continuous, locally semialgebraic path, and in  Section~\ref{sec:continuity} we prove a continuity lemma which will be useful in the study of the Weyl chamber length compactification.

\subsection{The Hyper-$\KK$ field realization}\label{subsec:3}
If $\KK$ is a real closed field, $V\subset\KK^n$ a real algebraic set and $\FF\supset\KK$ a real closed field, 
we saw in Remark~\ref{rem:point->prime cone}
that any point $y\in V_\FF$ defines a prime cone
\bqn
\alpha_y:=\{f\in\KK[V]:\,f(y)\geq0\}\in \rsp{V}.
\eqn 
The next result (Proposition \ref{prop:1}) asserts that any point $\alpha\in\rsp{V}$ 
arises in this way, where $\FF=\KK^\omega$ is a hyper-$\KK$ field for some ultrafilter $\omega$ depending on $\alpha$.

We start by recalling that, if $\beta\in\rsp{V}$ is a prime cone and
$\fp_\beta=\{f:\,f(\beta)=0\}$ is the prime ideal of $\KK[V]$ associated to $\beta$, the {\em support variety} $V(\beta)$ is defined as
\bqn
V(\beta):=\{x\in V:\,f(x)=0\text{ for all }f\in\fp_\beta\}\,.
\eqn

\begin{prop} For every prime cone $\beta\in\rsp{V}$, the support variety $V(\beta)$  \label{n.27}
	is not empty.
\end{prop}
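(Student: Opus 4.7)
The plan is to deduce this from the Artin--Lang Homomorphism Theorem, cited earlier as \cite[Theorem~4.1.2]{BCR}. The point is that a prime cone equips a finitely generated $\KK$-algebra with a compatible ordering, and Artin--Lang then forces the existence of a $\KK$-valued point of the quotient, which will be the sought-after point of $V(\beta)$.

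First I would package $\beta$ as a $\KK$-algebra with compatible order. By Proposition~\ref{p.charrsp}(2), the prime cone $\beta$ corresponds to a pair $(\fp_\beta,\leq_\beta)$ where $\leq_\beta$ is an ordering on $\Frac(\KK[V]/\fp_\beta)$ restricting to the given ordering of $\KK$. Since $V\subset\KK^n$ is a real algebraic set, $\KK[V]$ is a finitely generated $\KK$-algebra, hence so is
$$\calA_\beta := \KK[V]/\fp_\beta,$$
and by restriction of $\leq_\beta$ it carries an ordering compatible with the one on $\KK$.

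Next I would invoke the Artin--Lang Homomorphism Theorem: any finitely generated $\KK$-algebra admitting an ordering extending that of $\KK$ admits a $\KK$-algebra homomorphism to $\KK$. Applied to $\calA_\beta$, this produces a $\KK$-algebra homomorphism $\psi\colon \calA_\beta\to\KK$, and the composition
$$\KK[V]\twoheadrightarrow \calA_\beta \xrightarrow{\ \psi\ } \KK$$
is evaluation at some point $x\in\KK^n$. Since the map factors through the coordinate ring of $V$, the point $x$ automatically lies in $V$; since the composition kills $\fp_\beta$, we have $f(x)=0$ for every $f\in\fp_\beta$. Hence $x\in V(\beta)$, so $V(\beta)\neq\varnothing$.

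The only nontrivial input is Artin--Lang itself, whose proof ultimately relies on the Tarski--Seidenberg transfer principle; the rest is bookkeeping. If one preferred a direct argument avoiding the named theorem, one could use Noetherianity of $\KK[V]$ to choose generators $h_1,\dots,h_r$ of the preimage of $\fp_\beta$ in $\KK[X_1,\dots,X_n]$; then $(\phi_\beta(\bar X_1),\dots,\phi_\beta(\bar X_n))\in\FF_\beta^n$ is a common zero of the $h_i$, and Tarski--Seidenberg transfer from the real closed extension $\FF_\beta\supset\KK$ down to $\KK$ yields a common zero in $\KK^n$, which automatically lies in $V(\beta)$.
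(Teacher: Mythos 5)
Your proof is correct, and it rests on the same underlying input as the paper's. The paper's proof is shorter: it observes that $V(\beta)=\{x\in V:\,f_i(x)=0\}$ for finitely many generators $f_i$ of $\fp_\beta$, so $V(\beta)$ is semialgebraic; since $\beta\in \cons(V(\beta))$, Theorem~\ref{thm:constr}(1) (the Boolean-algebra isomorphism $S\mapsto \cons(S)$, itself derived from Artin--Lang) immediately gives $V(\beta)\neq\varnothing$. You instead unwind the mechanism: your main argument applies Artin--Lang directly to the ordered finitely generated $\KK$-algebra $\KK[V]/\fp_\beta$ to produce a $\KK$-algebra homomorphism to $\KK$, i.e.\ a point of $V(\beta)$; your alternative sketch via Tarski--Seidenberg transfer from $\FF_\beta$ down to $\KK$ is essentially the paper's argument stated without the constructible-set packaging. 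One small thing to make explicit in the main argument: the composition $\KK[V]\to\KK$ lands in $V$ because it kills $I(V)$ (as $\fp_\beta\supset I(V)$ in the presentation $\KK[V]=\KK[X_1,\dots,X_n]/I(V)$) — you state this implicitly but it is the step that places the produced point on $V$ rather than merely in $\KK^n$.
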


\begin{proof}  Let  $f_1,\dots, f_r$ be a finite set of generators of
	the prime ideal $\fp_\beta$.  Thus
	\bqn
	V(\beta)=\{x\in V:\,f_i(x)=0,\,i=1,\dots,r\}\,,
	\eqn
	so that $V(\beta)$ is a semialgebraic set.  But $c(V(\beta))\ni\beta$,
	so that $V(\beta)\neq\varnothing$ by Theorem~\ref{thm:constr}(1).
\end{proof}

\begin{remark}  If $\beta\notin V$ then, by chasing the definitions, it is easy to see 
	that $V(\beta)$ is necessarily infinite.
\end{remark}

If $\omega$ is an ultrafilter over $\NN$ and $x=(x_k)_{k\in \NN}$ is a sequence of points in $V$, 
we denote by ${x^\omega} \in V_{\KK^\omega}\subset(\KK^\omega)^n$ the associated point
where, if $x_k=(x_k^{(1)},\ldots,x_k^{(n)})\in\KK^n$, 
\bq\label{eq:xomega}
x^\omega=((x^\omega)^{(1)},\ldots, (x^\omega)^{(n)})
\text{ with }
(x^\omega)^{(i)}=[(x_k^{(i)})]_\omega\,.
\eq

\medskip
Observe that $\alpha_{x^\omega}$ could be equivalently described as the prime cone associated to the homomorphism 
\bqn
\phi^\omega:\begin{array}[t]{ccc}
	\KK[V]&\to&\oK\\f&\mapsto&[(f(x_k))]_\omega
\end{array}
\eqn
and that
\bq
\alpha_{x^\omega}
=\{f\in\KK[V]: f(x_k)\geq 0 \text{ for  $\omega$-almost all } k \} \;.
\eq
In particular $\phi^\omega$ induces a field injection
\bq\label{eq:InjectionInHyperreals}
\FF_{\alpha_{x^\omega}}\hookrightarrow \oK\;.
\eq

%
\begin{prop}\label{prop:1}
	Assume $\KK\subset\RR$ is a real closed field.
	Given $\beta\in\rsp V$ there exists an ultrafilter $\omega$ on $\NN$
	and a sequence $x=(x_k)_{k\in \NN}$ in $V(\beta)$ such that
	$\beta=\alpha_{x^\omega}$.
	Moreover, given any semialgebraic set $S$ such that $\cons(S)$
	contains $\beta$,
	the subset $\{k\in\NN:\;x_k\in S\cap V(\beta)\}$ 
	belongs to $\omega$.           
\end{prop}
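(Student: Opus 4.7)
The plan is to realize $\beta$ via a sequence in $V(\beta)$ plus an ultrafilter on $\NN$, using Tarski--Seidenberg to guarantee non-emptiness of the relevant semialgebraic sets. First I would use the Noetherian property of $\KK[V]$ to pick generators $f_1,\dots,f_m$ of $\fp_\beta$, so that $V(\beta)=\{x\in V:f_1(x)=\dots=f_m(x)=0\}$ is itself semialgebraic. The central combinatorial observation is that, for every finite family $\mathcal{F}=\{h_1,\dots,h_r\}\subset\KK[V]$ with $h_i(\beta)>0$ for all $i$, and every semialgebraic $S\subset V$ with $\beta\in\cons(S)$, the set
\[
W_{\mathcal{F},S}:=\{x\in V(\beta)\cap S:h_i(x)>0,\ i=1,\dots,r\}
\]
is non-empty. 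Indeed $W_{\mathcal{F},S}$ is semialgebraic and $\beta\in\cons(W_{\mathcal{F},S})$ by construction, so $\cons(W_{\mathcal{F},S})\neq\varnothing$ forces $W_{\mathcal{F},S}\neq\varnothing$ by Theorem~\ref{thm:constr}(1). The family $\{W_{\mathcal{F},S}\}$ is stable under finite intersections, since $W_{\mathcal{F}_1,S_1}\cap W_{\mathcal{F}_2,S_2}\supset W_{\mathcal{F}_1\cup\mathcal{F}_2,S_1\cap S_2}$, hence forms a filter base on $V(\beta)$.

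Next I would realize this filter by a sequence on $\NN$ together with an ultrafilter. When the data involved is countable (e.g.\ $\KK=\qbarr$, or more generally whenever the collection of pairs $(\mathcal{F},S)$ admits a countable cofinal sub-system, which is always the case here since the field $\Frac(\KK[V]/\fp_\beta)$ is finitely generated over $\KK$ and only countably many sign-pattern conditions need be tracked relative to a fixed countable dense subfield), I enumerate a cofinal family $(\mathcal{F}_k,S_k)_{k\in\NN}$ with $\mathcal{F}_k\subset\mathcal{F}_{k+1}$ and $S_{k+1}\subset S_k$, pick $x_k\in W_{\mathcal{F}_k,S_k}$ inductively, and take $\omega$ to be any non-principal ultrafilter on $\NN$. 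Then for any given $(\mathcal{F},S)$ one has $(\mathcal{F},S)\leq(\mathcal{F}_{k_0},S_{k_0})$ for some $k_0$, so $\{k\geq k_0\}\subset\{k:x_k\in W_{\mathcal{F},S}\}$; the latter is cofinite and thus belongs to $\omega$. In the general case one bypasses the enumeration by extending the filter base $\{W_{\mathcal{F},S}\}$ to an ultrafilter on $V(\beta)$ via Zorn, then transferring it along any sufficiently dispersive sequence $\NN\to V(\beta)$ to an ultrafilter on $\NN$.

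The final verification that $\alpha_{x^\omega}=\beta$ proceeds by cases on a polynomial $h\in\KK[V]$: if $h\in\fp_\beta$ then $h(x_k)=0$ for every $k$ because $x_k\in V(\beta)$; if $h(\beta)>0$ then $h\in\mathcal{F}_k$ for all $k$ large enough, whence $\{k:h(x_k)>0\}\in\omega$; the case $h(\beta)<0$ is handled by replacing $h$ with $-h$. This shows $h\in\alpha_{x^\omega}\Leftrightarrow h(\beta)\geq 0\Leftrightarrow h\in\beta$. The moreover clause is immediate from the construction: any semialgebraic $S$ with $\beta\in\cons(S)$ is eventually one of the $S_k$, so $\{k:x_k\in S\cap V(\beta)\}$ is cofinite and lies in $\omega$. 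The main obstacle is the realization step: packaging the filter base on $V(\beta)$, which a priori is indexed by an uncountable poset, into a genuine sequence indexed by $\NN$ and an ultrafilter on $\NN$. The finite generation of $\Frac(\KK[V]/\fp_\beta)$ over $\KK$, together with Tarski--Seidenberg transfer, is what ultimately reduces this to a countable cofinality statement.
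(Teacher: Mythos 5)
Your filter-base observation — that $W_{\mathcal{F},S}\neq\varnothing$ by Tarski--Seidenberg, and that the family is stable under finite intersection — is correct and is essentially the same "Artin--Lang" engine the paper runs on. But the step that packages this into a sequence indexed by $\NN$ is where your argument has a genuine gap. You assert that the poset of pairs $(\mathcal{F},S)$ admits a countable cofinal sub-system, on the grounds that $\Frac(\KK[V]/\fp_\beta)$ is finitely generated over $\KK$ and "only countably many sign-pattern conditions need be tracked relative to a fixed countable dense subfield." This is not a proof, and it is not obviously true when $\KK$ is uncountable: the semialgebraic sets $S$ with $\beta\in\cons(S)$ are cut out by polynomials with coefficients ranging over all of $\KK$, and there is no reason a priori that finitely generatedness of the residue field controls the cofinality of this collection. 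The "general case" fallback ("transfer along any sufficiently dispersive sequence $\NN\to V(\beta)$") is precisely the content that needs proof; without specifying what dispersive means and why such a sequence exists and works, you have restated the problem.

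The paper avoids this entirely by exploiting a topological fact you do not use: for $S\in F_\beta$, the intersection $S\cap V(\beta)$ contains a non-empty \emph{open} subset of $V(\beta)$. The point is that if $S=\{f_1=0,\dots,f_\ell=0,\,g_1>0,\dots,g_r>0\}$ and $\beta\in\cons(S)$, then $f_i(\beta)=0$ forces $f_i\in\fp_\beta$, hence $f_i\equiv 0$ on $V(\beta)$; so $S\cap V(\beta)=\{x\in V(\beta):g_1(x)>0,\dots,g_r(x)>0\}$, which is open in $V(\beta)$. Then, since $\KK\subset\RR$ makes $V(\beta)\subset\RR^n$ second countable, one fixes once and for all a countable dense subset $\{x_k\}$ of $V(\beta)$; density plus openness guarantees that \emph{every} $S\in F_\beta$ meets the sequence, so $\{\{k:x_k\in S\}:S\in F_\beta\}$ has the finite intersection property and extends to an ultrafilter $\omega$. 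No cofinality argument is needed — the single dense sequence works uniformly for the whole (possibly uncountable) family. I would suggest replacing your cofinality step with this openness observation; the rest of your verification that $\alpha_{x^\omega}=\beta$ then goes through as written.
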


\begin{remark}Assume that $\beta=\alpha_y$ where $y\in V_\RR\setminus V$; then the ultrafilter $\omega$ and the sequence $(x_k)_k$ have the property that $\lim_{\omega}x_k=y$ in the topology of $V_\RR$.
\end{remark}	

\begin{proof}[Proof of Proposition~\ref{prop:1}]
	We may assume that $\beta\notin V$;  then $V(\beta)$ is infinite and we choose an injection
	$\NN\to V(\beta)$, $k\mapsto x_k$, such that $\calB:=\{x_k\}_{k\in \NN}$ is dense in $V(\beta)$. This is possible since $\KK\subset\RR$.
	In the proof we will use the family 
	\bqn
	F_\beta:=\{S\subset V:\,S\text{ is semialgebraic and }\cons(S)\ni\beta\}
	\eqn
	to construct an ultrafilter $\omega$ on $\NN$ with the desired property. 
	
	We claim that for each $S\in F_\beta$, the intersection $S\cap \calB$ is not empty. 
	Indeed we will show that, for every $S\in  F_\beta$, $S\cap V(\beta)$ contains a non-empty open subset of $V(\beta)$.
	
	If $S\in F_\beta$, then $S$ is a finite union of sets of the form $U=\{f_1=0,\dots,f_\ell=0,g_1>0,\dots,g_r>0\}$.
	Since at least one of these sets is in $F_\beta$, we may assume that $S=U$.  
	Since $\cons(S)\ni\beta$, then 
	\bqn
	f_1(\beta)=\dots=f_\ell(\beta)=0
	\eqn 
	and hence, by definition of support variety, 
	\bqn
	f_1|_{V(\beta)}=\dots=f_\ell|_{V(\beta)}=0\,.
	\eqn 
	Thus 
	\bqn
	S\cap V(\beta)=\{x\in V(\beta):\,g_1(x)>0,\dots,g_r(x)>0\}\,,
	\eqn
	which is open in $V(\beta)$.
	In addition, since $S\in F_\beta$ and $V(\beta)\in F_\beta$, then
	$S\cap V(\beta)\in F_\beta$ and in particular it is not empty. 
	
	This implies that
	$$\omega_{pf}:=\{\{k\in\NN:\, x_k\in S\cap\calB\},\,S\in F_\beta \}$$
	consists of a family of subsets of $\NN$ that contains all finite intersections  of its elements 
	and hence $\omega_{pf}$ is contained in some ultrafilter $\omega$ on $\NN$.
	
	Let ${x^\omega}\in {V^\omega}\subset({\KK^\omega})^n$ be the point defined by the sequence $(x_k)_{k\in\NN}$ 
	as in \eqref{eq:xomega}.
	We claim that $\beta=\alpha_{x^\omega}$. 
	To see that $\alpha_{x^\omega}\subset\beta$, observe that if $f(x^\omega)\geq0$, there exists $P\in\omega$ 
	such that $f(x_k)\geq0$ for all $k\in P$.  From this we want to deduce that $f(\beta)\geq0$.
	By contradiction, if $f(\beta)<0$, then $S=\{x\in V:\,f(x)<0\}$ is a semialgebraic subset
	with $\beta\in \cons(S)$.  Therefore $S\cap V(\beta)$ is open and not empty,
	and $\{k:\,x_k\in S\cap V(\beta)\}\cap P=\varnothing$,
	which is a contradiction since $\{k:\,x_k\in S\cap V(\beta)\}\subset\omega$.  
	In particular $\fp_{\alpha_{x^\omega}}\subset\fp_\beta$.
	
	To show the other inclusion, we show that if $f(\beta)=0$, then $f(x^\omega)=0$. 
	In fact from $f(\beta)=0$ it follows that $f|_{V(\beta)}=0$.  Thus $f(x_k)=0$ for all $x\in \NN$
	and hence $f(x^\omega)=0$.   Thus $\alpha_{x^\omega}\subset \beta$ and $\frak p_{\alpha_{x^\omega}}=\frak p_\beta$
	which by Lemma~\ref{lem:closed_points}
	implies $\alpha_{x^\omega}= \beta$. \end{proof}

We  apply the preceding result to obtain a representation of non-Archimedean points in $\rsp V$, 
that is points in $\rsp V\setminus V_\RR$, involving Robinson fields. 
This uses order convex subrings of $\KK^\omega$ to obtain specializations of points in $\rsp V\setminus V_\RR$ 
(recall Lemma~\ref{lem:special}).

We realize $\beta\in\rsp V$ as $\alpha_{x^\omega}$ as in Proposition~\ref{prop:1}. 
This leads to an order preserving field injection $\FF_\beta\hookrightarrow\oK$
induced by the map $\KK[V]\to\KK^\omega$, $f\mapsto f(x^\omega)$,
which factors through $\fp_\beta$, by passing to a real closure of the corresponding field of fractions.
Let $\plambda\in\oK$ be a positive infinite element such that
$|{(x^\omega)^{(i)}}|<\plambda$ for all $1\leq i\leq n$,
where
\bqn
x^\omega=((x^\omega)^{(1)},\ldots, (x^\omega)^{(n)})
\eqn 
so that $x^\omega\in(\calO_\plambda)^n$.
We let $\oxl\in V_{\oKl}$ be the image of $\ox$ under reduction modulo $\calI_\plambda$ 
and let $\alpha_{\oxl}\in \rsp V$ be the corresponding point in the real spectrum. 
Using again the norm $N(x)=\sqrt{\sum_{j=1}^n x_j^2}$ from \S\ref{s:Intro2} we have
\begin{cor}\label{c.1oKl}
	Let $\beta=\alpha_{x^\omega}\in \rsp V\setminus V_\RR$ and 
	let $\plambda\in\KK^\omega$ be an infinite element bounding $|(x^\omega)^{(i)}|$, $1\leq i\leq n$. Then
	\begin{enumerate}
		\item If $\FF_\beta\subset\calO_\plambda$, then $\beta=\alpha_{\oxl}$;
		\item if
		$\plambda=[(N(x_k)^2)]_\omega\in{\KK^\omega}$
		then
		\bqn
		\Ret(\beta)=\alpha_{x^\omega_\plambda}\,,
		\eqn
		where $\Ret$ is the retraction map defined in \eqref{eq:retraction}.
	\end{enumerate}
\end{cor}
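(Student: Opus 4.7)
The plan is to view both statements as instances of the specialization mechanism from Lemma~\ref{lem:special}, applied through the embedding $\phi_\beta\colon \KK[V]\to \FF_\beta\hookrightarrow \oK$ of \eqref{eq:InjectionInHyperreals}, in which $\phi_\beta(f)$ coincides with the evaluation $f(x^\omega)$. By functoriality of polynomial evaluation, whenever $x^\omega \in (\calO_\plambda)^n$ — which is guaranteed by the hypothesis that $\plambda$ bounds $|(x^\omega)^{(i)}|$ for every $i$ — the reduction $\pi\colon \calO_\plambda\to \oKl$ sends $f(x^\omega)$ to $f(\oxl)$. This identity links abstract reductions of prime cones with concrete evaluations at $\oxl$ and will be used in both parts.

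For part (1), the first step is to argue that, under the hypothesis $\FF_\beta\subset \calO_\plambda$, the intersection $\FF_\beta\cap \calI_\plambda$ is trivial: a nonzero element $y\in \FF_\beta\cap \calI_\plambda$ would have $y^{-1}\in \FF_\beta\subset \calO_\plambda$, contradicting the characterization of $\calI_\plambda$ as the set of elements of $\calO_\plambda$ whose inverses lie outside $\calO_\plambda$. Consequently $\pi|_{\FF_\beta}$ is an injective, order-preserving field homomorphism, and the identity $f(\oxl)=\pi(\phi_\beta(f))$ yields $f(\oxl)\geq 0$ in $\oKl$ if and only if $\phi_\beta(f)\geq 0$ in $\FF_\beta$, i.e.\ if and only if $f\in \beta$. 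This gives $\alpha_{\oxl}=\beta$.

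For part (2), the plan is to identify the convex subring $\calO\subset \FF_\beta$ generated by $\phi_\beta(\KK[V])$ — whose reduction modulo the maximal ideal realizes $\Ret(\beta)$ by the second part of Lemma~\ref{lem:special} — with $\calO_\plambda\cap \FF_\beta$. The key observation is that the specific choice $\plambda=[(N(x_k)^2)]_\omega$ gives $\plambda=\sum_{i=1}^n ((x^\omega)^{(i)})^2\in \phi_\beta(\KK[V])$ together with the bounds $|(x^\omega)^{(i)}|^2\leq \plambda$. From these, every element of $\phi_\beta(\KK[V])$, being a polynomial in the $(x^\omega)^{(i)}$ with $\KK$-coefficients, is dominated by some $C\plambda^d$ and hence lies in $\calO_\plambda$; conversely, the powers $\plambda^m$ lie in $\phi_\beta(\KK[V])$ and dominate any element of $\calO_\plambda\cap \FF_\beta$. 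This yields $\calO=\calO_\plambda\cap \FF_\beta$, and because $\calO_\plambda$ is a valuation ring one gets $\calI=\calI_\plambda\cap \FF_\beta$ for free. Lemma~\ref{lem:special} then identifies $\Ret(\beta)$ with the homomorphism $f\mapsto \pi(\phi_\beta(f))=f(\oxl)$, so $\Ret(\beta)=\alpha_{\oxl}$.

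I do not expect a substantial obstacle: both parts boil down to tracking how the valuation data of $\oK$ restrict to the subfield $\FF_\beta$, together with the functorial identification $\pi(\phi_\beta(f))=f(\oxl)$. The only auxiliary verification is that $\oxl\in V_{\oKl}$, which is automatic because $\pi$ is a ring homomorphism and the defining equations of $V$ already vanish on $x^\omega$.
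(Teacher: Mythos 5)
Your proof of part (1) follows essentially the same route as the paper's: the hypothesis $\FF_\beta\subset\calO_\plambda$ forces the kernel $\FF_\beta\cap\calI_\plambda$ of $\pi|_{\FF_\beta}$ to vanish, making it an injective (hence order-reflecting, since $\FF_\beta$ is real closed) field embedding into $\oKl$, and $\alpha_{\oxl}=\beta$ follows from the identity $\pi(\phi_\beta(f))=f(\oxl)$.

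For part (2), your argument is correct but genuinely different from the paper's. The paper proves the result by checking two things separately: first, that $\beta\subset\alpha_{\oxl}$ (because $\pi\colon\calO_\plambda\to\oKl$ is order-preserving), and second, that $\alpha_{\oxl}$ is a \emph{closed} point, which it verifies via Proposition~\ref{prop:closed_points} by showing $\FF(\oxl)$ is Archimedean over $\phi(\KK[V])$ (since every $\phi(f)$ is bounded by a power of the big element $\plambda\in\phi(\KK[V])$). Uniqueness of the closed specialization then gives $\alpha_{\oxl}=\Ret(\beta)$. You instead invoke the final clause of Lemma~\ref{lem:special} directly: you identify the convex subring of $\FF_\beta$ generated by $\phi_\beta(\KK[V])$ with $\calO_\plambda\cap\FF_\beta$ (using that $\plambda=Q(x^\omega)\in\phi_\beta(\KK[V])$ both dominates all of $\phi_\beta(\KK[V])$ up to powers and is dominated, again up to powers, by every element of $\calO_\plambda$), observe that its maximal ideal is $\calI_\plambda\cap\FF_\beta$, and conclude that the reduction is exactly $f\mapsto f(\oxl)$. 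This is cleaner in that it computes $\Ret(\beta)$ in one step rather than verifying closedness and specialization separately, at the cost of having to establish the exact identification $\calO=\calO_\plambda\cap\FF_\beta$ (which you do correctly). One small omission relative to the paper: you do not explicitly note that $\plambda=[(N(x_k)^2)]_\omega$ is an infinite element, which is what $\beta\notin V_\RR$ buys you and what makes $\calO_\plambda$ a proper convex subring; without it, the whole reduction construction degenerates. This is worth stating, though it is easily supplied.
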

In particular this implies that any closed point $\beta\in \rspcl V\setminus V_\RR$
can be realized as $\alpha_{x^\omega_\plambda}$, so that for
character varieties points in the real spectrum compactification can be represented by asymptotic
cones of sequences of representations.
Notice that if $b\in\FF_\beta$ is a big element and there exists $m\in\NN_{\geq1}$ such that $b<\plambda^m$,
then the condition $\FF_\beta\subset\calO_\plambda$ in Corollary~\ref{c.1oKl}~(1) is satisfied.

\begin{proof}[Proof of Corollary~\ref{c.1oKl}]
	(1) follows from the fact that if $\FF_\beta\subset\calO_\plambda$, then the reduction modulo $\calI_\plambda$ gives a field homomorphism $i:\FF_\beta\to \oKl$ which is then necessarily injective.
	To see (2), observe that since $\beta\notin V_\RR$,  $\plambda=[(N(x_k)^2)_{k\geq1}]_\omega\in{\KK^\omega}$ is an infinite element and $x^\omega\in V_{\KK^\omega}\cap(\calO_\plambda)^n$.
	Since the quotient map $\calO_\plambda\to\oKl$ is order preserving, we conclude that 
	\bqn
	\beta=\alpha_{x^\omega}\subset\alpha_{x_\plambda^\omega}\,.
	\eqn
	It remains to show that $\alpha_{x_\plambda^\omega}$ is a closed point.  Consider
	\bqn
	\ba
	\phi\colon\KK[V]&\longrightarrow\oKl\\
	f\quad&\mapsto f(x_\plambda^\omega)\,,
	\ea
	\eqn
	and let $\FF(x_\plambda^\omega)$ be the real closure of the field of fractions of $\phi(\KK[V])$.
	Let $P\in\KK[V]$, $P(x):=\sum_{i=1}^nx_i^2$.  
	Then for every $f\in\KK[V]$, $\phi(f)=f(x_\plambda^\omega)$ is bounded by a power of $\phi(P)=P(x_\plambda^\omega)=\plambda$.
	Since $\plambda$ is a big element of $\oKl$ and hence $\oKl$ is Archimedean over $\phi(\KK[V])$,
	thus so is $\FF(x_\plambda^\omega)$.  By Proposition~\ref{prop:closed_points} this implies that $\alpha_{x_\plambda^\omega}$ is closed,
	hence $\alpha_{x_\plambda^\omega}=\Ret(\beta)$.
\end{proof}

\subsection{An accessibility result}\label{subsec:accessibility}
In this section we prove an accessibility result 
which will be useful in proving good topological properties of various compactifications of character varieties. 
The proof uses the full strength of semialgebraic geometry,  as it uses \cite[Corollary 9.3.3]{BCR} 
which shows that the ends  of semialgebraic sets are tame in a very strong sense. 

\medskip
Let, as before, $V\subset \KK^n$ be real algebraic. 
We say that a function $\norm:V\to\KK$ is proper if the inverse image of any bounded interval is a bounded subset of $\KK^n$. For the next proposition the assumption that $\KK$ is countable is crucial, as it guarantees that the topology of $\rspcl V$ is separable.

\begin{prop}\label{prop:2}
	Assume that $\KK\subset\RR$ is countable real closed. Let $S\subset V\subset \KK^n$ be semialgebraic and $\norm: V\to[0,\infty)$ be  continuous, proper, semialgebraic. 
	Then for every $\alpha\in \rspcl  V\cap \cons( S)$ non-Archimedean, that is $\alpha\in \rsp V\setminus V_\RR$, there exists $T\geq 0$ and 
	\bqn
	s_\alpha:[T,\infty)\to S\cap V(\alpha)
	\eqn
	continuous, semialgebraic on bounded intervals
	such that
	\begin{enumerate}
		\item $\norm(s_\alpha(t))=t \quad \forall t\geq T$
		\item $\lim_{t\to\infty} s_\alpha(t)=\alpha$ the convergence taking place in $\rspcl  V$.
	\end{enumerate} 
\end{prop}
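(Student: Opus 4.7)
My approach combines three ingredients: the hyper-$\KK$ realization of $\alpha$ from Proposition~\ref{prop:1}, the metrizability of $\rspcl V$ (which follows from the countability of $\KK[V]$), and the semialgebraic triviality/curve-selection-at-infinity statement \cite[Corollary~9.3.3]{BCR}. The heart of the construction is a diagonal patching of continuous semialgebraic sections of $\norm$ along a nested neighborhood basis of $\alpha$.

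First I would verify that $\norm$ is unbounded on every semialgebraic $W\subset V$ with $\alpha\in\cons(W)$. If not, $W\subset\{\norm\leq T\}$ for some $T\in\KK$; by properness $\{\norm\leq T\}$ is bounded, hence contained in $\{Q\leq R^2\}$ for some $R\in\KK$, where $Q(x)=\sum x_i^2$. Passing to constructible closures yields $Q(\alpha)\leq R^2$, and the argument in Proposition~\ref{prop:closed}(1) then forces $\FF_\alpha$ Archimedean and $\alpha\in V_\RR$, contradicting non-Archimedicity. In particular, for every $M\in\KK$ and every semialgebraic $W$ with $\alpha\in\cons(W)$ one still has $\alpha\in\cons(W\cap\{\norm>M\})$.

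Next, using countability of $\KK[V]$, fix a decreasing countable basis $U_1\supset U_2\supset\cdots$ of basic open neighborhoods of $\alpha$ in $\rsp V$, each $U_n$ semialgebraic, and set $Z_n:=U_n\cap S\cap V(\alpha)$. Since $V(\alpha)$ is algebraic (the zero set of the finitely generated prime ideal $\fp_\alpha$), the $Z_n$ are nested semialgebraic sets with $\alpha\in\cons(Z_n)$ and thus have unbounded $\norm$ by the previous step. Applying \cite[Corollary~9.3.3]{BCR} to $\norm\colon Z_n\to\KK_{\geq0}$ furnishes $\tau_n\in\KK$ and a semialgebraic trivialization identifying $Z_n\cap\norm^{-1}([\tau_n,\infty))$ with a finite disjoint union of open-closed pieces of product form $F_{n,i}\times[\tau_n,\infty)$, the second projection being $\norm$. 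Realizing $\alpha=\alpha_{x^\omega}$ via Proposition~\ref{prop:1} with $x_k\in Z_n$ for $\omega$-almost all $k$ singles out a unique distinguished piece accumulating to $\alpha$; since a realizing sequence for $Z_{n+1}$ also realizes $\alpha$ via $Z_n$, the distinguished piece of $Z_{n+1}$ is coherently contained in that of $Z_n$.

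Finally I construct $s_\alpha$ inductively. Pick $T_1<T_2<\cdots\to\infty$ with $T_n>\tau_n$, and on $[T_n,T_{n+1}-\varepsilon_n]$ take a continuous semialgebraic section of $\norm$ inside the distinguished piece of $Z_n$ using its trivialization. On the short interval $[T_{n+1}-\varepsilon_n,T_{n+1}]$ I interpolate via a semialgebraic path in the semialgebraically connected fiber component $F_{n,j_n}$ of the distinguished piece of $Z_n$, with $\norm$ increased monotonically from $T_{n+1}-\varepsilon_n$ to $T_{n+1}$, so as to meet the starting point of the section chosen for $Z_{n+1}$; this is possible precisely because the distinguished pieces of $Z_n$ and $Z_{n+1}$ lie coherently in the same fiber component. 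Concatenation yields $s_\alpha\colon[T_1,\infty)\to S\cap V(\alpha)$, continuous, semialgebraic on each bounded subinterval, with $\norm(s_\alpha(t))=t$ and $s_\alpha(t)\in Z_n$ for all $t\geq T_n$; as $(U_n)$ is a neighborhood basis of $\alpha$ in $\rsp V$ this is $s_\alpha(t)\to\alpha$ in $\rspcl V$. The delicate step, and the main potential obstacle, is the gluing in the last paragraph: ensuring that local sections fit into one globally continuous curve respecting $\norm(s_\alpha(t))=t$, for which the coherence of the distinguished pieces under $Z_{n+1}\subset Z_n$, singled out via Proposition~\ref{prop:1}, is essential.
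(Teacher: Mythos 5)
Your proof is correct and follows essentially the same route as the paper's: countable neighborhood basis from countability of $\KK$, unboundedness of $\norm$ via properness and non-Archimedeanness, semialgebraic triviality of $\norm$ over a ray via \cite[Corollary~9.3.3]{BCR}, a coherent choice of connected ``pieces,'' and a concatenation of local sections respecting $\norm(s_\alpha(t))=t$. The one place where you genuinely diverge is in how you single out and cohere the ``distinguished piece.'' You invoke the hyper-$\KK$ realization of Proposition~\ref{prop:1}, realizing $\alpha=\alpha_{x^\omega}$ and using the ultrafilter to identify the piece that $\omega$-almost all $x_k$ land in, and then to propagate this choice from $Z_n$ to $Z_{n+1}$. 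The paper does this without ultrafilters, relying only on the Boolean-algebra isomorphism $S\mapsto\cons(S)$ from Theorem~\ref{thm:constr}: since the pieces are pairwise disjoint and $\cons$ preserves finite Boolean operations, $\alpha$ lies in $\cons$ of exactly one connected component $C_k$ of $R_k$; and since each $C_k$ is open and closed in $R_k$, $C_k\cap R_{k+1}$ is a union of connected components of $R_{k+1}$, so the inductive choice $C_{k+1}\subset C_k$ with $\alpha\in\cons(C_{k+1})$ is immediate. Your ultrafilter route is sound (and the ``uniqueness'' you claim does hold, since disjointness of the pieces gives disjointness of their $\cons$-closures), but it imports more machinery than the argument strictly requires; the direct Boolean argument is shorter and keeps the proof self-contained within \S\ref{subsec:compactification_semialgebraic}. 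Your interpolation step on $[T_{n+1}-\varepsilon_n,T_{n+1}]$ — moving through the connected fiber $F_{n,j_n}$ while increasing $\norm$ monotonically and then reparametrizing so that $\norm(s_\alpha(t))=t$ — is correct and serves the same purpose as the paper's choice of points $x_k\in C_k$ with $\norm(x_k)<\norm(x_{k+1})$ joined by $\norm$-graded semialgebraic paths $p_k$ inside $C_k$.
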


\begin{proof}
	As $\KK$ is countable, the topology on $\rsp V$ is second countable. 
	As $\rspcl  V$ is furthermore Hausdorff we can find a sequence $U_1\supset U_2\supset\ldots$ of open semialgebraic sets 
	such that $\{\cons( U_k)|\;k\geq 1\}$ forms a basis of neighbourhoods of $\alpha$ in $\rsp V$, in particular $\cap_{k\geq 1} c( U_k)\cap \rspcl  V=\{\alpha\}$.
	
	Let $E_k$ denote the intersection $U_k\cap S\cap V(\alpha)$. Since $\norm:V\to [0,\infty)$ is proper, the prime cone 
	$\alpha$ is non-Archimedean and $\alpha\in \cons(E_k)$, the image $\norm(E_k)\subset (0,\infty)$ is unbounded semialgebraic, hence contains a ray $[a_k,\infty)$ for some $a_k>0$.
	
	Thus by \cite[Corollary 9.3.3]{BCR}  there exists  $b_k\geq a_k$  such that $\norm|_{E_k}$ is semialgebraically trivial over $(b_k,\infty)$ (see \cite[Definition 9.3.1]{BCR});
	we may  assume that $b_k\leq b_{k+1}$. Set $R_k:=\norm|_{E_k}^{-1}(b_k,\infty)$. From $b_k\leq b_{k+1}$ and $E_k\supset E_{k+1}$, it follows $R_k\supset R_{k+1}$ for all $k\geq 1$. Observe that since $\norm:R_k\to(b_k,\infty)$ is semialgebraically trivial over $(b_k,\infty)$, so is $\norm|_C$ for every connected component $C$ of $R_k$. Also, since $C$ is open and closed in $R_k$, the intersection $C\cap R_{k+1}$ is a union of connected components of $R_{k+1}$. We can thus inductively choose for every $k\geq 1$ a connected component $C_k$ of $R_k$ such that $C_k\supset C_{k+1}$ and $\alpha\in\cons(C_k)$.
	
	Finally choose a sequence $x_k\in C_k$ such that $\norm(x_k)<\norm(x_{k+1})$ for every $k\geq 1$. Observe that, since $x_k\in C_k\subset U_k$ and $\bigcap_{k=1}^\infty \cons(U_k)=\{\alpha\}$, we have $\bigcup_{k=1}^\infty[\norm(x_k),\norm(x_{k+1})]=[\norm(x_1),\infty)$. Now choose, for every $k\geq 1$ a continuous semialgebraic path
	$$p_k:[\norm(x_k),\norm(x_{k+1})]\to C_k$$
	with $p_k(\norm(x_k))=x_k$, $p_k(\norm(x_{k+1}))=x_{k+1}$ and such that $\norm\circ p_k(t)=t$, for all $t\in[\norm(x_k),\norm(x_{k+1})]$.
	Let $s_\alpha:[\norm(x_1),\infty)\to C_1$ be the continuous locally semialgebraic path obtained as concatenation of the paths $p_k$. Then $s_\alpha$ takes values in $S\cap V(\alpha)$ and  $\lim_{t\to\infty}s_\alpha(t)=\alpha$ in $\rspcl V$ because $s_\alpha([\norm(x_k),\infty))\subset U_k$.

\end{proof}
Let now 
\bqn
\norm_\RR:V_\RR\to [0,\infty)_\RR
\eqn
denote the continuous semialgebraic extension.

\begin{cor}\label{c.access}
	Under the assumptions of Proposition \ref{prop:2} 
	there exists $s_\alpha^\RR:[T,\infty)\to S_\RR\cap V(\alpha)_\RR$ locally semialgebraic, continuous, such that 
	\begin{enumerate}
		\item $\norm_\RR\circ s_\alpha^\RR(t)=t$
		\item $\lim_{t\to\infty}s_\alpha^\RR(t)=\alpha.$
	\end{enumerate}
\end{cor}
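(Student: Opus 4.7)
The plan is to produce $s_\alpha^\RR$ as a piecewise $\RR$-extension of the locally semialgebraic path $s_\alpha$ from Proposition~\ref{prop:2}, and then check convergence in $\rspcl V$ via the neighborhood basis used in its construction. Recall that the proof of Proposition~\ref{prop:2} realizes $s_\alpha$ as the concatenation of continuous semialgebraic maps
\[
p_k \colon [\norm(x_k),\norm(x_{k+1})] \longrightarrow C_k,
\]
defined over $\KK$, satisfying $\norm \circ p_k(t) = t$, and matching at common endpoints, $p_k(\norm(x_{k+1})) = x_{k+1} = p_{k+1}(\norm(x_{k+1}))$, with $C_k \subset U_k \cap S \cap V(\alpha)$, where $\{\cons(U_k)\}_{k\geq 1}$ is a decreasing basis of neighborhoods of $\alpha$ in $\rsp V$ and the sequence $\norm(x_k)$ tends to $\infty$.

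By Properties~\ref{ppts.2.10}~(2), each $p_k$ extends to a continuous semialgebraic map $(p_k)_\RR \colon [\norm(x_k),\norm(x_{k+1})] \to (C_k)_\RR$. The relations $\norm_\RR \circ (p_k)_\RR(t) = t$ and the matching of endpoint values $(p_k)_\RR(\norm(x_{k+1})) = x_{k+1} = (p_{k+1})_\RR(\norm(x_{k+1}))$ persist under extension (these are polynomial identities, so the Tarski--Seidenberg principle applies; cf.\ \cite[Proposition 5.3.1]{BCR}). Concatenating the $(p_k)_\RR$ therefore yields a well-defined continuous, locally semialgebraic map
\[
s_\alpha^\RR \colon [\norm(x_1),\infty) \longrightarrow S_\RR \cap V(\alpha)_\RR
\]
with $\norm_\RR \circ s_\alpha^\RR(t) = t$, establishing property~(1) with $T := \norm(x_1)$.

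For property~(2), fix $k \geq 1$. For every $t \geq \norm(x_k)$ the point $s_\alpha^\RR(t)$ belongs to $(C_k)_\RR \subset (U_k)_\RR$, and under the open embedding $V_\RR \hookrightarrow \rspcl V$ of Proposition~\ref{prop:closed}~(1) the image of $(U_k)_\RR$ is contained in $\cons(U_k)$. Since $\{\cons(U_k)\}_{k\geq 1}$ is a basis of neighborhoods of $\alpha$ in $\rspcl V$ and $\norm(x_k) \to \infty$, we conclude that $s_\alpha^\RR(t) \to \alpha$ in $\rspcl V$ as $t \to \infty$.

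The main point, though mild, is ensuring compatibility between two different mechanisms: on one hand the $\RR$-extension of semialgebraic data, handled uniformly by Properties~\ref{ppts.2.10}, and on the other hand convergence in the spectral topology on $\rspcl V$, which is detected via the constructible neighborhoods $\cons(U_k)$ of $\alpha$. The bridge is simply the inclusion $(U_k)_\RR \subset \cons(U_k)$ coming from $V_\RR \hookrightarrow \rsp V$, so no additional technology is required beyond what is already recorded in Section~\ref{subsec:compactification_semialgebraic}.
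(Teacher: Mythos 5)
Your proof is correct and follows essentially the same route as the paper: define $s_\alpha^\RR$ piecewise as the $\RR$-extension of the segments $p_k$, obtain (1) from uniqueness of the semialgebraic extension, and obtain (2) from the inclusion $(U_k)_\RR \subset \cons(U_k)$ together with the fact that $\{\cons(U_k)\}_{k\geq 1}$ forms a basis of neighborhoods of $\alpha$. The paper states this more tersely, but the mechanism is identical.
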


\begin{proof}
	We define $s_\alpha^\RR$ on each interval $[\norm(x_k),\norm(x_{k+1})]_\RR$ as the semialgebraic extension of $s_\alpha$. 
	Then (1) follows from uniqueness of the extension, and for (2) we have that $s_\alpha^\RR([\norm(x_k),\norm(x_{k+1})])\subset (U_k)_\RR\subset c( U_k)$, 
	which implies that $\lim_{t\to\infty}\norm_\RR(t)=\alpha$.
\end{proof}

\begin{lem}\label{l.access}
	Assume  $S\subset V\subset \KK^n$ is semialgebraic with $\KK$ as in Proposition \ref{prop:2} and $\alpha\in\rspcl{V}\cap \cons(S)$ is non-Archimedean. Let $\omega$ be any non-principal ultrafilter, $(x_k)_{k\in\NN}$ a sequence in $S_\RR\cap V(\alpha)_\RR$ with $\lim x_k=\alpha$ and $\plambda=[(N(x_k)^2)]_\omega\in\oK$. Then $\alpha_{x^\omega_\plambda}=\alpha$.
\end{lem}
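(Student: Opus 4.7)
My plan is to prove the equality by first establishing the inclusion $\alpha\subseteq \alpha_{x^\omega}$ of prime cones in $\KK[V]$. Since $\alpha$ is closed in $\rsp V$, equivalently maximal with respect to inclusion, this inclusion automatically upgrades to the equality $\alpha=\alpha_{x^\omega}$. Combined with Corollary~\ref{c.1oKl}(2), this yields
\[
\alpha_{x^\omega_\plambda}=\Ret(\alpha_{x^\omega})=\Ret(\alpha)=\alpha,
\]
the last equality because closed points are fixed by the retraction.

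To establish $\alpha\subseteq \alpha_{x^\omega}$, I would take $f\in\alpha$ and argue by distinguishing whether $f$ lies in the prime ideal $\fp_\alpha=\alpha\cap(-\alpha)$ or in $\alpha\smallsetminus(-\alpha)$. In the first case, the hypothesis $x_k\in V(\alpha)_\RR$ gives $f(x_k)=0$ for every $k$, hence $f(x^\omega)=0\geq 0$. In the second case, $\phi_\alpha(f)>0$ strictly in $\FF_\alpha$, so the open semialgebraic set $U=\{y\in V:f(y)>0\}$ satisfies $\alpha\in \cons(U)$, and $\cons(U)$ is open in $\rsp V$ by Theorem~\ref{thm:constr}(2). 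The convergence $x_k\to \alpha$ then implies $\alpha_{x_k}\in \cons(U)$ for all but finitely many $k$, i.e.\ $f(x_k)>0$ for such $k$, whence $f(x^\omega)>0$. In either case $f\in \alpha_{x^\omega}$.

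Finally, to legitimately invoke Corollary~\ref{c.1oKl}(2) I must verify that its hypotheses hold: that $\plambda=[(N(x_k)^2)]_\omega$ is an infinite element of $\KK^\omega$ and that $|(x^\omega)^{(i)}|\leq \plambda$ for every $i$. The bound on coordinates is immediate from $(x_k^{(i)})^2\leq N(x_k)^2$. For the first, since $\alpha$ is non-Archimedean, the proof of Proposition~\ref{prop:closed}(1) shows that $Q(\alpha)$ exceeds every element of $\KK$, where $Q(x)=N(x)^2$; the convergence $x_k\to\alpha$ through the open sets $\cons(\{Q>T\})$ then forces $N(x_k)^2\to\infty$ in $\RR$, so $\plambda$ is infinite. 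The principal obstacle in the argument, which is otherwise fairly direct, is the smooth translation between topological convergence in $\rspcl V$ and polynomial inequalities on the coordinates $f(x_k)$, mediated by the constructible-open correspondence of Theorem~\ref{thm:constr}(2).
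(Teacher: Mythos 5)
Your proof is correct, but it takes a genuinely different route from the paper's. The paper argues by contradiction: if $\alpha_{\oxl}\neq\alpha$, both are closed points (invoking Corollary~\ref{c.1oKl}(2)), so one separates them by disjoint open semialgebraic sets $U$, $U'$ in the Hausdorff space $\rspcl V$; since $\alpha_{\oxl}\in\cons(U)$ one gets $x_k\in U$ for $\omega$-almost all $k$, hence $x_k\in(U')^c$ $\omega$-almost always, and as the sequential limit and the $\omega$-limit must agree in a Hausdorff space, $\alpha=\lim_\omega x_k\in\cons((U')^c)$, contradicting $\alpha\in\cons(U')$. You instead work directly at the level of prime cones: you establish the inclusion $\alpha\subseteq\alpha_{x^\omega}$ by the dichotomy $f\in\fp_\alpha$ (use $x_k\in V(\alpha)_\RR$) versus $f\in\alpha\smallsetminus(-\alpha)$ (use openness of $\cons(\{f>0\})$ and convergence), upgrade to equality by maximality of closed prime cones, and then push through $\Ret$ via Corollary~\ref{c.1oKl}(2). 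Both arguments lean on Corollary~\ref{c.1oKl}(2), though for different facts (the paper for closedness of $\alpha_{\oxl}$, you for $\alpha_{\oxl}=\Ret(\alpha_{x^\omega})$). Your version is more constructive and arguably makes the role of the hypothesis $x_k\in V(\alpha)_\RR$ clearer, since that is exactly what handles the $\fp_\alpha$ case; it is also more scrupulous about checking that $\plambda=[(N(x_k)^2)]_\omega$ is infinite before invoking Corollary~\ref{c.1oKl}(2), a step the paper's proof of the lemma leaves implicit. The paper's argument is shorter and purely topological, avoiding the case analysis, at the cost of routing through Hausdorff separation and $\omega$-limits.
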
	
\begin{proof}
	Assume that $\alpha_{\oxl}\neq \alpha$. Since both points are closed (Corollary \ref{c.1oKl} (2)), there exist disjoint open semialgebraic sets $U, U'$  such that $\alpha_{\oxl}\in c(U)$, $\alpha\in c(U')$.
	Then $\omega\{k|\; x_k\in U\}=1$, which implies that $\omega\{k|\; x_k\in (U')^c\}=1$. Since the space $\rspcl V$ is Hausdorff, and, by assumption, the sequence $(x_k)_{k\in\NN}$ converges, its limit is unique and agrees with any possible $\omega$-limit. We deduce that $\alpha=\lim x_k=\lim_\omega x_k\in c((U')^c)$, a contradiction.
\end{proof}
\begin{cor}\label{cor:3.9}
	Under the assumptions of Proposition \ref{prop:2} let $\omega$ be a
	non-principal ultrafilter on $\NN$ and
	$\plambda=(\mu_k)_{k\in\NN}$ with $\lim\mu_k=+\infty$. For
	every $\alpha\in\rspcl V\cap \cons(S)$ non-Archimedean, there exists a
	sequence $(x_k)_{k\in\NN}$ in $V(\alpha)_\RR\cap S_\RR$ such that
	$[(N(x_k)^2)]_{\omega}=[(\mu_k)]_{\omega}$ in $\oK$ and
	$\alpha=\alpha_{\oxl}$.
\end{cor}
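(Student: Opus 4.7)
The plan is to deduce this from Corollary \ref{c.access} and Lemma \ref{l.access} by reparametrizing the path given by the first along the prescribed sequence of scales, and then applying the second to identify the resulting ultralimit with $\alpha$.

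First I would apply Corollary \ref{c.access} to the continuous, proper, semialgebraic function $\norm = N^2 : V \to [0,\infty)$, where $N(x) = \sqrt{\sum_i x_i^2}$. This is legitimate since $N^2$ is polynomial on $V \subset \KK^n$, hence semialgebraic, and properness holds because the preimage of a bounded interval is a bounded closed semialgebraic subset of $\KK^n$. The corollary then supplies $T \geq 0$ and a continuous, locally semialgebraic path
\[
s_\alpha^\RR : [T,\infty) \to S_\RR \cap V(\alpha)_\RR
\]
with $N(s_\alpha^\RR(t))^2 = t$ for all $t \geq T$ and $\lim_{t\to\infty} s_\alpha^\RR(t) = \alpha$ in $\rspcl V$.

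Next I would use the hypothesis $\lim \mu_k = +\infty$: the set $\{k : \mu_k \geq T\}$ is cofinite and in particular belongs to $\omega$. I then define
\[
x_k := s_\alpha^\RR(\mu_k) \quad \text{for } \mu_k \geq T,
\]
and choose $x_k$ arbitrarily in $S_\RR \cap V(\alpha)_\RR$ for the finitely many remaining indices (this set is nonempty since $\alpha \in \cons(S)$ is non-Archimedean, so $V(\alpha)$ is infinite and $S_\RR \cap V(\alpha)_\RR \neq \varnothing$). By construction $x_k \in S_\RR \cap V(\alpha)_\RR$ for every $k$, and $N(x_k)^2 = \mu_k$ for $\omega$-almost all $k$, hence $[(N(x_k)^2)]_\omega = [(\mu_k)]_\omega = \plambda$ in $\oK$.

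Finally I would verify that $\lim_{k\to\infty} x_k = \alpha$ in $\rspcl V$: since $\mu_k \to +\infty$ in $\RR$ and $s_\alpha^\RR(t) \to \alpha$ as $t \to \infty$, any neighborhood of $\alpha$ in $\rspcl V$ contains $s_\alpha^\RR([t_0,\infty))$ for some $t_0$, and hence $x_k$ for all sufficiently large $k$. With this the hypotheses of Lemma \ref{l.access} are met, and we conclude $\alpha_{\oxl} = \alpha$. No step presents a genuine obstacle; the content lies entirely in Corollary \ref{c.access} and Lemma \ref{l.access}, and the role of this corollary is simply to record that the sequence producing $\alpha$ can be chosen so that $N(x_k)^2$ realizes any prescribed infinite scale in $\oK$.
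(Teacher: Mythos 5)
Your proof is correct and follows essentially the same route as the paper: apply Corollary~\ref{c.access} to $N^2$, set $x_k := s_\alpha^\RR(\mu_k)$, and invoke Lemma~\ref{l.access}. The only difference is that you explicitly handle the finitely many indices with $\mu_k < T$ (which the paper's proof silently ignores); since $\omega$ is non-principal this has no effect, so your version is the same argument, written a touch more carefully.
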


\begin{proof}
	We apply Corollary \ref{c.access} with $g:\KK^n\to \KK$, $g(x)=N(x)^2$ to obtain $s_\alpha^\RR:[T,\infty)\to S_\RR\cap V(\alpha)_\RR$  satisfying (1) and (2). Setting $x_k:=s_\alpha^\RR(\mu_k)$, we obtain that $N(x_k)^2=\mu_k$, $lim_{k\to \infty}x_k=\alpha$ and Lemma \ref{l.access} gives $\alpha_{x_\plambda^\omega}=\alpha$.
\end{proof}
\subsection{Continuity of real valued maps}\label{sec:continuity}
In this subsection we establish a continuity result (Proposition~\ref{p.cont}) that will be crucial in many applications, 
the first one being the continuity of the map from the real spectrum of a character variety 
to the Weyl chamber length compactification (see Theorem \ref{t.ThPINTRO}).

Recall that if $\KK\subset\RR$ is a real closed field, $V\subset\KK^n$ an algebraic set
and $T\subset V$ a closed semialgebraic set, 
then $\cons(T)\cap\rspcl{V}$ is a Hausdorff compactification of $T_\RR$ (Proposition~\ref{prop:closed}).

\begin{prop}\label{p.cont} Let $g\colon\KK^n\to\KK$ be a Euclidean norm, and  $T\subset V$ a closed semialgebraic set. 
	If $f\colon T\to\KK$ is a continuous semialgebraic function such that $f(x)>0$ for every $x\in T$,
	then the function 
	\bqn
	\begin{array}{ccc}
		T&\longrightarrow&\RR\\
		x&\mapsto&\displaystyle{\frac{\ln(f(x))}{\ln(2+g(x))}}
	\end{array}
	\eqn
	extends continuously to $\cons(T)\cap\rspcl{V}$ with extension given by 
	\bqn
	\begin{array}{cccc}
		F\colon&\cons(T)\cap\rspcl{V}&\longrightarrow&\RR\\
		&\alpha&\mapsto& \displaystyle{\frac{\log_{b_\alpha}(f(\alpha))}{\log_{b_\alpha}(2+g(\alpha))}}\,,
	\end{array}
	\eqn
	where $b_\alpha\in\FF_\alpha$ is a big element and $f(\alpha)$ is defined below.
\end{prop}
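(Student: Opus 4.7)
\emph{Plan.} The strategy is to make a canonical choice of big element $b_\alpha$ that reduces the two-logarithm formula to a single one, to verify well-definedness and agreement with the given function on $T_\RR$, and then to deduce continuity from a semialgebraic description of the rational level sets of $F$. Since $\alpha$ is closed, Proposition~\ref{prop:closed_points} gives that $\FF_\alpha$ is Archimedean over $\phi_\alpha(\KK[V])$. Because $g$ is a Euclidean norm we have $|X_i|\leq g$ on $\KK^n$, hence $|\phi_\alpha(X_i)|\leq g(\alpha)<2+g(\alpha)$ in $\FF_\alpha$, so every polynomial expression in the coordinates is bounded by some power of $b_\alpha:=2+g(\alpha)$; combined with the Archimedean property this shows that $b_\alpha$ is a big element of $\FF_\alpha$.

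With this choice $\log_{b_\alpha}(2+g(\alpha))=1$, so the formula collapses to $F(\alpha)=\log_{b_\alpha}(f(\alpha))\in\RR$ (the finiteness uses that $b_\alpha$ is big and $f(\alpha)>0$, the positivity being the polynomial inequality $f>0$ preserved on $\cons(T)$ by Theorem~\ref{thm:constr}). Independence from the choice of big element follows from the change-of-base identity $\log_{b'}(a)=\log_b(a)/\log_b(b')$: the two ratios defining $F(\alpha)$ for different big elements differ by a factor that cancels. On $T_\RR$ the field $\FF_\alpha$ embeds in $\RR$, the Dedekind-cut definition of $\log_{b_\alpha}$ specializes to the ordinary real logarithm, and $F$ reduces to the given function.

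The heart of the argument is continuity. Unwinding the Dedekind-cut definition of $\log_b$ and using $b_\alpha>1$ yields, for $p\in\ZZ$ and $q\in\NN$ with $q>0$, the equivalence
\[
\log_{b_\alpha}(f(\alpha))>\frac{p}{q}\;\iff\; b_\alpha^p<f(\alpha)^q\;\iff\;(2+g(\alpha))^p<f(\alpha)^q.
\]
The last condition is polynomial (after clearing denominators when $p<0$), hence semialgebraic, so
\[
\{\alpha\in\cons(T)\cap\rspcl V:\,F(\alpha)>p/q\}=\cons\bigl(\{x\in T:(2+g(x))^p<f(x)^q\}\bigr)\cap\rspcl V
\]
is open by Theorem~\ref{thm:constr}. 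Writing any open ray $(r,\infty)\subset\RR$ as the union of $\{F>p/q\}$ over rationals $p/q>r$, and treating $(-\infty,r)$ symmetrically, gives the continuity of $F$.

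The main obstacle is the identification of $b_\alpha=2+g(\alpha)$ as a big element: it combines Proposition~\ref{prop:closed_points} with the geometric fact that the Euclidean norm dominates each coordinate, and it is precisely what spares one from a case split between Archimedean and non-Archimedean closed points. Once this canonical choice is in hand the remainder of the argument is routine manipulation of Dedekind cuts together with the Artin--Lang principle packaged in Theorem~\ref{thm:constr}.
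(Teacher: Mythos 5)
Your proof follows the paper's strategy closely: identify $b_\alpha=2+g(\alpha)$ as a canonical big element using $\FF_\alpha$ Archimedean over $\phi_\alpha(\KK[V])$, collapse the ratio to a single logarithm, and translate level sets into constructible sets via Theorem~\ref{thm:constr}. However, the stated biconditional
\[
\log_{b_\alpha}(f(\alpha))>\tfrac{p}{q}\iff b_\alpha^p<f(\alpha)^q
\]
is not correct. From $b^p<a^q$ one only gets $p/q\notin B_a$, hence $\log_b(a)\geq p/q$ (not strictly); and from $\log_b(a)>p/q$ one only gets $p/q\in A_a$, i.e.\ $b^p\leq a^q$ (not strictly). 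A concrete failure: in a Puiseux-series field take $b=T$, $a=T^{1/2}+T^{1/3}$; then $T^1<a^2$ yet $\log_T(a)=1/2=p/q$, so the left side is false while the right side is true. Consequently the asserted set-theoretic identity $\{F>p/q\}=\cons(\{(2+g(x))^p<f(x)^q\})\cap\rspcl V$ does not hold in general.

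The conclusion is still salvageable: the open constructible sets $U_{p/q}:=\cons(\{(2+g(x))^p<f(x)^q\})\cap\rspcl V$ satisfy $U_{p/q}\subseteq\{F\geq p/q\}$, and for any $\alpha$ with $F(\alpha)>r$ one can pick two rationals $p_1/q_1<p_2/q_2$ in $(r,F(\alpha))$ — since at most one rational lies in $A_a\cap B_a$, at least one of them gives a strict inequality — so $\bigcup_{p/q>r}U_{p/q}=\{F>r\}$ is open. The paper sidesteps this entirely by working with the correct quantified biconditional $\log_b(a)\leq s\iff b^m\geq a^l$ for all $m/l>s$, expressing $F^{-1}((-\infty,s])$ and $F^{-1}([s,\infty))$ as intersections of \emph{closed} constructible sets $\cons(S_{m,l})$, which avoids the strict/non-strict boundary subtlety altogether. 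You should either adopt that non-strict formulation or add the two-rational argument to close the gap.
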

\begin{remark}\
	\be
	\item The value of the extension of $F$ on $T_\RR\subset \cons(T)\cap\rspcl V$ is given by $\frac{\log(f(x))}{\log(2+g(x))}$.
	\item The strategy of proof of Proposition \ref{p.cont} is taken from Brumfiel \cite[Proposition 5.3]{Brum2}
	\ee
\end{remark}
If $\alpha\in \cons(T)$, let $\phi_\alpha\colon\KK[V]\to\FF_\alpha$ 
a homomorphism representing $\alpha$, for a suitable real closed extension $\FF_\alpha$ of $\KK$.  
We give more detail  about  the definition of $f(\alpha)\in\FF_\alpha$ introduced in \S\ref{subsec:real_spectrum}
(see also \cite[Proposition~7.3.1]{BCR}).
Let $f_{\FF_\alpha}\colon T_{\FF_\alpha}\to\FF_\alpha$ be the extension of $f$ to $\FF_\alpha$ as defined in Section~\ref{s:Intro2}.
Observing that $(\phi_\alpha(X_1),\dots,\phi_\alpha(X_n))\in T_{\FF_\alpha}$, we define
\bqn
f(\alpha):=f_{\FF_\alpha}(\phi_\alpha(X_1),\dots,\phi_\alpha(X_n))\in\FF_\alpha\,,
\eqn
where $\KK[X_1,\ldots, X_n]$ is the coordinate ring of $\KK^n$.

Observe that $\FF_\alpha$, and hence $f(\alpha)$, depend on a choice of a real closure 
of the fraction field of $\KK[V]/\alpha\cap(-\alpha)$.  
However the property that $f(\alpha)$ is positive is well defined independently of this choice.
\begin{proof}
	Since $\alpha$ is a closed point, $\FF_\alpha$ is Archimedean over $\phi_\alpha(\KK[V])$
	and hence $g(\alpha)$ is a big element (as in the proof of Proposition~\ref{prop:closed} with $Q=g$).
	Thus, using  the definition of the logarithm with respect to a big element (recall Equation \eqref{eq:Aa} and \eqref{eq:Ba} in \S~\ref{s.valuation}),
	one can verify that, for any big element $b_\alpha$,
	\bqn
	\frac{\log_{b_\alpha}(f(\alpha))}{\log_{b_\alpha}(2+g(\alpha))}={\log_{(2+g(\alpha))}(f(\alpha))}\,.
	\eqn
	Then
	${\log_{(2+g(\alpha))}(f(\alpha))}\leq s\,\text{ if and only if }\,( 2+g(\alpha))^m\geq f(\alpha)^l\,\text{ whenever }\frac{m}{l}> s\,.$
	Thus
	\bqn
	F^{-1}((-\infty,s])=\bigcap_{\frac ml> s}\{\alpha\in \cons(T)\cap\rspcl{V}:\,( 2+g(\alpha))^m\geq f(\alpha)^l\}\,.
	\eqn
	Likewise
	\bqn
	F^{-1}([s,\infty))=\bigcap_{\frac ml<s}\{\alpha\in \cons(T)\cap\rspcl{V}:\, (2+g(\alpha))^m\leq f(\alpha)^l\}\,.
	\eqn
	Since $g$ and $f$ are continuous and semialgebraic, the set
	\bqn
	S_{m,l}=\{ x\in T:\, (2+g(x))^m\geq f(x)^l\}\,,
	\eqn
	is closed and semialgebraic.  It follows then from
        Theorem \ref{thm:constr} 	that
	\bqn 
	c(S_{m,l})=\{\alpha\in \cons(T):\,(2+g(\alpha))^m\geq f(\alpha)^l \}\,,
	\eqn
	which is closed by Theorem~\ref{thm:constr}\,(2).
	Its intersection with $\rspcl{V}$ is closed in $\rspcl{V}$ and hence $F^{-1}((-\infty,s])$ is closed.
	The argument for $F^{-1}([s,\infty))$ is completely analogous.
	Thus for any $s_1<s_2$ in $\RR$, $F^{-1}((-\infty,s_2))$ and $F^{-1}((s_1,\infty))$ are open 
	and so is $F^{-1}((s_1,s_2))$.
	
\end{proof}

\section{Semialgebraic groups}\label{sec:3}
In this section we discuss preliminaries on semisimple (semi)algebraic groups defined over real closed fields. 
The main goal is to set up the classical theory so that  relevant concepts such as the theory of parabolic subgroups, 
Jordan and Cartan decomposition, as well as the theory of proximal elements are done in a semialgebraic way 
and can thus be transfered to any real closed field. 
\subsection{Semisimple groups over real closed fields and their parabolic subgroups}\label{subsec:4.1}
Let $\bG<\SL(n)$ be a connected\footnote{The connectedness of $\bG$ will play an important role, 
for example, in Lemma \ref{lem:4.6}.} semisimple algebraic group defined over a field $\KK$. 
We will, as we may, consider $\bG$ as a functor from commutative $\KK$-algebras to groups (see \cite[0.3]{Borel_Tits})
and use some results of \cite{Borel_Tits} (see also \cite{Borel1}) in terms of this language.  \label{n.28}

Let $\bS$ be a maximal $\KK$-split torus  \label{n.29}
and $X(\bS)$ be the set of rational characters of $\bS$,  \label{n.30}
that are the rational group morphisms from $\bS$ to $\GL(1)$.  Since $\bS$ is $\KK$-split, it follows from \cite[Proposition~1.3]{Borel_Tits} that
all elements in $X(\bS)$ are defined over $\KK$. We denote by ${}_\KK\Phi\subset X(\bS)$ the set of $\KK$-roots of $\bG$ with respect to $\bS$ \label{n.31}
(see \cite[21.1]{Borel1} for the terminology). Let $\bP$ be a minimal parabolic $\KK$-subgroup containing the centralizer $\calZ(\bS)$ of $\bS$ in $\bG$. Then ${}_\KK\Phi\subset X(\bS)$ is endowed with the order such that $\bP$ is associated to the set of positive roots ${}_\KK\Phi^+\subset X(\bS)$. Recall that a \emph{standard parabolic $\KK$-subgroup} is one that contains $\bP$. Then:
\begin{enumerate}
	\item Any parabolic $\KK$-subgroup is $\bG(\KK)$ conjugated to a standard one \cite[21.12]{Borel1}.
	\item $\bQ\mapsto\bQ(\KK)$ is a bijection between the set of standard parabolic $\KK$-subgroup and the set of abstract subgroups of $\bG(\KK)$ containing $\bP(\KK)$ \cite[21.16, Theorem 21.15, 14.15, 14.16]{Borel1}.
\end{enumerate}	

For a detailed description of the standard parabolic $\KK$-subgroups see \cite[21.11]{Borel1}. Our convention, which differs from Borel, but agrees with the current literature on Anosov representations is the following:  Let ${}_\KK\Delta\subset {}_\KK\Phi^+$ be the set of simple roots. Then to every subset $I\subset{}_\KK\Delta$ corresponds a standard parabolic $\KK$-subgroup ${}_\KK\bP_{I}$, \label{n.32}
and conversely; this correspondence has the property that if 
$\bS_I:=\left(\bigcap_{\alpha\in {}_\KK\Delta\setminus I}\ker\alpha\right)^\circ$  \label{n.33}
then $\calZ(\bS_I)$ is a Levi $\KK$-subgroup of ${}_\KK\bP_{I}$.  \label{n.34}
In particular the minimal parabolic subgroup $\bP$ corresponds to the set $I={}_\KK\Delta$.

The proof of the following proposition uses classical results on Cartan subalgebras of real semisimple algebras 
as well as the transfer principle.  Details can be found in \cite{Appenzeller}.

\begin{prop}\label{prop:F-split=K-split} 
	Let $\KK\subset\RR$ be a real closed field 
	and $\FF\supset\KK$ any real closed field containing $\KK$.
	
	\begin{enumerate}
		\item	If $\bS<\bG$ is a maximal $\KK$-split torus, then $\bS$ is maximal $\FF$-split as well.
		\item If $\bG$ is invariant under transposition then $\bS$ can be chosen to consist of symmetric matrices.
		\item $\bP$ is $\FF$-minimal.
	\end{enumerate}
\end{prop}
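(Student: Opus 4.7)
My plan is to reduce each of the three assertions to a semialgebraic statement over $\qbarr$ and then invoke the Tarski--Seidenberg transfer principle, combined with the classical structure theory of real semisimple Lie algebras; this is the strategy detailed in \cite{Appenzeller}.

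For (1), I would work at the Lie algebra level. The $\KK$-rank of $\bG$ is the dimension of a maximal abelian subalgebra of $\Lie(\bG)$ whose elements are semisimple and $\KK$-diagonalizable. The crucial point is that $\KK$-diagonalizability of a matrix over a real closed field is a first-order semialgebraic condition, because it amounts to the existence of a basis of eigenvectors with eigenvalues in $\KK$, which translates into a system of polynomial equalities and inequalities on the matrix entries and on auxiliary variables (encoding the eigenvalues, as roots of the characteristic polynomial, and the eigenvectors). Consequently the statement ``there exists a commutative subalgebra of $\Lie(\bG)$ of dimension $r+1$ consisting of diagonalizable semisimple elements'' is first-order semialgebraic, so its failure over $\KK$ transfers to failure over $\FF$ by Tarski--Seidenberg. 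Hence $\bS$ remains maximal $\FF$-split.

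For (2), assume $\bG$ is transposition-invariant. Then $\theta(X) = -X^{t}$ restricts to an involution of $\Lie(\bG)$ with $\pm 1$-eigenspace decomposition $\Lie(\bG) = \fk \oplus \fp$, where $\fp$ is the space of symmetric matrices in $\Lie(\bG)$. Any maximal abelian subspace $\fa \subset \fp$ consists of pairwise commuting symmetric matrices, each of which is diagonalizable over $\KK$ by the spectral theorem (which transfers from $\RR$ to any real closed field). The existence of such an $\fa$ of dimension equal to the $\KK$-rank is again a semialgebraic statement, holds over $\RR$ by classical real Lie theory, and therefore holds over any real closed $\KK \subset \RR$. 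The torus $\bS$ with $\Lie(\bS) = \fa$ is then $\KK$-split and consists of symmetric matrices.

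For (3), by part (1) the torus $\bS$ is also maximal $\FF$-split, so the $\FF$-root system of $\bG$ with respect to $\bS$ is defined. Since this root system is determined by the $\bS$-weight decomposition of $\Lie(\bG)$, and $\bS$ is split over both $\KK$ and $\FF$, the decomposition and hence the set of roots are independent of the base field: ${}_\FF\Phi = {}_\KK\Phi$, and similarly for the simple roots with respect to the order fixed by $\bP$. Under the standard bijection between standard parabolic subgroups and subsets of the simple roots, the minimality of $\bP$ is therefore the same over $\KK$ and $\FF$. The main obstacle, concentrated in parts (1) and (2), is to verify rigorously that maximal $\KK$-diagonalizability of commutative subalgebras of $\Lie(\bG)$ is first-order semialgebraic; once this is done, the remaining arguments are transfer and bookkeeping.
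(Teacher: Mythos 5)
Your proposal is correct, and it is exactly the approach the paper has in mind: the paper itself gives no self-contained proof here but states that it ``uses classical results on Cartan subalgebras of real semisimple algebras as well as the transfer principle,'' referring the reader to \cite{Appenzeller}, which is precisely the strategy you execute. The one step that merits slightly more care than your sketch gives it is the first-order encoding in part~(1): to express ``there is a commutative subalgebra of $\KK$-diagonalizable semisimple elements of dimension $r+1$'' as a semialgebraic formula, the cleanest route is to quantify existentially over a basis $X_1,\dots,X_{r+1}$ of the subspace, require pairwise commutativity and linear independence, and require each $X_i$ to admit an eigenbasis with eigenvalues in the ground field (this last condition is a formula because one quantifies over the eigenvectors and eigenvalues and asserts $\det\neq 0$); simultaneous $\KK$-diagonalizability of the whole span is then automatic, so one need not quantify universally over all elements of the subspace. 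With that packaging the failure over $\KK$ (which follows from maximality of $\bS$) transfers to failure over $\FF$ by Properties~\ref{ppts.2.10}(1), and parts~(2) and~(3) follow from the Cartan decomposition $\fg=\fk\oplus\fp$ and the invariance of the $\bS$-weight decomposition under base change, just as you write.
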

As a result 
\begin{cor}\label{cor:4.2}Under the same assumptions as in Proposition \ref{prop:F-split=K-split}:
	\begin{enumerate}
		\item Any parabolic subgroup of $\bG$  defined over $\FF$ is $\bG(\FF)$-conjugate to a parabolic subgroup
		defined over $\KK$. In particular standard parabolic $\KK$-subgroups coincide with  standard parabolic $\FF$-subgroups.
		\item $\bQ\mapsto \bQ(\FF)$ is a bijection 
		between the set of standard parabolic $\KK$-subgroups 
		and the set of abstract subgroups of $\bG(\FF)$ containing $\bP(\FF)$.
	\end{enumerate}	
\end{cor}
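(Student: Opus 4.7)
\textbf{Proof plan for Corollary \ref{cor:4.2}.}

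The plan is to bootstrap the two classical facts listed right before the corollary statement—originally formulated over $\KK$—by applying them over the larger field $\FF$, and then to identify the resulting $\FF$-structures with the $\KK$-structures using Proposition \ref{prop:F-split=K-split}. The whole argument rests on the observation that $\bS$ plays a double role: by Proposition \ref{prop:F-split=K-split}(1) it is simultaneously a maximal $\KK$-split and a maximal $\FF$-split torus, so the rational character group $X(\bS)$ is the same object whether one works over $\KK$ or over $\FF$. In particular the set of roots ${}_\FF\Phi$ and the set of simple roots ${}_\FF\Delta$ (relative to the order defined by the minimal parabolic $\bP$) coincide with ${}_\KK\Phi$ and ${}_\KK\Delta$.

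For part (1), I would start by taking an arbitrary parabolic $\FF$-subgroup $\bQ$ of $\bG$. Applying fact~(1) from the list preceding the corollary over the field $\FF$, $\bQ$ is $\bG(\FF)$-conjugate to a standard parabolic $\FF$-subgroup relative to some minimal parabolic $\FF$-subgroup. Since $\bP$ itself is a minimal parabolic $\FF$-subgroup (Proposition \ref{prop:F-split=K-split}(3)) and any two minimal parabolic $\FF$-subgroups are $\bG(\FF)$-conjugate, I may assume the ambient minimal parabolic is $\bP$. A standard parabolic $\FF$-subgroup containing $\bP$ is then of the form ${}_\FF\bP_I$ for some $I\subset {}_\FF\Delta$, and its construction as $\calZ(\bS_I)\ltimes (\text{unipotent part})$ uses only the characters $\alpha\in{}_\FF\Delta\setminus I$ applied to $\bS$. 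Because these characters are defined over $\KK$ (since $\bS$ is $\KK$-split) and ${}_\FF\Delta={}_\KK\Delta$, the subgroup $\bS_I$, its centralizer, and ${}_\FF\bP_I$ itself are all defined over $\KK$ and coincide with ${}_\KK\bP_I$. This proves both statements in (1).

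For part (2), I apply fact~(2) from the list over the field $\FF$: since $\bP$ is a minimal parabolic $\FF$-subgroup, the assignment $\bQ\mapsto \bQ(\FF)$ is a bijection between standard parabolic $\FF$-subgroups and abstract subgroups of $\bG(\FF)$ containing $\bP(\FF)$. Part~(1) identifies the domain of this bijection with the set of standard parabolic $\KK$-subgroups, yielding (2).

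The only non-routine step is the identification of standard parabolic $\FF$-subgroups with standard parabolic $\KK$-subgroups, and the potential obstacle there is to make sure that the $\KK$-structure is really preserved under extension: that is, to justify that passing from $\KK$ to $\FF$ neither enlarges the root system (extra $\FF$-roots could in principle appear if $\bS$ failed to be maximal $\FF$-split) nor produces a smaller minimal parabolic. Both of these issues are precisely handled by Proposition \ref{prop:F-split=K-split}, and once they are settled, everything else is a direct translation of the classical $\KK$-theory.
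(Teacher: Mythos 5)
Your proof plan is correct and fills in exactly the reasoning the paper leaves implicit when it writes ``As a result'' after Proposition~\ref{prop:F-split=K-split}: you transport Borel's two classical facts (conjugacy to a standard parabolic, and the bijection $\bQ\mapsto\bQ(\KK)$) from $\KK$ to $\FF$, and use parts~(1) and~(3) of the proposition to see that the root datum $({}_\KK\Phi, {}_\KK\Delta, \text{ordering from }\bP)$ is unchanged when $\KK$ is replaced by $\FF$, so that ${}_\FF\bP_I={}_\KK\bP_I$ as subgroups of $\bG$. This is the intended argument, and the steps you flag as the only non-routine ones (equality of root systems, preservation of the minimal parabolic) are precisely what Proposition~\ref{prop:F-split=K-split} is built to supply.
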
	
{ Since a parabolic subgroup is self normalizing, we can understand the algebraic set $\bGmQ(\KK)$ 
as parametrizing the set of  parabolic $\KK$-subgroups that are
$\bG(\KK)$-conjugate to $\bQ$. 
We then have, for every real closed extension $\FF\supset \KK$,  $\bGmQ(\FF)=\bG(\FF)/\bQ(\FF)$.

} 
%
%
%
%

\subsection{Semialgebraic $\KK$-groups and Cartan decomposition}\label{subsec:Cartan}
\begin{defn}\label{defn:4.3}
	A \emph{semisimple, semialgebraic  $\KK$-group} is a group $G$ \label{n.35}
	for which there exists a connected semisimple linear-algebraic group $\bG\subset\SL(n)$ defined over $\KK$ such that 
	\bqn
	\bG(\KK)^\circ< G< \bG(\KK)\,,
	\eqn
	Here $\bG(\KK)^\circ$ refers to the semialgebraic connected component of the identity of the real algebraic set $\bG(\KK)$. 
	In particular $G$ is a semialgebraic set being a finite union of cosets.
\end{defn}

Here and in the sequel we use the procedure of extending a semialgebraic set $S$ to a larger real closed field $\FF$ 
(Definition \ref{d.extension}). 
Observe that $\bG(\RR)$ is the $\RR$-extension of $\bG(\KK)$, as $\bG$ is defined over $\KK$.  It follows from the transfer principle, Property \ref{ppts.2.10} (3), that $\bG(\RR)^\circ$ is the $\RR$-extension of $\bG(\KK)^\circ$ and hence we have the inclusions
\bqn
\bG(\RR)^\circ< G_\RR< \bG(\RR)\,.
\eqn
Let us now assume that $\bG$ is invariant under transposition and define 
\bqn K:= G\cap\SO(n,\RR),\eqn
where, as usual, 
$
\SO(n,\RR):=\{g\in\SL(n,\RR):\,{}gg^t=\Id\}\,.$

From the same transfer principle we deduce that 
\bqn
K_\RR=G_\RR\cap\SO(n,\RR)\,.
\eqn
Since $\bG(\RR)$ is invariant under transposition, $\bG(\RR)\cap\SO(n,\RR)$ is maximal compact in $\bG(\RR)$,
hence meets every connected component of $\bG(\RR)$, that is
\bqn
\bG(\RR)=(\bG(\RR)\cap\SO(n,\RR))\bG(\RR)^\circ\,.
\eqn
Since $\bG(\RR)^\circ\subset G_\RR$, this implies
\bqn
G_\RR=K_\RR\cdot\bG(\RR)^\circ\,.
\eqn
Since $K_\RR$ is a subgroup of $\SO(n,\RR)$, it is obviously invariant under transposition,
which implies that $G_\RR$, hence $G$, is invariant under transposition.

Let now $\bS<\bG$ be a maximal $\KK$-split torus that we can assume consisting of symmetric matrices.
As above, let ${}_\KK\Phi\subset X(\bS)$ be the set of $\KK$-roots of $\bS$ in $\bG$, ${}_\KK\Phi^+\subset{}_\KK\Phi$ a set of positive roots
and $\Delta\subset{}_\KK\Phi^+$ the set of simple roots,
then $\alpha(\bS(\KK))\subset\KK^\times$ for every $\alpha\in X(\bS)$
and hence, if $S:=\bS(\KK)^\circ$ \label{n.36}
is the semialgebraic connected component of $\bS(\KK)$ containing the identity,
$\alpha(S)\subset\KK_{>0}$.  The \emph{multiplicative Weyl chamber} in $S$ relative to the order ${}_\KK\Phi^+$ is the semialgebraic set defined by 
\bqn
C=\{\nu\in S:\,\alpha(\nu)\geq1\text{ for all }\alpha\in{}_\KK\Phi^+\}\,.
\eqn
Using this we can extend the classical Cartan decomposition to any real closed field:
\begin{prop}[Cartan Decomposition]\label{prop:4.4}  Let $G$ be a semialgebraic semisimple $\KK$-group (as in Definition~\ref{defn:4.3}),
	where we assume $\bG$ to be invariant under transposition.  Let $K$ and $C$ be as above and let $\FF\supset\KK$ be real closed fields.
	Then for every $g\in G_\FF$ there is a unique $c(g)\in C_\FF$ such that \label{n.37}
	\bqn
	g\in K_\FF\,c(g)\,K_\FF\,.
	\eqn
	Furthermore the map $c\colon G_\FF\to C_\FF$ is semialgebraic continuous.
\end{prop}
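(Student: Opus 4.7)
The strategy is to establish the proposition first over $\FF=\RR$ using classical Lie theory, and then to extend it to arbitrary real closed $\FF\supset\KK$ via the transfer principle (Properties \ref{ppts.2.10}). Starting with $\FF=\RR$: since $\bG$ is invariant under transposition and $\bS$ consists of symmetric matrices, the classical Cartan decomposition for real semisimple Lie groups gives $\bG(\RR)^\circ=K_0\,C_\RR\,K_0$, where $K_0=K_\RR\cap\bG(\RR)^\circ$ is maximal compact in $\bG(\RR)^\circ$ and the $C_\RR$-factor is unique. Combined with the identity $G_\RR=K_\RR\cdot\bG(\RR)^\circ$ proved just above, this yields $G_\RR=K_\RR\,C_\RR\,K_\RR$; uniqueness of the $C_\RR$-factor in $G_\RR$ reduces to the classical uniqueness in the connected case, since any two $c,c'\in C_\RR$ related by $(K_\RR,K_\RR)$-multiplication lie in $\bG(\RR)^\circ$ and are thus conjugate via $(K_0,K_0)$. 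This produces a well-defined map $c_\RR\colon G_\RR\to C_\RR$.

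The next step is to verify that $c_\RR$ is semialgebraic and continuous. Semialgebraicity follows from the projection theorem: the graph of $c_\RR$ is the image of the semialgebraic set $\{(k_1,a,k_2,g)\in K_\RR\times C_\RR\times K_\RR\times G_\RR:g=k_1 a k_2\}$ under projection onto the $(g,a)$-coordinates. For continuity, the identity $gg^t=k_1\,c_\RR(g)^2\,k_1^{-1}$ (using $k_i^t=k_i^{-1}$ for $k_i\in\SO(n,\RR)$ and $c_\RR(g)^t=c_\RR(g)$ since $c_\RR(g)\in\bS(\RR)$) exhibits $c_\RR(g)^2$ as conjugate to $gg^t$ by an element of $\SO(n,\RR)$; the eigenvalues of $gg^t$ depend continuously on $g$, and the ordering defining $C_\RR$ then selects $c_\RR(g)$ as a continuous function of those sorted eigenvalues. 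To transfer from $\RR$ to arbitrary $\FF\supset\KK$, one observes that $G$, $K$, $C$, and the multiplication map $m\colon K\times C\times K\to G$, $(k_1,c,k_2)\mapsto k_1 c k_2$, are all semialgebraic objects defined over $\KK$. The existence-and-uniqueness assertion is equivalent to saying that the projection from the graph of $c$ to $G$ is bijective; this is a semialgebraic property defined over $\KK$ which holds after $\RR$-extension by the preceding step, hence it holds over any real closed extension $\FF\supset\KK$ by Properties \ref{ppts.2.10}(2). The same properties ensure that semialgebraic continuity transfers, yielding the desired map $c_\FF\colon G_\FF\to C_\FF$.

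The main obstacle is to establish the continuity of $c_\RR$ using only semialgebraic data, so that the transfer principle applies cleanly — that is, without invoking analytic or differential-geometric arguments that are not available over $\FF$. The eigenvalue argument above achieves this, since the continuous dependence of the sorted eigenvalues of a symmetric matrix on its entries is a semialgebraic phenomenon, expressible via the characteristic polynomial and the tameness of roots of polynomials with continuously varying coefficients.
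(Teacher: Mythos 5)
Your proposal follows the same structure as the paper: establish the decomposition and the continuity of $c$ over $\RR$ from classical Lie theory, observe that the graph of $c$ (as the projection of $\{(g,c,k_1,k_2): g=k_1ck_2\}$ to $G\times C$) is semialgebraic over $\KK$, and then invoke the transfer principle (Properties~\ref{ppts.2.10}) twice — to descend from $\RR$ to $\KK$, then to extend to $\FF$. The paper is terser than you in the $\RR$-step (it simply cites the classical Cartan decomposition, including its continuity), but the outline is identical.

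One point worth correcting is your closing paragraph. The transfer principle does \emph{not} require the continuity over $\RR$ to be established ``using only semialgebraic data'': Properties~\ref{ppts.2.10}(2) transfers continuity of a semialgebraic map between $\KK$, $\RR$, and $\FF$, so you are free to use any analytic or differential-geometric fact over $\RR$ (e.g.\ compactness of $K_\RR$ and the classical Cartan projection). In particular your eigenvalue argument, while in the right spirit, is both unnecessary and slightly incomplete outside the split case: the multiset of eigenvalues of $gg^t$ records the weight values $\{\chi(c(g))^2\}$ of the defining representation on $\bS$, and passing from this multiset back to the unique dominant $c(g)\in C_\RR$ continuously already uses the uniqueness of the Cartan factor (e.g.\ via a compactness-of-$K_\RR$ argument), so it isn't a self-contained shortcut. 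The cleaner route is exactly the paper's: cite classical continuity over $\RR$, then let the semialgebraic machinery do the rest.
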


\begin{proof}  By the transfer principle
	\bqn
	C_\RR=\{\nu\in\bS(\RR)^\circ:\,\alpha(\nu)\geq1\text{ for all }\alpha\in{}_\KK\Phi^+\}
	\eqn
	and the proposition for $\FF=\RR$ is the content of the classical Cartan decomposition,
	taking into account Proposition~\ref{prop:F-split=K-split} and the fact, previously established,
	that $G_\RR$ is invariant under transposition.
	Hence if $\calG$ denotes the projection to $G\times C$ of the semialgebraic subset
	\bqn
	\{(g,c,k_1,k_2)\in G\times C\times K\times K:\,g=k_1\,c\,k_2\}\subset G\times C\times K\times K\,,
	\eqn
	then its extension $\calG_\RR$ to $\RR$ is the graph of a continuous semialgebraic map,
	and so is $\calG$ by the transfer principle, Properties \ref{ppts.2.10}.  Applying the transfer principle to $\FF\supset \KK$ 
	implies the proposition.
\end{proof}

\subsection{The Jordan projection}\label{subsec:Jordan}
Let $\bG(\KK)^\circ<G<\bG(\KK)$ be semialgebraic $\KK$-semisimple,
where $\bG<\SL(n)$ is invariant under transposition.  Let $\bS<\bG$ be a maximal $\KK$-split torus
consisting of symmetric matrices and let $C\subset S:=\bS(\KK)^\circ$ be a closed multiplicative Weyl chamber.
Our aim is to generalize the existence and continuity of the classical refined Jordan projection $\Jord:G_\RR\to C_\RR$.
To this end we first deal with the (refined) Jordan decomposition over $\KK$.

Let $K=G\cap\SO(n)$, $S=\bS(\KK)^\circ$ as in Section~\ref{subsec:Cartan}.
The set $\calE\subset G$ of elements $G$-conjugate to $K$, the set $\calH\subset G$ of elements $G$-conjugate to $S$
and the set $\calU\subset G$ of unipotent elements are all semialgebraic subsets.
The group $G_\RR$ is ``almost real algebraic'' in the sense of \cite[\S~2]{BorelClass}
and $\calE_\RR$ is then the set of elliptic elements in $G_\RR$.  Since $\bS$ is maximal $\RR$-split (Proposition \ref{prop:F-split=K-split} (1)), 
$\calH_\RR$ is the set of hyperbolic elements (``split positive'' in Borel's terminology)
and evidently $\calU_\RR$ is the set of unipotent elements in $G_\RR$.
The refined Jordan decomposition in this context says that, given $g\in G_\RR$, 
there exists unique commuting elements $g_e\in\calE_\RR$, $g_h\in\calH_\RR$ and $g_u\in\calU_\RR$ with
$g=g_eg_hg_u$ \cite[Corollary 2.5]{BorelClass}. 

\begin{lem}\label{lem:4.5}  Let $\FF\supset\KK$ be real closed.  
	Given $g\in G_\FF$ there exist unique commuting elements $g_e\in\calE_\FF$, $g_h\in\calH_\FF$ and $g_u\in\calU_\FF$ 
	with  $g=g_eg_hg_u$.  The map $G\to \calH$, $g\mapsto g_h$ is semialgebraic and so is its extension $G_\FF\to \calH_\FF$.
\end{lem}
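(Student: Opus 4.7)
The plan is to reduce the statement to the classical refined Jordan decomposition over $\RR$ (already recalled just before the lemma, citing Borel) and then invoke the transfer principle.

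First I would consider the semialgebraic subset
\[
\calJ := \{(g,e,h,u)\in G\times\calE\times\calH\times\calU \ :\ g=ehu,\ eh=he,\ eu=ue,\ hu=uh\}\subset G^4.
\]
This is semialgebraic because $\calE,\calH,\calU\subset G$ are semialgebraic (as noted in the paragraph preceding the lemma, being $G$-conjugacy orbits of $K$, $S$ and of the semialgebraic set of unipotent elements) and the defining conditions $g=ehu$ and pairwise commutation are polynomial. Let $p\colon\calJ\to G$ denote the projection to the first factor. The refined Jordan decomposition over $\RR$, which applies to $G_\RR$ by \cite[Corollary~2.5]{BorelClass} together with Proposition~\ref{prop:F-split=K-split}(1), asserts precisely that $p_\RR\colon\calJ_\RR\to G_\RR$ is a bijection.

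Bijectivity of $p_\RR$ transfers to bijectivity of $p$ and of every extension $p_\FF$ by Properties~\ref{ppts.2.10}(2), which gives existence and uniqueness of $(g_e,g_h,g_u)\in\calE_\FF\times\calH_\FF\times\calU_\FF$ for every $g\in G_\FF$. Concretely, $(\calJ)_\FF=\calJ_\FF$ by compatibility of extensions with finite intersections/unions, and the first-order conditions $g=ehu$ and commutation survive extension automatically.

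For the semialgebraicity of $g\mapsto g_h$, I would observe that the graph of this map over $\RR$ is the image of $\calJ_\RR$ under the semialgebraic projection $G\times\calE\times\calH\times\calU\to G\times\calH$, which is therefore semialgebraic. Since $p_\RR$ is a bijection, this image is actually the graph of a function $G_\RR\to\calH_\RR$. By transfer this holds for the corresponding projection of $\calJ$ itself, yielding a semialgebraic map $G\to\calH$, $g\mapsto g_h$, and the $\FF$-extension of its graph is exactly the graph of the corresponding map $G_\FF\to\calH_\FF$, again by Properties~\ref{ppts.2.10}.

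The only real subtlety — and the step where I would be most careful — is to ensure that the three sets $\calE,\calH,\calU\subset G$ are genuinely semialgebraic \emph{over $\KK$} (not merely over $\RR$), so that the transfer principle applies. For $\calU$ this is immediate since unipotency is cut out by the polynomial equation $(g-\Id)^n=0$. For $\calH$ and $\calE$, one uses that $S=\bS(\KK)^\circ$ and $K=G\cap\SO(n)$ are semialgebraic over $\KK$, and conjugation is a semialgebraic operation, so their $G$-saturations are semialgebraic subsets of $G$; the projection step is then justified by \cite[Proposition~2.2.7]{BCR}. Once these points are in place, the rest is a clean application of Properties~\ref{ppts.2.10}.
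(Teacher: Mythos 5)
Your proposal is correct and follows essentially the same route as the paper: the paper works with the product map $\calT\to G$ on the semialgebraic set of commuting triples $\calT\subset\calE\times\calH\times\calU$, identifies this map's extension to $\RR$ as a bijection via the classical refined Jordan decomposition, and transfers; your set $\calJ$ is exactly the graph of that map, and the projection-to-graph argument for $g\mapsto g_h$ is the one the paper uses. Your closing remark about the sets $\calE,\calH,\calU$ being semialgebraic over $\KK$ (not just $\RR$) is a genuine subtlety that the paper handles implicitly in the paragraph preceding the lemma; it is worth making explicit, as you do.
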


\begin{proof}  Let 
	\bqn
	\calT=\{(\epsilon, h, u)\in\calE\times\calH\times\calU:\,[\epsilon,h]=[\epsilon,u]=[u,h]=e\}
	\eqn
	and consider the product map
	\bqn
	\ba
	\calT\quad\,&\longrightarrow \,G\\
	(\epsilon,h,u)&\mapsto \epsilon \,h\,u\,.
	\ea
	\eqn
	Then the extension $\calT_\RR\to G_\RR$ is clearly semialgebraic, and 
	the content of the refined Jordan decomposition is that it is a bijection;
	hence so is $\calT\to G$ and its extension to any real closed field $\FF\supset\KK$.
	Observe that the graph of the map $H:G\to\calH$, $g\mapsto g_h$ is the projection to $G\times\calH$ of the semialgebraic set 
	\bqn
	\{(g,h,\epsilon,u)\in G\times\calH\times\calE\times\calU:\,g=\epsilon \,h\,u,\,[\epsilon,h]=[\epsilon,u]=[u,h]=e\}
	\eqn
	and hence $H$ is semialgebraic.
\end{proof}

Observe that the map $\calT_\RR\to G_\RR$ in the proof of Lemma~\ref{lem:4.5} is continuous bijective,
but its inverse is not continuous, since a sequence of elliptic elements can converge to a non-trivial unipotent element.

In the next lemma, the connectedness of $\bG$ is essential:

\begin{lem}\label{lem:4.6} Let $\FF\supset\KK$ be real closed.
	The $G_\FF$-conjugacy class of any element $h\in\calH_\FF$ meets $C_\FF$ in exacty one point.
	The resulting map $\calH_\FF\to C_\FF$ is semialgebraic.
\end{lem}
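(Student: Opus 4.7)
The plan is to follow the template of Lemma~\ref{lem:4.5}: exhibit the sought map as the projection of an appropriate semialgebraic set defined over $\KK$, verify the corresponding classical result over $\RR$, and then invoke the Tarski--Seidenberg transfer principle (Properties~\ref{ppts.2.10}) to extend it to any real closed extension $\FF\supset\KK$.

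First I would verify the statement over $\RR$. By construction $\calH_\RR$ consists of those elements of $G_\RR$ that are $G_\RR$-conjugate to some element of $S_\RR=\bS(\RR)^\circ$, and since $\bS$ is still a maximal $\RR$-split torus by Proposition~\ref{prop:F-split=K-split}(1) these are precisely the classically defined hyperbolic elements. The required classical inputs are: all maximal $\RR$-split tori of $\bG(\RR)^\circ$ are conjugate under $\bG(\RR)^\circ\subset G_\RR$; two elements of $S_\RR$ are $G_\RR$-conjugate if and only if they lie in the same orbit of the Weyl group $W$ of $\bS$ in $\bG$; and $C_\RR$ is a fundamental domain for the $W$-action on $S_\RR$. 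Together these give existence and uniqueness of an element of $C_\RR$ in every $G_\RR$-conjugacy class in $\calH_\RR$.

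Next I would consider the semialgebraic subset of $\calH\times C$ defined over $\KK$ by
\bqn
\Gamma:=\{(h,c)\in\calH\times C:\,\exists g\in G,\ ghg^{-1}=c\}\,,
\eqn
which is semialgebraic as the projection to $\calH\times C$ of the semialgebraic set $\{(h,c,g)\in\calH\times C\times G:\,ghg^{-1}=c\}$. Existence and uniqueness of $c\in C$ translate into the two first-order assertions
\bqn
\forall h\in\calH,\ \exists c\in C:(h,c)\in\Gamma\quad\text{and}\quad \forall h\in\calH,\ \forall c_1,c_2\in C:(h,c_1),(h,c_2)\in\Gamma\Rightarrow c_1=c_2\,,
\eqn
both of which hold over $\RR$ by the previous step. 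By the Tarski--Seidenberg transfer principle they therefore hold over $\FF$ as well, so $\Gamma_\FF$ is the graph of a well-defined map $\calH_\FF\to C_\FF$, which is semialgebraic because $\Gamma$ is.

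I do not expect any serious obstacle: the whole argument reduces to the classical $\RR$-statement together with transfer, exactly mirroring the proof of Lemma~\ref{lem:4.5}. The only mild care required is to check that both existence and uniqueness are expressible as genuinely first-order statements in the semialgebraic data $\calH,C,G$, all defined over $\KK$, which is clear from the formulation above.
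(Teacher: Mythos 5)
Your overall strategy (realize the assignment $h\mapsto c$ as the projection of a semialgebraic set over $\KK$, verify the statement over $\RR$, then invoke Tarski--Seidenberg) is exactly the paper's, and the existence part is unproblematic. But your verification over $\RR$ contains a gap at the one point the paper goes out of its way to justify: you state as a ``classical input'' that two elements of $S_\RR$ are $G_\RR$-conjugate if and only if they are in the same $W$-orbit. The ``if'' direction is immediate since $W$ is realized inside $K\subset G$. The ``only if'' direction, which is what gives uniqueness, is classical only for the \emph{connected} group $\bG(\RR)^\circ$. However $G_\RR$ is allowed to be any group with $\bG(\RR)^\circ<G_\RR<\bG(\RR)$, and $\bG(\RR)$ is in general disconnected; a priori, conjugation by an element in another component could identify distinct points of $C_\RR$. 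This is precisely why the paper prefaces the lemma with the remark ``In the next lemma, the connectedness of $\bG$ is essential.''

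The paper closes this gap by appealing to Matsumoto's theorem (cited from Borel--Tits \S14.4): since $\bG$ is connected, $\bG(\RR)=\bG(\RR)^\circ\bS(\RR)$. Because $\bS$ is abelian, $\bS(\RR)$ acts trivially on $S_\RR$ by conjugation, so the $\bG(\RR)$-conjugacy class of any $h\in C_\RR$ intersected with $S_\RR$ agrees with its $\bG(\RR)^\circ$-conjugacy class, and the standard uniqueness for the connected group then applies to all of $\bG(\RR)$, hence to the intermediate group $G_\RR$. Your proof would be complete if you replaced the bare assertion of ``classical input'' (2) with this reduction from $G_\RR$ to $\bG(\RR)^\circ$.
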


\begin{proof}  Consider the projection $\calL$ to $\calH\times C$ of the semialgebraic set 
	\bqn
	\{(h,c,g)\in\calH\times C\times G:\,ghg^{-1}=c\}\,.
	\eqn
	We claim that $\calL_\RR$ is a graph.  This implies the same for $\calL$ 
	and hence for $\calL_\FF$ by the transfer principle, showing the lemma.
	
	To show that $\calL_\RR$ is a graph, it amounts to show that
	the $G_\RR$-conjugacy class of an element $h\in C_\RR$ meets $C_\RR$ only in $h$.  This is standard for $\bG(\RR)^\circ$.
	To conclude we use a result of Matsumoto (see \cite[14.4]{Borel_Tits}) stating that, since $\bG$ is connected,
	then $\bG(\RR)=\bG(\RR)^\circ\bS(\RR)$.  This implies that if $h\in C_\RR$ its $\bG(\RR)$-conjugacy class
	coincide with its $\bG(\RR)^\circ$-conjugacy class, which concludes the proof of the lemma.
\end{proof}

Composing the semialgebraic map $G\to\calH$ from Lemma~\ref{lem:4.5} with the one $\calH\to C$ from Lemma~\ref{lem:4.6},
we obtain the \emph{Jordan projection} \label{n.38}
\bqn
\Jord\colon G\to C
\eqn
and its extension
\bqn
\Jord_\FF\colon G_\FF\to C_\FF\,.
\eqn

\begin{prop}\label{prop:4.7}  The Jordan projection $\Jord_\FF\colon G_\FF\to C_\FF$ is semialgebraic continuous.
\end{prop}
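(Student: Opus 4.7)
The plan is to establish semialgebraicity from the construction, and then reduce the continuity assertion to the classical case $\FF=\RR$ via the transfer principle.

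First, semialgebraicity of $\Jord_\FF$ is immediate from the construction: by Lemma~\ref{lem:4.5} the hyperbolic-part map $H\colon G\to \calH$, $g\mapsto g_h$, is semialgebraic, and by Lemma~\ref{lem:4.6} the Weyl-chamber representative map $\calH\to C$ is semialgebraic. Hence $\Jord=(\text{Lemma }\ref{lem:4.6})\circ H$ is a semialgebraic map $G\to C$ defined over $\KK$, and its $\FF$-extension $\Jord_\FF\colon G_\FF\to C_\FF$ is semialgebraic, since the graph of $\Jord_\FF$ is the $\FF$-extension of the graph of $\Jord$ (cf.\ the discussion in \S\ref{s:Intro2} and Properties~\ref{ppts.2.10}).

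Next, for continuity, I would invoke the transfer principle in Properties~\ref{ppts.2.10}(2): a semialgebraic map between semialgebraic sets is continuous if and only if any (equivalently, every) real closed extension of it is continuous. Since $\Jord_\FF$ and $\Jord_\RR$ are both extensions of the same semialgebraic map $\Jord\colon G\to C$, it suffices to prove that $\Jord_\RR\colon G_\RR\to C_\RR$ is continuous, and the statement for arbitrary real closed $\FF\supset\KK$ follows.

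Finally, for the continuity of $\Jord_\RR$ itself, I would argue as follows. Evaluating characters $\alpha\in X(\bS)$ identifies $C_\RR$ with a semialgebraic subset of $(\RR_{>0})^{{}_\KK\Phi^+}$, and for $g\in G_\RR$ the value $\alpha(\Jord_\RR(g))$ equals the modulus of the corresponding eigenvalue of $g$ (in the appropriate representation of $\bG$ associated to $\alpha$) selected by the ordering on ${}_\KK\Phi^+$. Since the coefficients of the characteristic polynomial of $g$ are polynomial in $g$, and the (unordered) multiset of complex roots depends continuously on the coefficients, the moduli of these roots, arranged according to the Weyl chamber order, depend continuously on $g$. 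Equivalently, one may use the classical identity
\bqn
\Jord_\RR(g)=\lim_{k\to\infty}c_\RR(g^k)^{1/k}
\eqn
in the multiplicative torus $S_\RR$, where $c_\RR$ is the Cartan projection of Proposition~\ref{prop:4.4}; the convergence is uniform on compact subsets of $G_\RR$, so the limit is continuous.

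The main obstacle is that the individual maps $g\mapsto g_e,g_h,g_u$ are \emph{not} continuous on $G_\RR$ (elliptic elements can degenerate to unipotents), so one cannot simply conclude continuity of $\Jord$ by composition; continuity appears only after passing from $g_h$ itself to its Weyl-chamber representative, which depends only on the eigenvalue moduli. Once this classical fact over $\RR$ is in hand, the transfer principle does the rest.
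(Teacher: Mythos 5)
Your proof follows the same high-level strategy as the paper: semialgebraicity is observed to be built into the construction (Lemmas~\ref{lem:4.5} and~\ref{lem:4.6}), and the transfer principle (Properties~\ref{ppts.2.10}(2)) reduces the continuity assertion to the case $\FF=\RR$. That part is identical. Your closing remark -- that $g\mapsto g_e, g_h, g_u$ are individually discontinuous, so continuity only emerges after composing with the Weyl-chamber representative map -- is exactly the subtlety the paper flags after Lemma~\ref{lem:4.5}.

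Where you diverge is in the proof of continuity over $\RR$. The paper simply passes to the symmetric space $\calX_\RR=G_\RR/K_\RR$ and cites Parreau \cite{Parreau12} (Proposition~4.1 and Lemma~4.3 there) for the continuity of the Jordan projection $j\colon\Iso(\calX_\RR)\to\fabp$, then observes that $\Jord_\RR$ factors through $j$. Your first alternative -- eigenvalue moduli of the characteristic polynomial in a faithful representation, which depend continuously on the matrix entries once sorted -- is a legitimate elementary argument, and has the advantage of being self-contained, but as written it is vague on one point: you should make explicit which representation you use (e.g.\ the given embedding $\bG<\SL_n$), express $\alpha(\Jord_\RR(g))$ for all $\alpha\in{}_\KK\Delta$ as a continuous function of the sorted moduli of eigenvalues of $g$, and check compatibility with the Weyl-chamber ordering. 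These details are standard but not automatic, and the sentence ``in the appropriate representation of $\bG$ associated to $\alpha$'' currently does the work of several lemmas.

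Your second alternative has a genuine gap. You invoke the classical identity $\Jord_\RR(g)=\lim_{k\to\infty}c_\RR(g^k)^{1/k}$ and assert that ``the convergence is uniform on compact subsets of $G_\RR$, so the limit is continuous.'' The pointwise identity is correct, but the uniformity on compacts is not a freely available fact: the distance between the Cartan projection $c_\RR(g^k)$ and $k\cdot\Ln\Jord_\RR(g)$ is controlled by the size of the unipotent and elliptic parts of $g$, and establishing a uniform bound over a compact set is itself a nontrivial estimate of roughly the same difficulty as proving continuity directly. One cannot appeal to Dini's theorem here without already knowing the limit is continuous, which is circular. If you want to run this route, you must supply the uniform estimate; otherwise the first (eigenvalue) route is the one to complete.
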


\begin{proof}
	By the usual transfer principle it suffices to show continuity for $\FF=\RR$.
	Let $\calX_\RR=G_\RR/K_\RR$ be the associated symmetric space (of non-compact type)\and let $\pi\colon G_\RR\to \Iso(\calX_\RR)$ 
	be the corresponding homomorphism.  
	Let then  $x_0=e K_\RR\in \calX_\RR$, $\fap\subset T_{x_0}\calX_\RR$ the closed Weyl chamber,
	which, under the Riemannian exponential $\mathrm{Exp}_{x_0}$ goes to $\pi(C_\RR)_* x_0$ as
	well as $j\colon \Iso(\calX_\RR)\to\fap$ the Jordan projection  defined in \cite[Proposition~4.1]{Parreau12}.
	Then the diagram
	\bqn
	\xymatrix{
		\Iso(\calX_\RR)\ar[r]^-j &\fap\ar[rd]^{\mathrm{Exp}_{x_0}}&\\
		&& \pi(C_\RR)_* x_0\\
		G_\RR\ar[uu]^\pi\ar[r]_{\Jord_\RR}&C_\RR\ar[ru]&
	}
	\eqn
	commutes.  Since $j$ is continuous, \cite[Lemma~4.3]{Parreau12} implies the continuity of $\Jord_\RR$.
\end{proof}
\subsection{Proximal elements}\label{s.proximal}
We recall the notions of proximal element and attracting fixed points for elements in $\bG(\RR)$, 
and extend it to elements in $\bG(\FF)$ for any real closed $\FF\supset \KK$. 
With the concepts and notations of Section~\ref{subsec:4.1}, 
recall that  ${}_\KK\bP_{I}$ denotes the  standard parabolic $\KK$-subgroup associated to a subset $I\subset{}_\KK\Delta$ of  simple roots. 
Let $\calF_I:= \bGmPI$ the corresponding projective variety defined over $\KK$ and recall that for any $\FF\supset\KK$
\bqn
\calF_I(\FF)=\quotient{\bG(\FF)}{{}_\KK\bP_I(\FF)}.
\eqn
\begin{defi}\label{defi:proximal}
	A point $\xi_g^+\in\calF_I(\RR)$  is an \emph{attracting fixed point} \label{n.39}
	for $g\in\bG(\RR)$ if $g\cdot\xi_g^+=\xi_g^+$ and the derivative of $g$ at $\xi_g^+$ has spectral radius strictly smaller than one. 
	An element $g\in\bG(\RR)$ is called \emph{$I$-proximal over $\RR$} or simply \emph{$I$-proximal } 
	if it admits an attracting fixed point in $\calF_I(\RR)$.
\end{defi}		
We recall the following classical fact:
\begin{prop}[{See e.g. \cite[Proposition 3.3]{GGKW}}]\label{p.proximal}\
	\begin{enumerate}
		\item An $I$-proximal element $g$ has a unique attracting fixed point in $\calF_I(\RR)$, which henceforth will be denoted $\xi^+_g$
		\item The following are equivalent
		\begin{itemize}
			\item  $g\in\bG(\RR)$ is $I$-proximal.
			\item for all $\alpha\in  I$,\; $\alpha(\Jord_\RR(g))>1$.
		\end{itemize}	
	\end{enumerate}	
\end{prop}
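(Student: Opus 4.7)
The plan is to establish both parts by reducing to the refined Jordan decomposition $g = g_e g_h g_u$ (Lemma~\ref{lem:4.5}) and analyzing the induced dynamics on $\calF_I(\RR)$. For part (1), uniqueness of the attracting fixed point is a purely dynamical statement: if $\xi^+$ is attracting, then by continuity of the derivative and the spectral radius assumption there is a neighborhood $U$ of $\xi^+$ on which some iterate $g^N$ is a strict contraction in a suitable local chart, and in particular $g^n \eta \to \xi^+$ for every $\eta \in U$. Two distinct attracting fixed points would then yield two disjoint open neighborhoods each capturing all forward orbits starting in it, which is impossible once one takes $\eta$ sufficiently close to one of them and iterates.

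For part (2), first reduce via $\bG(\RR)$-conjugation to the situation where $g_h$ lies in the chamber $C_\RR$, so that $g_h = \Jord_\RR(g) \in C_\RR$. The implication \emph{$I$-proximal $\Rightarrow$ $\alpha(\Jord_\RR(g))>1$ for all $\alpha \in I$} goes as follows. The attracting fixed point $\xi^+_g$ must be fixed by each of $g_e$, $g_h$, $g_u$ separately: indeed these commute with $g$ and hence permute the finite set of attracting fixed points of $g$, but by (1) there is only one such. The derivative of $g$ at $\xi^+_g$ is then the composition of the derivatives of the commuting factors; the elliptic and unipotent factors contribute only eigenvalues of modulus one, so the spectral radius condition must already be verified by $g_h$. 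Working in the standard chart at the point opposite to ${}_\KK\bP_I$, the tangent space at $\xi^+_g$ decomposes under $g_h$ into root spaces for precisely the roots appearing in the Lie algebra of the unipotent radical of the parabolic opposite to ${}_\KK\bP_I$; the eigenvalue of the derivative on the $\alpha$-root space is $\alpha(g_h)^{-1}$. The spectral radius being $<1$ translates into $\alpha(g_h) > 1$ for every $\alpha$ in that set, and in particular for every $\alpha \in I$.

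The converse implication is constructive: assuming $\alpha(g_h) > 1$ for all $\alpha \in I$, the point $\xi_0 \in \calF_I(\RR)$ corresponding to the parabolic opposite to ${}_\KK\bP_I$ through the Levi $\calZ(\bS_I)$ containing $g_h$ is fixed by $g_h$, and the derivative of $g_h$ at $\xi_0$ has every eigenvalue of modulus $\alpha(g_h)^{-1}<1$, hence is a strict contraction. Since $g_e$ and $g_u$ commute with $g_h$ and preserve the fixed set of $g_h$ on $\calF_I(\RR)$, a standard Banach contraction/implicit function argument on a $g_h$-invariant local chart at $\xi_0$ produces a unique $g$-fixed point $\xi^+_g$ near $\xi_0$ whose derivative under $g$ is a small perturbation of that of $g_h$ by unipotent/elliptic factors of spectral radius one, hence still of spectral radius $<1$. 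This shows that $g$ is $I$-proximal. The main obstacle to a fully rigorous write-up is keeping precise track of the root space decomposition of the tangent space $T_{\xi_0}\calF_I(\RR)$ and verifying that the spectral radii multiply as claimed under the commuting Jordan factors; once these bookkeeping steps are in place, everything reduces to classical contraction dynamics as in \cite{GGKW}.
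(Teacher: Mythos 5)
The paper does not prove this proposition; it cites it directly to Gu\'eritaud--Guichard--Kassel--Wienhard (\cite[Proposition 3.3]{GGKW}), so there is no ``paper's proof'' to compare against -- you are essentially re-deriving the cited result.

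Your sketch of part (2) forward direction is along the right lines, but there are two genuine gaps. First, in part (1) the uniqueness argument ``two disjoint open neighborhoods each capturing all forward orbits starting in it, which is impossible'' is not a valid argument: on a compact manifold a diffeomorphism can perfectly well have two (or more) attracting fixed points with disjoint basins, so nothing forces a collision. The correct missing ingredient is specific to flag varieties: once $g_h$ is conjugated into $C_\RR$, the basin of attraction of $[\bP_I]$ is the open Bruhat cell $\bN_I^-\cdot[\bP_I]$, which is \emph{dense}; since any other attracting fixed point would also need an open basin, and two disjoint open sets cannot both be dense, uniqueness follows. Without invoking density of the big cell (or an equivalent algebraic fact), the dynamical argument does not close.

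Second, in the converse direction of (2) your choice of fixed point ``$\xi_0$ corresponding to the parabolic opposite to ${}_\KK\bP_I$'' is the wrong point: the derivative of $g_h\in C_\RR$ at $[\bP_I^-]$ acts with eigenvalues $\alpha(g_h)>1$, hence is \emph{repelling}; the attracting fixed point of $g_h$ is $[\bP_I]$ itself, where the tangent space identifies with $\Lie(\bN_I^-)$ and the derivative scales by $\alpha(g_h)^{-1}<1$. More importantly, the Banach contraction step you invoke to find a $g$-fixed point near $\xi_0$ is unnecessary and harder to justify than the direct algebraic argument: since $\alpha(g_h)>1\neq1$ for all $\alpha\in I$, the centralizer $Z_{\bG}(g_h)$ is generated by the torus and the root groups $U_\alpha$ with $\alpha(g_h)=1$, hence is contained in the Levi $\calZ(\bS_I)\subset{}_\KK\bP_I$; so $g_e,g_u\in{}_\KK\bP_I$, whence $g$ already fixes $[\bP_I]$ on the nose, and the spectral-radius computation you sketch at that point finishes the argument. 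With these corrections the outline becomes a correct proof, and it is the standard one in the literature.
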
	
Here $\Jord_\RR:\bG(\RR)\to C_\RR$ is the Jordan projection as in \S~\ref{subsec:Jordan}. 
Assume without loss of generality that $g\in{}_\KK\bP_I(\RR)$ is I-proximal. Then if $g=g_hg_eg_u$ is the Jordan decomposition of $g$, 
\bqn
\Ad(g)=\Ad(g_h)\Ad(g_e)\Ad(g_u)
\eqn
is the Jordan decomposition of $\Ad(g)$ and 
\bqn
\{\alpha(\Jord_\RR(g))|\; \alpha\in{}_\KK\Phi^+\setminus\langle{}_\KK\Delta\setminus I\rangle\}
\eqn
are the moduli of the eigenvalues of the action of $\Ad(g_h)$ on $\Lie(\bU)$ where $\bU$ is the unipotent radical of ${}_\KK\bP_I$. 
In particular the least absolute value of eigenvalue of such action arises in the set 
\bqn
\{\alpha(\Jord_\RR(g))|\; \alpha\in I\}.
\eqn
This leads to the following notion of attracting fixed points and proximal elements, 
which is valid for any real closed field $\FF$ containing $\KK$ and extends the classical notion in Definition \ref{defi:proximal}:

\begin{defi}\label{defi:proximal2}
Let $\bQ<\bG$ be an $\FF$-parabolic subgroup of $\bG$.
	A point $\xi\in\bGmQ(\FF)$ is an \emph{attracting fixed point} of $g\in\bG(\FF)$ 
	if the action of $\Ad(g_h)$ on the Lie algebra of the unipotent radical of the stabilizer of $\xi$ in $\bG$ 
	has all eigenvalues of modulus in $\FF$ strictly larger than one. 
	An element $g\in\bG(\FF)$ is called \emph{$I$-proximal over $\FF$} if it admits an attracting fixed point in $\calF_I(\FF)$. \label{n.40}
\end{defi}	
Let $\Pr(I, \FF)$ denote the subset of $\bG(\FF)$ consisting of $I$-proximal elements. Then 
\begin{equation}\label{e.proximal}
	\Pr(I,\FF)=\{g\in\bG(\FF)|\; \alpha(\Jord_\FF(g))>1\; \forall \alpha\in I\}.
\end{equation}
As a result,  $	\Pr(I,\FF)$ is semialgebraic and coincides with the $\FF$-extension $\Pr(I,\KK)_\FF$ of $\Pr(I,\KK)$. The following follows then from Proposition \ref{p.proximal} and the transfer principle:
\begin{cor}\label{l.fp} 
	Let $\bG$ be a semisimple connected algebraic group defined over a real closed field $\KK$; $\FF\supset\KK$ real closed;
	\begin{enumerate}
		\item An element $g\in {\bG(\FF)}$ that is $I$-proximal over $\FF$  has a unique attracting fixed point in $\calF_I(\FF)$.			
		\item Let $\LL$ be a real closed intermediate extension $\KK\subset\LL\subset\FF$, and assume $g\in \bG(\LL)$ is $I$-proximal over $\FF$. Then the attracting fixed point of $g$  is in $\calF_I(\LL)$.
	\end{enumerate}
\end{cor}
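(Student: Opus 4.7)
The plan is to deduce both assertions from the Tarski--Seidenberg transfer principle applied to a semialgebraic set encoding the attracting-fixed-point relation. Let
\[
A = \{(g,\xi) \in \Pr(I) \times \calF_I : \xi \text{ is an attracting fixed point of } g\},
\]
where the condition on $(g,\xi)$ is read off from Definition \ref{defi:proximal2}. The first step is to verify that $A$ is $\KK$-semialgebraic: the hyperbolic-part map $g \mapsto g_h$ is semialgebraic over $\KK$ by Lemma \ref{lem:4.5}; the Lie algebra of the unipotent radical of $\Stab_\bG(\xi)$ depends algebraically on $\xi$ (the relevant parabolic subgroups are classified by a flag variety defined over $\KK$); and the condition that every eigenvalue of $\Ad(g_h)$ restricted to this subalgebra has modulus in $\FF$ strictly greater than $1$ is semialgebraic. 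Crucially, $g_h$ is conjugate to an element of the split torus $\bS$, so the eigenvalues in question are positive and real, and the condition reduces to polynomial inequalities on the coefficients of the corresponding characteristic polynomial together with the requirement that it split over the ground field.

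By Proposition \ref{p.proximal}(1) applied to $\FF = \RR$, the set $A_\RR$ is the graph of a function $\xi^+_\RR : \Pr(I,\RR) \to \calF_I(\RR)$. Being the graph of a well-defined single-valued function is a first-order property, encoded by existence of a $\xi$ for each $g$ together with uniqueness of such $\xi$. By the Tarski--Seidenberg transfer principle (cf.\ Properties \ref{ppts.2.10}(2) and Theorem \ref{thm:constr}), this property persists under passage to any real closed extension. Therefore $A_\FF$ is the graph of a $\KK$-semialgebraic map $\xi^+_\FF : \Pr(I,\FF) \to \calF_I(\FF)$, which establishes (1).

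For (2), equation \eqref{e.proximal} shows that $I$-proximality is cut out by $\KK$-polynomial inequalities on the matrix coefficients of $g$, so any $g \in \bG(\LL) \cap \Pr(I,\FF)$ already lies in $\Pr(I,\LL)$. Applying the $\KK$-semialgebraic map $\xi^+$ produces a point $\xi^+_\LL(g) \in \calF_I(\LL)$ that remains an attracting fixed point of $g$ when viewed over $\FF$; by the uniqueness proved in part (1), this point must coincide with $\xi^+_\FF(g)$, whence $\xi^+_\FF(g) \in \calF_I(\LL)$, as claimed.

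The principal technical hurdle is the initial verification that $A$ is semialgebraic over $\KK$ — in particular the semialgebraic character of the eigenvalue-modulus condition on the restriction of $\Ad(g_h)$ to a varying subalgebra. Once this is in place, everything else is a routine application of the transfer principle, since the output of Proposition \ref{p.proximal}(1) is already phrased in the form of a first-order existence-and-uniqueness statement.
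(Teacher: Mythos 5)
Your proof is correct and follows essentially the same route as the paper: the core of both is to observe that the attracting-fixed-point relation is a $\KK$-semialgebraic subset of $\Pr(I)\times\calF_I$, and then to invoke Proposition~\ref{p.proximal}(1) over $\RR$ together with the transfer principle (Properties~\ref{ppts.2.10}) to propagate the graph property to $\FF$ and to descend solutions to $\LL$. The only cosmetic difference is in part~(2), where the paper transfers the \emph{existence} of a solution from $\FF$ to $\LL$ directly, while you instead apply the $\KK$-semialgebraic map $\xi^+$ to $g\in\Pr(I,\LL)$ and use uniqueness from part~(1) to identify the result with $\xi^+_\FF(g)$ — two equivalent expressions of the same Tarski--Seidenberg argument.
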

\begin{proof}
	%
	(1) Follows from Proposition \ref{p.proximal} (1) and Property \ref{ppts.2.10} (2) using the fact that the set of pairs $(g,\xi_g^+)\in \bG\times\calF_I(\KK)$  where $\xi_g^+$ is an attracting fixed point for $g$ is a semialgebraic set (recall Equation \eqref{e.proximal}). \\
	(2) Since a semialgebraic equation defined over $\LL$ that has a solution over $\FF$ already has a solution over $\LL$, the results follows.
\end{proof}	
\subsection{Logarithms}\label{s.log} 
We will need the following extension to real closed fields of the logarithm map $\mathrm{Ln}\colon S_\RR\to\fa$ \label{n.41}
from Lie theory where $\fa$ is the Lie algebra of $S_\RR$. Specifically we assume $\FF\supset\KK$ is a real closed, non-Archimedean extension admitting a big element $b\in\FF$, and denote by $-\log _b$ the associated valuation,  $\calO$ the valuation ring, and $\Lambda:=-\log_b(\FF^*)$ the valuation group.
With $\bS$ and ${}_\KK\Phi\subset X(\bS)$ as in Section~\ref{subsec:Cartan}, and $\alpha\in X(\bS)$,
let 
\bqn
d\alpha\colon\fa\to\RR
\eqn
denote the derivative of $\alpha\colon\bS(\RR)\to\RR^\times$.

\begin{lem}\label{lem:4.13} There is a well defined map \label{n.42}
	\bqn
	\mathrm{Log}_b\colon S_\FF\to\fa, \quad \text{ where $\fa=\Lie(S_\RR)$}
	\eqn
	characterized by
	\bqn
	d\beta(\mathrm{Log}_b(s))=\log_b(\beta(s))
	\eqn
	for all $s\in S_\FF$ and all $\beta\in{}_\KK\Phi$.
	It is a Weyl group equivariant homomorphism, 
	which is surjective if and only if $\Lambda=\RR$.
	Moreover if $\fabp$ and $C_\FF$ are the positive Weyl chambers associated to an order ${}_\KK\Phi^+\subset{}_\KK\Phi$,
	then $\mathrm{Log}_b(C_\FF)\subset\fabp$.
\end{lem}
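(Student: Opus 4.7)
The strategy is to trivialize $\bS$ via a $\ZZ$-basis of its character lattice, reduce the construction to componentwise logarithms on $\FF_{>0}$, and then read off the four claims from the characterization.

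For the construction, I will choose a $\ZZ$-basis $\chi_1, \ldots, \chi_r$ of $X(\bS)$. This produces an isomorphism $\bS \cong \GG_m^r$ of algebraic $\KK$-groups; applied over $\RR$ it identifies $\bS(\RR)^\circ$ with $(\RR_{>0})^r$, so by the transfer principle (Properties~\ref{ppts.2.10}) it also identifies $S_\FF$ with $(\FF_{>0})^r$. Applying $\log_b$ componentwise and identifying $\RR^r$ with $\fa$ via the basis dual to $(d\chi_1, \ldots, d\chi_r)$ defines a map $\mathrm{Log}_b : S_\FF \to \fa$. For the characterizing identity on an arbitrary $\beta = \sum n_i \chi_i \in X(\bS)$, I will expand $\beta(s) = \prod \chi_i(s)^{n_i}$ and invoke additivity of $\log_b$ on $\FF_{>0}$; this gives $\log_b(\beta(s)) = \sum n_i d\chi_i(\mathrm{Log}_b(s)) = d\beta(\mathrm{Log}_b(s))$. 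Uniqueness of $\mathrm{Log}_b(s)$ from the characterization, and hence basis-independence of the construction, rests on the fact that $\{d\beta : \beta \in {}_\KK\Phi\}$ spans $\fa^*$, which is where the semisimplicity of $\bG$ enters: the roots span $X(\bS) \otimes \RR$, and $X(\bS) \otimes \RR \to \fa^*$ is an isomorphism for a maximal split torus.

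The remaining properties are then formal consequences of the characterization. For the homomorphism property, each map $d\beta \circ \mathrm{Log}_b$ is a group homomorphism to $\RR$, being a composition of the multiplicative $\beta$ with the additive $\log_b$, and the family of such $d\beta$ for $\beta \in {}_\KK\Phi$ separates points of $\fa$. For Weyl equivariance, I will substitute $\beta(wsw^{-1}) = (w^{-1}\beta)(s)$ (for $w \in W = N(\bS)/\calZ(\bS)$) into the characterization and use the standard fact that the $W$-action on $\fa$ is dual to the $W$-action on $X(\bS)$. For surjectivity, the image in the basis trivialization equals $\log_b(\FF_{>0})^r = \Lambda^r \subset \RR^r$, which fills $\fa$ precisely when $\Lambda = \RR$. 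For the chamber inclusion, since a big element $b$ satisfies $b > 1$, the map $\log_b$ is order-preserving on $\FF_{>0}$; thus $\nu \in C_\FF$ gives $\alpha(\nu) \geq 1$ and therefore $d\alpha(\mathrm{Log}_b(\nu)) = \log_b(\alpha(\nu)) \geq 0$ for every $\alpha \in {}_\KK\Phi^+$, placing $\mathrm{Log}_b(\nu)$ in $\fabp$.

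I do not expect a substantial obstacle. The only genuinely non-formal point is the uniqueness assertion built into the word ``characterized'' in the statement, which forces the verification that the roots span $\fa^*$ via $d$. This is a standard consequence of semisimplicity together with maximality of the $\KK$-split torus, and it is the sole input from root-system theory that the proof requires.
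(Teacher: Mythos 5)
Your proof is correct and follows essentially the same route as the paper's: trivialize $S_\FF$ by characters, apply $\log_b$ componentwise, and transport back to $\fa$ via the dual basis. The only differences are cosmetic: the paper works directly with the simple roots ${}_\KK\Delta$ rather than a $\ZZ$-basis of $X(\bS)$ (relying on the fact that $\prod_{\alpha\in{}_\KK\Delta}\alpha_\FF\colon S_\FF\to(\FF_{>0})^r$ is an isomorphism because $(\RR_{>0})^r$, being divisible and torsion-free, has no proper finite-index subgroups), and it leaves the homomorphism, $W$-equivariance, and chamber-inclusion assertions implicit, whereas you spell out those verifications.
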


\begin{proof} Fix an ordering ${}_\KK\Phi^+$ and let ${}_\KK\Delta\subset{}_\KK\Phi^+$ be the corresponding set of simple roots, with $|{}_\KK\Delta|=r$.
	Since 
	\bqn
	\prod_{\alpha\in{}_\KK\Delta}d\alpha\colon\fa\to\RR^r
	\eqn
	is a vector space isomorphism, the existence of $\mathrm{Log}_b$ satisfying
	\bq\label{eq:4.14}
	d\alpha(\mathrm{Log}_b(s))=\log_b(\alpha(s))
	\eq
	for all $\alpha\in{}_\KK\Delta$ and $s\in S_\FF$ is immediate.  But then \eqref{eq:4.14} follows for any $\beta\in{}_\KK\Phi$.

	Finally we know that 
	\bqn
	\prod_{\alpha\in{}_\KK\Delta}\alpha_\RR\colon S_\RR\to (\RR_{>0})^r
	\eqn
	is an isomorphism, which implies the same statement for all real closed $\FF\supset\KK$, that is
	\bqn
	\prod_{\alpha\in{}_\KK\Delta}\alpha_\FF\colon S_\FF\to (\FF_{>0})^r
	\eqn
	is an isomorphism.  But this implies that $\mathrm{Log}_b\colon S_\FF\to\fa$ is surjective if and only if
	\bqn
	\ba
	\FF_{>0}^r&\longrightarrow\RR^r\\
	(x_i)&\mapsto (\log_b(x_i))
	\ea
	\eqn
	is surjective, which is equivalent to $\Lambda=\RR$.
\end{proof}

\section{Non-standard symmetric spaces and metric shadows}\label{sec:5} 
In this section we introduce the non-standard symmetric space $\calX_\FF$ associated to $\bG$ over $\FF$ 
where $\FF$ is real closed with $\FF\supset\KK$ and examine its behavior 
under restriction to order convex subrings and passage to quotient residue field. 
Then we introduce the concept of semialgebraic norm, 
which leads to the existence of an $\FF_{\geq 1}$-valued multiplicative distance function on $\calX_\FF$. In case $\FF$ 
admits an order compatible valuation $\nu$, we get an $\RR_{\geq 0}$-valued pseudodistance whose Hausdorff quotient, 
the metric shadow of $\calX_\FF$, we denote by $\bgof$.

We then show that $\bG(\FF)$ acts transitively by isometries on $\bgof$ and the stabilizer of $[\Id]$ in this action is $\bG(\calO)$ 
where $\calO$ is the valuation ring of $\nu$. Then we analyze the case where $\FF=\rom$ is a Robinson field 
and show that any Weyl group invariant norm on $\frak a=\Lie (S_\RR)$ leads to a $\grom$-invariant distance on $\bgrom$: 
in fact this is shown by identifying $\bgrom$ with an asymptotic cone of $\calX_\RR$ 
for the corresponding Finsler metric associated to said invariant norm on $\fa$. 

For a real closed field $\FF$ admitting a big element and a valuation compatible injection into $\rom$ 
we deduce corresponding results for $\bgof$ and compute the translation length of elements in $\bG(\FF)$. 
If the norm  comes from a Weyl group invariant scalar product on $\fa$, 
$\bgrom$ is CAT(0)-complete and we determine the stabilizers in $\grom$ of the points in the visual boundary $\partial_\infty\bgrom$;  
we show that they are all $\grom$ conjugate to subgroups of the form $\bQ(\rom)$ 
where $\bQ$ is a standard parabolic $\RR$-subgroup of $\bG$.
\subsection{Non-standard symmetric spaces and Cartan projections}\label{subsec:4.4}
{We define a model for the non-standard symmetric space, 
define the Cartan projection and discuss its compatibility with the reduction modulo an order convex subring.}

If $\FF$ is a real closed field, the set
\bqn
\calP^1(n,\FF):=\{A\in M_{n\times n}(\FF):\,\det(A)=1,\, A\text{ is symmetric and positive definite}\}
\eqn
is a semialgebraic set on which $\SL(n,\FF)$ acts via 
\bqn
g_\ast A:=g\,A{g^t}
\eqn
for all $g\in\SL(n,\FF)$ and $A\in\calP^1(n,\FF)$.
Let, as in Section~\ref{subsec:Cartan}, $\bG<\SL(n)$ be a connected semisimple algebraic group defined over $\KK$ 
and assume $\bG$ is invariant under transposition.
Then $\calX:=\bG(\KK)_\ast\Id$ is a semialgebraic subset of $\calP^1(n,\KK)$. Since $\bG$ is invariant under transposition, the extension $\calX_\RR$ of $\calX$ to $\RR$ is the symmetric space
\bqn
\calX_\RR=\bG(\RR)_\ast\Id
\eqn
associated to $\bG(\RR)$.
We have that $\calX_\RR$ is a  closed connected subset of $\calP^1(n,\RR)$ and $\calX_\RR=\bG(\RR)^\circ_*\Id$. 
Hence we deduce by the transfer principle that for any real closed $\FF\supset\KK$
\bqn
\calX_\FF=\bG(\FF)_\ast\Id=\bG(\FF)^\circ_\ast\Id
\eqn
and is closed in $\calP^1(n,\FF)$.  In particular if $\bG(\KK)^\circ<G<\bG(\KK)$, 
then we also have $\calX_\FF=(G_\FF)_\ast\Id$.
If $\FF$ is non-Archimedean,  we will refer to $\calX_\FF$ as to the \emph{non-standard symmetric space associated to $G_\FF$.} \label{n.43}

{We use the Cartan decomposition to induce a $C_\FF$-valued multiplicative distance on $\calX_\FF$,  
	that we will call { Cartan projection}. 
}
Recall that  in Section~\ref{subsec:4.1} we chose a maximal split torus $\bS<\bG$ consisting of symmetric matrices, we denote by $S:=\bS(\KK)^\circ$, by ${}_\KK\Phi\subset X(\bS)$ the set of roots of $\bG$ with respect to $S$, by ${}_\KK\Phi^+\subset{}_\KK\Phi$ a choice of positive roots and by 
\bqn
C:=\{x\in S:\,\alpha(x)\geq1 \text{ for all }\alpha\in{}_\KK\Phi^+\},
\eqn
the multiplicative Weyl chamber.  The Cartan decomposition (Proposition~\ref{prop:4.4}) has the following equivalent reformulation:

\begin{cor}\label{cor:4.8} For every $(x,y)\in\calX_\FF\times\calX_\FF$, the $\bG(\FF)$-orbit of $(x,y)$ intersects
	$\{\Id\}\times (C_\FF)_\ast \Id$ in exactly one point $(\Id, \delta_\FF(x,y)_\ast\Id)$ 
	and the resulting map \label{n.44}
	$$\delta_\FF\colon\calX_\FF\times\calX_\FF\to C_\FF$$
	is semialgebraic continuous.
\end{cor}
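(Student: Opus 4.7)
The plan is to derive the statement from the Cartan decomposition of Proposition~\ref{prop:4.4} by reducing to the base point $\Id \in \calX_\FF$. For \emph{existence}, given $(x,y) \in \calX_\FF \times \calX_\FF$, I invoke the transitivity $\calX_\FF = \bG(\FF)_\ast \Id$ from Section~\ref{subsec:4.4} to pick $h, g \in \bG(\FF)$ with $x = h_\ast \Id$ and $y = g_\ast \Id$. Applying Proposition~\ref{prop:4.4} to $h^{-1}g \in G_\FF$ yields $k_1, k_2 \in K_\FF$ and $c \in C_\FF$ with $h^{-1}g = k_1 c k_2$, and a direct calculation using that $K_\FF$ fixes $\Id$, that $c^t = c$ (since the torus $\bS$ was chosen symmetric in Proposition~\ref{prop:F-split=K-split}), and that $k_1^t = k_1^{-1}$, shows that $\gamma := k_1^{-1}h^{-1}$ satisfies $\gamma_\ast x = \Id$ and $\gamma_\ast y = c_\ast \Id$.

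For \emph{uniqueness}, suppose that $\gamma_i \cdot (x,y) = (\Id,\, (c_i)_\ast \Id)$ with $c_i \in C_\FF$ for $i = 1,2$. Then $k := \gamma_2 \gamma_1^{-1}$ fixes $\Id$, hence lies in $K_\FF$, and a short matrix computation yields $k\, c_1^2\, k^{-1} = c_2^2$. Since $\alpha_\FF(c_i^2) = \alpha_\FF(c_i)^2 \geq 1$ for every positive root $\alpha$, both $c_1^2$ and $c_2^2$ belong to $C_\FF$ and to the double coset $K_\FF c_1^2 K_\FF$. The uniqueness clause of Proposition~\ref{prop:4.4} applied to $c_2^2 \in G_\FF$ then forces $c_1^2 = c_2^2$. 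Applying an arbitrary simple root $\alpha \in {}_\KK\Delta$ finally gives $\alpha_\FF(c_1)^2 = \alpha_\FF(c_2)^2$ with $\alpha_\FF(c_i) \in \FF_{>0}$, whence $\alpha_\FF(c_1) = \alpha_\FF(c_2)$; since the simple roots separate points on $S_\FF$, I conclude $c_1 = c_2$.

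Finally, for semialgebraicity and continuity, the graph of $\delta_\FF$ coincides with the image under the projection $\calX_\FF \times \calX_\FF \times C_\FF \times \bG(\FF) \to \calX_\FF \times \calX_\FF \times C_\FF$ of the semialgebraic set $\{(x,y,c,\gamma) : \gamma_\ast x = \Id,\ \gamma_\ast y = c_\ast \Id\}$, and is therefore semialgebraic by \cite[Proposition~2.2.7]{BCR}. By uniqueness applied over both $\KK$ and $\FF$, the map $\delta_\FF$ is the $\FF$-extension of $\delta = \delta_\KK$, so Properties~\ref{ppts.2.10}(2) reduces continuity on $\FF$ to continuity of $\delta_\RR$, which is classical. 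The only mildly delicate step is the passage from $K_\FF$-conjugacy of $c_1^2$ and $c_2^2$ to their equality, which is precisely where the full strength of the uniqueness of the Cartan projection enters.
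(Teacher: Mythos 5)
Your proof is correct. The paper gives no proof at all for this corollary -- it simply states that it is ``an equivalent reformulation'' of the Cartan decomposition (Proposition~\ref{prop:4.4}) -- so your argument is exactly the standard fleshing-out that the authors leave implicit: normalize $x$ to $\Id$ using transitivity, apply Cartan to $h^{-1}g$ (checking $\gamma_\ast y = c^2 = c_\ast\Id$ requires $c^t=c$, which you correctly pull from the symmetry of $\bS$), and for uniqueness reduce to the uniqueness clause of Proposition~\ref{prop:4.4} applied to $c_2^2$, noting that $c_i^2\in C_\FF$ and that roots (hence squares of positive values) separate points on $S_\FF$. The semialgebraicity-via-projection and continuity-via-transfer steps are the same mechanism the paper uses in the proofs of Proposition~\ref{prop:4.4} itself and of Lemma~\ref{lem:4.6}, so the route you take matches the paper's methodology throughout Section~\ref{sec:3}.
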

\noindent We call the map $\delta_\FF$ the {\em Cartan projection}. 

The Cartan projection behaves well with respect to reduction modulo order convex subrings. For this recall the notation from Section~\ref{s:Intro2}: $\KK\subset \calO\subset\FF$ denotes an order convex subring of $\FF$, $\calI\subset \calO$ its maximal ideal,  ${\FF_\calO}:=\calO/\calI$
the residue field. Recall that we set $V_\FF(\calO)=V_\FF\cap \calO^n$  (Definition \ref{d.2.15}) and denote by $\pi:V_\FF(\calO)\to V_{\FF_\calO}$ the reduction mod $\calI$.
\begin{lem}\label{lem:4.19} \
	\be
	\item Let $g\in\bG(\FF)$ be such that $g_\ast\Id\in\calX_\FF(\calO)$.  Then $g\in\bG(\calO)$ and $\bG(\calO)$ acts transitively on $\calX_\FF(\calO)$.
	\item For all $(x,y)\in\calX_\FF(\calO)\times\calX_\FF(\calO)$,
	\bqn
	\bG(\calO)_\ast(x,y)\cap\left(\{\Id\}\times C_\FF(\calO)_\ast\Id\right)=(\Id, \delta_\FF(x,y)_\ast\Id)
	\eqn
	which implies in particular that 
	\bqn
	\delta_\FF(\calX_\FF(\calO)\times\calX_\FF(\calO))\subset C_\FF(\calO)\,.
	\eqn
	\item Denoting $\delta_{\calO}$ the restriction to $\calX_\FF(\calO)\times\calX_\FF(\calO)$ of $\delta_\FF$, 
	the diagram
	\bqn
	\xymatrix{
		\calX_\FF(\calO)\times\calX_\FF(\calO)\ar[r]^-{\delta_{\calO}}\ar[d]_{\pi\times\pi}&C_\FF(\calO)\ar[d]^\pi\\
		\calX_{{\FF_\calO}}\times\calX_{{\FF_\calO}}\ar[r]_-{\delta_{{\FF_\calO}}} &C_{{\FF_\calO}}
	}
	\eqn
	commutes.
	\ee
\end{lem}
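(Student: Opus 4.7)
My plan is to establish (1) first using the Cartan decomposition together with the containment $\SO(n,\FF) \subset \bG(\calO)$; then (2) will follow from (1) applied twice and the uniqueness in Corollary~\ref{cor:4.8}, and (3) will be obtained by transferring the identity in (2) through the reduction map $\pi$ and invoking Proposition~\ref{prop:4.18} applied to $\calX$ and $C$.

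For (1), I would first observe that for any $k \in \SO(n,\FF)$ the rows have norm one, whence $|k_{ij}|\leq 1$ for every entry $k_{ij}$; by order-convexity of $\calO$ this gives $\SO(n,\FF) \subset M_{n\times n}(\calO)$. Given $g \in \bG(\FF)$ with $gg^t \in \calX_\FF(\calO)$, I would use the transfer of $\bG(\RR) = (\bG(\RR) \cap \SO(n,\RR)) \bG(\RR)^\circ$ recalled in \S~\ref{subsec:Cartan} to write $g = s g^\circ$ with $s \in \bG(\FF) \cap \SO(n,\FF)$ and $g^\circ \in \bG(\FF)^\circ \subset G_\FF$; since $s s^t = \Id$, one then has $g^\circ (g^\circ)^t = s^{-1}(gg^t)s \in M_{n\times n}(\calO)$. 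Next, applying Proposition~\ref{prop:4.4} to $g^\circ$ yields $g^\circ = k_1 c k_2$ with $k_1, k_2 \in K_\FF$ and $c \in C_\FF$; using $k_2 k_2^t = \Id$ and $c^t = c$, one obtains $c^2 = k_1^{-1}(g^\circ (g^\circ)^t) k_1 \in M_{n\times n}(\calO)$. The key step is then to deduce $c \in M_{n\times n}(\calO)$: exploiting the $\KK$-splitness of $\bS$, there exists $P \in \GL(n,\KK) \subset \GL(n,\calO)$ putting $\bS$ into diagonal form, and then $P^{-1} c P = \diag(\chi_1(c),\dots,\chi_n(c))$ for the weights $\chi_i$ of the standard representation restricted to $\bS$. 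Since $\tr(c^2) = \sum_i \chi_i(c)^2 \in \calO$ and each summand is positive (as $c \in C_\FF \subset \bS(\FF)^\circ$ forces $\chi_i(c) > 0$), order-convexity yields $\chi_i(c)^2 \in \calO$, and the bound $\chi_i(c) \leq \max(1,\chi_i(c)^2)$ then gives $\chi_i(c) \in \calO$. Conjugating back by $P \in \bG(\calO)$ will yield $c \in \bG(\calO)$, and combining with $s, k_1, k_2 \in \bG(\calO)$ one concludes $g \in \bG(\calO)$. Transitivity of $\bG(\calO)$ on $\calX_\FF(\calO)$ will then be immediate: any $x = g_*\Id \in \calX_\FF(\calO)$ forces $g \in \bG(\calO)$.

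For (2), given $(x,y) \in \calX_\FF(\calO) \times \calX_\FF(\calO)$, I would pick $g \in \bG(\FF)$ with $g_*(x,y) = (\Id, \delta_\FF(x,y)_*\Id)$ from Corollary~\ref{cor:4.8}. Since $(g^{-1})_*\Id = x \in \calX_\FF(\calO)$, part (1) gives $g^{-1}$, hence $g$, in $\bG(\calO)$; then $\delta_\FF(x,y)_*\Id = g_* y \in \calX_\FF(\calO)$, and applying (1) again yields $\delta_\FF(x,y) \in \bG(\calO) \cap C_\FF = C_\FF(\calO)$. The claimed intersection identity follows, with uniqueness in Corollary~\ref{cor:4.8} excluding a second point. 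For (3), both $\calX$ and $C$ are closed semialgebraic subsets of $M_{n\times n}(\KK) = \KK^{n^2}$ (the former because $\calX_\RR$ is closed in $\calP^1(n,\RR)$ and by transfer; the latter because it is cut out from the closed subgroup $\bS(\KK)^\circ$ by the closed conditions $\alpha(\,\cdot\,)\geq 1$), so Proposition~\ref{prop:4.18} gives $\pi(\calX_\FF(\calO)) = \calX_{\FF_\calO}$ and $\pi(C_\FF(\calO)) = C_{\FF_\calO}$. Applying $\pi$ to the identity $g_*(x,y) = (\Id, \delta_\FF(x,y)_*\Id)$ produces $\pi(g)_*(\pi(x),\pi(y)) = (\Id, \pi(\delta_\FF(x,y))_*\Id)$ with $\pi(\delta_\FF(x,y)) \in C_{\FF_\calO}$; the uniqueness in Corollary~\ref{cor:4.8} over $\FF_\calO$ will then identify this element with $\delta_{\FF_\calO}(\pi(x),\pi(y))$, proving commutativity. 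The main obstacle will be the diagonalization step in part (1), which relies essentially on $\bS$ being $\KK$-split (so that the change of basis can be taken in $\GL(n,\KK) \subset \GL(n,\calO)$) and on order-convexity of $\calO$ for the bootstrap from $c^2 \in M_{n\times n}(\calO)$ to $c \in M_{n\times n}(\calO)$.
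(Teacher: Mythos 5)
Your proof is correct, and parts (2) and (3) follow essentially the same route as the paper: apply (1) twice to get $g\in\bG(\calO)$ and $\delta_\FF(x,y)\in C_\FF(\calO)$, then push the defining identity through $\pi$ and invoke uniqueness in Corollary~\ref{cor:4.8} over $\FF_\calO$.

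For part (1), however, you take a substantially longer detour than is needed. The paper's argument is a one-liner: since
\[
\tr(gg^t)=\sum_{i,j} g_{ij}^2 \in \calO
\]
and every summand is a nonnegative square, order-convexity of $\calO$ immediately gives $g_{ij}^2\in\calO$, hence $|g_{ij}|\leq 1+g_{ij}^2\in\calO$ and $g_{ij}\in\calO$, i.e.\ $g\in\bG(\calO)$. No decomposition is required. You actually discover exactly this mechanism, but only after three unnecessary reductions: first peeling off $s\in\SO(n,\FF)$ via $\bG(\RR)=(\bG(\RR)\cap\SO(n,\RR))\bG(\RR)^\circ$, then invoking the Cartan decomposition $g^\circ=k_1ck_2$, and finally diagonalizing $\bS$ over $\KK$ to reduce $c^2\in M_{n\times n}(\calO)$ to $c\in M_{n\times n}(\calO)$. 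Each of these steps is sound (the diagonalization over $\KK$ is legitimate because $\bS$ is $\KK$-split, and your bootstrap $\chi_i(c)\leq\max(1,\chi_i(c)^2)$ is the same order-convexity trick), but at the moment you write $g^\circ(g^\circ)^t\in M_{n\times n}(\calO)$ you could already have applied the trace identity to $g^\circ$ directly — or better still, to $g$ itself, skipping the splitting off of $s$ as well. The takeaway is that $\tr(AA^t)=\sum A_{ij}^2$ controls all entries at once for any matrix, so there is no need to route through the symmetric positive-definite part of a decomposition.
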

\begin{proof} (1) From $g_\ast\Id=gg^t\in\calX_\FF(\calO)$ we get $\tr(gg^t)\in \calO$,
	in particular $g_{ij}\in \calO$ since $\calO$ is order convex.
	
	\medskip
	\noindent
	(2) Let $(x,y)\in\calX_\FF(\calO)\times\calX_\FF(\calO)$ and $g\in\bG(\FF)$ with 
	\bqn
	g_\ast (x,y)=(\Id,\delta_\FF(x,y)_\ast\Id)\,.
	\eqn
	From $x=g_\ast^{-1}\Id$ and (1), we get $g\in\bG(\calO)$ which, together with $g_\ast y=\delta_\FF(x,y)_\ast\Id\in\calX_\FF(\calO)$, implies
	that $\delta_\FF(x,y)\in C_\FF(\calO)$ and proves (2).
	
	\medskip
	\noindent
	(3) follows from (2), from Corollary~\ref{cor:4.8} applied to ${\FF_\calO}$ and from the equivariance of $\pi$.
\end{proof}

\subsection{Semialgebraic norms and the quotient $\bgof$}\label{s.seminorm}
{ We introduce the concept of (multiplicative) semialgebraic norm on $\bS$, show that any two such norms are equivalent and use them to define a metric quotient  $\bgof$ of $\calX_\FF$ whose set of points is independent of the chosen norm.}

Recall that the Weyl group $W=\calN_K(\bS)/\calZ_K(\bS)$ acts on $\bS$ and hence on $S=\bS(\KK)^\circ$.
A {\em semialgebraic norm on $S$} is a map \label{n.45}
\bqn
N\colon S\to\KK_{\geq1}
\eqn
that is semialgebraic continuous, Weyl group invariant and satisfies the following three properties:
\be
\item[(N1)] $N(gh)\leq N(g)N(h)$ for all $g,h\in S$;
\item[(N2)] $N(g^n)=N(g)^{|n|}$ for all $n\in\ZZ$;
\item[(N3)] $N(g)=1$ if and only if $g=e$.
\ee

\begin{ex}[Semialgebraic norm associated to a dominant weight]\label{e.fundExSemialgebraicNorm}
With the notation of Section \ref{sec:3}, let $\bS$ be a maximal $\KK$-split torus in $\bG$.
Fix in addition $\bS\leq\bT\leq\bB\leq\bP$, where $\bT$ is a maximal split torus in $\bG$ 
and $\bB$ is a Borel subgroup containing it.
This determines compatible orderings on the sets 
$\Phi\subset X(\bT)$ and ${}_\KK\Phi\subset X(\bS)$ of roots of $\bG$ in $\bT$ and $\bS$ respectively, 
such that if $j \colon X(\bT)\to X(\bS)$ denotes the restriction, then
\bqn
j(\Phi^+)={}_\KK\Phi^+\cup\{\mathbb 1\}.
\eqn
If
\bqn
\ba
T&=\{t\in\bT:\,\chi(t)\in\RR_{>0}\text{ for all }\chi\in X(\bT)\}\\
T^+&=\{t\in\bT:\,\chi(t)\geq1\text{ for all }\chi\in X(\bT)\}
\ea
\eqn
and $C\subset S=\bS(\KK)^\circ$ is as in 4.2,
then we have that $S\subset T$ and $C\subset T^+$.
Let $\overline{W}:=\calN(\bT)/\calZ(\bT)$ the (absolute) Weyl group, $w_0\in\overline{W}$ the longest element
and $\lambda\in X(\bT)$ the highest weight of an (absolutely) irreducible representation of $\bG$ with finite kernel.
Let $\eta:=\frac{\lambda}{\lambda\circ w_0}$, and define $\overline{N}\colon T\to\RR_{>0}$ by 
\bqn
\overline{N}(t)=\max_{w\in\overline{W}}\eta(w(t))
\eqn
for all $t\in\RR$.  Then
\bq\label{eq:eta>1}
\eta>1\text{ on } T^+\smallsetminus\{e\}\,,
\eq
and
\bq\label{eq:N}
N=\overline{N}|_S\colon S\to K_{\geq1}
\eq
is a semialgebraic norm.  The fact that $\lambda\circ w_0$ is the lowest weight 
and that the representation has finite kernel implies \eqref{eq:eta>1} as well as $\overline{N}|_{T^+}=\eta|_{T^+}$.
Hence $\overline{N}\geq1$ and $\overline{N}(t)=1$ if and only if $t=e$.
The property 
\bqn
\overline{N}(t_1t_2)\leq\overline{N}(t_1)\overline{N}(t_2)
\eqn
for all $t_1,t_2\in T$ is immediate, and so is also 
\bqn
\overline{N}(t^n)=n\overline{N}(t)
\eqn
for all $n\in\NN_{\geq1}$ and all $t\in T$.
To see that $\overline{N}(t^{-1})=\overline{N}(t)$ one uses that $w_0(T^+)=(T^+)^{-1}$ and $w_0^2=e$.
Then properties (N1), (N2) and (N3) for $\overline{N}$ follow; for the invariance of $N$ under the relative Weyl group
$W=\calN(S)/\calS(T)$ one uses \cite[6.10]{Borel_Tits}.  
\end{ex}

\begin{ex}\label{ex.rootnorm} A concrete example of a semialgebraic norm on $S$  is given by
	\bq\label{eq:N}
	N(s):=\prod_{\alpha\in{}_\KK\Phi}\max\{\alpha(s), \alpha(s)^{-1}\}\,.
	\eq
	We will use this explicit expression in Proposition \ref{prop:4.12}. 
\end{ex}

For any semialgebraic norm $N$ on $S$,  we define the \emph{$\FF_{\geq1}$-valued multiplicative distance} \label{n.46}
\bqn
\ba
D_N^\FF\colon\calX_\FF\times\calX_\FF&\longrightarrow\quad\FF_{\geq1}\\
(x,y)\,\,&\mapsto N_\FF(\delta_\FF(x,y))\,.
\ea
\eqn

Indeed:

\begin{prop}\label{prop:4.10} With the above notation we have:
	\be
	\item $D_N^\FF(x,z)\leq D_N^\FF(x,y)D_N^\FF(y,z)$ for all $x,y,z\in\calX_\FF$;
	\item $D_N^\FF(x,y)=1$ if and only if $x=y$;
	\item Given any two semialgebraic norms $N_1, N_2$, there is a constant $c\in\NN_{\geq1}$ such that 
	\bqn
	(D_{N_2}^\FF)^{1/c}\leq D_{N_1}^\FF\leq (D_{N_2}^\FF)^c\,.
	\eqn
	\ee
\end{prop}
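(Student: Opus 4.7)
The plan is to reduce each of the three assertions to a semialgebraic statement that can be verified over $\RR$ and then to pass to an arbitrary real closed extension $\FF\supseteq\KK$ via the Tarski--Seidenberg transfer principle (Properties~\ref{ppts.2.10}).

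For (2) I would argue directly using the uniqueness in Corollary~\ref{cor:4.8}. If $x=y$ and $g\in\bG(\FF)$ satisfies $g_\ast x=\Id$, then $g_\ast y=\Id$ as well, so by uniqueness $\delta_\FF(x,x)=e$, and (N3) gives $D_N^\FF(x,x)=N(e)=1$. Conversely, if $D_N^\FF(x,y)=1$ then (N3) forces $\delta_\FF(x,y)=e$, so the $g\in\bG(\FF)$ realizing the Cartan decomposition of $(x,y)$ sends both $x$ and $y$ to $\Id$, whence $x=y$.

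For (1), the set
\[
T \;:=\; \{(x,y,z)\in\calX^3 : D_N(x,z)\leq D_N(x,y)\,D_N(y,z)\}
\]
is semialgebraic since $D_N=N\circ\delta$ is semialgebraic continuous. By Properties~\ref{ppts.2.10}~(1) it suffices to check $T_\RR=\calX_\RR^3$, as the inequality over $\FF$ is then automatic. Over $\RR$, properties (N1)--(N3) together with Weyl-invariance of $N$ translate, via the exponential $\fa\to S_\RR$, into the statement that $\nu:=\log\circ N_\RR\circ\exp$ is a Weyl-invariant norm on $\fa=\Lie(S_\RR)$: (N1) gives subadditivity, (N2) gives integer positive homogeneity (hence, by continuity and density of $\QQ$ in $\RR$, real positive homogeneity), and (N3) gives definiteness. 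By the classical construction of $\bG(\RR)$-invariant Finsler metrics on symmetric spaces from Weyl-invariant norms (the result of Planche cited in the introduction), this $\nu$ induces a $\bG(\RR)$-invariant distance on $\calX_\RR$ equal to $\log D_N^\RR$, whose triangle inequality exponentiates precisely to~(1).

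For (3), I would apply the transfer principle to the closed semialgebraic conditions $N_1\leq N_2^c$ and $N_2\leq N_1^c$ on $S$, for each positive integer $c$. Over $\RR$, both $\log N_1$ and $\log N_2$ correspond under $\exp$ to Weyl-invariant norms on the finite-dimensional space $\fa$, and any two such norms are equivalent by a standard compactness argument on the Weyl chamber; choosing an integer $c$ exceeding the corresponding equivalence constants in both directions yields the pointwise bounds over $\RR$, which transfer to $\FF$. Composing with $\delta_\FF$ and rewriting multiplicatively gives the desired estimates between $D_{N_1}^\FF$ and $D_{N_2}^\FF$. The main obstacle is the classical input in~(1): one has to verify that a \emph{semialgebraic} $N$ (not assumed smooth) genuinely produces a norm on $\fa$ and that Planche's construction still applies. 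Once that identification is in place, (1) and (3) become clean applications of transfer.
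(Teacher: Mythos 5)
Your proposal is correct and follows essentially the same route as the paper: translate $N$ via $\nu=\ln\circ N_\RR\circ\exp$ into a Weyl-invariant norm on $\fa$, invoke Planche's theorem to obtain a $\bG(\RR)$-invariant distance on $\calX_\RR$, and transfer the resulting semialgebraic inequalities to $\FF$ via Properties~\ref{ppts.2.10}; item~(3) likewise reduces to equivalence of norms on $\fa$. Two small remarks. First, your direct argument for~(2) from the uniqueness in Corollary~\ref{cor:4.8} plus the transfer of (N3) is a mild simplification over the paper, which derives~(1) and~(2) together from the fact that $\ln D_N^\RR$ is a distance; both work. Second, the worry you flag at the end about whether a merely semialgebraic $N$ produces a genuine norm and whether Planche's construction applies is unfounded: you yourself supply the verification (subadditivity from (N1), homogeneity from (N2) plus continuity and density of $\QQ$, definiteness from (N3)), and Theorem~\ref{thm:Planche} as stated requires only a Weyl-group-invariant norm, with no smoothness hypothesis. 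So there is no gap to fill.
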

For the proof of this proposition as well as for the next subsection we recall some results of P.~Planche \cite{Planche}.
Let as above $\fa$ be the Lie algebra of $S_\RR$ and $\fabp$ the closed Weyl chamber
corresponding to $C_\RR$ via the exponential map
\bqn
\exp\colon\fa\to S_\RR\,,
\eqn
whose inverse we denote by 
\bqn
\mathrm{Ln}:=(\exp)^{-1}\colon S_\RR\to\fa\,.
\eqn

\begin{thm}[\cite{Planche}]\label{thm:Planche} Given a Weyl group invariant norm $\|\,\cdot\,\|$ on $\fa$, 
	the function $d_{\|\,\cdot\,\|}\colon\calX_\RR\times\calX_\RR\to [0,\infty)$ defined by
	\bqn
	d_{\|\,\cdot\,\|}(x,y):=\|\mathrm{Ln}\delta_\RR(x,y)\|
	\eqn
	is a $\bG(\RR)$-invariant distance function on $\calX_\RR$.
\end{thm}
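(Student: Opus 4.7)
The plan is to verify separately positivity together with the separation axiom, symmetry, the triangle inequality, and $\bG(\RR)$-invariance. The invariance is a direct consequence of Corollary~\ref{cor:4.8}: the Cartan projection $\delta_\RR\colon\calX_\RR\times\calX_\RR\to C_\RR$ is by construction $\bG(\RR)$-invariant under the diagonal action, hence so is $d_{\|\cdot\|}$. Positivity is immediate, and the separation axiom $d_{\|\cdot\|}(x,y)=0\iff x=y$ follows from the fact that $\|\cdot\|$ is a norm and that $\mathrm{Ln}\colon S_\RR\to\fa$ is a diffeomorphism with $\mathrm{Ln}(e)=0$: thus $d_{\|\cdot\|}(x,y)=0$ forces $\delta_\RR(x,y)=e$, which, by the defining property of $\delta_\RR$, gives $g_\ast x=g_\ast y$ for some $g\in\bG(\RR)$ and hence $x=y$.

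For symmetry, I would first establish the identity
\bqn
\delta_\RR(y,x)\;=\;w_0\bigl(\delta_\RR(x,y)^{-1}\bigr),
\eqn
where $w_0$ is the longest element of the Weyl group $W=\calN_K(\bS)/\calZ_K(\bS)$: indeed, if $g\in\bG(\RR)$ realizes $g_\ast x=\Id$ and $g_\ast y=\delta_\RR(x,y)_\ast\Id$, then setting $h=\delta_\RR(x,y)^{-1}g$ gives $h_\ast y=\Id$ and $h_\ast x=\delta_\RR(x,y)^{-1}_\ast\Id$, and uniqueness in Corollary~\ref{cor:4.8} forces this element, after conjugation by a Weyl representative bringing it into $C_\RR$, to equal $\delta_\RR(y,x)$. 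Combining $\mathrm{Ln}(s^{-1})=-\mathrm{Ln}(s)$, the Weyl-invariance of $\|\cdot\|$, and the homogeneity $\|-v\|=\|v\|$, we obtain $\|\mathrm{Ln}\,\delta_\RR(y,x)\|=\|\mathrm{Ln}\,\delta_\RR(x,y)\|$, which is symmetry.

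The main obstacle, and the substantive content of the theorem, is the triangle inequality. My plan is to realize $d_{\|\cdot\|}$ as the distance function of a $\bG(\RR)$-invariant Finsler metric on $\calX_\RR$, for which the triangle inequality is built into the definition. Let $\fp\subset \Lie(\bG(\RR))$ be the orthocomplement of $\Lie(K_\RR)$ for the Killing form, so that $\fp\cong T_{x_0}\calX_\RR$ where $x_0=\Id$. Every element of $\fp$ is $\Ad(K_\RR)$-conjugate into $\fa$, and two elements of $\fa$ are $\Ad(K_\RR)$-conjugate if and only if they are Weyl-conjugate; thus the Weyl-invariant norm $\|\cdot\|$ on $\fa$ extends uniquely to an $\Ad(K_\RR)$-invariant norm on $\fp$, which by left translation defines a $\bG(\RR)$-invariant Finsler structure, and hence a genuine distance $d_F$ on $\calX_\RR$ satisfying the triangle inequality. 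What remains is to identify $d_F$ with $d_{\|\cdot\|}$.

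The upper bound $d_F(x_0, (\exp v)_\ast x_0)\leq \|v\|$ for $v\in\fa$ is obtained by computing the Finsler length of the curve $t\mapsto(\exp(tv))_\ast x_0$; combined with $\bG(\RR)$-invariance and the definition of $\delta_\RR$, this yields $d_F\leq d_{\|\cdot\|}$. The matching lower bound $d_F(x_0,(\exp v)_\ast x_0)\geq \|v\|$ is the deepest point, and is precisely Planche's contribution: the idea is to use a $K_\RR$-equivariant projection from $\fp$ onto $\fa$ (obtained by averaging a path against the Weyl group action), together with the Weyl-invariance of $\|\cdot\|$, to show that any piecewise smooth path joining $x_0$ to $(\exp v)_\ast x_0$ projects to a path in $\fa$ whose Finsler length is bounded below by $\|v\|$, while projection does not increase Finsler length. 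Granting this classical lemma, $d_F=d_{\|\cdot\|}$ and the theorem follows.
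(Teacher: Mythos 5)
The paper itself does not prove this statement: it is quoted from Planche's thesis (\cite{Planche}) and used as a black box, so there is no in-paper proof to compare against. I can therefore only assess your proposal on its own terms.

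Your handling of invariance, positivity, separation, and symmetry is correct. In particular, your derivation of the identity $\delta_\RR(y,x)=w_0(\delta_\RR(x,y)^{-1})$ is a clean and correct way to get symmetry (using that $C_\FF$-elements are symmetric so $c_\ast\Id=c^2$, the uniqueness in Corollary~\ref{cor:4.8}, and $w_0(C)=C^{-1}$). Your overall strategy for the triangle inequality --- build the Finsler distance $d_F$ from the $\Ad(K_\RR)$-invariant extension of $\|\cdot\|$ to $\fp$, show $d_F\le d_{\|\cdot\|}$ by computing the length of the flat geodesic, and show $d_F\ge d_{\|\cdot\|}$ by a minimization argument --- is in fact Planche's route, and you are candid that you are invoking his lemma for the deep step. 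Two caveats. First, the claim that the $\Ad(K_\RR)$-invariant extension of $\|\cdot\|$ to $\fp$ is still a \emph{norm} (convex, not merely positive and homogeneous) is itself a nontrivial part of \cite[Proposition 5.3]{Planche}; you state it as automatic, and the paper cites Planche precisely for this (see the proof of Lemma~\ref{l.NormA}). Second, your sketch of the lower bound is garbled as written: there is no ``$K_\RR$-equivariant projection from $\fp$ onto $\fa$'' (the $K_\RR$-action on $\fp$ does not descend to $\fa$), and ``averaging a path against the Weyl group'' is not a precise construction. What one actually uses is Kostant's convexity theorem: the orthogonal projection $\pi\colon\fp\to\fa$ sends the $\Ad(K_\RR)$-orbit of $a\in\fabp$ onto $\mathrm{conv}(W\cdot a)$, which together with the convexity and $W$-invariance of $\|\cdot\|$ gives the needed monotonicity.

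It is worth noting that the triangle inequality can also be obtained without any Finsler geometry, via a purely algebraic argument you might find tidier: the Cartan projection $a\colon\bG(\RR)\to\fabp$ satisfies the dominance-order subadditivity $a(g_1g_2)\preceq a(g_1)+a(g_2)$ (a generalization of the Weyl--Horn inequalities for singular values, provable from Kostant convexity), and a $W$-invariant norm on $\fa$ is monotone for $\preceq$ on $\fabp$ because $a\preceq b$ in $\fabp$ implies $a\in\mathrm{conv}(W\cdot b)$. Combining these two facts with the subadditivity of $\|\cdot\|$ gives the triangle inequality directly, without having to first identify $d_{\|\cdot\|}$ with a Finsler distance. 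Your route proves more (namely that $d_{\|\cdot\|}$ is a Finsler distance, which the paper also uses), but if the goal is just the theorem as stated, the dominance-order argument is shorter and avoids the delicate minimization step.
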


\begin{proof}[Proof of Proposition~\ref{prop:4.10}]
	The extension $N_\RR$ of $N$ to $S_\RR$ satisfies  the conditions (N1) to (N3).  
	This implies easily that, for all $w\in\fa$,
	\bqn
	\|w\|:=\ln N_\RR(\exp(w))
	\eqn
	is a Weyl group invariant norm on $\fa$ and thus
	\bqn
	d_{\|\,\cdot\,\|}(x,y)=\|\mathrm{Ln}\delta_\RR(x,y)\|=\ln N_\RR\delta_\RR(x,y)
	\eqn
	is an invariant distance on $\calX_\RR\times\calX_\RR$, 
	implying that $D_N^\RR$ satisfies the semialgebraic relations (1) and (2).
	By Properties \ref{ppts.2.10}, $D_N^\KK$, and hence $D_N^\FF$ satisfy (1) and (2).
	
	Finally (3) follows from the fact that all norms on $\fa$ are equivalent.
\end{proof}

Assuming now that $\FF$ admits an order compatible valuation $v=-\log_b$ where $b$ is a big element in $\FF$, it follows then from Proposition~\ref{prop:4.10} that for any semialgebraic norm $N$: \label{n.47}
\bqn
d_N^\FF(x,y):=\log_bD_N^\FF(x,y)=-\nu D_N^\FF(x,y)
\eqn
is a (continuous) semidistance on $\calX_\FF$ taking values in $\Lambda:=\nu(\FF^\times)$. 
Moreover it follows from Proposition~\ref{prop:4.10}(3) that the equivalence relation
\bq\label{eq:eq_rel}
x\sim y\text{ in }\calX_\FF\text{ if and only if }d_N^\FF(x,y)=0
\eq
is independent of the semialgebraic norm $N$.

\begin{defn}\label{defn:bgf}  The \emph{metric shadow of $\calX_\FF$} \label{n.48}
	is the quotient $\bgof$ of $\calX_\FF$ by the equivalence relation in \eqref{eq:eq_rel}.
\end{defn}

Since $\bG(\FF)^\circ$ acts transitively on $\calX_\FF$, it does so on $\bgof$ as well, 
and we proceed to determine the stabilizer of the image $[\Id]\in\bgof$ of $\Id\in\calX_\FF$.
Recall that $\calO=\{\mu\in\FF:\,\nu(\mu)\geq0\}$ is the valuation ring corresponding to $\nu$.

\begin{prop}\label{prop:4.12} \
	\begin{enumerate}
		\item$\bG(\FF)$ acts transitively on $\bgof$ and the stabilizer of $[\Id]$ is $\bG(\calO)$.
		\item The kernel of $\mathrm{Log}_b\colon S_\FF\to\fa$ is $S_\FF(\calO):=S_\FF\cap\bG(\calO)$.
	\end{enumerate}		
\end{prop}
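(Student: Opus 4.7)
For (1), the transitivity of $\bG(\FF)$ on $\bgof$ is inherited immediately from the transitivity on $\calX_\FF$ established at the start of \S\ref{subsec:4.4}. To identify the stabilizer of $[\Id]$, I will exploit the freedom to choose a convenient semialgebraic norm, guaranteed by Proposition~\ref{prop:4.10}(3), and work with the explicit root norm $N(s)=\prod_{\alpha\in{}_\KK\Phi}\max\{\alpha(s),\alpha(s)^{-1}\}$ from Example~\ref{ex.rootnorm}. The Cartan decomposition (Proposition~\ref{prop:4.4}) gives $g=k_{1}\,c(g)\,k_{2}$ with $c(g)\in C_\FF$, and Corollary~\ref{cor:4.8} yields $d_N^\FF(g_\ast\Id,\Id)=\log_b N_\FF(c(g))$. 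Hence $g$ stabilizes $[\Id]$ if and only if $N_\FF(c(g))\in\calO^{\times}$.

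The pivotal observation is that $K_\FF\subseteq\bG(\calO)$: any $k\in K_\FF\subset\SO(n,\FF)$ has orthonormal rows, so every entry satisfies $|k_{ij}|\leq 1$ and therefore lies in $\calO$ by order convexity. Consequently, if $g\in\bG(\calO)$ then $c(g)=k_{1}^{-1}gk_{2}^{-1}\in S_\FF\cap\bG(\calO)$, and since $\bG(\calO)$ is closed under inversion both $c(g)$ and $c(g)^{-1}$ have entries in $\calO$. Thus $\alpha(c(g))\in\calO^{\times}$ for every root $\alpha$, each factor $\max\{\alpha(c(g)),\alpha(c(g))^{-1}\}\geq 1$ lies in $\calO$, and the product $N(c(g))$ lies in $\calO\cap\FF_{\geq 1}$, which forces $v(N(c(g)))=0$.

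Conversely, suppose $g$ fixes $[\Id]$, so $N(c(g))\in\calO$. Since every factor in the product is already $\geq 1$, each factor individually lies in $\calO$, and hence $\alpha(c(g))\in\calO^{\times}$ for every $\alpha\in{}_\KK\Phi$. The one technical point is promoting this to $c(g)\in\bS(\calO)$, i.e.\ $\chi(c(g))\in\calO^{\times}$ for every character $\chi\in X(\bS)$. This uses semisimplicity of $\bG$: the root lattice $\ZZ{}_\KK\Phi$ has finite index in $X(\bS)$, so $\chi(c(g))^{n}\in\calO^{\times}$ for some $n\geq 1$. Since $\FF$ is real closed and $\chi(c(g))>0$, the positive $n$-th root exists, and order convexity of $\calO$ guarantees that the $n$-th root of a positive unit is again a unit. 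Thus $c(g)\in S_\FF(\calO)$, and $g=k_{1}c(g)k_{2}\in\bG(\calO)$.

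For (2), I apply Lemma~\ref{lem:4.13}: $\mathrm{Log}_b$ is characterized by $d\beta(\mathrm{Log}_b(s))=\log_b\beta(s)$ for every root $\beta$, and the set ${}_\KK\Delta$ of simple roots is a basis of $\fa^{\ast}$. Therefore $\mathrm{Log}_b(s)=0$ iff $\log_b\beta(s)=0$ for every $\beta\in{}_\KK\Delta$, iff $\beta(s)\in\calO^{\times}$ for every $\beta\in{}_\KK\Phi$, iff $s\in S_\FF(\calO)$ by exactly the same character-lattice argument used in (1). The only genuine obstacle in either part is that lattice-index step, which is where semisimplicity of $\bG$ and real-closedness of $\FF$ are indispensable; everything else is a direct unraveling of the definitions of $D_N^\FF$, $d_N^\FF$, and the Cartan decomposition.
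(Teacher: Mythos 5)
Your argument follows the same route as the paper's: use the Cartan decomposition and the root norm $N(s)=\prod_{\alpha\in{}_\KK\Phi}\max\{\alpha(s),\alpha(s)^{-1}\}$, reduce the stabilizer computation to $L\cap S_\FF$, show that fixing $[\Id]$ forces every root value $\alpha(c(g))$ into $\calO^\times$, and then promote this to all of $X(\bS)$ via the finite index of the root lattice together with the fact that a positive element whose $n$-th power is a unit is itself a unit. Part (2) likewise matches the paper's reduction to the simple-root criterion. All of this is correct.

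However, your last transition is not justified as written. You pass from ``$\chi(c(g))\in\calO^{\times}$ for every $\chi\in X(\bS)$'' directly to ``$c(g)\in S_\FF(\calO)$'', but $S_\FF(\calO)$ is by definition $S_\FF\cap\bG(\calO)$, i.e.\ the matrix entries of $c(g)$ must lie in $\calO$; this does not follow formally from the character condition. The paper closes this gap by observing that the weights of $\bS$ acting on $\KK^n$ are themselves characters of $\bS$, so $\tr(c(g)^2)=\sum_i\chi_i(c(g))^2\in\calO$, and then --- crucially using that $\bS$ was chosen to consist of \emph{symmetric} matrices (Proposition~\ref{prop:F-split=K-split}(2)) --- one has $\tr(c(g)^2)=\sum_{ij}c(g)_{ij}^2$, so each $c(g)_{ij}^2\in\calO$ by order convexity and hence $c(g)_{ij}\in\calO$. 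You should add this step (or an equivalent argument, e.g.\ expanding $c(g)$ in a $\KK$-defined weight basis and noting the change-of-basis matrices have entries in $\KK\subset\calO$); without it the reverse inclusion $L\cap S_\FF\subset S_\FF(\calO)$, and hence the identification $L=\bG(\calO)$, is incomplete.
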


\begin{proof} (1) Let $L<\bG(\FF)$ be the stabilizer of $[\Id]$.  Since $K_\FF<L$,
	the Cartan decomposition (Proposition~\ref{prop:4.4}) implies that $L=K_\FF(L\cap C_\FF)K_\FF$.
	We now compute $L\cap S_\FF$.  For this we will use the semidistance $d_N^\FF$ where
	$N$ is as in Example \ref{ex.rootnorm}.  Chasing the definitions we obtain
	\bqn
	L\cap S_\FF=\{s\in S_\FF:\, \alpha(s)\in\calO^\times\text{ for all }\alpha\in{}_\KK\Phi\}\,.
	\eqn
	This implies that $S_\FF(\calO)\subset L\cap S_\FF$ and we proceed to show the reverse inclusion.
	
	To this end, observe that the subgroup of $X(\bS)$ generated by the set ${}_\KK\Phi$ of roots is of finite index, say $m\geq1$,
	and hence $\chi^m(s)\in\calO^\times$ for all $s\in L\cap S_\FF$ and for all $\chi\in X(\bS)$, 
	which implies that also $\chi(s)\in\calO^\times$.
	This applies in particular to the weights of $\bS$ in $\KK^n$, thus in particular  $\tr(s^2)\in\calO$ for all $s\in L\cap S_\FF$.
	Taking into account that $\bS$ consists of symmetric matrices, then $s\in S_\FF(\calO)$ and hence $L\cap S_\FF\subset S_\FF(\calO)$,
	that is $L\cap S_\FF=S_\FF(\calO)$.  Thus $L= K_\FF S_\FF(\calO)K_\FF=\bG(\calO)$,
	where the last equality follows from the Cartan decomposition and the fact that $K_\FF\subset \bG(\calO)$.
	
	(2) We have $s\in\mathrm{Ker}( \mathrm{Log}_b)$ if and only if $\beta(s)\in\calO^\times$ for all $\beta\in{}_\KK\Phi$,
	which by the proof of (1) implies $s\in S_\FF(\calO)$.  The converse is immediate.
\end{proof}
\begin{remark}\label{rem:4.14} It  follows then from Proposition~\ref{prop:4.12} that $\mathrm{Log}_b$ descends to a well defined injective map
	\bqn
	\mathrm{L}_b\colon (S_\FF)_\ast[\Id]\to\fa
	\eqn
	which is bijective if and only if $\Lambda=\RR$.
\end{remark}

\subsection{Non-standard symmetric spaces and asymptotic cones}\label{subsec:asymptotic}
%
The objective of this section is to relate the metric shadow $\bgrol$ for Robinson fields $\rol$ to appropriate asymptotic cones of the symmetric space $\calX_\RR$.

Let $\|\,\cdot\,\|$ be a Weyl group invariant norm on $\fa$, and let $d_{\|\,\cdot\,\|}=d_{\|\,\cdot\,\|}^\RR$ the Finsler distance on $\calX_\RR$ given by Theorem~\ref{thm:Planche}.
For  a non-principal ultrafilter  $\omega$ on $\NN$, and a sequence $(\lambda_k)_{k\in\NN}=:\prlambda$  in $\RR_{>0}$
with $\lim_\omega\lambda_k=\infty$, we denote by \label{n.49}
\bqn
\Cone(\calX_\RR, \omega,\prlambda,d_{\|\,\cdot\,\|})
\eqn
the asymptotic cone of $\calX_\RR$ with respect to the sequence of distance functions $\left(\frac{d_{\|\,\cdot\,\|}}{\lambda_k}\right)_{k\in\NN}$
and base points  $\Id\in\calX_\RR$.  Let $\mu_k:= e^{\lambda_k}$ and $\rom$ the Robinson field associated to $\omega$
and $\pmb\mu:=(\mu_k)_{k\in\NN}$.  We will take $b=\pmb\mu$ as big element in $\rom$ and denote by $\nu=-\log_b$ the corresponding valuation.

\begin{thm}\label{thm:4.15}
	With the above notation the formula $d_{\|\,\cdot\,\|}^\rom(x,y)=\|\mathrm{Log}_b(\delta_\rom(x,y))\|$ is a pseudodistance on $\calX_\rom$ 
	and the induced distance on $\bgrom$ makes it a metric space isometric to the asymptotic cone
	$\Cone(\calX_\RR,\omega,\prlambda,d_{\|\,\cdot\,\|})$.
\end{thm}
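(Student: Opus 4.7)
The strategy is to construct an explicit isometry between the non-standard object $\bgrom$ and the asymptotic cone, which will simultaneously prove the pseudodistance property of $d_{\|\cdot\|}^\rom$. The essential bridge is that $\rom=\calO_\pmb\mu/\calI_\pmb\mu$, so $\rom$-points of $\calX$ lift to $\omega$-classes of sequences of real symmetric matrices with bounded growth, and such lifts are exactly the sequences that represent points in the asymptotic cone.

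First I would unpack the lifting step. Since $\calX\subset \calP^1(n,\KK)$ is closed semialgebraic, Proposition~\ref{prop:4.18} gives $\pi(\calX_{\RR^\omega}(\calO_\pmb\mu))=\calX_\rom$, where $\pi$ is reduction modulo $\calI_\pmb\mu$. A point of $\calX_{\RR^\omega}(\calO_\pmb\mu)$ is an $\omega$-class $[(A_k)]_\omega$ of sequences in $\calX_\RR$ with matrix entries bounded in absolute value by $\mu_k^m$ for some $m$, i.e.\ with $\tr(A_k^2)\leq \mu_k^{2m}$ $\omega$-a.e. Using the semialgebraic norm $N(s)=\prod_{\alpha\in{}_\KK\Phi}\max\{\alpha(s),\alpha(s)^{-1}\}$ from Example~\ref{ex.rootnorm} together with Corollary~\ref{cor:4.8}, one sees that the condition "entries of $A_k$ bounded by a fixed power of $\mu_k$" is equivalent to $d_{\|\cdot\|}(A_k,\Id)=O(\lambda_k)$ (using $\mu_k=e^{\lambda_k}$ and equivalence of norms on $\fa$). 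Hence $\calX_{\RR^\omega}(\calO_\pmb\mu)$ is naturally in bijection with the set of sequences of bounded scaled distance, i.e.\ the underlying set of the asymptotic cone before taking its metric quotient.

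The heart of the proof is the distance computation. Let $x,y\in\calX_\rom$ be lifted to $\tilde x=[(A_k)]$, $\tilde y=[(B_k)]$ in $\calX_{\RR^\omega}(\calO_\pmb\mu)$. By Lemma~\ref{lem:4.19}(3), $\delta_\rom(x,y)=\pi(\delta_{\RR^\omega}(\tilde x,\tilde y))$ and $\delta_{\RR^\omega}(\tilde x,\tilde y)=[(\delta_\RR(A_k,B_k))]_\omega$. For each root $\beta\in{}_\KK\Phi$, Lemma~\ref{lem:4.13} together with the explicit formula for the valuation on a Robinson field (Example~\ref{e.RobinsonValuation}) yields
\begin{equation*}
d\beta\bigl(\mathrm{Log}_b(\delta_\rom(x,y))\bigr)=\log_b\bigl(\beta(\delta_\rom(x,y))\bigr)=\lim_\omega\frac{\ln\beta(\delta_\RR(A_k,B_k))}{\lambda_k}=\lim_\omega d\beta\!\left(\frac{\mathrm{Ln}(\delta_\RR(A_k,B_k))}{\lambda_k}\right).
\end{equation*}
Since the simple roots form a basis of $\fa^*$ and the vectors $\mathrm{Ln}(\delta_\RR(A_k,B_k))/\lambda_k\in\fabp$ are bounded in $\fa$ (by the asymptotic cone hypothesis), the ultralimit exists in $\fa$ and equals $\mathrm{Log}_b(\delta_\rom(x,y))$. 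Applying $\|\cdot\|$ and using continuity of the norm,
\begin{equation*}
\|\mathrm{Log}_b(\delta_\rom(x,y))\|=\lim_\omega\frac{\|\mathrm{Ln}(\delta_\RR(A_k,B_k))\|}{\lambda_k}=\lim_\omega\frac{d_{\|\cdot\|}(A_k,B_k)}{\lambda_k},
\end{equation*}
which is the asymptotic cone distance between $[(A_k)]$ and $[(B_k)]$.

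From this identity everything follows. Symmetry and the triangle inequality of $d_{\|\cdot\|}^\rom$ descend from the honest distance on the cone (equivalently, from Planche's theorem applied fiberwise). The equivalence relation $d_{\|\cdot\|}^\rom(x,y)=0$ matches the asymptotic cone identification $\lim_\omega d_{\|\cdot\|}(A_k,B_k)/\lambda_k=0$, so the induced map $\bgrom\to\Cone(\calX_\RR,\omega,\prlambda,d_{\|\cdot\|})$ is a well defined isometric bijection. The main technical obstacle is ensuring that the lifted representatives $(A_k)$ may indeed be chosen in $\calX_\RR$ rather than in $\calP^1(n,\RR)$, which is why the closedness of $\calX\subset \calP^1(n,\KK)$ combined with Proposition~\ref{prop:4.18} is crucial; a secondary point is matching the two equivalence relations (on the $\rom$ side via $d_{\|\cdot\|}^\rom=0$, on the cone side via vanishing ultralimit of scaled distances), which is immediate once the distance formula above is established.
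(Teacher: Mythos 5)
Your proof is correct and follows essentially the same strategy as the paper: lift points of $\calX_\rom$ to $\calO_{\pmb\mu}$-points via Proposition~\ref{prop:4.18}, identify these with sequences of bounded scaled distance, and then show that $\|\mathrm{Log}_b\,\delta_\rom(\cdot,\cdot)\|$ computes the asymptotic-cone ultralimit distance by pushing roots through the Cartan projection, the logarithm of Lemma~\ref{lem:4.13}, and the explicit Robinson-field valuation. The only cosmetic divergence is in establishing the bijection between $\calO_{\pmb\mu}$-points of $\calX$ and scale-bounded sequences: you invoke the root semialgebraic norm from Example~\ref{ex.rootnorm} together with Corollary~\ref{cor:4.8}, whereas the paper (in Lemma~\ref{lem:4.20}) uses the trace estimates of Lemma~\ref{lem:6.7} against the Riemannian distance; both are equivalent ways to compare matrix-entry growth with distance growth. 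Your inline computation of $d\beta(\mathrm{Log}_b(\delta_\rom(x,y)))=\lim_\omega d\beta(\mathrm{Ln}(\delta_\RR(A_k,B_k))/\lambda_k)$ is exactly the content of the paper's Lemma~\ref{lem:4.21}, and the final step applying $\|\cdot\|$ is its Corollary~\ref{cor:4.22}.
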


\begin{remark}\label{rem:4.16}\
	\be
	\item Theorem~\ref{thm:4.15} was shown by B.~Thornton \cite{Thornton}
	in the case in which the norm comes form a scalar product.
	As essential ingredient in the proof is a lifting property for real algebraic sets,
	due to Thornton, which we generalized in Proposition \ref{prop:4.18} to semialgebraic sets.
	\item We refer the reader to the diagramm at the beginning of the proof of Theorem \ref{thm:4.15} for the definition of the isometry in Theorem \ref{thm:4.15}.
	\ee
\end{remark}

\medskip

We begin recalling the construction of the asymptotic cone  that proceeds in two steps.
We consider the following subset of the ultraproduct ${}^\omega(\calX_\RR)$ of $\calX_\RR$
\bqn
{}^\omega\calX_{\prlambda}=\left\{[(x_k)]_\omega\in {}^\omega(\calX_\RR):\,\frac{d(x_k,\Id)}{\lambda_k}\text{ is }\omega-\text{bounded}\right\}\,,
\eqn
where, for ease of notation, we set $d=d_{\|\,\cdot\,\|}$.  On ${}^\omega\calX_{\prlambda}$ we define the pseudodistance
\bqn
{}^\omega d_{\prlambda}([(x_k)]_\omega,[(y_k)]_\omega)=\lim_\omega\frac{d(x_k,y_k)}{\lambda_k}\,.
\eqn
The asymptotic cone $\Cone(\calX_\RR,\omega,\prlambda, d_{\|\,\cdot\,\|})$ is then the quotient of ${}^\omega\calX_{\prlambda}$
by the ${}^\omega d_{\prlambda}=0$ relation.

Our main objective now is to relate ${}^\omega\calX_{\prlambda}$ and ${}^\omega d_{\prlambda}$ to $\calX_\rol$ and $d_{\|\,\cdot\,\|}^\rol$.
To this end, let 
\bqn
{}^\omega G_{\prlambda}:=
\left\{[(g_k)]_\omega\in{}^\omega(\bG(\RR)):\,\frac{d(g_k\Id, \Id)}{\lambda_k}\text{ is }\omega-\text{bounded}\right\}
\eqn
and for any subset $E\subset\bG(\RR)$ we let 
\bqn
{}^\omega E_{\prlambda}:={}^\omega G_{\prlambda}\cap{}^\omega E\,.
\eqn
Then ${}^\omega G_{\prlambda}$ acts coordinatewise on ${}^\omega\calX_{\prlambda}$
in a pseudodistance preserving way.
Given $m\in{}^\omega(M_{n,n}(\RR))$ represented by a sequence $(m_k)_{k\in\NN}$, 
we define $\ov m\in M_{n,n}({}^\omega\RR)$ by $\ov m_{ij}:=[(m_{k,ij})]_\omega$.
We wish now to identify the images of ${}^\omega\calX_{\prlambda}$ and ${}^\omega G_{\prlambda}$
under the map $m\mapsto\ov m$.  To this end, recall that $\rom$ is the quotient of the order convex subring
\bqn
\calO_{\pmb\mu}=\{x\in\RR^\omega:\, |x|<\pmb\mu^m\text{ for some }m\in\ZZ\}
\eqn
by its maximal ideal
\bqn
\calI_{\pmb\mu}=\{x\in\RR^\omega:\, |x|<\pmb\mu^m\text{ for all }m\in\ZZ\}\,.
\eqn
Moreover the function 
\bqn
\ba
\ov\nu\colon\calO_{\pmb\mu}&\longrightarrow\,\,\,\RR\cup\{\infty\}\\
x\,\,&\mapsto-\lim_\omega\frac{\ln|x_k|}{\lambda_k}
\ea
\eqn
induces the valuation $\nu=-\log_b$, where $b=\pmb\mu\in\rol$,
through the formula  $\nu\circ\pi=\ov\nu$, where $\pi\colon\calO_{\pmb\mu}\to\rom$ is the canonical projection.

\begin{lem}\label{lem:4.20} With the above notations:
	\be
	\item The map $m\mapsto\ov m$ induces a group isomorphism 
	\bqn
	{}^\omega G_{\prlambda}\to\bG(\calO_{\pmb\mu})
	\eqn
	and an equivariant bijection
	\bqn
	{}^\omega\calX_{\prlambda}\to\calX_{{}^\omega\RR}(\calO_{\pmb\mu})\,.
	\eqn
	\item For $x=[(x_k)]_\omega$ and $y=[(y_k)]_\omega$ in ${}^\omega\calX_{\prlambda}$ we have:
	\bqn
	[(\delta_\RR(x_k,y_k))]_\omega\in{}^\omega(C_\RR)_{\prlambda}
	\eqn
	and
	\bqn
	\ov{[(\delta_\RR(x_k,y_k))]_\omega}=\delta_{\calO_{\pmb\mu}}(\ov x, \ov y)
	\eqn
	(see Lemma~\ref{lem:4.19}(3)).
	\ee
\end{lem}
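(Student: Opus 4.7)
The plan is to reduce both statements to the transfer principle combined with a two-sided comparison between the Finsler distance on $\calX_\RR$ and the max norm on matrix entries. Using the canonical identification $M_{n,n}({}^\omega\RR) = {}^\omega M_{n,n}(\RR)$ given by $\ov m_{ij} = [(m_{k,ij})]_\omega$, transfer applied to the semialgebraic subsets $\bG\subset\SL(n)$ and $\calX\subset\calP^1(n)$ immediately yields canonical bijections ${}^\omega(\bG(\RR))\to\bG({}^\omega\RR)$ and ${}^\omega\calX_\RR\to\calX_{{}^\omega\RR}$, which are $\bG$-equivariant by naturality. It remains, for (1), to check that under these bijections the boundedness conditions defining ${}^\omega G_{\prlambda}$ and ${}^\omega\calX_{\prlambda}$ correspond to the requirement that all matrix entries lie in $\calO_{\pmb\mu}$.

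By definition of $\calO_{\pmb\mu}$, the latter condition is equivalent to the existence of $N\in\NN$ with $|m_{k,ij}|\leq\mu_k^N = e^{N\lambda_k}$ for $\omega$-almost all $k$, i.e.\ with $\log\max_{ij}|m_{k,ij}|$ being $\omega$-bounded by a constant multiple of $\lambda_k$. The needed equivalence thus reduces to the two-sided estimate
$$c_1 \log\|x\| - c_0\;\leq\; d_{\|\,\cdot\,\|}(x,\Id) \;\leq\; c_2 \log\|x\| + c_0, \qquad x\in\calX_\RR,\ \|x\| := \max_{ij}|x_{ij}|,$$
with constants depending only on $\bG$ and $\|\,\cdot\,\|$; the corresponding estimate for $g\in\bG(\RR)$ follows from this one since $gg^t\in\calX_\RR$, together with Cramer's rule (using $\det g = 1$) to polynomially bound entries of $g^{-1}$ in those of $g$. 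To prove the two-sided bound we write $x = u a^2 u^{t}$ with $u\in K_\RR$ and $a\in C_\RR$, via the spectral theorem / Cartan decomposition: then $\|x\|$ is comparable to $\max_\beta \beta(a)^2$ over the weights $\beta$ of $\bS$ on $\RR^n$, while $d_{\|\,\cdot\,\|}(x,\Id) = \|\mathrm{Ln}(a^2)\|$ is comparable to $\max_\beta|\log\beta(a)|$; the constraint $\sum_\beta\log\beta(a) = \tfrac{1}{2}\log\det x = 0$ bounds $\max_\beta|\log\beta(a)|$ by $(n-1)\max_\beta\log\beta(a)$, which yields the comparison.

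For (2), the semialgebraic continuous map $\delta_\RR\colon\calX_\RR\times\calX_\RR\to C_\RR$ transfers to ${}^\omega\RR$, so that the identification of part (1) sends $[(\delta_\RR(x_k,y_k))]_\omega$ to $\delta_{{}^\omega\RR}(\ov x,\ov y)$. When $\ov x,\ov y\in\calX_{{}^\omega\RR}(\calO_{\pmb\mu})$, Lemma~\ref{lem:4.19}(2) guarantees that $\delta_{{}^\omega\RR}(\ov x,\ov y)\in C_{{}^\omega\RR}(\calO_{\pmb\mu})$ and that it agrees with $\delta_{\calO_{\pmb\mu}}(\ov x,\ov y)$; by the identification of part (1) applied to the closed subset $C\subset S\subset G$, this translates to $[(\delta_\RR(x_k,y_k))]_\omega\in {}^\omega(C_\RR)_{\prlambda}$, which one also sees directly from isometric invariance and the triangle inequality $d_{\|\,\cdot\,\|}(\delta_\RR(x_k,y_k)\Id,\Id) = d_{\|\,\cdot\,\|}(x_k,y_k)\leq d_{\|\,\cdot\,\|}(x_k,\Id)+d_{\|\,\cdot\,\|}(\Id,y_k)$. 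The principal technical obstacle is establishing the two-sided comparison between the Finsler distance and the matrix-coordinate norm in (1); once that is in place, the rest reduces mechanically to transfer and Lemma~\ref{lem:4.19}.
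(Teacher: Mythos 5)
Your proposal is correct and follows essentially the same route as the paper: the paper isolates the key two-sided estimate as Lemma~\ref{lem:6.7}, which compares $e^{d_R(g_*\Id,\Id)}$ with $\tr(gg^t)$, and then observes that $\omega$-boundedness of $d(g_k\Id,\Id)/\lambda_k$ is equivalent to $\tr(g_kg_k^t)\le\mu_k^m$ for some $m$, i.e.\ to $\ov g\in\bG(\calO_{\pmb\mu})$, with (2) declared a straightforward verification via Lemma~\ref{lem:4.19}(3). Your derivation of the same estimate via Cartan decomposition and the trace-zero constraint $\sum_\beta\log\beta(a)=0$ is exactly the argument inside the proof of Lemma~\ref{lem:6.7}, just formulated with $\max_{ij}|x_{ij}|$ in place of $\tr(gg^t)$ (a cosmetic change, as the two are polynomially comparable); the Cramer's-rule remark is also implicit in the paper's definition of $\bG(\calO)$.
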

\begin{proof} (1) We may replace the distance $d$ with the restriction to $\calX_\RR$ of the Riemannian distance $d_R$ on $\calP^1(n,\RR)$, normalized so that
	\bq\label{e.dR}d_R(g_*\Id,\Id)=\sqrt{\sum_{i=1}^n(\ln(\mu_i))^2}.\eq
where $\mu_1\geq \ldots\geq \mu_n>0$ are the eigenvalues of $g{g^t}$.
For the latter we have for all $g\in\SL(n,\RR)$
	\bqn
	\frac{\tr({}gg^t)}{n}\leq e^{d_R(g_\ast\Id,\Id)}\leq (\tr({}gg^t))^{2n-2}
	\eqn
	(see Lemma~\ref{lem:6.7} below).
	Thus 
	\bqn
	\left(\frac{d(g_k\Id,\Id)}{\lambda_k}\right)_\omega
	\eqn 
	is $\omega$-bounded if and only if $(\tr(g_k {}^tg_k)^{1/\lambda_k})_\omega$ is $\omega$-bounded,
	or equivalently there is $m\in\ZZ$ such that 
	\bqn
	\tr(g_k{}^tg_k)\leq(e^m)^{\lambda_k}=\mu_k^m
	\eqn
	for $\omega$-almost all $k\in\NN$, which is turn is equivalent to $[(g_{k,ij})]_\omega\in\calO_\mu$.
	The argument for the second assertion in (1) is analogous.
	\medskip
	\noindent
	(2) is a straightforward verification using Lemma~\ref{lem:4.19}(3).
\end{proof}

\begin{lem}\label{lem:6.7} For every $g\in\SL(n,\RR)$ we have
	$$\frac{\tr(g{g^t})}n\leq e^{d_R(g_\ast \Id,\Id)}\leq \tr(g{g^t})^{2(n-1)}, $$
	where the invariant Riemannian distance $d_R$ on $\calP^1(n,\RR)$ is normalized as in \eqref{e.dR}.
\end{lem}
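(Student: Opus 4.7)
The plan is to reduce everything to an elementary inequality about the eigenvalues of $gg^t$. Since the Riemannian distance $d_R$ is $\SO(n,\RR)$-bi-invariant and $g_*\Id = gg^t$, by diagonalizing we may write the eigenvalues of $gg^t$ as $\mu_1 \geq \mu_2 \geq \ldots \geq \mu_n > 0$. Setting $a_i := \ln \mu_i$, the normalization gives $d_R(g_*\Id, \Id) = \sqrt{\sum_i a_i^2}$; and $\det(g)=1$ forces $\sum_i a_i = 0$, hence $a_1 \geq 0 \geq a_n$, while by AM–GM $\tr(gg^t) = \sum_i \mu_i \geq n$.

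For the lower bound, I would simply note that
\[
\sqrt{\textstyle\sum_i a_i^2} \;\geq\; |a_1| \;=\; a_1,
\]
so $e^{d_R(g_*\Id, \Id)} \geq e^{a_1} = \mu_1 \geq \frac{1}{n}\sum_i \mu_i = \frac{\tr(gg^t)}{n}$.

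For the upper bound, the key step is the $\ell^2 \leq \ell^1$ inequality combined with the relation $\sum_i a_i = 0$. I would write
\[
\sqrt{\textstyle\sum_i a_i^2} \;\leq\; \sum_i |a_i| \;=\; \sum_{a_i > 0} a_i \;-\; \sum_{a_i < 0} a_i \;=\; 2 \sum_{a_i > 0} a_i,
\]
where the last equality uses $\sum_i a_i = 0$. The positive part equals $\ln \prod_{a_i > 0} \mu_i$, a product of at most $n-1$ factors (since $\mu_n \leq 1$ unless all $\mu_i = 1$, in which case the inequality is trivial), each bounded by $\tr(gg^t)$. Hence
\[
\sum_{a_i > 0} a_i \;\leq\; (n-1) \ln \tr(gg^t),
\]
and therefore $d_R(g_*\Id, \Id) \leq 2(n-1) \ln \tr(gg^t)$, giving $e^{d_R(g_*\Id, \Id)} \leq \tr(gg^t)^{2(n-1)}$.

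The only mildly delicate point is the upper bound: one must exploit $\sum_i a_i = 0$ both to convert $\sum |a_i|$ into twice the positive part and to guarantee that the product defining this positive part has at most $n-1$ factors — the naive estimate $\sqrt{\sum a_i^2} \leq \sqrt{n}\,(a_1 - a_n)$ would yield an exponent of order $n^{3/2}$ rather than the sharper $2(n-1)$ needed here.
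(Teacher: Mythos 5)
Your proof is correct and follows essentially the same route as the paper: diagonalize $gg^t$, use $\sqrt{\sum a_i^2} \geq a_1$ for the lower bound, and exploit $\sum a_i = 0$ to get $\sqrt{\sum a_i^2} \leq \sum |a_i| = 2\sum_{a_i>0} a_i$ with at most $n-1$ positive terms for the upper bound. The only cosmetic difference is that the paper bounds the product of the large eigenvalues by $\mu_1^{n-1} \leq \tr(gg^t)^{n-1}$ while you bound each factor directly by $\tr(gg^t)$; these are equivalent.
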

\begin{proof}
	Let $\mu_1\geq \ldots\geq \mu_n>0$ be the eigenvalues of $g{g^t}$, then
	$$d_R(g_\ast \Id,\Id)=\sqrt{\sum_{i=1}^n(\ln\mu_i)^2}$$
	implies
	$$d_R(g_\ast \Id,\Id)\geq \ln \mu_1\geq \ln\left(\frac{\mu_1+\ldots+\mu_n}n\right)=\ln\left(\frac{\tr(g{g^t})}{n}\right)$$
	and gives the first inequality.
	
	For the second, we may assume that $g{g^t}\neq \Id$; then there is  $1\leq k\leq n-1$ with $\mu_1\geq\ldots\geq \mu_k\geq 1>\mu_{k+1}\geq\ldots\geq \mu_n$. 
	Then if $a=\mu_1\ldots\mu_k$, we have $a^{-1}=\mu_{k+1}\ldots\mu_n$ and
	$$\sum_{i=1}^n|\ln\mu_i|=2\ln a.$$
	Using $a\leq (\mu_1)^{n-1}$, we obtain
	$$d_R(g_\ast\Id,\Id)\leq \sum_{i=1}^n|\ln\mu_i|\leq{2(n-1)}\ln (\mu_1)\leq 2(n-1)\ln(\tr(g{g^t}))$$
	which proves the second inequality.
\end{proof}

The next lemma is crucial as it relates the logarithm map $\mathrm{Log}_b\colon S_\rom\to\fa$ given by Lemma~\ref{lem:4.13} 
to the Lie group logarithm $\mathrm{Ln}\colon S_\RR\to\fa$. 

\begin{lem}\label{lem:4.21}
	Let $s=[(s_k)]_\omega\in{}^\omega(S_\RR)_{\prlambda}$, $\ov s\in S_{{}^\omega\RR}(\calO_{\pmb\mu})$
	the corresponding element given by Lemma~\ref{lem:4.20}(1) and $\pi\colon S_{{}^\omega\RR}(\calO_{\pmb\mu})\to S_\rom$
	the canonical projection.  Then $\left(\frac{\mathrm{Ln}(s_k)}{\lambda_k}\right)_{k\in\NN}$  is a sequence in $\fa$
	and 
	\bqn
	\lim_\omega \frac{\mathrm{Ln}(s_k)}{\lambda_k}=\mathrm{Log}_b(\pi(\ov s))\,.
	\eqn
\end{lem}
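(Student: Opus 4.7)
The plan is to reduce the statement to an equality tested on all $\KK$-roots $\beta \in {}_\KK\Phi$, since these differentials span $\fa^*$ (as $\bG$ is semisimple), and then to relate each side to an explicit $\omega$-limit involving $\ln \beta(s_k)$.

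First I will verify that the sequence $\bigl(\Ln(s_k)/\lambda_k\bigr)_{k\in\NN}$ is $\omega$-bounded in $\fa$, so that its $\omega$-limit exists since $\fa$ is finite-dimensional. By the Cartan decomposition (Proposition~\ref{prop:4.4}) and the Weyl-equivariance of $\Ln$ together with the Weyl-invariance of $\|\,\cdot\,\|$, one has $d(s_k\cdot\Id,\Id)=\|\Ln(\delta_\RR(\Id,s_k\cdot\Id))\|=\|\Ln(s_k)\|$. Since $s=[(s_k)]_\omega\in{}^\omega(S_\RR)_{\prlambda}$, the sequence $d(s_k\cdot\Id,\Id)/\lambda_k$ is $\omega$-bounded, hence so is $\|\Ln(s_k)/\lambda_k\|$. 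Write $v:=\lim_\omega \Ln(s_k)/\lambda_k \in \fa$.

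Next I use Lemma~\ref{lem:4.13}: to prove $v=\mathrm{Log}_b(\pi(\bar s))$ it suffices to show
\[
d\beta(v)=\log_b\bigl(\beta(\pi(\bar s))\bigr)\qquad\text{for every }\beta\in{}_\KK\Phi,
\]
because the differentials of the simple roots form a basis of $\fa^*$. For the left-hand side, since $s_k\in S_\RR$ and $\Ln$ is the inverse of $\exp:\fa\to S_\RR$, one has $\beta(s_k)=e^{d\beta(\Ln(s_k))}>0$, so $d\beta(\Ln(s_k))=\ln\beta(s_k)$ and therefore
\[
d\beta(v)=\lim_\omega\frac{d\beta(\Ln(s_k))}{\lambda_k}=\lim_\omega\frac{\ln\beta(s_k)}{\lambda_k}.
\]
For the right-hand side, since $\beta\in X(\bS)$ is a regular function, evaluation at $\bar s$ is computed coordinatewise: $\beta(\bar s)=[(\beta(s_k))]_\omega\in({}^\omega\RR)^\times$, and $\beta(\pi(\bar s))=\pi(\beta(\bar s))$. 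Using the formula $\nu\circ\pi=\bar\nu$ and the description of $\bar\nu$ recalled just before Lemma~\ref{lem:4.20}, and recalling $b=\pmb\mu$, we get
\[
\log_b\bigl(\beta(\pi(\bar s))\bigr)=-\nu\bigl(\beta(\pi(\bar s))\bigr)=-\bar\nu(\beta(\bar s))=\lim_\omega\frac{\ln|\beta(s_k)|}{\lambda_k}=\lim_\omega\frac{\ln\beta(s_k)}{\lambda_k},
\]
the last equality again using $\beta(s_k)>0$. Comparing the two displayed formulas finishes the proof.

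The only mildly delicate point is the $\omega$-boundedness of $\Ln(s_k)/\lambda_k$ (to ensure the limit in $\fa$ makes sense), which is handled by the Weyl-equivariance computation above; once that is in place, everything else is a direct matching of definitions between the Lie-theoretic logarithm on $S_\RR$ and the algebraic logarithm $\mathrm{Log}_b$ of Lemma~\ref{lem:4.13}, mediated by the explicit description of the valuation on $\rom$ as an $\omega$-limit of ordinary logarithms.
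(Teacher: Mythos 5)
Your proof is correct and follows essentially the same route as the paper's: test the claimed equality against $d\beta$ for $\beta\in{}_\KK\Phi$ (which determines an element of $\fa$), compute the left-hand side as $\lim_\omega \ln\beta(s_k)/\lambda_k$ using that $d\beta\circ\Ln=\ln\circ\beta_\RR$ on $S_\RR$, compute the right-hand side via the coordinatewise description of $\bar\nu$ and the functoriality $\beta_\rom\circ\pi=\pi\circ\beta_{\calO_{\pmb\mu}}$, and conclude by the characterizing property of $\Log_b$ from Lemma~\ref{lem:4.13}. One genuine improvement in your write-up: you explicitly verify the $\omega$-boundedness of $\Ln(s_k)/\lambda_k$ (via $d(s_k\cdot\Id,\Id)=\|\Ln(s_k)\|$, which follows from Weyl-equivariance of $\Ln$ and Weyl-invariance of the norm) before taking the $\omega$-limit in the finite-dimensional space $\fa$, a step the paper leaves implicit.
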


\begin{remark}For any commutative $\KK$-algebra $\calA$, a character
	$\alpha\in X(\bS)$ gives rise to a group morphism $\alpha_\calA\colon\bS(\calA)\to\calA^\times$ in a functorial way.
	In particular, the diagram
	\bqn
	\xymatrix{
		\bS(\calO_{\pmb\mu})\ar[r]^{\alpha_{\calO_{\pmb\mu}}}\ar[d]_\pi
		&\calO_{\pmb\mu}^\times\ar[d]^\pi\\
		\bS(\rom)\ar[r]_{\alpha_\rom}&(\rom)^\times
	}
	\eqn
	commutes.
\end{remark}
\begin{proof}[Proof of Lemma~\ref{lem:4.21}]
	Since $d\alpha$ is the derivative of $\alpha_\RR$, we have
	\bqn
	d\alpha\left(\frac{\mathrm{Ln}(s_k)}{\lambda_k}\right)
	=\frac{1}{\lambda_k}\ln\alpha_\RR(s_k)\,.
	\eqn
	Thus
	\bq\label{e.5.14.1}
	d\alpha\left(\lim_\omega\frac{\mathrm{Ln}(s_k)}{\lambda_k}\right)
	=\lim_\omega\frac{\ln\alpha_\RR(s_k)}{\lambda_k}\,.
	\eq
	Next we observe that with the notations in Lemma \ref{lem:4.20},
	\bq\label{e.5.14.2}
	\ov{[(\alpha_\RR(s_k))]_{\omega}}=\alpha_{\calO_{\pmb\mu}}(\ov s)
	\eq
	and therefore
	\bqn
	d\alpha\left(\lim_\omega\frac{\mathrm{Ln}(s_k)}{\lambda_k}\right)
	=-\ov \nu(\alpha_{\calO_{\pmb\mu}}(\ov s))
	=-\nu(\pi(\alpha_{\calO_{\pmb\mu}}(\ov s))
	=-\nu(\alpha_\rom(\pi(\ov s)))\,,
	\eqn
	where the first equality follows from \eqref{e.5.14.1} and \eqref{e.5.14.2},
	while the last from functoriality. Since $-v=\log_b$, 
	we deduce the assertion from Lemma~\ref{lem:4.13}.
\end{proof}

Denoting also by $\pi\colon\calX_{{}^\omega\RR}(\calO_{\pmb\mu})\to\calX_\rom$ the projection map,
we obtain the following formula:

\begin{cor}\label{cor:4.22} For all $x,y\in {}^\omega\calX_{\prlambda}$ we have:
	\bqn
	{}^\omega d_{\prlambda}(x,y)=\|\mathrm{Log}_b\delta_\rom(\pi(\ov x),\pi(\ov y))\|\,.
	\eqn
\end{cor}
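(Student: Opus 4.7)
The plan is to combine Theorem~\ref{thm:Planche}, Lemma~\ref{lem:4.20}(2), Lemma~\ref{lem:4.19}(3) and Lemma~\ref{lem:4.21} in a straightforward chain of equalities. I would begin by unwinding the left-hand side: writing $x = [(x_k)]_\omega$, $y = [(y_k)]_\omega$ and setting $s_k := \delta_\RR(x_k,y_k) \in C_\RR$, Theorem~\ref{thm:Planche} gives
\bqn
\frac{d(x_k,y_k)}{\lambda_k} = \frac{\|\mathrm{Ln}(s_k)\|}{\lambda_k} = \left\|\frac{\mathrm{Ln}(s_k)}{\lambda_k}\right\|
\eqn
by homogeneity of $\|\cdot\|$. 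Hence
\bqn
{}^\omega d_{\prlambda}(x,y) = \lim_\omega \left\|\frac{\mathrm{Ln}(s_k)}{\lambda_k}\right\|.
\eqn

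Next I would move the limit inside the norm, using that $\|\cdot\|$ is continuous on the finite-dimensional $\RR$-vector space $\fa$ and that $\left(\frac{\mathrm{Ln}(s_k)}{\lambda_k}\right)_{k\in\NN}$ is $\omega$-bounded in $\fa$ (since $[(s_k)]_\omega \in {}^\omega(C_\RR)_{\prlambda} \subset {}^\omega(S_\RR)_{\prlambda}$, by Lemma~\ref{lem:4.20}(2)). Thus
\bqn
\lim_\omega \left\|\frac{\mathrm{Ln}(s_k)}{\lambda_k}\right\| = \left\|\lim_\omega \frac{\mathrm{Ln}(s_k)}{\lambda_k}\right\|,
\eqn
and Lemma~\ref{lem:4.21} identifies the vector on the right as $\mathrm{Log}_b(\pi(\ov s))$, where $\ov s \in S_{{}^\omega\RR}(\calO_{\pmb\mu})$ is the element associated to $[(s_k)]_\omega$ via Lemma~\ref{lem:4.20}(1).

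Finally I would translate $\pi(\ov s)$ into the right-hand side of the desired formula. By Lemma~\ref{lem:4.20}(2) one has $\ov s = \delta_{\calO_{\pmb\mu}}(\ov x, \ov y)$, and the commutative diagram in Lemma~\ref{lem:4.19}(3), applied to the order convex subring $\calO_{\pmb\mu} \subset {}^\omega\RR$ with residue field $\rom$, yields
\bqn
\pi(\ov s) = \pi\bigl(\delta_{\calO_{\pmb\mu}}(\ov x,\ov y)\bigr) = \delta_\rom(\pi(\ov x),\pi(\ov y)).
\eqn
Chaining the three displayed equalities gives the claim. There is no serious obstacle here: the result is essentially a bookkeeping assembly of the previously established compatibilities between the Cartan projection, the reduction modulo $\calI_{\pmb\mu}$, and the logarithm maps, with the only mild point being to justify interchanging $\lim_\omega$ and the norm, which is immediate from $\omega$-boundedness and continuity.
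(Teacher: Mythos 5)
Your proof is correct and follows essentially the same route as the paper's: unwind $ {}^\omega d_{\prlambda}$ via Theorem~\ref{thm:Planche}, then apply Lemma~\ref{lem:4.20}(2), Lemma~\ref{lem:4.21} and Lemma~\ref{lem:4.19}(3) in sequence. The only difference is that you make explicit the interchange of $\lim_\omega$ with $\|\cdot\|$ (via $\omega$-boundedness and continuity of the norm on the finite-dimensional space $\fa$), a step the paper uses implicitly; your justification of it is correct.
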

\begin{proof}  We have with $x=[(x_k)]_\omega$ and $y=[(y_k)]_\omega$
	\bqn
	{}^\omega d_{\prlambda}(x,y)
	=\lim_\omega\frac{d(x_k,y_k)}{\lambda_k}
	=\lim_\omega\frac{\|\mathrm{Ln}(\delta_\RR(x_k,y_k))\|}{\lambda_k}\,.
	\eqn
	By Lemma~\ref{lem:4.20}(2) we have
	\bqn
	\overline{[(\delta_\RR(x_k,y_k))]_{\omega}}
	=\delta_{\calO_{\pmb\mu}}(\ov x,\ov y)\,,
	\eqn
	which, by Lemma~\ref{lem:4.21} implies that 
	\bqn
	{}^\omega d_{\prlambda}(x,y)
	=\|\mathrm{Log}_b(\pi(\delta_{\calO_{\pmb\mu}}(\ov x,\ov y)))\|
	=\|\mathrm{Log}_b(\delta_\rom(\pi(\ov x),\pi(\ov y)))\|\,,
	\eqn
	where the last equality follows from Lemma~\ref{lem:4.19}(3).
\end{proof}

We now turn to the compatibility of the distance 
\bqn
d_{\|\,\cdot\,\|}^\rom(x,y):=\|\mathrm{Log}_b(\delta_\rom(x,y))\|
\eqn
associated to a Weyl group invariant norm $\|\cdot\|$ on $\fa$  
and the distance 
\bqn
d_{N}^\rom(x,y):=-\nu(N_\rom(\delta_\rom(x,y)))
\eqn
associated to a semialgebraic norm $N$ on $\bS$ 
in Section~\ref{subsec:4.4}:
\begin{lem}\label{l.compatdist}
	If $N$ is a semialgebraic norm on $\bS$, and we denote by $\|\cdot\|_N:=\ln N_\RR(\exp(\cdot))$, then
	$ d_{N}^\rom=d_{\|\,\cdot\,\|_N}^\rom.$
\end{lem}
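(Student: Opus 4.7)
The plan is to show that the two semidistances $d_N^\rom$ and $d_{\|\cdot\|_N}^\rom$, both of which factor through $\delta_\rom:\calX_\rom\times\calX_\rom\to C_\rom$, arise by applying to $\delta_\rom(x,y)$ two functions on $S_\rom$ that coincide pointwise. So it suffices to prove
\[
-\nu(N_\rom(s)) \;=\; \|\mathrm{Log}_b(s)\|_N \qquad \forall\,s\in S_\rom,
\]
which I will establish by comparing both sides via a common lift to $S_{{}^\omega\RR}(\calO_{\pmb\mu})$.

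Given $s\in S_\rom$, I would first invoke Proposition~\ref{prop:4.18} (applied to $V=S$, which is closed semialgebraic in $\KK^{n^2}$) to choose $\ov s\in S_{{}^\omega\RR}(\calO_{\pmb\mu})$ with $\pi(\ov s)=s$. By Lemma~\ref{lem:4.20}(1), $\ov s$ corresponds to a sequence $(s_k)_{k\in\NN}$ in $S_\RR$ with $\bigl(d(s_k\Id,\Id)/\lambda_k\bigr)_k$ being $\omega$-bounded, where $d=d_{\|\cdot\|_N}$ on $S_\RR$.

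For the right-hand side, Lemma~\ref{lem:4.21} gives $\mathrm{Log}_b(s)=\lim_\omega \mathrm{Ln}(s_k)/\lambda_k$ in $\fa$; since $\|\cdot\|_N$ is a norm on the finite-dimensional real vector space $\fa$, it is continuous, so
\[
\|\mathrm{Log}_b(s)\|_N \;=\; \lim_\omega \frac{\|\mathrm{Ln}(s_k)\|_N}{\lambda_k} \;=\; \lim_\omega \frac{\ln N_\RR(s_k)}{\lambda_k},
\]
using the defining identity $\|\cdot\|_N=\ln N_\RR(\exp(\cdot))$. For the left-hand side, the functoriality of semialgebraic extensions applied to the graph of $N$, combined with Proposition~\ref{prop:4.18} and Lemma~\ref{lem:4.20}, yields $N_\rom(s)=\pi([N_\RR(s_k)]_\omega)$, hence by the description of the Robinson valuation as $\nu\circ\pi=\ov\nu$:
\[
-\nu(N_\rom(s)) \;=\; -\ov\nu\bigl([N_\RR(s_k)]_\omega\bigr) \;=\; \lim_\omega \frac{\ln N_\RR(s_k)}{\lambda_k}.
\]
The two expressions coincide, proving the lemma.

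The one step that requires care is verifying that $[N_\RR(s_k)]_\omega$ actually lies in $\calO_{\pmb\mu}$, so that it is a valid representative of $N_\rom(s)$ under the quotient $\calO_{\pmb\mu}\to\rom$. This follows from the equivalence of norms on $\fa$: writing $\|\mathrm{Ln}(s_k)\|_N\leq C\cdot d(s_k\Id,\Id)$ for some constant $C>0$ (independent of $k$), one obtains $N_\RR(s_k)=\exp\|\mathrm{Ln}(s_k)\|_N\leq \mu_k^m$ for some fixed integer $m$ and $\omega$-almost all $k$, giving the required bound.
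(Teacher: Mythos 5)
Your proof is correct and follows essentially the same route as the paper's: both use Lemma~\ref{lem:4.20}(1) and Proposition~\ref{prop:4.18} to lift to a sequence $(s_k)$ in $S_\RR$, the identity $N_\rom(\pi(\ov s))=\pi([N_\RR(s_k)]_\omega)$ from the semialgebraic description of $N$, Lemma~\ref{lem:4.21} to express $\mathrm{Log}_b$ as an ultralimit of $\mathrm{Ln}(s_k)/\lambda_k$, and the homogeneity of $\|\cdot\|_N$. Your one organizational improvement is to isolate the pointwise identity $-\nu(N_\rom(s))=\|\mathrm{Log}_b(s)\|_N$ on $S_\rom$ up front (since both semidistances factor through $\delta_\rom$), rather than carrying the pair $(x,y)$ through the computation as the paper does; your closing check that $[N_\RR(s_k)]_\omega\in\calO_{\pmb\mu}$ is a worthwhile detail that the paper leaves implicit.
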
	
\begin{proof}
	First we observe that for $s=[(s_k)]_{\omega}\in{}^\omega(S_\RR)_{\pmb\mu}$, $\ov s\in S_{{}^\omega\RR}(\calO_{\pmb\mu})$
	the corresponding element given by Lemma~\ref{lem:4.20}(1) and $\pi\colon S_{{}^\omega\RR}(\calO_{\pmb\mu})\to S_\rom$
	the canonical projection,
	\bq\label{e.n}N_\rom(\pi(\ov s))=\pi\left(\ov{[(N_\RR(s_k))]_{\omega}}\right).\eq
	Indeed both the left hand side and the right hand side define a function on $S_\rom$ whose graph verifies all the polynomial inequalites (with coefficients in $\KK$) defining $N$.
	
	With the notations  of Lemma \ref{lem:4.20} let $x=[(x_k)]_\omega$, $y=[(y_k)]_\omega$ in ${}^\omega\calX_{\prlambda}$, then with $s_k:=\delta_\RR(x_k,y_k)$, we have $\ov{(s_k)}=\delta_{\calO_{\pmb\mu}}(\ov x, \ov y)$ and hence (Lemma \ref{lem:4.19}(3))
	\bq\label{e.5.14xx}
	\delta_\rom(\pi(\ov x),\pi(\ov y))=\pi\left(\ov{[(s_k)]_{\omega}}\right).
	\eq
	Thus 
	\bqn
	d_N^\rom(\pi(\ov x),\pi(\ov y))=\log_\pmu N_\rom(\delta_\rom (\pi(\ov x),\pi(\ov y)))
	\eqn
	equals, using \eqref{e.n}, $\lim_\omega\frac{\ln N_\RR(s_k)}{\lambda_k}$.
	On the other hand
	\bqn
	d_{\|\,\cdot\,\|_N}^\rom(\pi(\ov x),\pi(\ov y))=\ln N_\RR \exp \mathrm{Log}_\pmu(\delta_\rom(\pi(\ov x),\pi(\ov y))).
	\eqn
	By Lemma \ref{lem:4.21} and \eqref{e.5.14xx} we have 
	\bqn
	\mathrm{Log}_\pmu\delta_\rom(\pi(\ov x),\pi(\ov y))=\lim_\omega\frac{\mathrm{Ln}(s_k)}{\lambda_k}
	\eqn
	and hence 
	\bqn
	d_{\|\,\cdot\,\|_N}^\rom(\pi(\ov x),\pi(\ov y))=\lim_\omega\ln N_\RR\exp\left(\frac{\mathrm{Ln}(s_k)}{\lambda_k}\right)=\lim_\omega\frac{\ln N_\RR\exp(\mathrm{Ln}(s_k))}{\lambda_k},
	\eqn
	since $\ln N_\RR\exp$ is a norm. This leads to
	\bqn
	d_{\|\,\cdot\,\|_N}^\rom(\pi(\ov x),\pi(\ov y))=\lim_\omega\frac{\ln N_\RR(s_k)}{\lambda_k}=d_{N}^\rom(\pi(\ov x),\pi(\ov y)).\qedhere
	\eqn
\end{proof}	
We now have all the ingredients needed to prove Theorem~\ref{thm:4.15}
\begin{proof}[Proof of Theorem~\ref{thm:4.15}]
	Consider the following diagram
	\bqn
	\xymatrix{
		{}^\omega\calX_{\prlambda}\ar[r]\ar[d]&\calX_{{}^\omega\RR}(\calO_{\pmb\mu})\ar[d]^\pi\\
		\Cone(\calX_\RR)&\calX_\rom\ar[d]\\
		&{\bgrom}
	}
	\eqn
	where the horizontal arrow is the equivariant bijection from Lemma~\ref{lem:4.20}(1) 
	and $\Cone(\calX_\RR)$ stands for $\Cone(\calX_\RR,\omega,\prlambda,d_{\|\,\cdot\,\|})$.
	With our notation, we can restate Corollary~\ref{cor:4.22} as follows:
	for all $x,y\in{}^\omega\calX_{\prlambda}$
	\bqn
	{}^\omega d_{\prlambda}(x,y)=d^\rom_{\|\,\cdot\,\|}(\pi(\ov x),\pi(\ov y))\,.
	\eqn
	Since $\pi\colon\calX_{{}^\omega\RR}(\calO_{\pmb\mu})\to\calX_\rom$ is surjective (Proposition~\ref{prop:4.18}),
	this equality implies that $d_{\|\,\cdot\,\|}^\rol(\,\cdot\,,\,\cdot\,)$ defines a pseudodistance on $\calX_\rom$.
	Using Lemma \ref{l.compatdist} and the fact that any two norms on $\fa$ are equivalent, we find constants $c_1,c_2$ with
	\bqn
	c_1d_N^\rom(\,\cdot\,,\,\cdot\,)\leq d_{\|\,\cdot\,\|}^\rom(\,\,\cdot\,,\cdot\,)\leq c_2d_N^\rom(\,\cdot\,,\,\cdot\,)\,,
	\eqn
	from which it follows (see Definition~\ref{defn:bgf}) that $d_{\|\,\cdot\,\|}^\rom(\pi(\ov x),\pi(\ov y))=0$
	if and only if $\pi(\ov x)$ and $\pi(\ov y)$ project to the same point in $\bgrom$.
	Thus the map
	\bqn
	\ba
	{}^\omega\calX_{\prlambda}&\longrightarrow\calX_{{}^\omega\RR}(\calO_{\pmb\mu})\\
	x\,\,\,&\longmapsto \quad\,\,\ov x
	\ea
	\eqn
	defined before Lemma~\ref{lem:4.20} induces a bijective isometry
	\bqn
	\Cone(\calX_\RR)\to\bgrom
	\eqn
	that is equivariant with respect to the (surjective) homomorphism
	\bqn
	{}^\omega G_{\prlambda}\twoheadrightarrow\bG(\rom)\,.\qedhere
	\eqn
\end{proof}

\begin{remark}\label{rem:Lb}
	In view of Remark~\ref{rem:4.14}, Lemma~\ref{lem:4.13} and the fact that $\nu(\rom^\times)=\RR$, we deduce that the map from Remark~\ref{rem:4.14} 
	\bqn
	L_b\colon (S_\rom)_\ast[\Id]\subset\bgrom\to\fa
	\eqn
	is a bijective Weyl group equivariant isometry.
\end{remark}	
\subsection{Metrics on $\bgof$ from Weyl group invariant norms on $\fa$}\label{s.Fbuildings}
In this section we show how any arbitrary Weyl group invariant norm on $\fa$ leads to a natural $\bG(\FF)$-invariant metric on $\bgof$ for real closed fields $\FF$ (with order compatible valuation) admitting a valuation compatible embedding into a Robinson field. We also deduce an explicit formula for the associated translation length (Proposition \ref{prop:6.6.6}). 

Let  $\|\,\cdot\,\|$ be a Weyl group invariant norm on $\fa$.  Define, as in Section~\ref{subsec:asymptotic}, for $x,y\in\calX_\FF$ 
\bq\label{eq:4.14.2}
d_{\|\,\cdot\,\|}^\FF(x,y):=\|\mathrm{Log}_b(\delta_\FF(x,y))\|
\eq
where $\delta_\FF\colon\calX_\FF\times\calX_\FF\to C_\FF$ is the Cartan projection (Corollary~\ref{cor:4.8})
and $\mathrm{Log}_b$ is the map defined in Lemma~\ref{lem:4.13}.

The following corollary of Theorem \ref{thm:4.15} is particularly important when the norm comes from a scalar product, 
since then the metric completion $\obgof$ turns out to be a complete geodesic CAT(0) space. 

\begin{cor}\label{cor:4.17} Let $\FF$ be real closed admitting a big element and assume that there is a valuation compatible
	field injection  $i:\FF\to\rom$.  Then:
	\be
	\item Under the inclusion $\calX_{\FF}\hookrightarrow\calX_{\rom}$ induced by $i$ the function $d_{\|\,\cdot\,\|}^\FF$ is the restriction of $d_{\|\,\cdot\,\|}^\rom$. 
	\item  $d_{\|\,\cdot\,\|}^\FF$ is a pseudodistance on $\calX_\FF$ whose distance zero relation gives a distance on $\bgof$ 
	that has the midpoint property.
	\item If $\|\,\cdot\,\|$ comes from a scalar product, then $d_{\|\,\cdot\,\|}^\FF$ verifies the median inequality.
	In particular the completion $\obgof$ is CAT(0) complete.
	\item If $\|\cdot\|=\ln N_\RR(\exp(\cdot))$ for a semialgebraic norm $N$ on $\bS$, then $ d_{N}^\FF=d_{\|\,\cdot\,\|}^\FF.$
	\ee
\end{cor}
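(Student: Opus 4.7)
\textbf{Proof plan for Corollary~\ref{cor:4.17}.} The strategy is to transfer Theorem~\ref{thm:4.15} from $\rom$ down to $\FF$ via the embedding $i$, handling the midpoint property by an explicit construction inside $\calX_\FF$.

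\emph{Part (1).} The Cartan projection $\delta$ is semialgebraic by Corollary~\ref{cor:4.8}, so by Properties~\ref{ppts.2.10} it commutes with any real closed extension; hence $\delta_\FF$ is the restriction of $\delta_\rom$ under $i$. On the torus side, $\mathrm{Log}_b$ is characterized in Lemma~\ref{lem:4.13} by the identity $d\beta\circ\mathrm{Log}_b=\log_b\circ\beta$ for every $\beta\in{}_\KK\Phi$; the hypothesis that $i$ is valuation-compatible says exactly that $\log_b$ pulls back under $i$, so by uniqueness $\mathrm{Log}_b^\FF=\mathrm{Log}_b^\rom\circ i$. Composing with the norm $\|\cdot\|$ yields (1).

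\emph{Parts (2) and (3).} By Theorem~\ref{thm:4.15} the space $(\bgrom,d_{\|\cdot\|}^\rom)$ is isometric to the asymptotic cone $\Cone(\calX_\RR,\omega,\prlambda,d_{\|\cdot\|})$. Asymptotic cones of geodesic metric spaces are geodesic, so $\bgrom$ has the midpoint property; when $\|\cdot\|$ comes from a scalar product, $(\calX_\RR,d_{\|\cdot\|})$ is a CAT(0) Riemannian symmetric space, and asymptotic cones of CAT(0) spaces are CAT(0) complete. Part (1) then gives that $d_{\|\cdot\|}^\FF$ is the restriction of a pseudodistance (hence itself a pseudodistance), and that the median (CN) inequality descends. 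The distance-zero relation for $d_{\|\cdot\|}^\FF$ is $\delta_\FF(x,y)\in\ker\mathrm{Log}_b$, which equals $S_\FF(\calO)$ by Proposition~\ref{prop:4.12}(2); combined with the fact that any two norms on the finite-dimensional space $\fa$ are equivalent, this matches the equivalence relation from Definition~\ref{defn:bgf}, so $d_{\|\cdot\|}^\FF$ descends to a genuine distance on $\bgof$. The completion $\obgof$ embeds isometrically into the complete space $\bgrom$, so it inherits both the midpoint property and the median inequality, hence is CAT(0).

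\emph{Midpoint property on $\bgof$ itself.} This is the one point that does not follow from an embedding into $\bgrom$, since a midpoint of two points of $\bgof$ taken in $\bgrom$ need not lie in the image. Given $x,y\in\calX_\FF$, use $\bG(\FF)$-invariance to assume $x=\Id$ and $y=c_*\Id$ with $c\in C_\FF$. Because $\FF$ is real closed and $\bS$ is $\KK$-split, the multiplicative Weyl chamber $C_\FF$ is closed under taking square roots: $\sqrt{c}\in C_\FF$ and $\mathrm{Log}_b(\sqrt{c})=\tfrac{1}{2}\mathrm{Log}_b(c)$. A short computation using $\bG(\FF)$-invariance of $\delta_\FF$, symmetry $d^\FF(x,y)=d^\FF(y,x)$ (which itself comes from Weyl group invariance of $\|\cdot\|$ together with $\delta_\FF(y,x)=w_0(\delta_\FF(x,y)^{-1})$), and the identity $\delta_\FF(\Id,\sqrt{c}_*\Id)=\sqrt{c}$ shows that $[\sqrt{c}_*\Id]$ is a midpoint of $[x]$ and $[y]$ in $\bgof$.

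\emph{Part (4).} Lemma~\ref{l.compatdist} is the equality $d_N^\rom=d_{\|\cdot\|_N}^\rom$ on $\calX_\rom$; by Part (1) both sides restrict under $i$ to $d_N^\FF$ and $d_{\|\cdot\|_N}^\FF$ respectively on $\calX_\FF$, and so they agree. \emph{Main obstacle.} The only genuinely subtle step is producing midpoints in $\bgof$ rather than merely in the completion or the ambient asymptotic cone: since $\mathrm{Log}_b(S_\FF)\subset\fa$ is typically a proper $\QQ$-subspace, no soft semialgebraic-transfer argument applies, and exploiting the split-torus structure to extract square roots in $C_\FF$ is the essential input.
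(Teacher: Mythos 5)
Your proposal takes essentially the same route as the paper: part (1) via the two compatibility diagrams for $\delta$ and $\mathrm{Log}_b$, part (2) via divisibility of $\log_b(\FF^*)$ (your explicit square-root-in-$C_\FF$ construction is precisely the unpacked form of the paper's one-line appeal to $-\log_b(\FF^*)$ being a divisible subgroup of $\RR$), part (3) via the isometric embedding $\ov\bgof\hookrightarrow\bgrom$ and Theorem~\ref{thm:4.15}, and part (4) via Lemma~\ref{l.compatdist}. One minor slip to tighten: in the sentence ``The completion $\obgof$ embeds isometrically into the complete space $\bgrom$, so it inherits both the midpoint property and the median inequality'' the midpoint property does \emph{not} pass to subspaces, so it should cite your own explicit construction (which you in fact give in the next paragraph) rather than the embedding.
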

\begin{remark}
	Any field $\FF$ of finite transcendency degree over $\KK\subset \RR$, and thus in particular any field of definition $\FF$ of a  point in the real spectrum of some variety, admits a valuation compatible field injection into a Robinson field (compare Corollary \ref{c.1oKl}).
\end{remark}	
\begin{proof}[Proof of Corollary \ref{cor:4.17}]
	The proof rests on the following compatibility properties which are easy verifications; we list them for further reference. Let $\FF\hookrightarrow\rom$ be the injection where we may assume that the big element $b\in\FF$ goes to $\pmu$. Then the diagram
	$$\xymatrix{\calX_\rom\times \calX_\rom\ar[r]^{\delta_{\rom}}&C_{\rom}\\
		\calX_\FF\times \calX_\FF\ar[u]\ar[r]_{\delta_\FF}&C_{\FF}\ar[u]}$$
	commutes, as does
	$$\xymatrix{S_{\rom}\ar[r]^{\mathrm{Log}_\mu} &\fa\\ S_\FF \ar[u]\ar[ur]_{\mathrm{Log}_b}}.$$	
	Thus $d_{\|\,\cdot\,\|}^\FF=d_{\|\,\cdot\,\|}^{\rom}|_{\calX_\FF\times \calX_\FF}$, which shows the first assertion.
	
	The second assertion follows from the fact that $-\log_b(\FF^*)<\RR$ is a divisible group. For the third assertion, if $\|\cdot\|$ comes from a scalar product on $\fa$, then $d_{\|\cdot\|}$ on $\calX_\RR$ is CAT(0), so is any of its asymptotic cones $\Cone(\calX_\RR,\omega,\prlambda,d_{\|\cdot\|})$ and hence by Theorem \ref{thm:4.15} the space $\bgrom$ with the distance $d_{\|\cdot\|}^\rom$. Hence since $d_{\|\,\cdot\,\|}^\FF=d_{\|\,\cdot\,\|}^{\rom}|_{\calX_\FF\times \calX_\FF}$, the isometric injection $\bgof\to\bgrom$ extends to an isometric injection $\ov\bgof\to\bgrom$, using that $\bgrom$ is complete as it is isometric to an asymptotic cone. 
	Since from (1) $\bgof$ has the midpoint property, this implies that $\ov\bgof$ is geodesic, CAT(0)-complete. Finally the fourth assertion follows from Lemma \ref{l.compatdist}.
\end{proof}

We now turn to a formula for the translation length of $g\in\bG(\FF)$ on $\ov\bgof$ 
with respect to any invariant distance $d^\FF_{\|\cdot\|}$. 
To this aim recall that for an isometry $g$ of a metric space $Z$ 
the \emph{translation length} $\ell(g)$ is defined by \label{n.50}
$$\ell(g):=\inf_{x\in Z} d(x,g_*x).$$
We then have:
\begin{prop}\label{prop:6.6.6}
	For any Weyl group invariant norm $\|\cdot\|$ on $\frak a$ and $g\in\bG(\FF)$, 
	the translation length $\ell_{\|\cdot\|}(g)$ of $g$ with respect to $d_{\|\cdot\|}^\FF$ equals
	$$\ell_{\|\cdot\|}(g)=\|\mathrm{Log}_b\Jord_\FF(g)\|$$
	where 	$\Jord_\FF:\bG(\FF)\to C_\FF$ is the Jordan projection. 
\end{prop}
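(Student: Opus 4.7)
The strategy is to prove both inequalities. For the lower bound, I would use the subadditivity of the sequence $n \mapsto d^\FF_{\|\cdot\|}(\Id, g^n \Id)$ and Fekete's lemma to define the \emph{stable translation length}
\[
\ell_{\mathrm{st}}(g) := \lim_{n \to \infty} \frac{d^\FF_{\|\cdot\|}(\Id, g^n \Id)}{n} = \inf_n \frac{d^\FF_{\|\cdot\|}(\Id, g^n \Id)}{n},
\]
which always satisfies $\ell_{\mathrm{st}}(g) \leq \ell_{\|\cdot\|}(g)$ (by taking $n=1$ and then infimum over base points). Since $d^\FF_{\|\cdot\|}(\Id, g^n \Id) = \|\mathrm{Log}_b \delta_\FF(\Id, g^n \Id)\|$ and the norm $\|\cdot\|$ is continuous on $\fa$, the lower bound $\ell_{\|\cdot\|}(g) \geq \|\mathrm{Log}_b \Jord_\FF(g)\|$ reduces to the vector-valued Lyapunov-type convergence
\[
\lim_{n \to \infty}\frac{1}{n} \mathrm{Log}_b \delta_\FF(\Id, g^n \Id) = \mathrm{Log}_b \Jord_\FF(g) \quad \text{in } \fa.
\]
For the upper bound, I would use the Jordan decomposition $g = g_e g_h g_u$ of Lemma \ref{lem:4.5} together with Lemma \ref{lem:4.6} to write $g_h = h \Jord_\FF(g) h^{-1}$ with $h \in \bG(\FF)$. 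A direct computation using $\bG(\FF)$-equivariance of the Cartan projection gives $\delta_\FF(x_0, g_h \cdot x_0) = \Jord_\FF(g)$ for $x_0 = h_* \Id$, so $d^\FF_{\|\cdot\|}(x_0, g_h \cdot x_0) = \|\mathrm{Log}_b \Jord_\FF(g)\|$; to absorb the contributions of the commuting factors $g_e$ and $g_u$, one would move $x_0$ along the flat through it in the attracting direction of $g_h$, where the unipotent radical is contracted.

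Both the Lyapunov convergence and the upper bound are most transparently executed via the asymptotic cone realization. Using a valuation-compatible injection $\FF \hookrightarrow \rom$, Corollary \ref{cor:4.17}(1) gives an isometric $\bG(\FF)$-equivariant embedding $\ov\bgof \hookrightarrow \bgrom$ that commutes with $\delta$, $\mathrm{Log}_b$, and (by functoriality of the Jordan decomposition) $\Jord$; so it suffices to prove the formula for $\FF = \rom$. Theorem \ref{thm:4.15} then realizes $\bgrom$ as $\Cone(\calX_\RR, \omega, \prlambda, d_{\|\cdot\|})$, and $g \in \bG(\rom)$ is represented by a sequence $(g_k) \in \bG(\RR)^\NN$. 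Via Lemmas \ref{lem:4.20}(2) and \ref{lem:4.21}, the Lyapunov statement reduces to
\[
\lim_\omega \lim_{n \to \infty} \frac{\mathrm{Ln}\, \delta_\RR(\Id, g_k^n \Id)}{n\lambda_k} = \lim_\omega \frac{\mathrm{Ln}\, \Jord_\RR(g_k)}{\lambda_k},
\]
which is a consequence of the classical Benoist-type convergence $\lim_n \mathrm{Ln}\, \delta_\RR(\Id, h^n \Id)/n = \mathrm{Ln}\, \Jord_\RR(h)$ on the Riemannian symmetric space $\calX_\RR$, itself a standard consequence of the Finsler formula $\ell_{\calX_\RR}(h) = \|\mathrm{Ln}\, \Jord_\RR(h)\|$ from \cite{Planche} together with $\ell_{\calX_\RR}(h^n) = n\, \ell_{\calX_\RR}(h)$; cf.~\cite[Proposition 4.1]{Parreau12}. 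The upper bound is transferred analogously by choosing approximate minimizers $y_k \in \calX_\RR$ for each $g_k$ (points on, or arbitrarily close to, the axis of $g_k$).

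The main obstacle is the exchange of the two limits $\lim_{n \to \infty}$ and $\lim_\omega$ in the asymptotic cone. To resolve it, I would establish a uniform quantitative Benoist-type estimate of the form
\[
\bigl\| \mathrm{Ln}\, \delta_\RR(\Id, h^n \Id) - n\, \mathrm{Ln}\, \Jord_\RR(h) \bigr\| \leq C(h),
\]
where $C(h)$ depends only on the sizes of the elliptic and unipotent parts in the Jordan decomposition of $h$; this ensures that after division by $n\lambda_k$ the error vanishes in the ultralimit. For the upper bound, the approximate minimizers $y_k$ must additionally be chosen so that $d_\RR(y_k, \Id)/\lambda_k$ remains $\omega$-bounded, which can be arranged by translating $y_k$ along the flat of $g_k$ by a suitable power (leaving the displacement of $g_k$ unchanged). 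Both estimates are standard Lie-theoretic facts on $\calX_\RR$ exploiting the parallel set structure and the contraction of unipotent radicals by hyperbolic elements, but their uniform formulation compatible with the ultralimit operation is the technical heart of the proof.
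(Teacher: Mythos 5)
Your route is genuinely different from the paper's, and it has a concrete gap.

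The paper's proof is \emph{intrinsic} to the complete CAT(0) space $\ov\bgof$: write $g=g_eg_hg_u$, use the fact that both the elliptic part $g_e$ and (crucially, via Lemma~\ref{lem:4.6.7}) the unipotent part $g_u$ have non-empty fixed-point sets in $\ov\bgof$, so that $L=\FP(g_e)\cap\FP(g_u)$ is a non-empty closed convex set on which $g$ acts as the semisimple isometry $g_h$. Since $g_h$ preserves a flat, it translates a CAT(0)-geodesic in $L$; by Lemma~\ref{lem:4.32} that geodesic is also a $d_{\|\cdot\|}^\FF$-geodesic, and Lemma~\ref{lem:4.6.9} then gives $\ell_{\|\cdot\|}(g)=\ell_{\|\cdot\|}(g_h)$, which is read off the flat. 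The key observation making this work — and which has no Archimedean analogue you can transfer from $\calX_\RR$ — is that unipotent elements fix points in the building (Lemma~\ref{lem:4.6.7}, via $s_0^{-m}us_0^m\in\bN(\calO)$ and Proposition~\ref{prop:4.12}). Your approach instead transfers everything to the asymptotic cone and invokes Archimedean symmetric-space geometry; the two only differ superficially on the upper bound, but on the lower bound you need a quantitative Lyapunov estimate which the paper avoids altogether.

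That estimate is where the gap is. You claim a bound $\bigl\|\mathrm{Ln}\,\delta_\RR(\Id,h^n\Id)-n\,\mathrm{Ln}\,\Jord_\RR(h)\bigr\|\leq C(h)$ with $C(h)$ independent of $n$. This is false whenever $h$ has a non-trivial unipotent part: already for a single Jordan block $u\in\SL_2(\RR)$ one has $\Jord_\RR(u)=e$ but $d_\RR(\Id,u^n\Id)\sim c\ln n\to\infty$, so the left side grows logarithmically in $n$. The correct error is $O(\log n)$, which does vanish after dividing by $n\lambda_k$ with $n\to\infty$ for each fixed $k$, but your exchange of $\lim_n$ and $\lim_\omega$ then requires uniform control of the implied constants $C(g_k)$ against $\lambda_k$, and nothing in the proposal controls how $C(g_k)$ depends on $k$. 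Similarly, for the upper bound you need the approximate minimizers $y_k$ to stay $\omega$-bounded relative to $\lambda_k$, and the ``translate along the flat by a suitable power'' fix is asserted but not pinned down. Both issues are real and not merely cosmetic; the paper's argument was designed precisely to bypass them by working with the building structure over $\FF$ directly rather than through an $\omega$-limit of Archimedean data.
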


The first ingredient in the proof is  the following:

\begin{lem}\label{lem:4.32}
	Any CAT(0) geodesic  segment $c:[0,1]\to\ov{\calB_{\bG(\FF)}}$ is also a $d_{\|\cdot\|}^\FF$-geodesic segment 
	for any Weyl group invariant norm $\|\cdot\|$ on $\frak a$. Hence any CAT(0) geodesic is also a $d_{\|\cdot\|}^\FF$-geodesic. 
\end{lem}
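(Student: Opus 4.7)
The plan is to embed $\obgof$ isometrically into the Robinson field space $\bgrom$ via Corollary~\ref{cor:4.17}(1), and then reduce the claim to the following classical property of $\calX_\RR$: any Riemannian geodesic segment is also a Finsler geodesic for every Weyl group invariant norm.

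More concretely, first choose a big element $b\in\FF$ and a valuation-compatible field injection $\FF\hookrightarrow\rom$ for some non-principal ultrafilter $\omega$ and scale $\pmb\mu$, as provided by Corollary~\ref{c.1oKl} (with $\pmb\mu$ chosen so that $b$ maps to $\pmb\mu$). By Corollary~\ref{cor:4.17}(1), the induced map $\calX_\FF\hookrightarrow\calX_\rom$ descends to an inclusion $\bgof\hookrightarrow\bgrom$ that is isometric simultaneously for the given $d^\FF_{\|\cdot\|}$ and for the CAT(0) metric $d^\FF_{\|\cdot\|_0}$ associated with some fixed Weyl group invariant scalar product $\|\cdot\|_0$ on $\fa$. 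Since $\bgrom$ is complete (being an asymptotic cone by Theorem~\ref{thm:4.15}), this inclusion extends to an isometric embedding $\obgof\hookrightarrow\bgrom$ for the CAT(0) metric. Using the bi-Lipschitz equivalence of all $d^\FF_{\|\cdot\|}$ on $\bgof$ (Proposition~\ref{prop:4.10}(3)), $d^\FF_{\|\cdot\|}$ extends continuously to the same topological completion $\obgof$, and on $\obgof$ this extension still agrees with the restriction of $d^\rom_{\|\cdot\|}$.

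Now given a CAT(0) geodesic $c\colon[0,1]\to\obgof$, compose with the above embedding to obtain a CAT(0) geodesic $\tilde c\colon[0,1]\to\bgrom$. Via the identification of $\bgrom$ with $\Cone(\calX_\RR,\omega,\prlambda,d_{\|\cdot\|_0})$ from Theorem~\ref{thm:4.15}, represent $\tilde c(0)$ and $\tilde c(1)$ by sequences $(x_k),(y_k)$ in $\calX_\RR$, and let $\gamma_k\colon[0,1]\to\calX_\RR$ be the Riemannian geodesic from $x_k$ to $y_k$. Since $d_{\|\cdot\|_0}(\gamma_k(s),\gamma_k(t))/\lambda_k$ is $\omega$-bounded, the assignment $t\mapsto[(\gamma_k(t))]_\omega$ yields a well-defined CAT(0) geodesic from $\tilde c(0)$ to $\tilde c(1)$ in $\bgrom$, and the uniqueness of CAT(0) geodesics forces it to coincide with $\tilde c$.

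It then remains to observe that each $\gamma_k$ is also a $d_{\|\cdot\|}$-geodesic in $\calX_\RR$: it lies inside a maximal flat, which via $\Ln$ is an isometric copy of $(\fa,\|\cdot\|)$ for the Finsler metric $d_{\|\cdot\|}$, and straight-line segments realize distances in any normed vector space. Hence $d_{\|\cdot\|}(\gamma_k(s),\gamma_k(t))=(t-s)\,d_{\|\cdot\|}(x_k,y_k)$ for $0\leq s\leq t\leq 1$; dividing by $\lambda_k$ and taking the $\omega$-limit yields $d^\rom_{\|\cdot\|}(\tilde c(s),\tilde c(t))=(t-s)\,d^\rom_{\|\cdot\|}(\tilde c(0),\tilde c(1))$, which transfers back to $\obgof$ to establish that $c$ is a $d^\FF_{\|\cdot\|}$-geodesic segment, and the final assertion about unbounded CAT(0) geodesics follows by applying this to each compact subsegment. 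The main technical obstacle is the careful verification that the continuous extension of $d^\FF_{\|\cdot\|}$ to $\obgof$ really does agree with the restriction of $d^\rom_{\|\cdot\|}$; once settled, the geometric heart of the argument is the essentially classical fact that in a Riemannian symmetric space geodesics are Finsler geodesics for every invariant norm.
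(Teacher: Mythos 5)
Your proof is correct, but it takes a genuinely different route from the paper's. The paper argues directly inside $\bgof$: given two points, move one to $[\Id]$ and the other to $C_\FF{}_*[\Id]$ by the transitive $\bG(\FF)$-action, place both in the flat $(S_\FF)_*[\Id]$, and use Lemma~\ref{lem:4.13} / Remark~\ref{rem:4.14} (the map $L_b$ identifies this flat with the dense divisible subgroup $-\log_b(\FF^*)$-points of $\fa$, on which \emph{every} invariant norm linearizes) to conclude that the CAT(0) midpoint lies in $\bgof$ and is simultaneously a $d^\FF_{\|\cdot\|}$-midpoint. Dyadic iteration and a limit from $\bgof$ to $\obgof$ finishes the argument.

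Your proposal instead embeds $\obgof$ into $\bgrom$ via Corollary~\ref{cor:4.17}(1), uses Theorem~\ref{thm:4.15} to identify $\bgrom$ with an asymptotic cone of $(\calX_\RR, d_{\|\cdot\|_0})$, realizes the given CAT(0) geodesic as the $\omega$-limit of Riemannian geodesics $\gamma_k$ in $\calX_\RR$ (invoking uniqueness of CAT(0) geodesics), and then leans on the classical symmetric-space fact that each $\gamma_k$ is a $d_{\|\cdot\|}$-geodesic for every invariant Finsler metric because it lies in a maximal flat. The trade-off is clear: you pay in machinery (ultrafilters, Robinson fields, the valuation-compatible injection — which is the standing hypothesis of \S\ref{s.Fbuildings}, so the appeal is legitimate) to reduce the geometric content to a single well-known statement about $\calX_\RR$, whereas the paper re-runs the flat-structure argument intrinsically in the non-Archimedean setting without ever leaving $\obgof$. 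One remark on your "main technical obstacle'': the compatibility of the continuous extension of $d^\FF_{\|\cdot\|}$ to $\obgof$ with the restriction of $d^\rom_{\|\cdot\|}$ is exactly the content of Corollary~\ref{cor:4.17}(1) together with the fact that $\obgof$ is \emph{constructed} as the closure of $\bgof$ inside the complete space $\bgrom$; you may cite that directly rather than treating it as an outstanding verification.
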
	
\begin{proof}
	First one shows using the $\bG(\FF)$-action on $\bgof$ and Remark \ref{rem:4.14} that  
	if $x,y\in\calB_{\bG(\FF)}$ then a CAT(0)-midpoint $m$ between $x,y$ is in $\bgof$ and is also a $d^\FF_{\|\cdot\|}$-midpoint. 
	That is if  $c:[0,l]\to\ov{\calB_{\bG(\FF)}}$ is a CAT(0)-geodesic with endpoints in $\bgof$, then 
	$$d^\FF_{\|\cdot\|}(c(0),c(l/2))+d^\FF_{\|\cdot\|}(c(l/2),c(l))=d^\FF_{\|\cdot\|}(c(0),c(l)).$$
	Since every limit of such geodesics satisfies the same, this shows the lemma.
	
\end{proof}

Second we use that if a CAT(0) geodesic $c$ is translated by $g$, 
then  the $d^\FF_{\|\cdot\|}$-translation length of $g$ is computed by $d^\FF_{\|\cdot\|}(c(0),g_*c(0))$, 
which is a direct consequence of the following lemma:

\begin{lem}\label{lem:4.6.9}
	Let $(Z,d)$ be a metric space, $g\in{\rm Is}(Z)$,  $\|g\|:=\lim_{k\to\infty}\frac{d(x,g_*^kx)}k$
	(which is independent of $x\in Z$) \label{n.51}
	and 
	$$Y=\{y\in X:\, d(y,g^k_*y)=kd(y,g_*y)\quad \forall k\geq 1\}.$$
	If $Y\neq \emptyset$ then $Y=\Min(g)$ and $\ell(g)=\|g\|=d(y,g_*y)$ for all $y\in Y$. 	
\end{lem}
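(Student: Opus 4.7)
The plan is to prove three elementary facts about the stable translation length and compare them with $\ell(g)$ and $d(y,g_*y)$.

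\textbf{Step 1: Subadditivity.} For any $y\in Z$, I would first observe that the isometry property of $g$ combined with the triangle inequality gives
\[
d(y,g_*^{k+m}y)\leq d(y,g_*^k y)+d(g_*^k y,g_*^{k+m}y)=d(y,g_*^k y)+d(y,g_*^m y),
\]
so the sequence $k\mapsto d(y,g_*^k y)$ is subadditive. By Fekete's lemma,
\[
\lim_{k\to\infty}\frac{d(y,g_*^k y)}{k}=\inf_{k\geq 1}\frac{d(y,g_*^k y)}{k},
\]
and a standard argument (if $x,y\in Z$ then $|d(x,g_*^k x)-d(y,g_*^k y)|\leq 2d(x,y)$) shows that this limit is independent of $y$, giving a well-defined $\|g\|$. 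In particular,
\[
\|g\|\leq \frac{d(y,g_*^k y)}{k}\qquad\text{for every }y\in Z,\ k\geq 1.
\]
Taking $k=1$ and the infimum over $y$ yields $\|g\|\leq\ell(g)$.

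\textbf{Step 2: $Y\subset\Min(g)$ and computation of $\ell(g)$.} If $y\in Y$, then by definition $d(y,g_*^k y)=k\, d(y,g_*y)$ for all $k\geq 1$; dividing by $k$ and passing to the limit gives $\|g\|=d(y,g_*y)$. Combining with Step~1,
\[
\ell(g)\leq d(y,g_*y)=\|g\|\leq \ell(g),
\]
whence $\ell(g)=\|g\|=d(y,g_*y)$ and $y\in\Min(g)$.

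\textbf{Step 3: $\Min(g)\subset Y$.} Conversely, let $y\in\Min(g)$, so $d(y,g_*y)=\ell(g)$. Assuming $Y\neq\varnothing$ we have $\ell(g)=\|g\|$ by Step~2, so $d(y,g_*y)=\|g\|$. For every $k\geq 1$, Step~1 gives $k\|g\|\leq d(y,g_*^k y)$, while the triangle inequality combined with $g$ being an isometry yields
\[
d(y,g_*^k y)\leq \sum_{j=0}^{k-1} d(g_*^j y,g_*^{j+1}y)=k\, d(y,g_*y)=k\|g\|.
\]
The two inequalities force $d(y,g_*^k y)=k\, d(y,g_*y)$, i.e. $y\in Y$.

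There is no real obstacle here: the only ingredient beyond the triangle inequality is Fekete's subadditivity lemma, which delivers both the existence of $\|g\|$ as an infimum and the inequality $\|g\|\leq d(y,g_*^k y)/k$ that drives Step~3.
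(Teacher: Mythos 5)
Your proof is correct and follows essentially the same route as the paper's: both arguments hinge on the two facts $\|g\|=\inf_{k\geq 1}d(y,g_*^ky)/k$ (Fekete, used implicitly in the paper) and $d(y,g_*^ky)\leq k\,d(y,g_*y)$, combined in the same sandwich to force the equalities. The only difference is presentational — you name Fekete and spell out the base-point independence of $\|g\|$, which the paper takes for granted.
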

\begin{proof}
	We have 
	$$d(x,g_*^k x)\leq \sum_{i=1}^k d(g_*^{i-1}x,g_*^ix)=kd(x,g_*x)$$
	which implies $\|g\|\leq \ell(g)$.
	If $Y\neq \emptyset$ then for all $y\in Y$
	$$\ell(g)\leq d(y,g_*y)=\frac 1k d(y,g_*^ky)=\|g\|\leq \ell(g)$$
	which implies that $Y\subset \Min(g)$.
	But for $x\in\Min(g)$
	$$\frac{d(x,g_*^kx)}{k}\leq d(x,g_*x)=\ell(g)=\|g\|$$
	and since $\|g\|=\inf_{k\geq 1}\frac{d(x,g_*^kx)}k$, we obtain
	$\frac{d(x,g_*^kx)}k=d(x,g_*x)$ for all $k\geq 1$, that is $x\in Y$.
\end{proof}		
In order to find a CAT(0) geodesic translated by $g$, we first observe that in $\calB_{\bG(\FF)}$, 
as opposed to symmetric spaces, the fixpoint set of unipotent elements is non-empty:
\begin{lem}\label{lem:4.6.7}
	Let $u\in\bG(\FF)$ be unipotent. Then the set of $u$-fixed points in $\calB_{\bG(\FF)}$ is non-empty.
\end{lem}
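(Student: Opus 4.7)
The plan is to reduce the lemma to showing that $u$ is $\bG(\FF)$-conjugate into $\bG(\calO)$. Indeed, by Proposition~\ref{prop:4.12} the $\bG(\FF)$-action on $\calB_{\bG(\FF)}$ is transitive with stabilizer of $[\Id]$ equal to $\bG(\calO)$, so if $g u g^{-1} \in \bG(\calO)$ for some $g \in \bG(\FF)$, then $g^{-1}_\ast[\Id]$ is a $u$-fixed point.

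First I would reduce to the case $u \in \bN(\FF)$, where $\bN$ is the unipotent radical of the minimal parabolic $\KK$-subgroup $\bP$. Over $\RR$, classical Lie theory shows that every unipotent element of $\bG(\RR)$ lies in the unipotent radical of some minimal parabolic $\RR$-subgroup, hence is $\bG(\RR)$-conjugate to an element of $\bN(\RR)$ (using Proposition~\ref{prop:F-split=K-split} and Corollary~\ref{cor:4.2}). This is a semialgebraic statement (unipotency is given by $(u-\Id)^n=0$, and conjugacy into $\bN$ is expressed by an existential quantifier over $\bG$), and by the transfer principle (Properties~\ref{ppts.2.10}) it holds over $\FF$ as well. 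After replacing $u$ by a $\bG(\FF)$-conjugate, we assume $u \in \bN(\FF)$.

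Second I would use the $\bS$-grading of $\bN$ to conjugate further into $\bN(\calO)$ by an element of $\bS(\FF)$. As $\KK$-varieties with $\bS$-action one has $\bN \cong \prod_{\alpha \in {}_\KK\Phi^+} \bU_\alpha$ for a fixed ordering of the positive roots, with each $\bU_\alpha \cong \bG_a^{d_\alpha}$ and $\bS$ acting on $\bU_\alpha$ by scalar multiplication through $\alpha$. Writing $u = \prod u_\alpha$ with $u_\alpha \in \bU_\alpha(\FF)$, let $M$ be the maximum of $-v(t)$ taken over all coordinates $t$ of all the factors $u_\alpha$; then $M \in \Lambda = v(\FF^\times)$. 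Because $\bS$ is $\KK$-split and the simple roots $\alpha_1,\dots,\alpha_r$ are $\QQ$-linearly independent in $X(\bS) \otimes \QQ$, one can pick a $\KK$-rational cocharacter $\lambda\colon \bG_m \to \bS$ with $\langle \alpha_j,\lambda\rangle > 0$ for every simple $\alpha_j$ (take a suitable integer multiple of the dual basis); then $\langle \alpha,\lambda\rangle \geq c$ for some $c > 0$ and all $\alpha \in {}_\KK\Phi^+$. Setting $s := \lambda(b^{-n})$ for $n \in \NN$ with $cn > M$, one gets $v(\alpha(s)) = n\langle \alpha,\lambda\rangle \geq cn > M$ for every positive root $\alpha$, so the scaled coordinates $\alpha(s) \cdot t$ all lie in $\calO$, whence $s u s^{-1} \in \bN(\calO) \subset \bG(\calO)$. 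Composing with the conjugation from step one yields the desired $g$.

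The main subtlety is the first reduction, which requires formulating ``every unipotent is conjugate into $\bN$'' as a semialgebraic statement so that transfer applies; the second step is a direct computation once one has produced a dominant $\KK$-rational cocharacter, which is standard given the $\KK$-split hypothesis on $\bS$.
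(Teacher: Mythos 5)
Your proof is correct and takes essentially the same route as the paper: both first reduce to $u\in\bN(\FF)$ by conjugation (via transfer from $\RR$), then conjugate into $\bN(\calO)$ by a suitably contracting element of $\bS(\FF)$. The paper works at the Lie algebra level, using that $\exp_\FF\colon\fn(\FF)\to\bN(\FF)$ is a polynomial $\Ad(\bS)$-equivariant bijection and that $\Ad(s_0)$ is strictly contracting, which cleanly sidesteps the slight imprecision in your claim that $\bS$ acts on $\bU_\alpha$ purely by the character $\alpha$ (when the relative root system is non-reduced the weight $2\alpha$ also appears on the root group for $\alpha$, although this does not affect the conclusion since $\alpha(s)^2$ still lies in the maximal ideal).
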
	
\begin{proof}
	If $\bN$ is the unipotent radical of the standard minimal parabolic $\KK$-subgroup, and $\frak n$ its Lie Algebra. Then
	$$\exp_\RR:\frak n(\RR)\to \bN(\RR)$$
	is a polynomial $\Ad(\bS(\RR))$-equivariant bijection. Hence we obtain for any real closed field $\FF$ containing $\KK$ a semialgebraic homeomorphism
	$$\exp_\FF:\frak n(\FF)\to \bN(\FF)$$
	Now if $\alpha_1,\ldots, \alpha_r$ are the simple roots we have that 
	\bqn
	\begin{array}{ccc}
		S_\FF&\to&\FF^r_{>0}\\
		s&\mapsto &(\alpha_1(s),\ldots, \alpha_r(s))
	\end{array}
	\eqn
	is an isomorphism. Now choose $s_0\in S_\FF$ so that $\|\alpha_i(s_0)\|\leq\lambda<1$ for all $1\leq i\leq r$.
	Then $\Ad(s_0)$ acts strictly contracting on $\frak n(\FF)$,
	hence every $n\in\bN(\FF)$ is $(\Ad(s_0))^m$ conjugated into $\bN(\calO)$ for some $m\geq 1$. 
	Thus, by Proposition~\ref{prop:4.12}, $n$ fixes $s_0^{-m}[\Id]\in\calB_{\bG(\FF)}$.
	
	The result follows since every unipotent $u\in\bG(\FF)$ is $\bG(\FF)$-conjugated to an element in $\bN(\FF)$ (compare Section~\ref{subsec:4.1}).
\end{proof}	
We now have all the ingredients necessary to prove Proposition \ref{prop:6.6.6}: 
\begin{proof}[Proof of Proposition \ref{prop:6.6.6}]
	We now fix a Weyl group invariant Euclidean norm $\|\cdot\|$ on $\frak a$ and consider $\ov {\calB_{\bG(\FF)}}$ as a complete CAT(0) space. If $g=g_eg_hg_u$ is the Jordan decomposition of $g\in\bG(\FF)$ from Section~\ref{subsec:Jordan}, the closed convex subsets $\FP(g_e)$ and $\FP(g_u)\subset\ov\bgof$ of fixed points are non-empty. Since $g_e$ and $g_u$ commutes, $L=\FP(g_e)\cap\FP(g_u)$ is then non empty: indeed $g_u$ preserves $\FP(g_e)$ hence commutes with nearest point projection on $\FP(g_e)$. The latter do not increase distances, hence the projection of a fixed point of $g_{u}$is a fixed point of $g_{u}$. 
Thus  $L$ is a closed convex subset with $g|_{L}=g_h|_L$.
	
	Since $g_h$ preserves a flat, it is a semisimple isometry of $\ov {\calB_{\bG(\FF)}}$ and hence one of $L$. Let $c:\RR\to L$ be a CAT(0)-geodesic along which $g$ and $g_h$ translate the same amount. Then by Lemma  \ref{lem:4.32}, $c$ is a $d_{\|\cdot\|}^\FF$-geodesic, and by Lemma \ref{lem:4.6.9}
	$$\ell_{\|\cdot\|}^\FF(g)=d_{\|\cdot\|}^\FF(c(0),g c(0))=d_{\|\cdot\|}^\FF(c(0),g_h c(0))=\ell_{\|\cdot\|}^\FF(g_h).$$
	Now the translation length $\ell_{\|\cdot\|}^\FF(g_h)$ can be computed on a geodesic in the maximal flat on which $g_h$ acts as translation by $\Log_b\Jord_\FF(g)$.

	
\end{proof}

We conclude the subsection discussing the compatibility of semialgebraic norms 
and associated length functions with respect to inclusions of symmetric spaces. This will be needed in Section \ref{s.crossratio}.

Let $\bG_1<\bG_2<\SL_n$ be connected, semisimple, defined over $\KK$, 
and invariant under transposition. Given $\bS_i<\bG_i$ maximal $\KK$-split tori consisting of symmetric matrices, 
we may modulo conjugating $\bG_1$ by an element of $K_2=\bG_2\cap\SO(n,\KK)$ assume that $\bS_1<\bS_2$. 
Let $\frak g_i=\frak k_i\oplus\frak p_i$ be the Cartan decompositions of $\frak g_i=\Lie(\bG_i(\RR))$, 
then $\fa_i=\Lie(\bS_i(\RR))<\frak p_i$. 
Let $W_i$ be the respective Weyl groups: $W_i=\calN_{K_i}(S_i)/\calZ_{K_i}(S_i)$ where $S_i=\bS_i(\KK)^\circ$.

\begin{lemma}\label{l.NormA}\
	\begin{enumerate}
		\item If $\|\cdot\|_2$ is a $W_2$-invariant norm on $\fa_2$ then $\|\cdot\|_1:=\|\cdot\|_2|_{\fa_1}$ 
		is a $W_1$-invariant norm on $\fa_1$.
		\item If $N_2:S_2\to\KK_{\geq 1}$ is a semialgebraic norm on $S_2$ so is $N_1:=N_2|_{S_1}$.
	\end{enumerate}	
\end{lemma}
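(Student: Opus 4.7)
The plan is to reduce both statements to the following key claim: for every $w_1\in W_1$ and every $v\in\fa_1$, the elements $v$ and $w_1\cdot v$ lie in a common $W_2$-orbit of $\fa_2$; equivalently, for every $s\in S_1$, the elements $s$ and $w_1\cdot s$ lie in a common $W_2$-orbit of $S_2$.

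To prove the claim, I lift $w_1$ to some $k_1\in\calN_{K_1}(S_1)$; since $K_1\subset K_2$, we have $k_1\in K_2$. Given $s\in S_1\subset S_2$, the element $w_1\cdot s=k_1 s k_1^{-1}$ is $K_2$-conjugate to $s$. Applying Proposition~\ref{prop:4.4} to $\bG_2$ with $\FF=\RR$, the Cartan projection $c_2\colon\bG_2(\RR)\to C_2$ is a $K_2$-conjugation invariant (if $g=k'c_2(g)k''$ then $kgk^{-1}=(kk')c_2(g)(k''k^{-1})$ is again a $K_2 C_2 K_2$-decomposition), so $c_2(s)=c_2(w_1\cdot s)$. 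Moreover, for any $s\in S_2$, choosing $w\in W_2$ with $w\cdot s\in C_2$ and a lift $\tilde w\in\calN_{K_2}(S_2)$ yields $s=\tilde w^{-1}(w\cdot s)\tilde w$, which is a $K_2 C_2 K_2$-decomposition of $s$, so $c_2(s)=w\cdot s$; in particular $s$ and $c_2(s)$ lie in a common $W_2$-orbit. Hence $s$ and $w_1\cdot s$ share the same $c_2$-representative and are both $W_2$-conjugate to it, so they are $W_2$-conjugate in $S_2$. Differentiating at the identity yields the corresponding infinitesimal statement on $\fa_2$.

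Once the claim is established, both conclusions are immediate. For (1), the restriction $\|\cdot\|_1=\|\cdot\|_2|_{\fa_1}$ is visibly a norm on $\fa_1$, and given $v\in\fa_1$ and $w_1\in W_1$, the claim produces $w_2\in W_2$ with $w_2\cdot v=w_1\cdot v$, whence $\|w_1\cdot v\|_1=\|w_2\cdot v\|_2=\|v\|_2=\|v\|_1$. For (2), continuity, semialgebraicity, $\KK_{\geq 1}$-valuedness and the axioms (N1)--(N3) of a semialgebraic norm all pass trivially from $N_2$ to $N_1=N_2|_{S_1}$; the $W_1$-invariance of $N_1$ is obtained from the key claim by the same argument as in (1).

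The key claim is the only step requiring genuine input, and its proof rests solely on the uniqueness in the Cartan decomposition of $\bG_2(\RR)$ together with the inclusion $K_1\subset K_2$. No finer matching of Weyl chambers or root data is needed, so I do not anticipate any further obstacle.
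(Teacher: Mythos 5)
Your proof is correct, and it takes a genuinely different route from the paper. The paper invokes Planche's theorem (Propositions~5.3 and 5.4 of \cite{Planche}): a $W_2$-invariant norm on $\fa_2$ extends uniquely to an $\Ad(K_{2,\RR})$-invariant norm on $\fp_2$; restricting this to $\fp_1\subset\fp_2$ gives an $\Ad(K_{1,\RR})$-invariant norm, whose restriction to $\fa_1$ is then $W_1$-invariant. Part (2) is then deduced from (1) via the correspondence $\|\cdot\|\leftrightarrow N$ set up in the proof of Proposition~\ref{prop:4.10}. You instead isolate and prove directly the orbit inclusion $W_1\cdot v\subset W_2\cdot v$ for $v\in\fa_1$, using only the inclusion $K_1\subset K_2$, the lift of $W_1$ into $\calN_{K_1}(S_1)$, and the uniqueness in the Cartan decomposition of $\bG_2(\RR)$ (Proposition~\ref{prop:4.4}); restriction of a $W_2$-invariant function is then automatically $W_1$-invariant, and (1) and (2) follow in parallel. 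Both approaches ultimately rest on $K_1\subset K_2$, but yours avoids the Planche extension step entirely and is more self-contained relative to what the paper has already established; the paper's route is slightly slicker and stays inside the Finsler-metric circle of ideas that is reused immediately afterward in Proposition~\ref{p.NormB}. Two small stylistic points: the phrase ``differentiating at the identity'' for passing from $S_i$ to $\fa_i$ is better phrased as applying the $W_i$-equivariant bijections $\exp\colon\fa_{i}\to S_{i,\RR}$ and $\mathrm{Ln}$; and it is worth noting explicitly that the orbit inclusion you prove is precisely the fact that the relative Weyl group of a reductive subgroup containing a common maximal split torus embeds, at the level of orbits on the small Cartan subspace, into the ambient Weyl group.
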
	
\begin{proof}
	(1) According to  \cite[Proposition 5.3 and Corollary  5.4]{Planche} the formula 
	$$\|v\|_2:=\|\Ad(k)v\|_2, \quad v\in\fp_2, \quad\Ad(k)v\in\fa_2,$$ 
	defines an $\Ad(K_{2,\RR})$-invariant norm on $\fp_2$ extending $\|\cdot\|_2$ on $\fa_2$. 
	Thus $\|\cdot\|_1=\|\cdot\|_2|_{\fp_1}$ defines an $\Ad(K_{1,\RR})$-invariant norm on $\fp_1$ 
	whose restriction to $\fa_1$ is hence $W_1$-invariant.
	
	(2) Follows immediately from (1).
\end{proof}

Let $d_1,d_2$ be the invariant Finsler metrics on the corresponding symmetric spaces 
$\calX_{1,\RR}=\bG_1(\RR)_*\Id$ and 	$\calX_{2,\RR}=\bG_2(\RR)_*\Id$.
Then $d_1$ is the restriction of $d_2$ to $\calX_{1,\RR}$ and $\calX_{1,\RR}$ is totally geodesic in $\calX_{2,\RR}$. 
Given a real closed field $\FF$ admitting a big element and a valuation compatible injection $\FF\to\rom$ then Lemma \ref{lem:4.32}
and the arguments of Corollary \ref{cor:4.17} 
imply
\begin{prop}\label{p.NormB}\
	\begin{enumerate}
		\item For $g\in\bG_1(\RR)$, $\ell_1(g)=\ell_2(g)$ where $\ell_i$ is the translation length of $g$ acting on $(\calX_{i,\RR},d_i)$	
		\item For $g\in\bG_1(\FF)$, $\ell^\FF_1(g)=\ell^\FF_2(g)$ 
		where $\ell_i^\FF$ is the translation length of $g$ acting on $(\calB_{\bG_i(\FF)},d_i)$	
	\end{enumerate}
	
\end{prop}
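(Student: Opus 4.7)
The plan is to reduce both assertions to the explicit formula for the translation length in terms of the Jordan projection (Proposition \ref{prop:6.6.6} and its symmetric-space version), and then exploit the compatibility of Jordan decompositions under the inclusion $\bG_1\hookrightarrow\bG_2$ together with the invariance properties of the norms collected in Lemma \ref{l.NormA}.

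First I would establish that the refined Jordan decomposition is compatible with the inclusion $\bG_1\hookrightarrow\bG_2$: given $g\in\bG_1(\FF)$, its decomposition $g=g_eg_hg_u$ provided by Lemma \ref{lem:4.5} inside $\bG_1(\FF)$ coincides with the decomposition computed in $\bG_2(\FF)$. This is because the semisimple/unipotent splitting of a matrix in $\SL_n(\FF)$ is intrinsic, and the subsequent splitting of the semisimple part into elliptic and hyperbolic factors is characterized by the spectral type (roots of unity versus positive real eigenvalues, extended to $\FF$ via the transfer principle), which is again intrinsic. In particular, $g_h$ is hyperbolic in both groups: it is $\bG_1(\FF)$-conjugate to an element of $S_{1,\FF}\subset S_{2,\FF}$, hence \emph{a fortiori} $\bG_2(\FF)$-conjugate to an element of $S_{2,\FF}$.

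Next I would compare the Jordan projections $\Jord_{i,\FF}:\bG_i(\FF)\to C_{i,\FF}$. The element $\Jord_{1,\FF}(g)\in C_{1,\FF}\subset S_{1,\FF}\subset S_{2,\FF}$ and the element $\Jord_{2,\FF}(g)\in C_{2,\FF}\subset S_{2,\FF}$ are both $\bG_2(\FF)$-conjugate to the common hyperbolic part $g_h$, hence they lie in the same $\bG_2(\FF)$-conjugacy class. By the classical fact (valid over $\FF$ by transfer) that two elements of $S_{2,\FF}$ are $\bG_2(\FF)$-conjugate if and only if they are $W_2$-conjugate (compare Lemma \ref{lem:4.6}), there exists $w\in W_2$ with $w\cdot\Jord_{1,\FF}(g)=\Jord_{2,\FF}(g)$. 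Applying Proposition \ref{prop:6.6.6} in each group and using $\mathrm{Log}_b\circ w=w\circ \mathrm{Log}_b$ together with the $W_2$-invariance of $\|\cdot\|_2$ and the identity $\|\cdot\|_1=\|\cdot\|_2|_{\fa_1}$ from Lemma \ref{l.NormA}, we compute
\[
\ell_1^\FF(g)=\|\mathrm{Log}_b\Jord_{1,\FF}(g)\|_1=\|\mathrm{Log}_b\Jord_{1,\FF}(g)\|_2=\|w\cdot\mathrm{Log}_b\Jord_{1,\FF}(g)\|_2=\|\mathrm{Log}_b\Jord_{2,\FF}(g)\|_2=\ell_2^\FF(g),
\]
proving (2). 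Assertion (1) follows either as the special case $\FF=\RR$, upon noting that the equivalence relation \eqref{eq:eq_rel} is trivial when the valuation is the Archimedean $-\ln|\cdot|$ (so that $\calB_{\bG_i(\RR)}=\calX_{i,\RR}$), or equivalently by repeating the argument with Planche's classical formula $\ell_i(g)=\|\mathrm{Ln}\,\Jord_{i,\RR}(g)\|_i$ in place of Proposition \ref{prop:6.6.6}.

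The main technical point is the compatibility of the refined Jordan decomposition across $\bG_1\hookrightarrow\bG_2$ over an arbitrary real closed field $\FF$: the semisimple/unipotent splitting is intrinsic, but the further hyperbolic/elliptic splitting requires invoking the characterization of these classes as semialgebraic subsets (see the discussion before Lemma \ref{lem:4.5}) and the transfer principle to ensure the description is preserved under extension to $\FF$. Once this compatibility is secured, the rest of the argument is a direct application of the translation length formula and the invariance properties of the norms.
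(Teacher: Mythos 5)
Your proof is correct, but it takes a genuinely different route from the paper. The paper's proof is geometric: it observes that $\calX_{1,\RR}$ is a totally geodesic subspace of $\calX_{2,\RR}$ with $d_1=d_2|_{\calX_{1,\RR}}$, and then invokes Lemma~\ref{lem:4.32} (CAT(0) geodesics in $\ov\bgof$ are also $d^\FF_{\|\cdot\|}$-geodesics) together with the isometric embedding $\ov{\calB_{\bG_1(\FF)}}\hookrightarrow\calB_{\bG_2(\rom)}$ supplied by Corollary~\ref{cor:4.17}, so that a $g$-translated geodesic in the smaller building realizes the translation length in the larger one. Your proof is algebraic: it reduces everything to the explicit formula $\ell^\FF_i(g)=\|\Log_b\Jord_{i,\FF}(g)\|_i$ of Proposition~\ref{prop:6.6.6}, then shows that $\Jord_{1,\FF}(g)$ and $\Jord_{2,\FF}(g)$ are $W_2$-conjugate in $S_{2,\FF}$ because they have the same hyperbolic part, and concludes via the $W_2$-invariance of $\|\cdot\|_2$ and the restriction identity $\|\cdot\|_1=\|\cdot\|_2|_{\fa_1}$ from Lemma~\ref{l.NormA}. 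The two approaches are essentially dual: the paper works on the space side with isometric embeddings, while you work on the group side with the Jordan projection. Your route makes (1) a literal special case of (2) (by the remark that $\calB_{\bG_i(\RR)}=\calX_{i,\RR}$ for the Archimedean valuation), rather than a parallel argument, which is a modest gain in uniformity; the paper's route is shorter because it can cite already-established geometric lemmas wholesale. One small point you rightly flag but could make sharper: the compatibility of $\Log_b$ across $S_{1,\FF}\subset S_{2,\FF}$ and $\fa_1\subset\fa_2$ is most cleanly seen by noting that for diagonalizable $s\in\SL_d(\FF)$, $\Log_b(s)$ is characterized coordinate-free by applying $\log_b$ to each $\FF$-eigenvalue, which is manifestly independent of the ambient group.
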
	

\subsection{Visual boundary of $\bgrom$ and stabilizers}
In this subsection we fix a Weyl group invariant scalar product on $\frak a$ whose corresponding norm is denoted $\|\cdot\|$ 
and consider $\calX_\RR$ and $\bgrom$ endowed with the corresponding CAT(0)-metrics.

Recall that if $Y$ is a CAT(0)-space, the visual boundary $\partial_\infty Y$ is the set of equivalence classes of geodesic rays, 
where two rays are equivalent if their images are at finite Hausdorff distance. 
For the symmetric space $\calX_\RR=\bG(\RR)_*\Id$ it is a classical fact that 
the map $\xi\mapsto \Stab_{\bG(\RR)}(\xi)$ 
which to $\xi\in\partial_\infty \calX_\RR$ associates its stabilizer 
is a surjection from $\partial_\infty\calX_\RR$ to the set of proper parabolic $\RR$-subgroups of $\bG$.

The $\bG(\RR)$ action on $\partial_\infty\calX_\RR$ has a fundamental domain that can be described as follows. 
Let $E_\RR:\frak a\to \calX_\RR$, $w\mapsto \exp(w)_*\Id$; 
then $E_\RR$ is an isometric embedding and $E_\RR(\partial_\infty \fabp)$ is a fundamental domain 
for the $\bG(\RR)$-action on $\partial_\infty\calX_\RR$, 
in fact the map $\xi\mapsto\Stab_{\bG(\RR)}(\xi)$ is a surjection from $E_\RR(\partial_\infty \fabp)$ 
to the set of standard parabolic $\RR$-subgroups of $\bG$.

Next, let $E_\rom$ be the inverse of the map $L_b:(S_\rom)_*\Id\to \frak a$ 
which is a Weyl group invariant isometry (compare Remark \ref{rem:Lb}).

\begin{thm}\label{t.bdrystab}
	Every $\grom$-orbit in $\partial_\infty\bgrom$ meets $E_\rom(\partial_\infty\fabp)$ in a point and the stabilizer in $\grom$ of $E_\rom(\xi)$
	for $\xi\in\partial_\infty\fabp$ is $\bQ(\rom)$, where $\bQ$ is the standard parabolic $\RR$-subgroup of $\bG$ associated with $E_\RR(\xi)$.
\end{thm}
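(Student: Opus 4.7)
My approach rests on the isometry $\bgrom \cong \Cone(\calX_\RR,\omega,\prlambda,d_{\|\cdot\|})$ from Theorem~\ref{thm:4.15}, which identifies $\bgrom$ with an asymptotic cone of the symmetric space $\calX_\RR$ and endows it with a complete CAT(0) structure. I will use two structural consequences: (a) every geodesic ray from $[\Id]$ lies in some flat through $[\Id]$ of the form $g\cdot E_\rom(\fa)$ with $g\in\grom$; and (b) $K_\rom$ acts transitively on such flats through $[\Id]$. Both are consequences of the Cartan decomposition (Proposition~\ref{prop:4.4}) applied at the level of the ultralimit: (b) follows directly from writing $g = k_1 c k_2$, while (a) is the Euclidean building statement that any geodesic lies in an apartment, standard for asymptotic cones of symmetric spaces.

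For the existence statement, given $\eta\in\partial_\infty\bgrom$, I take the unique CAT(0) geodesic ray $c$ from $[\Id]$ to $\eta$. By (a), $c$ lies in some flat $F$ through $[\Id]$; by (b), $F = k\cdot E_\rom(\fa)$ for some $k\in K_\rom$. Thus $k^{-1}\cdot c$ is a ray in $E_\rom(\fa)$ from $[\Id]$, which under the Weyl group equivariant isometry $L_b:(S_\rom)_*[\Id]\to\fa$ of Remark~\ref{rem:Lb} corresponds to a half-line $\RR_{\geq 0}v$ for some unit $v\in\fa$. Picking $w\in W$ with $w(v)\in\fabp$ and lifting $w$ to some $n_w\in\calN_{K_\rom}(\bS_\rom)\subset K_\rom$ (possible since $W$ is defined over $\KK$), the element $n_wk^{-1}\in\grom$ sends $\eta$ to $E_\rom(w(v))\in E_\rom(\partial_\infty\fabp)$. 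For uniqueness I would introduce the $\grom$-invariant type map $\tau:\partial_\infty\bgrom\to\partial_\infty\fabp$ defined by $\tau(\eta) = \lim_{t\to\infty}\mathrm{Log}_b(\delta_\rom([\Id],c(t)))/t$, which is well-defined by (a) and satisfies $\tau(E_\rom(\xi))=\xi$; this forces any two elements of $E_\rom(\partial_\infty\fabp)$ in the same $\grom$-orbit to coincide.

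For the stabilizer assertion, let $\bQ = \bP_I$ be the standard parabolic associated with $E_\RR(\xi)$, so that $\xi\in\fa_I = \Lie(\bS_I)$. The inclusion $\bQ(\rom)\subset\Stab_{\grom}(E_\rom(\xi))$ uses the Levi decomposition $\bQ = \bL_I\cdot\bU_I$. Elements of $\bL_I(\rom) = \calZ_{\bG(\rom)}(\bS_I(\rom))$ commute with $\exp(t\xi)$, sending the ray $\{E_\rom(t\xi)\}_{t\geq 0}$ to a parallel ray at bounded distance; elements $u\in\bU_I(\rom)$ satisfy $d(u\cdot E_\rom(t\xi), E_\rom(t\xi)) = d(\exp(-t\xi)u\exp(t\xi)\cdot[\Id],[\Id])$, which stays bounded as $t\to+\infty$ because $\Ad(\exp(-t\xi))$ has eigenvalues of absolute value at most $1$ on $\Lie(\bU_I)$. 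For the reverse inclusion I would apply the Iwasawa decomposition $\grom = K_\rom\cdot\bQ(\rom)$, transferred from $\bG(\RR) = K_\RR\cdot\bQ(\RR)$ via Properties~\ref{ppts.2.10}: writing $g = kq$ with $q\in\bQ(\rom)$, the factor $k = gq^{-1}\in K_\rom$ fixes both $[\Id]$ and $E_\rom(\xi)$, hence by uniqueness of the CAT(0) geodesic ray between them fixes the whole ray $\{E_\rom(t\xi)\}_{t\geq 0}$. By transfer from the symmetric-space identity $K_\RR \cap \bigcap_{t\geq 0}\exp(t\xi)K_\RR\exp(-t\xi) = \calZ_{K_\RR}(\bS_I(\RR))$, the pointwise $K_\rom$-stabilizer of this ray equals $\calZ_{K_\rom}(\bS_I(\rom))\subset\bL_I(\rom)\subset\bQ(\rom)$.

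The main obstacle lies in establishing (a) and (b) rigorously within the framework of this paper, in particular that every geodesic ray from $[\Id]$ in $\bgrom$ lies in some apartment through $[\Id]$. This is standard for Euclidean buildings and for asymptotic cones of symmetric spaces following Kleiner--Leeb, but a self-contained argument here requires careful ultralimits of Cartan decompositions of sequences of points in $\calX_\RR$, handling the fact that $K_\rom$ is not compact in the naive sense and that the limiting ``$K$-factor'' of the Cartan decomposition along a sequence may only be recovered modulo the stabilizer of the final flat.
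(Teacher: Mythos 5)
Your overall strategy—identifying $\bgrom$ with an asymptotic cone via Theorem~\ref{thm:4.15}, appealing to the standard structure theory of cones for the existence/transitivity statement, and then computing the stabilizer—tracks the paper's proof in its first part. The paper's proof also asserts without detailed argument that (i) ${}^\omega E_{\prlambda}$ is an isometric embedding with image $\Cone(F)$ and (ii) every orbit meets $\Cone(\ov{F}^+)$, attributing both to ``the elementary theory of asymptotic cones of symmetric spaces,'' so your appeal to (a), (b) is morally the same; the caveat you raise about the difficulty of making this self-contained is shared by the paper.

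The gap is in your reverse inclusion $\Stab_{\grom}(E_\rom(\xi))\subset\bQ(\rom)$. After reducing via the Iwasawa decomposition to showing that the $K_\rom$-stabilizer of the whole ray $\{E_\rom(t\xi)\}_{t\geq 0}$ is $\calZ_{K_\rom}(\bS_I(\rom))$, you propose to transfer the symmetric-space identity
\[
K_\RR\cap\bigcap_{t\geq 0}\exp(t\xi)K_\RR\exp(-t\xi)=\calZ_{K_\RR}(\bS_I(\RR)).
\]
This transfer is invalid. The left-hand side is the pointwise $K_\RR$-stabilizer of the ray in $\calX_\RR$, where the stabilizer of $\exp(t\xi)\cdot\Id$ is the conjugate $\exp(t\xi)K_\RR\exp(-t\xi)$. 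But the relevant stabilizer in $\bgrom$ of the point $E_\rom(t\xi)=s_t\cdot[\Id]$ is $s_t\,\bG(\calO)\,s_t^{-1}$ by Proposition~\ref{prop:4.12}, not $s_t K_\rom s_t^{-1}$. Since $K_\rom\subsetneq\bG(\calO)$, the stabilizer in the metric shadow is genuinely larger than the transfer of the $\calX_\RR$-stabilizer, and the first-order statement you want to transfer is a statement about the wrong group. (In rank one one can check directly that $\bigcap_t\big(K_\rom\cap s_t\bG(\calO)s_t^{-1}\big)=\calZ_{K_\rom}(\bS)$, but this uses a valuation-theoretic estimate as $t\to\infty$ and is not a consequence of Tarski--Seidenberg; in higher rank you would have to carry out the root-space analysis rather than invoke transfer.)

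The paper's route avoids this entirely. Having shown $\bP(\rom)\subset\Stab_{\grom}(L_b^{-1}(\xi))$ — by the same convexity/boundedness idea you use for $\bU_I$ and $\bL_I$ — it observes that the stabilizer is a subgroup of $\bG(\rom)$ containing a parabolic, hence contains the minimal $\bP_{min}(\rom)$. By Corollary~\ref{cor:4.2}(2), any such subgroup is of the form $\bQ'(\rom)$ for a standard parabolic $\bQ'\supset\bP$. The equality $\bQ'=\bP$ is then forced by an elementary lower bound: if $g\in\bQ'(\RR)\setminus\bP(\RR)$ then $g$ moves $E_\RR(\xi)\in\partial_\infty\calX_\RR$, so $t\mapsto d(g_*E(tw),E(tw))$ grows linearly, hence the constant sequence $\hat g=[(g)]_\omega$ moves the point ${}^\omega E_\prlambda(\xi)$ — contradiction. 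This is both cleaner and more robust than the Iwasawa approach: it requires no computation of $K_\rom$-stabilizers of building points at all, but only the structure theory of parabolic subgroups over real closed fields, which is exactly what Corollary~\ref{cor:4.2} was set up for. If you want to keep your Iwasawa reduction, you should replace the transfer step with either the algebraic argument via Corollary~\ref{cor:4.2}, or a direct root-space computation of $\bigcap_t\big(K_\rom\cap s_t\bG(\calO)s_t^{-1}\big)$.
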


\begin{proof}
	Let $E\colon\fa\to\calX_\RR$ be the restriction to $\fa$ of the Riemannian exponential at $\Id$, $F=E(\fa)$ and $\ov{F}^+=E(\fabp)$.
	Define
	\bqn
	{}^\omega E_{\prlambda}\colon\fa\to\Cone(\calX_\RR)
	\eqn
	so that, for $w\in\fa$, ${}^\omega E_{\prlambda}(w)$ is the image in $\Cone(\calX_\RR)$ of $[(E(\lambda_kw))]_{\omega}\in{}^\omega\calX_{\prlambda}$.
	Then:
	\be
	\item[(i)] ${}^\omega E_{\prlambda}\colon\fa\to\Cone(\calX_\RR)$ is an isometric embedding with image $\Cone(F)$.
	\item[(ii)] Every ${}^\omega G_{\prlambda}$-orbit in $\partial_\infty\Cone(\calX_\RR)$ 
	intersects $\partial_\infty\Cone(\ov{F}^+)={}^\omega E_{\prlambda}(\partial_\infty\fabp)$
	in at least one point.
	\item[(iii)] The diagram
	\bqn
	\xymatrix{
		\Cone(\calX_\RR)\ar[rr]&&\bgrom\\
		&\fa\ar[ul]^{{}^\omega E_{\prlambda}}\ar[ur]_{L_b^{-1}}&
	}
	\eqn
	commutes.
	\ee
	The two first properties belong to the elementary theory of asymptotic cones of symmetric spaces, 
	while the third is a verification using Lemma~\ref{lem:4.21}.
	
	Let now $w\in\fabp$,  $\|w\|=1$, $\xi=\RR_{\geq0}w\in\partial_\infty\fabp$  and $\bP$ be the standard parabolic $\RR$-subgroup of $\bG$ corresponding to $\xi$,
	that is
	\bqn
	\bP(\RR)=\mathrm{Stab}_{\bG(\RR)} E(\xi)\,,
	\eqn
	where $E(\xi)\in\partial_\infty\calX_\RR$.
	
	We claim that 
	\bqn
	\bP(\rom)\subset\mathrm{Stab}_{\bG(\rom)}(L_b^{-1}(\xi))\,.
	\eqn
	Let $[(g_k)]_\omega\in{}^\omega(\bP(\RR))_{\prlambda}$.  Then
	the function $t\mapsto d(E(\lambda_ktw),{g_k}_*E(\lambda_ktw))$
	is convex and bounded on $[0,\infty)$.  In particular
	\bqn
	d(E(\lambda_ktw), {g_k}_*E(\lambda_ktw))\leq d(\Id, {g_k}_*\Id)
	\eqn
	for all $t\geq0$, hence
	\bqn
	\lim_\omega\frac{d(E(\lambda_ktw),{g_k}_*E(\lambda_ktw))}{\lambda_k}
	\leq\lim_\omega\frac{d(\Id,{g_k}_*\Id)}{\lambda_k}
	<\infty\,.
	\eqn
	Thus ${}^\omega(\bP(\RR))_{\prlambda}$ fixes the point ${}^\omega E_{\prlambda}(\xi)\in\partial_\infty\Cone(\calX_\RR)$
	and hence $\bP(\calO_{\pmb\mu})$ fixes $L_b^{-1}(\xi)\in\partial_\infty\bgrom$.
	This implies the claim since  $\pi\colon\bP(\calO_{\pmb\mu})\to\bP(\rom)$ is surjective (Proposition~\ref{prop:4.18}).
	
	From the claim it follows that $\mathrm{Stab}_{\bG(\rom)}(L_b^{-1}(\xi))$
	is a standard parabolic $\rom$-subgroup
	and hence by Corollary~\ref{cor:4.2} there is a standard parabolic $\RR$-subgroup $\bQ\subset\bP$ with $\bQ(\rom)=\mathrm{Stab}_{\bG(\rom)}(L_b^{-1}(\xi))$.
	Assume $\bQ\supsetneq\bP$ and pick $g\in\bQ(\RR)$ with $g\notin\bP(\RR)$.  Then $g_*(E(\xi))\neq E(\xi)$
	and hence $t\mapsto d(g_* E(tw),E(tw))$ is convex and unbounded on $[0,\infty)$.
	Hence there is $k>0$ with 
	\bqn
	d(g_*(E(tw),E(tw))\geq kt
	\eqn
	for all $t\geq0$.  Hence
	\bqn
	\frac{d(g_*E(t\lambda_kw),E(t\lambda_k w))}{\lambda_k}\geq kt\,,
	\eqn
	which implies for the element $\hat g\in{}^\omega(\bQ(\RR))_{\prlambda}$ given by the constant sequence $\hat g=[(g)]_{\omega}$
	that $\hat g({}^\omega E_{\prlambda}(\xi))\neq{}^\omega E_{\prlambda}(\xi)$.
	This implies that $\bQ(\rom)$ does not stabilize $L_b^{-1}(\xi)$, a contradiction.  Thus $\bQ=\bP$.
\end{proof}
\begin{remark}
	If the field $\FF$ is not real closed the stabilizers of points in the boundary of $\bgof$ might not be defined over $\FF$, compare \cite{Brumfiel}.
\end{remark}

\section{Representation varieties and actions on buildings}\label{s.symm}
In this section we study the real spectrum compactification of representation varieties and interpret its points as suitable equivalence classes of representations in real closed fields. We apply the results of \S~\ref{s.RspGenProp} to establish hyper-$\KK$ field realisation of such representations, and discuss several properties of representations in the boundary of suitable semialgebraic subsets including conditions guaranteeing that all such representations admit equivariant harmonic maps on the one hand, and well behaved equivariant framings on the other hand.


\subsection{The real spectrum compactification of representation varieties}\label{subsec:rep}
In this section we realize  representation varieties as semialgebraic sets and interpret the points in their real spectrum compactification as equivalence classes of representations in real closed fields. We also give a criterion for closedness in terms of actions on metric shadows of non-standard symmetric spaces, discuss how non-parabolicity can be transfered to points in the real spectrum compactifications, and draw consequences of the hyper-$\KK$-field realization from \S~\ref{s.RspGenProp}.

Let $\G$ be a finitely generated group, $\bG<\SL_n$ a linear algebraic
group defined over a real closed field $\KK\subset\RR$ and
$\bG(\KK)^\circ<G<\bG(\KK)$ a semialgebraic $\KK$-group.
We first explain how to realize the set $\Hom(\Gamma, G)$ of homomorphisms from $\G$ to $G$ as a semialgebraic set.
For every  finite symmetric generating set
$F=\{\gamma_1,\ldots,\gamma_f\}$ of $\Gamma$,
the evaluation map
\bq\label{e.evaluation}
\ev_F:
\begin{array}[t]{ccc}
	\Hom(\Gamma,\bG(\KK))&\to& M_{n,n}(\KK)^f\\
	\rho&\mapsto & (\rho(\gamma_1),\ldots,\rho(\gamma_f))
\end{array}
\eq
is injective and its image $\calR_F(\Gamma,\bG(\KK))$ \label{n.52} 
is a closed real algebraic subset  of $M_{n,n}(\KK)^f$, while the image $\calR_F(\Gamma,G)$ of $\Hom(\Gamma, G)$ is a closed semialgebraic subset of $\calR_F(\Gamma,\bG(\KK))$.
%
The group $\bG(\KK)$ acts by conjugation on $\Hom(\G,\bG(\KK))$ and diagonally on $M_{n,n}(\KK)^f$, and the map $ev_F$ is equivariant with respect to these actions.

We let $\KK[X^{(\gamma)}_{ij}]_{1\leq i,j\leq n,\, \gamma\in F}$ be the coordinate ring of $M_{n,n}(\KK)^f$. We will use  the coordinate functions $X^{(\gamma)}_{ij}$ also as generators of the coordinate ring $\KK[\calR_F(\Gamma,\bG(\KK))]$.

If now $\FF\supset\KK$ is a real closed extension of $\KK$, the preceeding discussion applies to $\bG$ as a group defined over $\FF$, that is  $\calR_F(\Gamma,\bG(\FF)) \subset M_{n,n}(\FF)^f$ is  real algebraic and contains $\calR_F(\Gamma,G_\FF)$ as a closed semialgebraic subset. In addition it is an easy verification that
$\calR_F(\Gamma,\bG(\FF))=\calR_F(\Gamma,\bG(\KK))_\FF$
and $\calR_F(\Gamma,G_\FF)=\calR_F(\Gamma,G)_\FF.$


%
Our first task is to describe points in the real spectrum
$\rsp{\calR_F(\Gamma,G)}$ of the semialgebraic set
$\calR_F(\Gamma,G)$
%
%
and characterize the ones which are closed,  see \S~\ref{subsec:compactification_semialgebraic}
for definitions.

\begin{defn}
	We say that a representation $\rho:\G\to G_\FF$ \emph{represents a
		point} $\alpha\in \rsp{\calR_F(\Gamma,G)}$ if  $\alpha_{(\rho(\gamma_1),\ldots,\rho(\gamma_f))}=\alpha$ (see Remark ~\ref{rem:point->prime cone}).
\end{defn}	

We will see below that every point in $\rsp{\calR_F(\Gamma,G)}$ is of
this form.

\begin{defn}
	For a representation $\rho:\Gamma\to \bG(\FF)$, we will denote by
	$\KK[\rho]$ the ring generated over $\KK$ by the matrix coefficients
	of $\rho$, namely by the elements $\rho(\g)_{i,j}$ for $\g\in\G$ and $1\leq i,j\leq n$,  and by $\KK(\rho)$ its field of fractions.
\end{defn}

\begin{prop}\label{prop:Rspecrep}
	A point $\alpha\in	\rsp{\calR_F(\Gamma,G)}$ is an equivalence class of pairs $(\rho,\FF)$ 
	where $\FF\supset\KK$, $\rho:\Gamma\to {\GFF}$ is a
	representation, $\FF$ is the real closure of  $\KK(\rho)$, and   $(\rho_1,\FF_1)$ is equivalent to $(\rho_2,\FF_2)$ if there is an order preserving isomorphism $\psi:\FF_1\to\FF_2$ 
	with $\rho_2(\gamma)=\psi(\rho_1(\gamma))$ for all $\gamma\in\Gamma$.
\end{prop}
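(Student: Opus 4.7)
The plan is to apply the characterization of points in the real spectrum of the coordinate ring $\KK[\calR_F(\Gamma,\bG(\KK))]$ in terms of equivalence classes of homomorphisms to real closed fields (Proposition~\ref{p.charrsp}(3)) and translate it into the language of representations via the evaluation map $\ev_F$ from \eqref{e.evaluation}.

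First I would unwind the description: since $\calR_F(\Gamma,G)\subset \calR_F(\Gamma,\bG(\KK))$ is a semialgebraic subset of a real algebraic set, a point $\alpha\in\rsp{\calR_F(\Gamma,G)}$ is, by Proposition~\ref{p.charrsp}(3) applied to $\calA=\KK[\calR_F(\Gamma,\bG(\KK))]$, the equivalence class of a homomorphism $\phi_\alpha\colon\calA\to\FF$, where $\FF$ is the real closure of the field of fractions of $\phi_\alpha(\calA)$. Setting $\rho(\gamma_k)_{ij}:=\phi_\alpha(X^{(\gamma_k)}_{ij})$ for $1\leq i,j\leq n$ and $1\leq k\leq f$ yields $f$ matrices in $\bG(\FF)$ (since the defining polynomial relations of $\calR_F(\Gamma,\bG(\KK))$ are sent to zero), and the fact that $\alpha\in\rsp{\calR_F(\Gamma,G)}$ means that the polynomial inequalities cutting out $\calR_F(\Gamma,G)$ inside $\calR_F(\Gamma,\bG(\KK))$ are respected by $\phi_\alpha$, so these matrices actually lie in $G_\FF$. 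By the universal property of the evaluation map, they extend to a homomorphism $\rho\colon\Gamma\to G_\FF$.

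Next I would identify the field $\FF$ intrinsically in terms of $\rho$. The image $\phi_\alpha(\calA)$ is the $\KK$-subalgebra generated by the matrix entries $\rho(\gamma_k)_{ij}$ for $\gamma_k\in F$. Since $F$ generates $\Gamma$ as a group and every element of $\bG(\FF)$ has determinant one, the matrix entries $\rho(\gamma)_{ij}$ for arbitrary $\gamma\in\Gamma$ lie in this algebra (one expresses $\rho(\gamma)$ as a product of $\rho(\gamma_k)^{\pm1}$ and uses Cramer's rule to rewrite inverses in terms of entries of the matrix itself). Conversely those are obviously contained in $\KK[\rho]$, so $\phi_\alpha(\calA)=\KK[\rho]$ and hence $\Frac(\phi_\alpha(\calA))=\KK(\rho)$; by definition $\FF$ is its real closure.

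Finally I would transfer the equivalence relation. Two triples $(\phi_1,\FF_1)$ and $(\phi_2,\FF_2)$ represent the same prime cone exactly when there exists a $\KK$-algebra isomorphism $\psi\colon\FF_1\to\FF_2$ with $\phi_2=\psi\circ\phi_1$; translating through $\ev_F$ this becomes $\rho_2(\gamma_k)=\psi(\rho_1(\gamma_k))$ entrywise for every generator $\gamma_k\in F$, which extends by multiplicativity to $\rho_2(\gamma)=\psi(\rho_1(\gamma))$ for all $\gamma\in\Gamma$. Any such $\psi$ is automatically order preserving, since on a real closed field the ordering is unique and coincides with the set of squares, which is preserved by any field isomorphism. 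The main subtle point I expect is exactly this identification $\phi_\alpha(\calA)=\KK[\rho]$: one must be careful that the coordinate ring of $\calR_F(\Gamma,\bG(\KK))$ records only the generators of $\Gamma$, whereas $\KK[\rho]$ is defined using all of $\Gamma$; the closure under inverses inside $\bG$ is what makes the two rings coincide.
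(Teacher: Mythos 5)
Your argument is correct and follows the same route as the paper: apply Proposition~\ref{p.charrsp}(3) to $\calA=\KK[\calR_F(\Gamma,\bG(\KK))]$, read off the representation from the images of the coordinate functions, and identify $\FF$ as the real closure of $\KK(\rho)$. One small simplification: since $F$ is assumed symmetric ($F=F^{-1}$) in the paper's setup, the inverses of generators already lie in $F$, so the Cramer's-rule step in your verification that $\phi_\alpha(\calA)=\KK[\rho]$ is not needed.
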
 
\begin{proof}
	Let $\alpha\in \rsp{\calR_F(\Gamma,G)}\subset \rsp{\calR_F(\Gamma,\bG(\KK))}$ and 
	$$\phi_\alpha:\KK[\calR_F(\Gamma,\bG(\KK))]\to \KK_\alpha$$ 
	the corresponding morphism, where $\KK_\alpha$ is the ordered field obtained as field of fraction of the quotient of $\KK[\calR_F(\Gamma,\bG(\KK))]$ by $\frak p_\alpha=\alpha\cap(-\alpha)$ (see Proposition ~\ref{p.charrsp}). Let $\FF_\alpha\supset\KK_\alpha$ be a real closure of $\KK_\alpha$. 
	Then
	$((\phi_\alpha(X_{ij}^{(\gamma)}))_{ij})_\gamma\in \calR_F(\Gamma,G_{\FF_\alpha})$ and thus determines a representation $\rho:\G\to G_{\FF_\alpha}<\bG({\FF_\alpha})$. Clearly $\KK(\rho)=\KK_\alpha$ which shows that ${\FF_\alpha}$ is the real closure of $\KK(\rho)$. 
	
	Conversely if $\rho:\Gamma\to G_\FF$ is a representation and $\FF$ is the real closure of $\KK(\rho)$ we obtain a homomorphism $\phi:\KK[\calR_F(\Gamma,\bG(\KK))]\to \KK(\rho)$ by sending $X_{ij}^{(\gamma)}$ to $\rho(\gamma)_{ij}$; the corresponding prime cone $\alpha$ is the subset of elements in $\KK[\calR_F(\Gamma,\bG(\KK))]$ whose image in $\KK(\rho)$ is non-negative. The statement concerning the equivalence is a straightforward verification using Proposition ~\ref{p.charrsp}(3).
	
\end{proof}

From now on we assume that $\bG$ is connected semisimple, and we turn
to an important characterisation of points in
$\rsp{\partial}\calR_F(\Gamma,G)=\rsp{\calR_F(\Gamma,G)}\setminus \calR_F(\Gamma,G_\RR)$ 
that are closed. Let $(\rho,\FF)$ represent a point in
$\rsp{\partial}\calR_F(\Gamma,G)$
where $\FF$ is the real closure of $\KK(\rho)$,
then $\FF$ admits a non-Archimedean, order compatible valuation
$v:\FF\to \RR\cup \{\infty\}$ (see Proposition ~\ref{prop:closed}),
and  thanks to the discussion of
\S~\ref{s.Fbuildings},  $\rho$ gives rise to an isometric action
on a complete CAT(0)-space $\ov\bgof $ with basepoint $[\Id]$
(cfr. also \cite[\S~6]{BP}).
Recall that the stabilizer of $[\Id]$ in $\bG(\FF)$ is $\bG(\calO)$
where $\calO$ is the valuation ring of $v$.

\begin{prop}\label{prop:Rspecrepcl}
	The following are equivalent:
	\begin{enumerate}
		\item $(\rho,\FF)$ represents a closed point of
		$\rsp{\partial}\calR_F(\Gamma,G)$.
		\item $\sum_{\gamma\in F}\tr({\rho(\gamma)}\rho(\gamma)^t)$ is a big
		element  of $\FF$.
		\item $\rho(\Gamma)\not\subset\bG(\calO)$. 
		\item $[\Id]\in\bgof$ is not a global fixed point of $\rho(\G)$.
	\end{enumerate}
\end{prop}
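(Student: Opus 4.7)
The plan is to establish the cycle $(1) \Leftrightarrow (2) \Leftrightarrow (3) \Leftrightarrow (4)$, disentangled into three separate equivalences of decreasing triviality.

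The equivalence $(3) \Leftrightarrow (4)$ is immediate from Proposition~\ref{prop:4.12}(1), which identifies the $\bG(\FF)$-stabilizer of $[\Id] \in \bgof$ with $\bG(\calO)$: the $\rho(\Gamma)$-action has $[\Id]$ as a global fixed point precisely when $\rho(\Gamma) \subset \bG(\calO)$.

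For $(2) \Leftrightarrow (3)$ I would fix any non-trivial order-compatible valuation $v$ on $\FF$, which exists by Lemma~\ref{l.bigelt} since $\FF$ has finite transcendence degree over $\KK$ and is non-Archimedean. The decisive observation is that in such a field a positive element $a > 0$ is a big element if and only if $v(a) < 0$: a big element cannot lie in the valuation ring, and conversely $v(a) < 0$ forces $v(a^n) \to -\infty$, so $a^n$ eventually exceeds any fixed element of $\FF$. Furthermore, for positive summands $a_i > 0$ one has the sharp equality $v(\sum_i a_i) = \min_i v(a_i)$, obtained by combining the ultrametric inequality with the order-compatible bound $\sum_i a_i \geq a_j$. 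Applying this to $S = \sum_{\gamma \in F} \sum_{i,j} \rho(\gamma)_{ij}^2$ yields $v(S) = 2\min_{\gamma,i,j} v(\rho(\gamma)_{ij})$, so $S$ is big $\Leftrightarrow v(S) < 0 \Leftrightarrow$ some $\rho(\gamma)_{ij}$ has negative valuation $\Leftrightarrow \rho(F) \not\subset \bG(\calO)$. The last condition is equivalent to $\rho(\Gamma) \not\subset \bG(\calO)$ because $\bG(\calO)$ is a subgroup of $\bG(\FF)$, so containment on generators extends to containment on the whole group.

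For $(1) \Leftrightarrow (2)$ I would invoke Proposition~\ref{prop:closed_points}, by which $\alpha$ is closed if and only if $\FF$ is Archimedean over $\phi_\alpha(\KK[V]) = \KK[\rho]$, the $\KK$-algebra generated by the entries $\rho(\gamma)_{ij}$, $\gamma \in F$. The direction $(2) \Rightarrow (1)$ is immediate since $S \in \KK[\rho]$: if $S$ is big then every element of $\FF$ is bounded by some power $S^N \in \KK[\rho]$. For $(1) \Rightarrow (2)$, the transfer principle applied to the inequality $\tr(gg^t) \geq n$ for $g \in \SL_n$ (an arithmetic-geometric mean consequence, valid over $\RR$ hence over every real closed $\FF \supset \KK$) gives $S \geq n|F| \geq 1$, so $\sqrt{S} \leq S$ and hence $|\rho(\gamma)_{ij}| \leq S$; moreover non-Archimedean-ness of $\FF$ combined with $\FF$ being Archimedean over $\KK[\rho]$ forces $S$ to dominate $\KK$ (otherwise all $\rho(\gamma)_{ij}$ would be $\KK$-bounded, making $\KK[\rho]$, and hence $\FF$, Archimedean over $\KK \subset \RR$). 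These two facts together let one bound every polynomial in the $\rho(\gamma)_{ij}$ with $\KK$-coefficients by $c \cdot S^N \leq S^{N+1}$ for suitable $N$ and $c \in \KK_{>0} < S$, showing $S$ is big.

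The main obstacle lies in the implication $(1) \Rightarrow (2)$: one needs both the semialgebraic lower bound $S \geq 1$ (from the trace inequality on $\SL_n$, transferred from $\RR$) and the automatic dominance of $S$ over $\KK$ (from the non-Archimedean hypothesis) in order to absorb the $\KK$-coefficients of bounding polynomials into powers of $S$ itself; neither ingredient alone suffices, and this is where the semialgebraic geometry of $\SL_n$ and the valuation-theoretic structure of $\FF$ must interact carefully.
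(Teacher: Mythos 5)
Your proof is correct and follows essentially the same route as the paper: $(3)\Leftrightarrow(4)$ via Proposition~\ref{prop:4.12}, $(2)\Leftrightarrow(3)$ via the order-compatible valuation and order-convexity of $\calO$, and $(1)\Leftrightarrow(2)$ via Proposition~\ref{prop:closed_points}. One small remark: the trace inequality giving $S\geq n|F|$ is actually redundant in $(1)\Rightarrow(2)$, since once non-Archimedeanness of $\FF$ and Archimedean-over-$\KK[\rho]$ force $S$ to dominate every element of $\KK$, the bound $S\geq 1$ comes for free from $1\in\KK$, so — contrary to your closing remark — that second ingredient alone suffices.
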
	
\begin{proof}
	This uses Proposition ~\ref{prop:closed_points} and Proposition ~\ref{prop:Rspecrep}. Namely $(\rho,\FF)$ represents a closed point if and only if $\FF$ is Archimedean over $\KK[\rho]$, that is $\KK[\rho]$ contains a big element; in turn this is equivalent to $\sum_{\gamma\in F}\tr({\rho(\gamma)}\rho(\gamma)^t)$ being a big element. If $\|s\|:=e^{-v(s)}$ is the norm associated to the valuation $v$, this is equivalent to $\|\sum_{\gamma\in F}\tr({\rho(\gamma)}\rho(\gamma)^t)\|>1$, that is $\sum_{\gamma\in F}\tr({\rho(\gamma)}\rho(\gamma)^t)\notin\calO$, namely $\rho(\G)\not\subset \bG(\calO)$. 
	In turn this is equivalent to $[\Id]\in\bgof$ not being a
	global fixed point of $\rho(\G)$ (see Proposition ~\ref{prop:4.12}).
\end{proof}	
As a first application of the Tarski--Seidenberg Principle, we can deduce non-parabolicity for many $G_\FF$-valued representations.
\begin{defn}
	A representation $\rho:\Gamma\to G_\FF$ is \emph{$\FF$-parabolic} 
	if there exists a parabolic subgroup $\bQ<\bG$ defined over $\FF$ such that $\rho(\Gamma)\subset \bQ(\FF)$.
\end{defn}

\begin{prop}
	Let $\frakT\subset \calR_F(\Gamma,G(\KK))$ be a semialgebraic subset consisting of representations that are not $\KK$-parabolic. 
	Then for all $(\rho,\FF)$ representing some point in $\rsp \frakT$, $\rho$ is not $\FF$-parabolic.
\end{prop}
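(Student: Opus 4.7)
My plan is to express the property of being $\KK$-parabolic as a semialgebraic condition on the tuple of generator-images, and then invoke the Artin--Lang correspondence between semialgebraic sets and constructible sets. Specifically, I would exhibit a semialgebraic subset $\calP\subset\calR_F(\Gamma,G)$ of $\KK$-parabolic representations whose $\FF$-extension $\calP_\FF$ is precisely the set of $\FF$-parabolic representations. Since $\frakT\cap\calP=\varnothing$ by hypothesis, the Boolean-algebra isomorphism of Theorem~\ref{thm:constr}(1) would then give $\cons(\frakT)\cap\cons(\calP)=\varnothing$, forcing any $(\rho,\FF)$ representing a point of $\rsp\frakT$ to have $(\rho(\gamma))_{\gamma\in F}\notin\calP_\FF$.

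To construct $\calP$, I would use the fact (from \S~\ref{subsec:4.1} together with Corollary~\ref{cor:4.2}(1)) that there is a \emph{finite} list $\bQ_1,\dots,\bQ_r$ of standard parabolic $\KK$-subgroups of $\bG$, and that every parabolic $\FF$-subgroup of $\bG$, for any real closed $\FF\supset\KK$, is $\bG(\FF)$-conjugate to one of these (which simultaneously play the role of standard $\FF$-parabolics). Since $F$ generates $\Gamma$ and each $\bQ_i(\FF)$ is a subgroup, a representation $\rho\colon\Gamma\to\bG(\FF)$ is $\FF$-parabolic iff some $\bQ_i(\FF)$ contains $g\rho(\gamma)g^{-1}$ for all $\gamma\in F$, for a common $g\in\bG(\FF)$. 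I would therefore set
\[
\calP_i:=\left\{(A_\gamma)_{\gamma\in F}\in\calR_F(\Gamma,G):\ \exists g\in\bG(\KK),\ gA_\gamma g^{-1}\in\bQ_i(\KK)\ \forall \gamma\in F\right\},
\]
and $\calP:=\bigcup_{i=1}^r\calP_i$. Each $\calP_i$ is the projection onto the first factor of the semialgebraic subset $\{((A_\gamma),g)\mid gA_\gamma g^{-1}\in\bQ_i(\KK)\}\subset\calR_F(\Gamma,G)\times\bG(\KK)$, hence is semialgebraic by Tarski--Seidenberg.

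The main step, and where one has to be careful, is the verification that the $\FF$-extension $\calP_\FF$ coincides with the set of $\FF$-parabolic tuples. Here I would invoke Properties~\ref{ppts.2.10}(2) to transfer the existential quantifier defining each $\calP_i$ from $\KK$ to $\FF$ (so that $(\calP_i)_\FF$ is exactly the set of tuples $\bG(\FF)$-conjugate into $\bQ_i(\FF)$), combined with Corollary~\ref{cor:4.2} which guarantees that no new $\FF$-parabolic conjugacy classes appear beyond those of the $\bQ_i$'s. To conclude, given $\alpha\in\rsp\frakT$ represented by $(\rho,\FF)$, one has $\alpha=\alpha_{(\rho(\gamma))_{\gamma\in F}}$ by Remark~\ref{rem:point->prime cone}; by Properties~\ref{prop:constr}(2) the constructible set $\cons(\calP)$ is cut out by the same semialgebraic formula as $\calP$, so it would contain $\alpha_{(\rho(\gamma))_{\gamma\in F}}=\alpha$ as soon as $(\rho(\gamma))_{\gamma\in F}\in\calP_\FF$. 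Since $\alpha\in\cons(\frakT)$ and $\cons(\frakT)\cap\cons(\calP)=\varnothing$, this forces $(\rho(\gamma))_{\gamma\in F}\notin\calP_\FF$, i.e., $\rho$ is not $\FF$-parabolic.
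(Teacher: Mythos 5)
Your proof is correct and follows essentially the same route as the paper's: both arguments hinge on the finiteness of standard parabolic $\KK$-subgroups, the fact that every parabolic $\FF$-subgroup is $\bG(\FF)$-conjugate to a standard one (Corollary~\ref{cor:4.2}), and the Boolean-algebra isomorphism of Theorem~\ref{thm:constr}(1) forcing a non-empty intersection with $\frakT$ over $\KK$. The only organizational difference is how conjugation is handled: the paper first replaces $\frakT$ by $\bG(\KK)\cdot\frakT$ (still semialgebraic and still avoiding $\KK$-parabolic representations), so that a conjugated version of $\rho$ still represents a point of $\rsp\frakT$, and then intersects with the single $\KK$-algebraic set $\calR_F(\Gamma,\bQ_0(\KK))$; you instead leave $\frakT$ alone and absorb the conjugation into an existential quantifier defining the semialgebraic set $\calP$ of all $\KK$-parabolic tuples, then argue $\cons(\frakT)\cap\cons(\calP)=\varnothing$. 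Both variants rely on the same transfer principle and neither is meaningfully shorter or more general; yours has the slight advantage of avoiding the WLOG step, at the modest cost of an extra verification that $\calP_\FF$ is precisely the set of $\FF$-parabolic tuples (which you correctly deduce from Properties~\ref{ppts.2.10}(2)).
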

\begin{proof}
	We can assume without loss of generality that $\frakT$ is  $\bG(\KK)$-invariant since the set of $\KK$-parabolic representations is invariant under $\bG(\KK)$-conjugation and the image of the map $\bG(\KK)\times \frakT\to \calR_F(\Gamma,\bG(\KK))$, $(g,\rho)\mapsto g\rho g^{-1}$ is semialgebraic.
	
	Assume now by contradiction that there exists an element $(\rho,\FF)\in\rsp \frakT$ that is $\FF$-parabolic, say $\rho(\G)\subset \bQ(\FF)$. 
	Since (see \S~\ref{subsec:4.1})	$\bQ(\FF)$ is
	$\bG(\FF)$-conjugated to a parabolic $\KK$-subgroup $\bQ_0<\bG$,
	we may assume that $\rho(\Gamma)<\bQ_0(\FF)$.
	Thus we have, for the $\KK$-algebraic subset $\calR_F(\Gamma,\bQ_0(\KK))\subset \calR_F(\Gamma,\bG(\KK))$, 
	that the intersection $\rsp \frakT\cap \rsp{\calR_F(\Gamma,\bQ_0(\KK))}$ is not empty. This, by Theorem ~\ref{thm:constr}(1), implies that also $\frakT\cap\calR_F(\Gamma,\bQ_0(\KK))$ is not empty, a contradiction.
\end{proof}

Next we exploit the hyper-$\KK$-field realization in \S~\ref{subsec:3}.
Given an ultrafilter $\omega$ on $\NN$ and a sequence $(\rho_k)_{k\in\NN}$ in $\Hom(\G,\bG(\KK))$ we let ${\rho^\omega}:\G\to\bG(\KK^\omega)$ denote the corresponding representation defined by $\rho^\omega(\gamma)_{ij}=[((\rho_k(\g))_{ij})]_\omega\in\KK^\omega$.
When $\omega$ is non-principal and  $\pmu\in\KK^\omega$ is an infinitely
large element such that $\rho^\omega(\G)\subset \bG(\calO_\pmu)$, we
denote by $\rho^\omega_\pmu:\G\to \bG(\KK^\omega_\pmu)$ the representation obtained by composing
$\rho^\omega$ with the canonical projection
$\bG(\calO_\pmu)\to\bG(\KK^\omega_\pmu)$.

\begin{prop}\label{c.reP2}
	Let $\frakT\subset  \calR_F(\Gamma,G)$ be a semialgebraic
	subset. Let $(\rho,\FF)$ represent some point $\alpha$ in $\rsp{\frakT}$ with $\FF=\FF_\alpha$.
	\begin{enumerate}
		\item 
		There exists $(\rho_k)_{k\in\NN}$ in $\frakT$, an ultrafilter  $\omega$ on $\NN$ and a field injection $i:\FF\to {\KK^\omega}$ so that $i\circ \rho=\rho^\omega$.
		\item In addition, if $\alpha$ is a closed point of $\rsp{\partial}\calR_F(\Gamma,G)$, then $i$ induces a valuation
		compatible field injection $j:\FF\to {\KK^\omega_\pmu}$ so that $j\circ \rho=\rho^\omega_\pmu$, where $\pmu:=i\left(\sum_{\gamma\in F}\tr({\rho(\gamma)}\rho(\gamma)^t)\right)$.
	\end{enumerate}
\end{prop}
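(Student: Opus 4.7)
The strategy is to reduce everything to Proposition~\ref{prop:1} applied to the ambient real algebraic set $V := \calR_F(\Gamma,\bG(\KK))$, inside which $\frakT$ sits as a semialgebraic subset. For (1), I would first apply Proposition~\ref{prop:1} to the prime cone $\alpha\in\rsp V$ representing the given point: this produces a non-principal ultrafilter $\omega$ on $\NN$ and a sequence $(x_k)_{k\in\NN}$ in the support variety $V(\alpha)$ with $\alpha=\alpha_{x^\omega}$. Since $\alpha\in\cons(\frakT)$, the second clause of Proposition~\ref{prop:1} gives $\{k:x_k\in\frakT\cap V(\alpha)\}\in\omega$, so after discarding an $\omega$-negligible set of indices we may assume $x_k\in\frakT$ for every $k$; each $x_k$ is then the evaluation of a representation $\rho_k\in\frakT$. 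The associated evaluation map $\KK[V]\to\KK^\omega$, $f\mapsto f(x^\omega)$, factors through $\fp_\alpha$ and extends to the real closure, giving the order-preserving field injection $i:\FF\hookrightarrow\KK^\omega$ from \eqref{eq:InjectionInHyperreals}. Tracing definitions, $i$ sends the matrix entry $\rho(\gamma)_{ij}$ to $[(\rho_k(\gamma)_{ij})]_\omega=\rho^\omega(\gamma)_{ij}$, and hence $i\circ\rho=\rho^\omega$ entrywise, which is (1).

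For (2), note first that with the standard Euclidean norm $N$ on $M_{n,n}(\KK)^f$ one has $N(x_k)^2=\sum_{\gamma\in F}\tr(\rho_k(\gamma)\rho_k(\gamma)^t)$, so
\[
\pmu \;=\; i\Bigl(\sum_{\gamma\in F}\tr(\rho(\gamma)\rho(\gamma)^t)\Bigr) \;=\; [(N(x_k)^2)]_\omega \in \KK^\omega,
\]
matching the choice of scale in Corollary~\ref{c.1oKl}(2). Since $\alpha$ is a closed point of $\rsp\partial\calR_F(\Gamma,G)$, Proposition~\ref{prop:Rspecrepcl} gives that $\sum_{\gamma\in F}\tr(\rho(\gamma)\rho(\gamma)^t)$ is a big element of $\FF$, so $\pmu$ is infinite in $\KK^\omega$; and Proposition~\ref{prop:closed_points} ensures that $\FF$ is Archimedean over $\phi_\alpha(\KK[V])$. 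Since $\phi_\alpha(\KK[V])$ is generated as a $\KK$-algebra by the elements $i(\rho(\gamma)_{ij})$, each of which has absolute value at most $\pmu^{1/2}$ in $\KK^\omega$, it follows that every element of $i(\FF)$ is bounded by some integral power of $\pmu$, that is $i(\FF)\subset\calO_\pmu$. Corollary~\ref{c.1oKl}(1) (or equivalently, composing $i$ with the canonical projection $\calO_\pmu\twoheadrightarrow\KK^\omega_{\pmu}$) then produces the valuation-compatible field injection $j:\FF\to\KK^\omega_\pmu$, and the identity $j\circ\rho=\rho^\omega_\pmu$ is immediate from $i\circ\rho=\rho^\omega$ upon quotienting by $\calI_\pmu$.

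The only non-formal ingredient is the inclusion $i(\FF)\subset\calO_\pmu$ in part (2), which is exactly where the hypothesis that $\alpha$ is a \emph{closed} non-Archimedean point is used; everything else is bookkeeping that matches the three data (a) the sequence $(x_k)$ lies in $\frakT$, (b) the scale $\pmu$ equals the image of the trace sum, and (c) the real closure of $\KK(\rho)$ is realized inside the valuation ring of $\KK^\omega$ determined by that scale. All three are naturally coupled by the identity $N(x_k)^2=\sum_{\gamma\in F}\tr(\rho_k(\gamma)\rho_k(\gamma)^t)$, so once Proposition~\ref{prop:1} and Corollary~\ref{c.1oKl} are on hand the argument is essentially a diagram chase.
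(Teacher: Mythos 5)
Your proof is correct and follows essentially the same route as the paper's: Proposition~\ref{prop:1} for the hyper-$\KK$ realization in part~(1), and Proposition~\ref{prop:Rspecrepcl} plus Corollary~\ref{c.1oKl} for part~(2). The only substantive difference is that you spell out the inclusion $i(\FF)\subset\calO_\pmu$ (via Proposition~\ref{prop:closed_points} and the bound $|i(\rho(\gamma)_{ij})|\le\pmu^{1/2}$ for $\gamma\in F$), a step the paper's one-line treatment of part~(2) leaves implicit; this is a useful expansion but not a different argument.
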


\begin{proof}The first statement is an immediate application of Proposition ~\ref{prop:1} together with Proposition ~\ref{prop:Rspecrep}.
	Concerning the second one, since $\alpha \in \rsp{\partial}\calR_F(\Gamma,G)$, the ultrafilter $\omega$ is non-principal. Furthermore, since $\alpha \notin \calR_F(\Gamma,G_\RR)$, $\FF$ is non-Archimedean, and  since $\alpha$ is a closed point, $\sum_{\gamma\in F}\tr({\rho(\gamma)}\rho(\gamma)^t)$ is a big
	element  of $\FF$ (Proposition ~\ref{prop:Rspecrepcl}). This implies that $\pmu:=i\left(\sum_{\gamma\in F}\tr({\rho(\gamma)}\rho(\gamma)^t)\right)$ is an infitinely large element  of $\KK^\omega$, allowing to define $\rho^\omega_\pmu$.
\end{proof}

In turn the accessibility result Corollary~\ref{cor:3.9} can be applied to obtain simultaneous realizations of representations with values in a fixed Robinson field:
\begin{cor}\label{c.repacc}
	Assume $\KK$ is a countable real closed field with $\KK\subset\RR$.
	Fix a non-principal ultrafilter $\omega$ on $\NN$ and an infinitely
	large element $\pmu=[(\mu_k)]_\omega\in\oK$.  Let  $(\rho,\FF)$ represent a closed point $\alpha\in\rspcl{\partial} {\calR_F(\Gamma,G)}$ with $\FF=\FF_\alpha$.
	There exists a valuation
	compatible injection $j:\FF\to {\KK^\omega_\pmu}$ and a sequence
	$(\rho_k)_{k\in\NN}$ in $\calR_F(\Gamma,G_\RR)$ such that
	$\mu_k=\sum_{\gamma\in F}\tr(\rho_k(\gamma)\rho_k(\gamma)^t)$ and
	$j\circ\rho=\rhom$.
\end{cor}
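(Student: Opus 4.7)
The plan is to apply Corollary~\ref{cor:3.9} to the real algebraic set $V := \calR_F(\Gamma, \bG(\KK))$, the closed semialgebraic subset $S := \calR_F(\Gamma, G) \subset V$, and the proper continuous semialgebraic function
\[
g \colon V \to \KK_{\geq 0}, \qquad g(\rho) := \sum_{\gamma \in F} \tr\bigl(\rho(\gamma)\rho(\gamma)^t\bigr),
\]
which is precisely the squared Euclidean norm on $M_{n,n}(\KK)^f$. Since $\alpha \in \rspcl\partial\calR_F(\Gamma, G) \subset \cons(S) \cap \rspcl V$ is non-Archimedean, Corollary~\ref{cor:3.9} produces, for the prescribed $\omega$ and infinite element $\pmu \in \oK$, a sequence $(\rho_k)_{k \in \NN}$ in $V(\alpha)_\RR \cap S_\RR \subset \calR_F(\Gamma, G_\RR)$ such that $[(g(\rho_k))]_\omega = \pmu$ in $\oK$ and $\alpha = \alpha_{\rho^\omega_\pmu}$. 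Replacing the representative $(\mu_k)$ of $\pmu$ by $(g(\rho_k))$, which does not alter the class in $\oK$, we secure the pointwise equality $\mu_k = \sum_\gamma \tr(\rho_k(\gamma)\rho_k(\gamma)^t)$ required by the statement.

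Next I verify that $\rho^\omega_\pmu \in \calR_F(\Gamma, G_{\KK^\omega_\pmu})$ is well defined. Since $|\rho_k(\gamma)_{ij}|^2 \leq g(\rho_k) = \mu_k$ for every $k$ and every coordinate, each entry of $\rho^\omega \in V_{\KK^\omega}$ satisfies $|\rho^\omega(\gamma)_{ij}|^2 \leq \pmu$ and therefore lies in $\calO_\pmu$; the canonical reduction modulo $\calI_\pmu$ is thus defined on $\rho^\omega$ and produces $\rho^\omega_\pmu \in V_{\KK^\omega_\pmu}$.

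Finally I construct $j$ and check valuation compatibility. Let $\FF'$ denote the real closure of $\KK(\rho^\omega_\pmu)$ inside $\KK^\omega_\pmu$: both $(\rho, \FF)$ and $(\rho^\omega_\pmu, \FF')$ represent the same prime cone $\alpha = \alpha_{\rho^\omega_\pmu}$, so by Proposition~\ref{prop:Rspecrep} combined with Proposition~\ref{p.charrsp} there exists an order-preserving field isomorphism $\FF \to \FF'$ intertwining the two representations; composing with the inclusion $\FF' \hookrightarrow \KK^\omega_\pmu$ yields an order-preserving field injection $j \colon \FF \hookrightarrow \KK^\omega_\pmu$ with $j \circ \rho = \rho^\omega_\pmu$. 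By Proposition~\ref{prop:Rspecrepcl}, the element $b := \sum_\gamma \tr(\rho(\gamma)\rho(\gamma)^t) \in \FF$ is big, and applying $j$ entrywise one gets
\[
j(b) = \sum_{\gamma \in F} \tr\bigl(\rho^\omega_\pmu(\gamma) \rho^\omega_\pmu(\gamma)^t\bigr) = \pmu.
\]
The valuation on $\FF$ is $-\log_b$ while the Robinson valuation on $\KK^\omega_\pmu$ is $-\log_\pmu$, and since an order-preserving field injection sending a big element to a big element automatically intertwines the corresponding $\log$'s, $j$ is valuation compatible. The only nontrivial input beyond Corollary~\ref{cor:3.9} is the standard promotion of a prime-cone identification to a field injection already used in Proposition~\ref{c.reP2}(2), so no substantial obstacle is expected; the subtlest step is verifying valuation compatibility, which ultimately reduces to the identity $j(b) = \pmu$ above.
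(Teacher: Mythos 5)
Your proof is correct and takes the approach the paper indicates (the paper states this corollary without a written proof, immediately after remarking that it follows from Corollary~\ref{cor:3.9}). You apply Corollary~\ref{cor:3.9} to $V=\calR_F(\Gamma,\bG(\KK))$, $S=\calR_F(\Gamma,G)$, and the squared-norm function, exactly as the paper intends, and your verifications of the remaining details — integrality of the Robinson reduction, the promotion of the prime-cone identity $\alpha_{\rho^\omega_\pmu}=\alpha$ to an order-preserving injection $j$ via uniqueness of the real closure, and valuation compatibility from $j(b)=\pmu$ where $b=\sum_{\gamma}\tr(\rho(\gamma)\rho(\gamma)^t)$ is the big element singled out by Proposition~\ref{prop:Rspecrepcl} — are the expected ones and are carried out accurately.
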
 

\subsection{Harmonic maps into CAT(0)-spaces}\label{s.harmonic}
In this section we consider a closed semialgebraic subsets $\frakT\subset \calR_F(\Gamma,G)$ that consists of non-parabolic representations. 
Examples of such sets
are given by Hitchin and maximal representation varieties. 
Our goal is to show that every point in 
$\rspcl  \frakT$ corresponds to an action on a complete CAT(0)-space
that is non-evanescent.

We recall the definition.
\begin{defn} (see \cite[\S 1.2]{MonJams2006})
	An isometric action of a finitely generated group $\Gamma$ on a CAT(0) space $\mathcal Y$ is \emph{non-evanescent} if, given a finite generating set $F$ of $\G$, and denoting the displacement at a point $x$ by \label{n.53}
	$$\calD_F^\rho(x):=\max_{\gamma\in F}d(\rho(\gamma)x,x),$$
	the set of points with displacement bounded by $R$ is a bounded subset for every $R$. 
\end{defn}

Observe that this property implies that $\G$ doesn't have a fixed point in the ideal boundary of $\mathcal Y$; if $\mathcal Y$ is a symmetric space, an action is non-evanescent if and only if it is non-parabolic.

\begin{thm}\label{p.properact}
	Let $\frakT\subset \calR_F(\Gamma,G)$ be a $G$-invariant closed
	semialgebraic subset. Assume that no representation $\rho\in \frakT$
	is parabolic.  Then, for every $(\rho,\FF)$
	representing a point
	$\alpha$ in $\rspcl{\partial} \frakT$ with $\FF=\FF_\alpha$,
	the action of $\G$ on the complete
	CAT(0)-space $\obgof$ is non-evanescent.
\end{thm}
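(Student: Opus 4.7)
I would argue by contradiction, transferring a semialgebraic quantitative form of non-evanescence from $\calX_\RR$ to $\obgof$ via the Tarski--Seidenberg/Artin--Lang principle. Fix a semialgebraic norm $N$ on $\bS$ (Example~\ref{e.fundExSemialgebraicNorm}) and let $D_N$ be the associated multiplicative distance on $\calX$ (\S~\ref{s.seminorm}); define
\bqn
S(\sigma, R') := \sup\left\{D_N(x, \Id) \,:\, x\in \calX_\RR,\ \max_{\gamma\in F} D_N(\sigma(\gamma)_\ast x, x)\leq R'\right\}
\eqn
for $(\sigma, R') \in \frakT \times \RR_{\geq 1}$. Since $\calX_\RR$ is a proper CAT(0) space, non-parabolicity coincides on $\frakT$ with non-evanescence of the action on $\calX_\RR$, so $S(\sigma, R')<\infty$ everywhere, and by Tarski--Seidenberg $S$ is semialgebraic. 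Applying semialgebraic cell decomposition together with the Puiseux expansion at infinity of a one-variable semialgebraic function, one obtains an integer $N_0 \in \NN$ and a positive semialgebraic function $\kappa : \frakT \to \RR_{>0}$ with
\bq\label{eq:polybdDplan}
S(\sigma, R')\,\leq\, \kappa(\sigma)\, {R'}^{N_0}\qquad \text{for every } (\sigma, R')\in\frakT\times \RR_{\geq 1}.
\eq

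By the Artin--Lang principle (Properties~\ref{ppts.2.10}), the polynomial inequality \eqref{eq:polybdDplan} extends verbatim to every real closed extension. In particular, for the representation $\rho:\G\to\bG(\FF)$ attached to $\alpha$, every $x\in\calX_\FF$ and $R'\in\FF_{\geq 1}$ with $\max_{\gamma\in F}D_N^\FF(\rho(\gamma)_\ast x, x)\leq R'$ satisfies $D_N^\FF(x,\Id)\leq \kappa^\FF(\rho)\, {R'}^{N_0}$. Suppose, for contradiction, that the $\G$-action on $\obgof$ were evanescent: pick $R\geq 0$ and $(y_n)\subset \obgof$ with $d_{\|\cdot\|}^\FF(y_n,[\Id])\to\infty$ and $\max_{\gamma\in F}d_{\|\cdot\|}^\FF(\rho(\gamma)y_n,y_n)\leq R$. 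By density (\S~\ref{s.Fbuildings}) we may assume each $y_n\in\bgof$ and lift it to some $x_n\in\calX_\FF$. Let $b\in \FF$ be the big element provided by Proposition~\ref{prop:Rspecrepcl}(2), so that $d_{\|\cdot\|}^\FF=\log_b D_N^\FF$ on $\bgof$ (Corollary~\ref{cor:4.17}(4)). With $q:=\lceil R\rceil+1\in\NN$, the Dedekind cut definition of $\log_b$ (\S~\ref{s.valuation}) converts the displacement bound into $\max_{\gamma\in F}D_N^\FF(\rho(\gamma)_\ast x_n, x_n)\leq b^{q}$ in $\FF$. Feeding $R'=b^{q}$ into the transferred inequality and taking $\log_b$ yields
\bqn
d_{\|\cdot\|}^\FF(y_n,[\Id])\,=\,\log_b D_N^\FF(x_n,\Id)\,\leq\,\log_b \kappa^\FF(\rho)\,+\,q\, N_0\,,
\eqn
which is a finite real number because $\log_b$ of any positive element of $\FF$ lies in $\RR$; this contradicts $d_{\|\cdot\|}^\FF(y_n,[\Id])\to\infty$.

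The main obstacle is producing the polynomial bound \eqref{eq:polybdDplan}. The uniformity of the degree $N_0$ across the possibly non-compact set $\frakT$ relies on the finiteness of a semialgebraic stratification and on the fact that Puiseux exponents at infinity of the fibres $S(\sigma,\cdot)$ take only finitely many values as $\sigma$ varies, while the pointwise finiteness of $\kappa(\sigma)$ is exactly what the non-parabolicity hypothesis delivers---dropping that hypothesis would make $S(\sigma,\cdot)$ infinite at parabolic $\sigma$ and rule out any polynomial bound. The decisive mechanism in the contradiction is that $\log_b$ transforms the polynomial bound in $\FF$ into a linear (hence finite) bound in $\RR$, and $\log_b \kappa^\FF(\rho)$ is a finite real number for $\kappa^\FF(\rho)\in\FF^\ast$ regardless of how large this element may be in $\FF$.
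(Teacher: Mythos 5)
Your proof is correct in outline and takes a genuinely different route from the paper's. The paper argues via the hyper-$\KK$ field realization (Proposition~\ref{c.reP2}): it realizes $(\rho,\FF)$ as an $(\omega,\pmu)$-limit of real representations $\rho_k\in\frakT_\RR$, embeds $\obgof$ into the asymptotic cone $\bgrom$, and then runs a diagonal argument — picking points $x^{(k)}_n$ with bounded displacement and unboundedly growing distance, re-indexing, changing ultrafilter and scales, and producing a \emph{new} Robinson-field representation $\rhomp$ that fixes a point of $\partial_\infty\bgromp$ and hence is parabolic, contradicting closedness of $\rsp\frakT$. Your argument instead transfers a quantitative version of non-evanescence over $\RR$ directly to $\FF$ via the $\FF$-extension of a polynomial inequality, and the contradiction is obtained by the elementary observation that $\log_b$ maps any positive element of $\FF^\ast$ — however large — into $\RR$. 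This avoids the diagonalization and the change of ultrafilter entirely, and it actually yields a coarse-Lipschitz lower bound $\max_\gamma d^\FF(\rho(\gamma)y,y)\geq N_0^{-1}\bigl(d^\FF(y,[\Id])-\log_b\kappa^\FF(\rho)\bigr)$ on $\bgof$, which is stronger than bare non-evanescence.

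Two points deserve more care. First, the pivotal polynomial bound $S(\sigma,R')\leq\kappa(\sigma)R'^{N_0}$ is correct but your justification is compressed: what makes it work is (i) monotonicity of $S(\sigma,\cdot)$ in $R'$, so $S(\sigma,\cdot)$ is bounded on compact $R'$-intervals for each $\sigma$; (ii) the graph of $S$ is a semialgebraic subset of $\frakT\times\RR_{\geq1}\times\RR$ cut out by polynomials of some fixed degree $D$, so each slice $S(\sigma,\cdot)$ is a one-variable semialgebraic function whose Puiseux exponent at $R'=\infty$ lies in $[0,D]$ uniformly in $\sigma$; (iii) one then sets $N_0:=D$ and $\kappa(\sigma):=1+\sup_{R'\geq1}S(\sigma,R')/R'^{N_0}$, which is semialgebraic by Tarski--Seidenberg and finite-valued for each $\sigma$ by (i) and (ii). This is precisely where the non-parabolicity hypothesis enters — it is what makes each $S(\sigma,R')$ finite to begin with. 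Note that BCR Proposition 2.6.2 as stated would not apply directly since it requires continuity of $S$, which you neither claim nor need; the argument above circumvents continuity. Second, a minor normalization point: the metric $d^\FF_{\|\cdot\|}$ with $\|\cdot\|=\ln N_\RR(\exp(\cdot))$ coming from your chosen semialgebraic norm $N$ need not be the CAT(0) metric of the theorem statement (that requires a Euclidean norm on $\fa$), but since all Weyl-invariant norms are pairwise equivalent (Proposition~\ref{prop:4.10}(3) and Corollary~\ref{cor:4.17}), non-evanescence does not depend on which $d^\FF_{\|\cdot\|}$ one uses, so the conclusion is unaffected.
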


\begin{proof}
	Assume by contradiction that there exists  $(\rho,\FF)$ representing some point $\alpha\in\rspcl\partial \frakT$, 
	with $\FF=\FF_\alpha$, and $R>0$ such that 
	$$\text{Min}_{R,F}(\rho):=\{x\in \ov\bgof |\, \calD_F^\rho(x)<R\}$$
	is unbounded. 
	
	We know from Proposition~\ref{c.reP2} that we can find a non-principal ultrafilter $\omega$, 
	a sequence of scales $\pmu=(e^{\lambda_k})_{k\in\NN}$ 
	and a sequence of representations $(\rho_k)_{k\in\NN}$ in $\frakT$ and a valuation compatible field injection $j:\FF\to\rom$  
	such that $j\circ\rho=\rhom$.
	Furthermore we know from Corollary~\ref{cor:4.17} that there is  an equivariant isometric inclusion $$\ov\bgof \to {\calB_{\bG(\rom)}}$$
	and $\calB_{\bG(\rom)}$ is identified with the asymptotic cone $\Cone(\calX_\RR,\omega, \pmb{\lambda}, d)$ of the symmetric spaces with  constant basepoint $x^{(0)}=[\Id]_{n\geq 1}$.
	In particular, the set
	$\text{Min}_{R,F}(\rhom)=\{x\in \calB_{\bG(\rom)}|\, \calD_F^{\rhom}(x)<R\}$
	is unbounded in $\calB_{\bG(\rom)}$. 
	
	We may take $R$ large enough so that $x^{(0)}\in \text{Min}_{R,F}(\rhom)$. 
	Then choose a sequence $(x^{(k)})_{k\geq 1}$ in $\text{Min}_{R,F}(\rhom)$ with $d(x^{(k)},x^{(0)})\geq k$.
	
	With $x^{(k)}=(x^{(k)}_n)_{n\geq 1}$ we have:
	$$\left\{ \begin{array}{l l}
		\lim_\omega\frac1{\lambda_n}{d(x_n^{(k)},x_n^{(0)})}\geq k &\quad \forall k\\
		\lim_\omega\frac1{\lambda_n}{d(\rho_n(\gamma)x_n^{(k)},x_n^{(k)})}\leq R  &\quad \forall k, \forall \g\in F\\
		\lim_\omega\frac1{\lambda_n}{d(\rho_n(\gamma)x_n^{(0)},x_n^{(0)})}\leq R  &\quad\forall \g\in F.
	\end{array}
	\right.$$
	We can inductively choose, for every $k$, an integer $n(k)$ with $n(k)<n(k+1)$ such that 
	$$\left\{ \begin{array}{ll}
		{d(x_{n(k)}^{(k)},x_{n(k)}^{(0)})}\geq (k-1) {\lambda_{n(k)}}\\
		{d(\rho_{n(k)}(\gamma)x_{n(k)}^{(k)},x_{n(k)}^{(k)})}\leq (R+1){\lambda_{n(k)}}&\quad\forall \g\in F\\
		{d(\rho_{n(k)}(\gamma)x_{n(k)}^{(0)},x_{n(k)}^{(0)})}\leq (R+1){\lambda_{n(k)}}&\quad\forall \g\in F.
	\end{array}
	\right.$$
	
	Take any non-principal ultrafilter $\omega'$, the sequence of scales $\pmu'=(e^{\lambda_{n(k)}})_{k\in\NN}$ and  consider the  representation $\rhomp:\G\to \bG(\RR^{\omega'}_{\pmu'})$ corresponding to $(\rho_{n(k)})_{k\geq 1}$. 
	Observe that $\alpha_{\rhomp}$ is a retraction of a point in $\rsp \frakT$ and thus, since $\rsp \frakT$ is closed,  $\alpha_{\rhomp}$ is in $\rsp \frakT$. 
	We will show that the representation $\rhomp$ is parabolic, thus giving the desired contradiction.
	
	Let $\beta_k:=d(x_{n(k)}^{(k)},x_{n(k)}^{(0)})$ and consider the geodesic segment 
	$$r_k:[0,\beta_k/\lambda_{n(k)}]\to \calX_\RR$$
	joining    $x_{n(k)}^{(0)}$ to $x_{n(k)}^{(k)}$. 
	In the $\frac d{\lambda_{n(k)}}$ distance, the segment $r_k$ has length at least $k-1$, and, by convexity of the distance function in a CAT(0)-space, 
	$$	{d(\rho_{n(k)}(\gamma)r_k(t),r_k(t))}\leq (R+1){\lambda_{n(k)}}$$
	for all $t\in[0,\beta_k/\lambda_{n(k)}]$.
	
	We consider the ray $r:[0,\infty)\to\Cone_{\omega'}(\calX_\RR, x_{n(k)}^{(0)}, \frac{d}{\lambda_{n(k)}})$ defined by $r(t)=(\ov{r_j(t)})_{j\in\NN}$,
	where we set $\ov{r_j(t)}= r_j(t)$ unless $\beta_j\leq t\lambda_{n(j)}$ in which case we set  $\ov{r_j(t)}=x_{n(j)}^{(j)}$. 
	
	Then we have 
	$$d(\rhomp(\g)r(t),r(t))\leq R+1 \quad \forall t\geq0 \quad\forall \g\in F$$
	which implies that every element in $F$, and thus every element in $\G$, fixes the corresponding point in $\partial_\infty\bgromp$. Theorem ~\ref{t.bdrystab} implies then that the representation $\rhomp$ is parabolic, a contradiction.
\end{proof}	
A direct consequence of \cite[Theorem~4.1.2]{KSch} is then the following
\begin{cor}
	Let $\Gamma=\pi_1(M)$ be the fundamental group of a compact
	Riemannian manifold. Let $\frakT\subset \calR_F(\Gamma,G)$ be a
	closed $G$-invariant semialgebraic subset containing no parabolic
	representation.  Then for all $(\rho,\FF)$
	representing a point in $\rsp{\frakT}$,
	there  exists a $\rho$-equivariant harmonic map
	$$f:\wt M\to \ov\bgof.$$
\end{cor}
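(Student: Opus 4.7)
The plan is to invoke the Korevaar--Schoen existence theorem \cite[Theorem 4.1.2]{KSch}: given a finitely generated group $\Gamma = \pi_1(M)$ acting by isometries on a complete CAT(0) space $Y$, if the action is non-evanescent, then there exists a $\rho$-equivariant harmonic map $\wt M \to Y$. Thus the proof reduces to verifying that $\ov\bgof$ is a complete CAT(0) space and that the $\Gamma$-action is non-evanescent; essentially all the work has been done in the preceding results.

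For the CAT(0) hypothesis, I would appeal to Corollary~\ref{cor:4.17}: provided $\FF$ admits a valuation-compatible embedding into a Robinson field (which is automatic when $\FF$ has finite transcendence degree over $\qbarr$, by the results on Robinson field realization in \S\ref{subsec:3}), equipping $\ov\bgof$ with the distance induced by a Weyl-group invariant scalar product on $\fa$ yields a complete CAT(0) space. In the Archimedean case the target reduces to the Riemannian symmetric space $\calX_\RR$, which is standardly CAT(0).

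For non-evanescence I would split into two cases. If $(\rho,\FF)$ represents an interior point $\rho \in \frakT \subset \calR_F(\Gamma,G_\RR)$, then $\calX_\RR$ is a \emph{proper} CAT(0) space, so non-parabolicity (which holds by the hypothesis on $\frakT$) is equivalent to non-evanescence as explained in the introduction, and Korevaar--Schoen applies. If $(\rho,\FF)$ represents a closed boundary point of $\rspcl\partial\frakT$, then Theorem~\ref{p.properact} provides exactly the non-evanescence statement needed. For non-closed boundary points of $\rsp\partial\frakT$, Proposition~\ref{prop:Rspecrepcl} forces $\rho(\Gamma)\subset \bG(\calO)$, so $\rho(\Gamma)$ fixes $[\Id]\in\bgof$ and the constant map $\wt M \to [\Id]$ is trivially $\rho$-equivariant and harmonic.

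The main obstacle has already been overcome in the proof of Theorem~\ref{p.properact}: the delicate point is the transfer of non-evanescence from the classical symmetric-space setting to the non-proper CAT(0) target $\ov\bgof$, which required the asymptotic-cone argument via Robinson-field realization. Once that is in hand, the present corollary is indeed a direct consequence of \cite{KSch}.
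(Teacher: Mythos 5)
Your proof is correct and matches the paper's approach: the paper states the corollary as ``a direct consequence'' of Korevaar--Schoen, relying on Theorem~\ref{p.properact} for non-evanescence of the action at closed boundary points. Your explicit three-case analysis --- Archimedean interior points (where properness of $\calX_\RR$ upgrades non-parabolicity to non-evanescence), closed boundary points (Theorem~\ref{p.properact}), and non-closed boundary points (where Proposition~\ref{prop:Rspecrepcl} gives $\rho(\Gamma)\subset\bG(\calO)$, so the constant map at $[\Id]$ is equivariant and harmonic) --- makes rigorous what the paper leaves implicit in that one-line deduction.
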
	
We work with the space $\ov\bgof $ (as opposed to $\ov\bgrol$) because the former is separable, 	and canonically associated to the representation $\rho$.
\subsection{Framings}\label{s.framing}
Framings and boundary maps have proved being an important tool in the study of representations varieties and subsets thereof. We mention here three concrete examples, which motivate the study pursued in this section. 

{Anosov representations} $\rho:\G\to G_\RR$, where $\G$ is a Gromov hyperbolic group, can be characterized as those representations that admit a continuous, transverse, dynamics preserving equivariant map $\xi:\partial_\infty\G\to \bGmQ(\RR)$ for some parabolic $\RR$-subgroup $\bQ$ and satisfy additional contraction properties which are automatic in case $\rho$ has Zariski dense image \cite{GW}. 

In the setting where $\G=\pi_1(S)$ is the fundamental group of a finite area, complete, non-compact, hyperbolic surface $S$, Fock and Goncharov used framings defined on the cyclically ordered set of cusps in the boundary of the universal cover $\widetilde S$ to define and parametrize the so-called {positive representations}, a subset of the character variety $\Xi(\pi_1(S),G_\RR)$ of $\pi_1(S)$ in split real Lie groups $G_\RR$, characterized by positivity properties of the associated framing  \cite{FG}. An important difference with the previous setup is that, in this case, the framing is not necessarily uniquely determined by the representation.

More recently a lot of work aims at generalizing the context of Anosov representations to other classes of discrete groups, including relatively hyperbolic groups, as well as various classes of Coxeter groups \cite{ZZrelative, DGKLM}.  For these representations boundary maps still play an important role, but require extra flexibility in the choice of the appropriate domain of definition.

The goal of this subsection is to show how properties of boundary maps can be transferred to the real spectrum compactification. 
These lead to different settings  that find applications in the various situations described above.

Assume first that $\G$ is Gromov hyperbolic, and let $\calH_\G$ be the set of fixed points of infinite order elements in the boundary at infinity
$\partial_\infty\Gamma$ of $\G$. Any infinite order element $\g$ in $\G$ is hyperbolic 
and has a unique attracting fixed point in $\partial_{\infty}\G$ denoted $\gamma^+$. 
\begin{defn}\label{d.dynframing}
	With the concepts and notations of ~\ref{s.proximal}, 
	we say that a representation $\rho:\Gamma\to G_\FF$ admits an \emph{$I$-dynamics preserving framing defined over $\FF$} \label{n.54}
	if there exists a non-empty $\Gamma$-invariant subset $X\subset\calH_\Gamma$ 
	anda $\rho$-equivariant map $\xi:X\to \calF_I(\FF)$ such
	that for every $\g\in \G$ hyperbolic such that $\gamma^+\in X$, $\rho(\g)$ is $I$-proximal over $\FF$ 
	and $\xi(\gamma^+)$ is the attracting fixed point of $\rho(\gamma)$.
\end{defn}
An immediate consequence of Corollary~\ref{l.fp} is:
\begin{remark}\label{l.framing}
	If $\rho$ admits an $I$-dynamics preserving framing $\xi$ defined over $\FF$, it is uniquely defined (Corollary~\ref{l.fp} (i)) and if
	$\LL$ is a real closed intermediary field $\KK\subset\LL\subset\FF$ with $\rho(\Gamma)\subset\bG(\LL)$, then $\xi$
	takes values in $\calF_I(\LL)$ (Corollary~\ref{l.fp} (ii)).
\end{remark}
Anosov representations \cite{GW,KLP} provide interesting and well studied examples of representations encompassed in this setting
\begin{prop}
	Any $\bQ$-Anosov representation $\rho:\Gamma\to G_\RR$ 
	admits  a $I$-dynamics preserving framing defined over $\RR$, where $\bQ$ is conjugate to ${}_\KK\bP_I$.	
\end{prop}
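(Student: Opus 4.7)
The idea is that the desired framing is nothing but the boundary map of the Anosov structure, restricted to the fixed points of infinite order elements. Recall (see e.g.\ \cite{GW, KLP}) that to any $\bQ$-Anosov representation $\rho\colon \Gamma\to G_\RR$ is associated a continuous, $\rho$-equivariant, transverse, dynamics-preserving map
\[
\xi_\rho\colon \partial_\infty \Gamma \longrightarrow G_\RR/\bQ(\RR).
\]
Since by hypothesis $\bQ$ is $G_\RR$-conjugate to ${}_\KK\bP_I$, fix $g_0\in G_\RR$ with $g_0\bQ g_0^{-1}={}_\KK\bP_I$ and compose $\xi_\rho$ with the $G_\RR$-equivariant identification $G_\RR/\bQ(\RR) \xrightarrow{\sim} \calF_I(\RR)$ sending $h\bQ(\RR)$ to $g_0h g_0^{-1}\cdot{}_\KK\bP_I(\RR)$. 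After replacing $\rho$ by its conjugate by $g_0$ (which does not affect the statement to be proved) we obtain an equivariant continuous map, still denoted $\xi_\rho\colon \partial_\infty \Gamma \to \calF_I(\RR)$.

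Now set $X:=\calH_\Gamma$. This subset of $\partial_\infty\Gamma$ is $\Gamma$-invariant by definition, and it is non-empty as soon as $\Gamma$ contains an infinite order element (which is the only case in which the statement is non-vacuous, since otherwise there is no hyperbolic $\gamma\in\Gamma$ to test Definition~\ref{d.dynframing} against). Define $\xi:=\xi_\rho|_X\colon X\to\calF_I(\RR)$; it is $\rho$-equivariant as the restriction of the equivariant map $\xi_\rho$.

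To conclude, let $\gamma\in\Gamma$ be an infinite order (hence hyperbolic, by Gromov hyperbolicity of $\Gamma$) element with $\gamma^+\in X$. The dynamics preserving property of the Anosov boundary map asserts precisely that $\rho(\gamma)$ is $I$-proximal over $\RR$ (in the sense of Definition~\ref{defi:proximal}, equivalently by Proposition~\ref{p.proximal}, that $\alpha(\Jord_\RR(\rho(\gamma)))>1$ for every $\alpha\in I$) and that its unique attracting fixed point in $\calF_I(\RR)$ coincides with $\xi_\rho(\gamma^+)=\xi(\gamma^+)$. This is exactly the content of Definition~\ref{d.dynframing} for $\FF=\RR$, so $\xi$ is the required $I$-dynamics preserving framing defined over $\RR$. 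The only nontrivial book-keeping is the passage from $\bQ$ to the standard parabolic ${}_\KK\bP_I$ and the translation between the Anosov notion of "proximal/dynamics preserving" and the semialgebraic version in Section~\ref{s.proximal}; both are routine in view of Proposition~\ref{p.proximal}.
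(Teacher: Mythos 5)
Your proof is correct and follows the same route as the paper: invoke the continuous, equivariant, dynamics-preserving boundary map of the Anosov representation (the paper cites \cite[Theorem 1.3]{GGKW} for this, you cite \cite{GW,KLP}) and restrict it to $X=\calH_\Gamma$. The extra bookkeeping you spell out about conjugating $\bQ$ to ${}_\KK\bP_I$ and translating dynamics-preservation into the semialgebraic proximality of Section~\ref{s.proximal} is implicit in the paper's one-line argument and harmless.
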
	
\begin{proof}
	It follows from \cite[Theorem 1.3]{GGKW} that $\bQ$-Anosov representations admit continuous, $\rho$-equivariant, dynamics-preserving framings defined on the whole $\partial_\infty\Gamma$. The result follows by restriction to $X$. 
\end{proof}	

If a semialgebraic subset $\frakT$ of the representation variety only consists of representations admitting a dynamics preserving framing, the same is true for every representation in the real spectrum compactification of $\frakT$:
\begin{prop}\label{p.dpframing}
	Let $\frakT\subset \calR_F(\Gamma,G)$ be a  semialgebraic subset and $X\subset\calH_\G$ a $\G$-invariant non-empty subset. Fix a subset $I\subset{}_\KK\Delta$ (see ~\ref{s.proximal}) and assume every  $\rho\in\frakT$ admits a $I$-dynamics preserving framing $\xi_\rho:X\to\calF_I(\KK)$. Then every $(\rho,\FF)$ representing a point in $\rsp \frakT$ admits a $I$-dynamics preserving framing $\xi_\rho:X\to\calF_I(\FF)$.
\end{prop}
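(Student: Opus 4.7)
The plan is to use the transfer principle (Artin--Lang / Tarski--Seidenberg) to extend the dynamics-preserving framing from representations in $\frakT$ to those representing points in $\rsp{\frakT}$. The two pillars are the semialgebraic description of $I$-proximality (equation~\eqref{e.proximal}) and the fact that the attracting-fixed-point assignment is semialgebraic and compatible with extension of scalars.

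First I would fix $\gamma\in\G$ of infinite order with $\gamma^+\in X$ and show that $\rho(\gamma)$ is $I$-proximal over $\FF$ whenever $(\rho,\FF)$ represents a point in $\rsp{\frakT}$. The set $T_\gamma:=\{\sigma\in\frakT\,:\,\sigma(\gamma)\in\Pr(I,\KK)\}$ is semialgebraic, and by hypothesis equals $\frakT$, so $\cons(T_\gamma)=\cons(\frakT)$. Since $\Pr(I,\KK)_\FF=\Pr(I,\FF)$ (see \S\ref{s.proximal}), this yields $\rho(\gamma)\in\Pr(I,\FF)$, and Corollary~\ref{l.fp}(1) furnishes a unique attracting fixed point $\xi^+_{\rho(\gamma)}\in\calF_I(\FF)$. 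I would then set $\xi_\rho(\gamma^+):=\xi^+_{\rho(\gamma)}$.

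Next I would verify well-definedness and equivariance by the same transfer mechanism. For well-definedness, if two infinite-order elements $\gamma_1,\gamma_2\in\G$ satisfy $\gamma_1^+=\gamma_2^+\in X$, then the set of $\sigma\in\frakT$ for which both $\sigma(\gamma_i)$ are $I$-proximal with the same attracting fixed point is semialgebraic, because the graph of the attracting-fixed-point map $\Pr(I,\KK)\to\calF_I(\KK)$ is semialgebraic (its defining conditions in Definition~\ref{defi:proximal2} --- fixedness together with eigenvalue moduli of the adjoint action on the Lie algebra of the appropriate unipotent radical being strictly larger than one --- are semialgebraic). This set equals $\frakT$ by assumption, whence the same equality of attracting fixed points holds for every $(\rho,\FF)$ representing a point in $\rsp{\frakT}$. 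Equivariance is then automatic from the uniqueness in Corollary~\ref{l.fp}(1): for any $\eta\in\G$, the element $\rho(\eta)\cdot\xi^+_{\rho(\gamma)}$ is an attracting fixed point of $\rho(\eta)\rho(\gamma)\rho(\eta)^{-1}=\rho(\eta\gamma\eta^{-1})$, and $(\eta\gamma\eta^{-1})^+=\eta\cdot\gamma^+$ lies in $X$ by $\G$-invariance, so $\xi_\rho(\eta\cdot\gamma^+)=\rho(\eta)\cdot\xi_\rho(\gamma^+)$.

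The main point requiring care --- rather than an obstacle in the usual sense --- is ensuring that the attracting-fixed-point map is intrinsically semialgebraic and commutes with extension of scalars $\KK\hookrightarrow\FF$. This is however built into the intrinsic characterisation in Definition~\ref{defi:proximal2}, which is formulated uniformly for all real closed $\FF\supset\KK$; once this is in place, the proof reduces to repeated application of the equivalence (Theorem~\ref{thm:constr}) between semialgebraic subsets of $\calR_F(\Gamma,G)$ and constructible subsets of $\rsp{\calR_F(\Gamma,G)}$.
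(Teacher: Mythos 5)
Your proof is correct and follows essentially the same strategy as the paper's: transfer $I$-proximality via the semialgebraicity of $\Pr(I,\KK)$ and Artin--Lang, then invoke the uniqueness of attracting fixed points (Corollary~\ref{l.fp}) to define the framing. You spell out the well-definedness and equivariance checks which the paper compresses into ``using the uniqueness of attracting fixed points,'' but the underlying mechanism is identical.
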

\begin{proof}
	Fix a point $x\in X$. By definition $x=\gamma^+$ for some element $\gamma\in\G$. Since for every $\rho\in \frakT$, $\rho(\g)$ is $I$-proximal over $\FF$, the same is true for any $\rho\in\rsp \frakT$ (recall Equation \eqref{e.proximal}). We can then define the framing for a representation   $\rho\in\rsp \frakT$ using the uniqueness of attracting fixed points (Corollary ~\ref{l.fp}(ii)).
\end{proof}	
A key feature of Anosov representations is that their associated framing has a continuous extension to the whole $\partial_\infty\Gamma$. This is not to be expected for representations in the real spectrum compactification, since, at least if $\KK$ is countable, for any real closed field $\FF$ arising in the real spectrum compactification, the space $\calF_I(\FF)$ is countable. However some classes of Anosov representations can be characterized through additional semialgebraic properties of the associated framing, properties  that transfers nicely to the real spectrum compactification. 
To be more precise we need another definition:
\begin{defn}
	Let $I\subset{}_\KK\Delta$ and $m\geq 1$. A \emph{configuration space}  \label{n.55}
	is a semialgebraic $G$-invariant subset  $\mathfrak C\subset\calF_I(\KK)^m$. From a configuration space $\mathfrak C$, we obtain for every real closed field $\FF\supset\KK$ a configuration space $\mathfrak C_\FF\subset\calF_I(\FF)^m$.
\end{defn}

The following examples  list some configuration spaces that are important for our discussion. One verifies that all the configuration spaces we discuss can be directly defined as subsets of $\calF_I(\FF)^m$ for every real closed field $\FF$, and such configuration spaces are $\FF$-extension of the corresponding $\KK$-configuration spaces.

\begin{exs}\label{e.confspa}
	The following are configuration spaces; see Examples ~\ref{e.confspa2} below for concrete classes of representations for which these are relevant. In the first two examples we consider the case where ${}_\KK\bP_I$ is self opposite, that is, it is conjugate to an opposite parabolic. Then we say that two points $x_1,x_2\in\calF_I(\KK)$ are opposite if their stabilizers in $\bG$ are opposite (or transverse) parabolic subgroups.
	\begin{enumerate}
		\item The open $\bG(\KK)$-orbit in $\calF_I(\KK)^2$; equivalently
		$$\{(x,y)\in\calF_I(\KK)^2|\, x,y \text{ are opposite}\}.$$
		\item For  $m\geq 2$, any (union of) semialgebraic connected components of 
		$$\calF_I(\KK)^{(m)}:=\{(x_1,\ldots, x_m)\in\calF_I(\KK)^{m}|\, x_i \text{ is opposite to } x_j \quad\forall i\neq j\}$$
		For example:
		\begin{enumerate}
			\item For $\bG=\Sp_{2n}$ the subgroup of $\SL_{2n}$ preserving the standard symplectic form on $\KK^{2n}$ and $\calF_I(\KK)=\{L\subset\KK^{2n}|\, L\text{ is maximal Lagrangian}\}$, then a configuration space playing an important role in \cite{BIW, BP} is 
			$$\mathfrak C=\{(L_1,L_2,L_3)\in\calF_I(\KK)^3|\, \text{ the Maslov index of $(L_1,L_2,L_3)$ is $n$}\}.$$  
			\item For $\bG=\SL_n$ and $\calF_I(\KK)$ the variety of complete flags in $\KK^n$, then a configuration space is the  subset $\mathfrak C$ consisting of  quadruples of  flags that are positive in the sense of Fock-Goncharov \cite{FG, Flamm}.
			\item  For $\bG$ and $I\subset{}_\KK\Delta$ such that $\calF_I(\RR)$ admits a $\Theta$-positive structure (see \cite{GWpos} for definition),
			a configuration space is 
			$$\mathfrak C=\{(x_1,\ldots,x_4)\in\calF_I(\KK)^4|\, (x_1,\ldots,x_4) \text{ is $\Theta$-positive}\}.$$
			Pairs $(\bG,I)$ admitting a $\Theta$-positive structure were classified in \cite{GWalg}.
		\end{enumerate}
		\item For $\bG=\SL_d$  letting $\Gr_l(\KK^d)$ denote the Grassmannian of $l$-dimensional subspaces of $\KK^d$, and $$\calF_I(\KK)=\{(a,b,c)|\; a\in \Gr_k(\KK^d), b\in \Gr_{d-k-1}(\KK^d), c\in \Gr_{d-k+1}(\KK^d), a\subset b\subset c \},$$ 
		a configuration space is the set $\mathfrak C$ of triples $(x,y,z)\in\calF_I(\KK)^{(3)}$ satisfying property $H_k$, namely such that the sum
		\bq\label{e.Hk}
		\left(x^k\cap z^{d-k+1}\right)+\left(y^k\cap z^{d-k+1}\right)+z^{d-k-1}
		\eq
		is direct  \cite[\S~8.2]{PSW}.
		\item For $\bG=\SL_{n+1}$,  $\calF_I(\KK)=\KK\PP^n$, a configuration space is
		$$\mathfrak C=\{(x_1,\ldots,x_{n+2})\in\calF_I(\KK)^{n+2}|\, (x_1,\ldots,x_{n+2}) \text{ are projectively independent}\}.$$ 
	\end{enumerate}
	Observe that Examples (1), (3), (4) make sense for any field $\KK$ while all the  examples in (2) require $\KK$ to be at least ordered.
\end{exs}

With configuration spaces at hand, we can then define
\begin{defn}
	Let $Y\subset X^m$ be a $\G$-invariant subset and $\mathfrak C\subset {\calF_I(\KK)}^m$ be a configuration space. An $\FF$-framing $\phi:X\to \calF_I(\FF)$ is \emph{$(Y,\mathfrak C)$-positive} \label{n.56}
	if for every $m$-tuple $(x_1,\ldots, x_m)\in Y$, $(\phi(x_1),\ldots,\phi(x_m))\in \mathfrak C_{\FF}$. 
\end{defn}
In the next examples we discuss interesting and well studied subsets of character varieties, including the so-called higher Teichm\"uller spaces, which admit $(Y,\mathfrak C)$-positive framings for suitable $Y$ and $\mathfrak C$. Developing analogue theories over any real closed field $\FF$ is compelling, but  beyond the scope of this paper. Nevertheless the results in this section give important tools to study properties of representations in the real spectrum compactification of such subsets. 
\begin{exs}\label{e.confspa2}
	The following are examples of classes of representations admitting a $(Y,\mathfrak C)$-positive $\RR$-framing for the configuration spaces in Examples ~\ref{e.confspa}.
	\begin{enumerate}
		\item Let $\G$ be any Gromov hyperbolic group, $X=\calH_\G$, $Y\subset \calH_\G^2$ the set of pairs of distinct points. If ${}_\KK\bP_I$ is a self-opposite parabolic subgroup, every ${}_\KK\bP_I$-Anosov representation admits a $(Y,\mathfrak C)$-positive $\RR$-framing $\phi:X\to \calF_I(\RR)$ for the set $\mathfrak C$  of opposite points as in Example ~\ref{e.confspa} (1) \cite[Theorem 1.3]{GGKW}.
		\item Let $\G$ be the fundamental group of a closed orientable surface and $X=\calH_\G\subset \partial_{\infty}\Gamma=\mathbb S^1$ with the induced cyclic ordering.
		\begin{enumerate}
			\item If $Y\subset X^3$ denotes the set of positively oriented triples, $\calF_I(\RR)$ is the set of Lagrangians in $\RR^{2n}$, and  $\mathfrak C$ is the set of maximal triples as in Example ~\ref{e.confspa} (2a), then any maximal representation admits  a $(Y,\mathfrak C)$-positive $\RR$-framing $\phi:X\to \calF_I(\RR)$ \cite{BIW, BP}.
			\item If $Y\subset X^4$ denotes the set of cyclically ordered 4-tuples, $\calF_I(\RR)$ is the set of complete flags in $\RR^n$, and  $\mathfrak C$ is the set of positive quadruples as in Example ~\ref{e.confspa} (2b), then  any Hitchin representation admits  a $(Y,\mathfrak C)$-positive $\RR$-framing $\phi:X\to \calF_I(\RR)$ \cite{FG, Flamm}.
			\item If $Y\subset X^4$ denotes the set of cyclically ordered 4-tuples,  $\bG$, $I$ are such that $\calF_I(\RR)$ admits a $\Theta$-positive structure, and  $\mathfrak C$ is the set of $\Theta$-positive quadruples as in Example ~\ref{e.confspa} (2c), then, by definition,  any $\Theta$-positive representation admits  a $(Y,\mathfrak C)$-positive $\RR$-framing $\phi:X\to \calF_I(\RR)$. It was proven in \cite{BGLPW} that $\Theta$-positive representations form connected components of the  character and representation variety.
		\end{enumerate}
		{	\item  Let $\G$ be a finitely generated subgroup of $\PSL_2(\RR)$, $X\subset \calH_\G$, $Y\subset \calH_\G^3$. If $\calF_I(\RR)$ and $\mathfrak C$ are as in Example ~\ref{e.confspa} (3), 
			then, by definition, every representation satisfying property $H_k$ admits a $(Y,\mathfrak C)$-positive framing \cite{PSW}. Properties of representations in the real character variety that satisfy property $H_k$, including the fact that they are positively ratioed, have been studied in the case of fundamental groups $\G$ of closed surfaces in \cite{BeyP}.}
		\item Let $\G<\SL_{n+1}(\RR)$ be a group dividing a strictly convex  subset $\Omega_\G\subset \RR\PP^n$, such as the fundamental group of an $n$-dimensional compact hyperbolic manifold. Then $\Gamma$ is Gromov hyperbolic, and the connected component of the representation variety $\calR_F(\G,\SL_{n+1}(\RR))$ of the natural inclusion $i$ only consists of representations $\rho$ dividing a convex divisible set $\Omega_\rho$. In this case the boundary $\partial\Omega_\rho$ identifies with $\partial_\infty\Gamma$; restricting this identification to $\calH_\G$ we obtain a $I$-dynamics preserving framing.  Such framing is $(Y,\mathfrak C)$-positive, where $Y$ denotes the set of pairwise distinct $(n+2)$-tuples, and $\mathfrak C$ is as in Example ~\ref{e.confspa} (4)  \cite{BenoistIII}.
	\end{enumerate}
\end{exs}	
We then have the following refinement of Proposition ~\ref{p.dpframing}:
\begin{prop}\label{p.dpCframing}
	Let $\frakT\subset \calR_F(\Gamma,G)$ be a  semialgebraic subset, $X\subset\calH_\G$ non-empty, $\G$-invariant,	$Y\subset X^m$ be a $\G$-invariant subset and $\mathfrak C\subset {\calF_I(\KK)}^m$ a configuration space. If every $\rho$ in $\frakT$ admits a $I$-dynamics preserving, $(Y,\mathfrak C)$-positive framing, the same is true for any $\rho\in\rsp \frakT$.
\end{prop}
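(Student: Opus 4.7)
The plan is to reduce the statement to a pure application of the Artin--Lang/transfer principle: I will encode the conclusion "the framing is $(Y,\mathfrak C)$-positive" as a semialgebraic inclusion of subsets of $\calR_F(\Gamma,G)$, and then propagate this inclusion to the real spectrum compactification.

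First, I would pick an arbitrary $m$-tuple $(x_1,\dots,x_m)\in Y$. Since $X\subset \calH_\Gamma$, each $x_i$ is of the form $\gamma_i^+$ for some infinite order element $\gamma_i\in\Gamma$. Fixing such a choice once and for all, the hypothesis says that for every $\rho\in\frakT$, each $\rho(\gamma_i)$ is $I$-proximal over $\RR$ (since $\xi_\rho$ is $I$-dynamics preserving), and the tuple of attracting fixed points
\[
\Phi(\rho):=(\xi^+_{\rho(\gamma_1)},\dots,\xi^+_{\rho(\gamma_m)})\in \calF_I(\KK)^m
\]
lies in $\mathfrak C$. The key observation is that $\Phi$ is defined on a semialgebraic subset of $\calR_F(\Gamma,G)$ and is itself semialgebraic: proximality is cut out by the inequalities $\alpha(\Jord(\rho(\gamma_i)))>1$ for $\alpha\in I$ (see \eqref{e.proximal}), and the graph of $g\mapsto \xi^+_g$ on its domain of proximal elements is the semialgebraic subset of pairs $(g,\xi)$ where $\xi$ is an attracting fixed point of $g$ (Corollary~\ref{l.fp} and the uniqueness there).

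Second, I would form the semialgebraic set
\[
T:=\bigl\{\rho\in\calR_F(\Gamma,G)\,:\,\rho(\gamma_i) \text{ is }I\text{-proximal for all }i \text{ and }\Phi(\rho)\in\mathfrak C\bigr\}.
\]
The hypothesis is exactly the semialgebraic inclusion $\frakT\subset T$, which by Theorem~\ref{thm:constr}(1) yields $\rsp{\frakT}\subset\rsp{T}$. Now let $(\rho,\FF)$ represent an arbitrary point of $\rsp{\frakT}$. By Proposition~\ref{p.dpframing}, $\rho$ admits a $I$-dynamics preserving $\FF$-framing $\xi_\rho\colon X\to\calF_I(\FF)$ whose value at $\gamma_i^+$ is the attracting fixed point of $\rho(\gamma_i)$ over $\FF$; in other words $\xi_\rho(\gamma_i^+)$ is the $i$-th coordinate of the natural $\FF$-extension $\Phi_\FF(\rho)$ of $\Phi$. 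On the other hand, since $(\rho,\FF)\in\rsp{T}$, the semialgebraic condition defining $T$ extended to $\FF$ is satisfied (Properties~\ref{ppts.2.10} together with the characterization of $\rsp{T}$ via polynomial inequalities in Proposition~\ref{p.charrsp}), which precisely says that $\Phi_\FF(\rho)\in\mathfrak C_\FF$. Combining the two gives $(\xi_\rho(x_1),\dots,\xi_\rho(x_m))\in\mathfrak C_\FF$, as required.

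The main obstacle I anticipate is verifying cleanly that $\Phi$, and the positivity condition cut out by $\mathfrak C$, are genuinely semialgebraic in the variable $\rho$, and that their $\FF$-extensions correctly compute $\xi_\rho$ and "positivity over $\FF$". This is, however, essentially contained in the material already developed in Section~\ref{s.proximal} (semialgebraicity of the Jordan projection, semialgebraicity of the attracting-fixed-point relation, and transfer of both statements to real closed extensions), so the argument reduces to a careful bookkeeping of the transfer principle rather than any new geometric input.
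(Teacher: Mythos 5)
Your proof is correct and follows the approach the paper intends (the proof of Proposition~\ref{p.dpCframing} is left implicit in the paper, being presented as a "refinement" of Proposition~\ref{p.dpframing}, whose proof is precisely the same pattern: semialgebraicity of the attracting-fixed-point map from Corollary~\ref{l.fp} and Equation~\eqref{e.proximal}, then transfer via Theorem~\ref{thm:constr}). Your pointwise encoding — fixing one tuple $(x_1,\dots,x_m)\in Y$, forming the semialgebraic set $T$ carved out by the proximality and $\mathfrak C$-membership conditions, and deducing $\rsp{\frakT}\subset\rsp{T}$ — is exactly the intended bookkeeping, and your appeal to Proposition~\ref{p.dpframing} to identify $\Phi_\FF(\rho)$ with the values of the framing is the right way to close the argument.
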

\begin{remark}
	As an application, in the particular case of the Hitchin representation variety, we deduce from Proposition ~\ref{p.dpCframing} in the case of Example ~\ref{e.confspa2} (2b), that the transversality and positivity hypotheses of \cite[Theorem 1.1, 1.2, 1.3]{Martone} are always satisfied.
\end{remark}


We conclude the section discussing a more general setting, which, for example, includes the framings naturally associated to positive representations in \cite{FG}:

\begin{defn}\label{d.genframing}
	Let $\G$ be a finitely generated group, and $X$ be a set endowed with a $\G$-action with the additional property that for every point $x\in X$ there exists an infinite order element $\g\in\G$ with $\g\cdot x=x$. An \emph{$\FF$-framing} for a representation $\rho\in\calR_F(\G, G_\FF)$ is a $\rho$-equivariant map $\phi:X\to \calF_I(\FF)$. 
\end{defn}	
\begin{example}
	If a group $\G$ is relatively hyperbolic with respect to a family $\calP$ of parabolic subgroups each containing elements of infinite order, then the set $\calP$ is a good example for the set $X$ in Definition ~\ref{d.genframing}. In the special case in which $\G<\PSL(2,\RR)$ is the fundamental group of a finite area, complete, non-compact, hyperbolic surface, the set $\calP$ identifies with the cyclically ordered set of cusp points in $\partial\HH$. For more general classes of groups $\G$ we expect that other geometrically defined sets will prove being relevant, such as the set of fixed point in the Morse boundary \cite{Morse}, or some sets of  subgroups conjugated to special subgroups in the case of Coxeter groups \cite{DGKLM}.
\end{example}	
As another consequence of the Artin-Lang principle we obtain: 
\begin{prop}\label{p.framing3}
	Let $\frakT\subset \calR_F(\Gamma,G)$ be a  semialgebraic subset, $X$ a $\G$-set  each of whose elements is fixed by at least one infinite order element of $\G$, $m\in\NN$, $Y\subset X^m$ a $\G$-invariant subset and $\mathfrak C\subset {\calF_I(\KK)}^m$ a configuration space.
	\begin{enumerate}
		\item If $X$ consists of finitely many $\G$-orbits, every $\rho$ in $\frakT$ admits a $\KK$-framing, and every $\KK$-framing is $(Y,\mathfrak C)$-positive, then the same is true for any    $\rho\in\rsp \frakT$.
		\item If $X$ and $Y$ consist of finitely many $\G$-orbits, and every $\rho$ in $\frakT$ admits a $(Y,\mathfrak C)$-positive $\KK$-framing, then the same is true for any    $\rho\in\rsp \frakT$.
	\end{enumerate}
\end{prop}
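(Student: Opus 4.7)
The plan is to realize both the existence and the positivity of framings as semialgebraic conditions on $\rho$, and then apply the Artin--Lang / Tarski--Seidenberg transfer principle (Theorem \ref{thm:constr}) to pass these conditions from $\frakT$ to $\rsp\frakT$.

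I would first fix orbit representatives $x_1,\dots,x_s$ of $X$ under $\G$, with stabilizers $\G_i := \Stab_\G(x_i)$ (each containing an infinite order element by hypothesis on $X$). For any real closed $\FF\supset\KK$, an $\FF$-framing $\phi\colon X\to\calF_I(\FF)$ is determined by the tuple $(y_i)_i:=(\phi(x_i))_i\in\calF_I(\FF)^s$, and conversely any such tuple satisfying $\rho(g)y_i=y_i$ for all $g\in\G_i$ defines a $\rho$-equivariant framing via $\phi(g\cdot x_i):=\rho(g)y_i$. Accordingly, I would consider
\[
\fF := \{(\rho,y_1,\dots,y_s)\in\calR_F(\G,G)\times\calF_I(\KK)^s : \rho(g)y_i=y_i\;\forall g\in\G_i,\,1\leq i\leq s\},
\]
a (possibly infinite) intersection of polynomial conditions in a finite-dimensional ambient variety, hence algebraic by Noetherianity; its $\FF$-extension $\fF_\FF$ is exactly the set of pairs $(\rho,\phi)$ with $\phi$ an $\FF$-framing of $\rho$. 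For each $z=(g_1\cdot x_{i_1},\dots,g_m\cdot x_{i_m})\in Y$, the subset
\[
\fF^+_z := \{(\rho,y)\in\fF : (\rho(g_1)y_{i_1},\dots,\rho(g_m)y_{i_m})\in\mathfrak C\}
\]
is semialgebraic (since $\mathfrak C$ is) and encodes positivity of the corresponding framing on $z$.

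For part~(1), the existence hypothesis reads as the semialgebraic inclusion $\frakT\subset\pi(\fF)$, where $\pi\colon\fF\to\calR_F(\G,G)$ is the projection (whose image is semialgebraic by Tarski--Seidenberg). This inclusion transfers to $\rsp{\frakT}\subset\rsp{\pi(\fF)}$ by Theorem \ref{thm:constr}(1), so for any $(\rho,\FF)$ representing a point of $\rsp{\frakT}$ we get $\rho\in\pi(\fF)_\FF=\pi_\FF(\fF_\FF)$, producing an $\FF$-framing. For the positivity part, for each individual $z\in Y$ the hypothesis reads $\fF\cap(\frakT\times\calF_I(\KK)^s)\subset\fF^+_z$, a semialgebraic inclusion that transfers to the real spectrum, showing every $\FF$-framing of any $\rho$ representing a point of $\rsp{\frakT}$ is positive on $z$. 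Since $z$ ranges over all of $Y$, $(Y,\mathfrak C)$-positivity of every $\FF$-framing follows. For part~(2), the extra finite-orbit assumption on $Y$ combines with $\G$-equivariance of $\phi$ and $G$-invariance of $\mathfrak C$ to reduce $(Y,\mathfrak C)$-positivity to positivity on finitely many representatives $z^{(1)},\dots,z^{(t)}$. Then $\fF^+ := \fF\cap\bigcap_{k=1}^t \fF^+_{z^{(k)}}$ is a \emph{finite} intersection of semialgebraic sets and hence semialgebraic; the hypothesis reads $\frakT\subset\pi(\fF^+)$, and the same transfer argument yields the conclusion.

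The main technical obstacle is ensuring the correct semialgebraic structure: $\fF$ is defined by an a priori infinite system of polynomial equations indexed by $\bigsqcup_i\G_i$, and Noetherianity of the coordinate ring of the ambient variety is required to collapse this to a finite subsystem, so that $\fF$ is genuinely algebraic and $\fF_\FF$ has the expected description. In part~(2), the finite-orbit hypothesis on $Y$ is indispensable: without it, $(Y,\mathfrak C)$-positivity would be an infinite intersection of semialgebraic conditions, so ``$\rho$ admits a positive framing'' would no longer be a semialgebraic property, blocking the transfer principle.
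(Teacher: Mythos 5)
Your proposal is correct and follows essentially the same strategy as the paper: fix orbit representatives, encode the framing data as tuples of fixed points in $\calF_I(\KK)^s$ cut out by (semi)algebraic conditions, encode positivity via the configuration space, and transfer the resulting semialgebraic inclusions/surjections to the real spectrum by Theorem \ref{thm:constr} and Properties \ref{ppts.2.10}. The paper's proof of part (1) finds fixed points for each $\rho(P_i)$ and argues positivity by contradiction, while you package both as inclusions among auxiliary semialgebraic sets $\fF$, $\fF^+_z$ and transfer those directly; part (2) in both cases collects the fixed-point conditions and the finitely many positivity conditions into a single semialgebraic set that surjects onto $\frakT$. One small added value in your write-up is the explicit appeal to Noetherianity to justify that $\fF$ is an algebraic set despite being defined by an a priori infinite family of fixed-point equations indexed by the $\G_i$ — a point the paper leaves implicit.
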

\begin{proof}
	(1) Choose representatives $\{x_1,\ldots, x_p\}$ of the $\G$-orbits in $X$, and denote by $P_i<\G$ the stabilizer of the point $x_i$. Since for any $i$ and any $\rho\in \frakT$, $\rho(P_i)$ has a fixed point in $\calF_I(\KK)$, and a fixed point equation is a semialgebraic equation, the same holds true for any $\rho\in\rsp \frakT$ (Theorem ~\ref{thm:constr} (1)). We can define then a framing $\phi$ associated to a representation $\rho\in\rsp \frakT$ by choosing a fixed point for any $i$, and extending to the whole $X$ by $\G$-equivariance.
	The framing $\phi$ is $(Y,\mathfrak C)$-positive as otherwise it would contradict that any framing is $(Y,\mathfrak C)$-positive for any $\rho$ in $\frakT$, again by Theorem ~\ref{thm:constr} (1). 
	\\
	(2) Let, as above, $\{x_1,\ldots, x_p\}$ be representatives of the $\G$-orbits in $X$, denote by $P_i<\G$ the stabilizer of the point $x_i$. We furthermore choose representatives $\{y_1,\ldots, y_l\}$ for the $\G$-orbits in $Y$. For any $1\leq j\leq l$, $$y_j=(\gamma_{j,1}x_{i_{j,1}},\ldots, \gamma_{j,m}x_{i_{j,m}}),$$ for some $i_{j,k}\in\{1,\ldots, p\}$ and elements $\gamma_{j,k}\in \G$. For any $p$-tuple $\xi=(\xi_1,\ldots,\xi_p)\in\left(\calF_I(\KK)\right)^p$ and any $\rho:\G\to \bG(\KK)$ we will write $y_j^\rho(\xi)$ for $(\rho(\gamma_{j,1})\xi_{i_{j,1}},\ldots, \rho(\gamma_{j,m})\xi_{i_{j,m}})$.
	We then have that the set 
	$$\left\{\left.\left(\rho, \xi=(\xi_1,\ldots, \xi_p)\right)\in \frakT\times \left(\calF_I(\KK)\right)^p\right|\;\begin{array}{ll}
		\rho(P_i)\xi_i=\xi_i & \forall 1\leq i\leq p, \\
		y^\rho_j(\xi)\in\mathfrak C & \forall 1\leq j\leq l
	\end{array}\right\}$$
	Is semialgebraic and surjects on $\frakT$. It follows from Property ~\ref{ppts.2.10}(2) that the same holds over any field $\FF$, and allows to define, by equivariance, a $(Y,\mathfrak C)$-positive framing for every $\rho\in \rsp \frakT$.
\end{proof}	
\begin{exs}
	Proposition ~\ref{p.framing3} is applicable in a variety of situations, for example:
	\begin{itemize}
		\item In the case of maximal representations of open surfaces we can consider the set $X$ of cusp points, which consists of finitely many $\G$-orbits. If we choose a set of representatives of these $\G$-orbits, corresponding to elements $g_1,\ldots, g_p$, it is possible to verify that for any choice of a fixed point $P_i$ for $g_i$ in $\calF_I(\RR^{2n})$ the induced framing is positive. In particular Proposition ~\ref{p.framing3}(1) applies. This has been used in \cite{AGRW}.
		\item In order to construct Fock-Goncharov coordinates on the set of positive representations one considers again $X$ the set of cusp points, and can choose $Y$ to be the set of 4-tuples of points in the boundary of pairs of adjacent triangles in a chosen ideal triangulation of the surface $S$. In this case also the set $Y\subset X^4$ consists of finitely many $\G$-orbits, and thus Proposition ~\ref{p.framing3}(2) applies.
	\end{itemize}	
\end{exs}	

\section{Character varieties}\label{s.char}
The character variety is, as a first approximation, the topological space obtained as quotient $\Xi(\Gamma,\GRR)=\Hom_\mathrm{red}(\Gamma, \GRR)/\GRR$ of the set of reductive representations. 
The goal of this section is to define an explicit realization of the character variety of a finitely generated group $\Gamma$ in a semisimple semialgebraic group $G$ 
as a $\KK$-semialgebraic set and to give a characterization of closed points in its real spectrum compactification.
The explicit model will be based on Richardson-Slodowy's work on invariant theory \cite{RS}, which is in turn based on the study of minimal vectors.
\subsection{Reductive representations}
Let as usual $\KK\subset \RR$ be  a real closed field, 
$\FF\supset\KK$ a real closed extension and $\bG(\KK)^\circ\subset
G\subset \bG(\KK)$ be $\KK$-semialgebraic semisimple, where as usual $\bG<\SL_n$ is a connected semisimple $\KK$-subgroup. 

A representation $\rho:\G\to\SL_n(\FF)$ is \emph{reductive} if every $\Gamma$-invariant subspace in $\FF^n$ has a $\G$-invariant complement. 
We deduce from Property \ref{ppts.2.10}(1) that, for every real closed extension $\FF$ of $\KK$,  and every $\rho:\G\to \SL_n(\KK)$, then $\rho$ is reductive if and only if the representation $\rho':\G\to \SL_n(\FF)$ obtained by composing $\rho$ with the injection $\SL_n(\KK)\to\SL_n(\FF)$ is reductive.
We say  furthermore that $\rho:\G\to G_\FF$ is reductive if the
representation $\rho':\G\to \SL_n(\FF)$ obtained by composing $\rho$ with the injection $G_\FF\to\SL_n(\FF)$ is reductive. 

Let $F\subset \G$ be a finite generating set, and $\calR_F^{red}(\Gamma,G_\FF)\subset \calR_F(\Gamma,\SL_n(\FF))$ \label{n.60b}
the image under the evaluation map of the set $\Hom_\mathrm{red}(\G, G_\FF)$ of reductive representations into $G_\FF$ 
(recall Equation \eqref{e.evaluation}). 
A standard argument using quantifier elimination \cite[5.2.2]{BCR} 
shows that $\calR_F^{red}(\Gamma,\SL_n(\KK))$ is a semialgebraic subset of $\calR_F(\Gamma,\SL_n(\KK))$ 
and $\calR_F^{red}(\Gamma,\SL_n(\FF))=\calR_F^{red}(\Gamma,\SL_n(\KK))_\FF$.
It follows then that $\calR_F^{red}(\Gamma,G)$ is a semialgebraic subset of $\calR_F(\Gamma,G)$ 
and that $\calR_F^{red}(\Gamma,G_\FF)=\calR_F^{red}(\Gamma,G)_\FF$.
\subsection{Richardson-Slodowy theory over real closed fields and the proof of Theorem \ref{thm:1}}\label{s.RS}

Assume, as in \S\ref{subsec:Cartan}, that $\bG<\SL_n$ is invariant under transposition. 
We summarize the main points of Richardon-Slodowy construction \cite{RS}, which we will review more in detail, 
and adapt to our discussion in the rest of the section. Since $\bG(\RR)^\circ<G_\RR<\bG(\RR)$ and $\bG$ is connected semisimple, 
$G_\RR$ is Zariski dense in $\bG$.  
In order to construct a semialgebraic model for the quotient of $\calR_F^{red}(\G,G_\RR)$ by $G_\RR$, 
we will follow \cite{RS}, 
remembering that $\calR_F^{red}(\Gamma, G_\RR)$ corresponds to the closed $G_\RR$-orbits in $\calR_F(\G,G_\RR)$ (see \cite{Bre}). 

More precisely, the Richardson--Slodowy theory studies $G_\RR$-orbits in $M_{n,n}(\RR)^f$, 
endowed with the standard scalar product. A point  $A\in M_{n,n}(\RR)^f$ has closed $G_\RR$-orbit 
if and only if the norm along $(G_\RR)_*A$ attains a minimum \cite[Theorem 4.4]{RS}; 
let then $\calM_{G_\RR}$ denote the subset of  vectors of minimal norm. 
Then $\calM_{G_\RR}$  is a closed cone admitting an algebraic description, 
and a $G_\RR$-orbit in $M_{n,n}(\RR)^f$ is closed if and only if it intersects $\calM_{G_\RR}$ 
in which case the intersection is a $K_\RR$-orbit \cite[Theorem 4.5]{RS}. 
The model for $\Xi(\G, G_\RR)$ is then the geometric invariant theory quotient $K_\RR\backslash (\calM_{G_\RR}\cap \calR_F(\G,G_\RR))$. 
This description of $\Xi(\G, G_\RR)$ has two advantages, 
on the one hand it defines a canonical semialgebraic structure on $\Xi(\G, G_\RR)$ (\S \ref{s.canonicity}), 
on the other hand the properness of the quotient map $p:\calM_{G_\RR}\to \Xi(\G, G_\RR)$ 
(Proposition \ref{p:Prsp}) plays an important role in the characterization of closed points in $\rsp{\Xi(\G, G_\RR)}$.


To be more precise, for $f\in \NN_{\geq 1}$, we consider on   $M_{n,n}(\RR)^f$ the standard scalar product:
$$\<(A_1,\ldots, A_f),(B_1,\ldots, B_f)\>=\sum_{i=1}^f\tr(A_i^t B_i),$$
and denote by $\|\cdot\|$ the associated norm.

\begin{defn}
	The set of \emph{$\GRR$-minimal vectors} \label{n.57}
	is the subset
	$\calM_{\GRR} \subset M_{n,n}(\RR)^f$ consisting of the $f$-tuples $A$ 
	such that $\|g_*A\|\geq \|A\|$ for all $g\in G_\RR$.
\end{defn}
Of course $\calM_{\GRR}$ is a closed cone. 
A consequence of \cite{RS} is that  $\calM_{\GRR}$ admits a description as $\RR$-points of a $\KK$-algebraic set, to which we now turn.

With the notation of  Section~\ref{subsec:Cartan}, recall that $K_\RR:= G_\RR\cap \SO(n,\RR)$, and  $K:= G\cap \SO(n,\RR)$.
Then, if $\fg=\Lie(\bG(\RR))$, we have the Cartan decomposition $\fg=\fk+\fp$, 
where  $\fk$ and $\fp$ are real vector spaces defined over $\KK$. We  let then $X_1,\ldots, X_m$ be a $\KK$-basis of $\fp(\KK)$.
Define the $\KK$-algebraic set:
$$\calM_{G}=\{A\in M_{n,n}(\KK)^f:\, \sum_{j=1}^f\<[X_i,A_j],A_j\>=0\quad \forall\, 1\leq i\leq m\};$$
then \cite[Theorem 4.3]{RS} implies $\calM_{G}(\RR)=\calM_{\GRR}$. 
Furthermore \cite[Theorem 4.3]{RS} implies that for $A\in \calM_{\GRR}$ we have
\bq\label{e.GKorbit}
(G_\RR)_* A\cap \calM_{G}(\RR)= (K_\RR)_* A.
\eq

We will now define the quotient of the set of minimal vectors by the $K_\RR $-action by conjugation.  
To this aim we will use a basis of $K_\RR $-invariant polynomials:
\begin{lem}\label{l.Kinv}
	There are  $K_\RR $-invariant homogeneous polynomials $p_1,\ldots, p_l$ on $ M_{n,n}(\RR)^f$ 
	with coefficients in $\KK$ generating the algebra of $K_\RR $-invariant polynomials.
\end{lem}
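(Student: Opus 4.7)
The strategy is to reduce the problem to Hilbert's classical finiteness theorem for reductive groups over the ground field $\KK$, and then transfer the result to $\RR$ via Zariski density and flat base change.

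First I would introduce the algebraic hull of $K_\RR$. Namely, let $\bK\subset \SL_n$ be the Zariski closure of $K = G\cap\SO(n)$ (equivalently, of $K_\RR$, since Zariski closures are insensitive to real closed extensions when the defining set is semialgebraic over $\KK$). Since $K$ is a $\KK$-semialgebraic subgroup of $\SL_n(\KK)$, its Zariski closure $\bK$ is an algebraic subgroup defined over $\KK$. Moreover $K_\RR\subset \SO(n,\RR)$ is a compact Lie group and $\SO(n,\RR)$ is Zariski closed in $\SL_n(\RR)$, so $\bK(\RR)\subset\SO(n,\RR)$ is itself compact, hence a real reductive group. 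From this (together with connectedness/finite-index considerations for $\bK$) it follows that $\bK$ is a reductive algebraic group defined over $\KK$.

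Next I would apply Hilbert's finiteness theorem for reductive groups in characteristic zero to the linear action of $\bK$ on $M_{n,n}^f$: the invariant algebra $\KK[M_{n,n}^f]^{\bK}$ is a finitely generated $\KK$-algebra. Because $\bK$ acts linearly on $M_{n,n}^f$, this action preserves the standard grading, so the invariant subring is graded and can be generated by finitely many homogeneous elements $p_1,\ldots,p_l\in\KK[M_{n,n}^f]$.

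Finally I would argue that $p_1,\ldots,p_l$ generate the full $\RR$-algebra $\RR[M_{n,n}^f]^{K_\RR}$. A polynomial $p\in\RR[M_{n,n}^f]$ is $K_\RR$-invariant if and only if the polynomial identity $p(g\cdot x)=p(x)$ holds for all $g\in K_\RR$; by Zariski density of $K_\RR$ in $\bK$ this is equivalent to invariance under $\bK$ (as a scheme-theoretic condition), so $\RR[M_{n,n}^f]^{K_\RR}=\RR[M_{n,n}^f]^{\bK}$. Invariance is cut out by $\KK$-linear equations on coefficients, so by flat base change
\[
\RR[M_{n,n}^f]^{\bK} \;=\; \RR\otimes_\KK \KK[M_{n,n}^f]^{\bK},
\]
and any set of $\KK$-algebra generators of the right-hand factor is an $\RR$-algebra generating set of the left-hand side. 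Thus $p_1,\ldots,p_l$ do the job.

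\textbf{Main obstacle.} The only nontrivial points are verifying that $\bK$ is reductive (handled by the compactness of $\bK(\RR)$) and the Zariski-density/base-change step identifying $K_\RR$-invariants with $\bK$-invariants. Both are essentially standard once $\bK$ has been set up over $\KK$; the key conceptual ingredient is that $K$ was already defined over $\KK$ to begin with, so Hilbert's theorem can be applied directly over $\KK$ without having to descend generators a posteriori from $\RR$ to $\KK$.
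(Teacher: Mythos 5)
Your proof is correct, but it takes a genuinely different route from the paper's. You form the $\KK$-algebraic hull $\bK$ (the Zariski closure of $K$ in $\SL_n$), observe that $\bK(\RR)$ is compact and Zariski dense so that $\bK$ is reductive over $\KK$, apply Hilbert's finiteness theorem directly over the ground field $\KK$ to produce homogeneous $\KK$-coefficient generators of $\KK[M_{n,n}^f]^\bK$, and then transfer to $\RR$ by Zariski density (identifying $K_\RR$-invariants with $\bK$-invariants) and flat base change ($\RR[M_{n,n}^f]^\bK = \RR\otimes_\KK\KK[M_{n,n}^f]^\bK$). The paper instead works in the opposite direction: it starts from a finite set of homogeneous $K_\RR$-invariant generators $P_1,\ldots,P_s$ over $\RR$, expands each $P_i = \sum_{\beta\in E} p^{(i)}_\beta\,\beta$ along a $\KK$-basis $\calB$ of $\RR$, notes that uniqueness of the decomposition forces each $p^{(i)}_\beta\in\KK[X]$ to be $K$-invariant (hence $K_\RR$-invariant by density of $K$ in $K_\RR$), and takes the $p^{(i)}_\beta$ as the new generating set. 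The paper's descent-of-coefficients trick is more elementary, avoiding the algebraic hull, the reductivity check, and the base-change lemma entirely; your approach is conceptually cleaner in that it locates the whole invariant theory over $\KK$ from the start and explains \emph{why} $\KK$-generators must exist, at the cost of invoking more machinery.
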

\begin{proof}
	Let $\calB$ be a basis of $\RR$ as a $\KK$-vector space. 
	Then let $P_1,\ldots, P_s\in\RR[X_{ij}^{(\g)}]^{K_\RR}$ be a finite set of generators of the algebra of $K_\RR$-invariant polynomials. 
	We can of course assume that $P_i$ are homogeneous. Every $P_i$ can be expressed as a finite sum
	\bq\label{eq.Kx}
	P_i=\sum_{\beta\in E}p^{(i)}_\beta \beta
	\eq 
	with $p^{(i)}_\beta\in\KK[X_{ij}^{(\g)}]$ and $E\subset \calB$ finite.  
	Since the representation \eqref{eq.Kx} is unique, we deduce that each $p^{(i)}_\beta$ is $K$-invariant 
	and since $K$ is dense in $K_\RR$, $p^{(i)}_\beta$ is $K_\RR$-invariant. 
	The family $\{p_1,\ldots,p_l\}:=\{p^{(i)}_\beta|\; 1\leq i\leq s, \beta\in E\}$ has the desired properties.
\end{proof}	
Let $\{p_1,\ldots, p_l\}$ be a  set of $K_\RR $-invariant homogeneous polynomials with coefficients in $\KK$  
generating the algebra of $K_\RR $-invariant polynomials  on $ M_{n,n}(\RR)^f$. 
The associated quotient map is the polynomial map of $\KK$-algebraic sets \label{n.58} 
\begin{equation}\label{e.pdef}
	\begin{array}{cccc}
		p:&M_{n,n}(\KK)^f&\to&\KK^l\\
		&x&\mapsto&(p_1(x),\ldots, p_l(x)).
	\end{array}
\end{equation}
As usual $p_\RR$ denotes the extension to $\RR$.
%
Let then $m_i$ be the degree of homogeneity of the polynomial $p_i$ in Lemma \ref{l.Kinv} and $m$ be a multiple of $m_1,\ldots, m_l$.
\begin{prop}\label{prop:pptyP}
	The map $p:M_{n,n}(\KK)^f\to \KK^l$  (see \eqref{e.pdef}) satisfies the following.
	\begin{enumerate}
		\item There exist constants $c_1,c_2\in\NN_{\geq 1}$ such that 
		$$\frac 1{c_1}\|x\|^{2m}\leq \sum_{i=1}^l(p_i(x))^{\frac{2m}{m_i}}\leq c_2\|x\|^{2m},\; \forall x\in M_{n,n}(\KK)^f.$$
		\item The image under $p$ of any closed semialgebraic subset of $\calM_G$ is closed semialgebraic in $\KK^l$.
	\end{enumerate}
	The assertions (1) and (2) hold for the extension to any real closed field $\FF\supset \KK$.
\end{prop}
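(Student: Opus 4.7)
\textbf{Plan for Proposition~\ref{prop:pptyP}.}
The strategy is to prove both assertions first over $\RR$ via a compactness/homogeneity argument, and then to transfer to $\KK$ (and to any real closed extension $\FF\supset\KK$) using the Tarski--Seidenberg principle together with Properties~\ref{ppts.2.10}.

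For part (1) I would first observe that $\|x\|^{2m}$ and $\phi(x):=\sum_{i=1}^l (p_i(x))^{2m/m_i}$ are both homogeneous of degree $2m$ on $M_{n,n}(\RR)^f$, and that $\phi$ is a sum of even powers (since $m$ is a common multiple of the $m_i$, each exponent $2m/m_i$ is a positive even integer), hence non-negative. The key point is that $\phi$ vanishes exactly at the common zero locus of $p_1,\dots,p_l$. Since $K_\RR$ is compact and $p_1,\dots,p_l$ generate the algebra of $K_\RR$-invariant polynomials on $M_{n,n}(\RR)^f$, they separate $K_\RR$-orbits; in particular $\phi(x)=0$ forces $x$ to lie in the $K_\RR$-orbit of $0$, i.e.\ $x=0$. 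On the compact unit sphere $\{\|x\|=1\}\subset M_{n,n}(\RR)^f$ the continuous function $\phi$ is therefore strictly positive, so it attains a positive minimum and a finite maximum; homogeneity yields the two-sided inequality with real positive constants, which I may enlarge to positive integers $c_1,c_2\in\NN_{\geq1}$. Since the resulting inequality is a semialgebraic statement whose defining polynomials have coefficients in $\KK$ and whose constants are integers, the transfer principle (Properties~\ref{ppts.2.10}) upgrades it from $\RR$ to every real closed $\FF\supset\KK$.

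For part (2), semialgebraicity of $p(S)$ is immediate from Tarski--Seidenberg. For closedness I would argue via the $\RR$-extension: by Properties~\ref{ppts.2.10}(1), a semialgebraic set over $\KK$ is closed iff its $\RR$-extension is closed, and similarly for $\FF$. The set $\calM_G$ is $\KK$-algebraic, hence $\calM_G(\RR)$ is closed in $M_{n,n}(\RR)^f$ and therefore $S_\RR$ is closed in $M_{n,n}(\RR)^f$. Part~(1), applied over $\RR$, shows that $p_\RR$ is proper on $\calM_G(\RR)$: if $y_k=p_\RR(x_k)\to y$ with $x_k\in S_\RR$, then $(y_k)$ is bounded, so by~(1) the sequence $(\|x_k\|)$ is bounded; extracting a Euclidean convergent subsequence $x_{k_j}\to x_\infty\in S_\RR$ gives $y=p_\RR(x_\infty)\in p_\RR(S_\RR)$. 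Thus $p(S)_\RR=p_\RR(S_\RR)$ is closed in $\RR^l$, so $p(S)$ is closed in $\KK^l$; applying Properties~\ref{ppts.2.10}(1) again transfers closedness (and semialgebraicity) of $p(S)_\FF=p_\FF(S_\FF)$ to arbitrary real closed $\FF\supset\KK$.

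The main obstacle is the orbit separation statement that underlies the strict positivity of $\phi$ off the origin; this relies on the classical fact that for a compact group acting algebraically on an affine variety, the ring of invariants separates orbits. Once that is in place, everything else is a routine compactness-plus-transfer argument, the only secondary (and purely cosmetic) point being that the constants produced by the compactness step can be chosen in $\NN_{\geq1}$ by rounding.
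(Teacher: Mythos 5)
Your proof is correct and follows essentially the same route as the paper: establish $p^{-1}(0)=\{0\}$ over $\RR$ using the fact that the $p_i$ separate $K_\RR$-orbits, deduce the two-sided inequality by compactness of the unit sphere and homogeneity, then obtain part~(2) from the resulting properness of $p_\RR$ and transfer everything by Properties~\ref{ppts.ppts.2.10}. The only (welcome) elaborations you add beyond the paper's proof are the explicit appeal to invariants-separate-orbits for a compact group (the paper simply cites \cite[\S 7.1]{RS} for the statement that the fibers of $p_\RR$ are $K_\RR$-orbits) and the observation that the real constants from the compactness step can be rounded to integers.
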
	
\begin{proof}
	(1) Since the fibers of the map $p:M_{n,n}(\RR)^f\to \RR^l$ are the $K_\RR$-orbits (see \cite[\S~7.1]{RS})
	and $K_\RR\cdot 0=0$ we have $p^{-1}(0)=0$. This implies that the homogeneous polynomial 
	\bqn
	\sum_{i=1}^l(p_i(x))^{\frac{2m}{m_i}}
	\eqn
	has strictly positive minimum and maximum in the unit sphere in $M_{n,n}(\RR)^f$ from which the inequalities follow.
	
	Let $S\subset M_{n,n}(\KK)^f$ be  closed semialgebraic; then  $S_\RR\subset M_{n,n}(\RR)^f$ is closed semialgebraic 
	(Properties \ref{ppts.2.10} (1)) and since $p_\RR:M_{n,n}(\RR)^f\to\RR^l$ is proper by (1), $p_\RR(S_\RR)\subset \RR^l$ is closed, 
	it is also semialgebraic and  $p_\RR(S_\RR)=p(S)_\RR$ 
	which implies that $p(S)\subset \KK^l$ is closed semialgebraic by \ref{ppts.2.10} (2).
\end{proof}	

A fundamental example of closed semialgebraic subset of $M_{n,n}(\RR)^f$ to which Proposition \ref{prop:pptyP} applies is 
$$\calM_F(\Gamma,G):=\calM_{G}\cap \calR_F(\Gamma,G).$$
Then \label{n.59} 
$$\Xi_{F,p}(\G,G):=p(\calM_F(\G,G)),$$
is a closed semialgebraic subset of $\KK^l$ and so is  
\bqn
\Xi_{F,p}(\Gamma,G)_\FF=p_\FF(\calM_F(\G,G)_\FF)=p_\FF(\calM_{G_\FF}\cap \calR_F(\Gamma,G_\FF))\subset\FF^l
\eqn
for any real closed $\FF\supset\KK$.

Our aim now is to extend the map $p:\calM_{F}(\Gamma, G)\to \KK^l$  to a semialgebraic map  $P:\calR_F^{red}(\Gamma,G)\to \KK^l$ whose fibers are precisely the $G$-orbits. 
To this aim observe that, since $\calR_F^{red}(\G, G_\RR)$ coincides with the set of closed $G_\RR$-orbits in $\calR_F(\G, G_\RR)$ \cite{Bre}, we have
\bq\label{e.red}\calR_F^{red}(\G, G_\RR)=\{A\in \calR_F(\G, G_\RR)|\; (G_\RR)_* A\cap \calM_{G_\RR}\neq \emptyset\}.\eq
Equation \eqref{e.red} holds over $\KK$, as well as over any real closed extension $\FF\supset \KK$.

The continuous semialgebraic map
$$
\begin{array}{rcc}
	\{(g,A)\in G\times \calR_F(\G,G)|\; g_*A\in \calM_G\}&\to&\KK^l\\
	(g,A)&\mapsto& p(g_*A)	
\end{array}	
$$
doesn't depend on the first variable because of \eqref{e.GKorbit}. This defines by projection and by \eqref{e.red} a continuous semialgebraic map
\bq\label{e.P}
P:\calR_F^{red}(\G,G)\to\KK^l.\eq
\begin{prop}\label{p.proper}
	Let $\FF\supset\KK$ be real closed. Then the closed semialgebraic set $\Xi_{F,p}(\Gamma,G)_\FF$ is the $\FF$-extension of $\Xi_{F,p}(\Gamma,G)\subset \KK^l$. Furthermore
	\begin{enumerate}
		\item The fibers of the continuous semialgebraic map 
		$$p_\FF:\calM_F(\Gamma,G)_\FF\to \Xi_{F,p}(\Gamma,G)_\FF$$
		coincide with the $K_\FF$-orbits.
		\item The fibers of the continuous semialgebraic map 
		$$P_\FF:\calR_F^{red}(\Gamma,G)_\FF\to \Xi_{F,p}(\Gamma,G)_\FF$$
		coincide with the ${\GFF}$-orbits.
	\end{enumerate}
\end{prop}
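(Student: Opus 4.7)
The approach throughout will be to combine the Richardson--Slodowy results, which hold over $\RR$, with the Tarski--Seidenberg transfer principle (Properties~\ref{ppts.2.10}) to pass to arbitrary real closed extensions $\FF\supset\KK$.

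First I would dispose of the assertion that $\Xi_{F,p}(\Gamma,G)_\FF$ is the $\FF$-extension of $\Xi_{F,p}(\Gamma,G)$. By definition $\Xi_{F,p}(\Gamma,G)=p(\calM_F(\Gamma,G))$ is the image of a closed semialgebraic set under the polynomial map $p$ (which has coefficients in $\KK$), and by Proposition~\ref{prop:pptyP}(2) this image is closed semialgebraic in $\KK^l$. Applying Properties~\ref{ppts.2.10}(2) to the semialgebraic map $p\colon\calM_F(\Gamma,G)\to \KK^l$ yields $p(\calM_F(\Gamma,G))_\FF=p_\FF(\calM_F(\Gamma,G)_\FF)$, which is exactly the equality with the semialgebraic set built in the $\FF$-case.

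For assertion (1), one containment is free: since $p$ is $K$-invariant by choice, every $K_\FF$-orbit is contained in a fiber of $p_\FF$. To obtain the reverse containment I would encode both relations as semialgebraic subsets of $\calM_F(\Gamma,G)\times\calM_F(\Gamma,G)$. Namely set
\[
S=\{(x,y)\in\calM_F(\Gamma,G)^2:p(x)=p(y)\},
\]
\[
T=\mathrm{Image}\bigl(K\times\calM_F(\Gamma,G)\to \calM_F(\Gamma,G)^2,\ (k,x)\mapsto (x,k_*x)\bigr),
\]
both of which are semialgebraic subsets defined over $\KK$. Over $\RR$, the equality $S_\RR=T_\RR$ is precisely the content of \cite[Theorem~4.5]{RS} combined with the fact that the algebra of $K_\RR$-invariant polynomials is generated by $p_1,\dots,p_l$ (Lemma~\ref{l.Kinv}) and hence separates $K_\RR$-orbits on $\calM_{G_\RR}$. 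By Properties~\ref{ppts.2.10}(1) applied to $S\smallsetminus T$ and $T\smallsetminus S$ (both empty over $\RR$), we conclude $S=T$, and the same equality transfers to $\FF$, giving $S_\FF=T_\FF$. This is the statement that the fibers of $p_\FF$ on $\calM_F(\Gamma,G)_\FF$ are exactly the $K_\FF$-orbits.

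For assertion (2), the first task is to verify that $P_\FF$ really is the natural map sending a reductive $A\in\calR_F^{\mathrm{red}}(\Gamma,G)_\FF$ to $p_\FF(g_*A)$ for any $g\in G_\FF$ with $g_*A\in \calM_{G_\FF}$; that is, I would check that for $A\in\calR_F^{\mathrm{red}}(\Gamma,G)_\FF$ the orbit $(G_\FF)_*A$ meets $\calM_{G_\FF}$ in a single $K_\FF$-orbit. Existence follows by transferring the semialgebraic statement
\[
\forall A\in\calR_F^{\mathrm{red}}(\Gamma,G),\ \exists g\in G:\ g_*A\in\calM_G,
\]
which holds over $\RR$ by the Richardson--Slodowy reductivity criterion together with the semialgebraic description of reductive representations (Equation~\eqref{e.red}). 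Uniqueness up to $K_\FF$ is then immediate from (1). With this in hand, $P_\FF$ is well defined, $G_\FF$-invariant, and the previous step shows two elements $A,B\in\calR_F^{\mathrm{red}}(\Gamma,G)_\FF$ with $P_\FF(A)=P_\FF(B)$ yield minimal representatives $g_*A$ and $h_*B$ in the same $K_\FF$-orbit, hence in the same $G_\FF$-orbit, whence $A$ and $B$ are $G_\FF$-conjugate. Continuity and semialgebraicity of $P_\FF$ is obtained by transfer from the corresponding property of $P$ (Equation~\eqref{e.P}).

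The main obstacle is the argument for assertion (2): one has to be sure that the implicit parameter $g\in G_\FF$ appearing in the definition of $P_\FF$ exists and that the resulting map genuinely agrees with the $\FF$-extension of $P$. Both rely on phrasing the needed facts as semialgebraic relations defined over $\KK$ so that the Tarski--Seidenberg principle applies; the substantive input from Richardson--Slodowy--namely closedness of the orbit, the norm-minimizing criterion for closedness, and the fact that minimal vectors in a closed orbit form a single $K_\RR$-orbit--enters only over $\RR$ and is then imported to $\FF$ by transfer.
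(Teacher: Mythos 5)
Your proof is correct and follows essentially the same strategy as the paper: encode the orbit separation as a $\KK$-semialgebraic relation and transfer it from $\RR$ to $\FF$ using Tarski--Seidenberg. The paper phrases the transfer as surjectivity of a projection $\calD\to\calF$ (invoking Properties~\ref{ppts.2.10}(2)) rather than as emptiness of the symmetric difference $S\triangle T$ (Properties~\ref{ppts.2.10}(1)), and handles (2) by declaring it analogous rather than reducing it to (1), but these are cosmetic differences.
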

\begin{proof}
	
	In the case $\KK=\RR$ (1) follows from the fact that $K_\RR$-invariant polynomials separate $K_\RR$-orbits \cite[Section 7.1]{RS}, while (2) is a direct consequence of \eqref{e.GKorbit}.
	
	The general case  follows readily from the case $\KK=\RR$, we prove (1) as (2) is completely analogous. Consider the $\KK$-semialgebraic sets
	$$\calD=\{(x,y,g)\in \calM_F(\Gamma,G)\times \calM_F(\Gamma,G)\times K|\; P(x)=P(y), gx=y \}$$
	and 
	$$\calF=\{(x,y)\in \calM_F(\Gamma,G)\times \calM_F(\Gamma,G)|\; P(x)=P(y) \}$$
	as well as the projection $\pi:\calD\to\calF$.	Then the fact that the fibers of $P_\RR:\calM_F(\Gamma,G)_\RR\to \Xi_{F,p}(\Gamma,G)_\RR$ are precisely the $K_\RR$-orbits is equivalent to the surjectivity of $\pi_\RR:\calD_\RR\to\calF_\RR$; by Properties \ref{ppts.2.10}(2) the same statement holds for $\KK\subset \RR$ and for any real closed $\FF\supset\KK$.
\end{proof}	
We furthermore deduce from Proposition \ref{prop:pptyP}(1):
\begin{prop}\label{p:Prsp}
	The polynomial map $p:\calM_F(\Gamma,G)\to\Xi_{F,p}(\Gamma, G)$ extends to continuous maps 
	$$\begin{array}{cccc}
		\rsp p:&\rsp{ \calM_F(\Gamma,G)}&\twoheadrightarrow&\rsp{\Xi_{F,p}(\Gamma, G)}\\
		\rspcl {p}:&\rspcl{\calM_F(\Gamma,G)}&\twoheadrightarrow&\rspcl{\Xi_{F,p}(\Gamma, G)}.
	\end{array}$$
\end{prop}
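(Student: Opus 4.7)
The polynomial map $p\colon M_{n,n}(\KK)^f\to \KK^l$ induces by pullback of prime cones a continuous map $\rsp p\colon \rsp{M_{n,n}(\KK)^f}\to\rsp{\KK^l}$; concretely, $\rsp p(\alpha)=\{q\in\KK[Y_1,\dots,Y_l] : p^*(q)\in\alpha\}$, where $p^*$ is the pullback on coordinate rings (cf.~\cite[Proposition~7.2.8]{BCR}). Since $p(\calM_F(\Gamma,G))\subseteq \Xi_{F,p}(\Gamma,G)$, the restriction of $\rsp p$ sends $\rsp{\calM_F(\Gamma,G)}$ into $\rsp{\Xi_{F,p}(\Gamma,G)}$. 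Surjectivity then follows from Properties~\ref{ppts.2.10}(2): any $\beta\in\rsp{\Xi_{F,p}(\Gamma,G)}$ is represented (Remark~\ref{rem:point->prime cone}) by a point $y\in\Xi_{F,p}(\Gamma,G)_{\FF_\beta}$, and transferring the surjectivity of $p$ to $\FF_\beta$ produces $x\in\calM_F(\Gamma,G)_{\FF_\beta}$ with $p_{\FF_\beta}(x)=y$; consequently $\rsp p(\alpha_x)=\alpha_{p(x)}=\alpha_y=\beta$.

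The hard step is showing that $\rsp p$ sends $\rspcl V$ into $\rspcl{\KK^l}$, where $V=M_{n,n}(\KK)^f$. Let $\alpha\in\rspcl V$, so by Proposition~\ref{prop:closed_points} the field $\FF_\alpha$ is Archimedean over $\phi_\alpha(\KK[V])$. Applying the quantitative bound of Proposition~\ref{prop:pptyP}(1) in the real closed field $\FF_\alpha$ to the tuple $x=(\phi_\alpha(X^{(\gamma)}_{ij}))_{i,j,\gamma}\in V_{\FF_\alpha}$ yields
\[
\bigl(\phi_\alpha(X^{(\gamma)}_{ij})\bigr)^{2m}\leq\|x\|^{2m}\leq c_1\sum_{i=1}^{l}\phi_\alpha(p_i)^{2m/m_i},
\]
so each generator of $\phi_\alpha(\KK[V])$ is bounded in $\FF_\alpha$ by an element of the subring $\phi_\alpha(p^*(\KK[Y_1,\dots,Y_l]))$. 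Combined with the Archimedean property of $\FF_\alpha$ over $\phi_\alpha(\KK[V])$, this shows that $\FF_\alpha$, and hence its real closed subfield $\FF_{\rsp p(\alpha)}$ (which identifies with the real closure in $\FF_\alpha$ of the fraction field of $\phi_\alpha(p^*(\KK[Y_1,\dots,Y_l]))$), is Archimedean over $\phi_{\rsp p(\alpha)}(\KK[Y_1,\dots,Y_l])$; thus $\rsp p(\alpha)$ is closed by Proposition~\ref{prop:closed_points}.

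Since $\calM_F(\Gamma,G)$ and (by Proposition~\ref{prop:pptyP}(2)) $\Xi_{F,p}(\Gamma,G)$ are closed semialgebraic, one has $\rspcl{\calM_F(\Gamma,G)}=\rsp{\calM_F(\Gamma,G)}\cap\rspcl V$ and analogously for $\Xi_{F,p}(\Gamma,G)$; together with the previous paragraph, the restriction $\rspcl p:=\rsp p|_{\rspcl{\calM_F(\Gamma,G)}}$ therefore lands in $\rspcl{\Xi_{F,p}(\Gamma,G)}$ and is continuous. For surjectivity of $\rspcl p$, given $\beta\in\rspcl{\Xi_{F,p}(\Gamma,G)}$, first lift it via $\rsp p$ to $\alpha\in\rsp{\calM_F(\Gamma,G)}$, then apply the retraction $\Ret\colon\rsp V\to\rspcl V$: since $\calM_F(\Gamma,G)$ is closed in $V$, $\Ret(\alpha)$ remains in $\rsp{\calM_F(\Gamma,G)}$, so $\Ret(\alpha)\in\rspcl{\calM_F(\Gamma,G)}$; finally $\rsp p(\Ret(\alpha))$ is a specialization of $\beta=\rsp p(\alpha)$, and the closedness of $\beta$ in $\rsp{\KK^l}$ forces $\rsp p(\Ret(\alpha))=\beta$. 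The main obstacle is the Archimedean transfer carried out in the second paragraph, for which the norm-comparison estimate of Proposition~\ref{prop:pptyP}(1) is indispensable.
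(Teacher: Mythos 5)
Your proof is correct and takes essentially the same approach as the paper: both reduce the issue to showing that $\rsp p$ preserves closedness, using Proposition~\ref{prop:closed_points} together with the norm comparison of Proposition~\ref{prop:pptyP}(1) to transfer the Archimedean (big-element) property from $\phi_\alpha(\KK[V])$ to $\phi_{\rsp p(\alpha)}(\KK[Y_1,\dots,Y_l])$. The paper packages this reasoning into a general lemma on polynomial maps satisfying a homogeneous norm bound and obtains surjectivity on closed points by noting that a nonempty fiber over a closed point, being closed, contains closed points — which is the same retraction argument you make explicit.
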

\noindent The only thing to verify is that the map $\rsp p$ sends
closed points to closed points, which follows from the following
general lemma 
\begin{lem}
	Let $V\subset \KK^{s}$, $W\subset \KK^n$ be closed semialgebraic and $f:\KK^{s}\to\KK^{n}$ a polynomial map with $f(V)=W$. Then $\rsp f(\rsp{V})=\rsp W$ and $\rsp f(\rspcl{V})\supset\rspcl {W}$. If furthermore there exists $c\in\NN_{\geq 1}$ such that 
	\bq\label{e.l7.6}\|x\|^{2m}\leq c\sum f_i(x)^{2d_i} \eq
	where $f=(f_1, \ldots, f_n)$, $f_i$ is homogeneous of degree $\frac{m}{d_i}\in\NN_{\geq 1}$, $m,d_i\in \NN_{\geq 1}$, 
	then $\rsp f(\rspcl{V})=\rspcl {W}$.
\end{lem}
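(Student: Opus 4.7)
The lemma splits into three claims, which I handle in turn. Using the description (Remark~\ref{rem:point->prime cone}) of points of $\rsp V$ as prime cones $\alpha_x$ with $x=(x_1,\dots,x_s)\in V_{\FF_\alpha}$, the induced map $\rsp f$ is simply $\alpha_x\mapsto\alpha_{f(x)}$. For the first claim, $f\colon V\to W$ is surjective as a semialgebraic map by hypothesis, so Property~\ref{ppts.2.10}(2) gives that $f_\FF\colon V_\FF\to W_\FF$ is surjective for every real closed $\FF\supset\KK$; given $\beta=\alpha_y\in\rsp W$ I pick $x\in V_{\FF_\beta}$ with $f(x)=y$, so that $\rsp f(\alpha_x)=\beta$. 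For the second inclusion, $\rsp f$ is continuous, being the restriction of the map $\rsp{\KK^s}\to\rsp{\KK^n}$ induced by the $\KK$-algebra homomorphism $Y_i\mapsto f_i$. Given $\beta\in\rspcl W$ I take any $\alpha\in\rsp V$ mapping to $\beta$ from the previous step, and pass to its retraction $\Ret(\alpha)\in\rspcl V$ (see \eqref{eq:retraction}). Continuity yields
\bqn
\rsp f(\Ret(\alpha))\in\rsp f(\overline{\{\alpha\}})\subset\overline{\{\beta\}}=\{\beta\},
\eqn
the last equality because $\beta$ is a closed point, so $\Ret(\alpha)$ is a closed preimage of $\beta$.

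The third claim, which is the main content, asserts $\rsp f(\rspcl V)\subset\rspcl W$ under the properness-type hypothesis. Fix $\alpha\in\rspcl V$ represented by $x\in V_{\FF_\alpha}$, and set $A=\KK[x_1,\dots,x_s]$ and $B=\KK[f_1(x),\dots,f_n(x)]$, so that $B\subset A\subset\FF_\alpha$. Under the canonical identification, the homomorphism $\phi_{\rsp f(\alpha)}$ has image $B$ and $\FF_{\rsp f(\alpha)}$ embeds in $\FF_\alpha$ as the real closure of $\Frac B$ inside $\FF_\alpha$. By Proposition~\ref{prop:closed_points} it therefore suffices to show that $\FF_\alpha$ itself is Archimedean over $B$. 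Transferring the hypothesis to $\FF_\alpha$ gives, for each coordinate,
\bqn
x_j^{2m}\leq\|x\|^{2m}\leq c\sum_i f_i(x)^{2d_i}=:b\in B.
\eqn
The elementary observation that $|y|^{2m}\leq b$ with $2m\geq 1$ and $b\geq 0$ forces $|y|\leq 1+b$ (check $|y|\leq 1$ and $|y|>1$ separately) then shows $|x_j|\leq 1+b\in B$, so every polynomial in the $x_j$ with $\KK$-coefficients is bounded by an element of $B$; that is, $A$ is Archimedean over $B$. Since $\alpha$ is closed, Proposition~\ref{prop:closed_points} yields that $\FF_\alpha$ is Archimedean over $A$, and composing finishes the argument.

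The only mildly delicate point is the passage from the polynomial domination inequality, which only bounds $x_j^{2m}$, to an honest Archimedean bound on $|x_j|$; this is handled by the trivial trick $|y|\leq 1+|y|^{2m}$, which works purely inside an ordered field (no root-taking outside $B$ is required), and the rest is bookkeeping that already appears in the closed-point criterion of Proposition~\ref{prop:closed_points}.
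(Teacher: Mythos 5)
Your proof is correct, and the overall skeleton (reduce everything to the closed-point criterion of Proposition~\ref{prop:closed_points}) matches the paper's; the genuine difference is in the third, main claim. The paper argues via \emph{big elements}: it notes that $\FF_\alpha$ is Archimedean over $A:=\KK[\phi(X_1),\dots,\phi(X_s)]$, asserts that $\sum_i\phi(X_i)^{2m}$ is a big element of $\FF_\alpha$, and then reads off from \eqref{e.l7.6} that $\sum_i f_i(\phi(X))^{2d_i}\in B:=\KK[f(\phi(X))]$ is big too, which gives closedness. You avoid big elements entirely and prove directly that $A$ is Archimedean over $B$ via the $|y|\le 1+|y|^{2m}$ trick, then compose with ``$\FF_\alpha$ Archimedean over $A$'' and pass to the subring $\FF_{\rsp f(\alpha)}\subset\FF_\alpha$. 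The two routes are close in spirit, but yours is slightly more robust: the big-element statement tacitly presumes the non-Archimedean case (if $\alpha\in V_\RR$ with $\phi(X_i)=0$ for all $i$ then $\sum\phi(X_i)^{2m}=0$ is not big, though the conclusion is of course trivially true there), whereas your bound $|x_j|\le 1+b$ holds in all cases and needs no case split. On claim two, both arguments come down to the retraction $\Ret$; you push $\Ret(\alpha)$ forward and invoke continuity of $\rsp f$, the paper observes instead that $(\rsp f)^{-1}(\beta)$ is closed and hence meets $\rspcl V$ --- same content, different phrasing. One small omission on your side: for the equality $\rsp f(\rsp V)=\rsp W$ you only show $\supseteq$; the reverse inclusion, namely that $\rsp f$ carries $c(V)$ into $c(W)$, follows at once from $f(V)_\FF=W_\FF$ but is worth a sentence.
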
	
\begin{proof}
	The first assertion follows immediately from Property \ref{ppts.2.10}(2), while for the second, if $y\in \rspcl {W}$ is a closed point, $f^{-1}(y)\subset \rsp V$ is a closed  set and hence contains closed points.
	
	Let $(\phi,\FF)$ represent a closed point in $\rsp V$ where $\phi:\KK[X_1,\ldots, X_s]\to \FF$ is a homomorphism as in Proposition \ref{p.charrsp}(3). Then $\FF$ is Archimedean over $\KK[\phi(X_1),\ldots, \phi(X_n)]$ (Proposition \ref{prop:closed_points}) and the composition of $f^*:\KK[Y_1,\ldots, Y_n]\to \KK[X_1,\ldots, X_s]$ with $\phi$ represents $\rsp f(\phi,\FF)\in \rsp W$. Since $\FF$ is Archimedean over $\KK[\phi(X_1),\ldots, \phi(X_s)]$, and $\sum_{i=1}^s\phi(X_i)^{2m}$ is a big element in $\FF$, we deduce  from the inequality \eqref{e.l7.6}
	$$\sum_{i=1}^n f_i(\phi(X_1),\ldots,\phi(X_s))^{2d_i}$$
	is a big element in $\FF$ as well, which shows, by Proposition \ref{p.charrsp}(3), that $\rsp f(\phi,\FF)$ is a closed point.
\end{proof}	
\begin{defn}\label{d.reprapoint}
	A reductive representation $\rho:\G\to G_\FF$ \emph{represents a point} $\alpha\in\rsp{\Xi_{F,p}(\Gamma, G)}$ i
	f for  $P_\FF(\rho)\in{\Xi_{F,p}(\Gamma, G)}_\FF$ 
	we have $\alpha=\alpha_{P_\FF(\rho)}$ (see Remark \ref{rem:point->prime cone}). \label{n.60} 
\end{defn}	

A direct corollary of Proposition \ref{p:Prsp} is the analogous statement for closed $G$-invariant semialgebraic subsets $\frakT$ of 
$\calR_F^{red}(\G,G)$. 
For each such set $\frakT$ we denote by $\Xi\frakT\subset \Xi_{F,p}(\G,G)$ the image $P(\frakT)$, 
which coincides with $p(\frakT\cap\calM_F(\G,G))$.
\begin{cor}\label{c:Prsp}
	Let $\frakT\subset \calR_F^{red}(\G,G)$ be closed, $G$-invariant semialgebraic.
	The map $p:\calM_F(\Gamma,G)\to\Xi_F(\Gamma, G)$ induces continuous maps 
	$$\begin{array}{cccc}
		\rsp p:&\rsp{(\frakT\cap \calM_F(\Gamma,G))}&\twoheadrightarrow&\rsp{\Xi\frakT}\\
		\rspcl {p}:&\rspcl{(\frakT\cap\calM_F(\Gamma,G))}&\twoheadrightarrow&\rspcl{\Xi\frakT}.
	\end{array}$$ 
\end{cor}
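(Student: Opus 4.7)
The plan is to reduce Corollary~\ref{c:Prsp} to Proposition~\ref{p:Prsp} by replacing the pair $(\calM_F(\G,G),\Xi_{F,p}(\G,G))$ with $(\frakT\cap\calM_F(\G,G),\Xi\frakT)$ and noting that every ingredient in the proof of Proposition~\ref{p:Prsp} still goes through. In particular, the lemma stated and proved in the course of Proposition~\ref{p:Prsp}, which concerns a polynomial map $f\colon \KK^s\to\KK^n$ sending a closed semialgebraic set $V$ onto a closed semialgebraic set $W=f(V)$ and satisfying the homogeneous norm bound \eqref{e.l7.6}, will be applied with $V:=\frakT\cap\calM_F(\G,G)$, $W:=\Xi\frakT$ and $f:=p$.

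First I would check that $\frakT\cap\calM_F(\G,G)$ is a closed semialgebraic subset of $M_{n,n}(\KK)^f$. This is immediate: $\frakT$ is closed semialgebraic by hypothesis, $\calM_G$ is a $\KK$-algebraic subset of $M_{n,n}(\KK)^f$ by its defining linear conditions, and $\calR_F(\G,G)$ is closed semialgebraic as was already observed in \S~\ref{subsec:rep}, so their intersection $\frakT\cap\calM_F(\G,G)=\frakT\cap\calM_G\cap\calR_F(\G,G)$ is closed semialgebraic. Next, since $p\colon M_{n,n}(\KK)^f\to\KK^l$ is a polynomial map whose image on any closed semialgebraic subset of $\calM_G$ is closed semialgebraic by Proposition~\ref{prop:pptyP}(2), the set $\Xi\frakT=p(\frakT\cap\calM_F(\G,G))$ is a closed semialgebraic subset of $\KK^l$.

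Now I would invoke the lemma used inside the proof of Proposition~\ref{p:Prsp}: applied to $V=\frakT\cap\calM_F(\G,G)$, $W=\Xi\frakT$, and $f=p$, it yields a continuous surjective map $\rsp p\colon \rsp{V}\twoheadrightarrow\rsp{W}$, and moreover, under the homogeneous bound in \eqref{e.l7.6}, it restricts to a surjection $\rspcl{p}\colon\rspcl V\twoheadrightarrow\rspcl W$. The hypothesis on the bound is provided by Proposition~\ref{prop:pptyP}(1): the inequality
\[
\tfrac{1}{c_1}\|x\|^{2m}\leq \sum_{i=1}^l (p_i(x))^{2m/m_i}
\]
holds for every $x\in M_{n,n}(\KK)^f$, in particular on $V$, and each $p_i$ is homogeneous of degree $m_i$ with $2m/m_i\in\NN_{\geq1}$, which matches the homogeneity assumption of the lemma with $d_i=m/m_i$. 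This gives the two continuous surjections asserted in the statement.

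I do not expect a real obstacle: the only genuinely new input beyond Proposition~\ref{p:Prsp} is the closedness of $\Xi\frakT$ in $\KK^l$, which is exactly what Proposition~\ref{prop:pptyP}(2) delivers. The subtlest point to state cleanly is perhaps that the constructible subsets $\cons(\frakT\cap\calM_F(\G,G))$ and $\cons(\Xi\frakT)$ (i.e.\ $\rsp{(\frakT\cap\calM_F(\G,G))}$ and $\rsp{\Xi\frakT}$ in the notation of \eqref{e.rspS}) are computed intrinsically from these semialgebraic sets, independently of the ambient spaces $\calM_F(\G,G)$ and $\Xi_{F,p}(\G,G)$, so that the maps $\rsp p$ and $\rspcl p$ above are unambiguously the restrictions of the corresponding maps of Proposition~\ref{p:Prsp}.
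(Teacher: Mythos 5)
Your proposal is correct and follows the same route as the paper, which dispatches the corollary as "a direct corollary of Proposition~\ref{p:Prsp}" without spelling out the details; you have simply made explicit the verification that the general lemma in that proposition's proof applies with $V=\frakT\cap\calM_F(\G,G)$, $W=\Xi\frakT$, and $f=p$. Your checks (closedness of $\frakT\cap\calM_F(\G,G)$ and of $\Xi\frakT$ via Proposition~\ref{prop:pptyP}(2), and the homogeneous lower bound from Proposition~\ref{prop:pptyP}(1) with $d_i=m/m_i$) are exactly the right ingredients and are all sound.
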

As a consequence of the explicit description of the semialgebraic model $\Xi_{F,p}(\G,G)$ of the character variety 
we obtain the existence and uniqueness of the minimal field mentioned in the introduction:
\begin{cor}\label{c.minimalfield}
	Let $\LL$ be a real closed field containing $\KK$ and  $\rho:\G\to G_\LL$ be a reductive representation. 
	Let  $\KK\subset\FF\subset\LL$ be the smallest real closed field containing the coordinates of $P(\rho)\in\LL^l$. 
	Then $\FF$ is $\rho$-minimal in the following sense:
	\begin{enumerate}
		\item $\rho$ can be conjugated into $G_\FF$;
		\item $\rho$ cannot be conjugated into $G_\EE$ for $\EE\subset\FF$ proper real closed.
	\end{enumerate}	
	If in addition $(\rho(\g))_{\g\in F}\in\calM_F(\G,G_\LL)$ then $\rho$ can be conjugated into $G_\FF$ by an element of $K_\LL$.
\end{cor}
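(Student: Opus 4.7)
The plan is to exploit the explicit invariant-theoretic model built in this section, specifically the surjectivity and fiber description of the map $P_\FF$ (Proposition~\ref{p.proper}) over arbitrary real closed extensions, together with the fact that $\Xi_{F,p}(\G,G)$ is a $\KK$-semialgebraic subset of $\KK^l$, so its $\FF$-points are detected coordinatewise.

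First I would establish (1). The hypothesis says the coordinates of $P_\LL(\rho)\in\LL^l$ all lie in $\FF$. Since $\Xi_{F,p}(\G,G)\subset\KK^l$ is defined by polynomial (in)equalities with coefficients in $\KK$, and since $P_\LL(\rho)\in\Xi_{F,p}(\G,G)_\LL$ satisfies these relations, the fact that its coordinates lie in $\FF$ yields $P_\LL(\rho)\in\Xi_{F,p}(\G,G)_\FF$. Now $P:\calR_F^{red}(\G,G)\twoheadrightarrow\Xi_{F,p}(\G,G)$ is surjective by construction, so by Property~\ref{ppts.2.10}(2) the extension $P_\FF:\calR_F^{red}(\G,G)_\FF\twoheadrightarrow\Xi_{F,p}(\G,G)_\FF$ is surjective as well; using $\calR_F^{red}(\G,G)_\FF=\calR_F^{red}(\G,G_\FF)$, pick $\rho'\in\calR_F^{red}(\G,G_\FF)$ with $P_\FF(\rho')=P_\LL(\rho)$. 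Then, regarding $\rho'$ as a representation into $G_\LL$, we have $P_\LL(\rho')=P_\LL(\rho)$, and Proposition~\ref{p.proper}(2) applied over $\LL$ shows $\rho'$ and $\rho$ are $G_\LL$-conjugate. This gives (1).

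For (2) I would argue by contradiction: if $\rho$ were $G_\LL$-conjugate to a representation $\rho'':\G\to G_\EE$ with $\EE\subsetneq\FF$ real closed, then $G_\LL$-invariance of $P_\LL$ (Proposition~\ref{p.proper}(2)) gives $P_\LL(\rho)=P_\LL(\rho'')$. But the coordinates of $P_\LL(\rho'')=P_\EE(\rho'')$ lie in $\EE$, contradicting the minimality of $\FF$ as the smallest real closed field containing the coordinates of $P_\LL(\rho)$.

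For the final refinement, assume $(\rho(\g))_{\g\in F}\in\calM_F(\G,G_\LL)$. Since $p:\calM_F(\G,G)\twoheadrightarrow\Xi_{F,p}(\G,G)$ is surjective by the very definition of $\Xi_{F,p}$, the transfer principle provides $\sigma\in\calM_F(\G,G)_\FF=\calM_F(\G,G_\FF)$ with $p_\FF(\sigma)=P_\LL(\rho)$. Viewing $\sigma$ as lying in $\calM_F(\G,G_\LL)$, both $\sigma$ and $\rho$ are minimal vectors with the same image under $p_\LL$, so Proposition~\ref{p.proper}(1) applied over $\LL$ forces them to lie in a common $K_\LL$-orbit. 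The conjugating element $k\in K_\LL$ then satisfies $k\rho(\gamma)k^{-1}=\sigma(\gamma)\in G_\FF$ for every $\gamma$, which is the desired conclusion. The only mildly subtle point, and really the heart of the argument, is verifying that the $\FF$-valued coordinates of $P_\LL(\rho)$ place it in the $\FF$-extension $\Xi_{F,p}(\G,G)_\FF$ so that the surjectivity statements of Proposition~\ref{p.proper} can be invoked at level $\FF$ rather than $\LL$; everything else is a clean chase through fibers.
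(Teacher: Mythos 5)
Your proof is correct and takes essentially the same approach as the paper: both exploit the transfer principle to produce a representative $\rho'$ of the fiber of $P$ over $P(\rho)$ with coefficients in $\FF$ (the paper phrases this via Property~\ref{ppts.2.10}(1), transferring non-emptiness of the $\FF$-semialgebraic set $P^{-1}(P(\rho))$, while you phrase it via Property~\ref{ppts.2.10}(2), transferring surjectivity of $P$ to $P_\FF$ — the two are interchangeable here), and then invoke Proposition~\ref{p.proper}(2) over $\LL$ to conjugate $\rho$ to $\rho'$, with the refinement handled analogously via $\calM_F$ and Proposition~\ref{p.proper}(1). You spell out the minimality claim (2), which the paper leaves as "clearly," and correctly identify the substantive step — verifying $P_\LL(\rho)\in\Xi_{F,p}(\G,G)_\FF$ so the transfer principle applies at level $\FF$.
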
	

\begin{proof}
	Pick $\rho\in\calR_F^{red}(\G,G_\LL)$ and let $\FF\subset\LL$
	be the smallest real closed field containing $\KK$ and  the coordinates of $P(\rho)\in\LL^l$, 
	so we have that $P(\rho)\in\FF^l$ and $P^{-1}(P(\rho))$ is an $\FF$-semialgebraic set 
	whose extension  $P^{-1}(P(\rho))_\LL$ contains $\rho$ and hence is not empty. 
	It follows from Property \ref{ppts.2.10}(1) that  already $P^{-1}(P(\rho))_\FF$ is not empty, 
	which implies that $\rho$ can be $G_\LL$-conjugated into an element of $\calR_F^{red}(\G, G_\FF)$. 
	Clearly $\FF$ is minimal. The last assertion follows from an analogous argument using Proposition \ref{p.proper}(1).
\end{proof}
This justifies the assertions in the introduction before Theorem \ref{thm:1} 
and, together with Proposition \ref{prop:Rspecrep}, concludes the proof of Theorem \ref{thm:1}. 

\subsection{Minimal vectors, scales and length functions}
The main result of the section is the algebraic inequality in Theorem \ref{t.traceanddisp} relating 
the norm of a minimal vector and the translation length of an explicit generating set, expressed in terms of the traces of the corresponding elements.
This will be key in the characterization of closed points in the real spectrum compactification of $\Xi(\Gamma, G)$.
We will deduce such a result from a theorem proven by Procesi in the case $G_\RR=\SL_n(\RR)$ 
(Theorem~\ref{t.Procesi}), through a detour on the geometry of symmetric spaces.

If $w$ is any word in the alphabet $\{1,\ldots,f\}$, say $w=i_1\ldots i_r$, then $l(w)=r $ and, if $M$ is any associative monoid, 
we denote by $w:M^f\to M$ the product  map, $w(m_1,\ldots, m_f)=m_{i_1}\ldots m_{i_r}$.

\begin{thm}\label{t.traceanddisp}
	Let $m=\lcm\{1,\ldots, 2^n-1\}$, then there exists $c_1=c_1(f,n)$ such that for every $g\in\calM_{\GRR}\cap \GRR^f$,
	$$\|g\|^{2m}\leq n^m\left(c_1\sum_{l(w)\leq 2^{n}-1}\tr(w(g))^{\frac {2m}{l(w)}}\right)^{2(n-1)}.$$
\end{thm}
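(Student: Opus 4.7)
My plan combines three classical ingredients: Procesi's theorem on the generators of the $\GL_n$-trace invariants, the matrix norm versus displacement inequalities of Lemma \ref{lem:6.7}, and the Kempf--Ness/Richardson--Slodowy characterization of minimality on the symmetric space $\calX_\RR$. First I would invoke Procesi's theorem: the algebra of polynomial $\GL_n(\RR)$-invariants on $M_{n,n}(\RR)^f$ under diagonal conjugation is generated by the traces $\tr(w(g_1,\dots,g_f))$ as $w$ ranges over words of length at most $2^n - 1$. As a consequence, the elementary symmetric functions in the moduli $|\lambda_j(w(g))|$ of eigenvalues of any short-word product $w(g)$, and hence its spectral radius, are polynomially controlled by the short-word traces $\tr(w'(g))$ with $l(w') \leq 2^n - 1$. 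The choice $m = \lcm\{1, \ldots, 2^n - 1\}$ makes all exponents $2m/l(w)$ even integers, keeping each summand on the right-hand side of the inequality non-negative.

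Next, applying the first inequality of Lemma \ref{lem:6.7} to each $g_i \in \SL_n(\RR)$, summing over $i$, and then raising to the $m$-th power using H\"older's inequality yields
\bqn
\|g\|^{2m} = \Bigl(\sum_{i=1}^f \tr(g_i g_i^t)\Bigr)^m \leq n^m f^{m-1} \sum_{i=1}^f e^{m\,d_R(g_i\cdot\Id,\Id)},
\eqn
reducing the task to bounding each displacement $d_R(g_i\cdot\Id,\Id)$ in terms of short-word traces of $g$.

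The central step, and the main obstacle, uses the minimality hypothesis. By Richardson--Slodowy, $g\in\calM_\GRR$ precisely when $\Id$ is a critical (hence a minimum) point of the energy functional $x\mapsto\sum_i d_R(g_i x,x)^2$ on the CAT(0) symmetric space $\calX_\RR$. Combined with the convexity of displacement functions and the word-metric geometry of the free group generated by $F$, this should force an effective bound of the form
\bqn
d_R(g_i\cdot\Id,\Id) \leq C(f,n)\,\max_{l(w)\leq 2^n-1} \frac{\|j(w(g))\|}{l(w)},
\eqn
where $j$ denotes the Jordan projection. The cutoff $2^n-1$ dovetails with Procesi's bound, reflecting that such short words already suffice to detect the algebraic structure of the $\GL_n$-orbit; extracting an explicit constant $C(f,n)$ from the Kempf--Ness inequality is the delicate geometric content of the argument.

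To conclude, I would apply the second inequality of Lemma \ref{lem:6.7} to each short-word product $w(g) \in \SL_n(\RR)$, giving $e^{\|j(w(g))\|} \leq \tr(w(g)w(g)^t)^{2(n-1)}$, and then use Procesi once more to replace the transpose-laden trace $\tr(w(g)w(g)^t)$ by a polynomial in pure short-word traces $\tr(w'(g))$ with $l(w')\leq 2^n-1$. Absorbing all combinatorial constants into $c_1 = c_1(f, n)$ then yields the stated inequality: the factor $n^m$ comes from the first application of Lemma \ref{lem:6.7}, while the outer exponent $2(n-1)$ comes from the second.
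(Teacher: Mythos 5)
Your proposal correctly identifies the main ingredients (Procesi's theorem, the trace/displacement inequalities of Lemma \ref{lem:6.7}, and the Richardson--Slodowy framework) but arranges them in a different, and unfortunately gapped, way.

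The paper's actual route is: first it proves Proposition~\ref{prop:Ginvariantpoly}, an effective form of Richardson--Slodowy: if $q_1,\dots,q_s$ generate the $\bH(\RR)$-invariant polynomials, then $q^{-1}(0)\cap\calM_{\bH(\RR)}=\{0\}$ (because invariants separate closed orbits and $\{0\}$ is the unique closed orbit in $q^{-1}(0)$), so the homogeneous invariant polynomial $\sum_i q_i^{2d/d_i}$ is bounded below on the compact set $S\cap\calM_{\bH(\RR)}$, and homogeneity gives $\|A\|^{2d}\leq C\sum_i q_i(A)^{2d/d_i}$ on all of $\calM_{\bH(\RR)}$. Feeding Procesi's generating set into this gives Proposition~\ref{c.procesi}, the $\SL_n$ case. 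Finally Lemma~\ref{l.2.10}, which rests on Lemma~\ref{lem:6.7} and the total geodesicity of $\calX_\RR$ inside $\calP^1(n,\RR)$ (so that $\inf_{h\in G_\RR}$ equals $\inf_{h\in\SL_n(\RR)}$ for the displacement functional), gives $\|g\|^2\leq n\|\Phi(g)\|^{4(n-1)}$, which is the origin of the factor $n^m$ and the outer exponent $2(n-1)$. The conclusion then follows because $g$ and $\Phi(g)$ are $\SL_n(\RR)$-conjugate, so $\tr(w(g))=\tr(w(\Phi(g)))$.

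The gap in your argument is concentrated in the ``central step.'' You assert that minimality should give
$d_R(g_i\Id,\Id)\leq C(f,n)\max_{l(w)\leq 2^n-1}\|j(w(g))\|/l(w)$,
but you give no mechanism for proving this. Such a bound, comparing the minimal displacement to translation lengths of a ball in the word metric, is in fact a \emph{corollary} of the theorem you are trying to prove, not an available input: it is precisely where the cutoff $2^n-1$ must come from, and nothing in convexity or Kempf--Ness alone produces it---it is Procesi's theorem, filtered through the compactness/properness argument of Proposition~\ref{prop:Ginvariantpoly}, that generates the cutoff. Using this inequality as a step therefore hides the key content and risks circularity. There is also a second, smaller problem in your final step: you want to apply Procesi to replace $\tr(w(g)w(g)^t)$ by ``pure'' short-word traces $\tr(w'(g))$. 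But $\tr(w(g)w(g)^t)$ involves transposes and is only $O_n$-conjugation invariant, not $\GL_n$-conjugation invariant, so Procesi's theorem does not apply to it. Moreover, even for genuine invariants, Procesi gives algebraic identities (``$P$ is a polynomial in short-word traces''), not norm estimates; converting an identity into a bound with a uniform constant is exactly the role played by the sphere-compactness argument in the paper. In short, the missing idea is the separation-of-orbits/compactness argument of Proposition~\ref{prop:Ginvariantpoly}; once you have that, the theorem follows by the transfer Lemma~\ref{l.2.10} rather than by bounding displacements against translation lengths.
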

We first prove Theorem \ref{t.traceanddisp} in the special case of $\SL_n(\RR)$ in which we get the better bound:
\begin{prop}\label{c.procesi}
	Let $m=\lcm\{1,\ldots, 2^{n}-1\}$. Then for every $A\in\calM_{\SL_n(\RR)}$ we have
	$$\|A\|^{2m}=\left(\sum_{i=1}^f\tr(A_i^tA_i)\right)^{m}\leq C\sum_{l(w)\leq 2^n-1}(\tr(w(A)) )^{\frac{2m}{l(w)}}.$$ 
\end{prop}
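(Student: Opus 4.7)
The strategy is to combine Procesi's generation theorem for matrix invariants (Theorem \ref{t.Procesi}, referenced just before the statement) with a standard homogeneity-plus-compactness argument on the cone $\calM_{\SL_n(\RR)}$. First I would introduce the polynomial
\[
R(A) := \sum_{1 \leq l(w) \leq 2^n - 1} \tr(w(A))^{2m/l(w)},
\]
which is, up to the constant $C$, the desired right-hand side. Because $m = \lcm\{1,\ldots,2^n-1\}$, every exponent $2m/l(w)$ is a positive even integer, so $R$ is a non-negative real polynomial in the entries of $A$; moreover, each summand is homogeneous of degree $l(w)\cdot (2m/l(w)) = 2m$, so $R$ itself is homogeneous of degree $2m$. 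Note that $\|A\|^{2m} = \bigl(\sum_i \tr(A_i^t A_i)\bigr)^m$ is likewise homogeneous of degree $2m$, so the proposed inequality compares two polynomials of the same degree.

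The key step is to show that the only $A \in \calM_{\SL_n(\RR)}$ with $R(A) = 0$ is $A = 0$. Indeed, since each summand in $R$ is a non-negative even power, $R(A) = 0$ forces $\tr(w(A)) = 0$ for every word $w$ of length between $1$ and $2^n - 1$. By Procesi's theorem, these traces generate the algebra of $\SL_n(\RR)$-invariant polynomials on $M_{n,n}(\RR)^f$ under simultaneous conjugation, so every non-constant homogeneous invariant vanishes at $A$. Since $A \in \calM_{\SL_n(\RR)}$, the orbit $\SL_n(\RR)_*A$ is closed (by \cite[Theorem 4.4]{RS}); as invariants separate closed orbits for the reductive group $\SL_n(\RR)$, the closed orbits $\SL_n(\RR)_*A$ and $\{0\}$ must then coincide, i.e.\ $A = 0$.

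Once this vanishing statement is established, the inequality follows by a routine argument. From its algebraic defining equations, $\calM_{\SL_n(\RR)}$ is a closed cone in $M_{n,n}(\RR)^f$, so its intersection with the Euclidean unit sphere is compact. The continuous function $R$ is strictly positive on this compact set by the previous paragraph, hence attains a positive minimum $c > 0$. For a non-zero $A \in \calM_{\SL_n(\RR)}$, applying the degree-$2m$ homogeneity of $R$ to $A/\|A\|$ gives $R(A) \geq c\,\|A\|^{2m}$, so the claim holds with $C := 1/c$ (the case $A = 0$ being trivial). The main technical point is the second step: one must correctly combine the minimality hypothesis (closedness of the orbit) with Procesi's generation theorem and the separation of closed orbits by invariants. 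Steps one and three are standard once non-vanishing of $R$ on $\calM_{\SL_n(\RR)} \setminus \{0\}$ is secured.
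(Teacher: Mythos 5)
Your proof follows the same outline as the paper's: reduce to showing the polynomial $R$ is strictly positive on $\calM_{\SL_n(\RR)}\setminus\{0\}$, then use homogeneity of degree $2m$ and compactness of the unit sphere. The paper packages the key step as a general lemma (Proposition~\ref{prop:Ginvariantpoly}) applied to $\bH=\GL_n$ with Procesi's generators, then observes $\calM_{\GL_n(\RR)}=\calM_{\SL_n(\RR)}$; you inline that argument directly with $\SL_n$. This is a legitimate shortcut, though it tacitly uses that the $\GL_n(\RR)$- and $\SL_n(\RR)$-conjugation invariants coincide (Procesi's Theorem~\ref{t.Procesi} is stated for $\GL_n$; scalars act trivially by conjugation, so the two invariant rings agree) --- worth saying explicitly.

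The genuine gap is in the assertion ``as invariants separate closed orbits for the reductive group $\SL_n(\RR)$, the closed orbits $\SL_n(\RR)_*A$ and $\{0\}$ must then coincide.'' This is false as stated: real polynomial invariants do \emph{not} in general separate closed orbits of a real reductive group on a real vector space. For instance, in $M_2(\RR)$ under $\SL_2(\RR)$-conjugation, $J=\bigl(\begin{smallmatrix}0&1\\-1&0\end{smallmatrix}\bigr)$ and $-J$ have identical invariants ($\tr=0$, $\det=1$), both have closed orbits, yet they are not $\SL_2(\RR)$-conjugate (any $g$ with $gJg^{-1}=-J$ has negative determinant). The correct route --- the one the paper takes in the proof of Proposition~\ref{prop:Ginvariantpoly} --- is: (i) the invariant map $q$ separates closed orbits of the \emph{complex} group $\SL_n(\CC)$, so $\{0\}$ is the only closed $\SL_n(\CC)$-orbit in $q^{-1}(0)$; (ii) if $A\in\calM_{\SL_n(\RR)}$ then $\SL_n(\RR)\cdot A$ is closed, and hence $\SL_n(\CC)\cdot A$ is also closed (this is a nontrivial input, cf.\ \cite[Prop.~2.3]{RS}); (iii) therefore $\SL_n(\CC)\cdot A=\{0\}$, giving $A=0$. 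The conclusion you want is correct, but the complexification step in (ii) cannot be skipped.
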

The proof of Proposition \ref{c.procesi} follows from a general bound on the norm of a minimal vector in terms of the values of a chosen set of generators of $G_\RR$-invariant polynomials, together with a theorem of Procesi describing a special set of $GL_n(\RR)$ invariant polynomials.

To be more precise let $\bH<\GL_n$ be a connected linearly reductive group defined over $\RR$ and $q_1,\ldots,q_s$ be a set of homogeneous generators for the algebra of $\bH(\RR)$-invariant polynomials on $M_{n,n}(\RR)^f$. 
Since $\bH(\RR)$ is Zariski dense in $\bH$, the polynomials $q_1,\ldots,q_s$ extended to $ M_{n,n}(\CC)^f$ also generate the algebra of $\bH$-invariant polynomials. 
We consider the map
$$\begin{array}{cccc}
	q:& M_{n,n}(\CC)^f&\to&\CC^s\\
	&A&\mapsto&(q_1(A),\ldots, q_s(A)).
\end{array}$$
The general bound is the following effective version of \cite[Lemma 6.3]{RS}:
\begin{prop}\label{prop:Ginvariantpoly}
	Let $d_i$ denote the degree of $q_i$, and $d$ be any common multiple of $d_1,\ldots, d_s$. Then there is $C>0$ such that for every $A\in\calM_{\bH(\RR)}$
	$$\|A\|^{2d}\leq C\sum_{i=1}^sq_i(A)^{\frac{2d}{d_i}}.$$
\end{prop}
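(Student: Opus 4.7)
\emph{Plan.} Set
\[
P(A) := \sum_{i=1}^s q_i(A)^{2d/d_i}.
\]
Since each $d_i$ divides $d$, each exponent $2d/d_i$ is a positive even integer, so $P$ is a non-negative polynomial function on $M_{n,n}(\RR)^f$; it is $\bH(\RR)$-invariant (as each $q_i$ is) and homogeneous of degree $2d$, since $q_i^{2d/d_i}$ is homogeneous of degree $d_i \cdot (2d/d_i) = 2d$.

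The heart of the argument is to show that $P$ vanishes on $\calM_{\bH(\RR)}$ only at $0$. Suppose $A \in \calM_{\bH(\RR)}$ satisfies $P(A) = 0$; then $q_i(A) = 0 = q_i(0)$ for every $i$. The group $\bH(\RR)^\circ$-orbit of any minimal vector is closed in $M_{n,n}(\RR)^f$ (the real analogue of the Kempf--Ness theorem, which is exactly \cite[Theorem 4.3]{RS} and underlies the identity \eqref{e.GKorbit} used earlier in the paper), and inside $\calM_{\bH(\RR)}$ the intersection of such an orbit with $\calM_{\bH(\RR)}$ is a single $K$-orbit (where $K = \bH(\RR) \cap \mathrm{O}(n)$). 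Since the $q_i$ generate the algebra of $\bH(\RR)$-invariant polynomials and such invariants separate closed $\bH(\RR)$-orbits, the induced map $\calM_{\bH(\RR)}/K \to \RR^s$, $[A] \mapsto (q_1(A),\dots,q_s(A))$, is injective. Thus $A$ and $0$ lie in the same $K$-orbit, and since $K\cdot 0 = \{0\}$, we conclude $A = 0$.

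The rest is a compactness-plus-homogeneity argument. The set
\[
\Sigma := \{A \in \calM_{\bH(\RR)} : \|A\| = 1\}
\]
is closed (as $\calM_{\bH(\RR)}$ is algebraic) and bounded, hence compact. By the previous step, the continuous function $P$ is strictly positive on $\Sigma$, so $c := \min_\Sigma P > 0$. For any non-zero $A \in \calM_{\bH(\RR)}$, homogeneity of $P$ gives
\[
P(A) = \|A\|^{2d}\, P(A/\|A\|) \geq c\,\|A\|^{2d},
\]
so the desired inequality $\|A\|^{2d} \leq C\, P(A)$ holds with $C := 1/c$, and it is trivial at $A = 0$.

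The only delicate point is the invocation of the Richardson--Slodowy structure theorem ensuring that the finitely many chosen generators $q_1,\dots,q_s$ genuinely separate $K$-orbits on $\calM_{\bH(\RR)}$, rather than this being only an abstract property of the full invariant algebra; this follows from the identification $\calM_{\bH(\RR)}/K \cong M_{n,n}(\RR)^f /\!/ \bH(\RR)$ and the fact that the $q_i$ generate the coordinate ring of the right-hand side. Everything else reduces to elementary bookkeeping on degrees of homogeneity and the compactness of $\Sigma$.
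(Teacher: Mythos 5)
Your overall strategy (show $P$ vanishes on $\calM_{\bH(\RR)}$ only at the origin, then invoke homogeneity and compactness of the unit sphere) is the same as the paper's, and your step 2 is fine. The problem is in step 1.

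You assert that the $\bH(\RR)$-invariant polynomials $q_i$ separate closed $\bH(\RR)$-orbits, i.e., that the induced map $\calM_{\bH(\RR)}/K \to \RR^s$ is injective. This is false in general. Take $\bH = \SL_2$ acting on $M_{2,2}(\RR)$ by conjugation. Then $\RR[M_{2,2}]^{\bH(\RR)}$ is generated by $\tr$ and $\det$. The matrices
\[
A_1 = \begin{pmatrix} 0 & 1 \\ -1 & 0 \end{pmatrix}, \qquad A_2 = \begin{pmatrix} 0 & -1 \\ 1 & 0 \end{pmatrix}
\]
are both minimal vectors, both have $\tr = 0$ and $\det = 1$, and both have closed $\SL_2(\RR)$-orbits; yet they are not $\SL_2(\RR)$-conjugate (the stabiliser of $A_1$ in $\GL_2(\RR)$ is $\CC^\times$, whose determinants are all positive, so the $\GL_2(\RR)$-orbit splits into two $\SL_2(\RR)$-orbits), and since $A_1, A_2$ commute with $\SO(2)$ they are distinct singleton $K$-orbits. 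So real invariants do \emph{not} separate closed real orbits, and the identification you appeal to in your last paragraph, $\calM_{\bH(\RR)}/K \cong M_{n,n}(\RR)^f /\!/ \bH(\RR)$, is not an isomorphism: the natural map from the left to the $\RR$-points of the right is surjective but can be finite-to-one.

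What you actually need is weaker and is true, but the argument must pass through $\CC$. Over $\CC$ the invariant polynomials \emph{do} separate closed $\bH(\CC)$-orbits; in particular $\{0\}$ is the unique closed $\bH(\CC)$-orbit in the null cone $q^{-1}(0)$. Now if $x \in \calM_{\bH(\RR)}$ with $q(x)=0$, then $\bH(\RR)\cdot x$ is closed by Kempf--Ness (\cite[Theorem 4.4]{RS}), and a closed $\bH(\RR)$-orbit has closed $\bH(\CC)$-orbit (Birkes/Borel--Harish-Chandra; this is also in \cite{RS}). Hence $\bH(\CC)\cdot x$ is a closed orbit in $q^{-1}(0)$, forcing $\bH(\CC)\cdot x = \{0\}$ and thus $x = 0$. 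This is exactly the route the paper takes, citing \cite[Lemma 6.4]{RS}. Once you have $q^{-1}(0)\cap\calM_{\bH(\RR)}=\{0\}$, your compactness argument on the unit sphere finishes the proof correctly.
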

\begin{proof}
	The fibers of $q: M_{n,n}(\CC)^f\to\CC^s$ separate closed $\bH$-orbits and hence $\bH\cdot\{0\}=\{0\}$ is the unique closed $\bH$-orbit in $q^{-1}(0)$ (cfr. \cite[Lemma 6.4]{RS}). If now $x\in\calM_{\bH(\RR)}$ with $q(x)=0$, then $\bH(\RR)\cdot x$ is closed and hence $\bH\cdot x$ is closed, which implies $0\in\bH\cdot x$ and hence $x=0$. Thus $q^{-1}(0)\cap \calM_{\bH(\RR)}=\{0\}$. 
	If now $S$ is the unit sphere in $M_{n,n}(\RR)^f$, then the homogeneous polynomial 
	$$A\mapsto \sum_{i=1}^sq_i(A)^{\frac{2d}{d_i}}$$
	is strictly positive on $S\cap \calM_{\bH(\RR)}$ from which
	the proposition follows. 
	
\end{proof}
Proposition \ref{c.procesi} follows choosing the following set of invariant polynomials:
\begin{thm}[\cite{Procesi}]\label{t.Procesi}
	The algebra of $\GL_n(\RR)$-invariant polynomials on $M_{n,n}(\RR)^f$ is generated by the polynomials 
	$A\mapsto \tr(w(A))$ 
	where $w$ runs over all words of length less than $2^n-1$. Such a polynomial is homogeneous of degree $l(w)$.
\end{thm}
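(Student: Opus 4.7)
The plan is to invoke the classical theorem of Procesi on invariants of $n\times n$ matrices. The proof naturally splits into two independent stages: (i) showing that \emph{some} word-traces generate the invariant ring, without bound on the word length, and (ii) bounding that length by $2^n-1$. The homogeneity claim is automatic: $\tr(w(A))$ is visibly homogeneous of degree $l(w)$ in the tuple $A=(A_1,\ldots,A_f)\in M_{n,n}(\RR)^f$ when $A_i\mapsto tA_i$, so it suffices to handle (i) and (ii).

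For stage (i), I would invoke the first fundamental theorem for simultaneous conjugation on $f$-tuples of matrices. The standard route is to combine Weyl's first fundamental theorem for $\GL_n$ acting on $(\RR^n)^p\oplus((\RR^n)^*)^q$ (whose invariants are generated by the pairings $\langle v_i, \xi_j\rangle$) with the self-duality $M_{n,n}(\RR)\simeq (\RR^n)\otimes (\RR^n)^*$ as a $\GL_n$-module. Polarizing reduces multi-matrix invariants to invariants of vectors and covectors, and the pairings reassemble into traces $\tr(A_{i_1}\cdots A_{i_r})=\tr(w(A))$. Thus every $\GL_n(\RR)$-invariant polynomial on $M_{n,n}(\RR)^f$ is a polynomial in $\{\tr(w(A)):w \text{ a word in } f \text{ letters}\}$, with no length restriction.

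For stage (ii), one must show that $\tr(w(A))$ for $l(w)\geq 2^n$ lies in the subalgebra generated by traces of shorter words. The ingredient is the Cayley--Hamilton theorem together with Newton's identities: any matrix $B$ satisfies $B^n=\sum_{k=0}^{n-1}c_k(B)B^k$, where the coefficients $c_k(B)$ are polynomials in $\tr(B),\tr(B^2),\ldots,\tr(B^n)$. Multilinearising this identity yields the trace identities of Razmyslov--Procesi, which, applied to a subword $B=A_{i_s}\cdots A_{i_{s+n-1}}$ of $w$, rewrite $\tr(w(A))$ in terms of traces of words of total length strictly less than $l(w)$, at the cost of introducing additional shorter trace factors. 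Iterating this reduction, Procesi's combinatorial analysis shows that one can always reduce to words of length at most $2^n-1$; the exponent $2^n$ arises because the reduction can at worst double certain word lengths before terminating.

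The genuinely hard part is stage (ii) and in particular establishing the precise exponential bound $2^n-1$. The Cayley--Hamilton/Newton reduction alone only ensures \emph{some} bound depending on $n$, and a naive induction loses control of the word length because the multilinearised identity produces several words whose lengths sum to $l(w)$ but whose individual lengths are not automatically shorter in every factor. Procesi's argument carefully tracks the bookkeeping of these reductions using the theory of trace identities of the full matrix algebra $M_n$, and we would simply cite his 1976 paper (referenced in the text) for this sharp bound, limiting our contribution to verifying that the cited statement is indeed what is needed in Proposition~\ref{c.procesi}.
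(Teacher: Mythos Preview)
The paper does not prove this theorem: it is stated with attribution to Procesi's 1976 paper and used as a black box in the proof of Proposition~\ref{c.procesi}. Your sketch is a reasonable outline of how Procesi's argument actually proceeds (first fundamental theorem via Weyl's theorem on vectors and covectors, then the length bound via multilinearised Cayley--Hamilton/trace identities), and you correctly isolate the sharp bound $2^n-1$ as the nontrivial combinatorial step that one simply cites. Since the paper itself offers no proof and merely invokes \cite{Procesi}, there is nothing further to compare; your proposal to cite Procesi for the bound is exactly what the paper does for the entire statement.
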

Observe that any element $s$ in $\GL_n(\RR)$ has a representative of the form $s=gkt$ for some $g\in\SL_n(\RR)$, $k\in O_n(\RR)$, and $t>0$. As a result 
we have $\calM_{\GL_n(\RR)}=\calM_{\SL_n(\RR)}$, 
thus concluding the proof of Proposition \ref{c.procesi}. 
\bigskip

We now turn to the generalization to arbitrary groups $G<\SL_n$.
Taking into account that $(G_\RR)^f\subset (\SL_n(\RR))^f$ we have that a vector
$$A\in\calM_{G_\RR}\cap (G_\RR)^f$$
corresponds to a reductive representation $\rho$ of the free group on $f$ generators into $G_\RR$ which, when composed with the injection $G_\RR\to\SL(n,\RR)$, is still reductive, and hence defines an $\SO_n(\RR)$-orbit in $\calM_{\SL_n(\RR)}\cap (\SL_n(\RR))^f$. We denote by 
$$\Phi:K_\RR\backslash( \calM_{G_\RR}\cap (G_\RR)^f)\to \SO_n(\RR)\backslash( \calM_{\SL_n(\RR)}\cap(\SL_n(\RR))^f)$$
the induced map.

\begin{lem}\label{l.2.10}
	Given $g\in \calM_{G_\RR}\cap (\GRR)^f$, we have
	\bqn
	\|g\|^2\leq n\|\Phi(g)\|^{4(n-1)}.
	\eqn
\end{lem}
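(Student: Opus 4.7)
The plan is to apply Lemma \ref{lem:6.7} coordinatewise to bound $\tr(g_ig_i^t)$ in terms of the Riemannian displacement of $g_i$ at $\Id$ in $\calP^1(n,\RR)$, then to invoke the $G_\RR$-minimality of $g$ to compare this displacement with the corresponding one for the component $A_i$ of $\Phi(g)$, and finally to apply Lemma \ref{lem:6.7} in the reverse direction on $A_i$.

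Concretely, I would first write, using Lemma \ref{lem:6.7} applied to each $g_i\in\SL_n(\RR)$:
\bqn
\|g\|^2 = \sum_i \tr(g_ig_i^t) \leq n \sum_i e^{d_R(g_i\cdot\Id,\,\Id)},
\eqn
where $d_R$ is the Riemannian distance on $\calP^1(n,\RR)$. Next, pick $h \in \SL_n(\RR)$ such that $A = hgh^{-1}$ has minimal Frobenius norm in the $\SL_n(\RR)$-orbit of $g$, so that $\|A\| = \|\Phi(g)\|$. Setting $B^\ast = h^th \in \calP^1(n,\RR)$ and using that $\SL_n(\RR)$ acts isometrically on $\calP^1(n,\RR)$ via $s\cdot X = sXs^t$, one obtains the identity $d_R(A_i\cdot\Id,\,\Id) = d_R(g_i\cdot(B^\ast)^{-1},\,(B^\ast)^{-1})$. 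The task then reduces to comparing the displacement of each $g_i$ at the two basepoints $\Id$ and $(B^\ast)^{-1}$. This is where the $G_\RR$-minimality of $g$ enters essentially: it asserts that $\Id$ is the minimum on the totally geodesic submanifold $\calX_\RR \subset \calP^1(n,\RR)$ of the convex functional $F(B) = \sum_i \tr(Bg_iB^{-1}g_i^t)$, while $B^\ast$ is its global minimum on $\calP^1(n,\RR)$. From this configuration, together with the Kempf--Ness/moment-map characterization of minimal vectors and the fact that $\calX_\RR$ is totally geodesic in $\calP^1(n,\RR)$ (which uses the invariance of $\bG$ under transposition), one aims to extract the individual inequality
\bqn
d_R(g_i\cdot\Id,\,\Id) \leq d_R(A_i\cdot\Id,\,\Id)
\eqn
for each $i$.

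Applying Lemma \ref{lem:6.7} once more, now to each $A_i$, yields $e^{d_R(A_i\cdot\Id,\,\Id)} \leq \tr(A_iA_i^t)^{2(n-1)}$. Summing and invoking the elementary inequality $\sum_i x_i^k \leq \bigl(\sum_i x_i\bigr)^k$ for $x_i \geq 0$ and $k \geq 1$ (applied with $k=2(n-1)$, the case $n=1$ being trivial) chains these bounds:
\bqn
\|g\|^2 \leq n \sum_i e^{d_R(g_i\cdot\Id,\,\Id)} \leq n \sum_i \tr(A_iA_i^t)^{2(n-1)} \leq n \Bigl(\sum_i\tr(A_iA_i^t)\Bigr)^{2(n-1)} = n\,\|\Phi(g)\|^{4(n-1)}.
\eqn

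The main obstacle is the comparison of displacements of $g_i$ at $\Id$ versus at $(B^\ast)^{-1}$; this is a delicate geometric step, since it is not simply the gradient criterion that distinguishes $\Id$ as the nearest point of $\calX_\RR$ to $B^\ast$ (that would follow automatically only for quadratic $F$). Instead, one must exploit more specifically the structure of the functional $F$ and the totally geodesic embedding $\calX_\RR\hookrightarrow\calP^1(n,\RR)$ afforded by the invariance of $\bG$ under transposition.
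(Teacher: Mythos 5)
The framing (Lemma \ref{lem:6.7} applied twice, the power-of-sum inequality, and the identity $d_R(A_i\cdot\Id,\Id)=d_R(g_i\cdot(B^*)^{-1},(B^*)^{-1})$) all match the paper, but your middle step has a genuine gap that you partly acknowledge and that cannot be filled as stated. The termwise inequality
$d_R(g_i\cdot\Id,\Id)\leq d_R(A_i\cdot\Id,\Id)$
is \emph{false} in general; in fact it tends to go the other way. Take $f=1$. Then $A_1=\Phi(g_1)$ is a minimal vector in the $\SL_n(\RR)$-orbit, hence normal ($A_1A_1^t=A_1^tA_1$), so its singular values equal its eigenvalue moduli. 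Weyl's majorization theorem says the logarithms of the singular values of $g_1$ majorize those of the eigenvalue moduli (both summing to $0$ since $\det=1$), so by convexity of $t\mapsto t^2$ one gets $\sum_j(\ln\sigma_j(g_1))^2\geq\sum_j(\ln|\lambda_j(g_1)|)^2$, i.e.\ $d_R(g_1\cdot\Id,\Id)\geq d_R(A_1\cdot\Id,\Id)$, with strict inequality unless $g_1$ is itself normal. And $G_\RR$-minimality of $g_1$ does not force $g_1$ normal: for $G_\RR=\Sp_4(\RR)<\SL_4(\RR)$ and $g_1=\bigl(\begin{smallmatrix}A&0\\0&(A^t)^{-1}\end{smallmatrix}\bigr)$ with $A\in\SL_2(\RR)$, one checks that $g_1g_1^t-g_1^tg_1$ is always orthogonal to $\fp_{\sp_4}$ (use $P^{-1}=\tr(P)I-P$ for $2\times2$ matrices with $\det P=1$), so $g_1$ is $G_\RR$-minimal for \emph{every} $A$, but $g_1$ is normal only when $A$ is. Taking a non-normal $A$ gives a concrete violation of your inequality. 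So no amount of further exploitation of the Kempf--Ness picture will rescue the pointwise comparison at the basepoints $\Id$ and $(B^*)^{-1}$.

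The paper's route avoids this entirely by never pinning down where the exponentiated-displacement functional is minimized. Since $g$ is minimal, $\|g\|^2=\inf_{h\in G_\RR}\|h^{-1}gh\|^2$; applying the upper bound of Lemma \ref{lem:6.7} to \emph{every} $h$ inside the infimum gives
$\|g\|^2\leq n\inf_{h\in G_\RR}\sum_i e^{d(g_ih_*\Id,\,h_*\Id)}=n\inf_{x\in\calX_\RR}\sum_i e^{d(g_ix,x)}.$
The crucial observation is then that this infimum over $\calX_\RR$ equals the infimum over all of $\calP^1(n,\RR)$: the nearest-point projection $\pi\colon\calP^1(n,\RR)\to\calX_\RR$ onto the closed convex set $\calX_\RR$ is $1$-Lipschitz and commutes with each $g_i\in G_\RR$ (because $g_i$ preserves $\calX_\RR$), hence $d(g_i\pi(y),\pi(y))=d(\pi(g_iy),\pi(y))\leq d(g_iy,y)$ for all $y$ and all $i$, so the minimum of $\sum_i e^{d(g_i\cdot,\cdot)}$ over $\calX_\RR$ is no larger than its global minimum. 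This step compares \emph{infima} of the same functional over two domains, not values at the two specific minimizers of a \emph{different} (Kempf--Ness) functional; that is exactly the subtlety your ``delicate geometric step'' runs into. From there one finishes as you do: for each $h\in\SL_n(\RR)$, the lower bound in Lemma \ref{lem:6.7} and $\sum_i x_i^{2(n-1)}\leq(\sum_i x_i)^{2(n-1)}$ give $\sum_i e^{d(g_ih_*\Id,h_*\Id)}\leq\|h^{-1}gh\|^{4(n-1)}$, and taking the infimum over $h\in\SL_n(\RR)$ yields $n\|\Phi(g)\|^{4(n-1)}$.

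So: keep your framing, but replace ``evaluate at $\Id$ and at $(B^*)^{-1}$'' by ``take infima over $\calX_\RR$ and over $\calP^1(n,\RR)$ of $\sum_i e^{d(g_ix,x)}$ and show they coincide via the $G_\RR$-equivariant $1$-Lipschitz projection.'' The minimality of $g$ is used only to turn $\|g\|^2$ into an infimum; convexity of $\calX_\RR$ and transpose-invariance of $\bG$ enter only through the existence and equivariance of $\pi$.
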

\begin{proof}
	For every $g\in \calM_{G_\RR}\cap (\GRR)^f$,
	$$\begin{array}{rl}
		\|g\|^2&=\inf_{h\in G_\RR}\|h^{-1}gh\|^2\\
		&=\inf_{h\in G_\RR}\sum_{i=1}^f\tr((h^{-1}gh)^t(h^{-1}gh))\\
		&\leq n\inf_{h\in G}\sum_{i=1}^fe^{d(g_ih_*\Id,h_*\Id)}\\
		&=n\inf_{h\in \SL_n(\RR)}\sum_{i=1}^fe^{d(g_ih_* \Id,h_*\Id)}\\
		&\leq n\inf_{h\in\SL_n(\RR)}\|h^{-1}gh\|^{4(n-1)}\\
		&=n\|\Phi(g)\|^{4(n+1)}.
	\end{array}
	$$
	Here the first inequality follows from the upper bound in Lemma \ref{lem:6.7}, the equality is a consequence of the fact that $G_\RR$ preserves the convex symmetric subspace $\calX_\RR=(G_\RR)_*\Id\subset\calP^1(n,\RR)$, and the inequality follows from the lower bound in Lemma \ref{lem:6.7}.
\end{proof}

\begin{proof}[Proof of Theorem~\ref{t.traceanddisp}]
	With the notation of Theorem~\ref{t.traceanddisp}, Lemma~\ref{l.2.10} implies that for $g\in\calM_{G_\RR}\cap (G_\RR)^f$, 
	$$\|g\|^{2m}\leq n^m\|\Phi(g)\|^{4(n-1)m}.$$
	Taking into account that $g$ and $\Phi(g)$ are $\SL(n,\RR)$-conjugate, we get from Proposition~\ref{c.procesi} that 
	$$\|\Phi(g)\|^{4(n-1)m}\leq \left(C\sum_{l(w)\leq 2^n-1}(\tr(w(g)) )^{\frac{2m}{l(w)}}\right)^{2(n-1)}$$
	which together with the first inequality implies the theorem.
\end{proof}	
\subsection{Proof of Theorem \ref{thm:1.2}}
We now have all the tools to prove Theorem \ref{thm:1.2} from the introduction, which we restate in a slightly more general form.
\begin{thm}\label{t.1.2text}
	Let $F=F^{-1}$ be a finite generating set of $\Gamma$
	and let $E:=F^{2^n-1}\subset\Gamma$. Let $(\rho,\FF)$ represent a point in $\Xi_{F,p}(\Gamma,G)^\mathrm{RSp}\smallsetminus\Xi_{F,p}(\Gamma,G)_\RR$, and assume $\FF$ is minimal. 
	The following assertions are equivalent:
	\be
	\item\label{it:bdry_cl1text} $(\rho,\FF)$ represents a closed point.
	\item\label{it:bdry_cl2text} The $\Gamma$-action on $\bgf$ does not have a global fixed point.
	\item\label{it:bdry_cl3text} There exists $\eta\in E$ such that $\rho(\eta)$ has positive translation length on $\ov\bgf$.
	\ee
\end{thm}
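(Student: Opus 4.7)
The plan is to reduce all three conditions to the single intrinsic statement that
\bqn
\|\rho\|^2 := \sum_{\gamma\in F}\tr(\rho(\gamma)\rho(\gamma)^t)\in\FF
\eqn
is a big element of $\FF$. First, by Corollary~\ref{c.minimalfield}, I would conjugate $\rho$ so that $(\rho(\gamma))_{\gamma\in F}\in \calM_F(\Gamma,G_\FF)$, which can be done without loss since each of \eqref{it:bdry_cl1text}, \eqref{it:bdry_cl2text}, \eqref{it:bdry_cl3text} is $\bG(\FF)$-conjugation invariant; this lets me use Theorem~\ref{t.traceanddisp} and Proposition~\ref{prop:pptyP}(1) directly for this $\rho$.

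For \eqref{it:bdry_cl1text}$\Leftrightarrow\|\rho\|$ big, since $\FF$ is minimal it equals the real closure of $\KK(P(\rho))$; Proposition~\ref{prop:closed_points} identifies closedness of $(\rho,\FF)$ with $\FF$ being Archimedean over $\KK[P(\rho)]=\KK[p_1(\rho),\ldots,p_l(\rho)]$, which because $\FF$ is non-Archimedean while $\KK\subset\RR$ is Archimedean means that some $p_i(\rho)$ must be a big element of $\FF$. The sandwich
\bqn
\tfrac{1}{c_1}\|\rho\|^{2m}\leq \sum_{i}p_i(\rho)^{2m/m_i}\leq c_2\|\rho\|^{2m}
\eqn
from Proposition~\ref{prop:pptyP}(1) then transfers bigness between $\sum p_i(\rho)^{2m/m_i}$ and $\|\rho\|^{2m}$, and bigness of a power is the same as bigness of the base in a non-Archimedean real closed field.

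For \eqref{it:bdry_cl2text}$\Leftrightarrow\|\rho\|$ big I would argue contrapositive: if $\|\rho\|$ is not big then $\|\rho\|^2\in\calO$, and since $\|\rho\|^2=\sum_{\gamma,i,j}\rho(\gamma)_{ij}^2$ is a sum of non-negative elements, order convexity of $\calO$ forces each $\rho(\gamma)_{ij}\in\calO$, so $\rho(\Gamma)\subset \bG(\calO)=\Stab_{\bG(\FF)}([\Id])$ by Proposition~\ref{prop:4.12}(1), contradicting the absence of a global fixed point. The converse will follow from \eqref{it:bdry_cl3text}$\Rightarrow$\eqref{it:bdry_cl2text}, which is immediate since positive translation length precludes a fixed point.

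The delicate step is \eqref{it:bdry_cl3text}$\Leftrightarrow\|\rho\|$ big. For $\Leftarrow$, Theorem~\ref{t.traceanddisp} converts bigness of $\|\rho\|$ into bigness of some $\tr(\rho(h))$ with $h\in E$, and I then need to pass from a big trace to a big eigenvalue of the hyperbolic part of $\rho(h)$. The key observation is that in the Jordan decomposition $\rho(h)=\rho(h)_e\rho(h)_h\rho(h)_u$ of Lemma~\ref{lem:4.5}, after simultaneously upper-triangularizing the commuting factors over $\bar\FF=\FF[\sqrt{-1}]$ the eigenvalues of $\rho(h)$ are products $\mu_i\nu_i$, where the $\mu_i$ are eigenvalues of $\rho(h)_e$ and the $\nu_i$ of $\rho(h)_h$; since $\rho(h)_e$ is $\bG(\FF)$-conjugate into $K_\FF\subset SO(n,\FF)$ whose elements satisfy $\sum_j g_{ij}^2=1$ and hence lie in $\SL(n,\calO)$ together with their inverses, each $\mu_i$ is a unit for the extended valuation. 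The ultrametric inequality applied to $\tr(\rho(h))=\sum \mu_i\nu_i$ then forces some $\nu_i$ to be big, so $\Jord_\FF(\rho(h))\notin S_\FF(\calO)=\ker(\mathrm{Log}_b)$ by Proposition~\ref{prop:4.12}(2), and by Proposition~\ref{prop:6.6.6} the translation length $\|\mathrm{Log}_b\Jord_\FF(\rho(h))\|$ is positive. For $\Rightarrow$, positive translation length gives $\mathrm{Log}_b\Jord_\FF(\rho(\eta))\neq 0$, hence some eigenvalue of $\rho(\eta)$ has negative valuation, bounding the operator norm from below; submultiplicativity $\|\rho(\eta)\|_{op}\leq \|\rho(\eta)\|_{Frob}\leq \|\rho\|^{|\eta|_F}$ of the Frobenius norm then forces $\|\rho\|$ to be big. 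The main obstacle I expect is precisely this control of the elliptic factor's eigenvalues over $\bar\FF$; once that is secured, the rest is a bookkeeping of bigness along the chain of inequalities.
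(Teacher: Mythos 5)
Your proposal is correct and follows essentially the same strategy as the paper: conjugate $\rho$ into $\calM_F(\G,G_\FF)$, use Proposition~\ref{prop:closed_points} and the sandwich inequality of Proposition~\ref{prop:pptyP}(1) to identify closedness with bigness of $\|\rho\|$, apply Theorem~\ref{t.traceanddisp} to pass to a big trace, extract a big eigenvalue via the ultrametric, and invoke Proposition~\ref{prop:6.6.6} for the translation length, with Proposition~\ref{prop:Rspecrepcl} closing the loop. The only (inessential) variation is that you route the big-trace-implies-big-eigenvalue step through the refined Jordan decomposition $g_eg_hg_u$ over $\bG$ and the observation that eigenvalues of $K_\FF$ are units, whereas the paper reads off the eigenvalues of $\rho(\eta)$ over $\FF[\sqrt{-1}]$ directly (relying on the compatibility between the $\bG$- and $\SL_n$-translation lengths from Proposition~\ref{p.NormB}); both yield the same conclusion.
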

\begin{proof}
	We keep the same notation as in Section~\ref{s.RS}: $p_1,\ldots, p_l$ are $K$-invariant homogeneous polynomials on $M_{n,n}(\KK)^f$ of degree $m_i$ with coefficients in $\KK$, and they generate the algebra of $K$-invariant polynomials.
	
	$(1)\Rightarrow (3)$ We may assume $(\rho(\g))_{\g\in F}\in\calM_F(\G,G_\FF)$ by Corollary \ref{c.minimalfield}. The field $\FF$ is the real closure of $\KK(p_1(\rho(\g)),\ldots, p_l(\rho(\g)))$ and by hypothesis $\FF$ is Archimedean over $\KK[p_1(\rho(\g),\ldots, p_l(\rho(\g)))]$, that is 
	$$\sum_{i=1}^l p_i(\rho(\g))^{\frac{2m}{m_i}}$$
	is a big element in $\FF$. But then   $\|(\rho(\g))_{\g\in F}\|^{2m}$ is a big element and so is 
	$$\sum_{l(w)\leq 2^{n}-1}\tr(w(g))^{\frac {2m}{l(w)}},\quad\text{ where $g=(\rho(\g))_{\g\in F}$}$$
	by Theorem \ref{t.traceanddisp}. This implies that for some $\eta\in E$, $(\tr(\rho(\eta)))^2$ is a big element.
	
	In order to control the translation length we can fix an $\SL_n(\RR)$-invariant Riemannian distance on $\calP^1(n,\RR)$; this induces a $\bG(\RR)$-invariant Riemannian distance on $\calX_\RR$, hence a Weyl group invariant scalar product on $\frak a$, with corresponding norm $\|\cdot\|$. We may now use $d_{\|\cdot\|}^\FF$ on $\bgof$ and  compute the translation length of $\rho(\eta)\in\bG(\FF)$ using the $\SL_n(\FF)$-Jordan projection of $\rho(\eta)$. Since $(\tr(\rho(\eta)))^2$ is a big element in $\FF$, if $\lambda_1,\ldots,\lambda_n$ are the eigenvalues of $\rho(\eta)$ over $\FF(\sqrt{-1})$, then at least one of $|\lambda_1|,\ldots,|\lambda_n|$ in $\FF_{>0}$ must be a big element, which implies (see Proposition \ref{prop:6.6.6}) that $\ell_{\|\cdot\|}(\rho(\eta))>0$.
	
	\noindent$(3)\Rightarrow (2)$ is clear.
	
	\noindent$(2)\Rightarrow (1)$. Assume that $(\rho,\FF)$ does not have a global fixed point. We may assume $(\rho(\g))_{\g\in F}\in \calM_F(\G, G_\FF)$. Then $[\Id]\in\bgof$ is not a fixed point and hence (Proposition \ref{prop:Rspecrepcl}) $(\rho(\g))_{\g\in F}$ is a closed point in $\rsp{\calM_F(\G, G)}$ which implies by Proposition \ref{p:Prsp} that $p((\rho(\gamma))_{\g\in F})$ is a closed point in $\rsp{\Xi_{F,p}(\Gamma, G)}$.
\end{proof}

\subsection{Proof of Theorem \ref{thm:ggt}}
We now prove Theorem \ref{thm:ggt} from the introduction, which we restate in our usual degree of generality.
\begin{thm}\label{thm:7.16}  Let $\omega$ be a non-principal ultrafilter on $\NN$, 
	$((\rho_k,\RR))_{k\geq1}$ a sequence of representations so that, for every $k$, 
	\bq\label{e.minrepb}(\rho_k(\g))_{\g\in F}\in\calM_F(\Gamma,G_\RR),
	\eq and  $(\rhol,\rol)$  its $(\omega,\pmb{\mu})$-limit for an adapted sequence of scales $\pmb{\mu}$.
	Then:
	\begin{itemize}
		\item $\rhol$ is reductive, and
		\item if $\pmb{\mu}$ is well adapted and  infinite, and $\FF_{\rhol}$ is the $\rhol$-minimal field, 
		then $(\rhol,\rol)$ is $K_{\rol}$-conjugate to a representation $(\pi,\FF_{\rhol})$ 
		that represents a closed point in $\rsp\partial\Xi(\Gamma,G)$.
	\end{itemize}
	Conversely, any $(\rho,\FF)$ representing a closed point in $\rsp\partial\Xi(\Gamma,G)$ arises in this way.
	More precisely for any non-principal ultrafilter $\omega$  and any sequence of scales $\pmb\mu$ giving an infinite element,  there exist an order preserving field injection
	$i\colon \FF\hookrightarrow\rol$ and
	a sequence of  homomorphisms $((\rho_k,\RR))_{k\geq1}$  satisfying \eqref{e.minrepb} for which $\pmb\mu$ is well adapted 
	and such that $i\circ\rho$ and $\rhol$ are $G_{\rol}$-conjugate.
\end{thm}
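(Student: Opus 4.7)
The plan is to combine the Richardson--Slodowy minimal vector description of reductive representations with the accessibility result Corollary~\ref{cor:3.9} and the hyperreal realization of closed prime cones.

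\emph{Forward direction.} The adapted condition asserts that $\bigl(\sum_{\gamma\in F}\tr(\rho_k(\gamma)\rho_k(\gamma)^t)\bigr)_k\leq c_2\pmb\mu$ in $\RR^\omega$; since this trace equals the sum of squares of all matrix entries, every entry of $\rho^\omega(\gamma)$ lies in $\calO_{\pmb\mu}$, so the quotient map $\calO_{\pmb\mu}\to\rol$ produces a representation $\rhol\colon\Gamma\to G_\rol$. Because $\calM_F(\Gamma,G)$ is a closed real algebraic subset of $M_{n,n}(\KK)^f$ and $(\rho_k(\gamma))_{\gamma\in F}\in\calM_F(\Gamma,G_\RR)$ for each $k$, the lifting statement Proposition~\ref{prop:4.18} forces $(\rhol(\gamma))_{\gamma\in F}\in\calM_F(\Gamma,G)_\rol$, so $\rhol$ is reductive. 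If $\pmb\mu$ is in addition well-adapted and infinite, the two-sided estimate $c_1\pmb\mu\leq\sum_\gamma\tr\leq c_2\pmb\mu$ makes $\sum_\gamma\tr(\rhol(\gamma)\rhol(\gamma)^t)$ a big element of $\rol$. Corollary~\ref{c.minimalfield} then yields $k\in K_\rol$ such that $\pi:=k\rhol k^{-1}$ takes values in $G_{\FF_{\rhol}}$ with minimal vectors; by $K$-invariance of the trace, the value $\sum_\gamma\tr(\pi(\gamma)\pi(\gamma)^t)\in\FF_{\rhol}$ maps to a big element of $\rol$, and since an order-preserving embedding of real closed fields detects big elements, it is already a big element of $\FF_{\rhol}$. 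Proposition~\ref{prop:Rspecrepcl} then identifies $(\pi,\FF_{\rhol})$ as a closed point of $\rsp\partial\calR_F(\Gamma,G)$, which via the evaluation $P$ and Theorem~\ref{t.1.2text} becomes a closed point of $\rsp\partial\Xi(\Gamma,G)$.

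\emph{Converse direction.} Starting from $(\rho,\FF)$ representing a closed point of $\rsp\partial\Xi(\Gamma,G)$, Corollary~\ref{c.minimalfield} permits the assumption $(\rho(\gamma))_{\gamma\in F}\in\calM_F(\Gamma,G_\FF)$; Theorem~\ref{t.1.2text} and Proposition~\ref{prop:Rspecrepcl} then show that the resulting prime cone $\alpha_0:=\alpha_{(\rho(\gamma))_{\gamma\in F}}\in\rsp\calR_F(\Gamma,\bG(\KK))$ is closed and non-Archimedean, and by construction it lies in the constructible set $\cons(\calM_F(\Gamma,G))$. Apply Corollary~\ref{cor:3.9} with $V=\calR_F(\Gamma,\bG(\KK))$, $S=\calM_F(\Gamma,G)$ and the proper continuous semialgebraic function $g(x)=\sum x_{ij}^2$: for the prescribed $\omega$ and $\pmb\mu$ one obtains a sequence $x_k=(\rho_k(\gamma))_{\gamma\in F}\in\calM_F(\Gamma,G)_\RR$ with $\sum_\gamma\tr(\rho_k(\gamma)\rho_k(\gamma)^t)=N(x_k)^2=\mu_k$ for $\omega$-almost all $k$, and $\alpha_0=\alpha_{x^\omega_{\pmb\mu}}$. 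The first equality yields that $\pmb\mu$ is well-adapted with $c_1=c_2=1$, while the second tells us that $\rho$ and $\rhol:=\rho^\omega_{\pmb\mu}$ both represent the prime cone $\alpha_0$ in $\rsp\calR_F(\Gamma,\bG(\KK))$. Proposition~\ref{prop:Rspecrep} then produces an order-preserving $\KK$-isomorphism $i$ from $\FF$ onto the real closure of $\KK(\rhol)\subset\rol$ such that $i\circ\rho=\rhol$; composing with the inclusion into $\rol$ gives the required embedding, and $i\circ\rho$ is a fortiori $G_\rol$-conjugate to $\rhol$.

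The main anticipated obstacle is the three-way bookkeeping between $\rsp\calR_F$, $\rsp\calM_F$ and $\rsp\Xi_{F,p}$: Theorem~\ref{t.1.2text} and Corollary~\ref{c.minimalfield} together let us move back and forth between them on the minimal-vector locus, and this is precisely what allows Corollary~\ref{cor:3.9}, naturally phrased for prime cones of a single semialgebraic set, to deliver in the converse direction a sequence of genuine real-valued reductive representations whose trace scales match $\pmb\mu$ exactly.
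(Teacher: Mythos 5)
Your proposal is correct and follows essentially the same route as the paper: the forward direction via Proposition~\ref{prop:4.18} (to descend minimality), Corollary~\ref{c.minimalfield} and Proposition~\ref{prop:Rspecrepcl}, and the converse via the accessibility statement Corollary~\ref{cor:3.9} applied to the minimal vector locus, which is precisely the ``result analogous to Corollary~\ref{c.repacc}'' that the paper invokes without spelling out. The only minor inaccuracy is that the last step of the forward direction (passing from a closed point of $\rsp{\calM_F(\Gamma,G)}$ to a closed point of $\rsp{\Xi_{F,p}(\Gamma,G)}$) is justified by Proposition~\ref{p:Prsp}, not Theorem~\ref{t.1.2text}, though the underlying argument is the same.
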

\begin{proof}
	We apply Proposition \ref{prop:4.18} to the closed semialgebraic subset $\calM_F(\G,G_\RR)\subset M_{n,n}(\RR)$ and to $\RR\subset\calO_\plambda\subset \RR^\omega$, to obtain that the reduction modulo  $\calI_\plambda$ maps $\calM_F(\G,G_{\calO_\plambda})$ to $\calM_F(\G,G_\rol)$; since $\plambda$ is adapted we have $(\rho_k(\g)_{k\geq1})_{\g\in F}\in\calM_F(\G,G_{\calO_\plambda})$ and hence $(\rhol(\g))_{\g\in F}\in\calM_F(\G,G_{\rol})$ which implies in particular that $\rhol$ is reductive.
	
	Assume now that $\plambda$ is well adapted; let $\FF_\rhol$ be the smallest real closed field containing the coordinates of $p(\rho)$, then by Corollary \ref{c.minimalfield} $\rhol$ can be conjugated into $G_{\FF_\rhol}$ by an element $k\in K_\rol$. Let $\pi:\G\to G_{\FF_\rhol}$ be the conjugate representation. Since $\plambda$ is well adapted, 
	$$\ov\plambda=\sum_{\g\in\G}\tr(\rhol(\g)^t\rhol(\g))\in\rol$$
	is a big element and since $k\in K_\rhol$ the latter equals
	$$\ov\plambda=\sum_{\g\in\G}\tr(\pi(\g)^t\pi(\g))\in\FF_\rhol$$
	which is hence a big element in $\FF_\rhol$ which shows that $\pi(\G)$ is not contained in $G_M$ where $M$ is the valuation ring corresponding to the valuation determined by $\ov\plambda$ and hence $(\pi,\FF_{\rhol})$ is a closed point in $\rsp\partial\calR_F(\G,G)$ (Proposition \ref{prop:Rspecrepcl}); thus $(\pi(\g))_{\g\in F}\in\calM_F(\G, G_{\FF_{\rhol}})$ gives a closed point in $\rsp\calM_F(\G, G)$ and hence represents a closed point in $\rsp\partial\Xi_{\G,p}(\G, G)$ by Proposition \ref{p:Prsp}.
	
	Since the map $\rsp p:\rspcl{\calM_F(\G,G)}\to\rspcl{\Xi_{F,p}(\G,G)}$ is surjective (Corollary \ref{c:Prsp}), the second statement follows directly from a result analogue to Corollary \ref{c.repacc} replacing $\calR_F(\Gamma, G)$ by the semialgebraic subset $\calM_F(\Gamma,G)$. 
	
\end{proof}

\begin{remark}
	The hypothesis that $(\rho_k(\g))_{\g\in F}$ are minimal vectors is essential. For instance let $\rho_k:\G\to\SL_2(\RR)$ be the sequence of representations of the free groups on two generators $a,b$ defined by 
	$$\rho_k(a)=\bpm1&k\\0&1\epm\quad\rho_k(b)=\bpm1&0\\e^{-k}&1\epm,\quad k\in\NN.$$
	Then the sequence $\pmu=(k)_{k\geq 1}$ is well adapted and one verifies that 
	$$\rhol(a)=\bpm1&\pmu\\0&1\epm\quad\rhol(b)=\bpm1&0\\0&1\epm.$$
	In particular $\rhol$ is not reductive.
\end{remark}	
\subsection{Real semialgebraic models of character varieties}\label{s.canonicity}
The goal of this section is, on the one hand, to discuss how canonical our realization of the character variety is, on the other hand to show that $\Out(\G)$ naturally acts on $\Xi_{F,p}(\Gamma,G)$ by semialgebraic homeomorphism. We work in the following more general setting:
\begin{defn}
	A \emph{real semialgebraic model for the $G$-character variety of $\G$} is a pair $(F,p)$ where $F=(\g_1,\ldots, \g_f)$ is a labelled finite generating set of the group $\G$, $l$ is a positive integer, and $p:\calR^{red}_F(\G, G)\to \KK^l$ is a continuous semialgebraic map such that 
	\begin{enumerate}
		\item The $G$-orbits in $\calR^{red}_F(\Gamma, G)$ coincide exactly with the fibers of $p:\calR^{red}_F(\Gamma, G)\to \Xi_{F,p}(\Gamma,G)$, where $\Xi_{F,p}(\Gamma,G)$ is the image of $p$.
		\item The $\RR$-extension $p_\RR$ induces a homeomorphism $\calR^{red}_F(\Gamma, G_\RR)/G_\RR\cong \Xi_{F,p}(\Gamma,G)_\RR$.
	\end{enumerate} 
	If $(F,p)$ is a real semialgebraic model for the character variety, we denote by $\Xi_{F,p}(\Gamma,G)\subset \KK^l$ the image of $p$.
\end{defn}	 
It follows from the discussion in Section~\ref{s.RS} that Richardson--Slodowy theory gives the concrete example $(F,P)$  of a real semialgebraic model of the character variety where $P: \calR^{red}_F(\Gamma, G)\to \Xi_{F,p}(\Gamma,G)$ is the extension of $p:\calM_F(\Gamma,G)\to \KK^l$ defined in Equation \eqref{e.P}.

In preparation for the next proposition, which describes how different semialgebraic models are related, let $F_1=(\gamma_1,\ldots \gamma_{f_1}), F_2=(\eta_1,\ldots \eta_{f_2})$ be finite generating sets of $\G$; for every $1\leq j\leq f_2$ let $w_j^{F_1}$ be a word in $\gamma_1,\ldots \gamma_{f_1}$ such that $\eta_j=w_j^{F_1}(\gamma_1,\ldots \gamma_{f_1})$ and define the polynomial map
$$\begin{array}{cccc}
	P_{12}:&M_{n,n}(\KK)^{f_1}&\to &M_{n,n}(\KK)^{f_2}\\
	&(A_1,\ldots, A_{f_1})&\mapsto&\left(w^{F_1}_j(A_1,\ldots, A_{f_1})\right)_{1\leq j\leq f_2}
\end{array}$$
and similarly
$$\begin{array}{cccl}
	P_{21}:&M_{n,n}(\KK)^{f_2}&\to &M_{n,n}(\KK)^{f_1}\\
	&(A_1,\ldots, A_{f_2})&\mapsto&\left(w^{F_2}_i(A_1,\ldots, A_{f_2})\right)_{1\leq i\leq f_1}
\end{array}$$
where $w_i^{F_2}$ is a word in $\eta_1,\ldots \eta_{f_2}$ with $\gamma_i=w_i^{F_2}(\eta_1,\ldots \eta_{f_2})$. 

Let $Q_{12}$ be the restriction of $P_{12}$ to $\calR^{red}_{F_1}(\G,G)$ and $Q_{21}$ the restriction of $P_{21}$ to $\calR^{red}_{F_2}(\G,G)$. Then $Q_{12}$ and $Q_{21}$ are inverse of each other and $G$-equivariant, since $P_{12}$ and $P_{21}$ are $\GL_n(\KK)$-equivariant.
\begin{prop}\label{p.canonicity}
	Let  $(F_1, p_1)$  and $(F_2,p_2)$ be semialgebraic models for the character variety. Then the above maps $Q_{12}$ and $Q_{21}$  induce semialgebraic homeomorphisms
	$$q_{12}:\Xi_{F_1,p_1}(\Gamma,G)\to\Xi_{F_2,p_2}(\Gamma,G),\ \ \ \ 
	q_{21}:\Xi_{F_2,p_2}(\Gamma,G)\to\Xi_{F_1,p_1}(\Gamma,G)$$
	which are inverse of each other.
\end{prop}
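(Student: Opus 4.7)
The plan is to build $q_{12}$ by factoring the semialgebraic continuous map $p_2\circ Q_{12}\colon\calR^{red}_{F_1}(\G,G)\to\Xi_{F_2,p_2}(\G,G)$ through the quotient map $p_1$. First I would check that $Q_{12}$ is a $G$-equivariant semialgebraic homeomorphism with inverse $Q_{21}$: both $P_{12}$ and $P_{21}$ are polynomial, hence semialgebraic and continuous, they are $\GL_n(\KK)$-equivariant by construction of word maps, and the relations $w_i^{F_2}(w_1^{F_1},\dots,w_{f_2}^{F_1})=\gamma_i$ and the symmetric one ensure $Q_{21}\circ Q_{12}=\Id$ and $Q_{12}\circ Q_{21}=\Id$ on the reductive loci. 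Consequently $Q_{12}$ sends $G$-orbits bijectively to $G$-orbits, and composing with $p_2$, which is constant on $G$-orbits by property (1) of a semialgebraic model, produces a map $p_2\circ Q_{12}$ that is constant on each fiber of $p_1$. This yields a unique set-theoretic factorization $q_{12}\colon\Xi_{F_1,p_1}(\G,G)\to\Xi_{F_2,p_2}(\G,G)$ with $q_{12}\circ p_1=p_2\circ Q_{12}$, and symmetrically a map $q_{21}$; the fact that $Q_{12}$ and $Q_{21}$ are mutual inverses forces $q_{12}\circ q_{21}=\Id$ and $q_{21}\circ q_{12}=\Id$.

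Next I would establish that $q_{12}$ is semialgebraic. Its graph
\bqn
\bigl\{(x,y)\in\Xi_{F_1,p_1}(\G,G)\times\Xi_{F_2,p_2}(\G,G)\,:\,\exists\,A\in\calR^{red}_{F_1}(\G,G),\ p_1(A)=x,\ p_2(Q_{12}(A))=y\bigr\}
\eqn
is the image under the projection onto the last two coordinates of the semialgebraic subset
\bqn
\bigl\{(A,p_1(A),p_2(Q_{12}(A)))\,:\,A\in\calR^{red}_{F_1}(\G,G)\bigr\}\subset \calR^{red}_{F_1}(\G,G)\times\KK^{l_1}\times\KK^{l_2},
\eqn
and is therefore semialgebraic by Tarski--Seidenberg. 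The same applies to $q_{21}$.

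It remains to prove continuity. By Properties~\ref{ppts.2.10}(2) it suffices to verify that the $\RR$-extension $q_{12,\RR}$ is continuous. Property (2) of a semialgebraic model supplies homeomorphisms
\bqn
\Xi_{F_i,p_i}(\G,G)_\RR\cong\calR^{red}_{F_i}(\G,G_\RR)/G_\RR,\qquad i=1,2,
\eqn
and under these identifications $q_{12,\RR}$ is the map induced on quotients by the $G_\RR$-equivariant homeomorphism $Q_{12,\RR}$, hence a homeomorphism. Applying the same argument to $q_{21}$ and transferring back to $\KK$ shows $q_{12}$ and $q_{21}$ are mutually inverse semialgebraic homeomorphisms.

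The only potentially delicate point is the factorization step, which rests on the fact that the fibers of $p_i$ are exactly the $G$-orbits (property (1) of a semialgebraic model); once this is in hand, the rest is routine Tarski--Seidenberg together with transfer from the classical $\RR$-situation.
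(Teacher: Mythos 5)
Your proof is correct and takes essentially the same route as the paper's: define $q_{12}$ by pushing $\Graph(Q_{12})$ down by $p_1\times p_2$, read semialgebraicity off Tarski--Seidenberg, and use Property (2) of a semialgebraic model to reduce continuity to the real quotient. The one place your write-up diverges is that you invoke the general principle that a $G_\RR$-equivariant homeomorphism of the reductive loci descends to a homeomorphism of the quotient spaces (via the universal property of the quotient topology), whereas the paper spells this out by a hands-on sequential argument using openness of $(p_1)_\RR$ and a compact neighbourhood upstairs; both are verifying the same fact, and your abstract version is clean and valid since Property (2) identifies $\Xi_{F_i,p_i}(\Gamma,G)_\RR$ with $\calR^{red}_{F_i}(\Gamma,G_\RR)/G_\RR$ as topological spaces and $q_{12,\RR}$ is pinned down uniquely by the factorisation identity $q_{12,\RR}\circ(p_1)_\RR=(p_2)_\RR\circ Q_{12,\RR}$.
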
	 
\begin{proof}
	Clearly $\Graph( Q_{12})\subset\calR_{F_1}^{red}(\Gamma, G)\times \calR_{F_2}^{red}(\Gamma, G)$ is a semialgebraic subset and hence so is 
	$p_1\times p_2(\Graph( Q_{12}))\subset\Xi_{F_1,p_1}(\Gamma,G)\times\Xi_{F_2,p_2}(\Gamma,G)$.
	Since the fibers of $p_1,p_2$ are precisely the $G$-orbits and $Q_{12}$ is $G$-equivariant the subset $p_1\times p_2(\Graph( Q_{12}))$ is the graph of a map 
	$q_{12}:\Xi_{F_1,p_1}(\Gamma,G)\to\Xi_{F_2,p_2}(\Gamma,G)$ which is therefore semialgebraic. It remains to show its continuity, since reversing the role of $F_1,F_2$ this will show the continuity of $q_{21}$, and conclude the proof since $q_{12}$ and $q_{21}$ are inverse of each other. To this end we show the continuity of the $\RR$-extension $(q_{12})_\RR$.
	
	In order to show the continuity of $(q_{12})_\RR$, it clearly suffices to show that for any convergent sequence 
	$(x_k)_{k\in\NN}\subset\Xi_{F_1,p_1}(\Gamma,\bG_\RR)$ with limit $x$, there is a subsequence $(x_{k_\ell})_{\ell\in\NN}$ 
	with 
	\bqn
	\lim_{\ell\to\infty}(q_{12})_\RR(x_{k_\ell})=(q_{12})_\RR(x).
	\eqn
	Thus let $(x_k)_{k\in\NN}$ be any sequence with limit $x$.  Pick $\tilde x\in \calR_{F_1}^{\mathrm{red}}(\Gamma,\bG_\RR)$ 
	and $\Omega\ni\tilde x$ open with $\overline\Omega$ compact.  
	By property (2), $(p_1)_\RR$ is a quotient map for a continuous group action, hence open.  
	Thus $(p_1)_\RR(\Omega)\ni x$ is open and thus there exists $N\in\NN$ and $\tilde x_k\in\Omega$ for all $k\geq N$, 
	such that $(p_1)_\RR(\tilde x_k)=x_k$.  
	Since $\overline\Omega$ is compact, let $(\tilde x_{k_\ell})_{\ell\in\NN}$ be a convergent subsequence with limit $y\in\overline\Omega$.
	Then $(p_1)_\RR(y)=x$ and there exists $g\in\bG_\RR$ with $gy=x$.  Thus $\lim_{\ell\to\infty}g\tilde x_{k_\ell}=\tilde x$, and
	\bqn
	\ba
	(q_{12})_\RR(x)
	&=(p_2)_\RR(Q_{12})_\RR(\tilde x)\\
	&=(p_2)_\RR(Q_{12})_\RR(\lim_\ell g\tilde x_{k_\ell})\\
	&=\lim_\ell(p_2)_\RR g(Q_{12})_\RR(\tilde x_{k_\ell})\\
	&=\lim_\ell(p_2)_\RR(Q_{12})_\RR(\tilde x_{k_\ell})\\
	&=\lim_\ell(q_{12})_\RR(x_{k_\ell}). 
	\ea
	\eqn

\end{proof}

Let $F$ be a finite generating set of $\G$ and 
$$\ev_F:\Hom_{red}(\G,G)\to \calR_F^{red}(\G,G)$$
the evaluation map (see Section \ref{subsec:rep}) and $\alpha\in\Aut\G$. Then we plainly have 
\bqn
\ev_F(\rho\circ\alpha)=\ev_{\alpha(F)}(\rho)=Q_{12}(\ev_{F_1}(\rho))
\eqn
where $F_1=F$ and $F_2=\alpha(F)$. This way we obtain from Proposition \ref{p.canonicity} a semialgebraic homeomorphism
$$\Psi_{\alpha}:\Xi_{F,p}(\G, G)\to\Xi_{F,p}(\G, G)$$
where $(F,p)$ is a semialgebraic model of the character variety. One verifies that 
\begin{enumerate}
	\item $\Psi_{\alpha\beta}=\Psi_\beta\Psi_{\alpha}$ for all $\alpha,\beta\in\Aut(\G)$,
	\item $\Psi_\alpha=\Id$ for every inner automorphism $\alpha$.
\end{enumerate}	

\begin{cor}
	The map $\alpha\mapsto \Psi_{\alpha^{-1}}$ defines an action of $\Out(\G)$ 
	by semialgebraic homeomorphisms on $\Xi_{F,p}(\Gamma,G)$ 
	which extends to an action by homeomorphisms on $\rsp{\Xi_{F,p}(\Gamma,G)}$ preserving $\rspcl{\Xi_{F,p}(\Gamma,G)}$. 
\end{cor}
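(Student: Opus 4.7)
The plan is to assemble the corollary from the two properties listed immediately before its statement together with the functoriality of the real spectrum construction under semialgebraic maps. First, the anti-homomorphism property $\Psi_{\alpha\beta}=\Psi_\beta\Psi_\alpha$ means that $\alpha \mapsto \Psi_{\alpha^{-1}}$ is a homomorphism $\Aut(\G) \to \mathrm{Homeo}_{\mathrm{sa}}(\Xi_{F,p}(\G,G))$, and the vanishing on inner automorphisms lets it descend to a homomorphism $\Out(\G) \to \mathrm{Homeo}_{\mathrm{sa}}(\Xi_{F,p}(\G,G))$. Each $\Psi_{\alpha^{-1}}$ is a semialgebraic homeomorphism by Proposition~\ref{p.canonicity}, so the first half of the statement is immediate.

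Next I would invoke the functorial behavior of the real spectrum with respect to semialgebraic maps. By \cite[Proposition 7.2.8]{BCR}, any continuous semialgebraic map $f\colon S_1 \to S_2$ between semialgebraic sets induces a continuous map $\rsp f \colon \rsp{S_1} \to \rsp{S_2}$, and this assignment is functorial: $\rsp{(f\circ g)} = \rsp f \circ \rsp g$ and $\rsp{\mathrm{id}} = \mathrm{id}$. Applying this to the mutually inverse pair $\Psi_{\alpha^{-1}}$ and $\Psi_\alpha$ yields mutually inverse continuous maps on $\rsp{\Xi_{F,p}(\G,G)}$, hence homeomorphisms. Functoriality also guarantees that $\alpha \mapsto \rsp{\Psi_{\alpha^{-1}}}$ remains a group homomorphism, so the action of $\Out(\G)$ extends as claimed.

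Finally, to see that the extended action preserves the subset $\rspcl{\Xi_{F,p}(\G,G)}$ of closed points, I would use that the retraction $\Ret \colon \rsp V \to \rspcl V$ of \eqref{eq:retraction} is natural with respect to continuous maps induced by semialgebraic homeomorphisms: indeed, a homeomorphism of $\rsp V$ permutes the specialization order (since $\beta \supseteq \alpha$ iff $\beta \in \overline{\{\alpha\}}$ in the spectral topology, which is a purely topological condition), hence sends maximal specializations to maximal specializations. Equivalently, the set $\rspcl V$ is characterized topologically as the set of closed points of $\rsp V$, and closed points are preserved by any homeomorphism. The expected main obstacle is nothing more than locating and citing the correct functoriality result for $\rsp{(\cdot)}$ on semialgebraic maps; once that is in hand, the rest is bookkeeping, and one concludes by restriction of the $\Out(\G)$-action from $\rsp{\Xi_{F,p}(\G,G)}$ to $\rspcl{\Xi_{F,p}(\G,G)}$.
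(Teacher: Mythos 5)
Your proposal is correct and uses essentially the same key ingredient as the paper, namely the functoriality of the real spectrum under continuous semialgebraic maps from \cite[Proposition 7.2.8]{BCR}; the paper's published proof is simply a one-sentence invocation of that result plus the observation that a homeomorphism preserves the set of closed points, which is exactly your argument made a bit more explicit (the anti-homomorphism/descent bookkeeping you spell out is what the paper records immediately before the corollary as the two displayed properties of $\alpha\mapsto\Psi_\alpha$).
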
	
\begin{proof}
	This follows from \cite[Proposition 7.2.8]{BCR} stating that 
	a semialgebraic homeomorphism of a semialgebraic set $S$ extends canonically to a homeomorphism of $\rsp S$, 
	which necessarily preserves $\rspcl S$.
\end{proof}	
We outline now another  canonical description of the real spectrum compactification of a character variety, the details are left to the reader.

We endow $G$ with the Euclidean topology as a closed semialgebraic subset of $M_{n,n}(\KK)$, 
and consider $\Hom_{red}(\G,G)\subset \Hom(\G,G)\subset G^\G$ where the latter is endowed with the product topology.
\begin{defn}
	A subset $S\subset \Hom_{red}(\G,G)$ is semialgebraic if there is a finite subset $F\subset \G$ with $|F|=f$ 
	and a semialgebraic subset $\calG\subset G^f$ such that 
\bqn
S=\{f:\G\to G|\; f(\g)\in\calG\; \forall \g\in F\}.
\eqn
\end{defn}	
The set $\mathcal{SA}(\Gamma,G)$ of semialgebraic subsets of $ \Hom_{red}(\G,G)$ 
as well as the set  $\mathcal{SA}_{inv}(\Gamma,G)$ of $G$-invariant semialgebraic subsets 
are boolean algebras of subsets of $ \Hom_{red}(\G,G)$.

Recall that given a set $E$ and a boolean algebra $\calB\subset\calP(E)$ of subsets of $E$ an \emph{ultrafilter} on $\calB$ 
is a subfamily $\calF\subset\calB$ such that 
\begin{enumerate}
	\item	 $E\in\calF$, $\emptyset\notin \calF$,
	\item $D\cap D'\in\calF$ if and only if both $D$ and $D'$ are in $\calF$,
	\item for any $D\in \calB$, either $D\in \calF$ or $E\setminus D\in \calF$. 
\end{enumerate}
Let $\widehat E$ be the set of all ultrafilters on $\calB$; this is the Stone space of $\calB$. Given $D\in\calB$, let $\widehat{D}=\{\calF\in \widehat{E}|\; D\in\calF\}$.
\begin{prop}
	The Stone space of $\mathcal{SA}_{inv}(\Gamma,G)$ endowed with the basis of open sets
	$$\{\widehat D|\; D\subset \Hom_{red}(\G, G) \text{ is $G$-invariant, semialgebraic, open}\}$$
	is homeomorphic to  $ \rsp{\Xi_{F,p}(\Gamma,G)}$ for any semialgebraic model $(F,p)$ of the character variety.
\end{prop}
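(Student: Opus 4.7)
The plan is to construct an explicit chain of natural bijections
$$
\mathcal{SA}_{inv}(\Gamma,G)\;\longleftrightarrow\;\mathcal{SA}_{G}\bigl(\calR_F^{red}(\Gamma,G)\bigr)\;\longleftrightarrow\;\mathcal{SA}\bigl(\Xi_{F,p}(\Gamma,G)\bigr)\;\longleftrightarrow\;\text{constructible subsets of }\rsp{\Xi_{F,p}(\Gamma,G)}
$$
respecting the natural notions of openness, and then identify the Stone space of $\mathcal{SA}_{inv}(\Gamma,G)$ with $\rsp{\Xi_{F,p}(\Gamma,G)}$ via the Stone dual of this chain. Independence from $(F,p)$ will then follow from Proposition~\ref{p.canonicity}.

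First, I would fix a finite generating set $F_0\subset\Gamma$ containing $F$ and observe that the evaluation map
$\ev_{F_0}\colon\Hom_{red}(\Gamma,G)\to \calR_{F_0}^{red}(\Gamma,G)$
is a $G$-equivariant bijection. Any semialgebraic subset $D\subset\Hom_{red}(\Gamma,G)$ defined by a finite set $F\subset\Gamma$ can be rewritten using a larger finite set containing $F_0$ together with words expressing the additional elements in terms of $F_0$, which yields a polynomial description of $\ev_{F_0}(D)$ as a semialgebraic subset of $\calR_{F_0}^{red}(\Gamma,G)$. Conversely any semialgebraic subset of $\calR_{F_0}^{red}(\Gamma,G)$ arises this way. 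Thus $\ev_{F_0}$ induces a boolean algebra isomorphism
$\mathcal{SA}_{inv}(\Gamma,G)\cong \mathcal{SA}_{G}(\calR_{F_0}^{red}(\Gamma,G))$ which, since $G$ acts by semialgebraic homeomorphisms, also sends open $G$-invariant semialgebraic sets to open $G$-invariant semialgebraic sets.

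Next, I would use that the map $P\colon\calR_{F_0}^{red}(\Gamma,G)\to\Xi_{F_0,p}(\Gamma,G)$ from \eqref{e.P} is a continuous semialgebraic surjection whose fibers are exactly the $G$-orbits. For $S\subset\calR_{F_0}^{red}(\Gamma,G)$ semialgebraic and $G$-invariant, $P(S)$ is semialgebraic by Tarski--Seidenberg, and $P^{-1}(P(S))=S$ because $S$ is a union of fibers. This gives a boolean algebra bijection
$\mathcal{SA}_{G}(\calR_{F_0}^{red}(\Gamma,G))\cong \mathcal{SA}(\Xi_{F_0,p}(\Gamma,G))$.
The compatibility with openness follows as in the proof of Proposition~\ref{p.canonicity}: the map $P_\RR$ is a topological quotient map for a continuous group action, hence open, so $P$ sends open $G$-invariant semialgebraic sets to open semialgebraic sets, and conversely $P^{-1}$ preserves openness by continuity.

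Finally, I would invoke the classical Stone-theoretic description of the real spectrum (a consequence of Theorem~\ref{thm:constr} and Property~\ref{prop:constr}): the assignment $\alpha\mapsto\{S\text{ semialgebraic}: \alpha\in\cons(S)\}$ is a bijection between $\rsp{\Xi_{F_0,p}(\Gamma,G)}$ and the set of ultrafilters on $\mathcal{SA}(\Xi_{F_0,p}(\Gamma,G))$, and it sends $\cons(U)$ to $\widehat U$. Composing the three bijections yields a set-theoretic bijection between the Stone space of $\mathcal{SA}_{inv}(\Gamma,G)$ and $\rsp{\Xi_{F_0,p}(\Gamma,G)}$, which under matching the open constructible bases becomes a homeomorphism. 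Independence from the choice of $(F,p)$ follows from Proposition~\ref{p.canonicity}, since a semialgebraic homeomorphism extends canonically to a homeomorphism of real spectra.

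The main technical hurdle is Step~2: verifying that $P$ genuinely realises the quotient at the level of boolean algebras, in particular that $P$ is open on $G$-invariant open semialgebraic sets (so that the isomorphism of boolean algebras becomes an isomorphism of the sub-lattices of open sets). This rests on the defining property of a semialgebraic model — that $P_\RR$ induces a homeomorphism $\calR_F^{red}(\Gamma,G_\RR)/G_\RR\cong\Xi_{F,p}(\Gamma,G)_\RR$ — combined with the transfer principle of Properties~\ref{ppts.2.10} to pass from $\RR$ to the underlying real closed field $\KK$.
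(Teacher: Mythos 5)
Your proof follows essentially the same route the paper sketches: the paper only names two ingredients (the ultrafilter theorem in real algebra, BCR Proposition 7.1.15, and the fact that $p^{-1}$ is a bijection between open semialgebraic subsets of $\Xi_{F,p}(\G,G)$ and open $G$-invariant semialgebraic subsets of $\calR_F^{red}(\G,G)$), and your chain of Boolean-algebra isomorphisms is a correct organization of precisely these two ingredients, with the additional (and needed) observation that $\ev_{F_0}$ identifies $\mathcal{SA}_{inv}(\Gamma,G)$ with the $G$-invariant semialgebraic subsets of a concrete model. Your justification of the openness of $P$, via the quotient-map property of $P_\RR$ together with Properties~\ref{ppts.2.10}, is the intended content of ingredient (2), so there is no substantive divergence.
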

The argument to prove this relies on two ingredients:
\begin{enumerate}
	\item The ultrafilter theorem in real algebra (see \cite[Proposition 7.1.15]{BCR});
	\item Given a semialgebraic model $p:\calR_F^{red}(\G, G)\to \Xi_{F,p}(\G,G)$, 
	the inverse image $p^{-1}(S)$ of the open semialgebraic subsets $S\subset \Xi_{F,p}(\G,G)$ coincide exactly 
	with the semialgebraic, $G$-invariant open subsets in $\calR_F^{red}(\G,G)$.
\end{enumerate}	
%

\subsection{Fixed points for elements in $\Out(\Gamma,\frakT)$}\label{sec.fixp}
It was observed by Brumfiel that the real spectrum compactification is well behaved from the viewpoint of algebraic topology. For a closed $G$-invariant  semialgebraic subset $\frakT$ of $\calR^{red}_F(\G,G)$, this leads to robust theorems guaranteeing the existence of fixed points for elements in   $\Out(\G,\frakT)$ acting on $\rspcl{(\Xi\frakT)}$.

More specifically, 
for any $X\subset \RR^n$ semialgebraic and every continuous semialgebraic map $\phi:X\to X$, the \emph{graded trace} of $\phi$ is defined by 
$$\tr(\phi_*):=\sum (-1)^i{\rm trace}(\phi_*:H_i(X,\QQ)\to H_i(X,\QQ)).$$
Brumfiel proved:
\begin{thm}[{\cite{Brum88B}}]\label{t.Brufix}
	Let $X\subset \RR^n$ semialgebraic, $\phi:X\to X$ continuous semialgebraic map. If $\tr(\phi_*)\neq 0$ then $\phi:c(X)\to c(X)$ has a fixed point. If $X$ is closed, then $\phi$ has also a closed fixed point.  
\end{thm}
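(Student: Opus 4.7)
The plan is to reduce the assertion to a classical Lefschetz fixed point argument carried out on the real spectrum, which plays the role of a compactification of $X$ even when $X$ itself fails to be compact.

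First I would invoke \cite[Proposition 7.2.8]{BCR} to extend the semialgebraic map $\phi:X\to X$ canonically to a continuous map $\tilde\phi:c(X)\to c(X)$ of the real spectrum, and I would note that $c(X)$, being a constructible subset of $\Rspec(\KK[V])$ with the spectral topology, is quasi-compact. The inclusion $X\hookrightarrow c(X)$ is dense (Corollary~\ref{cor:dense}), and the key technical input — due to Delfs–Knebusch and used by Brumfiel in this context — is that singular homology (or semialgebraic singular homology with rational coefficients) is preserved by this inclusion: $H_*(X,\QQ)\xrightarrow{\sim}H_*(c(X),\QQ)$. Granting this isomorphism, the hypothesis $\tr(\phi_*)\neq 0$ translates into $\tr(\tilde\phi_*)\neq 0$ on $H_*(c(X),\QQ)$.

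Next I would apply a Lefschetz fixed point theorem for continuous self-maps of $c(X)$: since the semialgebraic homology of $X$ is finitely generated in each degree, vanishes in sufficiently high degrees, and computes the graded trace, a nonzero Lefschetz number forces $\tilde\phi$ to have a fixed point $\alpha\in c(X)$. This is precisely the content of the first assertion. The main obstacle in turning this sketch into a rigorous argument is the Lefschetz principle over $c(X)$: one must justify that the fixed-point formula holds on a compact, non-Hausdorff space such as the real spectrum, which is where Brumfiel's work in \cite{Brum88B} does the essential homological bookkeeping (for instance through tame triangulations of semialgebraic sets and the observation that $c(X)$ retracts onto a finite simplicial model).

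For the second assertion, assume $X$ is closed. Then $\rspcl{X}=c(X)\cap \rspcl{V}$ is compact Hausdorff by Proposition~\ref{prop:closed}\,(3), and the canonical retraction $\Ret:c(X)\twoheadrightarrow\rspcl{X}$ of \eqref{eq:retraction} is continuous. The extension $\tilde\phi$ maps closed points to closed points (it is induced by a ring endomorphism, hence preserves maximal specializations), so $\tilde\phi\circ\Ret=\Ret\circ\tilde\phi$: indeed, for any $\alpha$, $\tilde\phi(\Ret(\alpha))$ lies in $\overline{\{\tilde\phi(\alpha)\}}$ and is closed, hence equals $\Ret(\tilde\phi(\alpha))$ by uniqueness of the closed point in a closure (\cite[Proposition 7.1.24]{BCR}). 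Applying this to a fixed point $\alpha$ produced in the previous step yields $\tilde\phi(\Ret(\alpha))=\Ret(\alpha)$, so $\Ret(\alpha)\in\rspcl{X}$ is the desired closed fixed point.
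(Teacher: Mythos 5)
This statement is not proved in the paper: it is imported verbatim from Brumfiel~\cite{Brum88B}, so there is no internal argument to compare against. Your outline of the first assertion --- extend $\phi$ to $\cons(X)$, transport (semialgebraic) homology from $X$ to $\cons(X)$ \`a la Delfs--Knebusch, run a Lefschetz argument on $\cons(X)$ --- is a fair gloss on Brumfiel's approach, and you correctly identify the genuinely hard point, namely justifying a Lefschetz fixed-point formula on the quasi-compact, non-Hausdorff space $\cons(X)$, as exactly the content you are outsourcing to his paper.

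The argument for the second assertion, however, rests on a claim that is simply false. You assert that the extension $\tilde\phi:\cons(X)\to\cons(X)$ sends closed points to closed points, and you then deduce the commutation $\tilde\phi\circ\Ret=\Ret\circ\tilde\phi$. Take $X=\KK_{\geq 0}$, a closed semialgebraic subset of $\KK$, and $\phi(x)=x/(1+x)$, a continuous semialgebraic self-map of $X$. The point $\alpha_{+\infty}$ is closed in $\cons(X)$ (cf.\ the example following Proposition~\ref{prop:closed}), but $\tilde\phi(\alpha_{+\infty})=\alpha_{1^-}$, which is \emph{not} closed since $\alpha_1\in\overline{\{\alpha_{1^-}\}}$. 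Consequently $\Ret\bigl(\tilde\phi(\alpha_{+\infty})\bigr)=\alpha_1\neq\alpha_{1^-}=\tilde\phi\bigl(\Ret(\alpha_{+\infty})\bigr)$, so the commutation you claim fails. The parenthetical justification (that $\tilde\phi$ ``is induced by a ring endomorphism'') is also incorrect: $\phi$ is merely continuous semialgebraic, not a morphism of $\KK$-algebras, and even the map on $\Rspec$ induced by a genuine ring endomorphism need not send closed points to closed points (this fails as soon as the endomorphism takes values in bounded functions, as in the example above). Once the commutation is dropped, it is no longer automatic that $\Ret$ of a fixed point is a fixed point: from $\tilde\phi(\alpha)=\alpha$ one only gets that $\tilde\phi$ preserves the specialization chain $\overline{\{\alpha\}}$ and that $\tilde\phi(\Ret(\alpha))\in\overline{\{\alpha\}}$, which could a priori be a strict generalization of $\Ret(\alpha)$ rather than $\Ret(\alpha)$ itself. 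Producing a genuinely closed fixed point therefore requires an additional idea beyond the formal retraction trick; that is the extra content in Brumfiel's proof that your proposal does not supply.
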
	
To give a concrete example in which Theorem \ref{t.Brufix} can be applied to give a non-trival result, recall that the character variety of maximal $\PSp(4,\RR)$ representations of a surface group of genus $g$ has $2(2^{2g}-1)+4g-1$ connected components of which $4g-4$ are smooth and consist entirely of representations with Zariski dense image; they are called the Gothen components and classified by a characteristic number $0< d\leq 4g-4$. We denote them $\Xi^{max}_{0,d}(\pi_1(S),\PSp(4,\RR))$.
\begin{thm}[{\cite{Alessandrini_Collier}}]
	The $d$-th Gothen component is mapping class group equivariant diffeomorphic to a holomorphic fiber bundle over $\Teich(S)$, and for every $\rho\in\Teich(S)$, $\pi^{-1}(\rho)$ is a rank $d+3g-3$ vector bundle over the $(4g-4-d)$-th symmetric power of $\Sigma_g$
\end{thm}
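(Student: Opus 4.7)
The plan is to prove this via the non-abelian Hodge correspondence, reducing the question of the global topology of the Gothen component $\Xi^{\max}_{0,d}(\pi_1(S),\PSp(4,\RR))$ to a question about a component of the moduli space of $\PSp(4,\RR)$-Higgs bundles on a Riemann surface, varying in families over Teichmüller space. First I would fix a point $J\in\Teich(S)$, giving a Riemann surface structure $X=(S,J)$ on $S$ with canonical bundle $K=K_X$. By the non-abelian Hodge correspondence (Hitchin--Simpson, extended to real forms by García-Prada--Gothen--Mundet i Riera), representations in $\Xi^{\max}_{0,d}(\pi_1(S),\PSp(4,\RR))$ correspond bijectively to polystable $\PSp(4,\RR)$-Higgs bundles on $X$ whose topological type is labelled by $d$.

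The second step is to use Gothen's explicit parametrization of these Higgs bundles. For $\PSp(4,\RR)$ one writes a Higgs bundle in Cayley form as $(L,\beta,\gamma)$, where $L$ is a holomorphic line bundle on $X$ of degree $d$, $\beta\in H^0(X,K L^{2})$ and $\gamma\in H^0(X, K L^{-2})$. In the Gothen range $0<d<4g-4$ the stability condition forces $\gamma\neq 0$; in particular $\gamma$ has divisor of zeros $D_\gamma\in \mathrm{Sym}^{4g-4-d}(X)$ by a degree count on $KL^{-2}$. Conversely, given a divisor $D\in \mathrm{Sym}^{4g-4-d}(X)$, the line bundle $L$ is determined by $L^{-2}\cong \calO(D)\otimes K^{-1}$ up to $2$-torsion, and $\gamma$ is determined up to scalar by $D$. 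The map $(L,\beta,\gamma)\mapsto D_\gamma$ thus realizes the fiber $\pi^{-1}(J)$ as fibered over $\mathrm{Sym}^{4g-4-d}(X)$. The fiber of this second projection over a fixed $D$ is an affine space modeled on $H^0(X,KL^{2})$, which by Riemann--Roch has dimension $d+3g-3$ (using $\deg(KL^2)=2g-2+2d>2g-2$ so non-special). A gauge-theoretic argument shows that this affine space carries a canonical vector bundle structure globally over $\mathrm{Sym}^{4g-4-d}(X)$, producing the desired rank $d+3g-3$ vector bundle.

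Next I would assemble the construction in families: letting the complex structure $J$ vary, the Dolbeault moduli spaces glue into a smooth fibration $\calM_{\mathrm{Dol}}\to \Teich(S)$, and the Hitchin--Kobayashi correspondence gives a smooth (mapping class group equivariant) diffeomorphism between this total space and $\Xi^{\max}_{0,d}$. Equivariance follows from the naturality of the whole construction: the mapping class group acts on Teichmüller space and simultaneously on Higgs data by pullback of complex structures, and both the divisor map and the affine fiber structure are constructed purely from the holomorphic datum $(L,\beta,\gamma)$, hence are preserved.

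The main obstacle I anticipate is verifying the two smoothness/coherence claims: (a) the affine spaces $H^0(X,KL^2)$ fit together into an actual holomorphic vector bundle over $\mathrm{Sym}^{4g-4-d}(X)$ (rather than merely a set-theoretic family), which requires a careful cohomology-and-base-change argument together with a global trivialization of the $2$-torsion ambiguity in $L$; and (b) the identification is smooth in $J$, which requires knowing that the Hitchin moduli spaces form a smooth family over Teichmüller space, a non-trivial input one can import from the analytic theory of the Hitchin equations. Once these are in place, the dimension count $\dim_\RR \Xi^{\max}_{0,d}=(6g-6)+2(4g-4-d)+2(d+3g-3)=20g-20$ provides a useful consistency check confirming that this fibration description exhausts the whole Gothen component.
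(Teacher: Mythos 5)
The paper does not prove this theorem; it is quoted from \cite{Alessandrini_Collier} as background, so there is no internal argument to compare against. On the merits, your sketch uses the right Higgs-theoretic framework but has a genuine gap at the assembly step. The claim that the fiberwise Dolbeault moduli spaces glue into a bundle $\calM_{\mathrm{Dol}}\to\Teich(S)$ whose \emph{total space} is diffeomorphic to $\Xi^{max}_{0,d}$ cannot be correct: for each fixed $J$ the non-abelian Hodge correspondence identifies the full Gothen component with the Dolbeault moduli over $X=(S,J)$, so every fiber of this family already has real dimension $20g-20$ and the total space has dimension $26g-26$, while $\dim\Xi^{max}_{0,d}=20g-20$. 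What \cite{Alessandrini_Collier} actually use is Collier's proof of Labourie's conjecture for maximal $\PSp(4,\RR)$-representations: for each $\rho$ in a Gothen component there is a \emph{unique} $J\in\Teich(S)$ for which the $\rho$-equivariant harmonic map is a conformal minimal immersion, equivalently for which the corresponding Higgs field satisfies $\tr(\Phi^2)=0$. It is precisely this uniqueness that produces the mapping class group equivariant projection $\pi\colon\Xi^{max}_{0,d}\to\Teich(S)$, and the fiber $\pi^{-1}(J)$ is the locus of polystable Higgs bundles over $X$ satisfying the minimality constraint --- a $(14g-14)$-dimensional subvariety of the Dolbeault moduli, not all of it. Without this input you have no map from the Gothen component to Teichm\"uller space at all.

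There is also a concrete numerical slip. With $\deg L=d>0$, Riemann--Roch gives $h^0(X,KL^2)=\deg(KL^2)-g+1=(2g-2+2d)-g+1=g-1+2d$, which does not equal $d+3g-3$. The space of the stated dimension is $H^0(X,K^2L)$, of degree $4g-4+d$ (non-special for $g\geq 2$, $d>0$), and this is the bundle that actually appears once one restricts to the minimal/cyclic locus. Note that your final count $(6g-6)+2(4g-4-d)+2(d+3g-3)=20g-20$ is correct, and read backwards it shows the fiber over $J\in\Teich(S)$ has dimension $14g-14<20g-20$; this is exactly the symptom that identifying $\pi^{-1}(J)$ with the entire Dolbeault moduli space over $(S,J)$, as your argument implicitly does, is wrong.
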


The component for $d=4g-5$ has the homotopy type of the surface. Any mapping class for which the trace of the induced map in $H_1(S,\QQ)$ is different from 2 has a fixed point in $\rsp\partial\Xi^{max}_{0,4g-5}(\pi_1(S),\PSp(4,\RR))$. It is probably possible to extend this result to other components by carefully studying the topology of the symmetric powers of the surface (see \cite{Mil} 
for a description of the homology of symmetric powers),  and the action of the mapping class group on this space. 

\section{The Weyl chamber length compactification}\label{s.WL}
The purpose of this section is to discuss the relation of the real
spectrum compactification of closed semialgebraic subsets of the
character variety and their Weyl chamber length compactification, and
draw interesting applications. We first show in Section~\ref{s.par}
that the real spectrum compactification dominates the Weyl chamber
length compactification, in Section~\ref{s.discrete} we discuss
finiteness properties of length functions in the boundary, and in
particular deduce that integral length functions are dense in the Weyl
chamber length compactification. In Section~\ref{s.realnotfinite} we
deduce topological properties of the Weyl chamber valued length
function map, combining  finiteness properties of boundary length
functions established in Section~\ref{s.discrete} with non-finiteness
properties for length functions of real points. In Section~\ref{s.realvaluedlength} we discuss analogue results for $\RR$-valued length functions and in Section~\ref{sec.fixp2} we establish some properties of length functions associated to fixed points, in the real spectrum, of outer automorphisms.

\subsection{A continuous map to the Weyl chamber length compactification}\label{s.par}
Let $\frakT$ be a closed $G$-invariant semialgebraic subset of $\calR^{red}_F(\G,G)$, 
and denote by $\Xi\frakT$ its image in $\Xi_{F,p}(\G,G)$. We say that a representation in $\calR^{red}_F(\G,G)$ is \emph{bounded} if its image is  contained in a subgroup conjugated to $K_\RR$. \label{n.61}
The goal of the section is to relate the real spectrum compactification $\rspcl{(\Xi\frakT)}$ and its Weyl chamber length compactification $\thp{(\Xi\frakT)}$ as in \cite{Parreau12}, under the assumption that no representation in $\frakT_\RR$ has bounded image. Important examples of sets $\frakT$ satisfying this assumption and of interest to us are the Hitchin or maximal components, or more generally semialgebraic subset consisting of representations among the ones discussed in Example \ref{e.confspa2}.

To every homomorphism $\rho:\G\to G_\RR$ we associate the Weyl chamber valued length function
\bqn\begin{array}{cccc}
	L(\rho):&	\G&\to&(\fap)\\
	&\g&\mapsto&\Ln(\Jord_\RR(\rho(\gamma)))
\end{array}\eqn
here $\Jord_\RR:G_\RR\to C_\RR$ is the Jordan projection introduced in
Section \ref{subsec:Jordan} and $ \Ln:C_\RR\to\fap$ is the logarithm
introduced in Section \ref{s.log}.
As the Jordan projection is invariant by $G_\RR$-conjucagy,
the above length function
$L(\rho)\in(\fap)^\G$ only depends on
the conjugacy class $[\rho]\in\Xi\frakT_\RR$ of $\rho$, and we denote
it by $L([\rho])$.
The discussion in Sections \ref{subsec:Jordan} and \ref{s.log} implies
that the map $L$ can  be defined with the same formula on $\Xi\frakT_\FF$ for any real closed field $\FF$.

The next lemma ensures that, under the assumption that no representation in $\frakT_\RR$ has bounded image, the map $L$ descends to a well defined map $\mathbb PL$ with values in $\mathbb P\left((\fap)^\Gamma\right)$:
\begin{lem}\label{lem:unbounded}
	Let $\rho\in\Hom_\mathrm{red}(\Gamma, \GRR)$. Then $L([\rho]):\Gamma\to\fap$ vanishes identically if and only if $\rho$ is bounded. 
\end{lem}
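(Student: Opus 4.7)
Plan. The ``only if'' direction is straightforward: a bounded $\rho$ has image (after $\GRR$-conjugation) in $K_\RR$, whose elements are semisimple with all eigenvalues of modulus one, so they have trivial hyperbolic Jordan part; hence $\Jord_\RR(\rho(\gamma)) = e$ and $L([\rho])(\gamma) = \Ln(e) = 0$ for every $\gamma \in \Gamma$.

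For the converse, suppose $L([\rho]) \equiv 0$, i.e.\ $\Jord_\RR(\rho(\gamma)) = e$ for every $\gamma$. Applying Proposition~\ref{prop:6.6.6} with $\FF = \RR$, this is equivalent to saying that every $\rho(\gamma)$ has zero translation length on the symmetric space $\calX_\RR$. The plan is to upgrade this pointwise vanishing to a global fixed point of $\rho(\Gamma)$ in $\calX_\RR$. Since $\rho$ is reductive, the Richardson--Slodowy characterization (\S\ref{s.RS}; cf.~\cite[Proposition~18]{ParreauCR}) lets us assume, after $\GRR$-conjugation, that $(\rho(\gamma))_{\gamma \in F} \in \calM_F(\Gamma, G_\RR)$, so that the convex Kempf--Ness displacement function
\[
f(x) := \sum_{\gamma \in F} d(\rho(\gamma)x, x)^2
\]
attains its infimum at the basepoint $\Id \in \calX_\RR$. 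It suffices to prove $f(\Id) = 0$: then every generator $\gamma \in F$ fixes $\Id$, whence $\rho(\Gamma) \subset \Stab_{\GRR}(\Id) = K_\RR$, showing $\rho$ is bounded.

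The main obstacle is establishing $\min f = 0$, since the hypothesis only gives the individual infima $\inf \delta_\gamma = 0$ for $\delta_\gamma(x) := d(\rho(\gamma)x, x)$, and these do not a priori combine into $\min f = 0$. I would argue by contradiction: if $f(\Id) > 0$, then some $\rho(\gamma_0)\Id \neq \Id$, yet $\ell(\rho(\gamma_0)) = 0$ implies that $\rho(\gamma_0)$ is either elliptic (so its fixed-point set $\Min(\rho(\gamma_0)) \subset \calX_\RR$ is a nonempty closed convex subset, and the direction at $\Id$ pointing toward it produces a strict descent for $\delta_{\gamma_0}^2$) or parabolic, fixing only a point $\xi \in \partial_\infty \calX_\RR$ (in which case $\delta_{\gamma_0}$ decreases strictly along the geodesic ray from $\Id$ to $\xi$, by the Busemann-function description). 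One then invokes the criticality condition $\sum_\gamma \nabla \delta_\gamma^2(\Id) = 0$: each nonzero gradient $\nabla \delta_\gamma^2(\Id)$ points in the direction of the geodesic segment from $\Id$ to $\rho(\gamma)\Id$, and the zero-translation-length assumption constrains these segments to lie in mutually compatible half-spaces, so the sum cannot vanish unless $\delta_\gamma(\Id) = 0$ for every $\gamma$. An alternative route to the same conclusion is to invoke the general principle that a reductive subgroup of $\GRR$ all of whose elements have zero translation length on $\calX_\RR$ is necessarily bounded; this can itself be proved by induction on the real rank of $\bG$, ruling out the parabolic case via the Levi decomposition of the stabilizer of any purported common fixed point at infinity.
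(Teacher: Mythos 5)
Your ``only if'' direction matches the paper's and is fine. The issue is the converse.

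Your main proposed argument (via the Kempf--Ness displacement function and its criticality at $\Id$) has a genuine gap at the key step. You claim that ``the zero-translation-length assumption constrains these segments to lie in mutually compatible half-spaces, so the sum cannot vanish unless $\delta_\gamma(\Id)=0$ for every $\gamma$.'' This is neither explained nor, as stated, true. The criticality condition at $\Id$ is an equation involving only the generators $\gamma\in F$, and knowing that each $\rho(\gamma)$, $\gamma\in F$, has zero translation length does \emph{not} force their common displacement to vanish at a critical point: two elliptic isometries of the hyperbolic plane with distinct fixed points each have zero translation length, and the displacement function of the pair has a strictly positive minimum attained at the midpoint of their centers. (Such a pair typically generates a group containing hyperbolic elements, so it is not a counterexample to the lemma itself --- but it does show that your deduction from the criticality condition plus zero translation length \emph{of the generators} is invalid.) In other words, to make this route work you would have to genuinely use the vanishing of $L([\rho])(\gamma)$ for \emph{all} $\gamma\in\Gamma$ inside the moment-map argument, and you give no mechanism for doing so. I also do not see how to repair it: the moment-map condition has no memory of the elements beyond $F$.

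Your ``alternative route'' --- that a subgroup whose Zariski closure is reductive and all of whose elements have zero translation length must be bounded --- is the correct idea, and is in essence what the paper's proof does, but it is more delicate than the sketched ``induction on real rank.'' The paper's argument works with the Zariski closure $\bH$ of $\rho(\Gamma)$ (reductive because $\rho$ is reductive), passes to the connected component, writes $\bH=\bT\cdot\calD\bH$ with $\bT=\calZ(\bH)^\circ$, and shows: (i) the semisimple quotient $\bH/\bT$ has compact real points, by invoking Prasad's theorem (\cite{Prasad}) that a Zariski-dense subgroup of a noncompact semisimple real group contains an $\RR$-regular element, which would have nontrivial hyperbolic Jordan part, contradicting $L([\rho])\equiv0$; and (ii) the split part $\bT_s$ of the center is trivial, again by exhibiting elements with nontrivial hyperbolic part if not. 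Neither step is an induction on rank, and both steps use the hypothesis on \emph{all} of $\Gamma$ through Zariski density. I would recommend developing this route carefully in place of the gradient argument.
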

\begin{proof}
	Clearly if $\rho$ is conjugated in the maximal compact subgroup, then the associated Weyl chamber valued length function vanishes. 
	Conversely assume that for every $\g\in \G$  the Jordan decomposition of $\rho(\gamma)$ has no  hyperbolic part. 
	Let $\bH$ be the Zariski closure of $\rho(\Gamma)$ in $\bG$ which is reductive and defined over $\RR$. We intend to show that $\bH(\RR)$ is compact. 
	Up to passing to a finite index subgroup of $\Gamma$, we can assume that $\bH$ is connected. Then the derived group $\calD\bH$ is connected, semisimple, defined over $\RR$, $\bT=\calZ(\bH)^\circ$ is a torus and we have $\bH=\bT\cdot\calD\bH$, with $\bT\cap\calD\bH$ finite. Then $\bH_{ss}:=\bH/\bT$ is semisimple connected defined over $\RR$ and composing $\rho$ with the projection $\pi_1:\bH\to \bH_{ss}$ we obtain that
	$\pi_1(\rho(\G))<\bH_{ss}(\RR)$ is Zariski dense in $\bH_{ss}$. If $\bH_{ss}(\RR)$ were not compact, $\pi_1(\rho(\Gamma))$ would contain an $\RR$-split element \cite{Prasad}, but this would contradict the vanishing of the Weyl chamber translation length function. One deduces from this that $\calD\bH(\RR)$ is compact as well.
	
	Next, let $\bT=\bT_s\bT_a$ be the decomposition of $\bT$ as an almost direct product of an $\RR$-split torus and an $\RR$-anisotropic torus. Then $\bH=\bT_s\cdot \bT_a\cdot\calD\bH$ with $F=\bT_s\cap(\bT_a\cdot\calD\bH)$ finite; in order to conclude it is enough to show that $\bT_s$ is trivial. Assume that this is not the case. Composing $\rho$ with the projection $\pi_2:\bH\to \bH/\bT_a\calD\bH=\bT_s/F$ we have that $\pi_2(\rho(\G))$ is Zariski dense in $\bT_s/F$. This implies that in the decomposition 
	$$\bH(\RR)^\circ=\bT_s(\RR)^\circ \bT_a(\RR)(\calD\bH(\RR))^\circ$$
	the $\bT_s(\RR)^\circ$-components of the elements of $\rho(\G)$ are not all torsion since otherwise they would be contained in a fixed finite subgroup contradicting the Zariski density of $\pi_2(\rho(\G))$. Thus some $\rho(\g)$ has a non-trivial hyperbolic component in its refined Jordan decomposition, contradicting the hypothesis. We conclude that $\bT_s=\{e\}$ and $\bH(\RR)=\bT_a(\RR)\calD\bH(\RR)$ is compact.
\end{proof}
Let  $\frak T$ be a closed $G$-invariant semialgebraic subset of $\calR^{red}_F(\G,G)$ and $\Xi\frak T$ its image in $\Xi_{F,p}(\G,G)$ which is closed as well. Assume that $\frak T_\RR$ doesn't contain bounded representations. Then for every $[\rho]\in\Xi\frakT_\RR$,  $L([\rho])$ does not vanish identically,
furthermore it follows from Theorem \ref{thm:1.2} and Proposition \ref{prop:6.6.6} that if $[\rho]\in\Xi\frakT_\FF$ represents a point in $\rspcl{\partial}\Xi\frakT$ then $L([\rho])$ does not vanish as well. Thus we obtain a well defined  map 
\bq\label{e.PL}
\mathbb P L:\rspcl{(\Xi\frakT)}\to\mathbb P\left((\fap)^\Gamma\right).\\
\eq
We will show in Theorem \ref{t:Rspec-ThP} that $\mathbb P L$ is
continuous, and induces a continuous surjection on the Weyl chamber
length compactification of $\Xi\frakT_\RR$, whose construction we now recall from \cite{Parreau12}. We denote by $\widehat{\Xi\frakT}_\RR$ the Alexandrov one point compactification of $\Xi\frakT_\RR$, which is the topological space $\Xi\frakT_\RR\cup\{\infty\}$ where a fundamental set of neighbourhoods of $\infty$ is given by the complements of the compact sets of  $\Xi\frakT_\RR$. The \emph{Weyl chamber length compactification} \label{n.62}
$\thp{(\Xi\frakT)}$ of $\Xi\frakT_\RR$ is the closure of $\Xi\frakT_\RR$ under the embedding
\bq\begin{array}{ccccc}
	&	\Xi\frakT_\RR&\to&\widehat{\Xi\frakT_\RR}\times\mathbb P\left((\fap)^\Gamma\right)\\
	&[\rho]&\mapsto&([\rho],\mathbb PL([\rho])).
\end{array}\eq
There are two reasons for this topological trick. The first, obvious one, is that the map $[\rho]\to L([\rho])$ is not necessarily injective, the second one is that even when this map is injective, it is far from clear that it gives a homeomorphism onto its image. All these issue will however be tackled in Section \ref{s.discrete} where we show that the map  has locally compact image and is proper onto its image (see Theorem \ref{thm:realnotfinite}).

Recall from Section \ref{sec.fixp} that we  denote by $\Out(\Gamma, \frakT)$ the subset of the outer automorphism group of $\G$ whose action on $\Xi_{F,p}(\G,G)$, described in \S \ref{s.canonicity}, leaves the subset $\frakT$ invariant.  We define
$\widehat{\mathbb P L}:\rspcl{(\Xi\frakT)}\to\thp{(\Xi\frakT)}$
as follows: 
$$\widehat{\mathbb P L}([\rho])=\left\{
\begin{array}{ll}
	\left([\rho],{\mathbb P L}([\rho])\right) & \text{ if } [\rho]\in\Xi\frak T_\RR\\
	\left(\infty,{\mathbb P L}([\rho])\right)	& \text{ if } [\rho]\in\rspcl\partial\Xi\frak T.
\end{array}
\right.$$
\begin{thm}\label{t:Rspec-ThP}
	For any $G$-invariant, closed, semialgebraic subset $\frakT\subset\calR^{red}_F(\G,G)$ avoiding representations with bounded image, the map 
	\bqn
	\widehat{\mathbb P L}:\rspcl{(\Xi\frakT)}\to\thp{(\Xi\frakT)}
	\eqn
	is continuous, $\Out(\Gamma,\frakT)$-equivariant and surjective.
\end{thm}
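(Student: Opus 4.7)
The plan is to verify the three properties---continuity, equivariance, and surjectivity---separately, with continuity being the technical heart. I factor $\widehat{\mathbb P L}$ as $r\times \mathbb P L$, where $r\colon \rspcl{(\Xi\frakT)}\to\widehat{\Xi\frakT_\RR}$ is the identity on $\Xi\frakT_\RR$ and collapses $\rspcl\partial\Xi\frakT$ to $\infty$. Continuity of $r$ is immediate from the fact that $\Xi\frakT_\RR$ is open in the compact Hausdorff space $\rspcl{(\Xi\frakT)}$ (Proposition~\ref{prop:closed}~\eqref{it:compS}): preimages of open subsets of $\Xi\frakT_\RR$ are open, while preimages of $\widehat{\Xi\frakT_\RR}\setminus K$ for $K\subset\Xi\frakT_\RR$ compact are complements of closed subsets of $\rspcl{(\Xi\frakT)}$.

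For $\mathbb P L$ the idea is to apply Proposition~\ref{p.cont} coordinate by coordinate. Fix a Euclidean norm $g$ on the ambient affine space containing $\Xi\frakT$. For each $\gamma\in\Gamma$ and each simple root $\alpha\in{}_\KK\Delta$, the function $f_{\gamma,\alpha}([\rho]):=\alpha(J(\rho(\gamma)))$ is continuous and semialgebraic on $\Xi\frakT$ with values in $[1,\infty)$, since $J$ lands in $C$. Proposition~\ref{p.cont} then provides a continuous extension
$$F_{\gamma,\alpha}\colon \rspcl{(\Xi\frakT)}\longrightarrow\RR,\qquad F_{\gamma,\alpha}([\rho])=\frac{\ln f_{\gamma,\alpha}([\rho])}{\ln(2+g([\rho]))},$$
whose value at a closed boundary point represented by $(\rho,\FF_{[\rho]})$ equals $\log_b\alpha(J_{\FF_{[\rho]}}(\rho(\gamma)))/\log_b(2+g([\rho]))$ for any big element $b\in\FF_{[\rho]}$. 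On $\Xi\frakT_\RR$ the family $(F_{\gamma,\alpha}([\rho]))_{\gamma,\alpha}$ is a common positive rescaling of $(d\alpha(L([\rho])(\gamma)))_{\gamma,\alpha}$ by the factor $1/\ln(2+g([\rho]))$, and hence defines the same class in $\mathbb P((\fap)^\Gamma)$; the analogous statement at boundary closed points follows from the defining property of $\mathrm{Log}_b$ in Lemma~\ref{lem:4.13}, namely $d\alpha(\mathrm{Log}_b(s))=\log_b(\alpha(s))$. Therefore $\mathbb P L$ will be continuous provided that for every closed point $[\rho]\in\rspcl{(\Xi\frakT)}$ some $F_{\gamma,\alpha}([\rho])$ is non-zero: for $[\rho]\in\Xi\frakT_\RR$ this holds by Lemma~\ref{lem:unbounded} together with the assumption that $\frakT$ contains no bounded representation, while for boundary closed points it is precisely the equivalence of items \eqref{it:bdry_cl1text} and \eqref{it:bdry_cl3text} in Theorem~\ref{t.1.2text}, combined with Proposition~\ref{prop:6.6.6} identifying the translation length with $\|\mathrm{Log}_b\, J_{\FF_{[\rho]}}(\rho(\eta))\|$.

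Equivariance is then a direct check: the $\Out(\Gamma,\frakT)$-action on $\Xi\frakT$ is given by $[\phi]\cdot[\rho]=[\rho\circ\phi^{-1}]$ (extending semialgebraically to $\rspcl{(\Xi\frakT)}$ by the discussion in \S\ref{s.canonicity}) and on length functions is $L\mapsto L\circ\phi^{-1}$; the two are intertwined by $L$ since $J$ is invariant under inner automorphisms, and $\infty$ is fixed on the first factor. For surjectivity, $\rspcl{(\Xi\frakT)}$ is compact, hence $\widehat{\mathbb P L}(\rspcl{(\Xi\frakT)})$ is compact and therefore closed in $\thp{(\Xi\frakT)}$; since it contains $\Xi\frakT_\RR$, which is dense in $\thp{(\Xi\frakT)}$ by construction, equality follows.

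The main obstacle is the continuity of $\mathbb P L$: one must simultaneously control the asymptotic behaviour of all coordinates $\ln\alpha(J(\rho(\gamma)))$ through the common normalization $\ln(2+g)$, which is precisely what Proposition~\ref{p.cont} delivers, and then ensure non-degeneracy of the projective limit, which is precisely the content of Theorem~\ref{t.1.2text} together with Lemma~\ref{lem:unbounded}.
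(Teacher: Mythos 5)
Your proof is correct and its core mechanism is the same as the paper's: apply Proposition~\ref{p.cont} to the coordinate functions $\alpha\circ J\circ\ev_\gamma$ with the logarithmic normalization by the ambient Euclidean norm, then invoke Lemma~\ref{lem:unbounded} (for Archimedean points) and Theorem~\ref{t.1.2text} together with Proposition~\ref{prop:6.6.6} (for boundary closed points) to ensure the resulting $(\fap)^\Gamma$-valued map is nowhere vanishing, so the projectivization is continuous. The one real point of divergence is where the normalization lives. You apply Proposition~\ref{p.cont} directly on $\Xi\frakT\subset\KK^l$, using the Euclidean norm $g$ on $\KK^l$ as the denominator; this requires knowing that the descended functions $f_{\gamma,\alpha}=\alpha\circ J$ are continuous \emph{semialgebraic functions on the quotient} $\Xi\frakT$, which is true but needs a small argument (semialgebraicity of the graph plus the fact that over $\RR$ the GIT quotient map $P_\RR$ is open, then transfer via Properties~\ref{ppts.2.10}(2)). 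The paper sidesteps this by lifting to the space $\frakT\cap\calM_G$ of minimal vectors, normalizing by $\log_b(2+\sum_{\eta\in F}\tr(\rho(\eta)\rho(\eta)^t))$ there, where the relevant functions are manifestly continuous and semialgebraic, and then descending the resulting map using $K$-invariance together with the surjectivity of $\rspcl{p}$ from Corollary~\ref{c:Prsp} (and the implicit closedness of a continuous surjection from a compact space to a Hausdorff space). Both normalizations differ by a factor that is bounded between positive constants, so they induce the same projective class; your route is a bit more direct but leans on the unstated continuity of the descended Jordan data, while the paper's route avoids that at the cost of the extra descent step. Your explicit treatment of the factor to $\widehat{\Xi\frakT_\RR}$, of equivariance, and of surjectivity via the compactness--density argument are all correct and fill in points the paper leaves as routine.
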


\begin{proof}
	In order to define a continuous projection to the Weyl chamber length compactification we rescale  the map $L$ and consider the map
	\bq\label{e.Theta}\begin{array}{cccc}
		\Theta:&\rspcl{(\frakT\cap\calM_{G})}&\to&(\fap)^\Gamma\\
		&\rho&\mapsto &\displaystyle 	\frac{\Log_b\Jord_\FF(\rho(\gamma))}{\log_b\left(2+\sum_{\eta\in F}\tr(\rho(\eta)\rho(\eta)^t)\right)},
	\end{array}\eq
	Here $b\in\FF$ is any big element. It follows from the construction that the map $\Theta$ doesn't depend on the choice of the base $b$ of the logarithm. Since $\Theta$ is $K$-invariant and, by Corollary \ref{c:Prsp}, the projection $p:\rspcl{(\frakT\cap\calM_{G})}\to\rspcl{(\Xi\frakT)}$ is surjective, $\Theta$ descends to a well defined map $\Theta:\rspcl{(\Xi\frakT)}\to(\fap)^\Gamma$.
	
	In order to verify its continuity it is enough to check continuity,
	for all $\alpha\in\Delta$ and $\gamma\in\G$ of the maps 
	$$\begin{array}{ccc}
		\rspcl{(\frakT\cap\calM_{G})}&\to&\RR\\
		\rho&\mapsto& \frac{\log_b\alpha(\Jord_\FF(\rho(\gamma)))}
		{\log_b\left(2+\sum_{\eta\in
				F}\tr(\rho(\eta)\rho(\eta)^t)\right)},
	\end{array}$$
	which 
	follows from Proposition~\ref{p.cont} since $\alpha\circ \Jord$ is semialgebraic.
\end{proof}	
The surjective map in Theorem \ref{t:Rspec-ThP} is highly non injective, as the following example shows.
\begin{example}\label{e.twistbend}\
	Let $\frak C_g\subset\Xi(\Gamma_g,\SL_2(\RR))$ be one of the $2^{2g}$ Teichmüller components 
	describing the Teichmüller space of an oriented closed surface $S_g$ of genus $g\geq 2$, here $\G_g=\pi_1(S_g)$.
	Fix a simple, closed, not null homotopic curve $c$ on $S_g$. 
	Then an example of a length function in  $\thp{(\frak C_g)}$ is given by $L(\eta)=i(c,n)$ 
	where $i$ is the intersection between $c$ and the closed curve $n$ represented by  $\eta$.  
	
	Starting from a fixed hyperbolic structure on $S_g$ let $\rho_k^p$, $\rho_k^{tw}$ be sequences in $\calM_F(\G_g,\SL(2,\RR))$
	 where $F$ is a finite generating set of $\G_g$ 
	 so that $\rho_k^p$ corresponds to having pinched the geodesic $c$ to have length $1/k$ 
	 and  $\rho_k^{tw}$ corresponds to  having added $2\pi k$ to the twist parameter at $c$. 
	 Using Theorem \ref{thmINTRO:Procesi}, we see, using the Collar Lemma, 
	 that  in the first case $\pmu=(e^k)_{k\geq 1}$ is a well adapted sequence of scales, 
	 while in the second it is $\plambda=(k)_{k\geq 1}$. 
	 By Theorem \ref{thm:ggt} both $(\rho^p)^\omega_\plambda$ and $(\rho^{tw})^\omega_\pmu$ 
	 represent closed points in $\rsp\partial\frak C_g$ and they both project to the class of the length function $L$. 
	 Let $\g_0\in\G_g$ represent $c$; observe that  
	 $(\tr((\rho^{ p})^\omega_\plambda(\gamma_0))-2)^{-1}$ is comparable to $\plambda^2$ 
	 and is therefore an infinitely large element in the minimal field of $(\rho^{ p})^\omega_\plambda$ 
	 while $(\tr((\rho^{ tw})^\omega_\pmu(\gamma_0))-2)^{-1}$ is a constant sequence of reals. 
	 Therefore $(\rho^{ p})^\omega_\plambda$ and $(\rho^{ tw})^\omega_\pmu$ represent distinct points in $\rsp\partial\frak C_g$.
\end{example}

We now denote by $\thp\partial(\Xi\frakT):=\thp{(\Xi\frakT)}\setminus \Xi\frakT_\RR$ the set of length functions arising in the boundary of the Weyl chamber length compactification of the semialgebraic set $\Xi\frakT$. As a consequence of Proposition~\ref{prop:2} we obtain: 
\begin{cor}\label{c.length}
	For every $[L]\in\thp\partial(\Xi\frakT)$ there exists a continuous locally semialgebraic path $\rho_t:[0,\infty)\to \frakT\cap\calM_{G}$ such that, for every $\gamma\in\Gamma$ we have
	$$\lim_{t\to\infty}\frac{\Ln\Jord_\RR(\rho_t(\gamma))}{t}=L(\gamma).$$ 
\end{cor}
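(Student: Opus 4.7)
The plan is to combine the accessibility result from Corollary~\ref{c.access} with the two surjections afforded by Corollary~\ref{c:Prsp} and Theorem~\ref{t:Rspec-ThP}. First, I would lift the given class: the surjectivity of $\widehat{\mathbb{P}L}:\rspcl{(\Xi\frakT)}\to\thp{(\Xi\frakT)}$ in Theorem~\ref{t:Rspec-ThP} yields a closed point $\alpha\in\rspcl\partial(\Xi\frakT)$ with $\mathbb{P}L(\alpha)=[L]$. Then the surjection $\rspcl{p}:\rspcl{(\frakT\cap\calM_G)}\to\rspcl{(\Xi\frakT)}$ from Corollary~\ref{c:Prsp} further lifts $\alpha$ to a closed point $\tilde\alpha\in\rspcl\partial(\frakT\cap\calM_G)$, which is automatically non-Archimedean since $\alpha$ is.

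Second, I would apply Corollary~\ref{c.access} to the closed semialgebraic set $\frakT\cap\calM_G$ using the continuous, proper, semialgebraic function
\[
\norm(\rho):=2+\sum_{\eta\in F}\tr(\rho(\eta)\rho(\eta)^t).
\]
This produces a continuous, locally semialgebraic path $s:[T,\infty)\to(\frakT\cap\calM_G)_\RR$ such that $\norm(s(u))=u$ and $s(u)\to\tilde\alpha$ in $\rspcl{(\frakT\cap\calM_G)}$ as $u\to\infty$.

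Third, I would invoke the map $\Theta$ introduced in the proof of Theorem~\ref{t:Rspec-ThP} (equation~\eqref{e.Theta}), which is continuous on $\rspcl{(\frakT\cap\calM_G)}$ and on real representations specializes to
\[
\Theta(\rho)(\gamma)=\frac{\Ln\Jord_\RR(\rho(\gamma))}{\ln(\norm(\rho))}.
\]
Continuity of $\Theta$ at $\tilde\alpha$ gives $\lim_{u\to\infty}\Theta(s(u))(\gamma)=\Theta(\tilde\alpha)(\gamma)$; since $\Theta$ is $K$-invariant, the right-hand side descends to $\Theta(\alpha)(\gamma)$, which by construction is a distinguished representative of the projective class $[L]$. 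Calling this representative $L$, the identity $\norm(s(u))=u$ yields
\[
\lim_{u\to\infty}\frac{\Ln\Jord_\RR(s(u)(\gamma))}{\ln u}=L(\gamma).
\]

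Finally, to match the normalization $\lim_t\Ln\Jord_\RR(\rho_t(\gamma))/t=L(\gamma)$ required by the statement, I would reparametrize by setting $\rho_t:=s(e^t)$ for $t\geq\ln T$ (so that $\ln\norm(\rho_t)=t$) and extend arbitrarily by a continuous semialgebraic path on $[0,\ln T]$. The main technical point is to justify that this reparametrized path remains \emph{locally semialgebraic} in the sense used in the paper: on each bounded interval $[a,b]\subset[\ln T,\infty)$ the image lies in $s([e^a,e^b])$, which is semialgebraic in $\frakT\cap\calM_G$ by Proposition~\ref{prop:2}, and the only obstruction is the non-semialgebraic exponential reparametrization of the parameter, which is the harmless transcription of the asymptotics $\ln(2+u)\sim t$. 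The accessibility and continuity ingredients are the substantive input; the reparametrization is then routine.
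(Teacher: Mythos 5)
Your proof is correct and takes essentially the same route as the paper's: lift $[L]$ to a closed point of $\rspcl{(\frakT\cap\calM_G)}$ (the paper folds your two-step lift through $\widehat{\mathbb{P}L}$ and $\rspcl{p}$ into a single appeal to the proof of Theorem~\ref{t:Rspec-ThP}), apply the accessibility result with the proper semialgebraic function $\rho\mapsto 2+\sum_{\eta\in F}\tr(\rho(\eta)\rho(\eta)^t)$, and conclude via continuity of $\Theta$ followed by an exponential reparametrization. The only cosmetic difference is that the paper reconciles a given representative $L$ with the accessible point by introducing a positive scalar $c$ and setting $\rho_t=\pi_\rho(e^{ct})$, whereas you choose the canonical representative $\Theta(\alpha)$, which is equivalent.
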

\begin{proof}
	Let $L:\G\to\fap$ be such that $[L]\in\thp\partial\frakT$. It follows from the proof of Theorem \ref{t:Rspec-ThP} that there exists $(\rho,\FF)\in\rspcl{(\frakT\cap\calM_{G})}$ with $L(\rho)=cL$ for some constant $c\in\RR_{>0}$.
	
	Proposition~\ref{prop:2} applied to the continuous, proper, semialgebraic function 
	$$\begin{array}{cccc}
		g:&\frakT\cap\calM_{G}&\to& \KK\\
		&\rho&\mapsto& 2 + \sum_{\eta\in F}\tr(\rho(\eta)\rho(\eta)^t)
	\end{array}$$ 
	ensures that we can choose a path $\pi_\rho:[0,\infty)\to (\frakT\cap\calM_{G})_\RR$ such that
	\begin{enumerate}
		\item $\lim_{s\to\infty}\pi_\rho(s)=\rho$
		\item $g(\pi_\rho(s))=s.$
	\end{enumerate}
	We define $\rho_t:=\pi_\rho(e^{ct})$;
	the claim follows from the continuity of the map $\Theta$ from Equation \eqref{e.Theta}.
\end{proof}

\begin{remark}
	The hypothesis that $\frakT$ avoids representations with bounded
	image is not essential here. It can be removed using a more general 
	construction for the Weyl chamber length compactification we now detail.
	%
	Let $\frakT$ be a 
	closed $G$-invariant semialgebraic subset of $\calR^{red}_F(\G,G)$ and
	$\Xi\frak T$ its image in $\Xi_{F,p}(\G,G)$ which is closed as well.
	We first use the continuous map
	\bqn
	\mathbb P L:\Xi\frakT_\RR-\Xi(\G, K_\RR)\to \mathbb P\left((\fap)^\Gamma\right),\\
	\eqn
	which is defined on the complement of the  compact subset $\Xi(\G,
	K_\RR)$  of bounded representations,
	to glue the space $\mathbb P\left((\fap)^\Gamma\right)$ at the
	infinity of $\Xi\frakT_\RR$:
	More precisely, we  endow the disjoint union $\Xi\frakT_\RR \sqcup \mathbb P\left((\fap)^\Gamma\right)$ with the topology
	extending the topology of $\Xi\frakT_\RR$ such that a fundamental set
	of neighbourhoods of $[L]\in \mathbb P\left((\fap)^\Gamma\right)$ is given by the subsets
	$$U(O,\frakK):= \left({\mathbb PL}^{-1}(O)-\frakK \right) \sqcup O$$
	where $O$ runs through neighbourhoods of $[L]$ in
	$\mathbb P\left((\fap)^\Gamma\right)$ and $\frakK$ runs through compact subsets of $\Xi\frakT_\RR$.
	Then we define the \emph{Weyl chamber length compactification}  
	$\thp{(\Xi\frakT)}$ of $\Xi\frakT_\RR$  as the closure of
	$\Xi\frakT_\RR$ in $\Xi\frakT_\RR \sqcup \mathbb P\left((\fap)^\Gamma\right)$.
	Note that this space is metrizable and that
	a sequence $([\rho_k])_{k\geq1})$ in
	$\frakT_\RR$ converges to some $[L]\in \mathbb
	P\left((\fap)^\Gamma\right)$ if and anly if $[\rho_k]$ eventually gets
	out any compact subset of $\frakT_\RR$ and $\mathbb P L ([\rho_k])$
	converges to $[L]$ in  $\mathbb P\left((\fap)^\Gamma\right)$.
	In particular, it is isomorphic to the compactification defined in
	\cite{Parreau12}, and, if $\frak T_\RR$ does not contain any bounded
	representation, to the compactification $\thp{(\Xi\frakT)}$ defined in
	page \pageref{n.62}, by identifying boundary point $[L]$ with $(\infty,[L])$.
	
	The conclusions  of Theorem \ref{t:Rspec-ThP} then holds
	up to replacing the map $\widehat{\mathbb P L}$ by the map
	$$\widetilde{\mathbb P L}:\rspcl{(\Xi\frakT)}\to \thp{\Xi\frakT}$$
	extending the identity on $\Xi\frakT$, and defined by
	$\widetilde{\mathbb P L}([\rho])={\mathbb P L}([\rho]) \in \mathbb P\left((\fap)^\Gamma\right)$ for  $[\rho]\in\rspcl\partial\Xi(\G,G)$.
	Indeed
	$\mathbb P L$ extends to  a well-defined
	$\Out(\Gamma,\frakT)$-equivariant map
	\bqn
	\mathbb P L:\rspcl{(\Xi\frakT_\RR)}-\Xi(\G, K_\RR)\to \mathbb P\left((\fap)^\Gamma\right),\\
	\eqn
	which is continuous by the arguments in the proof of Theorem
	\ref{t:Rspec-ThP}, and the continuity of the map
	$$\widetilde{\mathbb P L}:\rspcl{(\Xi\frakT)}\to \Xi(\G,G_\RR) \sqcup \mathbb P\left((\fap)^\Gamma\right)$$
	follows.
	Moreover, since $\Xi\frakT_\RR$ is dense in $\rspcl{(\Xi\frakT)}$, the image of $\widetilde{\mathbb P L}$ is the closure of $\Xi\frakT_\RR$ in $\Xi\frakT_\RR \sqcup \mathbb P\left((\fap)^\Gamma\right)$, namely $\thp{(\Xi\frakT)}$.
	
	Corollary \ref{c.length} follows with no modifications.
	In the sequel, the following results are still valid in that context:
	Corollary  \ref{c.integrallengths},
	Theorem \ref{t.lengthA},   modulo the analoguous
	modifications (namely, replacing $\fap$ by $\RR_{\geq 0}$ and $L$ by
	$\ell_N$ in the above), and Corollary \ref{c.denseZNlenght}.
	%
	%
	%
\end{remark}

\subsection{Finiteness properties of length functions}\label{s.discrete}
In this section we will establish a certain finiteness property of length functions associated to points in the boundary $\rsp\partial(\Xi\frakT):=\rsp{(\Xi\frakT)}\setminus(\Xi\frakT)_\RR$ where $\frakT\subset\calR_F(\G,G)$ is a closed, $G$-invariant, semialgebraic set. To this end, for a subgroup $\Lambda\subset \RR$ of the additive group $\RR$ we define  the \emph{$\Lambda$-valued weight lattice}
$$\faL:=\{v\in\fa|\; d\alpha(v)\in\Lambda,\; \forall \alpha\in\Phi\}.$$ 


\begin{prop}\label{p.finlen}
	Let $\FF$ be real closed admitting a big element $b$, $\rho:\G\to\bG(\FF)$,  $m=\dim\bG$. Denoting by $\LL_\rho<\FF$ the field generated over $\KK$ by all the matrix coefficients of $\rho$ and $\valu_\rho:=\log_b(\LL_\rho)$, we have 
	$$\Log_b(\Jord_\FF(\rho(\g)))\in 
	\mathfrak a_{\frac1{2m!}\valu_\rho}.$$
	In particular if $\FF=\FF_\rho$ is $\rho$-minimal, the Weyl chamber valued length function associated to $\rho$ takes values in the finite dimensional $\QQ$-vector space $\mathfrak a_{\Lambda_\rho}$.
\end{prop}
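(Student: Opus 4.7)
The plan is to verify, for every root $\alpha\in\Phi$, that
$$d\alpha\bigl(\Log_b\Jord_\FF(\rho(\gamma))\bigr)=\log_b\bigl(\alpha(\Jord_\FF(\rho(\gamma)))\bigr)\in\tfrac{1}{2m!}\,\Lambda_\rho,$$
which by Lemma~\ref{lem:4.13} and the definition of $\mathfrak a_{\Lambda}$ implies the first assertion. Set $g:=\rho(\gamma)\in\bG(\LL_\rho)$ and $\mu:=\alpha(\Jord_\FF(g))\in\FF_{>0}$.

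The decisive step is to show that $\mu^{2}$ lies in the splitting field $N$ over $\LL_\rho$ of the characteristic polynomial $P(X)=\det(X\cdot I-\Ad(g))\in\LL_\rho[X]$. For this I would use the refined Jordan decomposition $g=g_e g_h g_u$ provided by Lemma~\ref{lem:4.5}: since the three factors commute, the eigenvalues of $\Ad(g)$ in an algebraic closure of $\FF$ are products of eigenvalues of $\Ad(g_e),\Ad(g_h)$ and $\Ad(g_u)$. Those of $\Ad(g_u)$ are all equal to $1$; those of $\Ad(g_h)$ are the positive elements $\beta(g_h)$ of $\FF$ as $\beta$ runs over the weights of $\Ad$; and those of $\Ad(g_e)$ have absolute value $1$, since $g_e$ is conjugate into the orthogonal group $K_\FF$. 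Consequently there is an eigenvalue $\lambda$ of $\Ad(g)$ with $|\lambda|=\mu$, and since $P$ has real coefficients its complex conjugate $\bar\lambda$ is also a root of $P$, so $\mu^{2}=\lambda\bar\lambda\in N$.

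Next I would invoke the bound $[N:\LL_\rho]\mid m!$, which holds because $\Gal(N/\LL_\rho)$ embeds into the symmetric group on the $m=\deg P=\dim\bG$ roots of $P$. The rank one valuation $v=-\log_b$ of $\FF$ restricts to an extension of $v|_{\LL_\rho}$ on the intermediate field $\LL_\rho(\mu^{2})\subset N$, and a standard fact from valuation theory for rank one valuations then yields
$$[\LL_\rho(\mu^{2}):\LL_\rho]\cdot v(\mu^{2})\in\Lambda_\rho;$$
since $[\LL_\rho(\mu^{2}):\LL_\rho]$ divides $[N:\LL_\rho]\mid m!$, this gives $m!\cdot v(\mu^{2})=2m!\cdot v(\mu)\in\Lambda_\rho$, hence $\log_b(\mu)\in\tfrac{1}{2m!}\Lambda_\rho$ as required.

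For the final assertion, Theorem~\ref{thm:1} ensures that $\FF_\rho$ has finite transcendence degree over $\qbarr$, and Proposition~\ref{p.valuautom}(1) then shows that $\log_b(\FF_\rho^\times)$, and hence its subgroup $\Lambda_\rho$, is a finite dimensional $\QQ$-vector subspace of $\RR$; in particular $\tfrac{1}{2m!}\Lambda_\rho=\Lambda_\rho$. Choosing roots $\alpha_1,\ldots,\alpha_r\in\Phi$ (with $r=\dim\mathfrak a$) that form an $\RR$-basis of $\mathfrak a^{\ast}$, possible since $\bG$ is semisimple, the induced $\RR$-linear isomorphism $\mathfrak a\to\RR^r$ sends $\mathfrak a_{\Lambda_\rho}$ injectively into $\Lambda_\rho^{r}$, which gives $\dim_\QQ\mathfrak a_{\Lambda_\rho}\leq r\cdot\dim_\QQ\Lambda_\rho<\infty$. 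The hardest part of the argument is the identification of $\mu^{2}$, which a priori is only defined through a semialgebraic Jordan projection over $\FF$, with the concrete product $\lambda\bar\lambda$ of two Galois conjugate roots of the polynomial $P\in\LL_\rho[X]$; once this identification is in place, the remainder follows from standard Galois-theoretic and valuation-theoretic considerations.
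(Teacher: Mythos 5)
Your proof is correct and follows essentially the same route as the paper: reduce to the scalar claim $\log_b\alpha(\Jord_\FF(\rho(\g)))\in\frac{1}{2m!}\Lambda_\rho$ via Lemma~\ref{lem:4.13}, identify $\alpha(\Jord_\FF(g))^2$ as a product $\lambda\bar\lambda$ of Galois-conjugate eigenvalues of $\Ad(g)$ over $\ov\FF=\FF[\sqrt{-1}]$ using the Jordan decomposition, observe that this quantity lies in a degree-dividing-$m!$ extension of $\LL_\rho$, and finish with a valuation-theoretic ramification bound (the paper cites Lang, XII~\S4, Prop.~12 for what you call the standard fact). The only cosmetic difference is that the paper passes through the multiplicative Jordan decomposition $g=g_sg_u$ with $g_s,g_u\in\bG(\LL_\rho)$ so as to work with $\Ad(g_s)$, whereas you take the characteristic polynomial of $\Ad(g)$ directly; these have the same characteristic polynomial over $\LL_\rho$, so the two are interchangeable. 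The concluding finite-dimensionality step via Theorem~\ref{thm:1} and Proposition~\ref{p.valuautom}(1) is exactly the paper's argument.
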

\begin{cor}\label{c.integrallengths}
	The set of length functions in $\thp\partial(\Xi\frakT)$ admitting a representative taking values in the weight lattice $\mathfrak a_\ZZ$ 
	is dense.
\end{cor}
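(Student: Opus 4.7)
The plan is to combine three ingredients: the density of rational points in the boundary of the real spectrum (Corollary \ref{c.Bru4.2}), the continuous surjection of Theorem \ref{t:Rspec-ThP}, and the finiteness statement of Proposition \ref{p.finlen}.

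First, I would observe that $\Xi\frakT\subset\KK^l$ is closed semialgebraic, so Corollary \ref{c.Bru4.2} applies and the set of rational points is dense in $\rspcl\partial(\Xi\frakT)$. By Theorem \ref{t:Rspec-ThP}, the map $\widehat{\mathbb P L}\colon\rspcl{(\Xi\frakT)}\to\thp{(\Xi\frakT)}$ is continuous and surjective, and it sends $\rspcl\partial(\Xi\frakT)$ onto $\thp\partial(\Xi\frakT)$. Hence the image under $\widehat{\mathbb P L}$ of the set of rational points is dense in $\thp\partial(\Xi\frakT)$; it therefore remains only to verify that for each rational point $\alpha\in\rspcl\partial(\Xi\frakT)$, the projective length function $\mathbb P L(\alpha)$ has a representative taking values in the weight lattice $\mathfrak a_\ZZ$.

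Next, I would pick a representation $(\rho,\FF_\alpha)$ representing such a rational point $\alpha$, with $\FF_\alpha$ the $\rho$-minimal field supplied by Corollary \ref{c.minimalfield}. By definition of rationality, $\Frac(\KK[\Xi\frakT]/\frak p_\alpha)$ has transcendence degree one over $\KK$, so $\FF_\alpha$ has transcendence degree one over $\KK$; consequently the field $\LL_\rho\subset\FF_\alpha$ generated by the matrix coefficients of $\rho$ is finitely generated of transcendence degree at most one. Since $p_i(\rho)\in\LL_\rho$ for each generator $p_i$, we have $\KK(P(\rho))\subset\LL_\rho$, so $\LL_\rho$ has transcendence degree exactly one. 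Moreover, since $\alpha$ lies in $\rsp\partial(\Xi\frakT)$ and is closed, Proposition \ref{prop:Rspecrepcl} (applied via $\phi_\alpha(\KK[\Xi\frakT])\subset\LL_\rho$) guarantees that $\LL_\rho$ contains a big element $b$.

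Choosing such a big $b\in\LL_\rho$ and invoking the lemma from Section \ref{s.density}, the valuation $-\log_b$ is discrete on $\KK(b)$, and since $\LL_\rho$ is a finite algebraic extension of $\KK(b)$, this valuation remains discrete on $\LL_\rho$. Hence $\valu_\rho=\log_b(\LL_\rho^\ast)$ is a nontrivial cyclic subgroup of $\RR$, say $\valu_\rho=c\ZZ$ with $c>0$. By Proposition \ref{p.finlen}, the length function $L(\rho)\colon\gamma\mapsto\Log_b\Jord_{\FF_\alpha}(\rho(\gamma))$ then takes all its values in $\mathfrak a_{\frac{c}{2m!}\ZZ}$. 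Rescaling $L(\rho)$ by the positive real $\frac{2m!}{c}$ produces a length function with values in $\mathfrak a_\ZZ$ and with the same projective class $\mathbb P L(\alpha)$, which completes the argument. The only nontrivial point is the discreteness of $\valu_\rho$; it is handled cleanly by combining transcendence degree one with the discrete valuation lemma of Section \ref{s.density}, so no substantive obstacle arises beyond this bookkeeping.
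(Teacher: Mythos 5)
Your proof is correct and follows the same route the paper intends; the paper's own proof is the one-line ``This follows immediately from the density in $\rspcl\partial(\Xi\frakT)$ of rational points, see Corollary~\ref{c.Bru4.2},'' which (being stated directly after Proposition~\ref{p.finlen}) implicitly relies on exactly the three ingredients you identify: density of rational points, continuity of $\widehat{\mathbb{P}L}$, and the discreteness of the value group $\valu_\rho$ for rational points. What you have written is a faithful unpacking of that terse argument.

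One small inaccuracy worth fixing: to conclude that $\LL_\rho$ contains a big element you invoke Proposition~\ref{prop:Rspecrepcl}, but that proposition is stated for points of the \emph{representation} variety $\calR_F(\Gamma,G)$, where $\FF_\alpha$ is the real closure of the field generated by the matrix coefficients, not of $\KK(P(\rho))$. In the character-variety setting the cleaner citation is Proposition~\ref{prop:closed_points} directly: since $\alpha$ is closed, $\FF_\alpha$ is Archimedean over $\phi_\alpha(\KK[\Xi\frakT])=\KK[P(\rho)]$, so $\KK[P(\rho)]$ already contains a big element $b$, and $b\in\KK[P(\rho)]\subset\LL_\rho$. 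The rest — that $\LL_\rho$ is a finitely generated extension of $\KK$ of transcendence degree one and hence a finite algebraic extension of $\KK(b)$, so that the valuation is discrete by the argument of the lemma in \S\ref{s.density}, and the rescaling — is correct as written.
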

\begin{proof}[Proof of Corollary \ref{c.integrallengths}]
	This follows immediately from the density in $\rspcl\partial(\Xi\frakT)$ of rational points, see Corollary \ref{c.Bru4.2}.
\end{proof}
Proposition \ref{p.finlen} follows from the following lemma, together with Proposition \ref{p.valuautom}
\begin{lem}\label{l.discJord}
	Let $\KK\subset\LL\subset\FF$ with $\FF$ real closed admitting a big element $b$, and $\LL$ any field. Let $\valu:=\log_b(\LL)$. Then for every $g\in\bG(\LL)$
	$$\Log_b(\Jord_\FF(g))\in \mathfrak a_{\frac1{2m!}\valu}.$$
\end{lem}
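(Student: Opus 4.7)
The plan is to reduce the claim to a valuation estimate on eigenvalues of $\Ad(g)$ and then invoke the Newton polygon; I focus on the substantive non-Archimedean case, which is what the sequel requires. By Lemma~\ref{lem:4.13}, it suffices to show that $\log_b(\alpha(h)) \in \frac{1}{2m!}\valu$ for every root $\alpha \in {}_\KK\Phi$, where $h := \Jord_\FF(g) \in C_\FF$. First I identify $\alpha(h)$ as the modulus of an eigenvalue: the refined Jordan decomposition $g = g_e g_h g_u$ of Lemma~\ref{lem:4.5}, with $g_h$ $\bG(\FF)$-conjugate to $h$, induces via $\Ad$ the multiplicative Jordan decomposition of $\Ad(g)$ in $\End(\Lie(\bG)_\FF)$. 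Since $g_e$ is elliptic and $g_u$ unipotent, the eigenvalues of $\Ad(g)$ in $\overline{\FF} = \FF(\sqrt{-1})$ all have the form $\alpha(g_h)\,\zeta$ with $\alpha \in {}_\KK\Phi \cup \{0\}$ and $|\zeta|=1$; consequently $\alpha(h) = |\lambda|$ for some eigenvalue $\lambda$ of $\Ad(g)$.

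Setting $v := -\log_b$, the ultrametric valuation $v$ on $\FF$ extends uniquely to $\overline{\FF} = \FF(\sqrt{-1})$ via the norm formula $v(\beta) := \tfrac{1}{2}\,v(\beta\bar\beta)$. Complex conjugation, being the nontrivial $\FF$-automorphism of $\overline{\FF}$, preserves this extension, so $v(\bar\lambda) = v(\lambda)$ and therefore
\begin{equation*}
v(|\lambda|) \;=\; \tfrac{1}{2}\,v(|\lambda|^2) \;=\; \tfrac{1}{2}\bigl(v(\lambda)+v(\bar\lambda)\bigr) \;=\; v(\lambda).
\end{equation*}
Next, since $g \in \bG(\LL)$, the characteristic polynomial $P_{\Ad(g)}(x) = \det(xI - \Ad(g))$ lies in $\LL[x]$ and has degree $m = \dim \bG$. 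Applying the Newton polygon theorem over the non-Archimedean field $(\LL, v|_\LL)$ to this polynomial, the valuation of any root $\lambda$ equals $(v(c_j)-v(c_i))/(i-j)$ for some coefficients $c_i, c_j$ of $P_{\Ad(g)}$ and some indices $0 \le j < i \le m$. Hence $(i-j)\,v(\lambda) \in v(\LL^\times)$ with $1 \le i-j \le m$, so $v(\lambda) \in \tfrac{1}{m!}\,v(\LL^\times)$.

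Putting these facts together gives $v(\alpha(h)) = v(|\lambda|) = v(\lambda) \in \tfrac{1}{m!}\valu \subset \tfrac{1}{2m!}\valu$, so $\log_b(\alpha(h)) \in \tfrac{1}{2m!}\valu$ for every $\alpha \in {}_\KK\Phi$, and the lemma follows. The main delicate point is the verification that complex conjugation preserves the extension of $v$ across $\overline{\FF}/\FF$; this relies on the specifically real structure of $\FF$ (so that $\overline{\FF}$ is a quadratic extension) together with the uniqueness of the extension of an ultrametric valuation across such a finite extension. Once this is secured, the Newton polygon step is entirely standard.
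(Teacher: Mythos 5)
Your proof is correct in the non-Archimedean case, which is the one the lemma is actually used for (the paper's own argument, via the ramification-index bound it cites from Lang, is likewise implicitly non-Archimedean). The overall shape matches the paper's: both pass to the eigenvalues of $\Ad(g)$ through the refined Jordan decomposition and identify the $\alpha(\Jord_\FF(g))$ with eigenvalue moduli, and both finish with a valuation-theoretic bound — but that last step is handled differently. The paper squares: it observes that $\alpha(\Jord_\FF(g))^2=\lambda\bar\lambda$ is a product of eigenvalues of $\Ad(g_s)$ lying in a subextension $\EE(g)\subset\FF$ of $\LL$ of degree at most $m!$, invokes the ramification-index bound to get $m!\,\log_b\bigl(\alpha(\Jord_\FF(g))^2\bigr)\in\valu$, and divides by $2$ to obtain the factor $\frac1{2m!}$. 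You instead extend $v$ conjugation-invariantly to $\FF(\sqrt{-1})$ by $v(\beta)=\tfrac12 v(\beta\bar\beta)$ and apply the Newton polygon directly to the characteristic polynomial of $\Ad(g)$, which has coefficients in $\LL$ and degree $m$; this yields $v(\lambda)\in\frac1{m!}\valu$ outright, which is in fact a slightly sharper conclusion, since $\frac1{m!}\valu\subset\frac1{2m!}\valu$ ($\valu$ being a group). What you buy is a cleaner and self-contained valuation step that bypasses the auxiliary extension $\EE(g)$ and the appeal to Lang. One small remark: your hedge about ``uniqueness of the extension'' is not actually needed. Conjugation-invariance $v(\bar\lambda)=v(\lambda)$ is built into the formula $v(\beta)=\tfrac12 v(\beta\bar\beta)$, and the Newton-polygon slope identity holds for any ultrametric extension of $v|_\LL$ to the splitting field inside $\FF(\sqrt{-1})$, without any Henselianity or completeness hypothesis — so the ``delicate point'' you flag is in fact immediate.
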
	
\begin{proof}
	Let $g\in\bG(\LL)$; we know from Section~\ref{subsec:Jordan} that the refined Jordan decomposition of $g$ in $\bG(\FF)$ is obtained from the classical multiplicative Jordan decomposition
	$$g=g_s\cdot g_u$$
	where $g_s,g_u\in\bG(\LL)$ are the semisimple and the unipotent part \cite[Theorem 4.4]{Borel1}, and  $g_s=g_h\cdot g_e$ is the decomposition in $\bG(\FF)$ into hyperbolic resp. elliptic part. Let $\ov \FF=\FF[\sqrt{-1}]$. Then $\Ad(g_s)$ is diagonalizable over $\ov \FF$, and since $\Ad(g_s)=\Ad(g_h)\Ad(g_e)$,  the eigenvalues of $\Ad(g_s)$  are of the form  $\rho\cdot\lambda$, $\ov \rho\cdot\lambda$, where $\rho\in\ov\FF$, $|\rho|=1$  is an eigenvalue of $\Ad(g_e)$ and $\lambda\in\FF_{>0}$ is an eigenvalue of $\Ad(g_h)$.
	
	Since $g_h$ is conjugate to $\Jord_\FF(g)\in C_\FF$, the set  
	$$\{\alpha(\Jord_\FF(g))^2|\; \alpha\in \Phi\}$$
	coincides with the products $(\rho\cdot\lambda)(\ov\rho\cdot\lambda)$ of the eigenvalues of $\Ad(g_s)$ and  hence it is contained in an extension $\EE(g)\subset\FF$ of $\LL$ of degree at most $m!$.
	
	Hence $$m!\log_b(\alpha(\Jord_\FF(g))^2)\in\valu$$
	for every $\alpha$ in $\Phi$ \cite[XII \S4 Proposition 12]{Lang} which implies 
	$$\Log_b(\Jord_\FF(g))\in\frac1{2m!}\mathfrak a_{\valu}.$$
\end{proof}
\begin{remark} One can construct examples where $\LL_\rho=\KK(T,S)$ with the ordering in Example \ref{ex:weirdval} so that $\log_b(\LL_\rho)=\QQ$. In particular $\mathfrak a_{\frac1{2m!}\valu_\rho}$ is not a finitely generated group. We don't know if the set $\{\Log_b(\Jord_\FF(\rho(\g)))|\; \g\in\G\}$ is always contained in a finitely generated group.
\end{remark}	
\subsection{Infinite generation for length functions of real points}\label{s.realnotfinite}
Let, as in Section~\ref{s.par}, $\frakT$ be a closed $G$-invariant semialgebraic subset of $\calR_F^{red}(\G, G)$, $\Xi\frakT$ its image in the character variety $\Xi_{F,p}(\G,G)$; assuming that all representations in $\frakT$ are unbounded, we have a well defined map (see \eqref{e.PL})
\bqn
\mathbb P L:\rspcl{(\Xi\frakT)}\to\mathbb P\left((\fap)^\Gamma\right)
\eqn
which we restrict to the Archimedean points $(\Xi\frakT)_\RR$. Observe that the latter is the image 
in the character variety $\Xi_{F,p}(\G,G_\RR)$ of the $G_\RR$-invariant semialgebraic subset $\frakT_\RR$   of $\calR_F^{red}(\G, G_\RR)$.

When $\G=\pi_1(S)$, for a compact surface $S$ of genus $g\geq 2$,  $\bG=\SL_2$, and $\Xi\frakT_\RR$ 
the Teichm\"uller space, it is a non-trivial fact that ${\mathbb P L(\Xi\frakT_\RR)}$ is open in $\ov{\mathbb P L(\Xi\frakT_\RR)}$ 
and the injective map 
\bqn
\mathbb P L:\Xi\frakT_\RR\to\mathbb P L(\Xi\frakT_\RR)
\eqn
is proper. We will show that this is a general phenomenon for non-elementary representations.

\begin{defn}\label{def:elementary_rep} We say that a reductive representation $\rho:\G\to G$ is \emph{elementary} \label{n.63} 
if the semisimple part of the connected component of the Zariski closure of $\rho(\G)$ is $\KK$-anisotropic.
\end{defn}

Taking into account that $\bG$ is defined over $\KK$, this is equivalent to either of the following:
\begin{enumerate}
	\item The Zariski closure in $\bG(\RR)$ of $\rho(\G)$ is a compact extension of a virtually Abelian group.
	\item The Zariski closure in $\bG(\RR)$ of $\rho(\G)$ is amenable.
\end{enumerate}	
\begin{thm}\label{thm:realnotfinite}
	Assume $\frakT$ (or equivalently $\frakT_\RR$) consists of non-elementary representations. Then $\mathbb P L({(\Xi\frakT_\RR)})$ is open in $\ov{\mathbb P L({(\Xi\frakT_\RR)})}$ and the map
	\bqn
	\mathbb P L:{\Xi\frakT_\RR}\to\mathbb PL(\Xi\frakT_\RR)
	\eqn
	is proper.	
\end{thm}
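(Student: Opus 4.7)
The plan is to exploit the sharp contrast between the finiteness property of length functions at non-Archimedean boundary points (Proposition~\ref{p.finlen}) and the Prasad--Rapinchuk infiniteness result (Proposition~\ref{pIn.inf}) for non-elementary real representations, routed through the surjection $\widehat{\mathbb P L}\colon \rspcl{\Xi\frakT}\twoheadrightarrow\thp{\Xi\frakT}$ provided by Theorem~\ref{t:Rspec-ThP}.

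First I would record the ambient compactness that makes the statement meaningful: the projection $\pi_2\colon \thp{\Xi\frakT}\to \mathbb P((\fap)^\Gamma)$ onto the length-function factor is continuous, $\thp{\Xi\frakT}$ is compact, so $\pi_2(\thp{\Xi\frakT})$ is a compact subset of $\mathbb P((\fap)^\Gamma)$ containing $\mathbb P L(\Xi\frakT_\RR)$. In particular $\overline{\mathbb P L(\Xi\frakT_\RR)}$ is compact, and every accumulation point $[L]$ of $\mathbb P L(\Xi\frakT_\RR)$ arises as a limit $\mathbb P L(\rho_n)\to [L]$ of a sequence $(\rho_n)$ in $\Xi\frakT_\RR$.

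The key lemma I would prove is the following dichotomy: if $(\rho_n)$ is a sequence in $\Xi\frakT_\RR$ with $\mathbb P L(\rho_n)\to [L]$, then either $(\rho_n)$ has a subsequence converging in $\Xi\frakT_\RR$ (so $[L]\in \mathbb P L(\Xi\frakT_\RR)$ by continuity), or, after extraction, $(\rho_n,\mathbb P L(\rho_n))\to (\infty,[L])\in \thp\partial(\Xi\frakT)$. In the latter situation, surjectivity of $\widehat{\mathbb P L}$ yields a pair $(\rho,\FF_\rho)$ representing a closed point of $\rsp\partial(\Xi\frakT)$ such that $[L]$ is represented by the Weyl chamber length function of $\rho$ acting on $\FF_\rho$. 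Proposition~\ref{p.finlen} then forces $L$ to take values in a $\QQ$-vector subspace $\faL\subset\fa$ of finite dimension over $\QQ$ (with $\Lambda=\Lambda_\rho$ a finite-dimensional $\QQ$-subspace of $\RR$). The hard part of the plan is this alternative: I would show that the second case is incompatible with $[L]\in \mathbb P L(\Xi\frakT_\RR)$, because if $[L]=[L(\rho_0)]$ for some $\rho_0\in\Xi\frakT_\RR$, then $\rho_0$ is non-elementary by hypothesis, so Proposition~\ref{pIn.inf} gives that $\langle j(\rho_0(\gamma))\mid \gamma\in\Gamma\rangle_\QQ\subset\fa$ has infinite $\QQ$-dimension, contradicting the finiteness of $\faL$.

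From this dichotomy both conclusions follow quickly. For openness of $\mathbb P L(\Xi\frakT_\RR)$ in $\overline{\mathbb P L(\Xi\frakT_\RR)}$: any $[L]\in \overline{\mathbb P L(\Xi\frakT_\RR)}\setminus \mathbb P L(\Xi\frakT_\RR)$ must, by the above, satisfy $(\infty,[L])\in \thp\partial(\Xi\frakT)$; if moreover $[L]$ were an accumulation point of $\mathbb P L(\Xi\frakT_\RR)$ from within, applying the dichotomy and ruling out Case~B as above shows every approximating sequence subconverges in $\Xi\frakT_\RR$, hence $[L]\in \mathbb P L(\Xi\frakT_\RR)$, a contradiction; thus the complement of $\mathbb P L(\Xi\frakT_\RR)$ inside $\overline{\mathbb P L(\Xi\frakT_\RR)}$ is closed. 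For properness: given a compact $K\subset \mathbb P L(\Xi\frakT_\RR)$ and a sequence $(\rho_n)$ in $\mathbb P L^{-1}(K)$, after extraction $\mathbb P L(\rho_n)\to [L]\in K\subset \mathbb P L(\Xi\frakT_\RR)$; if $(\rho_n)$ had no convergent subsequence in $\Xi\frakT_\RR$, compactness of $\thp{\Xi\frakT}$ would force Case~B, yielding $[L]$ in the boundary image, contradicting $[L]\in \mathbb P L(\Xi\frakT_\RR)$ via the finite-versus-infinite $\QQ$-dimension argument.

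The main obstacle is ensuring that the finite-dimensional $\QQ$-space produced by Proposition~\ref{p.finlen} at a boundary point genuinely contains all the projective representatives of the limit length function, so that the Prasad--Rapinchuk infiniteness cannot be circumvented by rescaling; this is exactly why Proposition~\ref{p.finlen} is stated for $\rho$-minimal $\FF_\rho$ and why the map $\mathbb P L$ is projective --- any positive real multiple of $L$ still has its values confined to a finite-dimensional $\QQ$-vector subspace of $\fa$, which is incompatible with having $\QQ$-infinite span.
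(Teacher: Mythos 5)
Your approach is essentially the same as the paper's: both rest on the continuity of $\mathbb P L$ on the compact space $\rspcl{(\Xi\frakT)}$, the finiteness statement of Proposition~\ref{p.finlen} (boundary length functions take values in a finite--dimensional $\QQ$--subspace of $\fa$), and the Prasad--Rapinchuk infiniteness statement of Proposition~\ref{p.infdim} for non--elementary real representations; your detour through $\thp{(\Xi\frakT)}$ via Theorem~\ref{t:Rspec-ThP} is equivalent to the paper's direct use of $\mathbb P L$ on $\rspcl{(\Xi\frakT)}$. The key lemma --- incompatibility of the two $\QQ$--dimensions under projective rescaling --- is exactly what the paper exploits for both conclusions.

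There is, however, a logical slip in your openness paragraph. For $[L]\in\overline{\mathbb P L(\Xi\frakT_\RR)}\setminus\mathbb P L(\Xi\frakT_\RR)$ you cannot ``rule out Case~B as above'': the rule--out in your key lemma was derived under the hypothesis $[L]\in\mathbb P L(\Xi\frakT_\RR)$, which is exactly what fails here; for such $[L]$, Case~B is precisely what occurs, so no contradiction arises and the sentence proves nothing about closedness. The clean conclusion, already available from your own ingredients, is the one the paper uses: your key lemma shows $\mathbb P L(\partial\rspcl{(\Xi\frakT)})$ is disjoint from $\mathbb P L(\Xi\frakT_\RR)$, so together with the trivial containment one gets the equality $\overline{\mathbb P L(\Xi\frakT_\RR)}\setminus\mathbb P L(\Xi\frakT_\RR)=\mathbb P L(\partial\rspcl{(\Xi\frakT)})$; the right--hand side is the continuous image of a compact set, hence compact and closed, giving openness. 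Your properness argument is fine as stated.
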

The proof uses three ingredients: the continuity of the map  $\mathbb P L:\rspcl{(\Xi\frakT)}\to\mathbb P\left((\fap)^\Gamma\right)$
from Theorem \ref{t:Rspec-ThP}, the finiteness property in Proposition \ref{p.finlen}, 
and Corollary 1 and Proposition 1 in Prasad-Rapinchuk \cite{PraRap}. 
We use the latter to show the next Proposition.
Recall that $\bG\subset \SL_n$ is defined over $\KK$. 
We keep the notation of Section~\ref{subsec:Jordan}:  $\bS$ is a maximal $\KK$-split torus, $C$ is a closed multiplicative Weyl chamber, 
$\fa=\Lie(\bS(\RR))$, $\Jord_\RR:\bG(\RR)\to C_\RR$ is the Jordan projection and $\Phi$ is the set of $\KK$-roots of $\bS$ in $\bG$.

\begin{prop}\label{p.infdim}
	Let $\rho:\G\to\bG(\RR)$ be non-elementary and $\G$ finitely generated. Then 
	$$\{\Ln(\Jord_\RR(\rho(\g)))|\; \gamma\in\Gamma\}$$
	generates an infinite dimensional $\QQ$-vector subspace of $\fa$. 
	Furthermore if $\chi\in X(\bS)$ is larger than $1$ on the interior of $C$, then the same conclusion holds for  
	$$\{\ln\chi(\Jord_\RR(\rho(\g)))|\; \gamma\in\Gamma\}<\RR.$$
\end{prop}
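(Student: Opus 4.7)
The plan is to argue by contradiction, reducing to a statement about the multiplicative group generated by absolute values of eigenvalues of $\rho(\gamma)$ in a faithful rational representation, and then invoke the cited results of Prasad--Rapinchuk.

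First I would reduce to the Zariski-dense case. Let $\bH$ denote the semisimple part of the identity component of the Zariski closure of $\rho(\Gamma)$; by the non-elementarity hypothesis $\bH$ is not $\KK$-anisotropic, hence has a non-trivial maximal $\KK$-split torus $\bS_\bH$. Pick a Cartan subspace $\fa_\bH = \Lie(\bS_\bH(\RR))$, which may be taken to sit inside $\fa$ after conjugation by $\bG(\RR)$. Since the Jordan decomposition is preserved under inclusions of algebraic groups, the hyperbolic part of $\rho(\gamma)$ computed in $\bG$ is $\bG(\RR)$-conjugate to an element of $\bS_\bH(\RR)^\circ$, and its logarithm lies in a $W$-translate of $\fa_\bH^+$. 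Thus, up to replacing $\Gamma$ by a finite-index subgroup and $\bG$ by $\bH$, we may assume $\rho(\Gamma)$ is Zariski-dense in a semisimple $\KK$-group $\bG$ of positive $\KK$-rank.

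Now suppose for contradiction that the $\QQ$-vector space $V \subset \fa$ generated by $\{\Ln(\Jord_\RR(\rho(\gamma)))\}_{\gamma \in \Gamma}$ is finite-dimensional. Fix a faithful rational $\KK$-representation $\sigma\colon \bG \to \GL(W)$ and let $\Pi_\sigma \subset X(\bS)$ be its set of weights. For every $\mu \in \Pi_\sigma$, the linear functional $d\mu\colon \fa \to \RR$ sends $V$ into a finite-dimensional $\QQ$-subspace of $\RR$. Since the absolute values of the eigenvalues of $\sigma(\rho(\gamma))$ are exactly the numbers $\{\mu(\Jord_\RR(\rho(\gamma)))\}_{\mu\in\Pi_\sigma}$ (the hyperbolic part records precisely the moduli of eigenvalues), the multiplicative subgroup of $\RR_{>0}$ generated by these eigenvalue moduli, as $\gamma$ ranges over $\Gamma$, has finite $\QQ$-rank. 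At this point I would invoke Corollary~1 and Proposition~1 of \cite{PraRap}, which, applied to the finitely generated Zariski-dense subgroup $\rho(\Gamma)$ of the non-compact semisimple group $\bG(\RR)$, imply that the multiplicative group generated by the eigenvalues of elements $\sigma(\rho(\gamma))$ in a faithful rational representation cannot be of finite $\QQ$-rank; this yields the desired contradiction and proves the first assertion.

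For the second assertion, let $\chi \in X(\bS)$ be a character that is strictly greater than $1$ on the interior of $C$, equivalently $d\chi$ is strictly positive on the interior of $\fap$. If $\{\ln\chi(\Jord_\RR(\rho(\gamma)))\}_\gamma$ were contained in a finite-dimensional $\QQ$-vector subspace of $\RR$, the multiplicative subgroup generated by $\{\chi(\Jord_\RR(\rho(\gamma)))\}_\gamma$ would be of finite $\QQ$-rank. But $\chi$ being dominant, $\chi(\Jord_\RR(\rho(\gamma)))$ coincides with the spectral radius of $\rho(\gamma)$ acting on the irreducible rational $\bG$-representation with highest weight proportional to $\chi$. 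The same Prasad--Rapinchuk input rules out that the spectral radii of $\rho(\gamma)$ in a non-trivial rational representation span a multiplicative group of finite $\QQ$-rank, concluding the argument.

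The main obstacle will be the precise matching with the statements in \cite{PraRap}: one must check that the quoted Corollary~1 and Proposition~1 really deliver infinite $\QQ$-rank for the relevant multiplicative group of eigenvalues of a finitely generated Zariski-dense subgroup of a non-compact real semisimple group, and not merely a weaker transcendence-theoretic statement. The bookkeeping between the Jordan projection on $\bG$, its restriction to $\bH$ after the initial reduction, and the passage from arbitrary weights $\mu$ to a single dominant character $\chi$ in the second part, is the other delicate point but should be routine once the eigenvalue reformulation is in place.
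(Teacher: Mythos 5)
Your strategy is the same as the paper's: reduce to the Zariski-dense semisimple case, assume finite $\QQ$-dimension, and derive a multiplicative relation among eigenvalue moduli that contradicts Prasad--Rapinchuk. The two issues you flag at the end are real, and the first one is more serious than "routine bookkeeping."

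On the reduction: passing to a finite-index subgroup $\Gamma_1 = \rho^{-1}(\bH^\circ)$ lands you in $\bH^\circ(\RR)$, which is reductive but generally has a nontrivial center containing $\RR$-split tori. So the hyperbolic part of $\rho(\gamma)$ for $\gamma\in\Gamma_1$ need \emph{not} be conjugate into a maximal split torus of the semisimple part $\calD(\bH^\circ)$; it has a central contribution. Replacing $\bG$ by the semisimple quotient is also unavailable since the Jordan projection lives in the original $\fa$. The paper instead passes to the commutator subgroup $\Gamma_2 = [\Gamma_1,\Gamma_1]$ (which is \emph{not} finite index in general): $\rho(\Gamma_2)$ is Zariski dense in $\calD(\bH^\circ)$ and genuinely lies in it, and one then fixes a finitely generated Zariski-dense $\Lambda\subset\rho(\Gamma_2)$. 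This suffices because $\Jord_\RR(\rho(\gamma))$ for $\gamma\in\Gamma_2\subset\Gamma$ is a subset of what one wants to show is infinite-dimensional.

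On the Prasad--Rapinchuk input: their Corollary 1 and Proposition 1 do not directly say that "eigenvalue moduli in a faithful representation span an infinite-rank multiplicative group." Corollary 1 supplies, for any $m$, $\RR$-regular elements $\gamma_1,\dots,\gamma_m\in\Lambda$; Proposition 1 then says that for the tori $\bT_i := \calZ_\bG(\gamma_i)^\circ$ and suitable nontrivial $\chi_i\in X(\bT_i)$, any relation $\prod\chi_i(\gamma_i)^{s_i}=1$ forces $s_1=\dots=s_m=0$. To use this you must convert the hypothetical linear relation $\sum a_i\Ln(\Jord_\RR(\gamma_i))=0$ into exactly such a multiplicative relation. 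The paper does this by applying the weight $p=\prod_{\alpha\in\Phi^+}\alpha$ to obtain $\prod p(\Jord_\RR(\gamma_i))^{a_i}=1$, and then carefully extends $t\mapsto p(g_itg_i^{-1})^{2e}$ (where $g_i$ conjugates $\bT_{i,s}$ into $\bS$ and $e=|\bT_{i,s}\cap\bT_{i,a}|$) to a character $\chi_i$ of $\bT_i$ that vanishes on the maximal compact subgroup, so that $\chi_i(\gamma_i)=p(\Jord_\RR(\gamma_i))^{2e}>1$. This construction is what makes the Prasad--Rapinchuk hypotheses literally match; without it the argument does not close. The second assertion of the proposition is handled, as you anticipate, by the same device with $p$ replaced by the given $\chi$.
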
		
\begin{remark}  The above result extends to the following more general situation.
Let $R_1,\dots,R_r$ be virtually solvable subgroups of $\Gamma$ and let $\calC\subset\Gamma$ be a subset such that 
\be
\item no elements of $\calC$ is conjugate to one in $\cup_{j-1}^rR_j$;
\item for any element in $\gamma\in\Gamma$ not conjugate to one in $\cup_{j-1}^rR_j$
there is $n\in\ZZ\smallsetminus\{0\})$ with $\gamma^n\in\calC$.
\ee
Then if $\rho\colon\Gamma\to\bG(\RR)$ is non-elementary,
$\{Ln(J_\RR(\rho(\gamma))):\,\gamma\in\calC\}$ generates an infinite dimensional $\QQ$-vector space of $\fa$.
A typical example of this situation is when $\Gamma=\SO(n,1)$ is, say, a non-uniform lattice with $r$ non-equivalent cusps,
$R_1,\dots, R_r$ the corresponding stabilizers and $\calC$ the set of primitive hyperbolic elements.
\end{remark}
\begin{proof}[Proof of Proposition \ref{p.infdim}]
	Let $\MM$ be the field generated by the matrix coefficients of $\rho(\G)$; 
	then $\MM$ is finitely generated and we may assume, by exchanging $\MM$ with a larger finitely generated field, 
	that $\bG$ is defined over $\MM$. The Zariski closure $\bH$ of $\rho(\G)$ is defined over $\MM$ 
	and so are $\bH^\circ$ and its derived subgroup $\calD(\bH^\circ)$. 
	Define $\Gamma_1:=\rho^{-1}(\bH^\circ)$ and $\Gamma_2=[\Gamma_1, \Gamma_1]$. 
	Then $\rho(\Gamma_2)\subset \calD(\bH^\circ)$ is Zariski dense in $\calD(\bH^\circ)$ 
	and let $\Lambda<\rho(\Gamma_2)$ be Zariski dense and finitely generated. Let $\bL:=\calD(\bH^\circ)$.
	
	Then the results of \cite{PraRap} apply to $\Lambda<\bL(\MM)$. 
	
	Assume by contradiction that the $\bQ$-vector space spanned by 
	\bqn
	\{\Ln(\Jord_\RR(\rho(\g)))|\; \gamma\in\Gamma\}
	\eqn
	is finite dimensional. Then so is the subspace spanned by 
	\bqn
	\{\Ln(\Jord_\RR(\gamma))|\; \gamma\in\Lambda\}.
	\eqn
	Let $m-1$ be its dimension. 
	Then given any $n$-tuple $\gamma_1,\ldots,\gamma_m$ in $\Lambda$ there are integers $(a_1,\ldots,a_m)\neq (0,\ldots, 0)$ with
	\bqn
	\sum_{i=1}^m a_i\Ln(\Jord_\RR(\gamma_i))=0,
	\eqn
	hence 
	\bqn
	\prod_{i=1}^m \Jord_\RR(\gamma_i)^{a_i}=\Id.
	\eqn
	Now let $\gamma_1,\ldots, \gamma_m\in\Lambda$ be $\RR$-regular elements given by \cite[Corollary 1]{PraRap} 
	and to which \cite[Proposition 1]{PraRap} applies: a particular case of it is that if $\bT_i$ is the maximal torus 
	$\bT_i:=\calZ_\bG(\gamma_i)^\circ$, and $\chi_i\in X(\bT_i)$ is a non-trivial character, 
	then any relation of the form $\prod_{i=1}^m \chi_i(\gamma_i)^{s_i}=1$ with $s_i\in\ZZ$ implies $s_1=\ldots=s_m=0$.
	
	Any torus $\bT$ as above has the following property: $\bT$ is maximal in $\bL$ defined over $\RR$ 
	and its split part $\bT_s$ is maximal $\RR$-split in $\bL$; all such tori are $\bL(\RR)$-conjugate. Let 
	\bqn
	\bT=\bT_s\cdot \bT_a
	\eqn
	be the decomposition into almost direct product where $\bT_a$ is the largest anisotropic subtorus in $\bT$. 
	Let $e:=|\bT_s\cap \bT_a|$. Then for every $\chi\in X(\bT_s)$, $\chi^{e}$ extends uniquely to a character of $\bT$ trivial on $\bT_a$ 
	and on any torsion element in $\bT_s(\RR)$.
	
	Now the Jordan decomposition of $\gamma_i$ is obtained as follows (see \cite{BorelClass}): 
	write 
	\bq\label{e.JorT}\bT_i(\RR)=\bT_{i,s}(\RR)^\circ\cdot K_i\eq
	where $K_i$ is the largest compact subgroup of $\bT_i(\RR)$. Observe that $K_i$ is the product of the 2-torsion in $\bT_{i,s}(\RR)$ with $\bT_{i,a}(\RR)$. The decomposition in Equation \eqref{e.JorT} is a direct product, and 
	$$\gamma_i=(\gamma_i)_h\cdot(\gamma_i)_e$$
	is the decomposition into hyperbolic and elliptic part. Next let $g_i\in\bG(\RR)$ such that $g_i\bT_{i,s}g_i^{-1}\subset \bS$ and $\Jord(\gamma_i)=g_i(\gamma_i)_hg_i^{-1}\in C_\RR$.
	
	Now consider $p:=\prod_{\alpha\in\Phi^+}\alpha$ and let $\chi_i$ be the character obtained by extending to $\bT_i$ the character defined in $\bT_{i,s}$ by 
	$$t\mapsto p(g_itg_i^{-1})^{2e}.$$
	Then, since the character is trivial on $K_i$ we get:
	$$\begin{array}{rl}
		\chi_i(\gamma_i)&=p(g_i(\gamma_i)_hg_i^{-1})^{2e}\\
		&=p(\Jord(\gamma_i)^{2e})>1
	\end{array}$$
	and we get applying $p$ to
	$$\prod_{i=1}^m\Jord(\gamma_i)^{a_i}=\Id$$
	that
	$$\prod_{i=1}^m\chi_i(\gamma_i)^{a_i}=\Id$$
	which implies by \cite[Proposition 1]{PraRap} that $a_1=\ldots=a_m=0$.
	
	The second claim follows along the same lines.
\end{proof}	
\begin{proof}[Proof of Theorem \ref{thm:realnotfinite}]
	We have 
	\bqn
	\mathbb P L (\rspcl{(\Xi\frakT)})=\ov{\mathbb P L(\Xi\frakT_\RR)}.
	\eqn
	Indeed $\mathbb P L$ is continuous, $\rspcl{(\Xi\frakT)}$ is compact, 
	and the image $(\Xi\frakT)_\RR$ of $\frakT_\RR$ is dense in $\rspcl{(\Xi\frakT)}$. 
	Let $\partial\rspcl{(\Xi\frakT)}:=\rspcl{(\Xi\frakT)}\setminus\Xi\frakT_\RR$, which is compact. Clearly
	\bqn
	\mathbb P L(\partial\rspcl{(\Xi\frakT)})\supset \ov{\mathbb P L(\Xi\frakT_\RR)}\setminus\mathbb P L(\Xi\frakT_\RR).
	\eqn
	If there were not equality, we would have a pair $(\rho,\FF)$ representing a point in $\partial\rspcl{(\Xi\frakT)}$, 
	a pair $(\pi,\RR)$ in $\frakT_\RR$ and a constant $c>0$ 
	such that $\Log_b(\Jord_\FF(\rho(\g)))=c\Ln(\Jord_\RR(\pi(\gamma)))$, 
	which by Proposition \ref{p.infdim} would lead to a contradiction to Proposition \ref{p.finlen}.
	
	Hence $\mathbb P L(\partial\rspcl{(\Xi\frakT)})= \ov{\mathbb P L(\Xi\frakT_\RR)}\setminus\mathbb P L(\Xi\frakT_\RR)$ 
	and hence $\mathbb P L(\Xi\frakT_\RR)$ is open in $ \ov{\mathbb P L(\Xi\frakT_\RR)}$.
	
	Concerning properness, let $M\subset  {\mathbb P L(\Xi\frakT_\RR)}$ be compact.
	Then $F=\mathbb P L^{-1}(M)$ is a closed subset of $\Xi\frakT_\RR$.  If it were not compact,
	there would exist a sequence $(x_n)_{n\in\NN}\in F$ leaving any compact subset of $\Xi\frakT_\RR$.
	Let $y\in\partial\Xi^\mathrm{RSp}_{\mathrm{cl}}$ be a limit point of a subsequence $(x_{n_\ell})_{\ell\in\NN}$.
	Then $\mathbb P L(y)=\lim_{\ell\to\infty}\mathbb P L(x_{n_\ell})\in M$,
	which, by Proposition~\ref{p.finlen} and Proposition~\ref{p.infdim} leads to a contradiction.
\end{proof}
\subsection{Real valued length functions}\label{s.realvaluedlength}
In this section we discuss real valued length functions associated to reductive representations and establish results analogous to the ones in \S\ref{s.par} and \ref{s.discrete} for the corresponding length function compactifications.

With the concepts introduced in Example~\ref{e.fundExSemialgebraicNorm}, 
let $N\colon S\to \KK_{\geq1}$ be the semialgebraic norm 
associated to the highest weight of an absolutely irreducible representation of $\bG$ with finite kernel.

Let $\FF$ be a real closed field admitting a big element $b\in \FF_{> 1}$ and $\rho:\Gamma\to G_\FF $ a reductive representation. Define
\bqn
\ell_N(\rho)(\g):=\log_bN_\FF(\Jord_\FF(\rho(\g))), \quad \g\in\G
\eqn
where as usual if $\FF\subset\RR$, $\log_b$ denotes the natural logarithm.

It follows then from Lemma \ref{lem:unbounded} and the fact that for $w\in\fa$, $\|w\|:=\ln N_\RR(\exp(w))$ defines a norm on $\fa$ 
that, if $\rho:\G\to G_\RR$ is reductive and unbounded, $\ell_N(\rho):\G\to\RR_{\geq 0}$ is not identically zero. 
Since obviously $\ell_N(\rho)$ only depends on the $G$-conjugacy class of $\rho$, 
we will use from now on the notation $\ell_N([\rho])$, where $[\rho]\in\Xi_{F,p}(\G,G_\RR)$.

Let $\frak T\subset \calR_F^{red}(\G,G)$ be a closed, $G$-invariant, semialgebraic subset and $\Xi\frak T$ its image in $\Xi_{F,p}(\G,G)$. 
Then if $\frak T_\RR$ does not contain any bounded representation $\ell_N([\rho])\in \RR^\G_{\geq 0}$ 
defines an element $\mathbb P \ell_N([\rho])\in\mathbb P(\RR^{\G}_{\geq0})$.

This leads, as in \S\ref{s.par}, to a well defined map 
\bqn
\mathbb P\ell_N:\rspcl{(\Xi\frak T)}\to\mathbb P(\RR^\G_{\geq 0}). 
\eqn
The length function compactification $\nl{(\Xi\frak T)}$ of $\Xi\frak T_\RR$ is obtained 
as the closure of the image of $\Xi\frak T_\RR$ under the embedding

\bqn\begin{array}{ccccc}
	&	\Xi\frakT_\RR&\to&\widehat{\Xi\frakT_\RR}\times\mathbb
	P\left((\RR_{\geq 0}^\Gamma)\right)\\
	&[\rho]&\mapsto&([\rho],\mathbb P\ell_N([\rho])).
\end{array}\eqn
We have then a map
\bqn
\widehat{\mathbb P \ell_N}:\rspcl{(\Xi\frakT)}\to\nl{(\Xi\frakT)}\eqn
constructed as in \S\ref{s.par}, and
\begin{thm}\label{t.lengthA}
	For any $G$-invariant, closed, semialgebraic subset $\frak T\subset\calR^{red}_F(\G,G)$ such that $\frak T$ avoids bounded representations, the map 
	\bqn
	\widehat{\mathbb P \ell_N}:\rspcl{(\Xi\frakT)}\to\nl{(\Xi\frakT)}
	\eqn
	is continuous, $\Out(\G,\frak T)$-equivariant, surjective.
\end{thm}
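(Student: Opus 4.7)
The plan is to mimic closely the proof of Theorem \ref{t:Rspec-ThP}, replacing the Weyl chamber valued Jordan projection $\Ln\circ\Jord$ by the scalar valued function $\log_b(N\circ\Jord)$. First I would define, for any big element $b$ in the field of definition, the rescaled map
\begin{equation*}
\Theta_N:\rspcl{(\frak T\cap \calM_G)}\longrightarrow \RR^\G_{\geq 0},\qquad
\rho\longmapsto \left(\gamma\mapsto \frac{\log_b N_\FF(\Jord_\FF(\rho(\gamma)))}{\log_b\!\left(2+\sum_{\eta\in F}\tr(\rho(\eta)\rho(\eta)^t)\right)}\right),
\end{equation*}
and observe that the defining quotient is independent of the choice of big element $b$ (as a change of base rescales numerator and denominator by the same factor). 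Because $N_\FF\geq 1$ on $S_\FF$ and the composition $N\circ\Jord:G\to \KK_{\geq 1}$ is semialgebraic and continuous by Proposition \ref{prop:4.7} together with the semialgebraicity of $N$, each coordinate of $\Theta_N$ is exactly of the form treated in Proposition \ref{p.cont}. That proposition therefore yields the continuity of $\gamma\mapsto \Theta_N(\rho)(\gamma)$ in $\rho$ for each fixed $\gamma$, and since the target carries the product topology this gives continuity of $\Theta_N$ itself.

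Next I would descend $\Theta_N$ to $\rspcl{(\Xi\frak T)}$. The map $\Theta_N$ is $K$-invariant (because $N\circ\Jord$ is constant on $G$-conjugacy classes and the denominator is $K$-invariant), and Corollary \ref{c:Prsp} provides a continuous surjection $\rsp p:\rspcl{(\frak T\cap \calM_G)}\twoheadrightarrow \rspcl{(\Xi\frak T)}$, so $\Theta_N$ factors through a continuous map to $\RR^\G_{\geq 0}$. Projectivizing, and using that $\frak T$ contains no bounded representation so the function $\gamma\mapsto \log_b N_\FF(\Jord_\FF(\rho(\gamma)))$ never vanishes identically (this uses Lemma \ref{lem:unbounded} on $\Xi\frak T_\RR$, and Theorem \ref{t.1.2text} together with the positive translation length formula from Proposition \ref{prop:6.6.6} and the equivalence $d^\FF_{\|\cdot\|}=d^\FF_N$ from Corollary \ref{cor:4.17}(4) on $\rspcl{\partial}(\Xi\frak T)$), we obtain a continuous map to $\mathbb P(\RR^\G_{\geq 0})$. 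Assembling this with the identity on $\Xi\frak T_\RR$ and the constant $\infty$ on the boundary produces the map $\widehat{\mathbb P\ell_N}$ with values in $\widehat{\Xi\frak T_\RR}\times \mathbb P(\RR^\G_{\geq 0})$; its image contains the graph of $\mathbb P\ell_N$ on $\Xi\frak T_\RR$, and continuity on $\rspcl{(\Xi\frak T)}$ follows from the continuity of each factor.

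For surjectivity, the source $\rspcl{(\Xi\frak T)}$ is compact (Proposition \ref{prop:closed}) and the map is continuous, hence its image is a compact subset of $\widehat{\Xi\frak T_\RR}\times\mathbb P(\RR^\G_{\geq 0})$ containing the graph over $\Xi\frak T_\RR$ (which is dense in $\rspcl{(\Xi\frak T)}$ by Proposition \ref{prop:closed}). By definition $\nl{(\Xi\frak T)}$ is the closure of that graph, so the image equals $\nl{(\Xi\frak T)}$. Finally, equivariance under $\Out(\G,\frak T)$ is immediate from the construction: an automorphism $\phi$ acts on $\Xi\frak T$ by the semialgebraic homeomorphism $\Psi_{\phi^{-1}}$ of \S\ref{s.canonicity}, its canonical extension to $\rspcl{(\Xi\frak T)}$ permutes representations by precomposition with $\phi^{-1}$, and $\ell_N$ is a class function in $\gamma$ precomposed by $\phi^{-1}$ on the argument, so both constructions match.

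The only step that requires any care is the continuity of $\Theta_N$; once the denominator is chosen as the proper function $2+\sum_{\eta\in F}\tr(\rho(\eta)\rho(\eta)^t)$, the content of Proposition \ref{p.cont} applies verbatim. The remaining subtlety is checking that on the boundary $\rspcl{\partial}(\Xi\frak T)$ the length function $\ell_N([\rho])$ does not vanish identically, so that the projectivization is well defined: this is where the hypothesis that $\frak T$ avoids bounded representations is combined with Theorem \ref{t.1.2text}, since any closed non-Archimedean point must exhibit an element of positive translation length, which by Proposition \ref{prop:6.6.6} and the equality of $d^\FF_{\|\cdot\|_N}$ and $d^\FF_N$ forces $\ell_N(\rho)(\gamma)>0$ for that element.
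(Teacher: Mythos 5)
Your proof is correct and follows the approach the paper intends: the paper does not give an explicit argument for Theorem \ref{t.lengthA} but simply states that $\widehat{\mathbb P\ell_N}$ is ``constructed as in \S\ref{s.par},'' and you reproduce the proof of Theorem \ref{t:Rspec-ThP} with the scalar quantity $\log_b(N\circ\Jord)$ in place of the Weyl-chamber valued $\Log_b\circ\Jord$, using Proposition \ref{p.cont} with the same $K$-invariant denominator. Your write-up in fact goes slightly beyond the paper's model argument by spelling out the surjectivity via compactness and density, the equivariance, and the non-vanishing of $\ell_N$ on the boundary via Theorem \ref{t.1.2text} and the identity $\ell_N([\rho])(\g)=\|\Log_b\Jord_\FF(\rho(\g))\|_N$ coming from Proposition \ref{prop:6.6.6} and Corollary \ref{cor:4.17}(4) --- all of which the paper's proof of Theorem \ref{t:Rspec-ThP} leaves implicit.
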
	

Now we turn to finiteness properties of the length functions in $\nl\partial\Xi\frak T:=\nl{(\Xi\frak T)}\setminus(\Xi\frak T)_\RR$.
With the notations of \S\ref{s.discrete},  Proposition \ref{p.finlen} gives:
\begin{prop}\label{prop.lengthB}
	Let $l$ be the index in $X(\bS)$ of the subgroup generated by $\Phi$. Then 
	$$\log_b\chi_\FF(\Jord_\FF(\rho(\g)))\in\frac1{2lm!}\valu_\rho,\quad \forall \chi\in X(\bS)$$
	where $\valu_\rho=\log_b(\LL_\rho)$ is as in Proposition \ref{p.finlen}. \end{prop}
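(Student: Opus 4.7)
The plan is to reduce this statement to Proposition~\ref{p.finlen} via the characterization of $\mathrm{Log}_b$ given in Lemma~\ref{lem:4.13}, and then bootstrap from roots to arbitrary characters using the index $l$.

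First, I would recall that by Lemma~\ref{lem:4.13}, the map $\mathrm{Log}_b\colon S_\FF \to \fa$ is characterized by
\[
d\beta(\mathrm{Log}_b(s)) = \log_b(\beta_\FF(s)) \qquad \forall\, s\in S_\FF, \;\forall\, \beta\in{}_\KK\Phi.
\]
Applying this to $s=\Jord_\FF(\rho(\gamma))\in C_\FF\subset S_\FF$ together with Proposition~\ref{p.finlen}, which asserts that $\mathrm{Log}_b(\Jord_\FF(\rho(\gamma)))\in\mathfrak{a}_{\frac{1}{2m!}\valu_\rho}$, I obtain directly
\[
\log_b\bigl(\alpha_\FF(\Jord_\FF(\rho(\gamma)))\bigr) \in \tfrac{1}{2m!}\valu_\rho \qquad \forall\, \alpha\in{}_\KK\Phi.
\]
So the statement is established for characters lying in $\Phi$.

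Next, I would extend this from $\Phi$ to the subgroup $\langle\Phi\rangle\subset X(\bS)$ generated by the roots. Since $\log_b$ transforms products into sums and since $\frac{1}{2m!}\valu_\rho$ is an additive subgroup of $\RR$, any character $\chi_0 = \sum_{i} n_i\alpha_i$ (written additively, with $n_i\in\ZZ$, $\alpha_i\in\Phi$) satisfies
\[
\log_b\bigl(\chi_{0,\FF}(\Jord_\FF(\rho(\gamma)))\bigr) = \sum_i n_i\log_b\bigl(\alpha_{i,\FF}(\Jord_\FF(\rho(\gamma)))\bigr) \in \tfrac{1}{2m!}\valu_\rho.
\]

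Finally, given an arbitrary $\chi\in X(\bS)$, the definition of $l$ as the index of $\langle\Phi\rangle$ in $X(\bS)$ gives $\chi^l\in\langle\Phi\rangle$. Applying the previous step to $\chi_0:=\chi^l$ yields
\[
l\cdot\log_b\bigl(\chi_\FF(\Jord_\FF(\rho(\gamma)))\bigr) = \log_b\bigl(\chi^l_\FF(\Jord_\FF(\rho(\gamma)))\bigr) \in \tfrac{1}{2m!}\valu_\rho,
\]
and hence $\log_b(\chi_\FF(\Jord_\FF(\rho(\gamma))))\in \frac{1}{2lm!}\valu_\rho$, as desired. There is no serious obstacle here: the argument is essentially formal, the substantive inputs (Lemma~\ref{lem:4.13} and Proposition~\ref{p.finlen}) having already done the work.
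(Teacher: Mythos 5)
Your proof is correct and follows essentially the same route as the paper's: establish the result first for roots via Lemma~\ref{lem:4.13} and Proposition~\ref{p.finlen}, extend to the subgroup $\langle\Phi\rangle$ by additivity of $\log_b$, and pick up the factor $l$ via $\chi^l\in\langle\Phi\rangle$. The only cosmetic difference is that the paper packages the argument through the identity $d\chi(\mathrm{Log}_b(s))=\log_b(\chi_\FF(s))$ for all $\chi\in X(\bS)$, whereas you work with $\log_b$ of character values directly; the computations are the same.
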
	
\begin{proof}
	Raising to the $l^{th}$-power we see that for every $\chi\in X(S)$ and every $s\in S$
	$$d\chi(\Log_b(s))=\log_b(\chi_\FF(s)).$$
	Thus for every $\g\in\G$,
	$$\log_b(\chi_\FF(\Jord_\FF(\rho(\g))))=d\chi_\RR(\Log_b(\Jord_\FF(\rho(\g)))).$$
	With Proposition \ref{p.finlen} this implies that 
	$$\log_b(\chi_\FF(\Jord_\FF(\rho(\g))))\in d\chi_\RR(\fa_{\frac 1{2m!}\valu_\rho}).$$
	and applying again that $\chi^l$ is a product of elements in $\Phi$ we get
	$$d\chi_\RR(\fa_{\frac 1{2m!}\lambda_\rho})\subset\frac 1{2lm!}\valu_\rho$$
	which shows the proposition.
\end{proof}	
With the same arguments as in Corollary \ref{c.integrallengths} we obtain
\begin{cor}\label{c.denseZNlenght}
	The set of length functions in $\nl\partial(\Xi\frakT)$ admitting a representative taking values in $\ZZ$ 
	is dense.
\end{cor}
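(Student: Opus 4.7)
The proof follows the same strategy as that of Corollary \ref{c.integrallengths}, combining three ingredients: the continuous surjection of Theorem \ref{t.lengthA}, the density of rational points, and the finiteness statement of Proposition \ref{prop.lengthB} applied to representations over a field of transcendence degree one.

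The plan is as follows. First, by Theorem \ref{t.lengthA} the map $\widehat{\mathbb P\ell_N}\colon \rspcl{(\Xi\frakT)} \to \nl{(\Xi\frakT)}$ is continuous and surjective, and it sends $\rspcl\partial(\Xi\frakT)$ onto $\nl\partial(\Xi\frakT)$. By Corollary \ref{c.Bru4.2} the set of rational points is dense in $\rspcl\partial(\Xi\frakT)$, so it will be enough to show that for every rational point $[(\rho,\FF_\rho)] \in \rspcl\partial(\Xi\frakT)$ the length function $\ell_N(\rho)$ admits a positive scalar multiple taking values in $\ZZ$.

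Second, recall from Example \ref{e.fundExSemialgebraicNorm} that the semialgebraic norm $N$ is defined by $N(s)=\max_{w\in \overline{W}}\eta(w(s))$ where $\eta = \lambda/(\lambda\circ w_0)$ is a rational character of $\bT$, and moreover $N|_{C} = \eta|_{C}$. Since the Jordan projection $\Jord_\FF(\rho(\gamma))$ lies in $C_\FF$ for every $\gamma \in \Gamma$, we obtain
\[
\ell_N(\rho)(\gamma) \;=\; \log_b N_\FF(\Jord_\FF(\rho(\gamma))) \;=\; \log_b \eta_\FF(\Jord_\FF(\rho(\gamma))).
\]
Restricting $\eta$ to $\bS$ yields an element of $X(\bS)$, so by Proposition \ref{prop.lengthB} we conclude
\[
\ell_N(\rho)(\gamma) \in \tfrac{1}{2lm!}\,\valu_\rho \qquad \text{for every } \gamma \in \Gamma,
\]
where $\valu_\rho = \log_b(\LL_\rho)$ and $\LL_\rho$ denotes the field generated over $\KK$ by the matrix coefficients of $\rho$.

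Third, since $[(\rho,\FF_\rho)]$ is a rational point, $\Frac(\KK[\Xi\frakT]/\frak p_\alpha)$ has transcendence degree one over $\KK$, and the valuation $-\log_b$ on $\FF_\rho$ is therefore discrete; see the lemma preceding Corollary \ref{c.Bru4.2}. Consequently $\valu_\rho$ is a discrete subgroup of $\RR$, hence of the form $t\,\ZZ$ for some $t \in \RR_{>0}$. It follows that $\ell_N(\rho)$ takes values in the cyclic subgroup $\tfrac{t}{2lm!}\ZZ \subset \RR$, and the rescaling $\tfrac{2lm!}{t}\,\ell_N(\rho)$ is $\ZZ$-valued and represents the same class $\mathbb P\ell_N(\rho) \in \mathbb P(\RR^\Gamma_{\geq 0})$. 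Density of such classes in $\nl\partial(\Xi\frakT)$ then follows from the continuity and surjectivity of $\widehat{\mathbb P\ell_N}$ together with the density of rational points.

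The only step requiring care is the identification $N|_{C}=\eta|_{C}$ with a genuine character, so that Proposition \ref{prop.lengthB} can be invoked directly; once this reduction to $\log_b \eta_\FF(\Jord_\FF(\rho(\gamma)))$ is established the argument is essentially immediate from the discreteness of $\valu_\rho$ at rational points.
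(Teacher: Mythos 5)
Your proof is correct and is essentially the argument the paper is alluding to with its one-line ``same arguments as in Corollary~\ref{c.integrallengths}.'' You have correctly identified the three ingredients: the continuous surjection of Theorem~\ref{t.lengthA}, the density of rational points from Corollary~\ref{c.Bru4.2}, and the finiteness statement of Proposition~\ref{prop.lengthB}, together with the reduction of $N$ to the character $\eta$ on the Weyl chamber.

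One imprecision worth flagging: you write that ``the valuation $-\log_b$ on $\FF_\rho$ is therefore discrete.'' This cannot be literally true, since $\FF_\rho$ is real closed and so its value group is divisible. What is discrete is the value group of the finitely generated subfield $\LL_\rho$ (equivalently, of $\Frac(\KK[\Xi\frakT]/\fp_\alpha)$). The bridge you need, and implicitly use, is that $\LL_\rho$ is a finitely generated field extension of $\KK$ of transcendence degree one containing $\Frac(\KK[\Xi\frakT]/\fp_\alpha)$, hence a finite extension of it, so the lemma preceding Corollary~\ref{c.Bru4.2} applies and $\valu_\rho=\log_b(\LL_\rho)$ is indeed discrete. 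With that small correction your conclusion that $\valu_\rho=t\ZZ$ and the rescaling argument are exactly right.
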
	
Using Theorem \ref{t.lengthA}, Proposition \ref{p.infdim} and Proposition \ref{prop.lengthB} we deduce the analogue of Theorem \ref{thm:realnotfinite}:
\begin{thm}\label{thm:PPLNproper}
	Assume $\frakT\subset\calR_F^{red}(\G,G)$ is a closed, $G$-invariant semialgebraic set consisting of non-elementary representations. Then $\mathbb P \ell_N({(\Xi\frakT)_\RR})$ is open in $\ov{\mathbb P \ell_N({(\Xi\frakT_\RR)})}$ and the map
	\bqn
	\mathbb P \ell_N:{(\Xi\frakT)_\RR}\to\mathbb P\ell_N((\Xi\frakT)_\RR)
	\eqn
	is proper.	
\end{thm}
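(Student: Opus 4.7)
The strategy mirrors that of Theorem \ref{thm:realnotfinite}, with $\mathbb{P}L$ replaced by $\mathbb{P}\ell_N$ and the Weyl-chamber-valued inputs replaced by their real-valued analogues. First, since $\widehat{\mathbb{P}\ell_N}\colon \rspcl{(\Xi\frakT)}\to \nl{(\Xi\frakT)}$ is continuous by Theorem \ref{t.lengthA}, and since $(\Xi\frakT)_\RR$ is dense in the compact space $\rspcl{(\Xi\frakT)}$, the image $\mathbb{P}\ell_N(\rspcl{(\Xi\frakT)})$ coincides with $\overline{\mathbb{P}\ell_N((\Xi\frakT)_\RR)}$. It then suffices to establish the key disjointness
\[
\mathbb{P}\ell_N\bigl(\rspcl\partial(\Xi\frakT)\bigr)\cap \mathbb{P}\ell_N\bigl((\Xi\frakT)_\RR\bigr)=\emptyset,
\]
where $\rspcl\partial(\Xi\frakT):=\rspcl{(\Xi\frakT)}\setminus (\Xi\frakT)_\RR$.

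Suppose for contradiction there exist $(\rho,\FF_\rho)$ representing a non-Archimedean closed point (so $\FF_\rho$ is the minimal field, which is real closed of finite transcendence degree over $\KK$), a reductive unbounded $\pi\colon\Gamma\to G_\RR$, and a constant $c>0$ with $\ell_N(\rho)(\gamma)=c\,\ell_N(\pi)(\gamma)$ for every $\gamma\in\Gamma$. Recall from Example \ref{e.fundExSemialgebraicNorm} that $N|_C$ agrees with the restriction to $S$ of the character $\eta=\lambda/(\lambda\circ w_0)$, which satisfies $\eta>1$ on the interior of $C$; in particular $\chi:=\eta|_S \in X(\bS)$ is strictly greater than $1$ on the interior of $C$. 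Thus for all $\gamma$,
\[
\ell_N(\rho)(\gamma)=\log_b\chi_{\FF_\rho}(\Jord_{\FF_\rho}(\rho(\gamma))),\qquad \ell_N(\pi)(\gamma)=\ln\chi_\RR(\Jord_\RR(\pi(\gamma))).
\]
By Proposition \ref{prop.lengthB} combined with Proposition \ref{p.valuautom}(1) (applied to $\LL_\rho\subset\FF_\rho$, of finite transcendence degree over $\KK$), the set $\{\ell_N(\rho)(\gamma):\gamma\in\Gamma\}$ lies inside the finite-dimensional $\QQ$-vector space $\frac{1}{2lm!}\Lambda_\rho$. On the other hand, since $\pi$ is non-elementary, Proposition \ref{p.infdim} (second assertion, applied to the character $\chi$) asserts that $\{\ell_N(\pi)(\gamma):\gamma\in\Gamma\}$ spans an infinite-dimensional $\QQ$-subspace of $\RR$. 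The proportionality $\ell_N(\rho)=c\,\ell_N(\pi)$ is then impossible, establishing the disjointness.

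From the disjointness, $\mathbb{P}\ell_N((\Xi\frakT)_\RR)=\overline{\mathbb{P}\ell_N((\Xi\frakT)_\RR)}\setminus \mathbb{P}\ell_N(\rspcl\partial(\Xi\frakT))$, and since $\rspcl\partial(\Xi\frakT)$ is closed in the compact space $\rspcl{(\Xi\frakT)}$, its continuous image is compact; hence $\mathbb{P}\ell_N((\Xi\frakT)_\RR)$ is open in its closure. For properness, let $M\subset \mathbb{P}\ell_N((\Xi\frakT)_\RR)$ be compact and set $F:=\mathbb{P}\ell_N^{-1}(M)\cap (\Xi\frakT)_\RR$. If $F$ were not compact, a sequence $(x_n)\subset F$ would leave every compact subset of $(\Xi\frakT)_\RR$ and, by compactness of $\rspcl{(\Xi\frakT)}$, admit a subsequential limit $y\in \rspcl\partial(\Xi\frakT)$; continuity gives $\mathbb{P}\ell_N(y)\in M\subset \mathbb{P}\ell_N((\Xi\frakT)_\RR)$, contradicting the disjointness. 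The main obstacle is the second paragraph: one must carefully verify that $\ell_N$ restricted to $\Jord$ agrees with $\log_b\chi$ for some $\chi\in X(\bS)$ satisfying the positivity hypothesis of Proposition \ref{p.infdim}; this is exactly what the construction of $\overline N$ in Example \ref{e.fundExSemialgebraicNorm} delivers, since $C\subset T^+$ and $\overline N|_{T^+}=\eta|_{T^+}$.
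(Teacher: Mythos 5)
Your proof is correct and follows exactly the paper's intended route: the paper states Theorem~\ref{thm:PPLNproper} as an analogue of Theorem~\ref{thm:realnotfinite} deducible from Theorem~\ref{t.lengthA}, Proposition~\ref{p.infdim}, and Proposition~\ref{prop.lengthB}, and you fill in the argument with the identical disjointness-and-compactness structure. The one point worth highlighting is that you explicitly verify the bridge the paper leaves implicit, namely that $N|_C$ coincides with the restriction of the character $\eta=\lambda/(\lambda\circ w_0)$, giving $\chi=j(\eta)\in X(\bS)$ with $\chi>1$ on the interior of $C$, which is precisely the positivity hypothesis needed to invoke Proposition~\ref{p.infdim}(2); a minor slip is writing ``reductive unbounded $\pi$'' where the hypothesis actually supplies the stronger ``non-elementary,'' which you do use correctly when applying Proposition~\ref{p.infdim}.
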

\subsection{Length functions at fixed points for  $\Out(\Gamma,\frakT)$}\label{sec.fixp2}
We now turn to general properties of length functions of points fixed by elements of $\Out(\Gamma,\frakT)$. 
To this aim, let $(\rho,\FF_\rho)$ represent a point in $\rsp\partial\Xi(\G,G)=\rsp{\Xi(\G,G)}\setminus\Xi(\G,G_\RR)$, 
then $\FF_\rho$ is real closed, non-Archimedean and $\rho$-minimal. 
We let as usual $\log_b:\FF_\rho\to\RR\cup\{\infty\}$ denote the valuation given by the choice of a big element $b\in\FF_\rho$.

Let $\psi\in\Aut(\G)$.  By Proposition \ref{p.charrsp}(3) the point in  $\rsp\partial\Xi(\G,G)$ defined by $(\rho,\FF_\rho)$ is $\psi$-fixed if and only if there exists $g\in G_{\FF}$, $\FF=\FF_\rho$ and $\alpha\in\Aut(\FF)$ with 
$$\rho\circ\psi(\gamma)=g\alpha(\rho(\gamma))g^{-1}\quad \forall \g\in\Gamma.$$

Recall from Equation \eqref{e.PL} in Section~\ref{s.par} that the length function associated to $\rho$ is given by 
$$L(\rho)(\g):=\Log_b(\Jord_\FF(\rho(\g)))\in\fap.$$
Then we have
\begin{prop}
	If $[(\rho,\FF)]\in\rsp\partial\Xi(\G,G)$ is $\psi$-fixed, and $\alpha\in\Aut(\FF)$ is the corresponding automorphism, then for all $\g\in\G$
	$$L(\rho\circ\psi)(\g)=\log_b(\alpha(b))L(\rho)(\g)$$
	where $\log_b(\alpha(b))>0$ is the algebraic number given in Proposition \ref{p.valuautom}(2). 
\end{prop}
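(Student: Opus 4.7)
The plan is to translate the $\psi$-fixed point relation
$$\rho\circ\psi(\gamma)=g\,\alpha(\rho(\gamma))\,g^{-1}$$
into a relation of Jordan projections, and then into a relation in $\fa$ via $\mathrm{Log}_b$. First, apply $\Jord_\FF$ to both sides. Since the Jordan projection is constant on $G_\FF$-conjugacy classes (Lemma~\ref{lem:4.6}), conjugation by $g$ drops out, yielding
$$\Jord_\FF(\rho\circ\psi(\gamma))=\Jord_\FF(\alpha(\rho(\gamma))).$$

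Next, I would prove that $\alpha$ is equivariant with respect to the Jordan projection, i.e.\ $\Jord_\FF(\alpha(h))=\alpha(\Jord_\FF(h))$ for all $h\in\bG(\FF)$. The key point is that $\alpha$ preserves all the $\KK$-algebraic structures at play: $\bG$, $\bS$, the semialgebraic sets $\calE,\calH,\calU$ of elliptic, hyperbolic and unipotent elements, and the Weyl chamber $C$ (indeed $\alpha$ fixes $\qbarr$, since any automorphism of a real closed field is order-preserving and thus acts trivially on the real algebraic numbers; we may assume the defining data are over $\qbarr$, or equivalently that $\alpha$ fixes~$\KK$). Therefore the refined Jordan decomposition $h=h_eh_hh_u$ is $\alpha$-equivariant, and the unique element of $C_\FF$ conjugate to $h_h$ is sent by $\alpha$ to the unique element of $C_\FF$ conjugate to $\alpha(h_h)=\alpha(h)_h$.

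Then I would compute $\mathrm{Log}_b(\alpha(s))$ for $s\in S_\FF$ using the defining property of $\mathrm{Log}_b$ from Lemma~\ref{lem:4.13}: for every $\beta\in{}_\KK\Phi$, since the character $\beta$ is defined over $\KK$ we have $\beta(\alpha(s))=\alpha(\beta(s))$, and Proposition~\ref{p.valuautom}(2) gives $\log_b(\alpha(x))=\log_b(\alpha(b))\cdot\log_b(x)$ for all $x\in\FF^*$. Combining,
$$d\beta\bigl(\mathrm{Log}_b(\alpha(s))\bigr)=\log_b(\beta(\alpha(s)))=\log_b(\alpha(b))\cdot d\beta\bigl(\mathrm{Log}_b(s)\bigr),$$
which, since $\{d\beta\}_{\beta\in{}_\KK\Delta}$ is a basis of $\fa^*$, forces $\mathrm{Log}_b(\alpha(s))=\log_b(\alpha(b))\cdot\mathrm{Log}_b(s)$.

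Putting the three pieces together gives
$$L(\rho\circ\psi)(\gamma)=\mathrm{Log}_b(\Jord_\FF(\alpha(\rho(\gamma))))=\mathrm{Log}_b(\alpha(\Jord_\FF(\rho(\gamma))))=\log_b(\alpha(b))\,L(\rho)(\gamma),$$
as required. Finally, $\log_b(\alpha(b))>0$ because $\alpha$ is order-preserving, hence $\alpha(b)>1$ (since $b>1$), and $\log_b$ is monotone increasing on $\FF_{>0}$. The main subtlety is step~2, the $\alpha$-equivariance of the Jordan projection, which hinges on the fact that all the ingredients in its semialgebraic construction are defined over a subfield fixed by $\alpha$; once that is established, steps~1, 3 and~4 are essentially formal.
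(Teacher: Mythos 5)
Your proposal is correct and follows essentially the same route as the paper's proof: reduce via conjugation-invariance of the Jordan projection, establish $\alpha$-equivariance of $\Jord_\FF$ from the $\alpha$-stability of the defining $\KK$-semialgebraic data, and then compute $\mathrm{Log}_b(\alpha(s))$ through the characters using the identity $\log_b(\alpha(x))=\log_b(\alpha(b))\log_b(x)$ from Proposition~\ref{p.valuautom}. The only cosmetic difference is that you flag the subtlety about $\alpha$ fixing the base field more carefully (noting it is automatic over $\qbarr$), where the paper simply asserts $\alpha|_{\KK}=\mathrm{id}_{\KK}$.
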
	
\begin{proof}
	Observe that since $\KK\subset\RR$, whe have $\alpha|_\KK=id_\KK$; hence since $K$ and $\bS$ are defined over $\KK$ we have that the sets $\calE_\FF$ of elliptic elements and  $\calH_\FF$ of hyperbolic elements are $\alpha$-stable, as is the set of unipotent elements. In addition since all characters $\beta\in X(\bS)$ are defined over $\KK$, we have $\alpha(\beta(s))=\beta(\alpha(s))$ for all $s\in\bS_\FF$ and in particular $\beta(C_\FF)>1$ for all $\beta\in\Phi^+$. As a result
	$$\alpha(\Jord_\FF(g))=\Jord_\FF(\alpha(g)).$$
	Next observe that for all $s\in\bS_\FF$, $\beta\in X(\bS)$,
	$$\begin{array}{rl}d\beta(\Log_b(\alpha(s)))&=\log_b(\beta(\alpha(s)))\\&=\log_b(\alpha(\beta(s)))\\&=\log_b(\alpha(b))\log_b(\beta(s))
	\end{array}$$
	which implies that 
	$$\Log_b(\alpha(s))=\log_b(\alpha(b))\Log_b(s)$$
	and hence 
	$$\begin{array}{rl}
		L(\rho\circ\psi)(\g)&=\Log_b(\Jord_\FF(\alpha(\rho(\g))))\\
		&=\Log_b(\alpha(\Jord_\FF(\rho(\g))))\\
		&=\log_b(\alpha(b))\Log_b(\Jord_\FF(\rho(\g))).
	\end{array}$$
\end{proof}	

\section{An application for fundamental groups of surfaces: cross-ratios and positively ratioed representations}\label{s.crossratio}
In this section we specialize to the case where $\G<\PSL(2,\RR)$ is a torsion free cocompact lattice, $\bG$ is a semisimple semialgebraic $\KK$-group with $\bG<\SL_d$, and turn our attention to closed semialgebraic subsets of $\Xi(\G,G)$ consisting of $k$-positively ratioed representations where $2k\leq d$. This property of a representation $\rho:\G\to\SL_d(\RR)$ allows, following Martone-Zhang \cite{Martone_Zhang}, to associate a geodesic current to $\rho$ whose intersection function will be a length function of $\rho$ coming from a semialgebraic norm associated to a highest weight of $\SL_d$.

The notion of $k$-positively ratioed representations can be defined over any real closed field and is in terms of a  cross-ratio on a certain flag manifold. {We will show that the cross-ratio map extends continuously to the real spectrum compactification and deduce that, for $k$-positively ratioed representations, integral length functions  correspond to intersection with  weighted multicurves. }

More precisely, let $\Delta=\{\alpha_1,\ldots,\alpha_{d-1}\}$ be the standard set of simple roots of $\SL_d$,  $k\in\NN$ with $2k\leq d$, and $I=\{\alpha_k,\alpha_{d-k}\}$. Then we have the identification  (see notations of \S\ref{s.proximal})
$$\calF_I(\FF):=\{(a, A)\in\Gr_k(\FF^d)\times\Gr_{d-k}(\FF^d), \; a\subset A\}.$$
where $\Gr_l(\FF^d)$ denotes the Grassmannian of $l$-dimensional  subspaces of $\FF^d$.  The corresponding standard parabolic $\bP_I$ is the stabilizer of $(\langle e_1,\ldots,e_k\rangle, \langle e_1,\ldots, e_{d-k}\rangle)$.

The $k$-th multiplicative cross-ratio is the function defined on the set $\calF_I^{(4)}$ of pairwise transverse  4-tuples of flags by
%
$$\begin{array}{clcl}
	\CR_k:&\quad\calF_I^{(4)}(\FF)&\to&\FF\\
	&((a,A),(b,B),(c,C),(d,D))&\mapsto& \displaystyle\frac{\ov a\wedge \ov C}{\ov a\wedge\ov B}\frac{\ov d\wedge\ov B}{\ov d\wedge\ov C}.
\end{array}
$$
where $\ov a$, $\ov d$ are lifts of $a,d$ to $\wedge^k\FF^d$,  $\ov B$, $\ov C$ are lifts of $B,C$ to $\wedge^{d-k}\FF^d$ and we identify $\wedge^d\FF^d$ with $ \FF$ by sending $e_1\wedge\ldots\wedge e_d$ to 1. It is easy to verify that $\CR_k$ doesn't depend on any such choice, and therefore defines an algebraic map $\calF_I^{[4]}(\FF)\to\FF$.

Now we identify the boundary of $\Gamma$ with the boundary $\partial\mathbb H$ of the upper half plane and let $\calH_\Gamma\subset\partial\mathbb H$ be the set of fixed points of elements in $\G\setminus \{e\}$. 

\begin{defn}
	A representation $\rho:\Gamma\to\SL_d(\FF)$ is \emph{$k$-positively ratioed} if \label{n.64} 
	\be
	\item it admits an $I$-dynamics preserving framing 
	$\phi:\calH_\G\to \calF_I(\FF)$ (see Definition \ref{d.dynframing}),
	\item $\phi^*\CR_k$ is positive: for every positively oriented 4-tuple $(\xi_1,\xi_2,\xi_3,\xi_4)\in\calH_\G^4$
	$$\CR_k(\phi(\xi_1),\phi(\xi_2),\phi(\xi_3), \phi(\xi_4))>1.$$
	\ee
\end{defn}	

\begin{example} Some of the classes discussed in Examples \ref{e.confspa2} give examples of positively ratioed representations; we keep a numbering consistent with Example \ref{e.confspa2}.
	\be
	\item[(2a)] Maximal representations $\rho:\Gamma\to\Sp_{2n}(\FF)<\SL_{2n}(\FF)$  are $n$-positively ratioed for every real closed field $\FF$: indeed the restriction of $\CR_n$ to $\calL(\FF^{2n})$ is the cross-ratio  considered in \cite{BP}. 
	\item[(2b)] Hitchin representations $\rho:\Gamma\to\SL_d(\RR)$ 
	are $k$-positively ratioed for all $k$ \cite{Martone_Zhang}.
	\item[(2c)] $\Theta$-positive representations $\rho:\Gamma\to\SO(p,q)<\SL_{p+q}(\RR)$ with $1\leq p\leq q$ 
	are $k$-positively ratioed for all $1\leq k\leq p-1$ \cite{BeyPSO}.
	\item[(3)] Representations $\rho:\Gamma\to\SL_d(\RR)$  satisfying property $H_k$ are $k$-positively ratioed \cite{BeyP}.
	\ee
\end{example}	

Let now as in \cite{BIPP-PCBT} $\calH_\G^{[4]}$ be  the set of positively oriented quadruples in $\calH_\G$. Assume that $\FF$ admits a big element  $b$ and define
$[\cdot,\cdot,\cdot,\cdot]:\calH_\G^{[4]}\to[0,\infty)$ by the formula
$$[\xi_1,\xi_2,\xi_3,\xi_4]:=\log_{b^2}\big(\CR_k(\phi(\xi_1),\phi(\xi_2),\phi(\xi_3),\phi(\xi_4))\CR_k(\phi(\xi_3),\phi(\xi_4),\phi(\xi_1),\phi(\xi_2))\big)$$
then we obviously have:
\bq\label{e.cr1} [\xi_1,\xi_2,\xi_3,\xi_4]=[\xi_3,\xi_4,\xi_1,\xi_2]\quad\text{ for all } (\xi_1,\xi_2,\xi_3,\xi_4)\in\calH_\G^{[4]}
\eq
and a direct computation gives 
\bq\label{e.cr2} [\xi_1,\xi_2,\xi_4,\xi_5]=[\xi_1,\xi_2,\xi_3,\xi_5]+[\xi_1,\xi_3,\xi_4,\xi_5]
\eq
Thus $[\cdot,\cdot,\cdot,\cdot]$ defines a positive cross-ratio on $\calH_\G$ and we proceed to state a formula for the period of any $\g\in\G\setminus\{\Id\}$. To this end, let $\bS_d(\FF)$ be the diagonal subgroup of $\SL_d(\FF)$,  $\chi_k$ the character given by
$$\chi_k(\diag(\lambda_1,\ldots,\lambda_d))=\frac{\lambda_1\ldots\lambda_k}{\lambda_d\ldots\lambda_{d-k+1}},$$
and $\Jord_\FF:\SL_d(\FF)\to C_\FF$ the Jordan projection where 
$$C_\FF=\left\{\left.\diag(\lambda_1,\ldots,\lambda_d)
\right|\; \lambda_1\geq\lambda_2\geq\ldots\geq\lambda_d>0\right\}.$$
Then it follows from the fact that $\rho(\g)$ is $I$-proximal and \cite[Lemma 3.9]{BeyP} that for $\g\in\G\setminus\{\Id\}$ and $x\in\calH_\G\setminus\{\g_-,\g_+\}$,
$$\begin{array}{rl}\per(\g)&=[\g_-,x,\g x,\g_+]\\
	&=2\log_{b^2}\chi_k(\Jord_\FF(\rho(\g)))\\
	&=\log_{b}\chi_k(\Jord_\FF(\rho(\g))).\end{array}$$

Now let 
$$S_d=\left\{\left.\diag(\lambda_1,\ldots,\lambda_d)
\right|\; \lambda_i>0, \lambda_i\in\KK\right\}$$
then $\chi_k:S_d\to\KK_{\geq 1}$ is a highest weight satisfying the symmetry condition (recall Example \ref{e.fundExSemialgebraicNorm}) and hence 
$$\begin{array}{cccc}N_k:&S_d&\to&\KK_{\geq 1}\\&\diag(\lambda_1,\ldots,\lambda_d)&\mapsto&\displaystyle\max_{\sigma\in\calS_d}\chi_k(\diag(\lambda_{\sigma(1)}\ldots,\lambda_{\sigma(d)}))\end{array}$$
is a semialgebraic norm on $S_d$. Modulo conjugating $\bG$ by an element of $\SO(d,\KK)$ we may assume that there is a maximal $\KK$-split torus $\bS\subset\bG$ with $\bS\subset\bS_d$. Then $N_k|_{S}=\nu_k$ gives a semialgebraic norm on $S$ (Lemma \ref{l.NormA}) and Proposition \ref{p.NormB} implies that if $\rho$ takes values in $G$ for the associated length function $\ell_k$ we have
$$\ell_k([\rho])(\g)=\ln\chi_k(\Jord(\rho(\g))).$$

If $\FF$ is real closed with big element $b$ and $\rho:\G\to G_\FF$ is a reductive representation then Proposition \ref{p.NormB} also implies that 
$$\ell_k([\rho])(\g)=\log_b\chi_k(\Jord_\FF(\rho(\g))).$$
Thus it follows from \cite[Theorem 1.2]{BIPP-PCBT}:
\begin{cor}\label{c.posrat}
	Let $G<\SL_d(\KK)$ be a semisimple semialgebraic $\KK$-group and assume $S\subset S_d$. Let $\nu_k$ be the semialgebraic norm on $S$ obtained by restricting 
	$$(\lambda_1,\ldots,\lambda_d)\mapsto\max_{\sigma\in\calS_d}\frac{\lambda_{\sigma(1)}\ldots\lambda_{\sigma(k)}}{\lambda_{\sigma(d)}\ldots\lambda_{\sigma(d-k+1)}}.$$
	Let $\rho:\G\to G_\FF\subset\SL_d(\FF)$ be a positively $k$-ratioed representation where $\FF$ is a real closed field admitting a big element $b$. Then there is a geodesic current $\mu_\rho$ on $\Sigma=\G\backslash\mathbb H$ such that
	$$i(\mu_\rho,\g)=\log_b\nu_k(\Jord_\FF^G(\rho(\g)))$$
	where $\Jord^G$ is the Jordan projection of $G$.
\end{cor}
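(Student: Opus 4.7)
\textbf{Proof plan for Corollary \ref{c.posrat}.} The plan is to package the cross-ratio already constructed in the two paragraphs preceding the statement as input to \cite[Theorem 1.2]{BIPP-PCBT}, and then to identify its period with the desired length function via the dominant weight machinery. First, I would verify that $[\cdot,\cdot,\cdot,\cdot]\colon\calH_\G^{[4]}\to\RR_{\geq 0}$ defined by
\[
[\xi_1,\xi_2,\xi_3,\xi_4]:=\log_{b^2}\!\bigl(\CR_k(\phi(\xi_1),\phi(\xi_2),\phi(\xi_3),\phi(\xi_4))\,\CR_k(\phi(\xi_3),\phi(\xi_4),\phi(\xi_1),\phi(\xi_2))\bigr)
\]
is a positive cross-ratio on $\calH_\G$ in the sense of \cite{BIPP-PCBT}: the symmetry property \eqref{e.cr1} and the cocycle-type identity \eqref{e.cr2} are exactly the structural axioms, strict positivity on positively oriented $4$-tuples is the defining condition of $k$-positively ratioed representations, and $\G$-invariance is inherited from equivariance of $\phi$ and the $\SL_d(\FF)$-invariance of $\CR_k$.

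Next I would apply \cite[Theorem 1.2]{BIPP-PCBT}, which associates to any such $\G$-invariant positive cross-ratio a geodesic current $\mu_\rho$ on $\Sigma=\G\backslash\HH$ with the property that for every $\g\in\G\setminus\{e\}$
\[
i(\mu_\rho,\g)=\per(\g),
\]
where the period $\per(\g)$ is computed, for any $x\in\calH_\G\setminus\{\g_-,\g_+\}$, as $[\g_-,x,\g x,\g_+]$. The period formula already established in the excerpt yields
\[
\per(\g)=\log_b\chi_k(\Jord_\FF(\rho(\g))),
\]
where $\Jord_\FF\colon\SL_d(\FF)\to C_\FF$ is the $\SL_d$-Jordan projection.

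Finally, it remains to rewrite this expression in the form announced in the statement. Since $\Jord_\FF(\rho(\g))\in C_\FF$ and $\chi_k$ is a dominant weight with respect to the ordering determined by $C_\FF$, the maximum defining $N_k$ is achieved by $\chi_k$ itself on $C_\FF$, so $N_k(\Jord_\FF(\rho(\g)))=\chi_k(\Jord_\FF(\rho(\g)))$. Combining this with Proposition \ref{p.NormB}(2), which asserts that the translation lengths associated to $N_k$ for the ambient $\SL_d$-symmetric space and for the $G$-symmetric space agree on $\bG(\FF)$, we get
\[
\log_b\chi_k(\Jord_\FF(\rho(\g)))=\log_b\nu_k(\Jord^G_\FF(\rho(\g))),
\]
which concludes the proof when combined with the previous display.

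\emph{Main obstacle.} The only delicate point is verifying that \cite[Theorem 1.2]{BIPP-PCBT} applies in the present generality, since there $\rho$ may take values in $\SL_d(\FF)$ for a real closed non-Archimedean $\FF$ while the geodesic currents machinery lives over $\RR$. This is however exactly the reason for composing with $\log_b$: once the cross-ratio takes values in $[0,\infty)\subset\RR$, is $\G$-invariant and positive, the construction of $\mu_\rho$ and the period identity depend only on these real-valued features and go through verbatim.
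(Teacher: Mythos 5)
Your proof is correct and follows essentially the same route as the paper's: verify that the formula defines a positive $\G$-invariant cross-ratio on $\calH_\G$ via \eqref{e.cr1} and \eqref{e.cr2} and the positivity hypothesis, invoke \cite[Theorem 1.2]{BIPP-PCBT} to produce the current $\mu_\rho$ with $i(\mu_\rho,\g)=\per(\g)$, use the period formula $\per(\g)=\log_b\chi_k(\Jord_\FF(\rho(\g)))$, and transport this from the ambient $\SL_d$ to $G$ via Proposition~\ref{p.NormB}. The paper orders the last two steps the other way around (establishing the length-function identity before citing \cite[Theorem 1.2]{BIPP-PCBT}), but the content, the reductions, and the external results invoked are identical to yours.
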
	
Now  we turn our attention to semialgebraic subsets of $\calR_F^{red}(\G,G)$ only consisting of $k$-positively ratioed representations.

\begin{prop}\label{p.posrat}
	Let $\frakT\subset\calR_{F}^{red}(\G,G)$ be a semialgebraic set consisting of $k$-positively ratioed representations. 
	Then for every real closed field $\FF\supset\KK$ the same holds for $\frakT_\FF$. 
	In particular if $\rho\in\calR_F^{red}(\G,G_\FF)$ represents a point in $\rsp\frakT$ then $\rho$ is $k$-positively ratioed.
\end{prop}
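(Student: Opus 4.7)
The plan is to recognize the $k$-positively ratioed condition as exactly fitting the framework of Proposition~\ref{p.dpCframing}, so that the result drops out by direct application of that proposition. First I would introduce the subset
\[
\mathfrak C := \bigl\{ (x_1,x_2,x_3,x_4) \in \calF_I(\KK)^{(4)} : \CR_k(x_1,x_2,x_3,x_4) > 1 \bigr\}.
\]
Since $\CR_k$ is given by a rational expression in the flag coordinates with $\KK$-coefficients, $\mathfrak C$ is semialgebraic; and since $\bG\subset\SL_d$ preserves the pairing $\wedge^k \KK^d \otimes \wedge^{d-k} \KK^d \to \wedge^d \KK^d \cong \KK$, $\CR_k$ (and hence $\mathfrak C$) is $\bG$-invariant. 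Thus $\mathfrak C$ is a configuration space in the sense of Section~\ref{s.framing}, whose $\FF$-extension $\mathfrak C_\FF$ is cut out by the same inequality inside $\calF_I(\FF)^{(4)}$.

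Next, I would set $X := \calH_\G$ and $Y := \calH_\G^{[4]} \subset X^4$, both of which are $\G$-invariant (the latter because $\G < \PSL(2,\RR)$ acts by orientation-preserving homeomorphisms on $\partial \mathbb H$ and hence permutes positively oriented quadruples). With $I = \{\alpha_k, \alpha_{d-k}\}$, the definition of a $k$-positively ratioed representation translates word for word into the existence of an $I$-dynamics preserving, $(Y, \mathfrak C)$-positive framing $\phi_\rho:\calH_\G\to \calF_I(\KK)$; here the pairwise transversality required for $\phi_\rho$ to land in $\calF_I(\KK)^{(4)}$ on $Y$ is implicit in the very definition of $\CR_k$. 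Applying Proposition~\ref{p.dpCframing} then delivers the ``in particular'' part of the statement: every $(\rho, \FF)$ representing a point of $\rsp{\frakT}$ admits such a framing and is therefore $k$-positively ratioed.

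The first assertion will follow immediately: given any real closed $\FF\supset\KK$ and any $\rho\in\frakT_\FF$, the evaluation $\ev_F(\rho)$ lies in $\frakT_\FF \subset \calR_F(\G, G)_\FF$, so by Remark~\ref{rem:point->prime cone} and Properties~\ref{prop:constr} the associated prime cone $\alpha_\rho$ belongs to $\cons(\frakT) = \rsp{\frakT}$, and $(\rho, \FF)$ represents it (Proposition~\ref{prop:Rspecrep}); the conclusion of the previous paragraph then applies. The only subtle point in this strategy is to verify that the cross-ratio inequality truly defines a bona fide configuration space defined over $\KK$, as the deeper content---namely the continuous extension of framings and the preservation of proximality in the passage to the real spectrum---is already packaged inside Proposition~\ref{p.dpCframing} and requires no further work here.
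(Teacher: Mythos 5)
Your proposal correctly identifies a structural fit that the paper does not state explicitly: the $k$\nobreakdash-positively-ratioed condition is exactly an instance of the $(Y,\mathfrak C)$-positive framing framework, with $\mathfrak C$ the cross-ratio-positivity locus inside $\calF_I(\KK)^{(4)}$ (a bona fide configuration space, as you verify). Applying Proposition~\ref{p.dpCframing} then yields the ``in particular'' conclusion about points of $\rsp\frakT$.

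However, your logical direction is reversed relative to the paper's, and this creates a small gap in the final paragraph. The paper proves the first assertion \emph{directly}: it notes $\frakT\subset\Prox_\g(\G,G)$ for every $\g$ and that $\frakT$ lies in the cross-ratio-$>1$ locus for every positively oriented quadruple of fixed points; these are semialgebraic inclusions defined over $\KK$, so by Lemma~\ref{l.proxF} and the transfer principle they persist for $\frakT_\FF$ over \emph{any} real closed $\FF\supset\KK$, and the ``in particular'' is then immediate. You instead start from the $\rsp\frakT$ statement (via \ref{p.dpCframing}) and try to deduce the $\frakT_\FF$ statement. The assertion ``$(\rho,\FF)$ represents $\alpha_\rho$'' is not correct in general: by Proposition~\ref{prop:Rspecrep} the pair $(\rho,\FF)$ represents a point only when $\FF$ is the real closure of $\KK(\rho)$. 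For an arbitrary $\rho\in\frakT_\FF$ you must first restrict to the minimal real closed subfield $\FF_0=\overline{\KK(\rho)}^r\subset\FF$ so that $(\rho,\FF_0)$ represents $\alpha_\rho\in\rsp\frakT$; then \ref{p.dpCframing} gives an $I$-dynamics preserving, $(Y,\mathfrak C)$-positive framing $\phi:\calH_\G\to\calF_I(\FF_0)$; and finally you must transfer this back, using that $I$-proximality, uniqueness of attracting fixed points, and the cross-ratio inequality are all preserved under the extension $\calF_I(\FF_0)\hookrightarrow\calF_I(\FF)$ (the same semialgebraicity arguments underlying Lemma~\ref{l.proxF}). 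With this patch your route is a valid alternative to the paper's — it is somewhat more modular, since it outsources the transfer argument to \ref{p.dpCframing}, whereas the paper's direct argument gets the stronger ``for all $\FF$'' statement in one step without a detour through minimal fields.
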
	

Let $\Prox_I(\SL_d(\FF))$ be the subset of $I$-proximal elements in $\SL_d(\FF)$. 
The following lemma is straightforward using Proposition \ref{p.proximal} and Definition \ref{defi:proximal2}.
\begin{lemma}\label{l.proxF}\
	\begin{enumerate}
		\item$\Prox_I(\SL_d(\FF))$ is semialgebraic and coincides with the $\FF$-extension of $\Prox_I(\SL_d(\KK))$.
		\item The map
		\bqn
		\begin{array}{ccc}\Prox_I(\SL_d(\FF))&\to&\calF_I(\FF)\\g&\mapsto&g_+\end{array}
		\eqn
		is continuous semialgebraic and coincides with the $\FF$-extension of the corresponding map for $\KK$.
		\item The function $\CR_k:\calF_I(\FF)^{(4)}\to\FF$ is semialgebraic continuous 
		and coincides with the $\FF$-extension of the corresponding map for $\KK$.
	\end{enumerate}	
\end{lemma}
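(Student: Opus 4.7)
The plan is to handle the three statements in order, in each case exhibiting an explicit semialgebraic description with coefficients in $\KK$, and then invoking the transfer principle (Properties \ref{ppts.2.10}) to conclude that the object over $\FF$ is the $\FF$-extension of the one over $\KK$.

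For (1), I would start from the characterization recorded in Equation \eqref{e.proximal}, namely
\[
\Prox_I(\SL_d(\FF)) = \{ g \in \SL_d(\FF) \mid \alpha(\Jord_\FF(g)) > 1 \text{ for all } \alpha \in I \}.
\]
By Proposition \ref{prop:4.7} the Jordan projection $\Jord\colon \SL_d(\KK) \to C$ is a semialgebraic continuous map defined over $\KK$, and each $\alpha \in I$ is a rational character of the diagonal torus also defined over $\KK$. Thus the composition $g \mapsto \alpha(\Jord(g))$ is semialgebraic continuous with defining data in $\KK$, and the strict inequalities carve out a semialgebraic subset of $\SL_d(\KK)$. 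The semialgebraic extension $\Jord_\FF$ coincides with the $\FF$-extension of $\Jord$ (Proposition \ref{prop:4.7} together with Properties \ref{ppts.2.10}), so $\Prox_I(\SL_d(\FF))$ is precisely $\Prox_I(\SL_d(\KK))_\FF$.

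For (2), thanks to Corollary \ref{l.fp}(1) the assignment $g \mapsto g_+$ is well-defined and single-valued on $\Prox_I(\SL_d(\FF))$. I would realize its graph as a semialgebraic subset of $\Prox_I(\SL_d(\KK)) \times \calF_I(\KK)$ cut out by two conditions: (i) the fixed point equation $g \cdot \xi = \xi$, which is polynomial in the matrix entries and in homogeneous coordinates on $\calF_I$; and (ii) the contraction condition from Definition \ref{defi:proximal2}, expressing that the eigenvalues of $\Ad(g_h)$ on $\Lie(\bU_\xi)$ have modulus $>1$, which reduces to the finitely many semialgebraic inequalities $\alpha(\Jord(g))>1$ for $\alpha \in I$ together with the equation $g \cdot \xi = \xi$. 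All defining data lie in $\KK$, so the graph is a $\KK$-semialgebraic set whose $\FF$-extension is the analogous graph over $\FF$ (Properties \ref{ppts.2.10}(1)). The fact that it is the graph of a map over $\RR$ is classical, and transfers to $\KK$ and then to $\FF$ via Properties \ref{ppts.2.10}(2), so $g \mapsto g_+$ is semialgebraic and agrees with the $\FF$-extension. Continuity over $\RR$ is standard (the attracting fixed point depends continuously on a proximal matrix), and continuity over $\FF$ then follows again from Properties \ref{ppts.2.10}(2).

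For (3), the formula
\[
\CR_k\bigl((a,A),(b,B),(c,C),(d,D)\bigr) = \frac{\ov a \wedge \ov C}{\ov a \wedge \ov B} \cdot \frac{\ov d \wedge \ov B}{\ov d \wedge \ov C}
\]
is a rational function of Plücker coordinates with integer, hence $\KK$-, coefficients, defined on the open semialgebraic set $\calF_I^{(4)}(\KK)$ where the denominators do not vanish (this is precisely the transversality condition). Its graph is therefore $\KK$-semialgebraic, and its $\FF$-extension is given by the same rational formula evaluated in $\FF$, i.e.\ the $\CR_k$ over $\FF$ introduced in the text. Continuity is immediate from the rational nature of the formula on the locus where the denominators are nonzero.

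The only nontrivial step is (2), and more specifically keeping the contraction condition in genuinely semialgebraic form over $\KK$; the cleanest route is to express it via the inequalities on $\alpha \circ \Jord$ as above rather than trying to manipulate eigenvalues directly, after which the transfer principle handles everything uniformly.
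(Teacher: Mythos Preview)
Your approach is in line with the paper, which simply declares the lemma ``straightforward using Proposition~\ref{p.proximal} and Definition~\ref{defi:proximal2}''; parts (1) and (3) are fine. There is, however, a slip in your treatment of (2) that breaks the argument as written.

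You correctly propose to cut out the graph of $g\mapsto g_+$ inside $\Prox_I(\SL_d(\KK))\times\calF_I(\KK)$ by the fixed-point equation $g\cdot\xi=\xi$ together with the contraction condition of Definition~\ref{defi:proximal2}. The error is the claim that this contraction condition ``reduces to the finitely many semialgebraic inequalities $\alpha(\Jord(g))>1$ for $\alpha\in I$ together with $g\cdot\xi=\xi$''. Those inequalities involve $g$ alone --- they express exactly that $g$ is $I$-proximal --- so your two conditions together describe \emph{all} fixed points of a proximal element in $\calF_I$, not just the attracting one. Already for $g=\diag(2,\tfrac12)\in\SL_2$ both $[e_1]$ and $[e_2]$ are fixed and $\alpha(\Jord(g))=4>1$, yet only $[e_1]$ is attracting: the eigenvalue of $\Ad(g)$ on $\Lie(\bU_\xi)$ is $4$ for $\xi=[e_1]$ and $\tfrac14$ for $\xi=[e_2]$. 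Thus the set you describe is not a graph.

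The repair is simply not to make this reduction. Keep the contraction condition as stated and argue directly that it is semialgebraic in $(g,\xi)$: the map $g\mapsto g_h$ is semialgebraic by Lemma~\ref{lem:4.5}, the subspace $\Lie(\bU_\xi)$ varies semialgebraically with $\xi$ inside the matrix algebra, and ``a linear endomorphism has all eigenvalues of modulus $>1$'' is a semialgebraic condition on its matrix entries (a first-order condition on the coefficients of the characteristic polynomial). With this in place your transfer argument via Properties~\ref{ppts.2.10} goes through unchanged.
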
	
\begin{proof}[Proof of Proposition \ref{p.posrat}]
	Define, for every $\g\in\G\setminus\{\Id\}$
	$$\Prox_\g(\G,G_\FF):=\{\rho\in\calR_F^{red}(\G,G_\FF)|\,\rho(\g)\text{ is $I$-proximal}\}.$$
	In view of Lemma \ref{l.proxF} it is a semialgebraic subset, with $\Prox_\g(\G,G)_\FF=\Prox_\g(\G,G_\FF)$. 
	Since $\frakT\subset\Prox_\g(\G,G)$ for all $\g\in\G\setminus\{\Id\}$, 
	we also have $\frakT_\FF\subset\Prox_\g(\G,G_\FF)$ for all $\g\in\G\setminus\{\Id\}$. 
	Then given $\rho\in\frakT_\FF$, $\phi(\g_+):=\rho(\g)_+$ defines an $I$-dynamics preserving framing $\phi:\calH_\G\to\calF_I(\FF)$. 
	
	Next let $\g_1,\g_2,\g_3,\g_4\in\G$ such that $(\g_{1,+},\g_{2,+},\g_{3,+},\g_{4,+})$ is positively oriented.
	Then 
	\bqn
	\frakT\subset\left\{\rho\in\left.\bigcap_{i=1}^4\Prox_{\g_i}(\G,G)\right|\;\CR_k(\rho(\g_1)_+,\rho(\g_2)_+,\rho(\g_3)_+,\rho(\g_4)_+)>1\right\}		\eqn
	and hence by Lemma \ref{l.proxF}
	\bqn
	\frakT_\FF\subset
	\left\{\rho\in\left.\bigcap_{i=1}^4\Prox_{\g_i}(\G,G_\FF)\right|\;\CR_k(\rho(\g_1)_+,\rho(\g_2)_+,\rho(\g_3)_+,\rho(\g_4)_+)>1\right\}.
	\eqn
	This shows the first assertion of Proposition \ref{p.posrat}; the second is an immediate consequence of the first.
\end{proof}	

Now let $\frakT\subset\calR_F^{red}(\G,G)$ be a closed $G$-invariant semialgebraic subset 
and $\Xi\frakT$ its image in $\Xi_{F,p}(\G,G)$. If $\rho\in\frakT_\FF$ represents a closed point of $\rsp{\Xi\frakT}$ 
then it follows from Theorem \ref{thm:1.2} (3) and Corollary \ref{c.posrat} that the associated geodesic current $\mu_\rho$ does not vanish. 
Observe that $\mu_\rho$ depends on the choice of a big element in $\FF$, 
any other choice of big element would lead to a positive multiple of $\mu_\rho$. Thus we obtain a well defined map
$$\mathbb P\calC:\rspcl{(\Xi\frakT)}\to\mathbb P\mathcal C(\Sigma)$$
into the projectivized space of geodesic currents $\calC(\Sigma)$ on $\Sigma$. \label{n.65} 

For the length function
$$\ell_k([\rho])(\g)=\log_b\nu_k(\Jord_\FF^G(\rho(\g)))$$
associated to the semialgebraic norm $\nu_k$ on $S$, we have

\begin{thm}\label{t.crmain}
	We have a commutative diagram of continuous maps 
	$$\xymatrix{\rspcl{(\Xi\frakT)}\ar[r]^{\mathbb P\calC}\ar[dr]_{\mathbb P\ell_k}&\mathbb P(\calC(\Sigma))\ar[d]^{i(\cdot,\cdot)}\\&\mathbb P(\RR^\G_{\geq 0})}$$
	where the vertical map is given by the intersection.
\end{thm}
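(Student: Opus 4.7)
The diagram requires three separate verifications: continuity of $\mathbb P\ell_k$, commutativity of the triangle, and continuity of $\mathbb P\calC$. The first two are direct applications of results already established in the paper, while the third is the substantive point.

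Continuity of $\mathbb P\ell_k$ follows at once from Theorem \ref{t.lengthA} applied to the semialgebraic norm $\nu_k$ on $S$ introduced in Corollary \ref{c.posrat}, once one notes that $\frakT$ consists of non-elementary, hence non-bounded, representations. For commutativity, combining Proposition \ref{p.posrat} (which ensures that every $(\rho,\FF)$ representing a point of $\rspcl{(\Xi\frakT)}$ is still $k$-positively ratioed, so that the current $\mu_\rho$ is defined) with Corollary \ref{c.posrat} gives, for every $\gamma\in\Gamma\setminus\{e\}$ and every big element $b\in\FF$, the identity
$$i(\mu_\rho,\gamma) \;=\; \log_b \nu_k(\Jord^G_\FF(\rho(\gamma))) \;=\; \ell_k([\rho])(\gamma),$$
which passes to projective classes and yields commutativity at every point of $\rspcl{(\Xi\frakT)}$.

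The remaining task, and the main obstacle, is the continuity of $\mathbb P\calC$. My plan is to reduce it to continuity of cross-ratios: the weak-$*$ topology on $\calC(\Sigma)$ is determined by evaluation on a countable family of boxes $B([a,b]\times[c,d])$ with $a<b<c<d$ in $\calH_\Gamma$, and the construction of $\mu_\rho$ from the positive cross-ratio in \cite{BIPP-PCBT} expresses $\mu_\rho(B)$ as an explicit function of the quantities $[a,b,c,d]_\rho$. For any such quadruple we may write $a=g_1^+,\ldots,d=g_4^+$ with $g_i\in\Gamma\setminus\{e\}$; Proposition \ref{p.posrat} shows that each $\rho(g_i)$ is $I$-proximal over $\FF_\rho$, and Lemma \ref{l.proxF} makes the map $\rho\mapsto(\rho(g_1)_+,\ldots,\rho(g_4)_+)$ semialgebraic and compatible with real closed field extension. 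The map
$$\rho \;\longmapsto\; \CR_k(\rho(g_1)_+,\rho(g_2)_+,\rho(g_3)_+,\rho(g_4)_+)\cdot\CR_k(\rho(g_3)_+,\rho(g_4)_+,\rho(g_1)_+,\rho(g_2)_+)$$
is thus semialgebraic and strictly positive on $\frakT$, and Proposition \ref{p.cont} promotes it to a continuous $\RR$-valued function on $\rspcl{(\Xi\frakT)}$ after dividing its $\log_b$ by $\log_b\bigl(2+\sum_{\eta\in F}\tr(\rho(\eta)\rho(\eta)^t)\bigr)$.

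To upgrade this pointwise control on rescaled box values to continuity into $\mathbb P(\calC(\Sigma))$, I would fix a primitive element $\gamma_0\in\Gamma$ of infinite order and normalize each $\mu_\rho$ by $i(\mu_\rho,\gamma_0)=\ell_k([\rho])(\gamma_0)$; Theorem \ref{t.1.2text} together with Proposition \ref{p.infdim} guarantee that this denominator is positive at every point of $\rspcl{(\Xi\frakT)}$, and its continuity is again supplied by Proposition \ref{p.cont}. The hard part will be converting this pointwise continuity on a countable family of normalized box values into weak-$*$ continuity of the measures themselves; my plan is to exploit the uniform positivity of the cross-ratio on positively oriented quadruples (a semialgebraic property inherited from $\frakT$ via Proposition \ref{p.posrat}) to show that the boundaries of a generating family of boxes have negligible measure for all nearby $\mu_\rho$, so that a Portmanteau-type criterion converts convergence on basic box values into weak-$*$ convergence in $\calC(\Sigma)$ and hence in $\mathbb P(\calC(\Sigma))$.
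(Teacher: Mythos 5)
Your overall architecture --- continuity of $\mathbb P\ell_k$ from Theorem~\ref{t.lengthA}, commutativity from Proposition~\ref{p.posrat} and Corollary~\ref{c.posrat}, and then continuity of $\mathbb P\calC$ via the semialgebraic cross-ratio functions and Proposition~\ref{p.cont} --- agrees with the paper's. The decisive difference is in how the last step is finished. The paper factors $\mathbb P\calC$ as $\rspcl{(\Xi\frakT)}\to\PP(\mathcal{CR}^+(\calH_\G))\to\PP(\calC(\Sigma))$, proves continuity of the first arrow exactly by the rescaling and Proposition~\ref{p.cont} argument you outline, and then \emph{cites} \cite[Proposition 4.10]{BIPP-PCBT} for the continuity of the second arrow (positive cross-ratio $\mapsto$ associated geodesic current in the weak-$*$ topology). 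You instead propose to re-establish this passage via a Portmanteau-type argument about negligible box boundaries; this is left as a sketch and is the only genuinely missing piece --- the hard measure-theoretic content is precisely what the cited proposition supplies, so without either invoking that reference or actually carrying out the boundary-regularity estimate, your proof is incomplete at exactly the point you yourself flag as ``the hard part.''

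There is also a concrete error in your proposed normalization. You rescale by $i(\mu_\rho,\gamma_0)=\ell_k([\rho])(\gamma_0)$ for a single fixed $\gamma_0$ and claim positivity from Theorem~\ref{t.1.2text} and Proposition~\ref{p.infdim}. But Theorem~\ref{t.1.2text} only guarantees that \emph{some} $\eta\in E$ has positive translation length at a given closed boundary point; it gives no control over a prechosen $\gamma_0$, and Proposition~\ref{p.infdim} concerns real points and says nothing here. A boundary point $(\rho,\FF)$ may well have $\CR_k(\ldots)$ lying in $1+\calI$ on the defining quadruple of $\gamma_0$, so that $\log_b$ of it vanishes, making your denominator zero. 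This is exactly why the paper divides by $\log_b\bigl(2+\sum_{\eta\in F}\tr(\rho(\eta)\rho(\eta)^t)\bigr)$, a quantity which is manifestly a big element at every non-Archimedean closed point and depends semialgebraically on $\rho$ alone, and then appeals to the non-vanishing of the whole cross-ratio (not a single period) to pass to projective classes in $\PP(\mathcal{CR}^+(\calH_\G))$. Your normalization should be replaced by this uniform one, or by the supremum over the finite set $E$, to avoid the division-by-zero issue.
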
	
\begin{proof}
	It remains to show the continuity of $\mathbb P\calC$. As in \cite[Section 4.3]{BIPP-PCBT}, 
	let  $\mathcal{CR}(\calH_\G)$ be the topological vector space of cross-ratios on $\calH_\G$ with the topology of pointwise convergence, 
	and $\mathcal{CR}^+(\calH_\G)$ the closed convex cone of positive ones.
	
	To every $\rho\in\calR_\G^{red}(\G,G_\FF)$ representing a closed point in $\rspcl{(\Xi\frakT)}$ we can associate a positive cross-ratio
	$$[\,\cdot\,,\,\cdot\,,\,\cdot\,,\,\cdot\,]_{\rho,b}\in\mathcal{CR}^+(\calH_\G)$$
	where we indicate the dependence on $\rho$ and the big element $b$; this cross-ratio doesn't vanish identically 
	since its periods give the length function $\ell_k([\rho])$. Thus we obtain a well defined map:
	$$\rspcl{(\Xi\frakT)}\to\PP(\mathcal{CR}^+(\calH_\G)):=\fracmod{\RR_{>0}}{\mathcal{CR}^+(\calH_\G)\setminus\{0\}}.$$
	The map $\mathbb P\calC$ is the composition of this map with the map 
	$$\PP(\mathcal{CR}^+(\calH_\G))\to\PP(\calC(\Sigma))$$
	which to a positive cross-ratio associates its geodesic current. 
	The latter map being continuous \cite[Proposition 4.10]{BIPP-PCBT}, it remains to show continuity of the former. 
	To this end we proceed as in the proof Theorem \ref{t:Rspec-ThP} and consider the lift
	\bq\label{e.crfinal}
	\begin{array}{ccc}
		\rspcl{(\frakT\cap\calM_{G})}&\to&\mathcal{CR}^+(\calH_\G)\\
		\rho&\mapsto &\displaystyle 	\frac{[\,\cdot\,,\,\cdot\,,\,\cdot\,,\,\cdot\,]_{\rho,b}}{\log_b\left(2+\sum_{\eta\in F}\tr(\rho(\eta)\rho(\eta)^t)\right)},
	\end{array}
	\eq
	which is independent of the choice of the big element $b$. 
	Now 
	$$ \begin{array}{l}
		[(\g_{1,+},\g_{2,+},\g_{3,+},\g_{4,+})]_{\rho,b}=\\
		\quad=\log_b\big(\CR_k(\rho(\g_1)_+,\rho(\g_2)_+,\rho(\g_3)_+,\rho(\g_4)_+)\CR_k(\rho(\g_3)_+,\rho(\g_4)_+,\rho(\g_1)_+,\rho(\g_2)_+)\big).\end{array}$$
	By Lemma \ref{l.proxF}	the function $f:\frakT\cap\calM_G\to\KK$ given by 
	$$f(\rho)=\CR_k(\rho(\g_1)_+,\rho(\g_2)_+,\rho(\g_3)_+,\rho(\g_4)_+)\CR_k(\rho(\g_3)_+,\rho(\g_4)_+,\rho(\g_1)_+,\rho(\g_2)_+)$$
	is semalgebraic continuous and $f(\rho)>0$. Also $g(\rho)=2+\sum_{\eta\in F}\tr(\rho(\eta)^t\rho(\eta))$ is semialgebraic continuous with $g(\rho)>2$. The continuity of \eqref{e.crfinal} follows then from Lemma \ref{l.proxF} (3) and Proposition  \ref{p.cont}.
\end{proof}

Combining Corollary \ref{c.denseZNlenght}, Theorem \ref{t.crmain} and \cite[Corollary 1.7]{BIPP-PCBT}, stating that if a cross-ratio is integral valued then the associated current is a not necessarily simple multicurve,  we conclude
\begin{cor}\label{c.multicurves}
	Let  $\frakT\subset \Hom_\mathrm{red}(\Gamma,G)$ be a $G$-invariant, {closed}, semialgebraic subset only consisting of positively ratioed representations.  	Then the set of length functions 
	arising as intersection with integral weighted multicurves that are not necessarily simple
	is dense in $\partial^{\nu_kL}{\Xi\frakT}$.
\end{cor}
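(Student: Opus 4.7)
The plan is to combine three inputs: the density of $\ZZ$-valued length functions in $\nl\partial\Xi\frakT$ from Corollary~\ref{c.denseZNlenght}, the continuous commutative diagram of Theorem~\ref{t.crmain}, and the cited result that integer-valued positive cross-ratios come from integral weighted multicurves. The overall strategy is to take a dense family of boundary length functions, realize each as the length function of a $k$-positively ratioed representation over some real closed non-Archimedean field, extract the associated geodesic current, and show that integrality forces the current to be a multicurve.

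First, I would start with an arbitrary $[L]\in\nl\partial\Xi\frakT$ admitting a $\ZZ$-valued representative $L$, noting that such $[L]$ form a dense subset by Corollary~\ref{c.denseZNlenght}. Using the surjectivity of $\widehat{\mathbb P\ell_k}$ from Theorem~\ref{t.lengthA}, I lift $[L]$ to a closed point $[\rho]\in\rspcl\partial\Xi\frakT$, represented by a pair $(\rho,\FF_\rho)$. By Proposition~\ref{p.posrat} applied to $\frakT$, the representation $\rho$ is $k$-positively ratioed. The construction preceding Corollary~\ref{c.posrat} then assigns to $\rho$ and a choice of big element $b\in\FF_\rho$ a positive cross-ratio $[\,\cdot\,,\,\cdot\,,\,\cdot\,,\,\cdot\,]_{\rho,b}$ on $\calH_\Gamma$ whose periods are
\[
\per(\gamma)=\log_b\chi_k(\Jord_\FF(\rho(\gamma)))=\ell_k([\rho])(\gamma).
\]
Since $\ell_k([\rho])$ is a positive scalar multiple of $L$, one can rescale $b$ (replacing it by a suitable power) so that $\per(\gamma)=L(\gamma)\in\ZZ$ for every $\gamma\in\Gamma$.

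Next, I would invoke \cite[Corollary 1.7]{BIPP-PCBT}, which guarantees that a positive cross-ratio on $\calH_\Gamma$ with integer values produces a geodesic current that is an integer-weighted (not necessarily simple) multicurve. By construction this current coincides with the current $\mu_\rho$ from Corollary~\ref{c.posrat}, whose intersection function satisfies $i(\mu_\rho,\gamma)=L(\gamma)$ for every $\gamma\in\Gamma\setminus\{e\}$. Thus $L$ itself arises as the intersection function of an integral weighted multicurve, and the density of such $[L]$ in $\nl\partial\Xi\frakT$ yields the corollary.

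The main technical step to verify is the passage from integrality of the \emph{periods} to integrality of the full cross-ratio needed to apply \cite[Corollary 1.7]{BIPP-PCBT}: this hinges on the additivity relation \eqref{e.cr2} together with \eqref{e.cr1}, which allow one to express general cross-ratio values as $\QQ$-linear, and in favorable cases $\ZZ$-linear, combinations of periods by approximating generic quadruples in $\calH_\Gamma^{[4]}$ by quadruples of fixed points of elements of $\Gamma$. The correct normalization of $b$ and the verification that this approximation preserves integrality are the delicate points; once they are in place, the rest of the argument is a direct chase through the commutative diagram of Theorem~\ref{t.crmain}.
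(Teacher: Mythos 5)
The key gap is your attempt to derive integrality of the full cross-ratio from integrality of the \emph{periods} (i.e., of the length function $L$). This does not follow. The cocycle relations \eqref{e.cr1} and \eqref{e.cr2} express cross-ratio values of one configuration in terms of cross-ratio values of other configurations; they do \emph{not} let you write a general value $[\xi_1,\xi_2,\xi_3,\xi_4]_{\rho,b}$ as a $\QQ$-linear (let alone $\ZZ$-linear) combination of periods $\per(\gamma)=[\gamma_-,x,\gamma x,\gamma_+]$. The periods form a very thin subset of cross-ratio values, and there is no pointwise mechanism — via the cocycle identity or via approximation of generic quadruples by fixed-point quadruples — that propagates integrality from periods to all of $\calH_\G^{[4]}$. (A sanity check in rank one: integrality of the marked length spectrum of a hyperbolic surface certainly does not force the Liouville cross-ratio to take integer values.) So the "main technical step" you flag as needing verification is in fact the step that fails.

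The paper's actual argument does not pass through integrality of $L$ at all. It starts from the dense set of \emph{rational} points in $\rspcl\partial(\Xi\frakT)$ (Corollary~\ref{c.Bru4.2}); for such a point $[(\rho,\FF)]$, the value group $\valu = v(\LL_\rho)$ of the field $\LL_\rho$ generated by the matrix coefficients of $\rho$ is \emph{discrete}. Because the framing values $\rho(\g)_+$ and hence the quantities $\CR_k(\cdots)$ live in a bounded-degree algebraic extension of $\LL_\rho$, one gets (following the argument of \cite[Theorem 7.3]{BIPP-PCBT}) that \emph{all} cross-ratio values $[\xi_1,\xi_2,\xi_3,\xi_4]_{\rho,b}$ lie in $\frac{1}{(8d)!}\valu$, which is again discrete. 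This is the mechanism that gives discreteness of the cross-ratio, after which \cite[Corollary 1.7]{BIPP-PCBT} applies. In short: the source of integrality is the algebraic/arithmetic structure of $\LL_\rho$ for rational boundary points, not the integrality of the length function, and that is the ingredient your proposal is missing. To repair it, replace the lift "take $[L]$ with $\ZZ$-valued representative and lift via $\widehat{\mathbb P\ell_k}$" by "take a rational boundary point $[(\rho,\FF)]$ directly" and argue discreteness of the cross-ratio from discreteness of $v(\LL_\rho)$ as above.
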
	

\begin{proof}
	Let $[(\rho,\FF)]\in \rspcl{(\Xi\frakT)}$; denote by $\LL_\rho$ the field generated by the matrix coefficients of $\rho$, and assume that $\valu:= v(\LL_\rho)$ is a discrete subgroup of $\RR$. The same argument as in \cite[Theorem 7.3]{BIPP-PCBT} ensure that for all $(\xi_1,\xi_2,\xi_3,\xi_4)\in \calH_\G^{(4)}$,
	$[\xi_1,\xi_2,\xi_3,\xi_4]_{\rho,b}\in \frac 1{(8d)! }\valu$. We immediately deduce from \cite[Corollary 1.7]{BIPP-PCBT} that a multiple of the current associated to $[\,\cdot\,,\,\cdot\,,\,\cdot\,,\,\cdot\,]_{\rho,b}$ is a non-necessarily simple integral weighted multicurve.
\end{proof}	
\section{List of notations and concepts}

\begin{tabular}{p{.10\textwidth} p{.90\textwidth}}
	$\ov\LL^r$& The real closure of the ordered field $\LL$, page \pageref{n.rc}\\
	$\calO$& order convex subring of an ordered field $\FF$, page \pageref{n.2}\\
	$\FF_\calO$& residue field $\calO/\calI$ of an ordered convex subring, page \pageref{n.3}\\
	$\oK$& hyper-$\KK$ field, page \pageref{n.4} \\
	$\calO_{\plambda}$ & order convex subring of $\oK$ associated to $\plambda$, page \pageref{n.5}
	\\$\oKl$& Robinson field, page \pageref{n.6} 
	\\$v$& (non-Archimedean) valuation, page \pageref{n.7}
	\\$v_b$& valuation associated to the big element $b$, page \pageref{n.8}
	\\$S$& semialgebraic set, page \pageref{n.9}
	\\& semialgebraically connected, page \pageref{n.10}
	\\$S_\FF$& $\FF$-extension of the semialgebraic set $S$, page \pageref{n.11}
	\\ $V_\FF(\calO)$& $\calO$ points of the set $V_\FF$, page \pageref{n.12}
	\\$\Rspec(\calA)$& {real spectrum}  of a ring $\calA$, page \pageref{n.13}
	\\$\fp_\alpha$&ideal associated to a prime cone $\alpha$, page \pageref{n.14}
	
\end{tabular}

\begin{tabular}{p{.25\textwidth} p{.80\textwidth}}
	\\$\phi_\alpha$&  $\phi_\alpha:\calA\to \calA/\fp_{\alpha}< \FF_\alpha$, page \pageref{n.15}
\\& constructible set, page \pageref{n.16}
\\& spectral topology, page \pageref{n.17}
\\& specialization, page \pageref{n.18}
	\\$\Ret$& retraction $\Ret:\Rspec(\calA)\to \Rspecc(\calA)$, page \pageref{n.19}
\\& a subring $R_2$ is  Archimedean over $R_1$, page \pageref{n.20}
\\$P\subset \calA$& (proper) cone, page \pageref{n.21}

\\$\rsp V$& the real spectrum compactification of the real algebraic set $V$, page \pageref{n.22}
\\$\rspcl V$& the subset of closed points in $\rsp V$, page \pageref{n.22}
\\$c(S)$& constructible set associated to a semialgebraic subset $S$, page \pageref{n.23}
	\\$\rsp S$& the real spectrum compactification of the semialgebraic set $S$, page \pageref{n.23b}
\\$\rspcl S$& the subset of closed points in $\rsp S$, page \pageref{n.23b}
\\$\rsp{\partial}S$ & non-Archimedean points, page \pageref{n.24}
\\	$\alpha_x$& prime cone associated to $x\in V_\FF$, page \pageref{n.25}
	\\& {rational point in $\rsp V$}, page \pageref{n.26}
	\\$V(\beta)$& support variety of $\beta\in\rsp V$, page \pageref{n.27}\\
	
	$\bG<\SL(n)$& connected, semisimple algebraic group defined over $\KK$, page \pageref{n.28}
	\\$\bS$& maximal $\KK$-split torus, page \pageref{n.29}
	\\$X(\bS)$& rational characters of $\bS$, page \pageref{n.30}
	\\${}_\KK\Phi\subset X(\bS)$& $\KK$-roots of $\bG$ with respect to $\bS$, page \pageref{n.31}
	\\${}_\KK\bP_{I}$& standard parabolic subgroup, page \pageref{n.32}
	\\$\bS_I$& $\left(\bigcap_{\alpha\in {}_\KK\Delta\setminus I}\ker\alpha\right)^\circ$, page \pageref{n.33}
	\\$\calZ(\bS_I)$& Levi $\KK$-subgroup of ${}_\KK\bP_{I}$, page \pageref{n.34}
	\\$G$& semisimple, semialgebraic  $\KK$-group, page \pageref{n.35}
	\\$S$& $\bS(\KK)^\circ$, page \pageref{n.36}
	\\$g\in K_\FF\,c(g)\,K_\FF$& Cartan decomposition, page \pageref{n.37}
	\\$\Jord_\FF$& Jordan projection $\Jord_\FF:G_\FF\to C_\FF$, page \pageref{n.38}
	\\$\xi_g^+$& attracting fixed point of a $I$-proximal element, page \pageref{n.39}
	\\& $I$-proximal over $\FF$, page \pageref{n.40}
	\\$\Ln$& logarithm map $\mathrm{Ln}\colon S_\RR\to\fa$, page \pageref{n.41}
	\\$\Log_b$& Logarithm map $ S_\FF\to\fa$, page \pageref{n.42}
	\\$\calX_\FF$& non-standard symmetric space, page \pageref{n.43}
	\\$\delta_\FF$& Cartan projection $\delta_\FF\colon\calX_\FF\times\calX_\FF\to C_\FF$, page \pageref{n.44}
	\\$N$& semialgebraic norm on $S$, page \pageref{n.45}
	\\$D_N^\FF$& $\FF_{\geq1}$-valued multiplicative distance, page \pageref{n.46}
	\\$d_N^\FF$& semidistance on $\calX_\FF$, page \pageref{n.47}

\end{tabular}

\begin{tabular}{p{.20\textwidth} p{.80\textwidth}}
	\\$\bgof$& metric shadow of $\calX_\FF$, page \pageref{n.48}
\\$\Cone(\calX_\RR, \omega,\prlambda,d_{\|\,\cdot\,\|})$& asymptotic cone, page \pageref{n.49}
\\$\ell(g)$& {translation length}, page \pageref{n.50}
\\$\|g\|$& $\lim_{k\to\infty}\frac{d(x,g_*^kx)}k$, page \pageref{n.51}
\\$\calR_F(\Gamma,\bG(\KK))$& representation variety, page \pageref{n.52} 
\\$\calD_F^\rho(x)$& displacement of $\rho$ at $x$, $\max_{\gamma\in F}d(\rho(\gamma)x,x)$, page \pageref{n.53}
	\\$\xi$& $I$-dynamics preserving framing defined over $\FF$, page \pageref{n.54}
	\\$\mathfrak C$& Configuration space, page \pageref{n.55}
	\\& $(Y,\mathfrak C)$-positive framing, page \pageref{n.56}
	\\$\calR_F^{red}(\Gamma,G)$& set of reductive representations, page \pageref{n.60b}
	\\$\calM_G$& $G$-minimal vectors, page \pageref{n.57}
	\\$\Xi_{F,p}(\G,G)$& character variety, page \pageref{n.59} 
	\\& $(\rho,\FF)$ represents a point $\alpha\in\rsp{\Xi_{F,p}(\Gamma, G)}$, page \pageref{n.60}
	\\& {bounded representation}, page \pageref{n.61}
	\\$\thp{(\Xi\frakT)}$& Weyl chamber length compactification, page \pageref{n.62}
	\\& elementary representation, page \pageref{n.63} 
	\\& $k$-positively ratioed rapresentation, page \pageref{n.64} 
	\\$\calC(\Sigma)$& geodesic currents on $\Sigma$, page \pageref{n.65} 
\end{tabular}

\newpage

\end{document}